    \pgfplotsset{
        compat=1.15,
        width=8cm,
    }
\newcommand{\R}{\mathbb{R}}
\newcommand{\rmd}{{\rm d}}
\renewcommand{\leq}{\leqslant}
\renewcommand{\geq}{\geqslant}
\newcommand{\eee}{equation}
\newcommand{\be}{\begin{\eee}}
\newcommand{\ee}{\end{\eee}}
\let\div\relax 
\DeclareMathOperator{\div}{div}
\newcommand{\dd}{\,\mathrm{d}}
\newcommand{\dif}{\mathrm{d}}
\newcommand{\eps}{\varepsilon}
\renewcommand{\oint}{\fint}
\newcommand{\CC}{\mathbb{C}}
\newcommand{\EE}{\mathbb{E}}
\newcommand{\NN}{\mathbb{N}}
\newcommand{\PP}{\mathbb{P}}
\newcommand{\RR}{\mathbb{R}}
\renewcommand{\SS}{\mathbb{S}}
\newcommand{\TT}{\mathbb{T}}
\newcommand{\ZZ}{\mathbb{Z}}
\newcommand{\cC}{\mathcal{C}}
\newcommand{\cD}{\mathcal{D}}
\newcommand{\cF}{\mathcal{F}}
\newcommand{\cH}{\mathcal{H}}
\newcommand{\cM}{\mathcal{M}}
\newcommand{\cP}{\mathcal{P}}
\newcommand{\cV}{\mathcal{V}}
\newcommand{\var}{{\rm Var}}
\newcommand{\vertiii}[1]{{\left\vert\kern-0.25ex\left\vert\kern-0.25ex\left\vert #1 
    \right\vert\kern-0.25ex\right\vert\kern-0.25ex\right\vert}}
\numberwithin{equation}{section}
\newtheorem{theorem}{Theorem}[section]
\newtheorem{cor}[theorem]{Corollary}
\newtheorem{lemma}[theorem]{Lemma}
\newtheorem{prop}[theorem]{Proposition}
\theoremstyle{definition}
\newtheorem{assumption}[theorem]{Assumption}
\theoremstyle{remark}
\newtheorem{remark}[theorem]{Remark}
\title{Anomalous Dissipation and Regularization in isotropic Gaussian  turbulence}
\author{Theodore D. Drivas}
\address[T. D. Drivas]{Department of Mathematics, Stony Brook University, Stony Brook, NY, 11790}
\email{tdrivas(at)math.stonybrook.edu}
\author{Lucio Galeati}
\address[L. Galeati]{Dipartimento di Ingegneria e Scienze dell’Informazione e Matematica, Università degli Studi
dell’Aquila, Italy}
\email{lucio.galeati(at)univaq.it}
\author{Umberto Pappalettera}
\address[U. Pappalettera]{Fakult\"at f\"ur Mathematik, Universit\"at Bielefeld, D-33501 Bielefeld, Germany}
\email{upappale(at)math.uni-bielefeld.de}
\begin{document}

\begin{abstract}
In this work we rigorously establish a number of properties of ``turbulent'' solutions to the stochastic transport and the stochastic continuity equations constructed by Le Jan and Raimond in [Ann. Probab. 30(2): 826-873, 2002].
The advecting velocity field, not necessarily incompressible, is Gaussian and white-in-time, space-homogeneous and isotropic, with $\alpha$-H\"older regularity in space, $\alpha\in (0,1)$.
We cover the full range of compressibility ratios giving spontaneous stochasticity of particle trajectories. For the stochastic transport equation, we prove that generic $L^2_x$ data experience anomalous dissipation of the mean energy, and study basic properties of the resulting anomalous dissipation measure.
Moreover, we show that starting from such irregular data, the solution immediately gains regularity and enters into a fractional Sobolev space $H^{1-\alpha-}_x$.
The proof of the latter is obtained as a consequence of a new sharp regularity result for the degenerate parabolic PDE satisfied by the associated two-point self-correlation function, which is of independent interest.
In the incompressible case, a Duchon-Robert-type formula for the anomalous dissipation measure is derived, making a precise connection between this self-regularizing effect and a limit on the flux of energy in the turbulent cascade.
Finally, for the stochastic continuity equation, we prove that solutions starting from a Dirac delta initial condition undergo an average squared dispersion growing with respect to time as $t^{1/(1-\alpha)}$, rigorously establishing the analogue of Richardson’s law of particle separations in fluid dynamics.\\

\vspace{-2mm}
\noindent \textbf{Keywords:} Kraichnan model; Anomalous dissipation; Anomalous regularization; Richardson's law.\\

\vspace{-2mm}
\noindent \textbf{MSC (2020):} 76M35, 76F25, 60H15.
\end{abstract}

\date{\today}
\maketitle

\vspace{-10mm}

\tableofcontents

\section{Introduction}
In this work, we consider stochastic advection by a  Gaussian random field $W:=W(t,x): \R_+ \times \mathbb{R}^d \to \mathbb{R}^d$, $d \geq 2$, which is spatially colored and white-in-time correlated. The evolution is governed by a stochastic partial differential equation for a scalar field $\theta$ of the form
\begin{equation} \label{eq:Kraichnan}  \tag{STE}
\dd  \theta + \circ \dd W \cdot \nabla \theta = 0,
\end{equation}
where the symbol $\circ$ denotes that we are interpreting the stochastic integral in the Stratonovich sense.  The velocity field, being Gaussian, is completely prescribed by its mean (taken zero for simplicity) and covariance
\begin{align*}
    \mathbb{E}[ W(t,x) \otimes W(t',x') ] = (t \wedge t') C(x,x') ,
 \quad
 C : \R^d \times \R^d \to \R^{d \times d}.
\end{align*}
We are primarily interested in the case where the velocity field is spatially  \textit{homogeneous}, i.e. $C(x,x')= C(x-x')$, and  \textit{non-degenerate}, i.e. $C(0)$ is a positive definite matrix.  When $W$ is spatially smooth the analysis of \eqref{eq:Kraichnan} is classical \cite{Ku82,Ku97}, and relatively detailed information about the behaviour of solutions is available \cite{LeJ85,BaHa86}. Moreover, exponential mixing and enhanced dissipation have been established in \cite{DKK2004,GesYar2025}.

However, an important qualitative feature of the velocity fields that we consider is that they are spatially rough: exactly $\gamma$--H\"{o}lder in space for every $\gamma<\alpha\in (0,1)$ and not better. This amounts to considering a covariance matrix with the following small-distance asymptotic behavior
\be\label{covassum}
Q(z) := C(0) - C(z) =O( | z |^{2 \alpha}).
\ee
Slightly abusing terminology, we refer to such a velocity field as being  $\alpha$--regular. 
Such Gaussian fields are characterized by the Fourier transform of their covariance, see \cite[\S 12]{MoYa75} and \autoref{app:auxiliary}.

Our main motivation to study the stochastic transport equation \eqref{eq:Kraichnan} comes from homogeneous isotropic fluid turbulence. 
In fluid-dynamics, turbulence is said to be homogeneous and isotropic if the fluid velocity is a homogeneous and isotropic random field.
This concept is a mathematical idealization which,
however, is very convenient for a statistical description of certain turbulent flows \cite{MoYa75}.
A very popular model of homogeneous and isotropic turbulence is the so-called \textit{Kraichnan model} \cite{Kr68}, corresponding to the particular choice
\begin{equation}\label{eq:kraichnan_covariance_fourier_intro}
    \hat{C}(\xi)
\propto  \frac{1}{(|\xi|^2 + m^2)^{\frac{d}{2}+\alpha}}\bigg[ a \,\hat \xi\otimes \hat\xi + \frac{b}{d-1} (I_d- \hat \xi\otimes \hat \xi)\bigg] ,
\end{equation}
where the parameters $a,b\geq 0$ determine the compressibility of the noise (which is divergence-free when $a=0$ and a gradient when $b=0$), and $m>0$ is an infra-red cut-off parameter at low frequency. We assume moreover $a+b>0$, so that $W \neq 0$. 

First investigations on the Kraichnan model have been done by physicists in the particular \textit{self-similar} case $m=0$, i.e. assuming the exact scaling $C(z) - C(0) \propto |z|^{2\alpha}$ for every $z \in \R^d$.
Despite the fact that solutions to \eqref{eq:Kraichnan} cannot be rigorously defined in the self-similar case (e.g. $C(0)$ would be infinite), the simple and explicit structure of the covariance allows many formal computations suggesting the validity of some characteristic features of turbulent fluids, such as the \textit{anomalous dissipation} of $L^2_x$ norm of solutions, \textit{spontaneous stochasticity} of particle ``trajectories'', and Richardson's law for the growth with respect to time of the mean square separation of fluid particle pairs.
 Mathematically, the latter can be made precise by studying the stochastic continuity equation
\begin{align} \label{eq:Kraichnan.continuity} \tag{SCE}
\dd \mu_t + \nabla \cdot (\mu_t \circ \dd W)  = 0
\end{align}
starting from a Dirac delta initial condition $\mu_0 = \delta_x$, $x \in \R^d$, which represents the distribution of a single particle, initially located at the point $x$.
See the lectures \cite{Ga02} for a thorough discussion. 

Arguing formally, it is possible to clearly identify regimes of regularity $\alpha$ and \textit{compressibility} $\eta:= \frac{b}{a+b}$ in which the aforementioned anomalous phenomena happen \cite{GawVerg00}.
 One of our aims will be to disentangle this formal self-similarity assumption from many of the key phenomena possessed by the model, giving rigorous proofs when $m>0$.
We shall also allow $W$ to be non-divergence free in many of our results, covering the full \emph{diffusive regime} in which spontaneous stochasticity happens (see below), although some results will hold only in the special, but very important, divergence-free setting.  
Moreover, the particular covariance structure of the Kraichnan model is not strictly necessary as long as the key qualitative properties of the noise (space regularity and relative intensity of divergence-free and gradient part) are sufficiently good. Precise assumptions are stated in the relevant sections in the main body of the paper.

\subsection{Main results}
The stochastic transport equation \eqref{eq:Kraichnan} displays a rich array of phenomena in common with hydrodynamic turbulence.  One such phenomenon is \emph{anomalous dissipation} (or diffusion).
To describe this behavior, consider the following diffusive approximation\footnote{The coefficient $\sqrt{1-\kappa}$ appearing in front of $\dd W$ in \eqref{eq:intro_viscous_approx}, which might look unusual, is quite natural in the Kraichnan model, see the discussion after \autoref{rem:Lp_bound_divergence_free}, as well as \autoref{lem:approx} and its proof.
In full generality, the correct viscous approximation we will consider for \eqref{eq:Kraichnan} is given by \eqref{eq:spde_viscous_approx} below. 
Throughout the Introduction, for the sake of simplicity we implicitly assume $C(0)=2 I_d$, so that \eqref{eq:spde_viscous_approx} reduces to
\eqref{eq:intro_viscous_approx}.} of \eqref{eq:Kraichnan}
\begin{equation}\label{eq:intro_viscous_approx}
    \dd \tilde\theta^\kappa + \sqrt{1-\kappa} \circ \dd W \cdot \nabla \tilde{\theta}^\kappa = \kappa \Delta \tilde\theta^\kappa \dd t,
\end{equation}
for which the following energy balance holds for $\kappa \in (0,1)$:
\begin{align} \label{eq:contraction_L2k}
\frac{1}{2} \mathbb{E} \| \tilde\theta_t^\kappa \|_{L^2_x}^2 = \frac{1}{2}  \| \theta_0 \|_{L^2_x}^2 - \kappa \int_0^t   \mathbb{E} \|\nabla \tilde\theta_s^\kappa\|_{L^2_x}^2\rmd s,
\qquad
\forall t \geq 0.
\end{align}
Despite $W$ is possibly not divergence-free, this energy balance holds in expectation due to the fact that the energy input from $\div W$ is a martingale. 
The last term in \eqref{eq:contraction_L2k} describes to mean energy that is dissipated by $\tilde\theta^\kappa$ at time $t$. Persistence of mean energy dissipation as $\kappa \to 0$, namely 
\begin{align*}
    \liminf_{\kappa \to 0} \kappa \int_0^t   \mathbb{E} \|\nabla \tilde\theta_s^\kappa\|_{L^2_x}^2\rmd s >0 ,
\end{align*}
clearly necessitates that spatial roughness of $\tilde{\theta}^\kappa$ emerges as $\kappa\to 0$. Next we discuss what are the implications of the above to solutions of \eqref{eq:Kraichnan}.

For early mathematical investigations on the Kraichnan model \eqref{eq:Kraichnan}, we refer to \cite{EVan2000,EVan2001,EyiXin1996,EyiXin2000}. 
A very thorough mathematical theory has then been established by Le Jan and Raimond in their seminal papers \cite{LeJRai2002,LeJRai2004}; their approach, based on Wiener chaos expansions, has subsequently been expanded in \cite{LotRoz2004,LotRoz2006}. 
Le Jan and Raimond developed a general solution theory for (backward) stochastic equations on manifolds, not limited to equations of transport type, showing existence of solutions for every initial condition $\theta_0 \in L^2_x$, and uniqueness within the class of $\{\mathcal{F}_t\}_{t \geq 0}$-adapted solutions satisfying a suitable energy bound, cf. \cite[Theorem 3.2]{LeJRai2002}.
Thus, neglecting for a moment the discrepancy between backward and forward formulations of \eqref{eq:Kraichnan} (see \autoref{prop:solution_LJR} for more details),  uniqueness of adapted solutions to \eqref{eq:Kraichnan} holds and the vanishing diffusivity procedure selects the unique adapted solution.  Anomalous dissipation can be interpreted as non-conservation of the average $L^2_x$ norm of the unique adapted solution to the stochastic transport equation, without taking the vanishing diffusivity limit. It is defined in general as the strict inequality
\begin{align} \label{eq:anom_diss_def}
\mathbb{E} \| \theta_t \|_{L^2_x}^2
<
\| \theta_0 \|_{L^2_x}^2, 
\qquad
\mbox{ for some } t > 0.
\end{align}
Bernard, Gawedzki, and Kupiainen, in their celebrated paper \cite{bernard1998slow}, linked anomalous dissipation \eqref{eq:anom_diss_def} to non-uniqueness of characteristics.  This was subsequently rigorously established by  Le Jan and Raimond.  
Specifically, they prove that the solution map $S : \theta_0 \mapsto \theta$ is generally given by composition with a \textit{flow of Markovian kernels}, rather than a flow of maps. 
In particular, particle ``trajectories'' do not always make sense and must be replaced by a suitable probabilistic notion.  Le Jan and Raimond link energy conservation (that is, equality in \eqref{eq:anom_diss_def} for every initial condition $\theta_0 \in L^2_x$) to $S$ being or not a flow of maps \cite[Lemma 6.5]{LeJRai2002}: energy conservation holds if and only if $S$ is a flow of maps.

In \cite[Section 10]{LeJRai2002}, Le Jan and Raimond focus on the Kraichnan model in $\R^d$ and show that the qualitative behaviour of the solution map $S$, and therefore of anomalous dissipation, depends only upon the space regularity $\alpha$ and the compressibility ratio $\eta$ of the noise.
For values of $\alpha>1$ particle trajectories are classically defined, and $S$ is a flow of maps. 
When $\alpha \in (0,1)$ three different scenarios arise, depending on the compressibility ratio:
\begin{itemize}
    \item[1)]  For $\eta < 1-\frac{d}{4\alpha^2}$, forward particle trajectories are classically defined and coalesce together when they hit each other; $S$ is a (semi)flow of maps.
    \item[2)] In the \textit{diffusive regime} $\eta > 1- \frac{d}{4\alpha^2}$ particle trajectories are not classically defined and $S$ is the composition with a genuine Markovian kernel. Morally, from any initial point of $\R^d$ infinitely many particle trajectories emanate. Moreover:
    \item[2a)] For $\eta\in (1- \frac{d}{4\alpha^2}, \frac12 - \frac{d-2}{4\alpha})$, trajectories intersect at positive times with positive probability.
    \item[2b)] For $\eta> \frac12 - \frac{d-2}{4\alpha}$, such intersections happen with probability $0$.
\end{itemize}

In view of \cite[Lemma 6.5]{LeJRai2002} of Le Jan and Raimond, in the diffusive regime for the Kraichnan model on $\mathbb{R}^d$ anomalous dissipation is known to occur for some initial data.
But one could ask how generic the phenomenon is.
Our first result, valid in the general framework of Gaussian homogeneous noise, is that anomalous dissipation is generic whenever it occurs.
\begin{theorem} \label{thm:dissipation_intro} 
Let $\mathfrak{D}$ be the set of initial conditions $\theta_0 \in L^2_x$ such that \eqref{eq:anom_diss_def} holds. Then either $\mathfrak{D}$ is empty or $\mathfrak{D} = L^2_x \setminus \{0\}$,  and  for every $\theta_0 \in \mathfrak{D}$ energy dissipation happens continuously in time:
\begin{align*}
\mathbb{E} \| \theta_t \|_{L^2_x}^2 
<
\mathbb{E} \| \theta_s \|_{L^2_x}^2,
\qquad
\forall s<t.
\end{align*}
In particular, specifically for the Kraichnan model:
\begin{itemize}
    \item If $d\in \{2,3\}$ and $\eta<1-\frac{d}{4\alpha^2}$, then all solutions conserve the mean energy.
    \item If $d\geq 4$ or $\eta>1-\frac{d}{4\alpha^2}$, then all non-zero solutions continuously dissipate the mean energy.
    \end{itemize}
\end{theorem}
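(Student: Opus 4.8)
The plan is to exploit linearity of the solution map $S$ together with the energy balance structure. First I would observe that, by linearity and the semigroup property, the map $t\mapsto \mathbb{E}\|S_t\theta_0\|_{L^2_x}^2 = \langle \theta_0, (S_t^*S_t)\theta_0\rangle$ is controlled by a family of self-adjoint contraction operators $P_t := S_t^* S_t$ on $L^2_x$, with $P_0 = I$, $\|P_t\|\le 1$, and (using adaptedness and the Markovian-kernel structure from Le Jan--Raimond) the Chapman--Kolmogorov-type relation $P_{t+s} = S_s^* P_t S_s$ after conditioning, so that $\langle\theta_0,P_{t+s}\theta_0\rangle \le \langle\theta_0,P_s\theta_0\rangle$ and in particular $t\mapsto \mathbb{E}\|S_t\theta_0\|_{L^2_x}^2$ is nonincreasing for every fixed $\theta_0$.

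The crux is the dichotomy. Suppose $\mathfrak{D}\ne\emptyset$, so there is $\bar\theta_0$ and $\bar t>0$ with strict inequality; I want to upgrade this to \emph{every} nonzero datum at \emph{every} positive time. The key structural input is spatial homogeneity of the noise: the law of $W$ is invariant under translations $\tau_h$, hence $S_t$ commutes (in law) with translations, so $\mathbb{E}\|S_t(\tau_h\theta_0)\|^2 = \mathbb{E}\|S_t\theta_0\|^2$, and more usefully the ``energy defect'' functional $\theta_0\mapsto \|\theta_0\|^2 - \mathbb{E}\|S_t\theta_0\|^2 = \langle\theta_0,(I-P_t)\theta_0\rangle$ is a translation-invariant nonnegative quadratic form. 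On the Fourier side this means $I-P_t$ is a Fourier multiplier by a nonnegative measurable function $m_t(\xi)\ge 0$ (more precisely a matrix-valued symbol, but the scalar essence is what matters); the defect is strict for $\bar\theta_0$ iff $m_{\bar t}$ is not a.e.\ zero on the support of $\hat{\bar\theta_0}$. Then I would argue that the set $\{\xi : m_t(\xi)>0\}$ is, up to null sets, either empty or all of $\R^d$ --- this should follow from isotropy (rotation invariance forces $m_t(\xi)=m_t(|\xi|)$) combined with a scaling/monotonicity argument using the infrared cutoff structure and the fact that $m_t$ inherits analyticity or at least a rigidity in $|\xi|$ from the explicit parabolic equation governing the two-point correlation function. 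Once $\{m_t>0\}$ has full measure for one $t=\bar t$, monotonicity in $t$ (from $P_{t+s}\le P_s$ in quadratic-form sense) gives it for all $t\ge\bar t$, and a short argument using $P_{t/n}^{\,\ast\,n}$-type composition bounds propagates strictness down to all $t>0$: if some $P_{t_0}$ were a genuine isometry on a nontrivial subspace, iterating would contradict strictness at $\bar t$. This yields $\mathfrak{D} = L^2_x\setminus\{0\}$ and continuous-in-time strict decrease, since $t\mapsto \langle\theta_0,P_t\theta_0\rangle$ is monotone and, being strictly less than $\|\theta_0\|^2$ for all $t>0$ while equal at $t=0$, cannot be locally constant on any interval without forcing $m_t\equiv 0$ there by the same rigidity.

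For the Kraichnan-specific conclusions I would invoke the Le Jan--Raimond trichotomy recalled in the excerpt: in regime (1), $\eta < 1-\tfrac{d}{4\alpha^2}$ with $d\in\{2,3\}$, the solution map is a genuine (semi)flow of maps, so by \cite[Lemma 6.5]{LeJRai2002} energy is conserved for \emph{all} data, i.e.\ $\mathfrak{D}=\emptyset$. In the complementary case --- either $d\ge 4$ (where the diffusive regime is forced since $1-\tfrac{d}{4\alpha^2}<0$ whenever $\alpha<1$, so there is no flow of maps), or $\eta > 1-\tfrac{d}{4\alpha^2}$ --- the solution map is composition with a genuine Markovian kernel, not a flow of maps, so again by \cite[Lemma 6.5]{LeJRai2002} energy is \emph{not} conserved for all data, hence $\mathfrak{D}\ne\emptyset$, and the dichotomy just proved upgrades this to $\mathfrak{D}=L^2_x\setminus\{0\}$ with continuous dissipation.

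The main obstacle I anticipate is the rigidity step: rigorously showing that the Fourier symbol $m_t(\xi)$ of the energy defect cannot vanish on a positive-measure set of frequencies without vanishing identically. This is where isotropy and the precise (parabolic PDE) structure of the two-point function must be used rather than soft functional analysis; a slick alternative would be to represent $m_t(\xi)$ explicitly via the two-particle motion (the ``separation process'' $X_t - Y_t$) and show strict positivity of the relevant ``splitting probability'' at every frequency scale, which reduces to an irreducibility statement for that Markov process --- arguably the cleanest route, and the one I would pursue if the multiplier argument gets stuck.
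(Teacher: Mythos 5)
Your reduction of the problem to the translation-invariant quadratic form $\theta_0\mapsto \|\theta_0\|_{L^2_x}^2-\mathbb{E}\|\theta_t\|_{L^2_x}^2$, hence to a nonnegative Fourier multiplier $m_t(\xi)$, is sound, as is the monotonicity $P_{t+s}\leq P_s$ and the deduction of the Kraichnan cases from \cite[Lemma 6.5]{LeJRai2002} and the Le Jan--Raimond trichotomy. But the heart of the theorem is the dichotomy, and there your argument has a genuine gap which you yourself flag: you never prove that $\{m_t>0\}$ is either null or co-null. Isotropy is not available (the theorem is stated for general homogeneous noise, and the paper's proof uses only homogeneity), and neither analyticity of $m_t$ in $\xi$ nor irreducibility of the two-particle separation process is established or obviously true under \autoref{ass:well_posed} alone ($\hat C$ need not have full support, so the frequency-space jump process can only move by bounded increments per jump). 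That soft multiplier theory cannot suffice is shown by \autoref{rem:counterexample_torus}: on $\TT^4$ with anisotropic homogeneous noise the dichotomy \emph{fails}, so any completion of your argument must exploit a structural property beyond translation invariance and positivity of the defect form.

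The paper supplies exactly that missing structure, by a quite different route. It shows (i) if $\mathfrak{D}\neq\emptyset$ then, by Riesz representation applied to $\psi\mapsto\mathbb{E}\langle\theta_t,\psi_t\rangle$ with $\theta_0\in L^1_x\cap L^\infty_x$ dissipative, the conservation set $\mathfrak{C}$ lies in a hyperplane $\{h\}^\perp$ for some nonzero $h\in L^1_x\cap L^2_x$; and (ii) $\mathfrak{C}\cap\mathcal{C}_0$ is a translation-invariant \emph{algebra} under pointwise multiplication (proved via the strong $L^2_{\omega,x}$-convergence characterization of $\mathfrak{C}$ through smooth-noise approximations, for which products of solutions are again solutions), so if $\mathfrak{C}\neq\{0\}$ it contains a strictly positive element and is dense in $\mathcal{C}_0$ by Stone--Weierstrass, contradicting (i). In your Fourier picture, the algebra property is precisely the rigidity you are missing: it forces the zero set of $m_t$ to be essentially symmetric and closed under addition (since $\widehat{\theta_0 g_0}=\hat\theta_0\ast\hat g_0$), and a positive-measure set with these properties is all of $\mathbb{R}^d$; translation alone cannot give this (modulations preserve supports). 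If you want to salvage your approach, prove the algebra property first; otherwise the dichotomy remains unestablished. The continuity-in-time statement then follows in the paper simply from Markovianity (restart at time $s$) together with the dichotomy applied on $[0,t-s]$, not from any local-constancy rigidity of $m_t$.
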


Constructing deterministic examples of vector fields with H\"older regularity which give rise to anomalous scalar dissipation has received interest lately  \cite{DrElIyJe22, CoCrSo23,ArVi23+,BuSzWu23+,ElLi23+,JoSo24+,HeRo25,HeRo25+}. 
For the incompressible Kraichnan model on the torus, genericity of anomalous dissipation has been proved by Rowan in \cite{rowan2023} using PDE techniques.

The proof of \autoref{thm:dissipation_intro} is obtained proving that the set $\mathfrak{C} := \mathfrak{D}^c$ of initial conditions giving energy conservation is: $i$) contained in the subspace of $L^2_x$ orthogonal to a certain non-zero $h \in L^1_x \cap L^2_x$, as a consequence of Riesz Representation Theorem, and $ii$) dense in the space $\mathcal{C}_0$ of continuous functions vanishing at $\infty$, as a consequence of Stone-Weierstrass Theorem.
Details are in \autoref{sec:dissipation}.

We further study the local energy balance for \eqref{eq:Kraichnan}, showing that the solution $\theta$ admits a local energy balance that is governed by the stochastic PDE
\begin{equation*}
 \dd  |\theta_t|^2   + \circ \, \dd W \cdot \nabla | \theta_t |^2  + \dd \mathcal{D}[\theta] = 0,
\end{equation*}
where $\mathcal{D}[\theta]$ is a non-negative distribution that, analogously to the Duchon-Robert distribution for the Euler equations \cite{DuRo00}, describes exactly how the kinetic energy of $\theta$ is dissipated in space and time.
Then we show that
\begin{align*}
    \mathcal{D}[\theta](\rmd t, \rmd x) = \lim_{\kappa\to 0} \kappa |\nabla \tilde\theta_t^\kappa(x)|^2 \rmd t \dd x,
\end{align*}
where the equality is interpreted in the sense of distributions, and the limit in probability.  In view of this identification $\mathcal{D}[\theta]$ is, in fact, almost surely a non-negative Radon measure (see Theorem \ref{thm:energy_balance_inviscid}).  Assuming that the noise is divergence-free, we can derive an explicit formula for  $\mathbb{E}\mathcal{D}[\theta]$.
\begin{theorem} \label{thm:dissipation_measure_intro}
In the divergence-free case $\eta = 1$, there is a positive constant $\tilde c=\tilde c(\alpha,d)$ so that   
\begin{align}\label{idealdefect}
\mathbb{E} \mathcal{D}[\theta] (\rmd t, \rmd x) 
= \tilde c
\lim_{r \rightarrow 0} 
\oint_{\mathbb{S}^{d-1}} 
\frac{\mathbb{E}| \delta_{r \hat{y}} \theta_t (x) |^2}{r^{2- 2\alpha}} \sigma ( \mathrm{d} \hat{y}) \dd t \dd x,
\end{align}
where $\delta_{r} \theta_t(x):= \theta_t(x+r)-\theta_t(x) $ and the identity holds as space-time distributions.
\end{theorem}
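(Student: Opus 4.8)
The plan is to run a Duchon--Robert type computation at the level of the two-point correlation $\Phi(t,x,y):=\mathbb{E}[\theta_t(x)\theta_t(y)]$, exploiting the closed (degenerate parabolic) PDE it satisfies, which is already part of the machinery behind the regularization theorem. First I would take expectations in the local energy balance of \autoref{thm:energy_balance_inviscid}, $\dd|\theta_t|^2+\circ\dd W\cdot\nabla|\theta_t|^2+\dd\mathcal{D}[\theta]=0$: the Itô martingale part has zero mean and the Stratonovich drift reduces (with the normalization $C(0)=2I_d$) to $-\Delta_x\,\mathbb{E}|\theta_t|^2$, so, writing $e(t,x):=\mathbb{E}|\theta_t(x)|^2$, one gets the deterministic balance $\partial_t e=\Delta_x e-\mathbb{E}\mathcal{D}[\theta]$ as space-time distributions (consistently, integrating in $x$ kills the advective term by incompressibility and gives back $\tfrac{d}{dt}\mathbb{E}\|\theta_t\|_{L^2_x}^2=-\int\mathbb{E}\mathcal{D}[\theta]$). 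On the other hand $\Phi$ solves $\partial_t\Phi=\Delta_x\Phi+\Delta_y\Phi+\sum_{i,j}C_{ij}(x-y)\partial_{x_i}\partial_{y_j}\Phi$; in center-of-mass and difference variables $R=\tfrac{x+y}{2}$, $r=x-y$ this becomes $\partial_t\Phi=\mathcal{A}_R\Phi+\sum_{i,j}Q_{ij}(r)\,\partial_{r_i}\partial_{r_j}\Phi$, with $\mathcal{A}_R:=\tfrac12\Delta_R+\tfrac14\sum_{i,j}C_{ij}(r)\partial_{R_i}\partial_{R_j}$ and $Q:=C(0)-C$. By the sharp regularity result, $\Phi$ is smooth in $R$ up to the diagonal $\{r=0\}$ while being only $H^{1-\alpha-}$ in $r$; restricting the equation to $r=0$, where $\mathcal{A}_R\Phi|_{r=0}=\Delta_x e$ (here $C(0)=2I_d$ is used), gives $\partial_t e=\Delta_x e+\lim_{r\to0}\sum_{i,j}Q_{ij}(r)\,\partial_{r_i}\partial_{r_j}\Phi$. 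Comparing the two balances identifies the expected dissipation measure with the diagonal defect:
\begin{equation*}
\mathbb{E}\mathcal{D}[\theta](\dd t,\dd x)=-\lim_{r\to0}\sum_{i,j}Q_{ij}(r)\,\partial_{r_i}\partial_{r_j}\Phi(t,x,r)\ \dd t\,\dd x.
\end{equation*}

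Next I would extract from the sharp regularity result the precise leading profile of $\Phi$ at the diagonal, namely $\Phi(t,x,r)=e(t,x)-c(t,x)\,|r|^{2-2\alpha}+o(|r|^{2-2\alpha})$ as $r\to0$, with $c(t,x)\geq0$ continuous; the exponent $2-2\alpha$ is exactly the one forced by matching the bounded quantity $\partial_t e$ against the degenerate operator $\sum_{i,j}Q_{ij}(r)\partial_{r_i}\partial_{r_j}$, whose isotropic radial part near $\rho=|r|=0$ behaves like $d_0(d-1)|r|^{2\alpha}\big(\partial_\rho^2+\tfrac{d-1+2\alpha}{\rho}\partial_\rho\big)$. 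Equivalently, the expected second-order structure function satisfies $\mathbb{E}|\theta_t(x+\rho)-\theta_t(x)|^2=2c(t,x)\,|\rho|^{2-2\alpha}+o(|\rho|^{2-2\alpha})$: indeed the shift of the base point and the second difference of $e$ contribute only $O(|\rho|^2)=o(|\rho|^{2-2\alpha})$ since $\alpha\in(0,1)$, and by isotropy the leading coefficient does not depend on the direction, so $\fint_{\mathbb{S}^{d-1}}\frac{\mathbb{E}|\delta_{r\hat y}\theta_t(x)|^2}{r^{2-2\alpha}}\,\sigma(\dd\hat y)\to 2c(t,x)$ as $r\to0$.

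Finally I would plug these asymptotics into the diagonal defect. For the incompressible isotropic Kraichnan model one has, for every $m\geq0$, the small-scale expansion $Q_{ij}(r)=d_0\,|r|^{2\alpha}\big[(d-1+2\alpha)\delta_{ij}-2\alpha\,\hat r_i\hat r_j\big]+o(|r|^{2\alpha})$, with $d_0>0$ fixed by $C(0)=2I_d$ and the angular tensor pinned down by the incompressibility constraint $\sum_i\partial_i Q_{ij}=0$ (this is the precise form of \eqref{covassum}, see \autoref{app:auxiliary}; the $m>0$ correction only affects the $o(|r|^{2\alpha})$ terms). Using $\partial_{r_i}\partial_{r_j}|r|^{2-2\alpha}=(2-2\alpha)|r|^{-2\alpha}(\delta_{ij}-2\alpha\,\hat r_i\hat r_j)$, the powers of $|r|$ cancel, the $o(|r|^{2\alpha})$ error in $Q$ multiplies an $O(|r|^{-2\alpha})$ factor and drops out, and the tensor contraction $\big[(d-1+2\alpha)\delta_{ij}-2\alpha\hat r_i\hat r_j\big]\big(\delta_{ij}-2\alpha\hat r_i\hat r_j\big)=d(d-1)$ gives $-\lim_{r\to0}\sum_{i,j}Q_{ij}(r)\partial_{r_i}\partial_{r_j}\Phi=d_0(2-2\alpha)d(d-1)\,c(t,x)$. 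Combining with the previous paragraph yields $\mathbb{E}\mathcal{D}[\theta](\dd t,\dd x)=\tilde c\,\big(2c(t,x)\big)\,\dd t\,\dd x=\tilde c\,\lim_{r\to0}\fint_{\mathbb{S}^{d-1}}\frac{\mathbb{E}|\delta_{r\hat y}\theta_t(x)|^2}{r^{2-2\alpha}}\sigma(\dd\hat y)\,\dd t\,\dd x$ with $\tilde c=d_0(1-\alpha)d(d-1)>0$ depending only on $\alpha$ and $d$, which is \eqref{idealdefect}.

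The main obstacle is the input used in the second paragraph: the bound $\theta_t\in H^{1-\alpha-}_x$ only gives $\mathbb{E}|\delta_\rho\theta_t(x)|^2\lesssim|\rho|^{2-2\alpha}$, whereas here one needs the genuine asymptotic equivalence with an \emph{isotropic} leading coefficient, i.e.\ the conormal structure of $\Phi$ at the diagonal; this is precisely where the sharp regularity theorem for the degenerate parabolic PDE is indispensable. Secondary technical points are: justifying that $\mathcal{A}_R\Phi$ restricts to $\Delta_x e$ on $\{r=0\}$ (the $|r|^{2-2\alpha}$ part being annihilated in the limit by $\partial_R$-derivatives) and that the restriction of the full PDE to the diagonal is legitimate; handling all the limits in the weak-$*$ sense of measures in $(t,x)$ (testing against nonnegative $\varphi\in C^\infty_c$, using the uniform bound $\kappa\int_0^T\mathbb{E}\|\nabla\tilde\theta^\kappa_s\|_{L^2_x}^2\,\dd s\leq\tfrac12\|\theta_0\|_{L^2_x}^2$ and lower semicontinuity to pass the expectation through the vanishing-viscosity limit where needed); and checking once more that the subleading terms of $Q$ — the only place where $m>0$ differs from the self-similar case — indeed vanish after multiplication by $\partial^2_r\Phi=O(|r|^{-2\alpha})$.
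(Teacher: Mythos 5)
Your overall skeleton — identify $\mathbb{E}\mathcal{D}[\theta]$ with the diagonal defect $-\lim_{r\to 0}Q(r):D^2_r\Phi$ of the two-point correlation, then feed in the small-$r$ asymptotics of $Q$ — is the same as the paper's (your first paragraph is essentially \autoref{prop:alternative_dissipation_measure} in mollified form, and your tensor algebra and the resulting constant $\tilde c = c\,d\,(1-\alpha)$ are correct). However, there is a genuine gap in your second paragraph: you assert that $\Phi(t,x,r)=e(t,x)-c(t,x)|r|^{2-2\alpha}+o(|r|^{2-2\alpha})$ with a well-defined coefficient $c(t,x)$, and claim this follows from the sharp regularity theorem. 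It does not. The regularity results (\autoref{thm:regularity_Kraichnan}, \autoref{thm:reg_F_intro}) give an upper bound $|F_t(r)-F_t(0)|\lesssim |r|^{2-2\alpha-\delta}$ and a time-integrated lower bound ruling out $|r|^{2-2\alpha+\delta}$; neither implies that $\lim_{r\to 0}|r|^{-(2-2\alpha)}\,\mathbb{E}\|\delta_{r\hat y}\theta_t\|^2_{L^2_x}$ (or its spherical average) \emph{exists}. Existence of this limit is exactly the content of \eqref{idealdefect} and cannot be taken as input. A secondary version of the same problem appears in your "restriction to the diagonal": since $\Phi$ is only $C^{2-2\alpha-}$ in $r$, the quantity $Q(r):D^2_r\Phi$ is not classically defined near $r=0$ and must be handled through a mollification (as in \eqref{eq:defn_Deps}), which is why the limit you need comes packaged with a radial average rather than as a pointwise statement.

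The paper closes this gap with a Tauberian-type argument that your proposal is missing. Choosing the explicit mollifier $\chi(y)=k_d(1-|y|^2)\mathbf{1}_{|y|\le 1}$, the mollified defect becomes (\autoref{cor:exact_diss_measure2}) a fixed linear combination $h_t(\eps,x)-(d+2\alpha)\eps^{-d-2}\int_0^\eps r^{d+1}h_t(r,x)\,\mathrm{d}r$ of the spherical average $h_t$ and its radial mean, and only the convergence of \emph{this combination} follows from the energy balance. \autoref{lem:identification_limits} then shows, by an ODE comparison for $G(r)=\int_0^r u^{d+1}g(u)\,\mathrm{d}u$, that convergence of the combination forces convergence of $h_t(r,x)$ itself, identifying the limit up to the factor $1/(1-\alpha)$. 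Without this step (or some substitute establishing the conormal expansion of $\Phi$ at the diagonal, which is strictly stronger than anything proved in \autoref{sec:regularity}), your argument only yields \eqref{idealdefect} along subsequences in $r$, with possibly different limits, and hence does not prove the theorem as stated.
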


The  proportionality constant is explicit, see \autoref{rem:proportionality_constant}. Interestingly, $\tilde c(\alpha,d)$ does not vanish or explode as $\alpha$ approaches the endpoints $\alpha\downarrow 0^+$ and $\alpha\uparrow 1^-$.

Formula \eqref{idealdefect} is a manifestation of Yaglom's law for scalar turbulence, and can be regarded as an analogue of Duchon-Robert's result for the Kraichnan model.
It shows that the dissipation measure is directly connected to the scaling properties of $\theta$, controlled by the parameter $\alpha$ from the spatial regularity of the velocity field.  From \eqref{idealdefect}, one immediately deduces that
\begin{align*}
    \int_I\mathbb{E} \| \theta_t \|_{H^{1 - \alpha}_x}^2\dd t < \infty \text{ or }\int_I\mathbb{E} \| \theta_t \|_{B^{(1 - \alpha)+}_{2,\infty}}^2\dd t < \infty \quad \Rightarrow\quad
    \mathbb{E} [\| \theta_t \|_{L^2_x}^2] 
=
\mathbb{E} [\| \theta_s \|_{L^2_x}^2] \text{ for all } s, t \in I.
\end{align*}
The threshold regularity exponent $1-\alpha$ corresponds to the Obukhov-Corrsin regularity theory \cite{Ob49, Co51} for the stochastic transport equation, representing the minimal requirement for anomalous dissipation to take place, cf. \autoref{rmk:OC} below.  In fact, despite the above heuristic is obtained in the divergence-free setting, solutions possess exactly the borderline regularity allowing dissipation of mean energy in the whole compressibility regime inducing the latter, as we shall now further discuss.
The result below is stated for the Kraichnan model only, but holds true more generally under \autoref{ass:sharpness_reg} and \autoref{assumption}.

\begin{theorem} \label{thm:regularity_Kraichnan}
Let $W$ be given by the Kraichnan noise in the diffusive regime $\eta>1-\frac{d}{4\alpha^2}$. 
Then for every $\theta_0 \in L^2_x$ and small $\delta>0$ the unique solution of \eqref{eq:Kraichnan} satisfies
\begin{align} \label{eq:regularization_intro}
\int_0^t \mathbb{E} \| \theta_s \|^2_{H^{1-\alpha-\delta}_x} \dd s
\lesssim (1+t)\, \| \theta_0 \|_{L^2_x}^2,
\qquad
\forall\, t \geq 0,
\end{align}
\begin{align}\label{eq:regularization_intro3}
    \EE \| \theta_t\|_{H^{1-\alpha-\delta}_x}^2 \lesssim (1\wedge t)^{-\frac{1}{1-\alpha}-\delta} \| \theta_0\|_{L^2_x}^2,\qquad \forall\, t > 0,
\end{align}
and
\begin{align}\label{eq:regularization_intro4}
    \EE \| \theta_t\|_{L^\infty_x}^2 \lesssim (1\wedge t)^{-\frac{d}{2(1-\alpha)}-\delta} \| \theta_0\|_{L^2_x}^2, \qquad \forall\, t > 0.
\end{align}
If in addition $\theta_0\in H^1_x$, then
\begin{align} \label{eq:regularization_intro2}
 \mathbb{E} \| \theta_t \|^2_{H^{1-\alpha-\delta}_x} \lesssim (1+t)\, \| \theta_0 \|_{H^1_x}^2,
\qquad
\forall\, t \geq 0.   
\end{align}
Moreover, the regularity gain up to $1-\alpha$ is sharp. Indeed, for every non-zero $\theta_0 \in L^2_x$ it holds that
\begin{align} \label{eq:sharpness_intro}
\int_s^t \mathbb{E} \| \theta_r \|^2_{H^{1-\alpha}_x} \dd r
= \infty,
\qquad
\forall\, s<t.
\end{align}
Finally, if the noise $W$ is divergence-free ( $\eta = 1$), then for every non-zero $\theta_0 \in L^2_x$ and $t \geq 0$ 
\begin{align} \label{eq:regularization_besov_intro}
\tilde{c} \lim_{r \rightarrow 0}
\int_0^t
\oint_{\mathbb{S}^{d-1}} 
 \frac{\mathbb{E}\| \delta_{r \hat{y}} \theta_s \|^2_{L^2_x}}{r^{2- 2\alpha}} \sigma ( \mathrm{d} \hat{y}) \dd s
= \|\theta_0\|_{L^2_x}^2 - \mathbb{E} \|\theta_t\|_{L^2_x}^2
\in (0,\infty).
\end{align}
\end{theorem}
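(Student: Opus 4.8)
The plan is to read off every $L^2_x$-based norm of $\theta_t$ from the spatially averaged two-point self-correlation function $g_t(z):=\int_{\R^d}\mathbb{E}[\theta_t(x+z)\theta_t(x)]\dd x$. Since $\widehat{g_t}(\xi)=\mathbb{E}|\widehat{\theta_t}(\xi)|^2\ge 0$ and $\int\widehat{g_t}=g_t(0)=\mathbb{E}\|\theta_t\|_{L^2_x}^2$, Tonelli's theorem and the Gagliardo identity give, for every $s\in(0,1)$,
\[
\mathbb{E}\|\theta_t\|_{\dot H^s_x}^2=c_{d,s}\int_{\R^d}\frac{S_t(z)}{|z|^{d+2s}}\dd z,\qquad S_t(z):=g_t(0)-g_t(z)=\tfrac12\,\mathbb{E}\|\delta_z\theta_t\|_{L^2_x}^2\ge 0,
\]
the second-order structure function; the spherically averaged version of this identity is exactly the object in \eqref{eq:regularization_besov_intro}, and the corresponding $\sup_{|z|\le r}$ version controls $B^{s}_{2,\infty}$. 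The structural input is the classical Kraichnan moment closure: the two-point motion of the Le Jan--Raimond solution has generator $\tfrac12 C_{ij}(0)(\partial_{x_ix_j}+\partial_{y_iy_j})+C_{ij}(x-y)\partial_{x_iy_j}$ (cf. \cite{LeJRai2002}), and integrating out the barycentric variable shows that $g_t$ solves the closed Cauchy problem $\partial_t g_t=\mathcal{L}g_t$, $g_0=\theta_0\star\widetilde\theta_0$, with $\mathcal{L}:=Q_{ij}(z)\partial_{z_iz_j}$ and $Q(z):=C(0)-C(z)\succeq 0$. Crucially $\mathcal{L}$ is uniformly elliptic on $\{|z|\ge\varrho\}$ for every $\varrho>0$, converges to the positive constant-coefficient operator with matrix $C(0)$ as $|z|\to\infty$, and degenerates exactly like $|z|^{2\alpha}$ as $z\to 0$ by \eqref{covassum} and the isotropy assumptions. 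Since $\mathcal{L}$ is the (conservative) generator of the separation diffusion, $e^{t\mathcal{L}}1=1$, and hence $S_t=e^{t\mathcal{L}}S_0-(e^{t\mathcal{L}}S_0)(0)$, where $S_0=g_0(0)-g_0\in L^\infty$ vanishes at $z=0$ and $(e^{t\mathcal{L}}S_0)(0)=\|\theta_0\|_{L^2_x}^2-\mathbb{E}\|\theta_t\|_{L^2_x}^2=2\int_0^t\varepsilon_s\dd s$ is the cumulative mean dissipation, $\varepsilon_t:=-\tfrac12\partial_t\mathbb{E}\|\theta_t\|_{L^2_x}^2\ge 0$. Thus $S_t$, up to an additive constant in $z$, is the solution of the homogeneous degenerate parabolic equation $\partial_t(\cdot)=\mathcal{L}(\cdot)$ with bounded data $S_0$ vanishing at the origin, and all the claims reduce to two-sided bounds on $S_t$.

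\textbf{The sharp regularity lemma for $\mathcal{L}$ (the main obstacle).} The heart of the proof is the sharp regularity statement for the degenerate operator $\mathcal{L}$: \emph{if $\mathcal{L}u=f$ with $u,f\in L^\infty$ and $u(0)=0$, then for every $\delta>0$ one has $|u(z)|\lesssim_\delta(\|f\|_{L^\infty}+\|u\|_{L^\infty})\,|z|^{2(1-\alpha)-\delta}$ for $|z|$ small, and the exponent $2(1-\alpha)$ cannot be improved}, together with the parabolic analogue for $\partial_t u=\mathcal{L}u$, carrying the quantitative short-time factor $(1\wedge t)^{-\rho}$. Heuristically, near the origin $\mathcal{L}\approx|z|^{2\alpha}\Delta$, so $\Delta u\approx|z|^{-2\alpha}f\in L^p_{\mathrm{loc}}$ for every $p<d/(2\alpha)$, and Calder\'on--Zygmund theory yields $u\in C^{2-2\alpha-}_{\mathrm{loc}}$: the degeneracy costs precisely $2\alpha$ derivatives. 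Rigorously I would argue either through weighted parabolic estimates with the Muckenhoupt weight $|z|^{2\alpha}\in A_2(\R^d)$ (admissible since $2\alpha<d$, in the spirit of Fabes--Kenig--Serapioni), or through a comparison argument based on the fact that $|z|^{2(1-\alpha)}$ is the natural $\mathcal{L}$-harmonic-like profile at the degenerate point --- $\mathcal{L}(|z|^{2(1-\alpha)})$ is bounded near $0$ because the factor $|z|^{2\alpha}$ exactly kills the $|z|^{-2\alpha}$ singularity of $D^2|z|^{2(1-\alpha)}$ --- combined with the parabolic scaling $z\mapsto\lambda z$, $t\mapsto\lambda^{2(1-\alpha)}t$ of $\mathcal{L}$ near $0$ (equivalently, with the Richardson spreading scale $r(t)\sim t^{1/(2(1-\alpha))}$) to calibrate $\rho$. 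The matching \emph{lower} bound is forced by the consistency relation $\lim_{z\to 0}Q_{ij}(z)\partial_{z_iz_j}S_t(z)=2\varepsilon_t$ (needed so that $\partial_tS_t(0)=0$): given the isotropic small-scale structure of $Q$, it pins down $S_t(z)\sim c_{\alpha,d}^{-1}\,\varepsilon_t\,|z|^{2(1-\alpha)}$ near $z=0$ whenever $\varepsilon_t>0$. I expect this lemma --- sharp regularity for an operator that is degenerate at a single point yet nondegenerate at infinity, with a matching lower bound tied to the dissipation rate --- to be the main obstacle, and it is the source of the exponent $1-\alpha$.

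\textbf{Assembling the estimates.} For \eqref{eq:regularization_intro} I apply the elliptic lemma to $P_t(z):=\int_0^t S_s(z)\dd s\ge 0$: one has $P_t(0)=0$, $\|P_t\|_{L^\infty}\le 2t\|\theta_0\|_{L^2_x}^2$, and $\mathcal{L}P_t=\int_0^t\mathcal{L}S_s\dd s=e^{t\mathcal{L}}S_0-S_0$, which is bounded by $\lesssim\|\theta_0\|_{L^2_x}^2$; hence $P_t(z)\lesssim_\delta(1+t)\|\theta_0\|_{L^2_x}^2\,|z|^{2(1-\alpha)-\delta}$ for small $z$ (and $\le 2t\|\theta_0\|_{L^2_x}^2$ otherwise), and substituting into the structure-function identity with $s=1-\alpha-\delta$ gives \eqref{eq:regularization_intro}. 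For \eqref{eq:regularization_intro3} I apply the parabolic version to $S_t$ and split $\int S_t(z)|z|^{-d-2s}\dd z$ over $\{|z|<R\}\cup\{|z|>R\}$, using $S_t(z)\lesssim K(t)|z|^{2(1-\alpha)}$ on the first set and $S_t\le 2\|\theta_0\|_{L^2_x}^2$ on the second, then optimising in $R$; \eqref{eq:regularization_intro2} is the same with the improved datum $S_0(z)\le\|\nabla\theta_0\|_{L^2_x}^2|z|^2$. For \eqref{eq:regularization_intro4} I run the same scheme on the higher self-correlation functions $g_t^{(2p)}(z_1,\dots,z_{2p-1})=\int\mathbb{E}\big[\theta_t(x)\prod_k\theta_t(x+z_k)\big]\dd x$, which solve analogous degenerate parabolic equations (the $2p$-point motion again having generator built from $C(0)$ and $C(\cdot)$), obtaining $\mathbb{E}\|\delta_z\theta_t\|_{L^{2p}_x}^{2p}\lesssim_\delta K_p(t)\,|z|^{p(2(1-\alpha)-\delta)}$; Kolmogorov's continuity criterion with $p>d/(2(1-\alpha))$ --- or a Nash/Moser iteration for the degenerate semigroup, whose on-diagonal heat kernel decays like $t^{-d/(2(1-\alpha))}$ at the Richardson scale --- then upgrades this to the stated bound on $\mathbb{E}\|\theta_t\|_{L^\infty_x}^2$. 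For the sharpness \eqref{eq:sharpness_intro}: the lower bound gives $\int S_r(z)|z|^{-d-2(1-\alpha)}\dd z=+\infty$ whenever $\varepsilon_r>0$ (the integrand is $\gtrsim\varepsilon_r|z|^{-d}$ near $0$), and in the diffusive regime $\eta>1-\tfrac{d}{4\alpha^2}$, \autoref{thm:dissipation_intro} yields $\int_s^t\varepsilon_r\dd r=\tfrac12(\mathbb{E}\|\theta_s\|_{L^2_x}^2-\mathbb{E}\|\theta_t\|_{L^2_x}^2)>0$ for every nonzero $\theta_0$, so $\varepsilon_r>0$ on a set of positive measure and $\int_s^t\mathbb{E}\|\theta_r\|_{H^{1-\alpha}_x}^2\dd r=\infty$ (equivalently, finiteness of this integral would force energy conservation on $[s,t]$ by the Obukhov--Corrsin criterion of \autoref{rmk:OC}, contradicting \autoref{thm:dissipation_intro}). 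Finally, for \eqref{eq:regularization_besov_intro}, integrating the Duchon--Robert identity \eqref{idealdefect} of \autoref{thm:dissipation_measure_intro} in $x\in\R^d$ and in $t$ over $[0,t]$ identifies the left-hand side of \eqref{eq:regularization_besov_intro} with $\int_0^t\!\int_{\R^d}\mathbb{E}\,\mathcal{D}[\theta]$, which by the space-integrated local energy balance (Theorem \ref{thm:energy_balance_inviscid}) equals $\|\theta_0\|_{L^2_x}^2-\mathbb{E}\|\theta_t\|_{L^2_x}^2$; this lies in $(0,\infty)$ for nonzero $\theta_0$ since $\eta=1$ is in the diffusive regime and \autoref{thm:dissipation_intro} then gives strict dissipation.
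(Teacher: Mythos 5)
Your reduction to the degenerate parabolic equation $\partial_t g=Q:D^2g$ for the two-point self-correlation function is exactly the paper's starting point, and your treatment of the sharpness \eqref{eq:sharpness_intro} (regularity would force conservation, contradicting generic dissipation) and of \eqref{eq:regularization_besov_intro} (integrate the Duchon--Robert formula against the energy balance) matches the paper's Sections 3.2 and 5 in outline. The problem is the step you yourself flag as the main obstacle: the sharp $C^{2(1-\alpha)-}$ regularity lemma for $\mathcal{L}=Q:D^2$ near the degenerate point. Neither of your two proposed routes closes it. The Muckenhoupt-weight route (Fabes--Kenig--Serapioni) is an \emph{elliptic} theory; for the \emph{parabolic} problem with $A_2$ weights of type $|z|^{2\alpha}$ the analogous regularity statements are false in general -- the paper explicitly cites the Chiarenza--Serapioni counterexamples, and the best known general parabolic result is H\"older continuity with an unquantifiable exponent. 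Your heuristic $\mathcal{L}\approx|z|^{2\alpha}\Delta$ also discards the matrix structure of $Q$: the claimed regularity is \emph{false} outside the diffusive regime (strongly compressive noise gives a flow of maps and no regularization), so any proof must use the relative size of the longitudinal and normal parts of $Q$, i.e. the condition $\beta>(2\alpha-1)/(d-1)$, not just the order of degeneracy. Your barrier $|z|^{2(1-\alpha)}$ does secretly encode this (one computes $Q:D^2|z|^{2-2\alpha}=c(2-2\alpha)(1-2\alpha+\beta(d-1))+o(1)$, positive exactly in the diffusive regime), but a comparison argument for $\partial_t u=\mathcal{L}u$ cannot be run at the inviscid level without specifying what happens \emph{at} the degenerate point $z=0$: the operator vanishes there, the classical maximum principle gives no information through that point, and bounded solutions of the degenerate Cauchy problem are not unique without a selection criterion. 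The paper explicitly poses your inviscid-only formulation ("does every bounded solution of $\partial_tF=Q:D^2F$ satisfy the $C^{2-2\alpha-\delta}$ bound?") as an \emph{open problem}.

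What the paper actually does to close this gap is: (i) work with the viscous approximations $F^\kappa$, for which the PDE is uniformly parabolic and $F^\kappa_t\in C^2$; (ii) exploit the Neumann condition $\partial_rf^\kappa(t,0)=0$, which is \emph{not} a generic property of solutions but is inherited from $F^\kappa$ being a symmetric self-correlation function -- this is precisely the missing selection criterion at $z=0$; (iii) reduce to radial data by randomly rotating $\theta_0$, pass to a one-dimensional equation, perform a tailored change of variables $r\mapsto\xi(r)$ converting the target $C^{2-2\alpha-\delta}$ bound into a Lipschitz bound, and run weighted energy estimates in which the boundary terms at $\xi=0$ vanish thanks to (ii) and a sign condition $\partial_r\tilde q\leq0$ that is equivalent to the diffusive-regime inequality. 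All estimates are uniform in $\kappa$ and survive the limit. Two further points: your consistency-relation argument for the matching \emph{lower} bound $S_t(z)\gtrsim\varepsilon_t|z|^{2(1-\alpha)}$ is a heuristic, not a proof (the paper's lower bound is the separate mollifier-testing argument of \autoref{prop:regularity_implies_conservation}); and your route to \eqref{eq:regularization_intro4} via $2p$-point correlation functions is a large unproven detour -- those functions satisfy degenerate equations on $\R^{(2p-1)d}$ degenerating on the whole diagonal set, not at a single point -- whereas the paper obtains \eqref{eq:regularization_intro3}--\eqref{eq:regularization_intro4} from \eqref{eq:regularization_intro}--\eqref{eq:regularization_intro2} by soft interpolation, Sobolev embedding, the Markov property and an iteration in time.
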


Following \cite{GaGrMa24+}, we call this phenomenon \emph{anomalous regularization}, since every $L^2_x$ initial datum immediately jumps into a positive regularity Sobolev space
and gains $L^\infty_x$-integrability.
This behavior clearly breaks time reversal symmetry, just as anomalous dissipation does.

In the particular case of the incompressible Kraichnan model ($\eta=1$), anomalous regularization in Sobolev spaces of \emph{negative regularity}, for both \eqref{eq:Kraichnan} and the two-dimensional Euler equations in vorticity form, has been recently proved by Coghi and Maurelli in \cite{CoMa23+}.
Their approach has been subsequently extended to other fluid dynamics SPDEs, see \cite{JiLu24,BaGaMa24,BaGrMa24}. 
For the linear incompressible Kraichnan model, the result was strengthened in \cite{GaGrMa24+}, which obtained Sobolev regularization in the full range of exponents $(-d/2+1-\alpha,1-\alpha)$, which in particular contains the strictly positive interval $(0,1-\alpha)$.
In comparison to the aforementioned literature, we cover for the first time the full diffusive regime $\eta > 1- \frac{d}{4\alpha^2}$, 
(the natural regime where regularization is expected), and we establish \eqref{eq:regularization_intro} under less restrictive assumptions on the initial condition $\theta_0$, cf. also \cite{CLP25+}.
The pointwise-in-time estimates \eqref{eq:regularization_intro3}-\eqref{eq:regularization_intro4}-\eqref{eq:regularization_intro2}, as well as the identities \eqref{idealdefect}-\eqref{eq:regularization_besov_intro} in the divergence-free case, are novel contributions of this work, too.

Since the proof of \autoref{thm:regularity_Kraichnan} relies on novel techniques, different than those employed in the aforementioned works, let us explain the source of the regularization stated therein. 
In fact, we obtain \eqref{eq:regularization_intro} and \eqref{eq:regularization_besov_intro} as a consequence of different arguments.

As for \eqref{eq:regularization_intro}, under the \autoref{assumption} on the short-distance behavior of the incompressible and compressible parts of the covariance $C$, we study the two-point self-correlation function 
\begin{align*}
F_t^\kappa(z)
:=
\mathbb{E} \left[ \int_{\R^d} \tilde\theta_t^ \kappa (x+ z) \tilde\theta_t^ \kappa(x) \dd x \right],
\end{align*}
which is readily related to the second-order structure function $S^\kappa_t(z) := \EE\|\delta_z \tilde\theta_t^\kappa\|^2_{L^2_x}$ via the relation $S^\kappa_t(z) = 2(F_t^\kappa(0)-F_t^\kappa(z))$. In particular, there is a direct correspondence between Sobolev regularity of $\tilde{\theta}^\kappa$ and H\"older regularity of $F^\kappa$, see \autoref{lem:Sobolev-Holder}. Then, we notice that $F^\kappa$  solves the parabolic PDE in non-divergence form
\begin{align} \label{eq:PDE_F_intro_tilde}
\partial_t F^ \kappa  = (1-\kappa) Q(z) : D_z^2 F^ \kappa  + 2\kappa \Delta F^ \kappa .
\end{align}
We show that solutions of this problem (which is degenerate at $z=0$ when $\kappa=0$, since $Q(z) \sim |z|^{2\alpha}$) have the appropriate uniform-in-$\kappa$ fractional regularity.
More specifically, for every $\delta>0$ sufficiently small it holds that
\begin{align*}
\sup_{\kappa \in (0,1/2)}
\int_0^t \|F^\kappa_s\|_{C^{2-2\alpha-\delta}_x} \dd s
\lesssim
(1+t) \|\hat{F}_0\|_{L^1_x}
=
(1+t) \|\theta_0\|_{L^2_x}^2,
\end{align*}
where $\hat{F}_0$ denotes the Fourier transform of $F_0$.
This is a consequence of the more general \autoref{thm:reg_F_intro}, which is of independent interest.

\begin{remark}[On the regularity theory for degenerate parabolic equations]
    It is clear from our analysis that there is a correspondence between the regularity results for \eqref{eq:Kraichnan} and those for the associated two-point self-correlation $F$, solving
    \begin{align} \label{eq:PDE_F_intro_tilde_bis}
    \partial_t F  = Q(z) : D_z^2 F.
    \end{align}
    In particular, any further progress in the regularity theory for \eqref{eq:PDE_F_intro_tilde} may help improving the statements of \autoref{thm:regularity_Kraichnan}.
    Unfortunately, \eqref{eq:PDE_F_intro_tilde_bis} belongs to a class of degenerate parabolic PDEs (due to $Q(0)=0$) in non-divergence form which lack an existing satisfactory regularity theory.
    While other kinds of degeneracies have been successfully treated (see \cite{AuFiVi24,AuFiVi24+} and the references therein), even in the divergence-form analogue of \eqref{eq:PDE_F_intro_tilde_bis} there are several questions left open since the works \cite{ChiSer1984,ChSe87,GuWh91}.
    The main difficulty with such PDEs comes from the elliptic and parabolic regularity theories displaying very different behaviours, in contrast with the non-degenerate case.
    Due to the asymptotics $Q(z)\sim |z|^{2\alpha}$ as $z\sim 0$, $Q$ belongs to a class of elliptic PDEs with Muckenhoupt weights for which local H\"older regularity was established in \cite{FKS1982}, by means of De Giorgi-Nash-Moser arguments; however \cite{ChiSer1984} provided explicit counterexamples to similar results, for the same class of weights and solutions, for the parabolic case in divergence form.
    Later \cite{ChSe87,GuWh91} successfully established Harnack inequalities, with \cite[Thm. 3.6]{ChSe87} reaching a small $C^\eps$ regularity result, for some not easily quantifiable $\eps>0$; but the impossibility to perform standard bootstrap arguments did not allow to further improve the H\"older regularity exponent. To the best of our knowledge, to this day the state of the art for such PDEs remains as in \cite{ChSe87}.

    In contrast to the above, for our specific PDE \eqref{eq:PDE_F_intro_tilde_bis} we are able to obtain rather satisfactory regularity results, see in particular \autoref{thm:reg_F_intro} and \autoref{cor:reg_F_improved}.
    The $C^{2-2\alpha-\delta}_x$-regularity for $F$ is essentially sharp, as a consequence of \eqref{eq:sharpness_intro} and the deep link between $F$ and $\theta$.
    We believe our success is due to previous approaches being too general, as they only took in the account the degree of degeneracy of $Q$ at the origin (dictated by $\alpha$), while neglecting its precise structure (the incompressibility parameter $\eta$) which also plays a crucial role.    
\end{remark}

Let us move to discussing \eqref{eq:regularization_besov_intro}.
As mentioned before, it can be obtained by leveraging on the representation formula \eqref{idealdefect} for the dissipation measure, as we do in \autoref{sec:dissipation} below.
Let us also mention that, in order to derive \eqref{idealdefect} for general $\alpha \in (0,1)$, we use some degree of Sobolev regularity of $\theta$ implied by \eqref{eq:regularization_intro}, cf. \autoref{rem:required_regularity}. 
However, for $\alpha>1/2$, the derivation of \eqref{idealdefect} works under the sole assumption of $L^2_x$-regularity: 
The balance law, once derived,  provides a regularizing effect on the solution, immediately putting it into a space where the flux $\rmd \mathcal{D}[\theta]$ is finite.
Such a mechanism holds for the Burgers equation (and for more general nonlinear one-dimensional scalar conservation laws) \cite{goldman2015new} and has been proposed as a source of regularization in Navier-Stokes \cite{drivas2022self}.

Let us present an heuristic computation to give an intuition about the anomalous regularization mechanism. Let $\tilde\theta^\kappa$ be a solution to \eqref{eq:intro_viscous_approx} and consider its second-order structure function $S^\kappa_t(z)=\EE[\| \delta_z \tilde{\theta}^\kappa_t\|_{L^2_x}^2]$.
For simplicity, let the noise $W$ be divergence-free and statistically self-similar ($\eta=1$, $m=0$).
Applying It\^o's formula one can see that $S^\kappa$ formally solves the linear SPDE
\begin{align*}
    \partial_t S^\kappa_t(z)
    & = (1-\kappa) Q(z): D^2 S^\kappa_t(z) -2\kappa \EE[\|\nabla\delta_z\tilde\theta^\kappa_t\|_{L^2_x}^2]\\
    & = (1-\kappa) \nabla\cdot (Q \nabla S^\kappa_t)(z)-2\kappa \EE[\|\nabla\delta_z\tilde\theta^\kappa_t\|_{L^2_x}^2],
\end{align*}
where the second equality follows from $Q$ being divergence free, since $W$ is so. Integrating over the ball $B_r=\{|z|\leq r\}$, one finds
\begin{align*}
    \frac{\dd}{\dd t} \fint_{B_r} S^\kappa_t(z) \dd z
    =(1-\kappa) \fint_{B_r}\nabla\cdot (Q \nabla S^\kappa_t)(z) \dd z -2\kappa \fint_{B_r} \EE[\|\nabla\delta_z\tilde\theta^\kappa_t\|_{L^2_x}^2] \dd z
    =: (1-\kappa) I^1_r - 2\kappa I^2_r.
\end{align*}
Applying the Divergence Theorem and exact self-similarity $Q(z) \propto |z|^{2\alpha}$, one finds
\begin{align*}
    I^1_r
    &\sim 
    r^{-d} \int_{\partial B_r} \hat z\cdot Q(z)\nabla S^\kappa_t(z) \sigma(\rmd z)
    \sim 
    r^{2\alpha-d} \int_{\partial B_r} \hat z\cdot \nabla S^\kappa_t(z) \sigma(\rmd z)\\
    & \sim r^{2\alpha-d} \frac{\rmd}{\rmd r} \int_{\partial B_r} S^\kappa_t(\hat y) \sigma(\rmd \hat y)\\
    & \sim
    r^{2\alpha-1} \frac{\rmd}{\rmd r} \fint_{\partial B_r} S^\kappa_t(\hat y) \sigma(\rmd \hat y)
    +(d-1)r^{2\alpha-2} \fint_{\partial B_r} S^\kappa_t(\hat y) \sigma(\rmd \hat y).
\end{align*}
As the last term on the right-hand side is positive, we can obtain a lower bound on $I^1_r$ by dropping it.
Rearranging the above relations and 
integrating in time, uniformly over $\kappa\leq 1/2$, we get
\begin{align*}
    \frac{\rmd}{\rmd r}\int_0^T \oint_{\mathbb{S}^{d-1}} S^\kappa_t(r\hat y) \sigma(\rmd \hat y) \dd t
    &\lesssim 
    r^{1-2\alpha} \Big[ \fint_{B_r} S^\kappa_T(z) \dd z - \fint_{B_r} S^\kappa_0(z) \dd z\Big] + 2\kappa \int_0^T I^2_t \dd t \\
    &\lesssim r^{1-2\alpha} \sup_{z\in\R^d} \Big( S^\kappa_T(z) + 2\kappa \fint_{B_r} \int_0^T \EE[\|\nabla\delta_z\tilde\theta^\kappa_t\|_{L^2_x}^2] \dd t \dd z\Big).
\end{align*}
Integrating in $r$ and using the definition of $S^\kappa_t$ together with triangular inequality, we arrive at
\begin{align*}
    \int_0^T \oint_{\mathbb{S}^{d-1}} \frac{\EE[\| \delta_{r \hat{y}}  \tilde\theta^\kappa_t\|_{L^2_x}^2]}{r^{2-2\alpha}} \sigma(\rmd \hat y)
    \lesssim \EE[\|\tilde\theta^\kappa_T\|_{L^2_x}^2] + 2\kappa \int_0^T \| \nabla \tilde{\theta}^\kappa_t\|_{L^2_x}^2] \dd t
    = \|\theta_0\|_{L^2_x}^2,
\end{align*}
where in the last step we used \eqref{eq:contraction_L2k}.
The resulting (formal) estimate is uniform in $\kappa$, thus allowing to take the limit as $\kappa\to 0^+$ and obtain a corresponding statement for $\theta$ solving \eqref{eq:Kraichnan}.

Clearly the argument is only formal, for a number of reasons; however it nicely features many of the key quantities we want to use in order to get rigorous bounds on $\theta$: i) the behaviour of $Q$ around the origin; ii) the use of structure functions to quantify Sobolev regularity; iii) the importance of considering spherical averages of increments and exploiting isotropy of $Q$.
Even though the argument assumed $W$ to be divergence free, such tools ultimately allow to carry out the proof of \autoref{thm:regularity_Kraichnan} in the full weakly compressible regime $\eta>1-\frac{d}{4\alpha^2}$.
The rigorous proof of \eqref{eq:regularization_besov_intro} in the divergence free case, based on a different argument, will be presented in \autoref{section:dissipation_measure}.

We now discuss the final aspect of the present work, which concerns the stochastic behavior of ``particles'' in the Kraichnan velocity field. Rigorously, particles are described by the two-point motion $(X,Y)$ defined by Le Jan and Raimond in \cite{LeJRai2002}, cf. also \autoref{ssec:two_point} for additional details.
In classical theory of incompressible turbulent fluid, the statistics of the separation between two particle trajectories has historically been addressed for the first time by Richardson in \cite{Ri26}.
Notwithstanding the subsequent century of investigations, a sound mathematical formalization is still missing.
According to Richardson, fluid particles in a turbulent fluid separate at rate proportional to $t^{3/2}$, on average, at least for small times $t \ll 1$. 
Moreover, this rate is insensitive to the initial separation of fluid particles (i.e. independent of $|X_0-Y_0|$).

Recall that in \cite[Section 10]{LeJRai2002}, Le Jan and Raimond study non-uniqueness of Kraichnan trajectories, depending on the space regularity $\alpha$ and the compressibility ratio $\eta$ of the noise. The behavior they uncover in the diffusive regime is reminiscent of Richardson dispersion in turbulence, where insensitivity to initial condition is captured by stochastic splitting of trajectories in rough fields.  This phenomenon has been termed \emph{spontaneous stochasticity} and is a subject of intense investigation \cite{bernard1998slow, EVan2000, eyink2015spontaneous, drivas2017lagrangian, mailybaev2016spontaneous, eyink2020renormalization, mailybaev2023spontaneous, mailybaev2023spontaneously, drivas2024statistical, bandak2024spontaneous}.  In addition, it is worth remarking that the analysis in \cite[Section 10]{LeJRai2002} can be extended without difficulties to more general homogeneous isotropic noises, not necessarily given by the Kraichnan covariance. The role of the compressibility in the different phase transitions is mirrored by the relative weight of the longitudinal and normal components of the covariance $C$ close to zero, as encompassed in our \autoref{assumption}.

The above picture can be made precise via studying the stochastic continuity equation \eqref{eq:Kraichnan.continuity}
\begin{align*} 
\dd \mu_t + \nabla \cdot (\mu_t \circ \dd W)  = 0,
\end{align*}
starting from a Dirac delta. Indeed, the mean square particle separation is captured by
\begin{align*}
\mathbb{E}[|X_t-Y_t|^2] = 2 \mathbb{E}[\var(\mu_t)],
\end{align*}
where the expected variance of $\mu_t$ is defined as
\begin{align*}
\mathbb{E}[\var(\mu_t)]
:=
\mathbb{E} \left[ \int_{\R^d} \left| y - \int_{\R^d} x \mu_t( \mathrm{d} x) \right|^2 \mu_t( \mathrm{d} y) \right]
=
\frac12 \mathbb{E} \left[ \int_{\R^d} \int_{\R^d} 
|x-y|^2 \mu_t( \mathrm{d} x) \mu_t( \mathrm{d} y) \right].
\end{align*}

In general, this quantity can grow with respect to time at most like $t^{\frac{1}{1-\alpha}}$, essentially independently of the compressibility regime under consideration.
However, a lower bound is more delicate to obtain, and is generally false (for instance for the Kraichnan model in the strongly compressive regime).
Nonetheless, in the diffusive regime, spontaneous stochasticity happens and we are able to establish a precise asymptotics for $\mathbb{E}[\var(\mu_t)]$ at short time, quantifying precisely the stochastic splitting phenomena.

\begin{theorem} \label{thm:richardson}
Let $\mu$ be the unique solution of \eqref{eq:stochastic.continuity} with initial condition $\delta_x$, $x \in \R^d$, then we have
\begin{equation}\label{eq:richardson_upper}
\mathbb{E}[\var(\mu_t)]
\lesssim
t^{\frac{1}{1-\alpha}},
\quad
\forall t \geq 0,
\end{equation}
where the implicit constant depends only on the covariance $C$.
Moreover, in the diffusive regime $\eta > 1 - \frac{d}{4\alpha^2}$, there exists a positive constant $K_{\mathsf{Ric}}$ such that for every $\eps>0$ there exists a positive time $t_\eps>0$ such that
\begin{equation}\label{eq:richardson_lower}
\mathbb{E}[{\rm Var}(\mu_t)]
\geq (1-\eps)K_{\mathsf{Ric}}
t^{\frac{1}{1-\alpha}},
\quad
\forall t \in (0,t_\eps).
\end{equation}
\end{theorem}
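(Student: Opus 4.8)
The plan is to relate everything to the two-point motion $(X,Y)$ and the function $t \mapsto \mathbb{E}[|X_t - Y_t|^2]$. For the upper bound, I would start from the closed equation satisfied by the law of the separation process $Z_t := X_t - Y_t$. Since $W$ is Gaussian, white-in-time and homogeneous, $(X,Y)$ is a Markov diffusion with generator determined by $C$, and the difference $Z_t$ is itself a (degenerate) Markov process whose generator acts as $L g(z) = 2Q(z):D^2 g(z)$ on test functions, where $Q(z) = C(0)-C(z)$ (this is exactly the operator appearing in \eqref{eq:PDE_F_intro_tilde_bis}, up to the factor from the Stratonovich correction). Testing against $g(z) = |z|^2$ gives
\begin{equation*}
\frac{\dd}{\dd t}\mathbb{E}[|Z_t|^2] = 2\,\mathbb{E}[\,\tr(2Q(Z_t))\,] = 4\,\mathbb{E}[\,\tr Q(Z_t)\,].
\end{equation*}
Now $\tr Q(z) = \tr C(0) - \tr C(z) \lesssim |z|^{2\alpha} \wedge 1 \lesssim (|z|^2)^\alpha + 1$ by the covariance assumption \eqref{covassum} together with boundedness of $C$; hence, writing $V(t) := \mathbb{E}[|Z_t|^2]$ and using Jensen (concavity of $s \mapsto s^\alpha$), $V'(t) \lesssim V(t)^\alpha + 1$. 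With $Z_0 = 0$, i.e. $V(0)=0$, this differential inequality yields $V(t) \lesssim t^{1/(1-\alpha)}$ for all $t\ge 0$ (the $+1$ only improves things since $t^{1/(1-\alpha)} \gtrsim t$ for $t$ bounded away from $0$, and for large $t$ one absorbs it). Since $\mathbb{E}[\var(\mu_t)] = \tfrac12\mathbb{E}[|X_t-Y_t|^2] = \tfrac12 V(t)$ for the Dirac initial datum, this gives \eqref{eq:richardson_upper}. Care is needed to justify that $(X,Y)$ with coincident initial data genuinely has $V'$ controlled as above rather than being frozen at $0$: in the diffusive regime the two-point motion does separate, and the identity $\frac{\dd}{\dd t}V(t) = 4\mathbb{E}[\tr Q(Z_t)]$ should be read in integrated form, $V(t) = 4\int_0^t \mathbb{E}[\tr Q(Z_s)]\,\dd s$, which holds regardless.

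For the lower bound, the key point is that in the diffusive regime $\eta > 1 - \tfrac{d}{4\alpha^2}$ the separation process, started from $0$, instantaneously becomes a genuine diffusion with a well-defined law; by the scaling structure of the model at short times, $Z_t$ should behave like the self-similar ($m=0$) separation process, for which $|z|^2$-moments scale exactly like $t^{1/(1-\alpha)}$. Concretely, I would compare the true separation process with the one generated by the leading homogeneous part $Q_0(z)$ of $Q$ near the origin (the exact self-similar Kraichnan separation). Under rescaling $z \mapsto \lambda^{-1} z$, $t \mapsto \lambda^{-2(1-\alpha)} t$, the self-similar generator is invariant, so $\mathbb{E}_0[|Z^{\mathrm{ss}}_t|^2] = K_{\mathsf{Ric}}\, t^{1/(1-\alpha)}$ exactly, with $K_{\mathsf{Ric}} := \mathbb{E}_0[|Z^{\mathrm{ss}}_1|^2] \in (0,\infty)$ — positivity is precisely spontaneous stochasticity (the process does not stay at $0$), finiteness follows from the upper-bound argument applied to $Q_0$. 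Then one needs a short-time comparison: since $Q(z) = Q_0(z)(1 + o(1))$ as $z\to 0$, and for $t$ small the process $Z_t$ stays near $0$ with high probability (again by the upper bound, $Z_t = O_{\mathbb P}(t^{1/(2(1-\alpha))})$), a coupling / Gronwall-type argument on the difference of the two generators gives $\mathbb{E}[|Z_t|^2] \ge (1-\eps)K_{\mathsf{Ric}} t^{1/(1-\alpha)}$ for $t \le t_\eps$.

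The main obstacle I expect is making the lower bound rigorous: one must (i) establish that the separation process started from coincidence is well-defined and non-degenerate in the diffusive regime — this is the content of Le Jan–Raimond's analysis, so I would invoke \cite{LeJRai2002} and the two-point motion recalled in \autoref{ssec:two_point} — and (ii) control the error between the true and the self-similar separation processes uniformly at short times. Step (ii) is delicate because the generator is degenerate at $z=0$, so one cannot simply use standard parabolic stability estimates; instead I would localize to $|z| \le \rho$, use the strong Markov property with a stopping time at radius $\rho$, estimate the exit time from below using the upper-bound heuristic (exit from $B_\rho$ takes time $\gtrsim \rho^{2(1-\alpha)}$), and on $\{|z|\le \rho\}$ bound $|\tr Q(z) - \tr Q_0(z)| \le \epsilon(\rho)\,|z|^{2\alpha}$ with $\epsilon(\rho)\to 0$, closing the estimate by choosing $\rho = \rho(\eps)$ small and then $t_\eps$ small. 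A secondary technical point is the precise relation between the Stratonovich formulation \eqref{eq:Kraichnan.continuity}, the resulting generator for $(X,Y)$, and the factor of $2$ in $L g = 2Q:D^2 g$ (including the contribution of $\div W$ when $\eta < 1$); this should be handled once, carefully, following the conventions fixed earlier in the paper, and does not affect the exponent $1/(1-\alpha)$.
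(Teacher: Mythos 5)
Your upper bound follows essentially the paper's route (It\^o's formula on the squared separation, Jensen, Bihari--LaSalle), though phrased through the two-point motion rather than directly through $\langle G\ast\mu_t,\mu_t\rangle$ with $G(x)=|x|^2/2$; these are equivalent via \autoref{lem:variance_mu}. One real flaw: your bound $\tr Q(z)\lesssim |z|^{2\alpha}\wedge 1\lesssim (|z|^2)^\alpha+1$ leads to $V'\lesssim V^\alpha+1$, and with $V(0)=0$ this only yields $V(t)\lesssim t$ for small $t$, which is \emph{weaker} than the claimed $t^{1/(1-\alpha)}$ as $t\to 0$ (since $1/(1-\alpha)>1$). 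The fix is to use $\tr Q(z)\lesssim |z|^{2\alpha}$ globally (true because $\tr Q$ is bounded, so for $|z|\geq 1$ it is $\lesssim |z|^{2\alpha}$ anyway), drop the $+1$, and apply Bihari--LaSalle to $V'\lesssim V^\alpha$; this is what the paper does.

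The lower bound is where there is a genuine gap. Your strategy --- define $K_{\mathsf{Ric}}:=\mathbb{E}_0[|Z^{\mathrm{ss}}_1|^2]$ for the exactly self-similar separation process and transfer the bound by a generator-comparison/coupling argument --- leaves unproved precisely the two hardest points. First, positivity of $K_{\mathsf{Ric}}$ is not automatic: the separation diffusion with generator $Q_0(z):D^2$ is degenerate at $z=0$, and whether the process started at coincidence leaves the origin is exactly the coalescence-versus-splitting dichotomy; in the strongly compressible regime it stays at $0$ and $K_{\mathsf{Ric}}=0$. So "positivity is precisely spontaneous stochasticity" is the statement to be proved, not an input. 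Second, the short-time comparison between the true and self-similar separation processes cannot be run by standard stability estimates, because the martingale problem is degenerate (and possibly non-unique) at the origin --- the very point from which you start the process; your sketch with stopping times and exit-time lower bounds is plausible in outline but is not an argument. The paper avoids all of this by working at the level of the SPDE solution $\mu_t$ (whose uniqueness is secured by the Wiener chaos theory) and testing \eqref{eq:G_mu_mu} against the kernel $G(x)=|x|^{2-2\alpha}$: the exponent is chosen so that $D^2G:Q \sim \gamma\,|z|^{\gamma-2}\bigl((\gamma-1)b_L+(d-1)b_N\bigr)$ with $\gamma=2-2\alpha$ is bounded below by the \emph{constant} $c\gamma(1-2\alpha+\beta(d-1))>0$ near the origin, the positivity being exactly the diffusive-regime condition. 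This yields linear-in-$t$ growth of the $(2-2\alpha)$-moment of the separation, and Jensen converts it into the $t^{1/(1-\alpha)}$ lower bound for the variance, with an explicit $K_{\mathsf{Ric}}$. I would recommend adopting this elementary route rather than attempting to rigorize the self-similar comparison.
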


We mention that \autoref{thm:richardson} is stated for the particular case of Kraichnan noise, but similarly to other results here presented it is valid under the more general \autoref{ass:sharpness_reg} and \autoref{assumption}. 
The precise value of $K_{\mathsf{Ric}}>0$ is explicitly computable and depends only on the covariance of the noise. 
For the Kraichnan model, $K_{\mathsf{Ric}}$ tends to zero as the intensity of the noise tends to zero or the compressibility ratio $\eta$ approaches the threshold $1-\frac{d}{4\alpha^2}$. 
The proof of \autoref{thm:richardson} is obtained by a combination of It\^o's formula and elementary inequalities, and can be found in \autoref{sec:richardson}.
In \autoref{sec:dirac} we additionally establish, in the divergence-free case, pathwise lower bounds for $\mu_t$ and a sharp $L^2_x$-regularization result of the form (see \autoref{thm:main.thm.dirac} for additional details)
\begin{align*}
    \mathbb{E} [ \| \mu_t \|_{L^2_x}^2] \lesssim t^{-\frac{d}{2(1-\alpha)}}, \quad \forall t \in (0,1).
\end{align*}
Functional inequalities directly relate the above to a lower bound for Richardson's law (cf. \autoref{lem:deterministic.lower.bound}), further providing pathwise versions of it (cf. \autoref{cor:richardson_lower_functional}).

\begin{remark}[On the exponent $1-\alpha$] \label{rmk:OC}
It is worth comparing the exponent $1-\alpha$, dictating threshold Sobolev regularity when the noise is spatially $\alpha$-regular, with the classical Obukhov-Corrsin theory.
The latter postulates that the criticality threshold for energy conservation/dissipation in deterministic transport (replacing $\dd W$ with $u \dd t$) is
$u\in C^0_t C^\alpha_x$ and $\theta \in L^2_t H^\beta_x$, with $\beta := \frac{1-\alpha}{2}$, see e.g. \cite{DrElIyJe22}.
The discrepancy between the regularity exponent $\beta$ in Obukhov-Corrsin theory and our $1-\alpha$ is essentially rooted in the fact that we are dealing with stochastic objects that are rough in time: as the velocity $\dd W_t$ is only a distribution in $t$, commutator estimates à la Constantin-E-Titi \cite{CoETi94} can fail for some $\gamma>\frac{1-\alpha}{2}$.
Roughly speaking, $\rmd W\in C^{-1/2}_t C^\alpha_x$, which requires the space regularity for $\theta$ to be doubled and the critical condition to become $\theta\in L^2_t H^{2\beta}_x$.
A similar counting can be observed in Richardson's law of particle dispersion: for $u\in C^0_t C^\alpha_x$, denoting by $\mu_t$ the variance of the solution to the continuity equation $\partial_t \mu + \nabla\cdot (u \mu)=0$ started at a Dirac delta, one expects $\var (\mu_t)\sim t^{-2/(1-\alpha)}=t^{-1/\beta}$, see \cite{HeRo25+} for a rigorous result; instead in the Kraichnan model one finds $\EE\var (\mu_t)\sim t^{-1/(1-\alpha)}=t^{-1/(2\beta)}$, cf. \autoref{thm:richardson}.
Let us finally note that the Richardson-Kolmogorov scaling of cascade of energy (corresponding to $u\in C^0_t C^{1/3}_x$) is reproduced in the Kraichnan model by taking $\alpha=2/3$ (cf. \cite[p. 23]{CFG2008}), displaying yet another consistent doubled exponent.
\end{remark}

\begin{remark}[Extensions to the torus]
It is natural to wonder whether the same results hold on the torus $\TT^d$ as well, given the expected local nature of homogeneous, isotropic turbulence.
On $\TT^d$, it is still possible to define $W$ by prescribing its covariance $C$ via its discrete Fourier transform as in \eqref{eq:kraichnan_covariance_fourier_intro}; the resulting noise is still homogeneous and somewhat isotropic \cite{Gal2020,rowan2023}.
All the results from \autoref{sec:preliminaries} and \autoref{sec:rigidity} immediately transfer to that setting as well.
Instead \autoref{thm:dissipation} as stated fails on $\TT^d$ and must be appropriately modified (homogeneity is no longer enough), see \autoref{rem:counterexample_torus} for a deeper discussion.
Most importantly, the proofs of Theorems \ref{thm:dissipation_measure_intro} and \ref{thm:regularity_Kraichnan} currently do not work on $\TT^d$. Indeed, we crucially exploit radial symmetry of $\R^d$ several times across the paper, most notably: i) in the analysis of the PDE \eqref{eq:PDE_F_intro_tilde} and its reduction to a one-dimensional problem, see \eqref{eq:PDE_f}; ii) in the derivation of asymptotic expansions of $Q$ around $0$, cf. \autoref{lem:asymptotics_kraichnan}.
A similar limitations occurs in \cite[Sec. 9-10]{LeJRai2002}, where the authors can only consider rotationally symmetric domains.
Finally, concerning Richardson's law, the proof of the upper bound \eqref{eq:richardson_upper}         still holds, while the lower bound  \eqref{eq:richardson_lower} does not. However in the incompressible case $\eta=1$, using \cite[Prop. 3.1]{rowan2023}, one can still prove an analogue of \autoref{thm:main.thm.dirac}, yielding \eqref{eq:richardson_lower} via \autoref{lem:deterministic.lower.bound}.
\end{remark}

\subsection{Structure of the paper}
The paper is structured as follows.

In \autoref{sec:preliminaries} we collect the notation and basic facts that will be used throughout this work.
In \autoref{ssec:preliminaries_transp} we recall the construction of Le Jan and Raimond and the various equivalent formulations of the stochastic transport-diffusion equation, together with some approximations of it. \autoref{ssec:continuity} treats the stochastic continuity equation
and contains the duality between the solution map $S$ and the
continuity equation. \autoref{subsec:two_point_selfcor} introduces the two-point self-correlation function and records basic regularity facts for random distributions, while in \autoref{subsec:dissipation.measure} we define the
dissipation measure and prove its basic properties.

\autoref{sec:anomalous_dissipation} is devoted to anomalous dissipation of the mean energy. After proving that anomalous dissipation is a generic phenomenon whenever it occurs (\autoref{thm:dissipation}), we show that anomalous dissipation imposes an upper bound on regularity of solutions (\autoref{prop:regularity_implies_conservation}), establishing \eqref{eq:sharpness_intro} of \autoref{thm:regularity_Kraichnan}.

\autoref{sec:regularity} contains the anomalous regularization results.
The first subsection therein is devoted to the proof of the bounds \eqref{eq:regularization_intro} and \eqref{eq:regularization_intro2} in \autoref{thm:regularity_Kraichnan}. Some consequences, such as \eqref{eq:regularization_intro3} and \eqref{eq:regularization_intro4}, are discussed in \autoref{ssec:consequences.regularization}.
Finally, in \autoref{sec:regularity_PDE} we discuss the regularity theory for the class of degenerate parabolic PDEs satisfied by self-correlation functions, providing sharp spatial H\"older regularity (\autoref{thm:reg_F_intro}).

In \autoref{section:dissipation_measure} we study the structure of the dissipation measure in the incompressible Kraichnan model and derive the explicit representation \eqref{idealdefect} that links the dissipation to spherical averages of increments of $\theta$ (\autoref{thm:exact.formula.dissipation.measure}).
This particularly implies \autoref{thm:dissipation_measure_intro} and \eqref{eq:regularization_besov_intro} of \autoref{thm:regularity_Kraichnan} from the Introduction.

\autoref{sec:richardson} is concerned with particle dispersion.
In \autoref{ssec:Richardson's law} we prove \autoref{thm:richardson} for the expected variance of solutions to the stochastic continuity equation; we relate this growth to the two-point motion defined by Le Jan and Raimond in \autoref{ssec:two_point}, thus justifying the name \emph{Richardson's law}.
Then, in \autoref{sec:dirac} we discuss instantaneous $L^2_x$-regularization of Dirac delta initial conditions in the incompressible case  (\autoref{thm:main.thm.dirac}).

Auxiliary material in collected in the Appendix; \autoref{app:auxiliary} gathers facts on the covariance of the Kraichnan model, and \autoref{app:preliminaries} contains complements and deferred proofs from \autoref{sec:preliminaries}.

\section*{Acknowledgements}

We warmly thank Prof. Raul Serapioni for useful discussions on degenerate parabolic PDEs and the results from \cite{FKS1982,ChiSer1984,ChSe87}.

The research of TDD is partially supported by the NSF DMS-2106233 grant, NSF CAREER award \#2235395, a Stony Brook University Trustee’s award as well as an Alfred P. Sloan Fellowship.

LG is supported by the SNSF Grant 182565 and by the Swiss State Secretariat for Education, Research and Innovation (SERI) under contract number MB22.00034 and by the Istituto Nazionale di Alta Matematica (INdAM) through the project GNAMPA 2025 ``Modelli stocastici in Fluidodinamica e Turbolenza'' -CUP E5324001950001.

UP is supported by the European Research Council (ERC) under the European Union’s Horizon 2020 research and innovation programme (grant agreement No. 949981).

\section{Preliminaries}
\label{sec:preliminaries}

This section is devoted to a collection of more or less standard results on the stochastic transport equation \eqref{eq:Kraichnan}, its diffusive approximation \eqref{eq:spde_viscous_approx}, and the stochastic continuity equation \eqref{eq:stochastic.continuity}.

Equations are posed in the full space $\R^d$, but most of the results contained in \autoref{ssec:preliminaries_transp}, \autoref{ssec:continuity}, and \autoref{subsec:dissipation.measure} can be easily adapted to equations defined on the $d$-dimensional torus $\mathbb{T}^d$. In particular, notice that we do not require isotropy of the noise at the moment.  
\begin{assumption} \label{ass:well_posed}
$W$ is a space-homogeneous Gaussian noise with covariance $C(x,y) = C(x-y)$ satisfying: $i$) $\hat C$ is a nonnegative, symmetric matrix such that $\hat C\in L^1(\R^d)\cap L^\infty(\R^d)$,
$ii$) it holds $\sup_{\xi \in \R^d} \xi \cdot \hat{C}(\xi) \xi < \infty$, and $iii$) $C(0)$ is non-degenerate. 
\end{assumption}

Under \autoref{ass:well_posed}, we can represent $C$ as 
\begin{equation}\label{eq:representation_covariance}
	C(x-y)=\sum_{k \in \mathbb{N}} \sigma_k(x)\otimes \sigma_k(y),
\end{equation}
where $\sigma_k : \R^d \to \R^d$ belongs $\cap_{s\geq 0} H^s(\R^d)$ 
for every $k \in \mathbb{N}$ and the series is uniformly convergent on compact sets, see for instance \cite[Prop. 2.6]{GalLuo2023} or \cite[Lemma 2.5]{CoMa23+}.
Correspondingly, the noise $W$ can be represented as $W_t(x)=\sum_{k \in \mathbb{N}} \sigma_k(x) W^k_t$, where $\{W^k\}_{k\in \mathbb{N}}$ is a collection of i.i.d. Brownian motions defined on a filtered probability space $(\Omega, \mathcal{F}, \{\mathcal{F}_t\}_{t \geq 0}, \mathbb{P})$.

Associated to $C$, we can consider the convolutional operator $\mathcal{C} f := C\ast f$, which is symmetric and nonnegative. It admits a square root $\mathcal{C}^{1/2}$, which corresponds to the Fourier multiplier associated to $\hat{C}^{1/2}$, which by our assumptions belongs to $L^2(\R^d)\cap L^\infty(\R^d)$.

In order to define weak solutions of \eqref{eq:Kraichnan} and \eqref{eq:Kraichnan.continuity}, we need to make sense of the stochastic integrals therein. 
Given a Banach space $B$, we say that a stochastic process $f : \Omega \times \R_+ \to B$ is progressively measurable if it is $\{\mathcal{F}_t\}_{t \geq 0}$-progressive measurable as a $B$-valued stochastic process.

\begin{lemma}\label{lem:stoch_integr}
Suppose \autoref{ass:well_posed}. Then given two progressively measurable processes $f=(f^1,\dots,f^d)$ and $g$ such that $\PP$-almost surely for every $t<\infty$
\begin{align*}
\int_0^t \| \cC^{1/2} f_s\|_{L^2_x}^2 \dd s <\infty,
\quad
\int_0^t \| g_s\|_{L^2_x}^2 \dd s <\infty,
\end{align*}
the stochastic It\^o integrals
\begin{align*}
M^f_t 
:= 
\int_0^t \langle  f_s , \dd W_s \rangle
:=
\sum_{k \in \NN} \int_0^t \langle f_s , \sigma_k \rangle \dd W^k_s,
	\quad
	t \geq 0,
\end{align*}
and
\begin{align*}
M^g_t 
:= 
\int_0^t \langle \nabla g_s, \dd W_s \rangle 
=: 
- \int_0^t \langle g_s, \dd(\div W)_s \rangle
:=
-\sum_{k \in \NN} \int_0^t \langle g_s , \div \sigma_k \rangle \dd W^k_s,
	\quad
	t \geq 0,
\end{align*}
are well-defined local martingales with respect to the filtration $\{\mathcal{F}_t\}_{t \geq 0}$ with continuous trajectories $\mathbb{P}$-almost surely and quadratic variation given by
\begin{gather*}
[M^f]_t 
=
\int_0^t \| \cC^{1/2} f_s\|_{L^2_x}^2 \dd s 
= 
\int_0^t \langle C\ast f_s,f_s\rangle \dd s,
\\
[M^g]_t 
= 
\int_0^t \int_{\R^d} \xi\cdot \hat C(\xi)\xi |\hat g_s(\xi)|^2 \dd\xi \dd s 
\leq 
\sup_\xi \xi\cdot \hat C(\xi)\xi\, \int_0^t \| g_s\|_{L^2_{x}}^2 \dd s.
\end{gather*}
\end{lemma}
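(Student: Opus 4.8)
The plan is to realize $M^f$ and $M^g$ as limits of the partial sums of their defining series in the space $\mathcal{M}^2_c$ of continuous $L^2$-martingales, after the standard localization turning the $\PP$-a.s.\ finiteness hypotheses into genuine integrability. To this end I would set, for $N\in\NN$,
\[
\tau_N := N \wedge \inf\Big\{ t\geq 0 \,:\, \int_0^t \|\cC^{1/2} f_s\|_{L^2_x}^2 \dd s + \int_0^t \|g_s\|_{L^2_x}^2 \dd s \geq N\Big\},
\]
which are $\{\mathcal{F}_t\}$-stopping times with $\tau_N \uparrow +\infty$ $\PP$-a.s.\ by assumption. It then suffices to construct each integral on $[0,\tau_N]$, check that the constructions are consistent as $N$ varies, and glue, obtaining a continuous local martingale on $[0,\infty)$ with localizing sequence $(\tau_N)_N$.

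For $M^f$, each $s\mapsto\langle f_s,\sigma_k\rangle$ is progressively measurable (as $\sigma_k\in L^2_x$ and $f$ is $L^2_x$-valued), and the key algebraic identity, which I would derive from the representation \eqref{eq:representation_covariance} and Plancherel's theorem, is
\[
\sum_{k\in\NN}\langle f_s,\sigma_k\rangle^2 = \sum_{i,j}\iint f_s^i(x) f_s^j(y)\, C^{ij}(x-y)\dd x\dd y = \langle C\ast f_s, f_s\rangle = \|\cC^{1/2} f_s\|_{L^2_x}^2 .
\]
In particular $\sum_k\EE\int_0^{\tau_N}\langle f_s,\sigma_k\rangle^2\dd s \leq N<\infty$, so the truncated partial sums $M^{f,n}_t := \sum_{k=1}^n\int_0^{t\wedge\tau_N}\langle f_s,\sigma_k\rangle\dd W^k_s$ are well-defined continuous $L^2$-martingales. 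Since the $W^k$ are i.i.d., the increments of the summands are pairwise orthogonal, and Doob's inequality together with the It\^o isometry give $\EE[\sup_{t\leq\tau_N}|M^{f,n}_t-M^{f,m}_t|^2] \leq 4\sum_{k=m+1}^n\EE\int_0^{\tau_N}\langle f_s,\sigma_k\rangle^2\dd s \to 0$ as $m,n\to\infty$; hence $(M^{f,n})_n$ is Cauchy in $\mathcal{M}^2_c([0,\tau_N])$ and converges to a continuous $L^2$-martingale $M^f$. The quadratic variation is continuous in this topology, so $[M^f] = \lim_n[M^{f,n}] = \lim_n \sum_{k=1}^n\int_0^{\cdot\wedge\tau_N}\langle f_s,\sigma_k\rangle^2\dd s = \int_0^{\cdot\wedge\tau_N}\|\cC^{1/2}f_s\|_{L^2_x}^2\dd s$ by the displayed identity. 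Consistency in $N$ follows from uniqueness of the It\^o integral up to a smaller stopping time, and letting $N\to\infty$ yields the asserted $M^f$ and $[M^f]$ on $[0,\infty)$.

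For $M^g$, the extra ingredient is the integration by parts $\langle\nabla g_s,\sigma_k\rangle := -\langle g_s,\div\sigma_k\rangle$, which makes sense for $g_s\in L^2_x$ because $\div\sigma_k\in L^2_x$, and which gives the second expression for $M^g$ in the statement. The previous argument applies verbatim with $\langle f_s,\sigma_k\rangle$ replaced by $\langle g_s,\div\sigma_k\rangle$, once one computes, again from \eqref{eq:representation_covariance} and Plancherel,
\[
\sum_{k\in\NN}\langle g_s,\div\sigma_k\rangle^2 = -\sum_{i,j}\iint g_s(x) g_s(y)\,(\pa_i\pa_j C^{ij})(x-y)\dd x\dd y = \int_{\R^d} \xi\cdot\hat C(\xi)\xi\, |\hat g_s(\xi)|^2\dd\xi ,
\]
where the last equality uses that $-\sum_{i,j}\pa_i\pa_j C^{ij}$ has Fourier symbol $\sum_{i,j}\xi_i\xi_j\hat C^{ij}(\xi) = \xi\cdot\hat C(\xi)\xi$. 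By the bound $\sup_\xi\xi\cdot\hat C(\xi)\xi<\infty$ from \autoref{ass:well_posed} this is $\leq\big(\sup_\xi\xi\cdot\hat C(\xi)\xi\big)\|g_s\|_{L^2_x}^2$ for a.e.\ $s$, which both ensures square-integrability on $[0,\tau_N]$ and gives the stated upper bound for $[M^g]$; the remaining steps ($\mathcal{M}^2_c$-limit, localization, consistency) are identical.

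The part that needs genuine care — as opposed to being routine — is the justification of the two series identities, i.e.\ interchanging the sum over $k$ with the $x,y$-integrations and with the Fourier transform. I would handle this by first using the uniform-on-compact-sets convergence of \eqref{eq:representation_covariance}, approximating $f_s$ (resp.\ $g_s$) in $L^2_x$ by compactly supported smooth functions so that all pairings and iterated integrals converge absolutely, and then passing to the limit using $\hat C\in L^1\cap L^\infty$ and $\sup_\xi\xi\cdot\hat C(\xi)\xi<\infty$; for the nonnegative partial sums $\sum_{k\leq n}\langle f_s,\sigma_k\rangle^2 \uparrow\langle C\ast f_s,f_s\rangle$ monotone convergence suffices. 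A minor secondary point is the measurability of $\omega\mapsto M^f_t(\omega)$ and the independence of $M^f$ from the localizing sequence, which follows from the usual consistency of It\^o integrals under stopping. I expect the Fourier identification in the $M^g$ computation — tracking the signs in $\pa_{x_i}\pa_{y_j}C^{ij}(x-y) = -(\pa_i\pa_j C^{ij})(x-y)$ and recognizing the resulting symbol $\xi\cdot\hat C(\xi)\xi$ — to be the fussiest, though still elementary, ingredient.
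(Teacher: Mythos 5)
Your proof is correct and follows essentially the same route as the paper: construction of the integrals as $\mathcal{M}^2_c$-limits of the partial sums after localization (this is the content of the cited \cite[Lem. 2.8]{GalLuo2023}), together with the Parseval identification of the quadratic variation via the symbol $\xi\cdot\hat C(\xi)\xi$. The only cosmetic difference is that you treat $M^g$ directly through the pairing with $\div\sigma_k$ (legitimate since $\sigma_k\in\cap_{s}H^s$), whereas the paper first assumes $g\in L^2_tH^1_x$, applies the $M^f$ case to $f=\nabla g$, and then removes the extra regularity by approximation and localization.
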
 
\begin{proof}
The statement on $M^f$ can be proved arguing as in \cite[Lem. 2.8]{GalLuo2023}. As for $M^g$, let us suppose preliminarily that $g \in L^2_t H^1_x$ almost surely and let $f := \nabla g$.
By the previous result and Parseval's identity, it holds
\begin{align*}
[M^g]_t
= 
\int_0^t \langle C\ast \nabla g_s, \nabla g_s\rangle \dd s
= 
\int_0^t \int_{\R^d}  \hat C(\xi) \widehat{\nabla g_s}(\xi) \cdot \overline{\widehat{\nabla g_s}(\xi)} \dd\xi \dd s
= 
\int_0^t \int_{\R^d} \xi\cdot \hat C(\xi)\xi |\hat g_s(\xi)|^2 \dd\xi \dd s,
\end{align*}
from which the conclusion readily follows.
The general case can be treated by standard approximations and localization arguments.
\end{proof}

\begin{remark}\label{rem:bound_C_TV}
    Let $B=(\mathcal{M}(\mathbb{R}^d)^{\otimes d}, \|\cdot\|_{TV})$ denote the Banach space of finite, vector-valued, signed Radon measures, endowed with the total variation norm. Given $\mathfrak{m} \in \mathcal{M}(\mathbb{R}^d)^{\otimes d}$, we denote by $|\mathfrak{m}|=|\mathfrak{m}_1|+\ldots+|\mathfrak{m}_d|$ its total variation measure.
    It holds
    \begin{equation}\label{eq:covariance_TV_inequaity}
        \| \cC^{1/2} \mathfrak{m}\|_{L^2_x}^2 \leq {\rm Tr}(C(0))\,\| \mathfrak{m}\|_{TV}^2 \leq \| \hat C\|_{L^1_x} \,\| \mathfrak{m}\|_{TV}^2.
    \end{equation}
    The second estimate immediately follows from $C$ being a positive definite function;
    for the first one, by \eqref{eq:representation_covariance} we have
    \begin{align*}
        \| \cC^{1/2} \mathfrak{m}\|_{L^2_x}^2
        & = \langle C\ast \mathfrak{m},\mathfrak{m}\rangle
        \leq \sum_{k\in\NN} \int_{\R^d\times \R^d} |\sigma_k(x)||\sigma_k(y)| |\mathfrak{m}|(\dif x) |\mathfrak{m}|(\dif y)\\
        & \leq \frac{1}{2} \sum_{k\in\NN} \int_{\R^d\times \R^d} |\sigma_k(x)|^2 |\mathfrak{m}|(\dif x) |\mathfrak{m}|(\dif y)
        + \frac{1}{2} \sum_{k\in\NN} \int_{\R^d\times \R^d} |\sigma_k(y)|^2 |\mathfrak{m}|(\dif x) |\mathfrak{m}|(\dif y)\\
        & = \| \mathfrak{m}\|_{TV} \int_{\R^d} \Big(\sum_{k\in\NN} |\sigma_k(x)|^2\Big) |\mathfrak{m}|(\dif x)
        = {\rm Tr}(C(0))\,\| \mathfrak{m}\|_{TV}^2.
    \end{align*}
\end{remark}

In the following two subsections we provide preliminaries concerning the stochastic transport and continuity equations.
In order to keep this section as concise as possible, we postpone some of the proofs to \autoref{app:preliminaries}.

\subsection{Stochastic transport-diffusion equation} \label{ssec:preliminaries_transp}
Let $\theta_0 \in L^2_x$ be given.
We start by considering the stochastic transport equation \eqref{eq:Kraichnan}, whose It\^o form, using the contraction $C(0) : D^2 := \sum_{i,j = 1}^d C(0)^{ij} \partial_{x_i} \partial_{x^j}$, reads as
\begin{align} \label{eq:STE_ito}
\dd \theta
+ 
\dd W\cdot \nabla \theta
&= 
\frac12 C(0) : D^2 \theta\dd t.
\end{align}
In order to comprehend into our analysis the stochastic transport-diffusion equation presented in the introduction, where a diffusive term $\kappa \Delta$ is added to the right-hand side of \eqref{eq:Kraichnan}, we shall consider more generally equations of the form
\begin{align}\label{eq:spde_viscous}
\dd \theta
+ 
\dd W\cdot \nabla \theta
&= 
A \theta\dd t,
\end{align}
where $A := \frac12 C(0) : D^2 + \kappa \tilde{C}:D^2$, $\kappa \geq 0$, and $\tilde{C}$ is a uniformly elliptic matrix modelling diffusion (e.g. $\tilde{C}:D^2 = \Delta$ for $\tilde{C}=I_d$).  
The It\^o formulation \eqref{eq:spde_viscous} is more convenient for the analysis of the equation, and will be preferentially used throughout this section.
Let $P_t$ denote the semigroup associated to the operator $A$. 
For \eqref{eq:spde_viscous} we adopt the the following notion of solution, valid both when $\kappa = 0$ (in which case \eqref{eq:spde_viscous} reduces to \eqref{eq:STE_ito}) and $\kappa > 0$.
In the next statement, $\cF^W=\{\cF^W_t\}_{t\geq 0}$ denotes the (standardly augmented) natural filtration of $W$.

\begin{prop} \label{prop:solution}
Suppose \autoref{ass:well_posed} and let $\theta_0 \in L^2_x$ be given.
Let $\theta :\Omega \times \R_+ \to L^2_x$ be a $\cF^W$-progressively measurable stochastic process satisfying 
\begin{align} \label{eq:contraction_L2}
\mathbb{E} \| \theta_t \|_{L^2_x}^2 \leq   \| \theta_0 \|_{L^2_x}^2 \qquad
\forall t \geq 0.
\end{align}
Then the following are equivalent:
\begin{enumerate}
\item
For every $\varphi \in C^\infty_c(\R^d)$ and $t \geq 0$ the random variable $\langle \theta_t, \varphi \rangle$ satisfies $\PP$-almost surely
\begin{align} \label{eq:weak_sol}
\langle \theta_t , \varphi \rangle
=
\langle \theta_0 , \varphi \rangle
+
\sum_k
\int_0^t \langle \theta_s , \div (\sigma_k \varphi) \rangle \dd W^k_s 
+
\int_0^t
\langle \theta_s ,  A \varphi \rangle \dd s,
\end{align}
where the stochastic integral is well-defined by \autoref{lem:stoch_integr}.
\item
For every $\varphi \in C^\infty_c(\R^d)$ and $t \geq 0$ the random variable $\langle \theta_t, \varphi \rangle$ satisfies $\PP$-almost surely
\begin{align} \label{eq:mild_sol}
\langle \theta_t , \varphi \rangle
&=
\langle \theta_0 , P_t \varphi \rangle
+
\sum_k
\int_0^t \langle \theta_s , \div (\sigma_k P_{t-s}\varphi) \rangle \dd W^k_s,
\end{align}
where the stochastic integral is well-defined by \autoref{lem:stoch_integr}.
\item
For every $\varphi \in C^\infty_c(\R^d)$ and $t \geq 0$ the random variable $\langle \theta_t, \varphi \rangle$ satisfies $\PP$-almost surely
\begin{align} \label{eq:wiener_sol}
\langle \theta_t ,\varphi \rangle
=
\langle \theta_0 , P_t \varphi \rangle
+
\sum_{n \geq 1}
J^n_t(\theta_0,\varphi),
\end{align}
where for every $n \geq 1$ the $n$-th Wiener chaos stochastic integral is well-defined as $L^2_\omega$ random variable given by
\begin{align*}
J^n_t(\theta_0,\varphi)
&:=
\sum_{k_1,\dots,k_n}
\int_0^t \int_0^{t_1} \dots \int_0^{t_{n-1}}
\langle 
\theta_0 , P_{t_n} \mathcal{J}^{k_n}_{t_{n-1}-t_n}\dots\mathcal{J}^{k_1}_{t-t_1} \varphi \rangle   \,
\dd W^{k_1}_{t_1} \dots \dd W^{k_n}_{t_n},
\end{align*}
with $\mathcal{J}^{k}_{t}: \psi \mapsto \div ( \sigma_{k} P_t \psi)$, and the sum over $n \geq 1$ converges in $L^2_\omega$.
\end{enumerate}

Moreover, any two processes $\theta$ and $\tilde{\theta}$ satisfying the properties above are modifications of each other.
We say that a stochastic process $\theta$ satisfying the above conditions is the unique  solution of \eqref{eq:spde_viscous} (up to modifications).
\end{prop}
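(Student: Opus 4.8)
The plan is to establish the equivalences $(1)\Leftrightarrow(2)$ and $(2)\Leftrightarrow(3)$, and then to read off the modification statement from $(3)$. As a preliminary, observe that since $C(0)$ is non-degenerate (\autoref{ass:well_posed}(iii)) the operator $A=\tfrac12 C(0):D^2+\kappa\tilde C:D^2$ is a constant-coefficient \emph{uniformly elliptic} second-order operator, so $P_t=e^{tA}$ is a convolution semigroup that is a contraction on every $L^p_x$, maps Schwartz functions to Schwartz functions, satisfies the heat-type smoothing bounds $\|\nabla^j P_t\varphi\|_{L^2_x}\lesssim t^{-j/2}\|\varphi\|_{L^2_x}$ and $\|\nabla P_t\varphi\|_{L^\infty_x}\le \min\{\|\nabla\varphi\|_{L^\infty_x},\,Ct^{-1/2}\|\varphi\|_{L^\infty_x}\}$, and obeys $\partial_s P_{T-s}\varphi=-AP_{T-s}\varphi$, $P_0=\mathrm{Id}$. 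I would first extend \eqref{eq:weak_sol}--\eqref{eq:wiener_sol} from $\varphi\in C^\infty_c(\R^d)$ to all Schwartz $\varphi$: by Cauchy--Schwarz, the a priori bound \eqref{eq:contraction_L2}, and \autoref{lem:stoch_integr} --- using in particular $\sum_k|\langle\theta_s,\div(\sigma_k\psi)\rangle|^2\lesssim\|\theta_s\|_{L^2_x}^2(\|\psi\|_{L^\infty_x}^2+\|\nabla\psi\|_{L^\infty_x}^2)$, obtained by writing $\div(\sigma_k\psi)=(\div\sigma_k)\psi+\sigma_k\cdot\nabla\psi$ and applying \autoref{lem:stoch_integr} (first with $g=\theta_s\psi$, then with $f=\theta_s\nabla\psi$) --- all terms are continuous in $\varphi$ in a sufficiently strong topology, so density suffices. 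This extension is needed because iterating the mild formula generates test functions of the form $\div(\sigma_k P_u\varphi)$, which are Schwartz but not compactly supported.

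The equivalence $(1)\Leftrightarrow(2)$ is the standard weak-versus-mild equivalence for linear SPDEs. For $(1)\Rightarrow(2)$, fix $\varphi\in C^\infty_c$, $T>0$, set $\phi_s:=P_{T-s}\varphi$, and expand $\langle\theta_T,\varphi\rangle-\langle\theta_0,P_T\varphi\rangle$ along a partition $0=s_0<\dots<s_M=T$ as $\sum_m(\langle\theta_{s_{m+1}},\phi_{s_{m+1}}\rangle-\langle\theta_{s_m},\phi_{s_{m+1}}\rangle)+\sum_m(\langle\theta_{s_m},\phi_{s_{m+1}}\rangle-\langle\theta_{s_m},\phi_{s_m}\rangle)$; the first group is evaluated by applying \eqref{eq:weak_sol} on $[s_m,s_{m+1}]$ with the fixed Schwartz test function $\phi_{s_{m+1}}$, and the second equals $-\int_{s_m}^{s_{m+1}}\langle\theta_{s_m},A\phi_s\rangle\,ds$ because $\partial_s\phi_s=-A\phi_s$. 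Summing, the two It\^o drifts combine into $\int_0^T\big(\langle\theta_s,A\phi_{s^+}\rangle-\langle\theta_{s^-},A\phi_s\rangle\big)\,ds$ (with $s^\pm$ the partition points bracketing $s$), which tends to $0$ in $L^1(\PP)$ as the mesh $\to0$ by \eqref{eq:contraction_L2} (uniform integrability of $\|\theta_{s^-}\|_{L^2_x}$), continuity of $r\mapsto\langle\theta_r,\psi\rangle$, continuity and $L^2_x$-boundedness of $s\mapsto A\phi_s$, and dominated convergence; the martingale parts converge in $L^2_\omega$ to $\sum_k\int_0^T\langle\theta_s,\div(\sigma_k P_{T-s}\varphi)\rangle\,dW^k_s$, giving \eqref{eq:mild_sol}. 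For $(2)\Rightarrow(1)$, apply \eqref{eq:mild_sol} at a generic $r\in[0,t]$ with test function $A\varphi\in C^\infty_c$, integrate in $r$, use $\int_0^t P_rA\varphi\,dr=(P_t-\mathrm{Id})\varphi$, and move the $dr$-integral past the stochastic integral by the stochastic Fubini theorem --- justified since $\int_0^t\!\int_0^r\sum_k|\langle\theta_s,\div(\sigma_k P_{r-s}A\varphi)\rangle|^2\,ds\,dr<\infty$, the $(r-s)^{-1}$-type singularity being integrable thanks to the two-sided bound on $\|\nabla P_u\psi\|_{L^\infty_x}$ --- to obtain $\int_0^t\langle\theta_r,A\varphi\rangle\,dr=\langle\theta_0,(P_t-\mathrm{Id})\varphi\rangle+\sum_k\int_0^t\langle\theta_s,\div(\sigma_k(P_{t-s}-\mathrm{Id})\varphi)\rangle\,dW^k_s$, which subtracted from \eqref{eq:mild_sol} at time $t$ is exactly \eqref{eq:weak_sol}.

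For $(2)\Leftrightarrow(3)$, write $\mathcal J^k_u:=\div(\sigma_k P_u(\cdot))$, substitute \eqref{eq:mild_sol} for $\langle\theta_s,\mathcal J^k_{t-s}\varphi\rangle$ inside its own stochastic integral and iterate $N$ times to get $\langle\theta_t,\varphi\rangle=\langle\theta_0,P_t\varphi\rangle+\sum_{n=1}^N J^n_t(\theta_0,\varphi)+R^{N+1}_t$, with $R^{N+1}_t$ an $(N{+}1)$-fold iterated It\^o integral whose innermost integrand is $\langle\theta_{t_{N+1}},\mathcal J^{k_{N+1}}_{t_N-t_{N+1}}\cdots\mathcal J^{k_1}_{t-t_1}\varphi\rangle$. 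By the It\^o isometry for iterated integrals, $\mathbb E|R^{N+1}_t|^2=\sum_{k_1,\dots,k_{N+1}}\int_{\{0<t_{N+1}<\dots<t_1<t\}}\mathbb E|\langle\theta_{t_{N+1}},\cdots\rangle|^2\,dt_1\cdots dt_{N+1}$ and each $\mathbb E|J^n_t|^2$ is an analogous deterministic simplex integral; bounding the $\theta_{t_{N+1}}$-factor via \eqref{eq:contraction_L2} and estimating the iterated integrands by the smoothing of $P_t$ between successive $\mathcal J^k$'s together with the $k$-summability bounds $\sum_k|\langle f,\sigma_k\rangle|^2\le\|\hat C\|_{L^\infty}\|f\|_{L^2_x}^2$, $\sum_k|\langle f,\div\sigma_k\rangle|^2\le(\sup_\xi\xi\cdot\hat C(\xi)\xi)\|f\|_{L^2_x}^2$ and $\sum_k|\sigma_k(x)|^2={\rm Tr}\,C(0)$ --- precisely \autoref{ass:well_posed}(i)--(ii) --- one obtains $\sum_n\mathbb E|J^n_t|^2<\infty$ and $\mathbb E|R^{N+1}_t|^2\to0$, proving $(3)$. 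The reverse implication is the same computation run backwards: inserting $(3)$ into the right-hand side of \eqref{eq:mild_sol} and using $\sum_k\int_0^t\langle\theta_0,P_s\mathcal J^k_{t-s}\varphi\rangle\,dW^k_s=J^1_t(\theta_0,\varphi)$ and $\sum_k\int_0^t J^n_s(\theta_0,\mathcal J^k_{t-s}\varphi)\,dW^k_s=J^{n+1}_t(\theta_0,\varphi)$ reproduces the series in \eqref{eq:wiener_sol}. I expect this step to be the main obstacle: the operators $\mathcal J^k$ are first-order and, for $\alpha<1$, the naive pointwise sum $\sum_k|\div\sigma_k(x)|^2$ diverges, so the estimates must always contract $\div\sigma_k$ globally against a function (never as a pointwise factor), must be organised so as to remain uniform in $N$ and summable in $n$, and must be carried out at the level of the chaos coefficients since $\theta$ is only $L^2_x$-regular; this bookkeeping is exactly the content of \cite[\S3]{LeJRai2002} and \cite{LotRoz2004,LotRoz2006} and transfers here with only cosmetic changes.

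Finally, the modification statement follows from $(3)$: for fixed $\varphi\in C^\infty_c$ and $t\ge0$ the right-hand side of \eqref{eq:wiener_sol} is a functional of $\theta_0$, $\varphi$, $t$ and the Brownian motions $\{W^k\}$ only, not of the process $\theta$ at positive times; hence if $\theta$ and $\tilde\theta$ both satisfy the (equivalent) conditions, then $\langle\theta_t,\varphi\rangle=\langle\tilde\theta_t,\varphi\rangle$ $\PP$-a.s., and applying this to a countable family $\{\varphi_m\}$ dense in $L^2_x$ gives $\theta_t=\tilde\theta_t$ in $L^2_x$ $\PP$-a.s. for each fixed $t$, i.e. $\theta$ and $\tilde\theta$ are modifications of each other.
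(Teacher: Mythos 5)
Your steps $(1)\Leftrightarrow(2)$ (done by hand via the partition/stochastic-Fubini argument; the paper simply cites \cite[Thm.~6.5]{DaPZab2014}), $(3)\Rightarrow(2)$, and the uniqueness-up-to-modification conclusion all match the paper in substance. The issue is the implication $(2)\Rightarrow(3)$, where your route both diverges from the paper's and contains the one genuine gap.

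You propose to iterate \eqref{eq:mild_sol} $N$ times and show directly that $\mathbb{E}|R^{N+1}_t|^2\to 0$ and $\sum_n\mathbb{E}|J^n_t(\theta_0,\varphi)|^2<\infty$, by combining \eqref{eq:contraction_L2} with heat-kernel smoothing and the $k$-summability bounds of \autoref{lem:stoch_integr}. As you yourself flag, this is where the argument breaks: after applying the It\^o isometry and Cauchy--Schwarz against $\theta_{t_{N+1}}$, you are left having to control quantities of the form $\sum_{k}\|\div(\sigma_k P_u\psi)\|_{L^2_x}^2$ (i.e.\ $L^2_x$-norms of the iterated integrands summed over the noise index), and the term $\sum_k\int|\div\sigma_k(x)|^2|P_u\psi(x)|^2\,\mathrm{d}x$ involves the pointwise sum $\sum_k|\div\sigma_k(x)|^2$, which diverges for rough noise. \autoref{lem:stoch_integr} only controls the \emph{paired} quantity $\sum_k|\langle g,\div\sigma_k\rangle|^2$, which is not what appears at the intermediate levels of the iteration. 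Deferring the resulting combinatorics to \cite[\S 3]{LeJRai2002} is not a cosmetic matter --- it is precisely the hard quantitative estimate that the cited works are devoted to --- so as written the step is not proved.

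The paper's proof of $(2)\Rightarrow(3)$ avoids these estimates entirely by a soft argument. Since $\langle\theta_t,\varphi\rangle\in L^2_\omega$ is $\cF^W_t$-measurable, it already admits a unique Wiener chaos decomposition; one only has to \emph{identify} its components. Fixing a multi-index $\mathbf{K}=(K_1,\dots,K_m)$ and applying the projection $\Pi^{\mathbf K}$ onto the corresponding chaos to \eqref{eq:mild_sol} reduces the sum over $k_1$ to the finite set $K$, so the iteration is trivially well defined; after $N>m$ iterations the remainder is an $N$-fold iterated integral, hence orthogonal to every chaos of order $\leq N-1$, and its projection vanishes with no quantitative bound needed. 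This identifies $\Pi^{\mathbf K}\langle\theta_t,\varphi\rangle=\Pi^{\mathbf K}\langle\theta_0,P_t\varphi\rangle+\sum_{n=1}^m\Pi^{\mathbf K}J^n_t(\theta_0,\varphi)$ for every $\mathbf K$, and the $L^2_\omega$-convergence of the series in \eqref{eq:wiener_sol} is then automatic from orthogonality of the chaoses together with $\mathbb{E}|\langle\theta_t,\varphi\rangle|^2\leq\|\theta_0\|_{L^2_x}^2\|\varphi\|_{L^2_x}^2$, which follows from \eqref{eq:contraction_L2}. If you want a self-contained proof, you should replace your direct remainder estimate by this projection argument; otherwise you must actually carry out (or precisely quote) the Le Jan--Raimond iterated-integral bounds rather than assert that they transfer.
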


The previous equivalent formulations are particularly meaningful in the inviscid case $\kappa=0$.
Let us stress that in this case \autoref{prop:solution} only concerns $\cF^W$-adapted solutions and as such provides \emph{Wiener uniqueness}, cf. \cite{Fla2017open}; outside this class, uniqueness can fail, see \cite{LeJRai2004}.
In contrast, when \eqref{eq:spde_viscous} has a strictly positive diffusivity $\kappa > 0$, the analysis of the SPDE becomes easier and pathwise unique strong solutions can be constructed by rather classical arguments (cf. \autoref{prop:wellposedness_viscous_SPDE} in the Appendix).

In light of the above, in the rest of the paper, we will always take $\cF=\cF^W$ (equivalently, work on the canonical probability space associated to $W$) and restrict our analysis to the unique solutions coming from \autoref{prop:solution}. 

Let us now contextualize \autoref{prop:solution} within the theory developed by  Le Jan and Raimond.
Let us recall \cite[Theorem 3.2]{LeJRai2002}, where the authors construct a $\{\mathcal{F}_t\}_{t\geq 0}$-adapted solution map $S$ to the stochastic transport-diffusion equation \eqref{eq:spde_viscous} interpreted in a backward sense.
By points c) and d) of \cite[Theorem 3.2]{LeJRai2002}, given any $f \in L^2_x$ the solution map $S$ satisfies
\begin{align*}
\sup_{t \geq 0} \mathbb{E} [\|S_t f \|_{L^2_x}^2] \leq \|f\|_{L^2_x}^2
\end{align*}
and for every $\varphi \in L^2_x$ and $t \geq 0$ it holds $\PP$-almost surely
\begin{align*}
\langle S_t f , \varphi \rangle
=
\langle f, P_t \varphi \rangle
+
\sum_{k \in \mathbb{N}} 
\int_0^t \langle S_s \left( \sigma_k \cdot \nabla P_{t-s} f \right) , \varphi \rangle \, \dd W^k_s.
\end{align*}

Denote $\mathcal{I}^k_t : \psi \mapsto \sigma_k \cdot \nabla P_t \psi$.
By iteration of the formula above, the random variable $\langle S_t f , \varphi \rangle \in L^2_\omega$ has the following Wiener chaos decomposition, valid for every $t \geq 0$ $\PP$-almost surely 
\begin{align*}
\langle S_t f , \varphi \rangle 
&=
\langle f , P_t \varphi \rangle 
+
\sum_{n \geq 1}
I^n_t(f,\varphi),
\\
I^n_t(f,\varphi)
&:=
\sum_{k_1, \dots, k_n}
\int_0^t \int_0^{t_1} \dots \int_0^{t_{n-1}}
\langle
P_{t_n} \mathcal{I}^{k_n}_{t_{n-1}-t_n} \dots
\mathcal{I}^{k_1}_{t-t_1} f ,  \varphi\rangle  \,
\dd W^{k_1}_{t_1} \dots \dd W^{k_n}_{t_n}.
\\
&= 
(-1)^n
\sum_{k_1, \dots, k_n}
\int_0^t \int_0^{t_1} \dots \int_0^{t_{n-1}}
\langle f , P_{t-t_1} \mathcal{J}^{k_1}_{t-t_1} \dots   \mathcal{J}^{k_{n-1}}_{t_{n-1}-t_n}  \mathcal{J}^{k_n}_{t_n} \varphi \rangle \,
\dd W^{k_1}_{t_1} \dots \dd W^{k_n}_{t_n}.
\end{align*}

The next proposition states that the unique solution of the forward stochastic transport-diffusion equation \eqref{eq:spde_viscous} can be obtained by time reversal of the Le Jan-Raimond solution map $S$.

\begin{prop} \label{prop:solution_LJR}
Suppose \autoref{ass:well_posed} and let $\theta_0 \in L^2_x$ be given.
For every $t \geq 0$, consider the Brownian motions $\tilde{W}^k_s := W^k_t-W^k_{t-s}$ on the time interval $[0,t]$ and let $\tilde{S}^t$ be the Le Jan-Raimond solution map with respect to Wiener process $\tilde{W}$ and its natural filtration $\{\tilde{\mathcal{F}}_s\}_{s \in [0,t]}$.
Then $\theta :\Omega \times \R_+ \to L^2_x$ defined as $\theta_t := \tilde{S}^t_t \theta_0$ is the unique solution of \eqref{eq:spde_viscous}, up to modifications.
Moreover, $\theta$ satisfies \eqref{eq:wiener_sol} for every $\varphi \in L^2_x$. 
Finally, for every $t \geq 0$ the $L^2_x$-valued random variables $\theta_t := \tilde{S}^t_t \theta_0$ and $S_t \theta_0$ have the same law.
\end{prop}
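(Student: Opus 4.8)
The plan is to verify that the process $\theta_t := \tilde S^t_t \theta_0$ matches one of the equivalent characterizations in \autoref{prop:solution} — namely the Wiener-chaos formulation \eqref{eq:wiener_sol} — and then invoke the uniqueness part of that proposition. First I would fix $t \geq 0$ and test against $\varphi \in C^\infty_c(\R^d)$. Using the Wiener-chaos expansion of $\langle \tilde S^t_s f, \varphi\rangle$ recalled just before the statement (the second form, involving the operators $\mathcal{J}^k$), specialized to $s = t$ and the Wiener process $\tilde W$, one obtains
\begin{align*}
\langle \theta_t, \varphi\rangle
= \langle \theta_0, P_t \varphi\rangle
+ \sum_{n\geq 1}(-1)^n \sum_{k_1,\dots,k_n}
\int_0^t\!\!\int_0^{s_1}\!\!\dots\!\!\int_0^{s_{n-1}}
\langle \theta_0, P_{t-s_1}\mathcal{J}^{k_1}_{t-s_1}\cdots \mathcal{J}^{k_n}_{s_n}\varphi\rangle\,
\dd \tilde W^{k_1}_{s_1}\cdots \dd \tilde W^{k_n}_{s_n}.
\end{align*}
The task is to rewrite each iterated $\tilde W$-integral as an iterated $W$-integral and recognize the result as $\langle \theta_0, P_t\varphi\rangle + \sum_{n\geq 1} J^n_t(\theta_0,\varphi)$, the right-hand side of \eqref{eq:wiener_sol}.

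The computational heart of the argument is the time-reversal change of variables in the iterated stochastic integrals. The relation $\tilde W^k_s = W^k_t - W^k_{t-s}$ means that, formally, $\dd \tilde W^k_s = \dd W^k_{t-s}$, so substituting $r_i = t - s_i$ sends the simplex $\{0 < s_n < \dots < s_1 < t\}$ to $\{0 < r_1 < \dots < r_n < t\}$ and reverses the ordering of the integration variables. Rigorously, this is a standard fact about multiple Wiener–Itô integrals: for a deterministic kernel (here $\theta_0$ is $\mathcal{F}_0$-measurable, hence deterministic relative to the Brownian filtration, so the integrand is deterministic) the iterated integral against $\tilde W$ over the increasing simplex equals the iterated integral of the time-reversed kernel against $W$ over the increasing simplex — one can prove it by approximating with elementary (step) integrands, where it reduces to the algebraic identity $W^k_b - W^k_a = -(\tilde W^k_{t-a} - \tilde W^k_{t-b})$, and passing to the $L^2_\omega$ limit using the Itô isometry (the isometry is insensitive to time reversal since Lebesgue measure is). After performing this substitution and relabeling $s_i \leftrightarrow r_i$ appropriately one should land precisely on the kernel $P_{r_n}\mathcal{J}^{k_n}_{r_{n-1}-r_n}\cdots \mathcal{J}^{k_1}_{t-r_1}$ defining $J^n_t(\theta_0,\varphi)$; the matching of semigroup factors and the $(-1)^n$ signs is a bookkeeping check. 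Convergence of the $n$-sum in $L^2_\omega$ transfers automatically, since each term has the same $L^2_\omega$-norm as its counterpart.

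With \eqref{eq:wiener_sol} verified for all $\varphi \in C^\infty_c$, \autoref{prop:solution} identifies $\theta_t = \tilde S^t_t\theta_0$ as \emph{the} unique solution up to modifications; in particular $\theta$ is $\cF^W$-progressively measurable and satisfies the contraction \eqref{eq:contraction_L2} (the latter also following directly from the Le Jan–Raimond $L^2$-bound and the fact that $\tilde W$ generates the same filtration increments). The extension of \eqref{eq:wiener_sol} from $C^\infty_c$ to general $\varphi \in L^2_x$ is a density argument: both sides are continuous in $\varphi \in L^2_x$ in $L^2_\omega$, using that $P_t$ is a contraction on $L^2_x$ and the Itô-isometry bounds of \autoref{lem:stoch_integr} for the chaos terms, exactly as in the proof of \autoref{prop:solution}. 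Finally, the equality in law of $\theta_t = \tilde S^t_t\theta_0$ and $S_t\theta_0$ follows because $(\tilde W^k_s)_{s\in[0,t]} = (W^k_t - W^k_{t-s})_{s\in[0,t]}$ has the same law as $(W^k_s)_{s\in[0,t]}$ (time reversal of Brownian motion started at $0$, jointly over $k$ and as a process), and $\tilde S^t_t$ is built from $\tilde W$ by exactly the same measurable functional as $S_t$ is built from $W$; hence $\langle \tilde S^t_t\theta_0,\varphi\rangle$ and $\langle S_t\theta_0,\varphi\rangle$ have the same law for every $\varphi$, and by the Cramér–Wold device (or by considering all finite collections of test functions, using that both are $L^2_x$-valued) the $L^2_x$-valued random variables themselves are equal in law.

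The main obstacle is the rigorous justification of the time-reversal substitution in the multiple stochastic integrals — making precise the heuristic $\dd\tilde W^k_s = \dd W^k_{t-s}$ at the level of iterated Itô integrals and tracking the simplex orientation and semigroup indices so that the transformed expression is recognizably $J^n_t$. Everything else (density in $\varphi$, the contraction bound, the equality in law) is routine given the tools already assembled in the excerpt.
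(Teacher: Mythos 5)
Your proposal follows essentially the same route as the paper: verify the Wiener-chaos formulation \eqref{eq:wiener_sol} for $\theta_t=\tilde S^t_t\theta_0$ by the time-reversal change of variables $s_i=t-t_{n-i+1}$, $h_i=k_{n-i+1}$ in the iterated integrals, then invoke the uniqueness part of \autoref{prop:solution}, and deduce equality in law from the chaos expansions. The only point you gloss over is that progressive measurability of $\theta$ as an $L^2_x$-valued process is a \emph{hypothesis} of \autoref{prop:solution}, not a consequence of it, so before invoking uniqueness you must first note (as the paper does) that the right-hand side of \eqref{eq:wiener_sol} is progressively measurable for each $\varphi$ and upgrade weak to strong measurability via the Pettis measurability theorem.
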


\begin{proof}
Clearly $\theta$ is $\{\mathcal{F}_t \}_{t \geq 0}$-adapted and satisfies \eqref{eq:contraction_L2}.
Fix $t>0$ and $\varphi \in L^2_x$.  
By the Wiener chaos expansion of the Le Jan-Raimond solution map, the projection of $\langle \theta_t , \varphi \rangle = \langle \tilde{S}^t_t \theta_0 , \varphi \rangle$ onto the $n$-th chaos is given by
\begin{align*}
(-1)^n
&\sum_{k_1, \dots, k_n}
\int_0^t \int_0^{t_1} \dots \int_0^{t_{n-1}}
\langle \theta_0 , P_{t-t_1} \mathcal{J}^{k_1}_{t-t_1} \dots   \mathcal{J}^{k_{n-1}}_{t_{n-1}-t_n}  \mathcal{J}^{k_n}_{t_n} \varphi \rangle \,
\dd \tilde{W}^{k_1}_{t_1} \dots \dd \tilde{W}^{k_n}_{t_n}
\\
=
&\sum_{h_1,\dots,h_n}
\int_0^t \int_0^{s_1} \dots \int_0^{s_{n-1}}
\langle 
\theta_0 , P_{s_n} \mathcal{J}^{h_n}_{s_{n-1}-s_n} \dots   \mathcal{J}^{h_1}_{t-s_1}  \varphi \rangle  \,
\dd W^{h_1}_{s_1} \dots \dd W^{h_n}_{s_n},
\end{align*}
where we have used the change of variables $s_i := t-t_{n-i+1}$, $h_{i} := k_{n-i+1}$ and interpret the above as a $\PP$-almost sure equality. Therefore the random variable $J^n_t(\theta_0,\varphi) \in L^2_\omega$ is well-defined and $\langle \theta_t , \varphi \rangle$ satisfies \eqref{eq:wiener_sol} $\PP$-almost surely. 
Moreover, since the right-hand side of \eqref{eq:wiener_sol} defines a $\{\mathcal{F}_t\}_{t\geq 0}$-progressively measurable process for every $\varphi \in L^2_x$, $\theta$ has a weakly $\{\mathcal{F}_t\}_{t\geq 0}$-progressively measurable modification, still denoted $\theta$ for simplicity.
Since the space $L^2_x$ is separable, Pettis measurability Theorem implies that $\theta$ is progressively measurable as $L^2_x$-valued stochastic process. 
Since $C^\infty_c(\R^d) \subset L^2_x$, $\theta$ satisfies point $3$) of \autoref{prop:solution}, and thus $\theta$ is the unique solution of \eqref{eq:spde_viscous} up to modifications.

Finally, the $L^2_x$-valued random variables $S_t \theta_0$ and $\tilde{S}^t_t \theta_0$ have the same law, as $\langle S_t \theta_0, \varphi \rangle$ has the same law as $\langle \tilde{S}^t_t \theta_0, \varphi \rangle$ for every $\varphi \in L^2_x$ (looking at chaos expansion) and the latter determine the law of $S_t \theta_0$ and $\tilde{S}^t_t \theta_0 = \theta_t$ as $L^2_x$-valued random variables. 
\end{proof}

The next proposition gives a summary of some of the useful properties of the unique solution of \eqref{eq:spde_viscous}. It is valid both when $\kappa=0$ and $\kappa > 0$ (see \autoref{app:preliminaries} for the proof).

\begin{prop}\label{prop:properties.inviscid.Kraichnan}
Suppose \autoref{ass:well_posed} and let $\theta_0 \in L^2_x$ be given. Denote $\theta$ the unique solution of \eqref{eq:spde_viscous}. Then
\begin{enumerate}
    \item 
    $\theta$ is a Markov process and $\EE [\theta_t|\cF_s]=P_{t-s}\theta_s$ for any $s\leq t$.
    \item 
    If $\theta_0 \in L^2_x \cap L^p_x$ for some $p \in [1,\infty]$, then
    \begin{equation}\label{eq:contraction_Lp}
\sup_{t \geq 0}  \| \theta_t \|_{L^p_\omega L^p_x} 
\leq
\| \theta_0 \|_{L^p_x}.
\end{equation}
    \item  
    If $\theta_0 \in L^2_x \cap L^p_x$ for some $p \in (1,\infty)$, then the map $t \mapsto \mathbb{E}[\| \theta_t \|_{L^p_x}^p]$ is uniformly continuous. 
\end{enumerate}
\end{prop}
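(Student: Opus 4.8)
The plan is to establish the three items in order, using throughout the equivalent mild and Wiener-chaos formulations of \autoref{prop:solution}, the time-reversal identification of \autoref{prop:solution_LJR}, and the Le Jan--Raimond representation of the solution operator as composition with a flow of Markovian transition kernels.

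\emph{Item (1).} I would first record the \emph{flow (restart) property}: for $s\le t$ the process $(\theta_{s+r})_{r\ge0}$ is, relative to the filtration $(\cF_{s+r})_{r\ge0}$ and the shifted Brownian motions $(W^k_{s+r}-W^k_s)_r$, the unique solution of the same SPDE with initial datum $\theta_s$; this follows from the uniqueness in \autoref{prop:solution} together with the explicit Wiener-chaos expansion given there. Testing the corresponding mild formulation against $\varphi\in C^\infty_c(\R^d)$,
\[
\langle\theta_t,\varphi\rangle=\langle\theta_s,P_{t-s}\varphi\rangle+\sum_k\int_s^t\langle\theta_r,\div(\sigma_k P_{t-r}\varphi)\rangle\,\dd W^k_r,
\]
and taking $\EE[\,\cdot\mid\cF_s]$: the stochastic integral over $[s,t]$ is a genuine martingale (its bracket is integrable by \autoref{lem:stoch_integr} and \eqref{eq:contraction_L2}), hence drops out, and since the constant-coefficient operator $A$ is symmetric, $P_{t-s}$ is self-adjoint, so $\EE[\langle\theta_t,\varphi\rangle\mid\cF_s]=\langle P_{t-s}\theta_s,\varphi\rangle$, i.e. $\EE[\theta_t\mid\cF_s]=P_{t-s}\theta_s$ in $L^2_x$ after a density argument. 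The Markov property then follows from the flow property and the independence of $(W_{s+r}-W_s)_r$ from $\cF_s$: $\theta_t$ is a measurable functional of $\theta_s$ and of these increments, so $\EE[f(\theta_t)\mid\cF_s]$ is a measurable function of $\theta_s$ for every bounded Borel $f$ on $L^2_x$.

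\emph{Item (2).} Here I would use that the solution operator $S_t$ (and its viscous analogue) acts by $S_t f(x)=\int_{\R^d}f(y)\,K_t(x,\dd y)$, where $K_t(x,\cdot)$ is a \emph{probability} measure. Jensen's inequality in the $y$ variable gives, pathwise, $|S_t f(x)|^p\le\int|f(y)|^p K_t(x,\dd y)$ for $p\in[1,\infty)$, and directly $\|S_t f\|_{L^\infty_x}\le\|f\|_{L^\infty_x}$; integrating in $x$, taking expectations and using Fubini reduces the bound to $\int_{\R^d}\EE[K_t(x,\dd y)]\,\dd x$, which is Lebesgue measure because $\EE[K_t(x,\cdot)]$ is the transition kernel of the one-point motion --- a constant-coefficient diffusion with generator $A$ --- whose heat kernel integrates to $1$ in either variable. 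This yields $\EE\|S_t\theta_0\|_{L^p_x}^p\le\|\theta_0\|_{L^p_x}^p$, and \autoref{prop:solution_LJR} transfers the bound to $\theta_t$, which has the same law as $S_t\theta_0$. (An alternative, self-contained route for the viscous equation: apply It\^o's formula to $\int\beta(\theta^\kappa_t)\,\dd x$ with $\beta$ smooth, convex, approximating $|\cdot|^p$; the It\^o correction coming from the transport noise produces $+\tfrac12\int\beta''(\theta^\kappa)\,\nabla\theta^\kappa\!\cdot\!C(0)\nabla\theta^\kappa$, which is cancelled exactly by integrating the Stratonovich-correction term $\tfrac12C(0):D^2\theta^\kappa$ by parts, leaving only the nonpositive viscous contribution $-\kappa\int\beta''(\theta^\kappa)\,\nabla\theta^\kappa\!\cdot\!\tilde C\nabla\theta^\kappa$; one then lets $\kappa\to0$ using lower semicontinuity of the $L^p$-norm.)

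\emph{Item (3).} By Item (2) and the restart property, $\EE[\|\theta_t\|_{L^p_x}^p\mid\cF_s]\le\|\theta_s\|_{L^p_x}^p$, so $g_p(t):=\EE\|\theta_t\|_{L^p_x}^p$ is non-increasing and bounded; a bounded monotone continuous function on $[0,\infty)$ is automatically uniformly continuous, so it suffices to prove continuity of $g_p$. For right-continuity at $s$ I would write $g_p(s+h)=\EE[\Psi(h,\theta_s)]$ with $\Psi(h,g):=\EE\|(\text{solution started at }g)_h\|_{L^p_x}^p$, and show $\Psi(h,g)\to\|g\|_{L^p_x}^p$ as $h\downarrow0$ for each fixed $g\in L^2_x\cap L^p_x$: the solution converges to $g$ weakly in $L^p(\Omega\times\R^d)$ by the mild formulation, weak lower semicontinuity of the norm gives $\|g\|_{L^p_x}^p\le\liminf_h\Psi(h,g)$, and this combines with $\Psi(h,g)\le\|g\|_{L^p_x}^p$; then dominated convergence (majorant $\|\theta_s\|_{L^p_x}^p\in L^1_\omega$) yields $g_p(s+h)\to g_p(s)$. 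The subtle point is \emph{left-continuity}, i.e. ruling out downward jumps of the non-increasing $g_p$: the quantity $g_p(s)-g_p(t)$ (for $s<t$) equals the expected $L^p$-dissipation over $[s,t]$ starting from $\theta_s$, and one must bound it by something that vanishes \emph{uniformly in $s$} over compact time intervals as $t-s\to0$. For $p=2$ this reduces, via orthogonality in the martingale decomposition and a smoothing estimate for the constant-coefficient semigroup, to $g_2(s)-g_2(t)\le\EE\langle(I-P_{2(t-s)})\theta_s,\theta_s\rangle\lesssim(t-s)^{\beta}\,\EE\|\theta_s\|_{H^\beta_x}^2$ for any $\beta\in(0,1]$, which is finite and uniformly bounded for $s$ bounded away from $0$ by the fractional Sobolev regularity of the solution (cf.\ \autoref{thm:regularity_Kraichnan}); for general $p\in(1,\infty)$ one runs the analogous computation with the convex entropy $|\cdot|^p$ --- reflexivity of $L^p_x$ also entering through vector-valued martingale convergence --- or, alternatively, argues via the viscous approximations, for which $g_p^\kappa$ is absolutely continuous with an explicit dissipation rate and one needs a $\kappa$-uniform modulus. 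I expect this uniform-in-time control of the dissipation rate to be the main obstacle; the rest is soft.
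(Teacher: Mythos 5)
Your items (1) and (2) are essentially sound. For (1) you use the same restart-plus-uniqueness mechanism as the paper (the paper glues $\theta$ on $[0,s]$ with the solution driven by $\tilde W^s_r=W_{s+r}-W_s$ started at $\theta_s$ and invokes uniqueness from \autoref{prop:solution}); your derivation of $\EE[\theta_t\mid\cF_s]=P_{t-s}\theta_s$ from the restarted mild formulation is equivalent to the paper's chaos-expansion computation. For (2) your primary route — the Le Jan--Raimond kernel representation \eqref{eq:conditional_law}, Jensen in the kernel variable, and Fubini against the constant-coefficient heat kernel — is genuinely different from the paper's, which instead proves the $L^p$ bound for the viscous approximations $\tilde\theta^\kappa$ via It\^o's formula with smooth convex approximants of $|\cdot|^p$ (\autoref{prop:wellposedness_viscous_SPDE}) and passes to the limit using the strong $L^2_{\omega,x}$ convergence of \autoref{lem:approx} and lower semicontinuity; your parenthetical alternative is precisely that argument (modulo the remark that when $\div W\neq 0$ a martingale term $\int \div\sigma_k\,\beta(\theta)\,\dd W^k$ survives and only disappears under expectation, which is why the bound is in $L^p_\omega L^p_x$ and not pathwise). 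The kernel route buys a cleaner $p=\infty$ case; the paper's route avoids having to make sense of $S_t(|f|^p)$ for $|f|^p\notin L^2_x$.

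The genuine gap is in item (3), exactly where you flag it: left-continuity of $g_p(t)=\EE\|\theta_t\|_{L^p_x}^p$. Your proposed bound $g_2(s)-g_2(t)\lesssim (t-s)^{\beta}\,\EE\|\theta_s\|_{H^\beta_x}^2$ is correct as an inequality, but the uniform-in-$s$ finiteness of $\EE\|\theta_s\|_{H^\beta_x}^2$ that you need is supplied by \autoref{thm:regularity_Kraichnan}, which is not available here: the proposition is stated under \autoref{ass:well_posed} alone (homogeneous, possibly anisotropic noise, any compressibility, including regimes where no anomalous regularization occurs), whereas the regularization results require \autoref{assumption} and the diffusive-regime condition. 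It is also a forward reference to results proved later in the paper. Without some uniform high-frequency equi-integrability of $\theta_s$ for $s$ near $t$, weak convergence $\theta_s\rightharpoonup\theta_t$ plus lower semicontinuity only gives $g_2(t)\le\lim_{s\uparrow t}g_2(s)$, which is the wrong direction for excluding a downward dissipation jump at $t$. The paper closes this differently: for $p=2$ it invokes \cite[Theorem 3.2(b)]{LeJRai2002} (continuity of the Le Jan--Raimond solution map in $L^2(\PP\otimes\mathscr L^d)$, obtainable from the chaos expansion), and for $p\in(1,\infty)$ it reduces to $p=2$ by splitting $\theta_0=\Theta_0+(\theta_0-\Theta_0)$ with $\Theta_0\in L^1_x\cap L^\infty_x$, using linearity and \eqref{eq:contraction_Lp} to control the small remainder uniformly in time, and interpolating to transfer $L^2$-continuity of $t\mapsto\Theta_t$ to continuity of $t\mapsto\EE\|\Theta_t\|_{L^p_x}^p$. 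Your proposal as written does not contain a substitute for this input.
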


\begin{remark}\label{rem:Lp_bound_divergence_free}
    If $W$ is divergence-free, then estimate \eqref{eq:contraction_Lp} can be improved to a pathwise bound, as shown in \cite[Prop. 3.1]{GalLuo2023}: it holds
    \begin{equation}\label{eq:Lp_bound_divergence_free}
        \sup_{t \geq 0}  \| \theta_t \|_{L^p_x} \leq \| \theta_0 \|_{L^p_x}\quad \PP\text{-almost surely.}
    \end{equation}   
\end{remark}

Let us now consider two different approximations of the transport equation \eqref{eq:Kraichnan}, formulated in its It\^o form \eqref{eq:spde_viscous} with $\kappa=0$. The first one consists in taking a particular kind of decreasing-diffusivity approximation, obtained by reducing the intensity of the noise.
Namely we denote $\tilde{\theta}^\kappa$ the solutions of the equation in It\^o form 
\begin{align} \label{eq:spde_viscous_approx}
\dd \tilde{\theta}^\kappa_t
+
\sqrt{1-\kappa}\, \dd W \cdot \nabla \tilde{\theta}^\kappa
=
\frac12 C(0):D^2 \tilde{\theta}^\kappa\dd t,
\quad
\kappa \in (0,1/2).
\end{align}
This approximation scheme falls into the setting of \autoref{prop:solution} since
\begin{align*}
\dd \tilde{\theta}^\kappa_t
+
\sqrt{1-\kappa}\, \dd W \cdot \nabla \tilde{\theta}^\kappa
=
\frac{1-\kappa}2 C(0):D^2 \tilde{\theta}^\kappa\dd t
+
\frac{\kappa}{2} C(0):D^2 \tilde{\theta}^\kappa\dd t
\end{align*}
can be written in the form \eqref{eq:spde_viscous} with noise $\sqrt{1-\kappa}\,W$, covariance $(1-\kappa)C$, and diffusive term $\tilde{C} = C(0)/2$.
The reason for considering \eqref{eq:spde_viscous_approx} comes from \cite[pp. 858-859, eq.(9.23)]{LeJRai2002} and Malliavin calculus considerations, as $\tilde\theta^\kappa_t$ can be obtained by an Ornstein-Uhlenbeck operator applied to $\theta_t$. This allows to establish strong convergence of such approximations in $L^p_\omega L^p_x$, see \autoref{lem:approx} below, which in turn will be a crucial ingredient in the proof of \autoref{thm:energy_balance_inviscid}.

The second approximation we are going to consider consists in taking a spatially smooth truncation of the noise, by letting $W^\kappa := \sum_{k \leq \kappa^{-1}} \sigma_k W^k$ with covariance $C^\kappa := \sum_{k \leq \kappa^{-1}} \sigma_k \otimes \sigma_k$.
The equation becomes
\begin{align} \label{eq:spde_inviscid_approx}
\dd \hat{\theta}^\kappa_t
+
\dd W^\kappa \cdot \nabla \hat{\theta}^\kappa
=
\frac12 C^\kappa(0):D^2 \hat{\theta}^\kappa\dd t,
\quad
\kappa > 0.
\end{align}

Let $\theta$ be the unique solutions of \eqref{eq:spde_viscous} with $\kappa=0$ and for $\kappa>0$ let $\tilde{\theta}^\kappa$ and  $\hat{\theta}^\kappa$ be the unique solutions of \eqref{eq:spde_viscous_approx} and \eqref{eq:spde_inviscid_approx} respectively.
The convergence of $\tilde{\theta}^\kappa$ and $\hat{\theta}^\kappa$ towards $\theta$ is the content of the following result, proved in \autoref{app:preliminaries}.

\begin{lemma} \label{lem:approx}
Suppose \autoref{ass:well_posed} and let $\theta_0 \in L^2_x \cap L^p_x$, $p \in (1,\infty)$, and $t \geq 0$ be given. Then the random variable $\hat{\theta}^\kappa_t$ converges weakly in $L^2_\omega L^2_x \cap L^p_\omega L^p_x$ towards $\theta_t$ as $\kappa \downarrow 0$.
Moreover, the random variable $\tilde{\theta}^\kappa_t$ converges strongly in $L^2_\omega L^2_x \cap L^p_\omega L^p_x$ as $\kappa \downarrow 0$ towards $\theta_t$.
\end{lemma}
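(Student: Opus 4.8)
The plan is to prove the two convergences by different routes, both built on the observation that \eqref{eq:spde_inviscid_approx} and \eqref{eq:spde_viscous_approx} are special cases of \eqref{eq:spde_viscous}, so that \autoref{prop:solution} and \autoref{prop:properties.inviscid.Kraichnan} apply to $\hat\theta^\kappa$ and $\tilde\theta^\kappa$, and on the Wiener chaos representation \eqref{eq:wiener_sol}, which is available for $\theta$, $\hat\theta^\kappa$ and $\tilde\theta^\kappa$ alike. First I would record, from \autoref{prop:properties.inviscid.Kraichnan}(2), the uniform bounds $\sup_\kappa\|\hat\theta^\kappa_t\|_{L^2_\omega L^2_x}\leq\|\theta_0\|_{L^2_x}$, $\sup_\kappa\|\hat\theta^\kappa_t\|_{L^p_\omega L^p_x}\leq\|\theta_0\|_{L^p_x}$, and the analogous ones for $\tilde\theta^\kappa$. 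Since $p\in(1,\infty)$, the spaces $L^2_\omega L^2_x$ and $L^p_\omega L^p_x$ are reflexive; hence, to obtain $\hat\theta^\kappa_t\rightharpoonup\theta_t$ in both it is enough to prove that $\langle\hat\theta^\kappa_t,\varphi\rangle\to\langle\theta_t,\varphi\rangle$ in $L^2_\omega$ for every $\varphi\in C^\infty_c(\R^d)$, because testing against the total family $\{\varphi\otimes Z:\varphi\in C^\infty_c,\ Z\in L^\infty_\omega\}$ then identifies every weak subsequential limit with $\theta_t$.

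To prove the latter convergence I would compare the chaos expansions \eqref{eq:wiener_sol} of $\langle\hat\theta^\kappa_t,\varphi\rangle$ and $\langle\theta_t,\varphi\rangle$ term by term. The zeroth-order terms agree in the limit, since $\widehat{P^\kappa_t\varphi}(\xi)=e^{-\frac{t}{2}\,\xi\cdot C^\kappa(0)\xi}\hat\varphi(\xi)$ converges pointwise to $e^{-\frac{t}{2}\,\xi\cdot C(0)\xi}\hat\varphi(\xi)$ and is dominated by $|\hat\varphi(\xi)|$, so $P^\kappa_t\varphi\to P_t\varphi$ in $L^2_x$ (here $C^\kappa(0)\to C(0)$, which is non-degenerate, so \autoref{ass:well_posed} holds for $C^\kappa$ once $\kappa$ is small). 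For each fixed $n$, the kernel defining $J^{n,\kappa}_t(\theta_0,\varphi)$ converges pointwise on the time simplex — using $C^\kappa\to C$ and $P^\kappa\to P$ — and is dominated uniformly in $\kappa$, so It\^o's isometry together with dominated convergence over the multi-index $(k_1,\dots,k_n)$ yield $J^{n,\kappa}_t(\theta_0,\varphi)\to J^n_t(\theta_0,\varphi)$ in $L^2_\omega$. The remainders are then controlled uniformly in $\kappa$: the chaos construction, via \autoref{lem:stoch_integr}, provides a bound $\mathbb E|J^{n,\kappa}_t(\theta_0,\varphi)|^2\leq a_n$ with $\sum_n a_n<\infty$, where the $a_n$ depend on the covariance only through quantities such as $\operatorname{Tr}C(0)$, $\sup_\xi\xi\cdot\hat C(\xi)\xi$ and the ellipticity of $C(0)$ — all stable as $\kappa\to0$, since $C-C^\kappa=\sum_{k>\kappa^{-1}}\sigma_k\otimes\sigma_k$ is positive-definite (hence $\hat C^\kappa\leq\hat C$, $\operatorname{Tr}C^\kappa(0)\leq\operatorname{Tr}C(0)$) and $C^\kappa(0)\to C(0)$. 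Summing the three observations gives $\langle\hat\theta^\kappa_t,\varphi\rangle\to\langle\theta_t,\varphi\rangle$ in $L^2_\omega$, hence the weak convergence.

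For $\tilde\theta^\kappa$ I would exploit that \eqref{eq:spde_viscous_approx} is \eqref{eq:spde_viscous} with noise $\sqrt{1-\kappa}\,W$ but \emph{unchanged} semigroup $P_t$ (its generator is still $\tfrac12 C(0):D^2$); therefore, in \eqref{eq:wiener_sol} the zeroth-order term is untouched while each factor $\mathcal J^{k}$ acquires a scalar $\sqrt{1-\kappa}$, so $J^{n,\kappa}_t(\theta_0,\varphi)=(1-\kappa)^{n/2}J^n_t(\theta_0,\varphi)$ and, with $\rho_\kappa:=\sqrt{1-\kappa}\uparrow1$,
\begin{align*}
\langle\tilde\theta^\kappa_t,\varphi\rangle
=\langle\theta_0,P_t\varphi\rangle+\sum_{n\geq1}\rho_\kappa^{\,n}\,J^n_t(\theta_0,\varphi)
=T_{\rho_\kappa}\langle\theta_t,\varphi\rangle,
\end{align*}
where $T_\rho$ is the Ornstein--Uhlenbeck (Mehler) semigroup on $L^q(\Omega)$, which acts on the probability space alone. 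By separability of $L^2_x$ this yields $\tilde\theta^\kappa_t=T_{\rho_\kappa}\theta_t$ as $L^2_x$-valued random variables, $T_{\rho_\kappa}$ applied pathwise in $x$. The convergence $\tilde\theta^\kappa_t\to\theta_t$ is then soft: in $L^2_\omega L^2_x$ orthogonality of chaoses gives $\|\tilde\theta^\kappa_t-\theta_t\|^2_{L^2_\omega L^2_x}=\sum_{n\geq1}(1-\rho_\kappa^{\,n})^2\|\Theta_n\|^2_{L^2_\omega L^2_x}\to0$ by dominated convergence over $n$, $\Theta_n$ being the $n$-th chaos component of $\theta_t$; for general $p\in(1,\infty)$ one uses that $T_\rho$ is an $L^p(\Omega)$-contraction, strongly continuous at $\rho=1$, so $\mathbb E|T_{\rho_\kappa}\theta_t(x)-\theta_t(x)|^p\to0$ for a.e.\ $x$ while $\mathbb E|T_{\rho_\kappa}\theta_t(x)-\theta_t(x)|^p\leq 2^p\,\mathbb E|\theta_t(x)|^p\in L^1_x$ (as $\theta_t\in L^p_\omega L^p_x$), and dominated convergence in $x$ concludes.

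I expect the main obstacle to be the passage to the limit in the chaos series for $\hat\theta^\kappa$: the term-by-term convergence has to be matched by a tail estimate uniform in $\kappa$, which is precisely why it matters that the truncation $C\mapsto C^\kappa$ cannot worsen the quantities entering the Le Jan--Raimond chaos bounds — the only exception being the lower spectral bound of $C(0)$, which is nevertheless stable for small $\kappa$. By contrast, the identification of the weak limit for $\hat\theta^\kappa$, and the whole argument for $\tilde\theta^\kappa$ once the Ornstein--Uhlenbeck identity is secured, should be routine.
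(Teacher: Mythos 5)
Your treatment of $\tilde\theta^\kappa$ is correct: the identity $\langle\tilde\theta^\kappa_t,\varphi\rangle=\langle\theta_0,P_t\varphi\rangle+\sum_n(1-\kappa)^{n/2}J^n_t(\theta_0,\varphi)$ is exactly the one the paper uses (it follows from \autoref{prop:solution_LJR}, since the generator of \eqref{eq:spde_viscous_approx} is still $\tfrac12 C(0):D^2$), and for $p=2$ your orthogonality-plus-dominated-convergence argument is equivalent to the paper's ``weak convergence plus convergence of norms''. For $p\neq 2$ your route via contractivity and strong continuity of the Mehler semigroup $T_\rho$ on $L^p(\Omega)$, applied pointwise in $x$ and combined with dominated convergence in $x$, is genuinely different from the paper's (which approximates $\theta_0$ by data in $L^1_x\cap L^\infty_x$ and interpolates); it is a clean alternative, provided you justify the pointwise-in-$x$ identification $\tilde\theta^\kappa_t(x)=T_{\rho_\kappa}\theta_t(x)$ by a Fubini argument.

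The gap is in the weak convergence of $\hat\theta^\kappa_t$, precisely at the point you yourself single out as the main obstacle. You assert that the chaos construction provides term-wise bounds $\mathbb E|J^{n,\kappa}_t(\theta_0,\varphi)|^2\leq a_n$ with $\sum_n a_n<\infty$, uniformly in $\kappa$. No such bounds are available from \autoref{lem:stoch_integr} or from \autoref{prop:solution}: the $L^2_\omega$ convergence of the chaos series is obtained there only through the global energy bound \eqref{eq:contraction_L2} and orthogonality of the chaoses (the partial sums are projections of an $L^2_\omega$-bounded random variable), not through summable term-wise estimates. A naive iteration of \autoref{lem:stoch_integr} costs a factor $(t_{i-1}-t_i)^{-1}$ per step in the squared norm (from $\|\nabla P_s\psi\|_{L^2_x}^2\lesssim s^{-1}\|\psi\|_{L^2_x}^2$), and the resulting simplex integrals diverge; this is exactly the difficulty that forces Le Jan--Raimond's indirect construction. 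Moreover, a uniform-in-$\kappa$ tail bound $\sum_{n>N}\mathbb E|J^{n,\kappa}_t|^2\to 0$ would give \emph{strong} $L^2_\omega$ convergence of $\langle\hat\theta^\kappa_t,\varphi\rangle$ together with convergence of its second moment, which cannot be extracted from the information at hand (the identity $\sum_n\mathbb E|J^{n,\kappa}_t|^2=\mathbb E\langle\hat\theta^\kappa_t,\varphi\rangle^2-\langle\theta_0,P^\kappa_t\varphi\rangle^2$ would require knowing $\mathbb E\langle\hat\theta^\kappa_t,\varphi\rangle^2\to\mathbb E\langle\theta_t,\varphi\rangle^2$, which is circular). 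There are two ways to repair this. Either follow the paper: uniform $L^2$ bounds give a weakly convergent subsequence in $L^2_{t,loc}L^2_\omega L^2_x$, the limit is progressively measurable and satisfies \eqref{eq:weak_sol} by passing to the limit in the (finitely many) terms of the weak formulation, so uniqueness from \autoref{prop:solution} identifies it with $\theta$. Or stay with your chaos strategy but downgrade to what is actually needed, namely \emph{weak} $L^2_\omega$ convergence of $\langle\hat\theta^\kappa_t,\varphi\rangle$: testing against a random variable of finite chaos order kills, by orthogonality, all but finitely many terms of the expansion, so your term-by-term convergence for fixed $n$ plus the uniform bound $\mathbb E\langle\hat\theta^\kappa_t,\varphi\rangle^2\leq\|\theta_0\|_{L^2_x}^2\|\varphi\|_{L^2_x}^2$ suffices, with no tail estimate at all.
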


\subsection{Stochastic continuity equation} \label{ssec:continuity}
Let us move to considering the stochastic continuity equation \eqref{eq:Kraichnan.continuity}, written in It\^{o} form as
\begin{equation}
\dd \mu_t 
+ 
\nabla \cdot (\mu_t \dd W_t) 
= 
\frac{1}{2} C(0) : D^2 \mu_t\dd t, \label{eq:stochastic.continuity}
\end{equation}
and its diffusive counterpart (recall the operator $A$ from previous subsection)
\begin{equation}
\dd \mu_t 
+ 
\nabla \cdot (\mu_t \dd W_t) 
= 
A \mu_t\dd t. \label{eq:stochastic.continuity_diffusive}
\end{equation}

The natural setting to study \eqref{eq:stochastic.continuity_diffusive} is that of finite signed Radon measures $\mathcal{M}:=\mathcal{M}(\mathbb{R}^d)$.
We consider progressively measurable processes $\mu : \Omega \times \R_+ \to \mathcal{M}$ satisfying the bound
\begin{align} \label{eq:contraction_TV}
\sup_{t \geq  0} \| \mu_t \|_{\mathrm{TV}} \leq
\| \mu_0 \|_{\mathrm{TV}},
\quad
\mathbb{P}\mbox{-almost surely},
\end{align}
and define solutions of \eqref{eq:stochastic.continuity_diffusive} as follows.
\begin{prop} \label{prop:solution.continuity}
Let $\mu_0 \in \mathcal{M}$ be given.
Let $\mu : \Omega \times \R_+ \to \mathcal{M}$ be a progressively measurable stochastic process satisfying \eqref{eq:contraction_TV}. 
Then the following are equivalent:
\begin{enumerate}
\item
For every $\varphi \in C^\infty_c(\R^d)$ and $t \geq 0$ the random variable $\langle \mu_t, \varphi \rangle$ satisfies $\PP$-almost surely
\begin{align} \label{eq:weak_sol.continuity}
\langle \mu_t , \varphi \rangle
=
\langle \mu_0 , \varphi \rangle
+
\sum_k
\int_0^t \langle \mu_s ,\sigma_k \cdot \nabla \varphi  \rangle \dd W^k_s 
+
\int_0^t
\langle \mu_s ,  A \varphi \rangle\dd s,
\end{align}
where the stochastic integral is well-defined by \autoref{lem:stoch_integr} and \autoref{rem:bound_C_TV}.
\item
For every $\varphi \in C^\infty_c(\R^d)$ and $t \geq 0$ the random variable $\langle \mu_t, \varphi \rangle$ satisfies $\PP$-almost surely
\begin{align} \label{eq:mild_sol.continuity}
\langle \mu_t , \varphi \rangle
&=
\langle \mu_0 , P_t \varphi \rangle
+
\sum_k
\int_0^t \langle \mu_s , \sigma_k \cdot \nabla P_{t-s}\varphi \rangle \dd W^k_s,
\end{align}
where the stochastic integral is well-defined by \autoref{lem:stoch_integr} and \autoref{rem:bound_C_TV}.
\item
For every $\varphi \in C^\infty_c(\R^d)$ and $t \geq 0$ the random variable $\langle \mu_t, \varphi \rangle$ satisfies $\PP$-almost surely
\begin{align} \label{eq:wiener_sol.continuity}
\langle \mu_t ,\varphi \rangle
=
\langle \mu_0 , P_t \varphi \rangle
+
\sum_{n \geq 1}
I^n_t(\varphi,\mu_0),
\end{align}
where for every $n \geq 1$ the $n$-th stochastic integral is well-defined in $L^2_\omega$, and the sum over $n \geq 1$ converges in $L^2_\omega$.
\end{enumerate}

Moreover, any two processes $\mu$ and $\tilde{\mu}$ satisfying the properties above are a modification of each other.
We say that a stochastic process $\mu$ satisfying the above conditions is the unique  solution (up to modifications) of \eqref{eq:stochastic.continuity_diffusive}.
\end{prop}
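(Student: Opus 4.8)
The plan is to argue exactly as for the stochastic transport--diffusion equation in \autoref{prop:solution}, with the roles of the test function and of the solution interchanged and the Hilbertian $L^2_x$ estimates replaced throughout by the total-variation inequality of \autoref{rem:bound_C_TV}. Concretely, I would establish the cycle of implications $(1)\Rightarrow(2)\Rightarrow(3)\Rightarrow(1)$ and then read off uniqueness from the Wiener-chaos formulation $(3)$.

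\emph{Weak $\Leftrightarrow$ mild.} Fix $t>0$ and $\varphi\in C^\infty_c(\R^d)$. Since $A=\tfrac12 C(0):D^2+\kappa\tilde C:D^2$ is a non-degenerate, constant-coefficient, second-order operator, the curve $s\mapsto P_{t-s}\varphi$ is smooth on $[0,t]$ with values in the space of bounded functions with bounded gradient, and $\partial_s P_{t-s}\varphi=-AP_{t-s}\varphi$. I would then plug $P_{t-s}\varphi$ for $\varphi$ in \eqref{eq:weak_sol.continuity} and integrate by parts in the time variable $s$ — rigorously: partition $[0,t]$, freeze the test function to $P_{t-t_i}\varphi$ on each subinterval $[t_{i-1},t_i]$, apply \eqref{eq:weak_sol.continuity}, add a telescoping correction $\langle\mu_{t_{i-1}},(P_{t-t_i}-P_{t-t_{i-1}})\varphi\rangle=-\int_{t_{i-1}}^{t_i}\langle\mu_{t_{i-1}},AP_{t-s}\varphi\rangle\,\dd s$, sum, and pass to the limit in the mesh using the quadratic-variation bound of \autoref{lem:stoch_integr} (in the form of \autoref{rem:bound_C_TV}) together with \eqref{eq:contraction_TV}. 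The drift contributions then collapse to a single $\int_0^t\langle\mu_s,AP_{t-s}\varphi\rangle\,\dd s$ against itself and cancel, and \eqref{eq:mild_sol.continuity} results; the reverse implication is obtained by running the same computation backwards, using the semigroup law $P_rP_{r'}=P_{r+r'}$ and $\tfrac{\dd}{\dd r}P_r\varphi=AP_r\varphi$.

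\emph{Mild $\Leftrightarrow$ Wiener chaos, and uniqueness.} Starting from \eqref{eq:mild_sol.continuity} I would iterate: inside the stochastic integral insert the same identity applied at time $s$ against the test function $\sigma_k\cdot\nabla P_{t-s}\varphi$ (admissible after a density argument, since this function is bounded with rapidly decaying derivatives), and repeat $n$ times. This produces the truncation of \eqref{eq:wiener_sol.continuity} at level $n$ plus an iterated It\^o integral over the simplex $\{0<t_n<\dots<t_1<t\}$; combining It\^o's isometry, \autoref{rem:bound_C_TV}, the a priori bound \eqref{eq:contraction_TV}, and parabolic smoothing estimates for $P$ (arranged so that at each step one loses only an integrable power of the time increments, since the noise is not spatially differentiable), one obtains the quantitative decay of the remainder as $n\to\infty$ — this is exactly the Wiener-chaos bookkeeping used by Le Jan and Raimond for the solution map $S$, see also \cite[Lem. 2.8]{GalLuo2023} and \cite{CoMa23+} — giving $(2)\Rightarrow(3)$. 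The converse $(3)\Rightarrow(2)$ follows because the chaos expansion of $\langle\mu_s,\sigma_k\cdot\nabla P_{t-s}\varphi\rangle$ is a time-shift of the one of $\langle\mu_t,\varphi\rangle$, so that $\sum_{n\ge1}I^n_t(\varphi,\mu_0)$ telescopes onto the stochastic-integral term of \eqref{eq:mild_sol.continuity}. Finally, \eqref{eq:wiener_sol.continuity} expresses $\langle\mu_t,\varphi\rangle$, for fixed $t$ and $\varphi\in C^\infty_c(\R^d)$, as an $L^2_\omega$-limit of iterated stochastic integrals depending only on $\mu_0$; hence two solutions $\mu,\tilde\mu$ agree almost surely when tested against any fixed $\varphi$. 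Letting $\varphi$ range over a countable $\|\cdot\|_{\infty}$-dense subset of $C_0(\R^d)$ and using $\M=(C_0(\R^d))^\ast$ yields $\mu_t=\tilde\mu_t$ almost surely for every $t\ge 0$, i.e.\ a common modification.

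\emph{Main obstacle.} The algebraic skeleton is routine; the genuinely delicate point is the quantitative control of the iterated stochastic integrals (equivalently, convergence of the chaos series) in the \emph{measure-valued} setting. Here the $L^2_x$ Hilbertian estimates available for the transport equation are unavailable, so one must work with the total-variation inequality \eqref{eq:covariance_TV_inequaity} and the pathwise bound \eqref{eq:contraction_TV}, while the gradients that appear at every iteration must be absorbed by the semigroup without losing more than an integrable power of the time increments — a balance that is possible precisely because of the structure of the noise operator, as in the Le Jan--Raimond construction. A secondary (routine) wrinkle is the density step needed to apply the mild/chaos identities to the non-compactly-supported test functions $\sigma_k\cdot\nabla P_{t-s}\varphi$ arising in the iteration. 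I would carry out these estimates in the appendix, mirroring the proof behind \autoref{prop:solution}.
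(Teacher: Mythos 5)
Your overall architecture ($(1)\Leftrightarrow(2)$, then $(2)\Leftrightarrow(3)$, then uniqueness from the chaos expansion) matches the paper's intended argument, which is declared to be the same as the proof of \autoref{prop:solution} with $L^2_x$ pairings replaced by TV--$C^0_b$ pairings. The weak/mild equivalence and the uniqueness step are fine. The problem is in your step $(2)\Rightarrow(3)$.

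You propose to prove convergence of the chaos expansion by iterating the mild formula $N$ times and then showing ``quantitative decay of the remainder as $n\to\infty$'' via It\^o isometry, \autoref{rem:bound_C_TV}, \eqref{eq:contraction_TV} and parabolic smoothing, ``arranged so that at each step one loses only an integrable power of the time increments''. This arrangement does not exist. After the first iteration the test function is $\sigma_{k_1}\cdot\nabla P_{t-t_1}\varphi$; since $\sum_k|\nabla\sigma_k|^2$ is related to $D^2C(0)$, which is infinite for an $\alpha$-regular noise with $\alpha<1$, the gradient produced at each subsequent iteration must be absorbed by the semigroup, at a cost of $(t_{i-1}-t_i)^{-1/2}$ in sup norm. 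After It\^o's isometry the $n$-fold iterated integral is then bounded only by $\int_{\{0<t_n<\dots<t_1<t\}}\prod_{i}(t_{i-1}-t_i)^{-1}\,\dd t_n\cdots\dd t_1$, which diverges already at the second level. This is precisely why Le Jan--Raimond's construction is delicate, and why the paper's proof of \autoref{prop:solution} never estimates the remainder at all: it applies the projection $\Pi^{\mathbf K}$ onto a \emph{fixed} Wiener chaos of order $m$, observes that the order-$N$ remainder is orthogonal to all chaoses of order $\leq N-1$ (so the projection is identified exactly after finitely many iterations, each stochastic integral being well defined by \autoref{lem:stoch_integr} and \autoref{rem:bound_C_TV}), and then obtains $L^2_\omega$-convergence of the full series for free from orthogonality of the chaoses together with the a priori bound --- here $|\langle\mu_t,\varphi\rangle|\leq\|\varphi\|_{C^0}\|\mu_0\|_{TV}$ from \eqref{eq:contraction_TV}, so $\langle\mu_t,\varphi\rangle\in L^2_\omega$ and its chaos coefficients are automatically square-summable. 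You should replace your quantitative remainder estimate by this soft orthogonality argument; the rest of your proof then goes through as written.
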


A few comments on \autoref{prop:solution.continuity} are in order.

The proof of \autoref{prop:solution.continuity} unfolds similarly to that of \autoref{prop:solution} for the stochastic transport equation in \autoref{app:preliminaries}, and we omit it. 
Notice, however, that for the stochastic continuity equation we can accommodate initial conditions in $\mathcal{M}$.
This is possible for two reasons: 
First, we only need to use the first part of \autoref{lem:stoch_integr} to define the stochastic integrals above, as no divergence of the noise appears when formally integrating \eqref{eq:stochastic.continuity} by parts to transfer the derivatives on the test function $\varphi$;
Second, we can combine \eqref{eq:contraction_TV} with  \autoref{rem:bound_C_TV} applied to the vector-valued measures $\mathfrak{m} := \mu \nabla \varphi$ and $\mathfrak{m} := \mu \nabla P_{t-\cdot} \varphi$, giving the bound needed to satisfy the assumption of the first part of \autoref{lem:stoch_integr}.

In addition, we point out that \autoref{prop:solution.continuity} guarantees uniqueness of solutions to \eqref{eq:stochastic.continuity_diffusive}, but not existence.
This is contained in the next proposition, valid both in the case $\kappa>0$ and $\kappa=0$. For the lack of a suitable reference in the literature, we give a proof of it in \autoref{app:preliminaries}.

\begin{prop}
\label{prop:existence.SCE}
For any deterministic measure $\mu_0 \in \mathcal{M}$, there exists a progressively measurable solution $\mu$ to \eqref{eq:stochastic.continuity_diffusive}, 
with the properties that $\mathbb{P}$-a.s. $\mu \in C^{\gamma}_t H^{- d/2 - 2 - \eps}_x$ for all $\gamma < 1 / 2$ and $\eps > 0$, and such that \eqref{eq:contraction_TV} holds.
The solution $\mu$ is unique up to modifications, by the last part of \autoref{prop:solution.continuity}.
\end{prop}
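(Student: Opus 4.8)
The plan is to construct $\mu$ by smooth approximation driven by the \emph{same} noise $W$, to identify and pass to the limit through the Wiener chaos expansion \eqref{eq:wiener_sol.continuity}, and to read off uniqueness from the last part of \autoref{prop:solution.continuity}. First I would fix a standard mollifier and set $\mu_0^n:=\mu_0\ast\rho_n\in C_c^\infty(\R^d)$, so that $\|\mu_0^n\|_{\mathrm{TV}}=\|\mu_0^n\|_{L^1_x}\leq\|\mu_0\|_{\mathrm{TV}}$ and $\mu_0^n\to\mu_0$ weakly-$\ast$ in $\cM$. When $\kappa=0$, the pushforward $\mu^n_t:=(\phi_t)_\#\mu_0^n$ under the stochastic flow $\phi_t$ of $\dd X_t=\sum_k\sigma_k(X_t)\circ\dd W^k_t$ is a classical solution of \eqref{eq:stochastic.continuity_diffusive} (the flow exists and is smooth, the $\sigma_k$ lying in $C^\infty_b$ with $\sum_k\sigma_k\otimes\sigma_k=C$); when $\kappa>0$ one instead solves \eqref{eq:stochastic.continuity_diffusive} with datum $\mu_0^n$ by classical parabolic SPDE theory, stochastic parabolicity being strict thanks to the uniformly elliptic term $\kappa\tilde{C}:D^2$ in $A$. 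In either case $\mu^n$ is $\cF^W$-progressively measurable and $\PP$-a.s. in $C_tL^1_x\cap C_tL^2_x$, and applying the maximum principle to the positive and negative parts of $\mu_0^n$ yields $\sup_{t\geq0}\|\mu^n_t\|_{\mathrm{TV}}\leq\|\mu_0\|_{\mathrm{TV}}$ $\PP$-a.s. By \autoref{prop:solution.continuity}, $\mu^n$ is then the unique solution with datum $\mu_0^n$ and in particular admits the chaos representation \eqref{eq:wiener_sol.continuity}.

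Two uniform estimates will be needed. The total variation bound embeds $\mu^n$ into $L^\infty_t\cM_x\hookrightarrow L^\infty_tH^{-d/2-\eps}_x$, uniformly in $n$ and $\omega$. For the time modulus, I would test the weak formulation \eqref{eq:weak_sol.continuity} against $\varphi=e^{-i\xi\cdot}$ (admissible by density): for $0\leq s<t$,
\begin{equation*}
\langle\mu^n_t-\mu^n_s,e^{-i\xi\cdot}\rangle
=\sum_k\int_s^t\langle\mu^n_r,\sigma_k\cdot\nabla e^{-i\xi\cdot}\rangle\,\dd W^k_r
+\int_s^t\langle\mu^n_r,Ae^{-i\xi\cdot}\rangle\,\dd r,
\end{equation*}
whose drift is $O(|t-s|\langle\xi\rangle^2\|\mu_0\|_{\mathrm{TV}})$ and whose martingale part has quadratic variation $\int_s^t\|\cC^{1/2}(\mu^n_r\nabla e^{-i\xi\cdot})\|_{L^2_x}^2\,\dd r\leq|\xi|^2\,{\rm Tr}(C(0))\,|t-s|\,\|\mu_0\|_{\mathrm{TV}}^2$ by \autoref{rem:bound_C_TV}. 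Burkholder--Davis--Gundy then gives $\EE|\langle\mu^n_t-\mu^n_s,e^{-i\xi\cdot}\rangle|^{2p}\lesssim|t-s|^p\langle\xi\rangle^{4p}$ for $|t-s|\leq1$ and every $p\geq1$, with constant independent of $n$; integrating $\langle\xi\rangle^{-2m}\dd\xi$ for $m>d/2+2$ yields $\EE\|\mu^n_t-\mu^n_s\|_{H^{-m}_x}^{2p}\lesssim|t-s|^p$, so Kolmogorov's criterion gives $\mu^n\in C^\gamma_tH^{-m}_x$ $\PP$-a.s. for every $\gamma<1/2$, with moments bounded uniformly in $n$. Taking $m=d/2+2+\eps$ produces exactly the space in the statement.

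To pass to the limit I would fix $\varphi\in C_c^\infty(\R^d)$ and $t\geq0$ and work with \eqref{eq:wiener_sol.continuity} for $\mu^n$. By the same analysis underlying the convergence of the chaos expansion in \autoref{prop:solution} and \autoref{prop:solution_LJR} --- the It\^o isometry for iterated integrals, the $\ell^2$-summability in $k$ packaged by the covariance operator ($\sup_\xi\xi\cdot\hat C(\xi)\xi<\infty$, \autoref{lem:stoch_integr}) and the regularizing bound $\|P_s\|_{H^{a}\to H^{a+2\beta}}\lesssim s^{-\beta}$, which holds \emph{even when $\kappa=0$} because $C(0)$ is positive definite --- but now carried out with $\mu_0\in\cM\hookrightarrow H^{-d/2-\eps}_x$ in place of $\theta_0\in L^2_x$, one obtains $\|I^n_t(\varphi,\mu_0^n)\|_{L^2_\omega}\leq(C_\varphi t)^{n/2}(n!)^{-c}\|\mu_0\|_{\mathrm{TV}}$ uniformly in $n$, while $\mu_0^n\to\mu_0$ weakly-$\ast$ forces $I^n_t(\varphi,\mu_0^n)\to I^n_t(\varphi,\mu_0)$ in $L^2_\omega$ for each fixed $n$; dominated convergence over the chaos index gives $\langle\mu^n_t,\varphi\rangle\to\langle\mu_0,P_t\varphi\rangle+\sum_{n\geq1}I^n_t(\varphi,\mu_0)=:L(t,\varphi)$ in $L^2_\omega$. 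On the other hand, the uniform total variation bound makes $(\mu^n_t)_n$ weakly-$\ast$ precompact in $\cM$ for each $t$, and any limit point $\mu_t$ then satisfies $\langle\mu_t,\varphi\rangle=L(t,\varphi)$ for all $\varphi\in\cC_0$ and $\|\mu_t\|_{\mathrm{TV}}\leq\|\mu_0\|_{\mathrm{TV}}$; since $L(t,\cdot)$ pins the limit point down, the whole sequence converges and $t\mapsto\mu_t$ is well defined with values in $\cM$. Progressive measurability of $\mu$ as an $H^{-d/2-2-\eps}_x$-valued process would follow from \eqref{eq:wiener_sol.continuity} and Pettis' theorem exactly as in the proof of \autoref{prop:solution_LJR}, the path regularity from the bounds of the previous step together with Fatou, and \eqref{eq:contraction_TV} from the above; hence $\mu$ satisfies point $(3)$ of \autoref{prop:solution.continuity}, so it is the unique solution of \eqref{eq:stochastic.continuity_diffusive} up to modifications.

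The hard part will be the uniform-in-$n$ control of the chaos series for \emph{measure-valued} data in the third step --- that is, showing $\sum_{k_1,\dots,k_n}\|P_{t_n}\mathcal{J}^{k_n}_{t_{n-1}-t_n}\cdots\mathcal{J}^{k_1}_{t-t_1}\varphi\|$ produces, after the $L^2_\omega$ orthogonality and the nested time integration are exploited, a factor decaying fast enough in $n$ to be summable, with the dependence on $\mu_0$ entering only through $\|\mu_0\|_{\mathrm{TV}}$ (equivalently, through its $H^{-d/2-\eps}_x$-norm) and the summability in $k$ coming only from the global bounds of \autoref{lem:stoch_integr} and \autoref{rem:bound_C_TV} rather than from any pointwise decay of the individual $\sigma_k$. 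This is structurally the computation already behind \autoref{prop:solution} and \autoref{prop:solution_LJR} for the stochastic transport equation, only transplanted to negative Sobolev data; a more benign secondary point is the borderline stochastic parabolicity when $\kappa=0$ (where $\sum_k|\sigma_k(x)\cdot\xi|^2=\xi\cdot C(0)\xi$, so the transport part contributes no net diffusion), which is exactly why the base approximation there is built from the stochastic flow rather than from standard parabolic SPDE theory.
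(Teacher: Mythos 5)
Your uniform estimates (the TV bound, and the $C^\gamma_t H^{-d/2-2-\eps}_x$ modulus via Burkholder--Davis--Gundy applied to $\varphi=e^{-i\xi\cdot}$ plus Kolmogorov) match the paper's \autoref{lem:estim.stoch.integrals}. But your construction of the approximating sequence and your limit passage both contain genuine gaps. First, for $\kappa=0$ you build $\mu^n$ by pushing $\mu_0^n=\mu_0\ast\rho_n$ forward under ``the stochastic flow'' of $\dd X=\sum_k\sigma_k(X)\circ\dd W^k$, asserting the flow exists and is smooth because each $\sigma_k\in C^\infty_b$. This is false for the noise considered here: smoothness of the individual $\sigma_k$ is not enough, one needs summability of their derivatives (equivalently $C\in C^{2,\delta}$ near the diagonal), and for an $\alpha$-H\"older covariance with $\alpha<1$ one has $\sum_k|\nabla\sigma_k(x)|^2=\infty$. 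Indeed the absence of a flow of maps --- spontaneous stochasticity --- is the central phenomenon of the paper, so the base case of your approximation scheme does not exist. The paper avoids this by mollifying the \emph{noise} instead of the datum: $W^n=\Pi_nW$ has finitely many smooth modes, the associated SDE does generate a smooth flow, and $\mu^n_t$ is realized as a conditional law of the corresponding diffusion (with an auxiliary Brownian motion supplying the $\kappa\tilde C:D^2$ term), which also yields the TV bound by pushforward.

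Second, your limit passage hinges on the bound $\|I^n_t(\varphi,\mu_0^n)\|_{L^2_\omega}\leq(C_\varphi t)^{n/2}(n!)^{-c}\|\mu_0\|_{\mathrm{TV}}$, which you attribute to ``the same analysis'' as in \autoref{prop:solution} and \autoref{prop:solution_LJR}. No such quantitative bound is proved there: in those results the convergence of the chaos series is obtained for free from Wiener-chaos orthogonality together with the a priori $L^2_{\omega,x}$ (resp.\ TV) bound on a solution that is \emph{assumed to exist}, so using the series to \emph{construct} the solution is circular. A direct estimate of the iterated operators $P_{t_n}\mathcal{J}^{k_n}_{\cdot}\cdots\mathcal{J}^{k_1}_{\cdot}\varphi$ in a norm dual to $\mathcal{M}$ (say $H^{d/2+\eps}_x$ or $C^0_b$) would again require summability of derivatives of the $\sigma_k$ over $k$, which fails for rough noise; the $L^2$-level bounds of \autoref{lem:stoch_integr} and \autoref{rem:bound_C_TV} that you propose to rely on do not interact with the nested composition. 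So the ``dominated convergence over the chaos index'' step is unjustified, and this is not a technical loose end but the point where the strategy breaks. The paper instead extracts a weak limit of $\{\mu^n\}$ in $L^2(\Omega\times[0,T];\mathcal{H})$ with $\mathcal{H}=\mathcal{C}^{-1/2}(L^2_x)\cap H^{-d/2-\eps}_x$, uses convexity and closedness to preserve the a priori bounds, and passes to the limit directly in the weak formulation \eqref{eq:weak_sol.continuity} via weak continuity of the map $f\mapsto\int_0^\cdot\langle f_r,\dd W_r\rangle$ and the convergence $\hat C^n\to\hat C$ in $L^1$; the chaos expansion enters only afterwards, through \autoref{prop:solution.continuity}, to give uniqueness.
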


Next, we introduce duality formulae linking the Le Jan-Raimond solution map $S_t$ and the unique solution of the continuity equation \eqref{eq:stochastic.continuity_diffusive}. We remark that these formulae are valid also in the inviscid case $\kappa=0$, when  \eqref{eq:stochastic.continuity_diffusive} reduces to \eqref{eq:stochastic.continuity}.

Notice that $I^n_t(\varphi,\mu_0)$ in \eqref{eq:wiener_sol.continuity} differs from the Wiener chaos of the Le Jan-Raimond solution map $S_t$ only by the order of the solution and the test function:
\begin{align*}
\langle \mu_t ,\varphi \rangle
=
\langle \mu_0 , P_t \varphi \rangle
+
\sum_{n \geq 1}
I^n_t(\varphi,\mu_0),
\quad
\mbox{whereas}
\quad
\langle S_t f , \varphi \rangle 
=
\langle f , P_t \varphi \rangle 
+
\sum_{n \geq 1}
I^n_t(f,\varphi).
\end{align*}
Thus we can formally identify the Le Jan-Raimond solution map $S_t$ and the (forward) stochastic continuity equation $\mu_t$ as adjoint of each other. More precisely we have the following:
\begin{lemma} \label{lem:duality}
For every $f \in C_c(\R^d)$, $\mu_0 \in L^2_x$, and $t \geq 0$ it holds $\PP$-almost surely
\begin{align*}
\langle S_t f, \mu_0 \rangle = \langle f, \mu_t \rangle.
\end{align*}
Furthermore, given $x \in \R^d$, denote $\mu^x$ the unique solution of \eqref{eq:stochastic.continuity_diffusive} with Dirac delta initial condition $\mu^x_0 = \delta_x$.
Then it holds for every $f \in L^2_x$ and $\PP \otimes \mathscr{L}^d$-almost every $(\omega,x) \in \Omega \times \R^d$ 
\begin{align*}
S_t f (x,\omega) =
\langle f, \mu^{x}_t \rangle.
\end{align*}
\end{lemma}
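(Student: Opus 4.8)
The plan is to exploit the Wiener chaos expansions already on the table: by construction the chaos of $\langle S_t f,\varphi\rangle$ and of $\langle\mu_t,\varphi\rangle$ coincide up to swapping the roles of $f$ and $\varphi$. First I would take $f,\varphi\in C^\infty_c(\R^d)$ (so that all pairings, semigroup actions and stochastic integrals are unambiguously defined), and write out both sides term by term. On one side, $\langle S_t f,\mu_0\rangle$ has $n$-th chaos $I^n_t(f,\mu_0)$; on the other, with $\mu$ the solution of \eqref{eq:stochastic.continuity_diffusive} started at $\mu_0\in L^2_x$, the $n$-th chaos of $\langle\mu_t,\varphi\rangle$ is $I^n_t(\varphi,\mu_0)$ in the notation of \autoref{prop:solution.continuity}. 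The key algebraic observation is that $I^n_t(f,\mu_0)$ and $I^n_t(\varphi,\mu_0)$ are \emph{the same multiple stochastic integral} once one uses the self-adjointness of $P_t$ in $L^2_x$ and the fact that the operators $\mathcal{J}^k_s:\psi\mapsto\div(\sigma_k P_s\psi)$ are, up to sign, the $L^2_x$-adjoints of $\mathcal{I}^k_s:\psi\mapsto\sigma_k\cdot\nabla P_s\psi$. Summing over $n$ — both series converging in $L^2_\omega$ by \autoref{prop:solution} and \autoref{prop:solution.continuity} — and adding the $0$-th order terms $\langle f,P_t\mu_0\rangle=\langle P_t f,\mu_0\rangle$, one gets $\langle S_t f,\mu_0\rangle=\langle f,\mu_t\rangle$ as an identity in $L^2_\omega$, hence $\PP$-a.s.; a density argument in $f$ (using the $L^2_x$-contraction of $S_t$ and of $\mu_t$ against $f\in C_c$, plus $\mu_0\in L^2_x$) extends it to all $f\in C_c(\R^d)$.

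For the second identity I would specialize $\mu_0=\delta_x$. Here the subtlety is that $\langle f,\mu^x_t\rangle$ must be read as a function of $x$: one cannot invoke the first identity pointwise in $x$ since $\delta_x\notin L^2_x$. Instead I would test against an arbitrary $g\in C^\infty_c(\R^d)$ in the $x$-variable and show $\int_{\R^d} S_t f(x)\,g(x)\,\dd x = \int_{\R^d}\langle f,\mu^x_t\rangle g(x)\,\dd x$ $\PP$-a.s. The left side is $\langle S_t f, g\rangle$, which by the first part (with $\mu_0=g\in L^2_x\cap C_c$, $g$ playing the role of $\mu_0$) equals $\langle f,\mu^g_t\rangle$ where $\mu^g$ solves \eqref{eq:stochastic.continuity_diffusive} from $g\,\mathscr{L}^d$. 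For the right side I would use linearity/superposition of the continuity equation: $\mu^g_t = \int_{\R^d} \mu^x_t\, g(x)\,\dd x$, which follows because both sides solve \eqref{eq:stochastic.continuity_diffusive} with the same initial datum $g\,\mathscr{L}^d$ and uniqueness holds by \autoref{prop:solution.continuity} — one just checks that $x\mapsto\mu^x$ is suitably measurable (from \autoref{prop:existence.SCE}, it lives in $C^\gamma_t H^{-d/2-2-\eps}_x$, and the chaos representation \eqref{eq:wiener_sol.continuity} shows joint measurability in $(\omega,x)$) so that the integral makes sense and commutes with the pairings and stochastic integrals by a Fubini argument. Matching the two sides for all $g$ and using that $S_t f(\cdot)$ and $\langle f,\mu^{(\cdot)}_t\rangle$ are both in $L^2_x$ ($\PP$-a.s., the former by the Le Jan–Raimond $L^2$ bound, the latter by the same chaos estimate) gives the claimed $\PP\otimes\mathscr{L}^d$-a.e. equality.

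The main obstacle I anticipate is not the algebra of chaos coefficients — that is bookkeeping with adjoints of $P_t$ and $\mathcal{I}^k_s$ versus $\mathcal{J}^k_s$ — but the measurability and Fubini justifications in the second part: making rigorous sense of the map $x\mapsto\mu^x_t$ as a measurable family, of the superposition identity $\mu^g_t=\int \mu^x_t\,g(x)\,\dd x$ in the negative-regularity space where the solutions live, and of interchanging $\int_{\R^d}\dd x$ with the Itô integrals defining the chaos $I^n_t(\cdot,\delta_x)$. A clean way around this is to avoid superposition entirely and instead run the chaos-matching argument directly with $\mu_0=\delta_x$: the point pairings $\langle \delta_x, P_{t_n}\mathcal{J}^{k_n}_{\cdots}\cdots\varphi\rangle = (P_{t_n}\mathcal{J}^{k_n}_{\cdots}\cdots\varphi)(x)$ are honest functions of $x$, the series \eqref{eq:wiener_sol.continuity} converges in $L^2_\omega$ for each fixed $x$ and (integrating its square in $x$) in $L^2_\omega L^2_x$, and one then tests against $g(x)\,\dd x$ and recognizes $\int (P_{t_n}\mathcal{J}^{k_n}_{\cdots}\varphi)(x) g(x)\,\dd x = \langle g, P_{t_n}\mathcal{J}^{k_n}_{\cdots}\varphi\rangle$, which is exactly the $n$-th chaos of $\langle S_t \varphi, g\rangle$ after relabelling — reducing the second identity to the first. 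I would present the proof this way, flagging the $\PP\otimes\mathscr L^d$-null set as unavoidable because $\langle f,\mu^x_t\rangle$ is only defined up to such a set once $f$ ranges over $L^2_x$.
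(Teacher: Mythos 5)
Your first identity is proved essentially as in the paper: the chaos expansion of $\langle \mu_t,f\rangle$ in \autoref{prop:solution.continuity} is, term by term, $I^n_t(f,\mu_0)$, which is literally the $n$-th chaos of $\langle S_tf,\mu_0\rangle$ in the Le Jan--Raimond expansion, so after using self-adjointness of $P_t$ on the zeroth-order term the two series coincide; the density step from $C^\infty_c$ to $C_c$ is the same. For the second identity your preferred route is genuinely different from the paper's. The paper mollifies the Dirac mass, applies the first identity to $\chi^n(x-\cdot)$, and passes to the limit in $L^2_{\omega,x}$ on the left and $\PP$-a.s. on the right. You instead run the chaos expansion directly at $\mu_0=\delta_x$ (legitimate, since the pairings $\langle\delta_x,P_{t_n}\mathcal{J}^{k_n}_{\cdots}\cdots\varphi\rangle$ are smooth functions of $x$), test against $g(x)\,\dd x$, and use a stochastic Fubini to recognize the chaos of $\langle S_tf,g\rangle$. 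Both work; yours trades the paper's limit $\langle f,\mu^{\chi^n(x-\cdot)}_t\rangle\to\langle f,\mu^x_t\rangle$ for a Fubini interchange between $\int\dd x$ and iterated It\^o integrals with deterministic integrands, which is standard. Your instinct to avoid the superposition identity $\mu^g_t=\int\mu^x_t\,g(x)\,\dd x$ is sound, as that would require exactly the measurability bookkeeping you wanted to sidestep.

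The genuine gap is the extension of the second identity from $f\in C_c(\R^d)$ to all $f\in L^2_x$, which your proposal dismisses by ``flagging the null set.'' The difficulty is not the null set but that $\langle f,\mu^x_t\rangle$ is not \emph{a priori} defined: $f$ is an $L^2$ equivalence class and $\mu^x_t$ is only a random finite measure, which could in principle charge Lebesgue-null sets, so the pairing must be constructed and shown independent of the representative and of the approximating sequence. The paper resolves this by taking $f^n\in C_c$ with $\|f^{n+1}-f^n\|_{L^2_x}\leq 2^{-n}$, applying the already-proved identity to $\sum_{n\leq M}|f^{n+1}-f^n|$, using non-negativity of $\mu^x_t$ and monotone convergence to show $\{\langle f^n,\mu^x_t\rangle\}_n$ is Cauchy for $\PP\otimes\mathscr{L}^d$-a.e.\ $(\omega,x)$, and then identifying the limit with $S_tf$ via the bound
\begin{align*}
\mathbb{E}\int_{\R^d}\langle |f|,\mu^x_t\rangle^2\,\dd x
\leq \int_{\R^d} P_t(|f|^2)(x)\,\dd x = \|f\|_{L^2_x}^2,
\end{align*}
which in turn uses $\mathbb{E}[\mu^x_t]=P_t\delta_x$ and Jensen's inequality. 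If instead you define $\langle f,\mu^x_t\rangle$ for $f\in L^2_x$ as the $L^2_{\omega,x}$-limit of the chaos series, the identity with $S_tf(x)$ becomes tautological, but you have then not shown that this object is the integral of $f$ against the measure $\mu^x_t$, which is the actual content of the statement. Some version of the paper's positivity/monotone-convergence argument is therefore needed and should be supplied.
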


\subsection{Two-point self-correlation function and regularity of random distributions}\label{subsec:two_point_selfcor}

Let $\tilde{\theta}^\kappa$ be the unique solution of \eqref{eq:spde_viscous_approx} with $\kappa \in [0,1/2)$
and denote $F^\kappa$ the two-point self-correlation function:
\begin{align} \label{eq:two-point_correlation}
F^\kappa(t,z) 
:= 
\mathbb{E}\left[ 
\int_{\R^d}\tilde{\theta}^\kappa_t(x+z)\tilde{\theta}^\kappa_t(x)\dd x \right]
= \EE[\langle \tilde{\theta}^\kappa_t(\cdot+z), \tilde{\theta}^\kappa_t\rangle_{L^2_x}]
=\EE[(\tilde\theta^\kappa_t\ast\tilde\theta^{\kappa-}_t)(z)],
\end{align}
where $f^-(x)=f(-x)$ denotes the reflection of a function (or distribution) $f$.
In the next lemma we derive a closed PDE for $F^\kappa$.
Recall that we have defined $Q(z)=C(0)-C(z)$.

\begin{lemma}\label{lem:PDE_twopoint_transport}
For every $\kappa \in [0,1/2)$ the two-point self-correlation function $F^\kappa$ is a weak solution of the parabolic PDE
\begin{align} \label{eq:PDE_F}
\partial_t F^\kappa 
= 
(1-\kappa) Q : D^2_z F^\kappa + \kappa C(0): D^2_z F^\kappa.
\end{align}

\end{lemma}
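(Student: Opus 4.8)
The plan is to derive \eqref{eq:PDE_F} from It\^o's formula applied to products of linear functionals of $\tilde\theta^\kappa$: this first yields a closed equation for the full two-point correlator $G_t:=\EE[\tilde\theta^\kappa_t\otimes\tilde\theta^\kappa_t]\in L^2(\R^d\times\R^d)$, which is then reduced to an equation for $F^\kappa$ by integrating out the ``center of mass'' variable.

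\emph{Step 1.} Fix $\varphi,\psi\in C^\infty_c(\R^d)$. Writing \eqref{eq:spde_viscous_approx} in the form \eqref{eq:spde_viscous} with noise $\sqrt{1-\kappa}\,W$ and generator $\tfrac12 C(0):D^2$, \autoref{prop:solution}(1) gives that $t\mapsto\langle\tilde\theta^\kappa_t,\varphi\rangle$ and $t\mapsto\langle\tilde\theta^\kappa_t,\psi\rangle$ are continuous semimartingales with martingale parts $\sqrt{1-\kappa}\sum_k\int_0^\cdot\langle\tilde\theta^\kappa_s,\div(\sigma_k\varphi)\rangle\dd W^k_s$ (and likewise with $\psi$) and drifts $\langle\tilde\theta^\kappa_s,\tfrac12 C(0):D^2\varphi\rangle\dd s$ (and likewise). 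Applying the It\^o product rule to $\langle\tilde\theta^\kappa_t,\varphi\rangle\langle\tilde\theta^\kappa_t,\psi\rangle$ and taking expectations, the martingale term vanishes, being a true martingale: by \autoref{lem:stoch_integr}, \autoref{ass:well_posed} and \eqref{eq:contraction_L2}, $\EE\int_0^t\sum_k\langle\tilde\theta^\kappa_s,\div(\sigma_k\varphi)\rangle^2\dd s\lesssim_\varphi\int_0^t\EE\|\tilde\theta^\kappa_s\|_{L^2_x}^2\dd s\le t\|\theta_0\|_{L^2_x}^2<\infty$. The quadratic covariation of the two martingales is $(1-\kappa)\sum_k\langle\tilde\theta^\kappa_s,\div(\sigma_k\varphi)\rangle\langle\tilde\theta^\kappa_s,\div(\sigma_k\psi)\rangle\dd s$, and by the representation \eqref{eq:representation_covariance} in the form $\sum_k\sigma_k^i(x)\sigma_k^j(y)=C_{ij}(x-y)$ one obtains $\EE\big[\sum_k\langle\tilde\theta^\kappa_s,\div(\sigma_k\varphi)\rangle\langle\tilde\theta^\kappa_s,\div(\sigma_k\psi)\rangle\big]=\big\langle G_s,\,\textstyle\sum_{ij}\partial_{x_i}\partial_{y_j}\big(C_{ij}(x-y)\,\varphi\otimes\psi\big)\big\rangle$. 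Since $\EE[\langle\tilde\theta^\kappa_t,\varphi\rangle\langle\tilde\theta^\kappa_t,\psi\rangle]=\langle G_t,\varphi\otimes\psi\rangle$ by Fubini, this shows that $G$ is a weak solution of
\[
\partial_t G_t=\tfrac12 C(0):D^2_x G_t+\tfrac12 C(0):D^2_y G_t+(1-\kappa)\textstyle\sum_{ij}C_{ij}(x-y)\,\partial_{x_i}\partial_{y_j}G_t ,
\]
where for $\kappa>0$ the last term is a genuine $L^1_{\mathrm{loc}}$ function since $C$ is bounded and $\partial_{x_i}\partial_{y_j}G_s=\EE[\partial_i\tilde\theta^\kappa_s\otimes\partial_j\tilde\theta^\kappa_s]\in L^2$ for a.e.\ $s$ by the dissipation identity \eqref{eq:contraction_L2k} ($C(0)$ being non-degenerate).

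\emph{Step 2.} By \eqref{eq:two-point_correlation}, $F^\kappa(t,z)=\int_{\R^d}G_t(y+z,y)\,\dd y$, so $\langle F^\kappa_t,\chi\rangle=\int\!\int G_t(x,y)\chi(x-y)\,\dd x\dd y$ for $\chi\in C^\infty_c(\R^d)$, the integral converging absolutely as $\int_{\R^d}|G_t(y+z,y)|\dd y\le\EE\|\tilde\theta^\kappa_t\|_{L^2_x}^2$. Setting $f(z):=\int g(y+z,y)\dd y$, one has the distributional identities
\[
\int(\partial_{x_i}\partial_{x_j}g)(y+z,y)\dd y=\int(\partial_{y_i}\partial_{y_j}g)(y+z,y)\dd y=\partial_{z_i}\partial_{z_j}f(z),\qquad
\int(\partial_{x_i}\partial_{y_j}g)(y+z,y)\dd y=-\partial_{z_i}\partial_{z_j}f(z),
\]
obtained by testing against $\chi(z)$ and integrating by parts (each side reduces to $\int\!\int g(x,y)(\partial_i\partial_j\chi)(x-y)\dd x\dd y$, with a sign change in the mixed case). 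Applying these to the $G$-equation — and using crucially that $C_{ij}((y+z)-y)=C_{ij}(z)$ does not depend on the integration variable $y$ and hence factors out — yields
\[
\partial_t F^\kappa=\tfrac12 C(0):D^2_z F^\kappa+\tfrac12 C(0):D^2_z F^\kappa-(1-\kappa)C(z):D^2_z F^\kappa=\big(C(0)-(1-\kappa)C(z)\big):D^2_z F^\kappa,
\]
which is \eqref{eq:PDE_F} because $C(0)-(1-\kappa)C(z)=(1-\kappa)\big(C(0)-C(z)\big)+\kappa C(0)=(1-\kappa)Q(z)+\kappa C(0)$.

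\emph{Step 3 and the main obstacle.} For $\kappa>0$ the coefficient $(1-\kappa)Q(z)+\kappa C(0)\ge\kappa C(0)$ is uniformly elliptic and bounded, $\tilde\theta^\kappa$ enjoys the $H^1_x$ regularity coming from \eqref{eq:contraction_L2k}, and all the above manipulations are rigorous (in particular $F^\kappa(t,\cdot)$ is smooth for $t>0$ by Schauder and solves \eqref{eq:PDE_F} classically). The genuinely delicate point is $\kappa=0$: then \eqref{eq:PDE_F} degenerates at $z=0$ (as $Q(0)=0$), $\theta$ is only $L^2_x$-valued, and the term $Q(z):D^2_z F^0$ cannot be made sense of directly — this is exactly the difficulty behind the regularity theory of \autoref{sec:regularity_PDE}. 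I would therefore obtain the case $\kappa=0$ by letting $\kappa\downarrow0$: by \autoref{lem:approx}, $\tilde\theta^\kappa_t\to\theta_t$ strongly in $L^2_\omega L^2_x$, so by bilinearity, the bound $\|u\ast v^-\|_{L^\infty_x}\le\|u\|_{L^2_x}\|v\|_{L^2_x}$ and Cauchy--Schwarz, $\EE\|\tilde\theta^\kappa_t\ast\tilde\theta^{\kappa-}_t-\theta_t\ast\theta_t^-\|_{L^\infty_x}\to0$, i.e.\ $F^\kappa(t,\cdot)\to F^0(t,\cdot)$ uniformly and boundedly; passing to the limit in the (linear, $\kappa$-continuous) weak formulation of \eqref{eq:PDE_F}, using $(1-\kappa)Q+\kappa C(0)\to Q$ locally uniformly, gives the statement for $\kappa=0$. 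The main obstacle is thus the meaning of the mixed second-order term when the solution is non-smooth: handled for $\kappa>0$ by parabolic/energy regularity, and for $\kappa=0$ only through this limiting argument.
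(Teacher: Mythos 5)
Your proof is correct and follows essentially the same route as the paper: an It\^o computation for the two-point correlator whose expectation kills the martingale part and whose quadratic covariation produces the $(1-\kappa)\,C(x-y):\nabla_x\nabla_y$ term, followed by integrating out the base point to pass to the $z$-variable (with the same sign flip $\int \partial_{x_i}\partial_{y_j} \mapsto -\partial_{z_i}\partial_{z_j}$), and a reduction of the case $\kappa=0$ to $\kappa>0$ via the strong $L^2_{\omega,x}$ convergence of \autoref{lem:approx}. Your weak formulation against $\varphi\otimes\psi$ is simply the rigorous version of the pointwise It\^o argument that the paper carries out formally.
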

\begin{proof}
In view of the $L^2_{\omega,x}$ convergence $\tilde{\theta}^\kappa_t \to \theta_t$ of \autoref{lem:approx}, we can limit ourselves to the case $\kappa>0$.
The basic idea is to apply It\^o Formula to the process $t \mapsto \tilde{\theta}^\kappa_t(x+z) \tilde{\theta}^\kappa_t(x)$ for every fixed $x,z \in \R^d$. Rigorously, one should replace $\tilde{\theta}^\kappa$ by a smooth mollification $\tilde{\theta}^{\kappa,\eps}$, apply It\^o Formula to the mollifications and then pass to the limit,
but we omit the technical details and proceed formally.

The computation is similar to that of \cite[Proposition 2.1]{rowan2023}.
Let us denote for simplicity $y:=x+z$ and $r^\kappa_t(x,y) := \EE[\tilde{\theta}^\kappa_t(x) \tilde{\theta}^\kappa_t(y)] $. It\^o Formula gives
\begin{align*}
\frac{\dd}{\dd t} r^\kappa_t(x,y)
&= 
\frac12 \EE\left[ \tilde{\theta}^\kappa_t(x) \, C(0):D^2_y  \tilde{\theta}^\kappa_t(y) 
\right]
+ 
\frac12 \EE\left[  C(0):D^2_x \tilde{\theta}^\kappa_t(x) \, \tilde{\theta}^\kappa_t(y)\right]
\\
&\quad+ 
(1-\kappa)\EE\left[ \sum_k \sigma_k(x)\cdot\nabla_x \tilde{\theta}^\kappa_t(x)\, \sigma_k(y)\cdot\nabla_y \tilde{\theta}^\kappa_t(y)\right].
\end{align*}
Observing that
\begin{align*}
\sum_k \sigma_k(x)\cdot\nabla_x \tilde{\theta}^\kappa_t(x)\, \sigma_k(y)\cdot\nabla_y \tilde{\theta}^\kappa_t(y)
&= 
C(x-y) : \nabla_x \tilde{\theta}^\kappa_t(x)\otimes \nabla_y \tilde{\theta}^\kappa_t(y)
\end{align*}
and that
\begin{align*}
\partial_{x_i} r^\kappa_t(x,y)
= 
\EE[\partial_{x_i} \tilde{\theta}^\kappa_t(x)\tilde{\theta}^\kappa_t(y)], \quad 
\partial_{y_j} r^\kappa_t(x,y)
= 
\EE[\tilde{\theta}^\kappa_t(x) \partial_{y_j}\tilde{\theta}^\kappa_t(y)],
\end{align*}
we can collect the terms above to arrive at
\begin{equation}\label{eq:aux006}
\partial_t r^\kappa_t(x,y)
= 
\frac{1}{2} C(0) : (D^2_x+D^2_y) r^\kappa_t(x,y) 
+  
(1-\kappa)C(x-y): \nabla_x \nabla_y r^\kappa_t(x,y).
\end{equation}
Since 
\begin{align*}
\int_{\R^d}\partial_{x_i} r^\kappa_t (x,y) \dd x
= 
- \partial_{z_i} F^\kappa(t,z),
\quad
\int_{\R^d} \partial_{y_i} r^\kappa_t (x,y) \dd x
= 
\partial_{z_i} F^\kappa(t,z),
\end{align*}
we obtain the desired result upon integrating \eqref{eq:aux006} with respect to $x$ and rewriting the matrix $C(0)$ as $(1-\kappa)C(0) + \kappa C(0)$, so to isolate the coefficient $(1-\kappa)C(0) - (1-\kappa)C(x-y) = (1-\kappa) Q(z)$.
\end{proof}

\begin{remark}\label{rem:PDE_twopoint_continuity}
Replacing $\tilde{\theta}^\kappa$ with solutions of the diffusive stochastic continuity equation 
\begin{equation}\label{eq:viscous.stochastic.continuity}
\dd \tilde{\mu}^\kappa_t 
+ 
\sqrt{1-\kappa}\nabla \cdot (\tilde{\mu}^\kappa_t \dd W_t) 
= 
\frac{1}{2} C(0) : D^2 \tilde{\mu}^\kappa_t\dd t, 
\end{equation}
with similar arguments as above one obtains the closed PDE satisfied by the two-point self-correlation  $G^\kappa := \mathbb{E} \int_{\R^d} \tilde{\mu}^\kappa(\cdot+x) \tilde{\mu}^\kappa(x) \dd x$, namely
\begin{align} \label{eq:PDE_G}
\partial_t G^\kappa = (1-\kappa) D_z^2 : (Q G^\kappa) + \kappa C(0): D^2_z G^\kappa.
\end{align} 
\end{remark}

\begin{remark} \label{rmk:isotropy}
In the special case $C(0) = 2c_0 I_d$ for some $c_0>0$, valid for instance when the noise is isotropic, we obtain the simplified expressions for \eqref{eq:PDE_F} and \eqref{eq:PDE_G}: 
\begin{align*}
\partial_t F^\kappa 
&= 
(1-\kappa) Q : D^2_z F^\kappa + 2 c_0 \kappa \Delta  F^\kappa,
\\
\partial_t G^\kappa 
&= 
(1-\kappa) D_z^2 : (Q G^\kappa) + 2 c_0 \kappa \Delta G^\kappa.
\end{align*}
\end{remark}

The interest in the self-correlation function, besides the PDE \eqref{eq:PDE_F}, comes from the fact that its regularity is dictated exclusively by its local behaviour around $0$ and is linked to the regularity of the original random function from which it was generated, as the next results show.

\begin{lemma}\label{lem:Sobolev-Holder}
Let $f$ be a random function in $L^2(\Omega;L^2_x)$ and let $F=F[f]:=\EE[f\ast f^-]$ be the associated self-correction function. Then for every $s \in (0,1]$ the following hold:
\begin{enumerate}
      \item If there exist $l>0$ such that 
    \begin{align*}
        \llbracket F\rrbracket_{I^{2s}(l)} := \sup_{|z|< l} \frac{|F(z)-F(0)|}{|z|^{2s}}<\infty,
    \end{align*}
    then for any $\delta>0$ sufficiently small, $f\in L^2(\Omega;H^{s-\delta}_x)$ with $\EE[\| f\|_{H^{s-\delta}_x}^2]\lesssim_{l,s,\delta} \llbracket F\rrbracket_{I^{2s}(l)} + F(0)$.
    \item Conversely, if $f\in L^2(\Omega;H^s_x)$, then $F[f]\in C^{2s}_x$ with $\| F\|_{C^{2s}_x} \lesssim_s \EE[\|f\|_{H^{s}_x}^2]$.
\end{enumerate}
In particular, if $\llbracket F\rrbracket_{I^{2s}(l)}<\infty$, then $F\in C^{2s-\delta}_x$ for every small $\delta>0$.
\end{lemma}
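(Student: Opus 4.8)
The plan is to pass to the Fourier side, where the whole statement linearises. Writing $\rho(\xi):=\mathbb{E}[|\hat f(\xi)|^2]\geq 0$, the assumption $f\in L^2(\Omega;L^2_x)$ gives $\rho\in L^1(\R^d)$ with $\|\rho\|_{L^1_\xi}\simeq\mathbb{E}\|f\|_{L^2_x}^2=F(0)$, and the self-correlation function $F=\mathbb{E}[f\ast f^-]$ has $\hat F=\rho$ up to a fixed normalisation constant; in particular $F$ is bounded, continuous and even, and one has the two identities
\begin{align*}
F(0)-F(z)=\int_{\R^d}(1-\cos(\xi\cdot z))\,\rho(\xi)\dd\xi=\tfrac12\,\mathbb{E}\|\delta_z f\|_{L^2_x}^2,\qquad \mathbb{E}\|f\|_{H^\sigma_x}^2\simeq\int_{\R^d}(1+|\xi|^2)^\sigma\rho(\xi)\dd\xi.
\end{align*}
Both parts of the lemma then reduce to elementary estimates for the single finite nonnegative measure $\rho(\xi)\dd\xi$.

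For part (1), I would take $\sigma:=s-\delta\in(0,1)$ (possible since $s\in(0,1]$ and $\delta$ is small) and invoke the Gagliardo--Slobodeckij characterisation $\mathbb{E}\|f\|_{H^\sigma_x}^2\simeq F(0)+\int_{\R^d}|z|^{-d-2\sigma}\,\mathbb{E}\|\delta_z f\|_{L^2_x}^2\dd z$, which holds by Tonelli. Splitting the $z$-integral at $|z|=l$: on $\{|z|<l\}$ the hypothesis gives $\mathbb{E}\|\delta_z f\|_{L^2_x}^2=2(F(0)-F(z))\leq 2\llbracket F\rrbracket_{I^{2s}(l)}|z|^{2s}$, contributing $\lesssim\llbracket F\rrbracket_{I^{2s}(l)}\int_0^l r^{2\delta-1}\dd r\lesssim_{l,\delta}\llbracket F\rrbracket_{I^{2s}(l)}$; on $\{|z|\geq l\}$ the trivial bound $\mathbb{E}\|\delta_z f\|_{L^2_x}^2\leq 4F(0)$ contributes $\lesssim F(0)\int_l^\infty r^{-1-2\sigma}\dd r\lesssim_{l,s,\delta}F(0)$. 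Adding the two gives $\mathbb{E}\|f\|_{H^{s-\delta}_x}^2\lesssim_{l,s,\delta}\llbracket F\rrbracket_{I^{2s}(l)}+F(0)$.

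For part (2), I would bound second-order increments of $F$ directly from $F(z)\propto\int_{\R^d}\cos(\xi\cdot z)\,\rho(\xi)\dd\xi$. Using the identity $\cos(\xi\cdot(z+h))-2\cos(\xi\cdot z)+\cos(\xi\cdot(z-h))=-2\cos(\xi\cdot z)(1-\cos(\xi\cdot h))$ and the elementary bound $1-\cos t\leq\min(2,t^2)\leq 2|t|^{2s}$, valid for all $t\in\R$ and $s\in(0,1]$, one gets
\begin{align*}
|F(z+h)-2F(z)+F(z-h)|\lesssim|h|^{2s}\int_{\R^d}|\xi|^{2s}\rho(\xi)\dd\xi\lesssim|h|^{2s}\,\mathbb{E}\|f\|_{H^s_x}^2,
\end{align*}
uniformly in $z$; combined with $\|F\|_{L^\infty_x}\leq F(0)\leq\mathbb{E}\|f\|_{H^s_x}^2$, this is exactly the finite-difference characterisation of the H\"older--Zygmund norm $\|F\|_{C^{2s}_x}$ (it coincides with the classical H\"older norm for $2s\notin\Z$, and for $s=1$ one even has $D^2F(z)\propto\int_{\R^d}\xi\otimes\xi\,\cos(\xi\cdot z)\,\rho(\xi)\dd\xi$ bounded and continuous, so $F\in C^2_x$). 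The last assertion then follows by composing the two parts: if $\llbracket F\rrbracket_{I^{2s}(l)}<\infty$, part (1) with $\delta/2$ in place of $\delta$ gives $f\in L^2(\Omega;H^{s-\delta/2}_x)$, and part (2) applied with exponent $s-\delta/2\in(0,1)$ gives $F\in C^{2s-\delta}_x$.

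I expect the only genuinely delicate point to be the unavoidable $\delta$-loss in part (1): the hypothesis $\llbracket F\rrbracket_{I^{2s}(l)}<\infty$ says precisely that $f$ lies, in mean square, in the Besov space $B^s_{2,\infty}$ (a single-scale modulus bound $\mathbb{E}\|\delta_z f\|_{L^2_x}^2\lesssim|z|^{2s}$), which embeds into $H^{s-\delta}=B^{s-\delta}_{2,2}$ but not into $H^s$; equivalently, that bound only controls the spectral tails $\int_{|\xi|\geq R}\rho\lesssim R^{-2s}$, and reconstructing $\int|\xi|^{2\sigma}\rho<\infty$ from dyadic tails forces $\sigma<s$. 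Everything else is a routine manipulation of the two displayed identities.
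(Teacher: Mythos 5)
Your proof is correct. Part (1) is essentially the paper's own argument: the paper likewise writes $\mathbb{E}\|f\|_{\dot H^{s-\delta}_x}^2=2\int|z|^{-d-2s+2\delta}(F(0)-F(z))\,\dd z$ and splits at $|z|=l$, using the increment bound near the origin and $|F(z)|\leq F(0)$ at infinity; your exponent bookkeeping matches. For part (2) you diverge: the paper disposes of it in one line by noting that the convolution of two $H^s_x$ functions lies in $C^{2s}_x$ (Cauchy--Schwarz on the Fourier side gives $(1+|\xi|^2)^{s}\widehat{f\ast f^-}\in L^1$), then takes expectations, whereas you run a real-variable argument through second-order differences and the Zygmund characterisation. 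Both work; the paper's route is shorter and handles all $s\in(0,1]$ uniformly, while yours is more self-contained and makes the mechanism $1-\cos t\lesssim|t|^{2s}$ explicit. The one point to watch in your version is the borderline case $2s=1$ ($s=1/2$): the second-difference bound alone only places $F$ in the Zygmund class $\Lambda_1$, which is strictly larger than $\mathrm{Lip}$; but this is immediately repaired by the Fourier identities you already set up, since $\int(1+|\xi|^2)^{1/2}\rho(\xi)\,\dd\xi<\infty$ gives $\nabla F$ bounded and continuous (just as you note for $s=1$). Your closing composition of (1) and (2) for the final assertion is the same as the paper's.
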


\begin{proof}
To prove $(1)$, since by assumption $f\in L^2(\Omega;L^2_x)$, it suffices to check finiteness of the Gagliardo--Niremberg seminorm. It holds
\begin{align*}
\mathbb{E}[ \| f \|^2_{\dot{H}^{s-\delta}_x}]
& = \int_{\R^d}
\frac{ \mathbb{E} [\| f(\cdot+z) -f(\cdot) \|^2_{L^2_x}] }{|z|^{d+2s-2\delta}} \dd z
\\
& = 2 \int_{\R^d} \frac{ \mathbb{E} [\|f \|^2_{L^2_x} - \langle  f(\cdot+z), f\rangle_{L^2_x}] }{|z|^{d+2s-2\delta}} \dd z 
= 2 \int_{\R^d} \frac{ F(0) - F(z) }{|z|^{d+2s-2\delta}} \dd z.
\end{align*}
In the region $|z|\geq l$, we can estimate the above integral using that $|F(z)|\leq F(0)$ (see \eqref{eq:selfcorrelation.sup.bound} below). Instead, in the region $|z|<l$ by assumption we have
\begin{align*}
    \int_{|z|<l} \frac{ F(0) - F(z) }{|z|^{d+2s-2\delta}} \dd z
    \leq \llbracket F\rrbracket_{I^{2s}(l)} \int_{|z|<l} \frac{ 1}{|z|^{d-2\delta}} \dd z\lesssim \llbracket F\rrbracket_{I^{2s}(l)}
\end{align*}
yielding the desired estimate. 

Concerning $(2)$, notice that the convolution of two functions in $H^s_x$ belongs to $C^{2s}_x$, as can be seen by looking at its Fourier transform.
Thus
\begin{equation*}
    \| F\|_{C^{2s}_x} \leq \EE[\|f\ast f^-\|_{C^{2s}_x}] \lesssim_s \EE[\|f\|_{H^{s}_x}^2]. \qedhere
\end{equation*}
\end{proof}

Note that, if $f\in L^2(\Omega;L^p_x)$ for some $p\in [1,2]$, by Young's inequality $F[f]$ is still a well-defined function with
\begin{align*}
    \| F\|_{L^r_x}\leq \EE[\| f\ast f^-\|_{L^r_x}] \lesssim \EE[\| f\|_{L^p_x}^2],\quad\text{for } r=\frac{p}{2p-1}.
\end{align*}
Similarly, if $f\in L^2(\Omega;\mathcal{M})$, then $F\in \mathcal{M}$ with $\| F\|_{TV}\leq \EE[\|f\|_{TV}^2]$.
Recall that $F$ belongs to the Fourier--Lebesgue space $\mathcal{F}L^1_x$ if $\hat{F}\in L^1_x$.

\begin{lemma}\label{lem:L^2-criterion-selfcorrelation}
    Let $f\in L^2(\Omega;E)$ for either $E=L^p_x$ with $p\in [1,2]$, or $E=\mathcal{M}$, and let $F$ be the associated self-correlation function. Then $f\in L^2(\Omega;L^2_x)$ if and only if $F$ is locally bounded around the origin. Moreover in this case $F$ is continuous and bounded, $F\in \mathcal{F}L^1_x$ and
    \begin{equation}\label{eq:selfcorrelation.sup.bound}
        \| F\|_{C^0_x} = \| F\|_{L^\infty_x} = \| \hat F\|_{L^1_x} = \EE[\| f\|_{L^2_x}^2]= F(0).
    \end{equation}
\end{lemma}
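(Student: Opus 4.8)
### Proof proposal for Lemma \ref{lem:L^2-criterion-selfcorrelation}

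\textbf{Overview of the strategy.} The claim is a characterization of $L^2_x$-valued-in-$L^2_\omega$ randomness via a local boundedness condition on the self-correlation $F$, together with an identification of the relevant norms once this condition holds. The plan is to prove the nontrivial implication ($F$ locally bounded near $0$ $\Rightarrow$ $f\in L^2(\Omega;L^2_x)$) by an approximation/Fatou argument, after which the norm identities \eqref{eq:selfcorrelation.sup.bound} follow quickly from Fourier analysis. The reverse implication is immediate since $f\in L^2(\Omega;L^2_x)$ forces $F(0)=\EE[\|f\|_{L^2_x}^2]<\infty$ and $F$ is then continuous (as a convolution of $L^2$ functions, or as a Fourier transform of an $L^1$ function).

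\textbf{Main steps.} First I would mollify: let $f_\eps := f * \rho_\eps$ for a standard mollifier $\rho_\eps$. Since $f\in L^2(\Omega;E)$ with $E\hookrightarrow \mathcal S'$, each $f_\eps$ is smooth and lies in $L^2(\Omega;L^2_x)$ (using $E = L^p_x$, $p\le 2$, together with Young's inequality, or $E=\mathcal M$ with $\rho_\eps$ bounded). Its self-correlation is $F_\eps := \EE[f_\eps * f_\eps^-] = F * \rho_\eps * \rho_\eps^-$, so $\widehat{F_\eps}(\xi) = |\hat\rho_\eps(\xi)|^2 \hat F(\xi) \ge 0$ as a function (here $\hat F$ is a tempered distribution of positive type, being the Fourier transform of $\EE[f*f^-]$; this nonnegativity is the structural fact that makes everything work). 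Now $\EE[\|f_\eps\|_{L^2_x}^2] = F_\eps(0) = \int \widehat{F_\eps}(\xi)\,d\xi = \|\widehat{F_\eps}\|_{L^1_x}$, and since $\widehat{F_\eps} \uparrow \hat F$ pointwise a.e. as $\eps\downarrow 0$ (with $|\hat\rho_\eps|^2 \uparrow 1$), monotone convergence gives $\lim_\eps F_\eps(0) = \|\hat F\|_{L^1_x} \in (0,\infty]$. The key point is then to bound $\limsup_{\eps}F_\eps(0)$ by the local sup of $F$ near the origin: since $F_\eps(0) = (F * \rho_\eps * \rho_\eps^-)(0) = \int F(z)\, (\rho_\eps * \rho_\eps^-)(z)\, dz$ and $\rho_\eps * \rho_\eps^-$ is an approximate identity supported in $\{|z| < 2\eps\}$, for $2\eps < l$ one gets $F_\eps(0) \le \sup_{|z| < l}|F(z)| =: M < \infty$. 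Hence $\sup_\eps \EE[\|f_\eps\|_{L^2_x}^2] \le M < \infty$. By weak compactness in $L^2(\Omega;L^2_x)$ and $f_\eps \to f$ in $\mathcal S'$ (in a suitable sense, e.g. testing against $C_c^\infty$), the limit $f$ lies in $L^2(\Omega;L^2_x)$ with $\EE[\|f\|_{L^2_x}^2] \le M$; alternatively, lower semicontinuity of the norm under the weak limit gives the bound directly.

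\textbf{The norm identities.} Once $f\in L^2(\Omega;L^2_x)$ is established, $F = \EE[f*f^-]$ is (a representative of) a continuous function with $\hat F = \EE[|\hat f|^2] \ge 0$ and $\hat F \in L^1_x$ (by Plancherel, $\|\hat F\|_{L^1_x} = \EE[\|\hat f\|_{L^2_x}^2] = \EE[\|f\|_{L^2_x}^2] < \infty$). Then Fourier inversion gives $F(0) = \int \hat F(\xi)\,d\xi = \|\hat F\|_{L^1_x}$, while $|F(z)| = |\int \hat F(\xi) e^{i z\cdot\xi}\,d\xi| \le \int \hat F(\xi)\,d\xi = F(0)$ for all $z$, so $\|F\|_{C^0_x} = \|F\|_{L^\infty_x} = F(0)$. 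Stringing these together yields \eqref{eq:selfcorrelation.sup.bound}, and $F\in\mathcal F L^1_x$ by definition. Continuity of $F$ follows from the dominated convergence theorem applied to the inversion integral.

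\textbf{Anticipated main obstacle.} The delicate point is the passage through the mollification: one must be sure that $\hat F$ is genuinely a nonnegative \emph{locally integrable} function (not merely a positive-type distribution) before invoking monotone convergence — this is what the local boundedness of $F$ buys us, via the uniform bound $F_\eps(0)\le M$. I would be careful to phrase the argument so that the finiteness of $\|\hat F\|_{L^1}$ is \emph{deduced} from $M<\infty$ rather than assumed, and to justify that the weak-$L^2(\Omega;L^2_x)$ limit of $f_\eps$ can be identified with $f$ (this uses that $f_\eps\to f$ in $L^2(\Omega;E)$, which holds by standard mollifier convergence in $L^p_x$ for $p<\infty$, or weakly-$\ast$ in $\mathcal M$, and that both limits must agree as elements of $\mathcal S'$). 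The rest is routine Fourier analysis.
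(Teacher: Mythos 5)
Your proposal is correct and follows essentially the same route as the paper: mollify $f$ so that the mollified self-correlation is $F\ast\rho_\eps\ast\rho_\eps^-$, use the local boundedness of $F$ near the origin to get a uniform bound on $\EE[\|f\ast\rho_\eps\|_{L^2_x}^2]=F_\eps(0)$, and conclude by lower semicontinuity of the $L^2$-norm under distributional convergence, with the norm identities \eqref{eq:selfcorrelation.sup.bound} following from $\hat F=\EE[|\hat f|^2]\geq 0$, Parseval, and Young's inequality. (The only cosmetic caveat is that $|\hat\rho_\eps|^2\uparrow 1$ need not hold monotonically for a general mollifier, but that monotone-convergence aside is not needed for the argument to close.)
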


\begin{proof}
    First assume $f\in L^2(\Omega;L^2_x)$. Then by Young's inequality for convolutions we have
    \begin{align*}
        \| F\|_{C^0_x} \leq \EE[\| f\ast f^-\|_{C^0_x}] \leq \EE[\| f\|_{L^2_x}^2]
    \end{align*}
    and by properties of Fourier transform 
    \begin{align} \label{eq:hat_xi=L2}
    \hat F(\xi)=\EE[\widehat{f\ast f^-}(\xi)]=\EE[|\hat f(\xi)|^2]\geq 0,
    \end{align}
    so that by Parseval identity
    \begin{align*}
        \int_{\R^d} |\hat F(\xi)| \dd \xi
        = \int_{\R^d} \EE[|\hat f(\xi)|^2] \dd \xi = \EE[\| f\|_{L^2_x}^2].
    \end{align*}
    The last equality in \eqref{eq:selfcorrelation.sup.bound} immediately follows from the definition of $F$ and implies the chain of equalities since $F(0) \leq \| F\|_{C^0_x}$.

    Next assume $f\in L^2(\Omega;E)$ for $E$ as above such that $F$ is locally bounded around $0$. Let $\chi$ be a compactly supported, radially symmetric smooth probability density, $\{\chi^\eps\}_{\eps>0}$ the associated standard mollifiers, and set $\psi=\chi\ast\chi$, so that $\psi^\eps=\chi^\eps\ast\chi^\eps$. By properties of convolutions
    \begin{align*}
        F^\eps:=\psi^\eps\ast F
        =\EE[(\chi^\eps\ast\chi^\eps)\ast (f \ast f^-)]
        =\EE[(\chi^\eps \ast f)\ast (\chi^\eps \ast f)^-]=F[f \ast \chi^\eps]. 
    \end{align*}
    By the assumptions, $f \ast \chi^\eps\in L^2(\Omega;L^2_x)$, and therefore \eqref{eq:selfcorrelation.sup.bound} applies to $F^\varepsilon$.
    Since $F$ is bounded around $0$ and $\psi^\eps$ are mollifiers with support of size $\sim \varepsilon$, we deduce the existence of $\eps_0>0$ small enough such that $\sup_{\eps<\eps_0} F^\eps(0) < \infty$, and in particular by \eqref{eq:selfcorrelation.sup.bound} we find
    \begin{align*}
    \sup_{\eps<\eps_0} \EE[\|f \ast \chi^\eps\|_{L^2_x}^2] = \sup_{\eps<\eps_0} F^\eps(0) <\infty.
    \end{align*}
   Since $f \ast \chi^\eps\to f$ in the sense of distributions, the conclusion follows by the lower semicontinuity of the $L^2_x$-norm.
\end{proof}

To compare more precisely the regularity of a random function $f$ and its self-correlation function $F=F[f]$, we need to introduce some notation.
On a fixed time horizon $[0,T]$, given $p\in [1,\infty)$, set
\begin{align*}
    \| f\|_{L^p_{\omega,t,x}}
    =\| f\|_{L^p(\Omega\times [0,T]\times\R^d)}
    :=\bigg(\int_0^T \EE[\| f_t\|_{L^p_x}^p] \dd t\bigg)^{1/p}.
\end{align*}
Given $z\in\R^d$, we define the increment operator $\delta_z$ by $(\delta_z g)(x):=g(x+z)-g(x)$.
For $s\in (0,1)$, we then define the Besov-type space $\tilde{L}^p_{t,\omega} \tilde B^s_{p,\infty}$ as the collection of random functions $f\in L^p_{\omega,t,x}$ such that
\begin{align*}
    \llbracket f\rrbracket_{\tilde{L}^p_{t,\omega} \tilde B^s_{p,\infty}}
    := \sup_{z\neq 0} \frac{1}{|z|^{s}} \| \delta_z f\|_{L^p_{\omega,t,x}}
    =\bigg( \sup_{z\neq 0} \frac{1}{|z|^{sp}} \int_0^T \EE[\| \delta_z f_t\|_{L^p_x}^p] \dd t \bigg)^{1/p}.
\end{align*}
Such spaces can be considered as random analogues of the ones naturally arising in regularity criteria for energy conservation in fluid dynamics, see for instance \cite{CCFS08,BGSTW2019}. They can also be regarded as a random version of the space-time Besov spaces $\tilde{L}^q_t \dot B^s_{p,r}$ presented in \cite[\S 2.6.3]{BaChDa2011}.

\begin{remark}\label{rem:lsc_besov_type}
    Being related to Besov-type spaces, the seminorms $\llbracket\, \cdot\,\rrbracket_{\tilde L^p_{t,\omega} \tilde B^s_{p,\infty}}$ naturally enjoy suitable lower-semicontinuity type properties. For instance, for any $s\in (0,1)$ and $p\in [1,\infty)$, if $\{f^n\}_n\subset L^p_{\omega,t,x}$ is a sequence such that $f^n$ converge weakly in $L^p_{\omega,t,x}$ to $f$, then
    \begin{align*}
        \llbracket f\rrbracket_{\tilde L^p_{t,\omega} \tilde B^s_{p,\infty}} \leq \liminf_{n\to\infty} \llbracket f^n\rrbracket_{\tilde L^p_{t,\omega} \tilde B^s_{p,\infty}}.
    \end{align*}
    A similar argument applies to $p=\infty$, where we may replace weak convergence by weak-$\ast$ convergence.
\end{remark}

The next technical lemma clarifies the nature of these spaces. Therein, we denote by $\{\dot\Delta_j\}_{j\in\ZZ}$ the homogeneous Littlewood--Paley blocks, cf. \cite{BaChDa2011}. 
Recall that $\dot\Delta_j \varphi=\psi_j\ast\varphi$, for $\psi_j(x)=2^{jd} \psi(2^j x)$, where $\psi$ is a radial Schwartz function with Fourier transform supported in an annulus.

\begin{lemma}\label{lem:besov_type_spaces}
    Let $p\in[1,\infty)$, $s\in (0,1)$. For any $f\in L^p_{\omega,t,x}$, the following quantities are equivalent:
    \begin{equation}\label{eq:besov_type_spaces}
        \llbracket f\rrbracket_{\tilde{L}^p_{t,\omega} \tilde B^s_{p,\infty}}^p
        \sim \sup_{j\in\ZZ} 2^{sjp} \| \dot\Delta_j f\|_{L^p_{\omega,t,x}}^p
        \sim \sup_{\eps>0} \frac{1}{\eps^{sp}} \fint_{\SS^{d-2}} \| \delta_{\eps \hat z} f\|_{L^p_{\omega,t,x}}^p \sigma(\dif \hat z).
    \end{equation}
    Moreover
    \begin{equation}\label{eq:besov_type_spaces2}\begin{split}
        \lim_{|z|\to 0} \frac{1}{|z|^{sp}} \int_0^T \EE[\| \delta_z f_t\|_{L^p_x}^p] \dd t=0
        &\Leftrightarrow \lim_{j\to+\infty} 2^{sj} \| \dot\Delta_j f\|_{L^p_{\omega,t,x}} = 0\\
        &\Leftrightarrow \lim_{\eps\to 0} \frac{1}{\eps^{sp}} \fint_{\SS^{d-2}} \| \delta_{\eps \hat z} f\|_{L^p_{\omega,t,x}}^p \sigma(\dif \hat z) =0,
    \end{split}\end{equation}
    and the above happens if and only if $f$ belongs to the closure in $\tilde{L}^p_{t,\omega} \tilde B^s_{p,\infty}$ of smooth-in-space random functions.
\end{lemma}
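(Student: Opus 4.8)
The plan is to reduce the entire statement to the (essentially classical) theory of homogeneous Besov spaces $\dot B^s_{p,\infty}$, the only twist being that the functions are Banach-space valued: writing $X:=L^p(\Omega\times[0,T])$ and $L^p_{\omega,t,x}=L^p(\R^d;X)$, the increment operator $\delta_z$, the convolutions defining the blocks $\dot\Delta_j$, and the mollifications used below all act \emph{only} on the spatial variable $x$, while $\omega$ and $t$ are inert. Hence every scalar estimate from \cite{BaChDa2011} transfers by applying it pointwise in $(\omega,t)$ and then using Minkowski's integral inequality and Fubini; no UMD-type structure of $X$ is needed, only $p\in[1,\infty)$, $s\in(0,1)$, together with the elementary facts $\|\dot\Delta_j f\|_{L^p_{\omega,t,x}}\lesssim\|f\|_{L^p_{\omega,t,x}}$ and $\sum_{k\le j}\dot\Delta_k f\to f$ in $L^p_{\omega,t,x}$ as $j\to\infty$ (an approximate identity argument, valid since $p<\infty$). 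All three quantities in \eqref{eq:besov_type_spaces} are allowed to be $+\infty$, and the trivial inequality $\fint_{\SS^{d-1}}\|\delta_{\eps\hat z}f\|^p_{L^p_{\omega,t,x}}\,\sigma(\dif\hat z)\le\sup_{|z|=\eps}\|\delta_z f\|^p_{L^p_{\omega,t,x}}$ already gives that the third quantity is $\lesssim\llbracket f\rrbracket^p_{\tilde L^p_{t,\omega}\tilde B^s_{p,\infty}}$.

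First I would prove $\llbracket f\rrbracket_{\tilde L^p_{t,\omega}\tilde B^s_{p,\infty}}^p\sim\sup_{j}2^{sjp}\|\dot\Delta_j f\|^p_{L^p_{\omega,t,x}}$. For ``$\gtrsim$'', since $\hat\psi$ is supported away from the origin one has $\int\psi_j=0$, hence $\dot\Delta_j f(x)=-\int\psi_j(y)\,(\delta_{-y}f)(x)\dd y$, and Minkowski's inequality in $L^p_{\omega,t,x}$ yields $\|\dot\Delta_j f\|_{L^p_{\omega,t,x}}\le\llbracket f\rrbracket\int|\psi_j(y)|\,|y|^s\dd y\lesssim\llbracket f\rrbracket\,2^{-js}$. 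For ``$\lesssim$'', write $\delta_z\big(\sum_{k\le j}\dot\Delta_k f\big)=\sum_{k\le j}\delta_z\dot\Delta_k f$, use $\|\delta_z\dot\Delta_k f\|_{L^p_{\omega,t,x}}\le\min\!\big(2,|z|2^k\big)\|\dot\Delta_k f\|_{L^p_{\omega,t,x}}$ (mean value plus a Bernstein inequality, both applied pointwise in $(\omega,t)$), split the sum at $2^k\sim|z|^{-1}$, and sum the two geometric series (convergent precisely because $s\in(0,1)$) to obtain $\|\delta_z\sum_{k\le j}\dot\Delta_k f\|_{L^p_{\omega,t,x}}\lesssim|z|^s\sup_k 2^{ks}\|\dot\Delta_k f\|_{L^p_{\omega,t,x}}$ uniformly in $j$; letting $j\to\infty$ with $L^p_{\omega,t,x}$-convergence of the partial sums gives the bound for $\delta_z f$.

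Next I would establish the spherical-average characterization, which is the only genuinely new computation. Having noted one direction above, it remains to dominate $\sup_j 2^{js}\|\dot\Delta_j f\|_{L^p_{\omega,t,x}}$ by $B:=\big(\sup_{\eps>0}\eps^{-sp}\fint_{\SS^{d-1}}\|\delta_{\eps\hat z}f\|^p_{L^p_{\omega,t,x}}\,\sigma(\dif\hat z)\big)^{1/p}$. Writing $\dot\Delta_j f(x)=-\int\psi_j(y)(\delta_{-y}f)(x)\dd y$ in polar coordinates $y=r\omega$, using the rotation-invariant Schwartz envelope $|\psi_j(r\omega)|\le 2^{jd}\Phi(2^jr)$ with $\Phi(t)=C_N\min(1,t^{-N})$, Minkowski's inequality, Hölder on the sphere, and $\|\delta_{-v}f\|_{L^p_x}=\|\delta_v f\|_{L^p_x}$, I obtain
\begin{align*}
\|\dot\Delta_j f\|_{L^p_{\omega,t,x}}\ \lesssim\ \int_0^\infty 2^{jd}\Phi(2^jr)\,r^{d-1}\Big(\fint_{\SS^{d-1}}\|\delta_{r\omega}f\|^p_{L^p_{\omega,t,x}}\,\sigma(\dif\omega)\Big)^{1/p}\dd r .
\end{align*}
Inserting the hypothesis that the bracket is $\lesssim B\,r^s$, choosing $N>s+d$ so that $\int_0^\infty\Phi(u)u^{s+d-1}\dd u<\infty$, and substituting $u=2^jr$ gives $2^{js}\|\dot\Delta_j f\|_{L^p_{\omega,t,x}}\lesssim B$. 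Combined with the first equivalence, this closes the chain of $\sim$'s in \eqref{eq:besov_type_spaces}.

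Finally I would treat \eqref{eq:besov_type_spaces2} and the closure statement by running the cycle (i)$\Rightarrow$(iii)$\Rightarrow$(ii)$\Rightarrow$$(f$ in the closure of smooth-in-space random functions$)\Rightarrow$(i). The implication (i)$\Rightarrow$(iii) is immediate from the trivial inequality above. For (iii)$\Rightarrow$(ii), I would first note that continuity of spatial translations in $L^p_{\omega,t,x}$, together with the decay of $\eps^{-s}(\fint_{\SS^{d-1}}\|\delta_{\eps\hat z}f\|^p\sigma)^{1/p}$ as $\eps\to 0$ (hypothesis) and as $\eps\to\infty$ (since $\|\delta_{\eps\hat z}f\|_{L^p_x}\le 2\|f\|_{L^p_x}$), forces $B<\infty$; then, in the displayed radial estimate, split $\int_0^\infty=\int_0^R+\int_R^\infty$, bounding the bracket by $r^s\eta(R)$ on $[0,R]$ with $\eta(R):=\sup_{0<r\le R}r^{-s}(\fint_{\SS^{d-1}}\|\delta_{r\omega}f\|^p\sigma)^{1/p}\to0$, and by $2\|f\|_{L^p_{\omega,t,x}}$ on $[R,\infty)$ where the kernel contributes $\lesssim R^{d-N}2^{j(d-N)}$; since $N>s+d$ this yields $\limsup_{j\to\infty}2^{js}\|\dot\Delta_j f\|_{L^p_{\omega,t,x}}\lesssim\eta(R)$ for every $R>0$, hence $=0$. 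For (ii)$\Rightarrow$(closure), take the band-limited truncations $\sum_{k\le j}\dot\Delta_k f$, which are smooth in $x$ with all spatial derivatives in $L^p_{\omega,t,x}$: they converge to $f$ in $L^p_{\omega,t,x}$, and by the first equivalence applied to the high-frequency remainder, $\llbracket f-\sum_{k\le j}\dot\Delta_k f\rrbracket_{\tilde L^p_{t,\omega}\tilde B^s_{p,\infty}}\lesssim\sup_{k\ge j-2}2^{ks}\|\dot\Delta_k f\|_{L^p_{\omega,t,x}}\to0$ by (ii). For (closure)$\Rightarrow$(i), any smooth-in-space $g$ satisfies $|z|^{-s}\|\delta_z g\|_{L^p_{\omega,t,x}}\le|z|^{1-s}\|\nabla_x g\|_{L^p_{\omega,t,x}}\to0$, so if $f=\lim_n g_n$ then $\limsup_{|z|\to0}|z|^{-s}\|\delta_z f\|_{L^p_{\omega,t,x}}\le\llbracket f-g_n\rrbracket\to0$. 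I expect the main obstacle to be the refined radial splitting in (iii)$\Rightarrow$(ii) (keeping the rapidly-decaying kernel, the a priori finiteness of $B$, and the two scales $R$ and $2^{-j}$ cleanly separated); the rest is a routine bookkeeping adaptation of classical Besov arguments to the Bochner setting.
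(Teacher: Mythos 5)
Your proof is correct and follows essentially the same route as the paper's: the trivial domination of the spherical average by the increment seminorm, the Bernstein estimate $\|\delta_z\dot\Delta_k f\|\lesssim\min(1,|z|2^k)\|\dot\Delta_k f\|$ with a dyadic split at $2^k\sim|z|^{-1}$, and the mean-zero kernel representation of $\dot\Delta_j f$ in polar coordinates to close the cycle of inequalities. The only differences are cosmetic (you prove the redundant bound $I_1\lesssim\llbracket f\rrbracket^p$ directly, and you use Minkowski plus H\"older on the sphere where the paper uses Jensen), and you spell out the vanishing/closure statement that the paper dismisses as ``almost identical''.
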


The proof of \autoref{lem:besov_type_spaces} is postponed to \autoref{app:proof.besov.spaces}. With the above preparation, we can relate fractional regularity of $f$ in $L^2_x$-scales to that of $F$ in $L^\infty_x$-scales.

\begin{lemma}\label{lem:refined.Besov.Holder}
    Let $f\in L^2_{\omega,t,x}$, $F_t=F[f_t]$ be its self-correlation function. Then for any $s\in (0,1)$, the following quantities are equivalent:
    \begin{equation}\label{eq:refined.Besov.Holder}
        \llbracket f\rrbracket_{\tilde{L}^2_{t,\omega} \tilde B^s_{2,\infty}}^2
        \sim \sup_{z\neq 0} \frac{1}{|z|^{2s}} \int_0^T |F_t(z)-F_t(0)| \dd t
        \sim \sup_{j\in\ZZ} 2^{2sj} \int_0^T \| \dot\Delta_j F_t\|_{L^\infty_x} \dd t.
    \end{equation}
    Moreover
    \begin{align*}
        \lim_{|z|\to 0} \frac{1}{|z|^{2s}} \int_0^T \EE[\| \delta_z f_t\|_{L^2_x}^2] \dd t=0
        \Leftrightarrow
        \lim_{|z|\to 0} \frac{1}{|z|^{2s}} \int_0^T |F_t(z)-F_t(0)| \dd t = 0.
    \end{align*}
\end{lemma}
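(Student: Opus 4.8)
The plan is to reduce the statement to two inputs: the elementary identity relating increments of $f$ to increments of its self-correlation $F$, and the positive-definiteness of each $F_t$, already recorded in \autoref{lem:L^2-criterion-selfcorrelation} via $\hat F_t(\xi)=\EE[|\hat f_t(\xi)|^2]\geq 0$, cf.\ \eqref{eq:hat_xi=L2}. First I would dispose of the equivalence between the first two quantities in \eqref{eq:refined.Besov.Holder}: expanding the square, for a.e.\ $t$ and every $z\in\R^d$ one has $\EE[\|\delta_z f_t\|_{L^2_x}^2]=2\,\EE[\|f_t\|_{L^2_x}^2]-2\,\EE[(f_t\ast f_t^-)(z)]=2\big(F_t(0)-F_t(z)\big)$, so in particular $F_t(0)-F_t(z)\geq 0$ and $|F_t(z)-F_t(0)|=F_t(0)-F_t(z)$. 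Integrating in $t\in[0,T]$, dividing by $|z|^{2s}$ and taking the supremum over $z\neq 0$ then gives $\llbracket f\rrbracket_{\tilde{L}^2_{t,\omega}\tilde{B}^s_{2,\infty}}^2=2\sup_{z\neq 0}|z|^{-2s}\int_0^T|F_t(z)-F_t(0)|\,\dd t$ (both sides possibly infinite); read at fixed $z$ in the limit $|z|\to 0$, the very same identity yields the ``moreover'' part at no cost.

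The real work is the equivalence with the Littlewood--Paley quantity. I would fix a dyadic profile with $\hat\psi\geq 0$, which is admissible since different admissible profiles produce comparable norms. Then $\widehat{\dot\Delta_j F_t}=\hat\psi(2^{-j}\cdot)\,\hat F_t\geq 0$, so each $\dot\Delta_j F_t$ is itself positive-definite and continuous, whence $\|\dot\Delta_j F_t\|_{L^\infty_x}=\dot\Delta_j F_t(0)=\int_{\R^d}\hat\psi(2^{-j}\xi)\,\hat F_t(\xi)\,\dd\xi$; on the other hand Plancherel together with $\hat F_t=\EE[|\hat f_t|^2]$ and Tonelli give $\EE[\|\dot\Delta_j f_t\|_{L^2_x}^2]=\int_{\R^d}|\hat\psi(2^{-j}\xi)|^2\,\hat F_t(\xi)\,\dd\xi$. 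Comparing these two spectral weights — by $|\hat\psi|^2\leq\|\hat\psi\|_{L^\infty}\hat\psi$ in one direction, and by $\hat\psi(2^{-j}\xi)\lesssim\sum_{|j'-j|\leq N}|\hat\psi(2^{-j'}\xi)|^2$ on $\supp\hat\psi(2^{-j}\cdot)$ in the other (a consequence of $\hat\psi\geq 0$, of the finite overlap of the rescaled profiles, and of $\sum_{j'}\hat\psi(2^{-j'}\cdot)\equiv 1$ off the origin, so that some nearby block is $\gtrsim 1$ there) — I obtain, for a.e.\ $t$ and all $j\in\ZZ$, $\EE[\|\dot\Delta_j f_t\|_{L^2_x}^2]\lesssim\|\dot\Delta_j F_t\|_{L^\infty_x}\lesssim\sum_{|j'-j|\leq N}\EE[\|\dot\Delta_{j'} f_t\|_{L^2_x}^2]$. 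Integrating over $[0,T]$, multiplying by $2^{2sj}$, absorbing the finitely many shifts and taking the supremum over $j$, this yields $\sup_{j\in\ZZ}2^{2sj}\int_0^T\|\dot\Delta_j F_t\|_{L^\infty_x}\,\dd t\sim\sup_{j\in\ZZ}2^{2sj}\,\|\dot\Delta_j f\|_{L^2_{\omega,t,x}}^2$, and \autoref{lem:besov_type_spaces} with $p=2$ identifies the last expression with $\llbracket f\rrbracket_{\tilde{L}^2_{t,\omega}\tilde{B}^s_{2,\infty}}^2$, closing the chain.

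I expect the only point demanding genuine care to be this dyadic comparison, namely ensuring that the shift count $N$ and the implicit constants are uniform in $j$ — and it is exactly here that positive-definiteness of $F_t$ (equivalently $\hat F_t\geq 0$) is indispensable: for a generic $F_t$ the origin increment $F_t(0)-F_t(z)$ would control neither $\sup_x|F_t(x+z)-F_t(x)|$ nor the high-frequency blocks, and the naive first-difference H\"older characterization breaks down once $2s>1$, which is allowed here. One could alternatively run the comparison directly from the Bochner representation $F_t(0)-F_t(z)=\int_{\R^d}(1-\cos(\xi\cdot z))\,\hat F_t(\xi)\,\dd\xi$, a dyadic splitting in $\xi$, and a spherical average over the direction of $z$, but going through \autoref{lem:besov_type_spaces} is shorter. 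The remaining ingredients — measurability and $t$-integrability of $\|\dot\Delta_j F_t\|_{L^\infty_x}$, which is dominated by $\|\hat\psi\|_{L^\infty}^2\int_0^T\EE[\|f_t\|_{L^2_x}^2]\,\dd t<\infty$, and Tonelli to move expectations inside Fourier integrals of nonnegative integrands — are routine.
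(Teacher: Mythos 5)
Your proof is correct, and its skeleton matches the paper's: the exact identity $\int_0^T\EE[\|\delta_z f_t\|_{L^2_x}^2]\,\dd t=2\int_0^T(F_t(0)-F_t(z))\,\dd t$, together with nonnegativity of the increment coming from \eqref{eq:selfcorrelation.sup.bound}, disposes of the first equivalence and the ``moreover'' part, and the Littlewood--Paley equivalence is reduced to comparing $\|\dot\Delta_j F_t\|_{L^\infty_x}$ with $\EE[\|\dot\Delta_j f_t\|_{L^2_x}^2]$ up to finitely many neighbouring blocks, after which \autoref{lem:besov_type_spaces} closes the chain. The one place where you genuinely diverge is the implementation of that dyadic comparison. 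The paper stays on the physical side: it writes $\dot\Delta_j F_t=\sum_{|j'-j|\leq 1}\EE[(\dot\Delta_j f_t)\ast(\dot\Delta_{j'}f_t)^-]$ and applies Young's convolution inequality for the upper bound, and uses $\EE[\|\dot\Delta_j f_t\|_{L^2_x}^2]=\langle\psi_j\ast F_t,\psi_j\rangle\leq\|\dot\Delta_j F_t\|_{L^\infty_x}\|\psi\|_{L^1_x}$ for the lower one; neither step needs positivity of $\hat F_t$ nor a special choice of dyadic profile. You instead work on the Fourier side, choosing $\hat\psi\geq 0$ so that $\|\dot\Delta_j F_t\|_{L^\infty_x}=\dot\Delta_j F_t(0)=\int\hat\psi(2^{-j}\xi)\hat F_t(\xi)\,\dd\xi$ (here positive-definiteness of $F_t$, i.e. $\hat F_t=\EE[|\hat f_t|^2]\geq 0$, is essential), and then compare the spectral weights $\hat\psi(2^{-j}\cdot)$ and $|\hat\psi(2^{-j}\cdot)|^2$ in both directions via $|\hat\psi|^2\leq\|\hat\psi\|_{L^\infty}\hat\psi$ and the finite-overlap lower bound. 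Both routes are sound and of comparable length; yours makes the role of $\hat F_t\geq 0$ explicit and is closer in spirit to a Bochner-type argument, while the paper's is softer in that it imposes no sign condition on the profile. Your closing remark — that the naive first-difference characterization of $C^{2s}$-type regularity for $F$ would break down for $2s>1$ were it not for positive-definiteness, which lets one route everything through $f$ — is accurate and is precisely why both arguments pass through \autoref{lem:besov_type_spaces} rather than comparing the two $F$-quantities directly.
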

Roughly speaking, \autoref{lem:refined.Besov.Holder} states that $f\in \tilde{L}^2_{t,\omega} \tilde B^s_{2,\infty}$ if and only if $F\in \tilde{L}^1_t \dot B^{2s}_{\infty,\infty}$ (in the sense of \cite{BaChDa2011}). 
Moreover, the regularity of $f$ is sharply in $\tilde{L}^2_{t,\omega} \tilde B^s_{2,\infty}$ if and only if $F$ displays a sharp power-law behaviour around the origin (after integrating in time).

\begin{proof}
    First note that, as in \autoref{lem:Sobolev-Holder},
    \begin{align*}
        \frac{1}{|z|^{2s}} \int_0^T \EE[\| \delta_z f_t\|_{L^2_x}^2] \dd t
        = \frac{2}{|z|^{2s}} \int_0^T (F_t(0)-F_t(z)) \dd t
        = \frac{2}{|z|^{2s}} \int_0^T |F_t(z)-F_t(0)| \dd t
    \end{align*}
    where in the second step we used \eqref{eq:selfcorrelation.sup.bound}. The above identity yields both the equivalence of the first two seminorms appearing in \eqref{eq:refined.Besov.Holder} and the last statement.

    Next assume that $f \in \tilde{L}^2_{t,\omega} \tilde B^s_{2,\infty}$. Then by properties of convolutions and Littlewood--Paley blocks,
    \begin{align*}
        \dot\Delta_j F_t
        = \EE[(\dot\Delta_j f_t)\ast f^-_t]
        = \sum_{|j'-j|\leq 1} \EE[(\dot\Delta_j f_t)\ast (\dot\Delta_{j'} f_t)^-],
    \end{align*}
    so that by Young's convolution inequalities, uniformly over $j\in \ZZ$ we find
    \begin{align*}
       \int_0^T \| \dot\Delta_j F_t\|_{L^\infty_x} \dd t
       & \lesssim \sum_{|j'-j|\leq 1} \int_0^T \EE[\|\dot\Delta_j f_t\|_{L^2_x} \|\dot\Delta_{j'} f_t\|_{L^2_x}] \dd t\\
       & \lesssim  \sum_{|j'-j|\leq 1} \| \dot\Delta_j f\|_{L^2_{\omega,t,x}} \| \dot\Delta_{j'} f\|_{L^2_{\omega,t,x}}
       \lesssim 2^{-2js} \llbracket f\rrbracket_{\tilde{L}^2_{t,\omega} \tilde B^s_{2,\infty}}^2,
    \end{align*}
    where we used \eqref{eq:besov_type_spaces}.
    Conversely, assume $F$ displays the above regularity; by $\dot\Delta_j\varphi=\psi_j\ast \varphi$ and properties of convolutions
    \begin{align*}
        \EE[\| \dot\Delta_j f_t\|_{L^2}^2]
        = \EE[\langle \psi_j\ast f_t,\psi_j\ast f_t\rangle]
        = \EE[\langle \psi_j\ast (f_t\ast f_t^-),\psi_j\rangle]
        = \langle \psi_j \ast F_t, \psi_j\rangle.
    \end{align*}
    It follows that
    \begin{align*}
        \EE[\| \dot\Delta_j f_t\|_{L^2}^2]\leq \| \psi_j \ast F_t\|_{L^\infty_x} \| \psi_j\|_{L^1_x} = \| \dot\Delta_j  F_t\|_{L^\infty_x} \| \psi\|_{L^1_x}
    \end{align*}
    and so
    \begin{align*}
        \sup_{j\in \ZZ} 2^{2js} \int_0^T \EE[\| \dot\Delta_j f_t\|_{L^2}^2] \dd t \lesssim  \sup_{j\in\ZZ} 2^{2sj} \int_0^T \| \dot\Delta_j F_t\|_{L^\infty_x} \dd t,
    \end{align*}
    from which we deduce \eqref{eq:refined.Besov.Holder} by applying \eqref{eq:besov_type_spaces}.
\end{proof}

\subsection{Dissipation measure}\label{subsec:dissipation.measure}

Next, we are interested in finding an expression for the local energy balance of solutions of \eqref{eq:spde_viscous_approx} when $\kappa = 0$ and $\kappa >0$. First, we derive an explicit stochastic PDE solved by the quantity $|\tilde{\theta}^\kappa|^2$ for $\kappa > 0$. This in hand, we take the limit $\kappa \downarrow 0$ and identify a dissipation measure in the local energy balance of the inviscid SPDE \eqref{eq:Kraichnan}. 

\begin{prop}\label{prop:energy_balance_viscous_kappa}
Suppose \autoref{ass:well_posed}.
Let $\theta_0\in L^2_x$ and let $\tilde{\theta}^\kappa$ be the unique solution of \eqref{eq:spde_viscous_approx} with $\kappa \in (0,1/2)$.
Then $|\tilde{\theta}^\kappa|^2$ satisfies the SPDE
\begin{equation}\label{eq:energy_balance_viscous_kappa}
\dd |\tilde{\theta}^\kappa|^2 
+ 
2\sqrt{1-\kappa}
\tilde{\theta}^\kappa \nabla \tilde{\theta}^\kappa \cdot \dd W 
=  
\frac12 C(0) :D^2 |\tilde{\theta}^{\kappa}|^2\dd t
- 
\kappa C(0) : \nabla \tilde{\theta}^{\kappa} \otimes \nabla \tilde{\theta}^{\kappa}
\dd t.
\end{equation}
\end{prop}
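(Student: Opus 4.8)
Since the statement concerns $\kappa\in(0,1/2)$, we are dealing with the viscous equation \eqref{eq:spde_viscous_approx}, whose solution $\tilde\theta^\kappa$ is a genuine strong solution; in particular, by standard parabolic theory (cf.\ \autoref{prop:wellposedness_viscous_SPDE}) one has $\tilde\theta^\kappa\in L^2(\Omega;L^2([0,T];H^1_x))$ for every $T<\infty$, and this time-integrated gradient bound is the only nontrivial input beyond It\^o's formula that the argument will use. The plan is to establish \eqref{eq:energy_balance_viscous_kappa} in the weak-in-space, integrated-in-time, $\PP$-almost sure sense, namely that for every $\varphi\in C^\infty_c(\R^d)$ and $t\geq0$,
\begin{align*}
\langle|\tilde\theta^\kappa_t|^2,\varphi\rangle
+
2\sqrt{1-\kappa}\sum_k\int_0^t\langle\tilde\theta^\kappa_s\,\sigma_k\cdot\nabla\tilde\theta^\kappa_s,\varphi\rangle\,\dd W^k_s
=
\langle|\theta_0|^2,\varphi\rangle
+
\int_0^t\langle|\tilde\theta^\kappa_s|^2,\tfrac12 C(0):D^2\varphi\rangle\,\dd s
-
\kappa\int_0^t\langle C(0):\nabla\tilde\theta^\kappa_s\otimes\nabla\tilde\theta^\kappa_s,\varphi\rangle\,\dd s,
\end{align*}
and we follow the mollification strategy already sketched in the proof of \autoref{lem:PDE_twopoint_transport}.

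Concretely, fix a radial $\rho\in C^\infty_c(\R^d)$ with $\int\rho=1$, let $\{\rho_\eps\}$ be the induced mollifiers, set $v^\eps:=\rho_\eps\ast\tilde\theta^\kappa$ and $b^\eps_k:=\rho_\eps\ast(\sigma_k\cdot\nabla\tilde\theta^\kappa)$. Testing the weak formulation of \eqref{eq:spde_viscous_approx} against $\rho_\eps(x-\cdot)\in C^\infty_c(\R^d)$, and integrating by parts in the noise term (licit for a.e.\ $s$ since $\tilde\theta^\kappa_s\in H^1_x$), one sees that for a.e.\ $x$ the real-valued process $t\mapsto v^\eps_t(x)$ is an It\^o process with drift $\tfrac12(C(0):D^2v^\eps_t)(x)$ and diffusion coefficients $-\sqrt{1-\kappa}\,b^\eps_{k,t}(x)$; the latter are square-integrable in time for a.e.\ $x$ because, by Jensen's inequality, $\sum_k b^\eps_{k,s}(x)^2\leq\big(\rho_\eps\ast(C(0):\nabla\tilde\theta^\kappa_s\otimes\nabla\tilde\theta^\kappa_s)\big)(x)$, and $\int_0^t\|\nabla\tilde\theta^\kappa_s\|_{L^2_x}^2\,\dd s<\infty$ $\PP$-a.s. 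Applying It\^o's formula to $(v^\eps)^2$ (for instance after truncating to finitely many Brownian drivers and then passing to the limit), together with the algebraic identity $\phi\,C(0):D^2\phi=\tfrac12C(0):D^2\phi^2-C(0):\nabla\phi\otimes\nabla\phi$ for the drift, yields the regularized balance, pointwise in $x$,
\begin{align*}
\dd(v^\eps_t)^2
+
2\sqrt{1-\kappa}\sum_k v^\eps_t\, b^\eps_{k,t}\,\dd W^k_t
=
\tfrac12C(0):D^2(v^\eps_t)^2\,\dd t
-
C(0):\nabla v^\eps_t\otimes\nabla v^\eps_t\,\dd t
+
(1-\kappa)\sum_k(b^\eps_{k,t})^2\,\dd t,
\end{align*}
where the extra last term is the It\^o correction coming from the quadratic variation.

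The final step is to test this identity against $\varphi\in C^\infty_c(\R^d)$, integrate on $[0,t]$, and send $\eps\to0$. From $v^\eps\to\tilde\theta^\kappa$ and $\nabla v^\eps\to\nabla\tilde\theta^\kappa$ in $L^2(\Omega;L^2([0,T];L^2_x))$ one gets convergence of $\langle(v^\eps_0)^2,\varphi\rangle$, of $\int_0^t\langle(v^\eps_s)^2,C(0):D^2\varphi\rangle\,\dd s$, and of $\int_0^t\langle C(0):\nabla v^\eps_s\otimes\nabla v^\eps_s,\varphi\rangle\,\dd s$. For the quadratic-variation term one views $(\sigma_k\cdot\nabla\tilde\theta^\kappa)_k$ as an element of $L^2(\R^d;\ell^2(\NN))$ --- its squared fibre-norm being $C(0):\nabla\tilde\theta^\kappa\otimes\nabla\tilde\theta^\kappa\in L^1_x$ --- so that mollification converges strongly there and $(1-\kappa)\sum_k(b^\eps_k)^2\to(1-\kappa)\,C(0):\nabla\tilde\theta^\kappa\otimes\nabla\tilde\theta^\kappa$ in $L^1(\Omega;L^1([0,T];L^1_x))$; combined with the $-C(0):\nabla v^\eps\otimes\nabla v^\eps$ term, this produces exactly the claimed coefficient $-\kappa\,C(0):\nabla\tilde\theta^\kappa\otimes\nabla\tilde\theta^\kappa$. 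Finally, the martingale term converges in $L^2(\Omega)$: by It\^o's isometry its variance involves $\sum_k\langle\rho_\eps\ast(v^\eps_s\varphi),\sigma_k\cdot\nabla\tilde\theta^\kappa_s\rangle^2$, which by the vector-measure covariance bound \autoref{rem:bound_C_TV} (applied to $\mathfrak m=(\rho_\eps\ast(v^\eps_s\varphi))\,\nabla\tilde\theta^\kappa_s$) is controlled by $\|\hat C\|_{L^1_x}\,\|\rho_\eps\ast(v^\eps_s\varphi)\|_{L^2_x}^2\,\|\nabla\tilde\theta^\kappa_s\|_{L^2_x}^2$, so that the convergence $\rho_\eps\ast(v^\eps_s\varphi)\to\tilde\theta^\kappa_s\varphi$ in $L^2_x$, together with dominated convergence in $(\omega,s)$, gives the limit $2\sqrt{1-\kappa}\sum_k\int_0^t\langle\tilde\theta^\kappa_s\,\sigma_k\cdot\nabla\tilde\theta^\kappa_s,\varphi\rangle\,\dd W^k_s$, i.e.\ the noise term of \eqref{eq:energy_balance_viscous_kappa}. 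Assembling the limits gives the weak form displayed above. I expect the main obstacle to be precisely the two $k$-summed limits (the It\^o-correction term and the stochastic integral): these cannot be handled term-by-term and require the $\ell^2$-valued viewpoint on the noise and the covariance bound of \autoref{rem:bound_C_TV}; everything else is routine mollification bookkeeping, the single external input being the parabolic estimate $\tilde\theta^\kappa\in L^2(\Omega;L^2_tH^1_x)$.
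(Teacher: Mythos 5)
Your proof is correct and follows essentially the same route as the paper's: mollify in space, apply It\^o's formula to the square of the mollified solution, recognize that the It\^o correction $(1-\kappa)\sum_k(b^\eps_k)^2$ combines with the drift cross-term $-C(0):\nabla v^\eps\otimes\nabla v^\eps$ to produce the $-\kappa$ coefficient, and pass to the limit $\eps\to0$. The only cosmetic difference is in how the two $k$-summed limits are justified — you use the $L^2(\R^d;\ell^2)$-valued viewpoint plus the It\^o isometry with the covariance bound of \autoref{rem:bound_C_TV}, while the paper truncates the sum at $k\leq N$ and shows the tails are small uniformly in $\eps$ — but these are equivalent in substance and rest on the same estimate $\EE[\sup_t\|\tilde\theta^\kappa_t\|_{L^2_x}^2\int_0^T\|\nabla\tilde\theta^\kappa_t\|_{L^2_x}^2\,\dd t]<\infty$.
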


\begin{proof}
Mollifying $\tilde{\theta}^\kappa$ with a smooth mollifier $\chi^\eps$ we find the SPDE for $\tilde{\theta}^{\kappa,\eps}:= \tilde{\theta}^\kappa \ast \chi^\eps$
\begin{align*}
\dd \tilde{\theta}^{\kappa,\eps} 
+ 
\sqrt{1-\kappa}
\sum_k (\sigma_k\cdot\nabla \tilde{\theta}^\kappa)^\eps \dd W^k 
= 
\frac12 C(0) : D^2 \tilde{\theta}^{\kappa,\eps}\dd t,
\end{align*}
where $(\sigma_k\cdot\nabla \tilde{\theta}^\kappa)^\eps := (\sigma_k\cdot\nabla \tilde{\theta}^\kappa) \ast \chi^\eps$. The SPDE above has rigorous balance
\begin{align*}
\dd |\tilde{\theta}^{\kappa,\eps}|^2 
&+ 
\sqrt{1-\kappa}\sum_k 2 \tilde{\theta}^{\kappa,\eps}  (\sigma_k\cdot\nabla \tilde{\theta}^\kappa)^\eps \dd W^k 
\\
&=  
\tilde{\theta}^{\kappa,\eps} C(0):D^2 \tilde{\theta}^{\kappa,\eps}\dd t 
+ 
(1-\kappa)\sum_k |(\sigma_k\cdot\nabla \tilde{\theta}^\kappa)^\eps|^2\dd t
\\
&=
\frac12 C(0) :D^2 |\tilde{\theta}^{\kappa,\eps}|^2\dd t
- 
C(0) : \nabla \tilde{\theta}^{\kappa,\eps} \otimes \nabla \tilde{\theta}^{\kappa,\eps}
\dd t
+ 
(1-\kappa)\sum_k |(\sigma_k\cdot\nabla \tilde{\theta}^\kappa)^\eps|^2\dd t
.
\end{align*}
Next we pass to the limit as $\eps\to 0$. 
Clearly $| \tilde{\theta}^{\kappa,\eps}_t|^2 \to | \tilde{\theta}^{\kappa}_t|^2$ in $L^1_x$ and $C(0) :D^2| \tilde{\theta}^{\kappa,\eps}_t|^2 \to C(0) :D^2| \tilde{\theta}^{\kappa}_t|^2$ as distributions for every fixed $t \geq 0$, because of \eqref{eq:contraction_L2}.
Moreover, since for almost every $t\geq 0$ we have $\nabla \tilde{\theta}_t^\kappa\in L^2_x$ $\PP$-almost surely by \autoref{prop:wellposedness_viscous_SPDE}, we have as well $|\nabla \tilde{\theta}^{\kappa,\eps}_t|^2 \to |\nabla \tilde{\theta}^{\kappa}|^2$ and $C(0) : \nabla \tilde{\theta}_t^{\kappa,\eps} \otimes \nabla \tilde{\theta}_t^{\kappa,\eps}=\sum_k |\sigma_k\cdot\nabla \tilde{\theta}^{\kappa,\eps}_t|^2 \to \sum_k |\sigma_k\cdot\nabla \tilde{\theta}^{\kappa}_t|^2$ in $L^1_x$ $\PP$-almost surely.
The more challenging terms are the stochastic integral and the last term. For the last one, notice that for every $k \in \NN$ it holds for almost every $t \geq 0$ $\PP$-almost surely
\begin{align*}
	|(\sigma_k\cdot\nabla \tilde{\theta}^\kappa_t)^\eps|^2 \to |\sigma_k\cdot\nabla \tilde{\theta}^\kappa_t|^2 \text{ in } L^1_x.
\end{align*}
On the other hand, for every $T<\infty$ and $N \in \NN$ we have
\begin{align*}
\sup_{\eps \in (0,1)}
\int_0^T \int_{\R^d} \sum_{k>N} |(\sigma_k\cdot\nabla \tilde{\theta}^\kappa_t)^\eps|^2 \dd x\dd t	
&\leq 
\int_0^T \int_{\R^d} \sum_{k>N} |\sigma_k\cdot\nabla \tilde{\theta}^\kappa_t|^2 \dd x\dd t
\\
&= 
\int_0^T \int_{\R^d} \nabla \tilde{\theta}^\kappa_t(x) \cdot \left(\sum_{k>N} \sigma_k\otimes \sigma_k \right)\nabla \tilde{\theta}^\kappa_t(x) \dd x\dd t,
\end{align*}
and this term becomes $\PP$-almost surely infinitesimal as $N\to\infty$ by Dominated Convergence.
Therefore for almost every $t \geq 0$ it holds $\PP$-almost surely
\begin{align*}
- (1-\kappa)
C(0) : \nabla \tilde{\theta}_t^{\kappa,\eps} \otimes \nabla \tilde{\theta}_t^{\kappa,\eps}
+ 
(1-\kappa)\sum_k |(\sigma_k\cdot\nabla \tilde{\theta}_t^\kappa)^\eps|^2
\to 0 \text{ in } L^1_x.
\end{align*}

The stochastic term is quite tedious but also similar: for every $k$, the $\PP$-almost sure convergence holds
\begin{align*}
\tilde{\theta}^{\kappa,\eps}(\sigma_k\cdot\nabla \tilde{\theta}^\kappa)^\eps 
\to 
\tilde{\theta}^{\kappa}(\sigma_k\cdot\nabla \tilde{\theta}^\kappa)
\text{ in } L^2_t L^1_x,
\end{align*}
implying that the stochastic integral
\begin{align*}
\int_0^T
 \tilde{\theta}^{\kappa,\eps}_t  (\sigma_k\cdot\nabla \tilde{\theta}^\kappa_t)^\eps \dd W^k_t
\to
\int_0^T  \tilde{\theta}^{\kappa}_t (\sigma_k\cdot\nabla \tilde{\theta}^\kappa_t) \dd W^k_t
\end{align*}
in $L^2_\omega L^2_t L^1_x$ for every $k \in \NN$. In order to show the convergence in a space of distributions of the whole series
\begin{align*}
2 \sqrt{1-\kappa} \sum_k\int_0^T \tilde{\theta}^{\kappa,\eps}_t  (\sigma_k\cdot\nabla\tilde{\theta}^\kappa_t)^\eps \dd W^k_t
\to
2 \sqrt{1-\kappa} \sum_k\int_0^T  \tilde{\theta}^{\kappa}_t (\sigma_k\cdot\nabla\tilde{\theta}^\kappa_t) \dd W^k_t 
\end{align*}
it is sufficient to show that, for every $T<\infty$, as $N \to \infty$ it holds uniformly in $\eps \in (0,1)$
\begin{align*}
\mathbb{E} \left[ \left|
\sum_{k > N} 
\int_0^T \langle \tilde{\theta}^{\kappa,\eps}_t (\sigma_k \cdot \nabla \tilde{\theta}^\kappa_t)^\eps  ,\varphi_t\rangle \dd W^k_t 
\right|^2 \right] 
\to 
0
\end{align*}
for every test function $\varphi \in L^\infty_t L^\infty_x$.  
Let us expand the convolutions in the stochastic integral above
\begin{align*}
\sum_{k > N}& 
\int_0^T \langle \tilde{\theta}^{\kappa,\eps}_t (\sigma_k \cdot \nabla \tilde{\theta}^\kappa_t)^\eps  ,\varphi_t\rangle \dd W^k_t
\\
&=
\int_{\R^d}\int_{\R^d} 
\sum_{k > N} 
\int_0^T \int_{\R^d} \tilde{\theta}^{\kappa}_t(x-y) \sigma_k(x-z) \cdot \nabla \tilde{\theta}^\kappa_t(x-z)  \varphi_t(x) \dd x \dd W^k_t   \chi^\eps(y) \chi^\eps(z)\dd y\dd z 
\\
&=
\int_{\R^d}\int_{\R^d} 
\sum_{k > N} 
\int_0^T \langle \tilde{\theta}^{\kappa}_t(\cdot+z-y) \sigma_k \cdot \nabla \tilde{\theta}^\kappa_t , \varphi_t(\cdot+z) \rangle \dd W^k_t   \chi^\eps(y) \chi^\eps(z)\dd y\dd z. 
\end{align*}
By the analogue of \autoref{lem:stoch_integr} for the noise $\sum_{k > N} \sigma_k W^k$, we have that for every fixed $y,z \in \R^d$ the stochastic integral above is the value at time $T$ of a local martingale $M^f$ with quadratic variation 
\begin{align*}
\left[ M^f \right]_t
&\lesssim
\int_0^t 
\langle (C-C^N) \ast f_s , f_s \rangle\dd s,
\end{align*}
for $f_s := \tilde{\theta}^{\kappa}_s(\cdot+z-y) \nabla \tilde{\theta}^\kappa_s  \varphi_s(\cdot+z)$.
As a consequence,
\begin{align*}
\mathbb{E} &\left[ \left|
\sum_{k > N} 
\int_0^T \langle \tilde{\theta}^{\kappa,\eps}_t (\sigma_k \cdot \nabla \tilde{\theta}^\kappa_t)^\eps  ,\varphi_t\rangle \dd W^k_t 
\right|^2 \right] 
\\
&\lesssim
\int_{\R^d}\int_{\R^d} 
\mathbb{E} \left[ \left| M^f_T  \right|^2 \right]
\chi^\eps(y) \chi^\eps(z)\dd y \dd z
\\
&\lesssim
\int_{\R^d}\int_{\R^d} \mathbb{E} \left[\int_0^T  \langle (C-C^N) \ast f_t , f_t \rangle \right]\dd t \chi^\eps(y) \chi^\eps(z)\dd y\dd z 
\to 0
\end{align*}
as $N \to \infty$ by Dominated Convergence, since by \autoref{lem:stoch_integr_estim} and \autoref{prop:wellposedness_viscous_SPDE}
\begin{align*}
\mathbb{E} \left[ \int_0^T \langle C \ast f_t , f_t \rangle\dd t \right]
&\lesssim
\mathbb{E} \left[ \int_0^T \| f_t\|_{L^1_x}^2\dd t \right]
\lesssim
\|\varphi\|_{L^\infty_{t,x}}^2
\mathbb{E} \left[  
\sup_{t \in [0,T]}\|\tilde{\theta}^{\kappa}_t\|_{L^2_x}^2 
\int_0^T \| \nabla \tilde{\theta}^\kappa_t \|_{L^2_x}^2\dd t \right] < \infty. \qedhere
\end{align*}
\end{proof}

Under stronger integrability requirements on $\theta_0$, we are able to rigorously deduce a SPDE for the energy $|\theta|^2$ of solutions to \eqref{eq:Kraichnan}.

\begin{theorem}\label{thm:energy_balance_inviscid}
Suppose \autoref{ass:well_posed}, and
let $\theta_0\in L^2_x \cap L^4_x$, or alternatively let $\theta_0 \in L^2_x$ and $W$ be divergence-free. Then the unique solution $\theta$ to \eqref{eq:Kraichnan} is such that
\begin{equation}\label{eq:energy_balance_inviscid}
\dd |\theta|^2 
+ 
\nabla|\theta|^2 \cdot \dd W 
= 
\frac12 C(0) : D^2 |\theta|^2\dd t 
- 
\mathcal{D}[\theta]
\end{equation}
where the \emph{dissipation measure} $\mathcal{D}[\theta](\dif t,\dif x)$ is a nonnegative finite random measure, given by the limit (in probability with respect to $\omega$, in the sense of distributions with respect to $t,x$) as $\kappa\to 0^+$ of $ \kappa C(0) :  \nabla \tilde{\theta}^\kappa_t(x) \otimes  \nabla \tilde{\theta}^\kappa_t(x) \dif t \dif x$.
\end{theorem}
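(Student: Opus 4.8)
The plan is to derive \eqref{eq:energy_balance_inviscid} by passing to the vanishing-viscosity limit $\kappa\to 0^+$ in the exact balance \eqref{eq:energy_balance_viscous_kappa} of \autoref{prop:energy_balance_viscous_kappa}. Using $2\tilde\theta^\kappa\nabla\tilde\theta^\kappa=\nabla|\tilde\theta^\kappa|^2$ and rearranging, \eqref{eq:energy_balance_viscous_kappa} reads
\begin{align*}
\kappa\,C(0):\nabla\tilde\theta^\kappa\otimes\nabla\tilde\theta^\kappa\,\dd t
= -\dd|\tilde\theta^\kappa|^2 - \sqrt{1-\kappa}\,\nabla|\tilde\theta^\kappa|^2\cdot\dd W + \tfrac12 C(0):D^2|\tilde\theta^\kappa|^2\,\dd t .
\end{align*}
Since $\theta_t\in L^2_x$ (so $|\theta_t|^2\in L^1_x$) $\PP$-a.s., the three $\theta$-analogues of the right-hand side are well-defined random space-time distributions: $\dd|\theta|^2$ and $C(0):D^2|\theta|^2\,\dd t$ directly, and $\nabla|\theta|^2\cdot\dd W$ --- tested against $\varphi$ it equals $-\sum_k\int\langle|\theta_t|^2,\div(\sigma_k\varphi_t)\rangle\dd W^k_t$ --- via \autoref{lem:stoch_integr} and Remark~\ref{rem:bound_C_TV}. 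Here the alternative hypotheses enter: when $W$ is divergence-free only the term with $|\theta_t|^2\nabla\varphi_t\in L^1_x$ survives, handled by Remark~\ref{rem:bound_C_TV}; when $\theta_0\in L^4_x$, \eqref{eq:contraction_Lp} gives $|\theta_t|^2\in L^2_\omega L^2_x$, making the full integral admissible and a true martingale. One then \emph{defines} $\mathcal{D}[\theta]:=-\dd|\theta|^2-\nabla|\theta|^2\cdot\dd W+\tfrac12 C(0):D^2|\theta|^2\,\dd t$, so that \eqref{eq:energy_balance_inviscid} holds by construction; what remains is to identify $\mathcal{D}[\theta]$ with the asserted limit and to prove it is an a.s.\ finite non-negative measure.

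For the limit, I would test the displayed identity against arbitrary $\varphi\in C^\infty_c([0,\infty)\times\R^d)$ and pass $\kappa\to 0$ termwise using \autoref{lem:approx}. For the deterministic terms, $\EE\||\tilde\theta^\kappa_t|^2-|\theta_t|^2\|_{L^1_x}\to 0$ for each $t$ by the strong $L^2_\omega L^2_x$ convergence, dominated by $2\|\theta_0\|^2_{L^2_x}$ via \eqref{eq:contraction_L2}, so dominated convergence in $t$ gives convergence of the time integrals in $L^1_\omega$. For the stochastic term, writing $g^\kappa_t:=|\tilde\theta^\kappa_t|^2-|\theta_t|^2$, an It\^o-isometry estimate --- based on Remark~\ref{rem:bound_C_TV} in the divergence-free case and on \autoref{lem:stoch_integr} in the $L^4_x$ case --- bounds the $L^2_\omega$-norm of the difference (together with the elementary factor $\sqrt{1-\kappa}\to 1$) by $C_\varphi\big(\int_0^T\EE\|g^\kappa_t\|^2_{L^1_x}\,\dd t\big)^{1/2}$, resp.\ $C_\varphi\big(\int_0^T\EE\|g^\kappa_t\|^2_{L^2_x}\,\dd t\big)^{1/2}$; both vanish as $\kappa\to 0$, using \autoref{lem:approx} and a dominated-convergence argument in $t$ whose envelope is furnished by the pathwise bound \eqref{eq:Lp_bound_divergence_free} (divergence-free case), resp.\ by \eqref{eq:contraction_Lp} with $p=4$. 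The uniform-in-$\kappa$ control of the tail $\sum_{k>N}$ is carried out exactly as in the proof of \autoref{prop:energy_balance_viscous_kappa}. This yields $\langle\kappa\,C(0):\nabla\tilde\theta^\kappa\otimes\nabla\tilde\theta^\kappa\,\dd t\,\dd x,\varphi\rangle\to\langle\mathcal{D}[\theta],\varphi\rangle$ in probability.

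For the qualitative properties: since $C(0)$ is positive semidefinite, $\kappa\,C(0):\nabla\tilde\theta^\kappa_t(x)\otimes\nabla\tilde\theta^\kappa_t(x)\geq 0$ pointwise, so $\langle\mathcal{D}[\theta],\varphi\rangle\geq 0$ $\PP$-a.s.\ for every $0\leq\varphi\in C^\infty_c$; testing along a countable dense family of such $\varphi$ shows $\mathcal{D}[\theta]$ is $\PP$-a.s.\ a non-negative distribution, hence a non-negative Radon measure on $[0,\infty)\times\R^d$ by the standard representation theorem for non-negative distributions. For finiteness, integrating \eqref{eq:energy_balance_viscous_kappa} over $\R^d$ --- the $C(0):D^2$ term integrating to zero and the stochastic term being a genuine martingale, cf.\ \eqref{eq:contraction_L2k} --- gives $\kappa\int_0^t\EE\int_{\R^d}C(0):\nabla\tilde\theta^\kappa_s\otimes\nabla\tilde\theta^\kappa_s\,\dd x\,\dd s=\|\theta_0\|^2_{L^2_x}-\EE\|\tilde\theta^\kappa_t\|^2_{L^2_x}\to\|\theta_0\|^2_{L^2_x}-\EE\|\theta_t\|^2_{L^2_x}$ by \autoref{lem:approx}; combining with the distributional convergence and Fatou's lemma (along nonnegative $\varphi_n\uparrow\mathbf 1_{[0,t]\times\R^d}$) yields $\EE[\mathcal{D}[\theta]([0,t]\times\R^d)]\leq\|\theta_0\|^2_{L^2_x}$ for all $t$, so $\mathcal{D}[\theta]$ is $\PP$-a.s.\ finite.

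The main obstacle is the stochastic-term passage to the limit: it requires martingale/It\^o-isometry estimates based only on $L^1_x$- or $L^2_x$-norms of $|\theta|^2$ that are uniform in $\kappa$ and in the truncation level of the noise series $\sum_k\sigma_kW^k$, essentially a repetition of the most delicate step of \autoref{prop:energy_balance_viscous_kappa}; a related subtlety is ensuring the viscous stochastic integrals are genuine rather than merely local martingales, which is precisely where the hypothesis $\theta_0\in L^4_x$ (or divergence-freeness of $W$) is indispensable.
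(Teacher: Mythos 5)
Your proposal is correct and follows essentially the same route as the paper: pass to the limit $\kappa\to 0^+$ term by term in the viscous balance \eqref{eq:energy_balance_viscous_kappa}, using \autoref{lem:approx} for the deterministic terms, the splitting of the stochastic integral into a divergence part (needing only $\theta_0\in L^2_x$) and a $\div W$ part (needing $L^4_x$ unless $W$ is divergence-free), and positivity of the approximants plus an integrated energy identity for the measure properties. The only cosmetic difference is that you define $\mathcal{D}[\theta]$ as the defect in the limiting balance and then identify it with the limit of the viscous dissipation, whereas the paper obtains the limit directly from convergence of the other three terms; the two are logically equivalent.
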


\begin{proof}
Apply \autoref{prop:energy_balance_viscous_kappa} to $\tilde\theta^\kappa$. We intend to take the limit as $\kappa \downarrow 0$ in each term in \eqref{eq:energy_balance_viscous_kappa}.
For the terms $\dd |\tilde{\theta}^\kappa|^2$ and $
\frac12 C(0) :D^2 |\tilde{\theta}^{\kappa}|^2\dd t$ we 
exploit \autoref{lem:approx}. 
As for the stochastic integral, we have
\begin{align*}
\sqrt{1-\kappa} \nabla |\tilde\theta^\kappa|^2 \cdot \dd W
&= 
\sqrt{1-\kappa}\nabla\cdot (|\tilde\theta^\kappa|^2 \dd W) 
- 
\sqrt{1-\kappa}|\tilde\theta^\kappa|^2 \dd({\rm div} W)
\\
&\to \nabla\cdot (| \theta|^2 \dd W) 
- 
|\theta|^2 \dd({\rm div} W).
\end{align*}
In fact, for the convergence of the first one needs to invoke \autoref{lem:stoch_integr}, \autoref{lem:stoch_integr_estim}, the bound \eqref{eq:contraction_L2}, and the strong $L^2_\omega L^2_x$ convergence $\tilde\theta^\kappa_t \to \theta_t$ given by \autoref{lem:approx}, which only require $\theta_0 \in L^2_x$.
This hypothesis also suffices when $\div W = 0$, as the last stochastic integral disappears. 
On the other hand, when $\div W \neq 0$, the term $|\tilde\theta^\kappa|^2 \dd({\rm div} W)$ remains and we need $L^4_x$ integrability of $\theta_0$ to rigorously define the last stochastic integral by \autoref{lem:stoch_integr}.
The convergence is then a consequence of \autoref{lem:approx} and \eqref{eq:contraction_Lp}, both applied with $p=4$.

Since three out of four terms in \eqref{eq:energy_balance_viscous_kappa} converge in probability with respect to $\omega$, in the sense of distributions with respect to $t,x$, so does the term $ \kappa C(0) :  \nabla \tilde{\theta}^\kappa_t(x) \otimes  \nabla \tilde{\theta}^\kappa_t(x) \dif t \dif x$.
We denote $\mathcal{D}[\theta]$ the limiting random distribution.
It is clear that $\mathcal{D}[\theta] \geq 0$ as the same is true for every $\kappa > 0$, and therefore $\mathcal{D}[\theta]$ is $\PP$-almost surely a non-negative measure. Moreover, by \autoref{prop:wellposedness_viscous_SPDE} the measure $\mathcal{D}[\theta]$ has $\PP$-almost surely finite mass.
\end{proof}

\begin{cor}\label{cor:dissipation_measure}
    Let $W$, $\theta_0$ be as in \autoref{thm:energy_balance_inviscid}. Then $\PP$-almost surely, in the sense of distributions with respect to time, it holds
    \begin{equation}\label{eq:stoch.energy.balance}
        \dif \| \theta_t\|_{L^2_x}^2 + \langle |\theta_t|^2, \dd( {\rm div} W_t)\rangle = - \mathcal{D}[\theta](\dif t,\R^d)
    \end{equation}
    and moreover for every $T\geq 0$ we have
    \begin{equation}\label{eq:mean.energy.balance}
        \|\theta_0\|_{L^2_x}^2-\EE[\|\theta_T\|_{L^2_x}^2]= \EE[\mathcal{D}([0,T]\times \R^d)].
    \end{equation}
\end{cor}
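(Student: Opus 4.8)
The plan is to obtain \eqref{eq:stoch.energy.balance} by integrating over $\R^d$ the pathwise local energy balance \eqref{eq:energy_balance_inviscid} furnished by \autoref{thm:energy_balance_inviscid}, and then to deduce \eqref{eq:mean.energy.balance} by integrating in time over $[0,T]$ and taking expectations, exploiting that the resulting noise term has zero mean.

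For the first part, I would fix $\varphi\in C^\infty_c(\R^d)$ with $0\le\varphi\le 1$ and $\varphi\equiv 1$ on the unit ball, set $\varphi_R(x):=\varphi(x/R)$, and test \eqref{eq:energy_balance_inviscid} against $\varphi_R$; this gives, $\PP$-a.s.\ and distributionally in time, $\dd\langle|\theta_t|^2,\varphi_R\rangle-\sum_k\langle|\theta_t|^2,\div(\sigma_k\varphi_R)\rangle\dd W^k_t=\frac{1}{2}\langle|\theta_t|^2,C(0):D^2\varphi_R\rangle\dd t-\langle\mathcal{D}[\theta],\varphi_R\rangle$, where the last pairing is in the space variable. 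I would then send $R\to\infty$ in each term. The mass term $\langle|\theta_t|^2,\varphi_R\rangle$ increases to $\|\theta_t\|_{L^2_x}^2$ by monotone convergence, which is finite for a.e.\ $t$ since $\EE\|\theta_t\|_{L^2_x}^2\le\|\theta_0\|_{L^2_x}^2$ by \eqref{eq:contraction_L2}. The second-order term is $O(R^{-2}\|\theta_t\|_{L^2_x}^2)$, so it vanishes pointwise in $t$ and, being dominated by an integrable function of $t$ (again by \eqref{eq:contraction_L2}), also after integrating in time. For the dissipation term I would use that $\mathcal{D}[\theta]$ is $\PP$-a.s.\ a finite nonnegative measure by \autoref{thm:energy_balance_inviscid}, so that $\langle\mathcal{D}[\theta],\varphi_R\rangle$ increases to $\mathcal{D}[\theta](\dd t,\R^d)$ as measures in time. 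In the stochastic term one writes $\div(\sigma_k\varphi_R)=\varphi_R\div\sigma_k+\sigma_k\cdot\nabla\varphi_R\to\div\sigma_k$, and controls the convergence of the It\^o integral via the quadratic-variation estimate of \autoref{lem:stoch_integr} together with \autoref{rem:bound_C_TV} (applied to the vector measures $|\theta_t|^2\nabla\varphi_R$), using $\EE\int_0^T\|\theta_t\|_{L^4_x}^4\dd t\le T\|\theta_0\|_{L^4_x}^4<\infty$, which holds by \eqref{eq:contraction_Lp} under the hypothesis $\theta_0\in L^2_x\cap L^4_x$; when $W$ is divergence-free this term is simply absent, and $L^2_x$ data suffices. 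Collecting the limits yields \eqref{eq:stoch.energy.balance}.

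For the second part, I would integrate \eqref{eq:stoch.energy.balance} over $[0,T]$, obtaining $\|\theta_T\|_{L^2_x}^2-\|\theta_0\|_{L^2_x}^2+M_T=-\mathcal{D}[\theta]([0,T]\times\R^d)$ with $M_t:=\int_0^t\langle|\theta_s|^2,\dd(\div W_s)\rangle$; here one notes that $t\mapsto\|\theta_t\|_{L^2_x}^2$ differs from the continuous martingale $M$ by a nonincreasing process, so the endpoint values are unambiguous, and weak continuity of $\theta$ at $t=0$ fixes the value $\|\theta_0\|_{L^2_x}^2$ there. By \autoref{lem:stoch_integr}, $M$ is a local martingale, and in fact a genuine $L^2_\omega$-martingale since $\EE\int_0^T\||\theta_t|^2\|_{L^2_x}^2\dd t=\EE\int_0^T\|\theta_t\|_{L^4_x}^4\dd t<\infty$ by \eqref{eq:contraction_Lp} (and $M\equiv 0$ when $\div W=0$), so $\EE M_T=0$. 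Taking expectations then gives \eqref{eq:mean.energy.balance}. A streamlined alternative, which sidesteps the pathwise time-integration, is to take expectations directly in the distributional identity \eqref{eq:stoch.energy.balance}: the mean-zero martingale drops out, the continuity of $t\mapsto\EE\|\theta_t\|_{L^2_x}^2$ from \autoref{prop:properties.inviscid.Kraichnan} shows the limiting function has no atom at the endpoints, and one reads off $\EE\big[\mathcal{D}[\theta]([0,T]\times\R^d)\big]=\|\theta_0\|_{L^2_x}^2-\EE\|\theta_T\|_{L^2_x}^2$.

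I expect the main obstacle to be the $R\to\infty$ passage in the stochastic term of the first part, i.e.\ showing that $\sum_k\int_0^{\cdot}\langle|\theta_t|^2,\div(\sigma_k(\varphi_R-1))+\sigma_k\cdot\nabla\varphi_R\rangle\dd W^k_t$ tends to $0$; this is precisely where the $L^4_x$-contraction \eqref{eq:contraction_Lp} and the covariance bound of \autoref{rem:bound_C_TV} are needed, and it explains why the extra integrability of $\theta_0$ (or divergence-freeness of $W$) is assumed. Everything else reduces to monotone/dominated convergence, plus the elementary martingale argument in the second part.
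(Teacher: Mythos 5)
Your proposal is correct and follows essentially the same route as the paper: a spatial cutoff $\varphi_R$ in the local balance \eqref{eq:energy_balance_inviscid} with $R\to\infty$ (the paper compresses the limit passage into \autoref{lem:tightness}, while you spell out the quadratic-variation and $L^4_x$ estimates explicitly), followed by taking expectations so that the martingale term vanishes and the continuity of $t\mapsto\EE\|\theta_t\|_{L^2_x}^2$ identifies the endpoint values. Your ``streamlined alternative'' for \eqref{eq:mean.energy.balance} is precisely the paper's argument (made rigorous there by integrating against $\psi^n\downarrow\mathbf{1}_{[0,T]}$ and using monotone convergence), and it is preferable to the pathwise integration, which would otherwise require a little extra care about the a.s.\ meaning of $\|\theta_T\|_{L^2_x}^2$ and possible atoms of $\mathcal{D}[\theta]$ at $t=T$.
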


\begin{proof}
Test \eqref{eq:energy_balance_inviscid} against a test function $\varphi^R \in C^\infty_c(\R^d)$, taking values in $[0,1]$, with $\varphi^R \equiv 1$ on $B_R$, $\varphi^R \equiv 0$ on $B_{2R}$ for some $R>1$. Assume in addition $|\nabla \varphi^R| \lesssim R^{-1}$ and $|D^2 \varphi^R| \lesssim R^{-2}$.
Integration by parts gives (as distributions with respect to time)
\begin{align*}
\int_{\R^d} \dif |\theta_t(x)|^2 \varphi^R(x) \dif x
-
\int_{\R^d} |\theta_t(x)|^2 \nabla \varphi^R(x)  \dif W  \dif x
-
\int_{\R^d} |\theta_t(x)|^2 \varphi^R(x) \dif (\div W)  \dif x
\\
=
\frac12 \int_{\R^d} |\theta_t(x)|^2 C(0) : D^2 \varphi^R(x) \dif t  \dif x
-
\int_{\R^d} \varphi^R(x) \mathcal{D}[\theta](\dif t, \dif x).
\end{align*}
Taking the limit $R \to \infty$ thanks to \autoref{lem:tightness} we obtain \eqref{eq:stoch.energy.balance}.
In order to obtain \eqref{eq:mean.energy.balance}, fix $T \geq 0$ and define $\psi^n \in C^\infty_c(\R_+)$ such that $\psi^n(s) \equiv 1$ on $s \in[0,T]$ and $\psi^n(s) \equiv 0$ for $s > T+1/n$,  $\psi^n$ is non-increasing and $\psi^n \downarrow \mathbf{1}_{[0,T]}$ monotonically as $n \to \infty$. 
Integrate \eqref{eq:stoch.energy.balance} against $\psi^n$ and take expectations to get
\begin{align*}
\|\theta_0\|_{L^2_x}^2+ \int_{\R_+} \EE [\|\theta_t\|_{L^2_x}^2] \partial_t\psi^n(t) \dif t  
=
\mathbb{E}\int_{\R_+} \psi^n(t) \mathcal{D}[\theta](\dif t, \R^d).
\end{align*}
Since the map $t \mapsto \EE [\|\theta_t\|_{L^2_x}^2]$ is continuous by \autoref{prop:properties.inviscid.Kraichnan} and $\partial_t \psi^n(t) \to -\delta_{\{t=T\}}$ as distributions when $n \to \infty$, the left-hand side of the above converges to $ \|\theta_0\|_{L^2_x}^2-\EE[\|\theta_T\|_{L^2_x}^2]$. On the other hand, the left-hand side converges to $\EE[\mathcal{D}([0,T]\times \R^d)]$ by Monotone Convergence, concluding the proof.  
\end{proof}

We conclude this subsection with the following characterization of anomalous dissipation, in the setting of stochastic transport equation. The key feature consists in the fact that, by uniqueness of \eqref{eq:Kraichnan} and strong convergence $\tilde{\theta}^\kappa_t \to \theta_t$, persistence of mean energy dissipation along the vanishing diffusivity scheme $\kappa \downarrow 0$ can be read directly at the inviscid level, either by looking at $\mathbb{E}[\| \theta_t \|_{L^2_x}^2]$ or at $\mathbb{E}[\mathcal{D}([0,t] \times \R^d)]$.

\begin{prop} \label{prop:anomalous_dissipation_equivalence}
Let $W$, $\theta_0$ be as in \autoref{thm:energy_balance_inviscid}.  For any $s<t$, the following are equivalent:
\begin{enumerate}
    \item 
    $\mathbb{E}[\| \theta_t \|_{L^2_x}^2] < \mathbb{E}[\| \theta_s \|_{L^2_x}^2]$;
    \item 
    $\lim_{\kappa \downarrow 0} \kappa \int_s^t \int_{\R^d} \mathbb{E} \left[  C(0) : \nabla \tilde{\theta}_r^{\kappa}(x) \otimes \nabla \tilde{\theta}_r^{\kappa}(x) \right]
\dd x \dd r > 0$;
    \item 
    $\mathbb{E}[\mathcal{D}((s,t] \times \R^d)]>0$.
\end{enumerate}
Moreover any of the above implies that
\begin{itemize}
    \item[(4)]
    $\mathbb{E} \left[ \limsup_{\kappa \downarrow 0} \kappa \int_s^t \int_{\R^d}   C(0) : \nabla \tilde{\theta}_r^{\kappa}(x) \otimes \nabla \tilde{\theta}_r^{\kappa}(x)  
\dd x \dd r \right] > 0$.
\end{itemize}
\end{prop}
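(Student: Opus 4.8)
The plan is to show that the three numbers appearing in (1)--(3) are all equal to the single quantity
\[
c\;:=\;\mathbb{E}\|\theta_s\|_{L^2_x}^2-\mathbb{E}\|\theta_t\|_{L^2_x}^2\;\ge\;0,
\]
and then to deduce (4) from the lower semicontinuity built into the definition of $\mathcal{D}[\theta]$. Throughout I write
\[
Z_\kappa\;:=\;\kappa\int_s^t\int_{\R^d} C(0):\nabla\tilde\theta^\kappa_r(x)\otimes\nabla\tilde\theta^\kappa_r(x)\,\dd x\,\dd r,
\]
which is nonnegative (since $C(0)$ is positive definite) and $\PP$-a.s.\ finite (by \autoref{prop:wellposedness_viscous_SPDE}); with this notation, (2) and (4) read $\lim_{\kappa\downarrow 0}\mathbb{E}[Z_\kappa]>0$ and $\mathbb{E}[\limsup_{\kappa\downarrow 0}Z_\kappa]>0$ respectively.

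First I would integrate \eqref{eq:energy_balance_viscous_kappa} over $\R^d$ (testing against a cutoff $\varphi^R\uparrow 1$ with controlled derivatives and passing to the limit, exactly as in the proof of \autoref{cor:dissipation_measure}), obtaining $\PP$-a.s., as distributions in time,
\[
\dd\|\tilde\theta^\kappa_t\|_{L^2_x}^2\;=\;\sqrt{1-\kappa}\,\langle|\tilde\theta^\kappa_t|^2,\dd(\div W_t)\rangle-\kappa\int_{\R^d}C(0):\nabla\tilde\theta^\kappa_t(x)\otimes\nabla\tilde\theta^\kappa_t(x)\,\dd x\,\dd t.
\]
The stochastic term vanishes identically when $\div W=0$; when instead $\theta_0\in L^4_x$ it is a genuine mean-zero martingale, because by \autoref{lem:stoch_integr} its quadratic variation is dominated by $\int_0^t\|\tilde\theta^\kappa_r\|_{L^4_x}^4\,\dd r$, which has finite expectation by \eqref{eq:contraction_Lp} with $p=4$. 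Taking expectations (Fubini being justified by \autoref{prop:wellposedness_viscous_SPDE}) and subtracting the resulting identity at times $s$ and $t$ gives
\[
\mathbb{E}[Z_\kappa]\;=\;\mathbb{E}\|\tilde\theta^\kappa_s\|_{L^2_x}^2-\mathbb{E}\|\tilde\theta^\kappa_t\|_{L^2_x}^2.
\]
Letting $\kappa\downarrow0$ and using the strong $L^2_\omega L^2_x$ convergence $\tilde\theta^\kappa_\bullet\to\theta_\bullet$ of \autoref{lem:approx} (cf.\ the proof of \autoref{thm:energy_balance_inviscid}), the right-hand side converges to $c$, so $\lim_{\kappa\downarrow0}\mathbb{E}[Z_\kappa]=c$; this is (1)$\Leftrightarrow$(2). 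For (3), subtract \eqref{eq:mean.energy.balance} at times $s$ and $t$ to get $\mathbb{E}[\mathcal{D}((s,t]\times\R^d)]=c$, so (1)$\Leftrightarrow$(3) as well, and each of (1)--(3) is equivalent to $c>0$.

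Now assume $c>0$ and prove (4). Set $\mu_\kappa:=\kappa\,C(0):\nabla\tilde\theta^\kappa\otimes\nabla\tilde\theta^\kappa\,\dd t\,\dd x$, a $\PP$-a.s.\ finite nonnegative Radon measure on $\R_+\times\R^d$ with $Z_\kappa=\mu_\kappa((s,t]\times\R^d)$. By \autoref{thm:energy_balance_inviscid}, $\mu_\kappa\to\mathcal{D}[\theta]$ as $\kappa\downarrow0$, in probability in the sense of distributions. Fix $R\in\NN$ and pick $\varphi^R_k\in C_c(\R_+\times\R^d;[0,1])$ with $\varphi^R_k\uparrow\mathbf{1}_{(s,t)\times B_R}$ pointwise as $k\to\infty$. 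Since $\langle\mu_\kappa,\varphi^R_k\rangle\to\langle\mathcal{D}[\theta],\varphi^R_k\rangle$ in probability for each of the countably many pairs $(R,k)$, a diagonal extraction yields a sequence $\kappa_n\downarrow0$ along which all these convergences hold $\PP$-a.s.\ simultaneously. On that full-measure event, using $0\le\varphi^R_k\le\mathbf{1}_{(s,t]\times\R^d}$ and then letting $k\to\infty$ and $R\to\infty$ with monotone convergence,
\[
\limsup_n Z_{\kappa_n}\;\ge\;\lim_n\langle\mu_{\kappa_n},\varphi^R_k\rangle\;=\;\langle\mathcal{D}[\theta],\varphi^R_k\rangle\;\xrightarrow[k\to\infty]{}\;\mathcal{D}[\theta]\big((s,t)\times B_R\big)\;\xrightarrow[R\to\infty]{}\;\mathcal{D}[\theta]\big((s,t)\times\R^d\big).
\]
Because $t\mapsto\mathbb{E}\|\theta_t\|_{L^2_x}^2$ is continuous (\autoref{prop:properties.inviscid.Kraichnan}), \eqref{eq:mean.energy.balance} forces $\mathbb{E}[\mathcal{D}(\{t\}\times\R^d)]=0$, hence $\PP$-a.s.\ $\mathcal{D}[\theta]((s,t)\times\R^d)=\mathcal{D}[\theta]((s,t]\times\R^d)$. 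Therefore $\limsup_{\kappa\downarrow0}Z_\kappa\ge\limsup_n Z_{\kappa_n}\ge\mathcal{D}[\theta]((s,t]\times\R^d)$ $\PP$-a.s., and taking expectations yields $\mathbb{E}[\limsup_{\kappa\downarrow0}Z_\kappa]\ge\mathbb{E}[\mathcal{D}((s,t]\times\R^d)]=c>0$, which is (4).

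The main obstacle is precisely this last step: upgrading the distributional convergence in probability $\mu_\kappa\to\mathcal{D}[\theta]$ to an $\PP$-a.s.\ lower bound for $\limsup_\kappa Z_\kappa$. This needs (i) an a.s.\ convergent subsequence chosen simultaneously for a countable family of test functions exhausting the open slab $(s,t)\times B_R$ from below, (ii) the lower semicontinuity of the mass of an open set under vague convergence of nonnegative Radon measures, and (iii) excluding loss of mass both at spatial infinity (handled by $R\to\infty$) and at the terminal time $t$ (handled by the no-atom property, itself a consequence of continuity of the mean energy). A minor wrinkle is that $\limsup_{\kappa\downarrow0}Z_\kappa$ is most safely read along a countable set of parameters (or via outer expectation), which is harmless since the extracted sequence $\kappa_n$ already realizes the bound. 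Everything else is routine: the $\kappa$-level energy identity is a direct consequence of \autoref{prop:energy_balance_viscous_kappa} once the $\div W$ stochastic integral is recognized as a mean-zero martingale, and the passage to the inviscid level uses only \autoref{lem:approx} together with \eqref{eq:mean.energy.balance}.
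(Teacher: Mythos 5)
Your proof of the equivalences (1)$\Leftrightarrow$(2)$\Leftrightarrow$(3) is the same as the paper's: both show that all three quantities equal $c=\mathbb{E}\|\theta_s\|_{L^2_x}^2-\mathbb{E}\|\theta_t\|_{L^2_x}^2$ by combining the viscous energy identity from \autoref{prop:energy_balance_viscous_kappa} (the $\div W$ term being a mean-zero martingale), the strong convergence of \autoref{lem:approx}, and \eqref{eq:mean.energy.balance}. For the implication to (4), however, you take a genuinely different route. The paper integrates \eqref{eq:energy_balance_viscous_kappa} pathwise, observes that the stochastic integral is bounded in $L^2(\Omega)$ uniformly in $\kappa$ (via \eqref{eq:contraction_Lp} with $p=4$), deduces uniform integrability of the family $\{Z_\kappa\}$, and applies the reverse Fatou lemma for uniformly integrable random variables to get $\mathbb{E}[\limsup_\kappa Z_\kappa]\geq\limsup_\kappa\mathbb{E}[Z_\kappa]=c$. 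You instead work directly with the measures $\mu_\kappa$ and the convergence in probability $\mu_\kappa\to\mathcal{D}[\theta]$ from \autoref{thm:energy_balance_inviscid}: an a.s.\ convergent diagonal subsequence for a countable exhausting family of test functions, lower semicontinuity of the mass of open sets, and the no-atom property at time $t$ (which indeed follows from continuity of $r\mapsto\mathbb{E}\|\theta_r\|_{L^2_x}^2$ and \eqref{eq:mean.energy.balance}). Your argument is correct and in fact yields the slightly stronger pathwise bound $\limsup_{\kappa\downarrow 0}Z_\kappa\geq\mathcal{D}[\theta]((s,t]\times\R^d)$ $\PP$-a.s., at the cost of a longer measure-theoretic extraction; the paper's argument is shorter but only gives the inequality in expectation and leans on the uniform $L^2_\omega$ control of the martingale term. (The measurability caveat for $\limsup$ over a continuum of $\kappa$ that you flag is shared by the statement itself and is harmless.)
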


\begin{proof}
We claim that
\begin{equation}\label{eq:claim_equivalences}
    \mathbb{E}[\mathcal{D}((s,t] \times \R^d)]
    = \mathbb{E}[\| \theta_s \|_{L^2_x}^2] - \mathbb{E}[\| \theta_t \|_{L^2_x}^2]
    = \lim_{\kappa \downarrow 0} \kappa \int_s^t \int_{\R^d} \mathbb{E} \left[  C(0) : \nabla \tilde{\theta}_r^{\kappa}(x) \otimes \nabla \tilde{\theta}_r^{\kappa}(x) \right]
\dd x \dd r
\end{equation}
which readily implies $(1) \Leftrightarrow (2)\Leftrightarrow (3)$.
The first equality in \eqref{eq:claim_equivalences} follows immediately by taking the difference between  \eqref{eq:mean.energy.balance} evaluated at times $t$ and $s$, respectively.
By \autoref{prop:energy_balance_viscous_kappa}, for every $\kappa > 0$ we have 
\begin{align*}
\mathbb{E}[\| \tilde{\theta}^\kappa_s \|_{L^2_x}^2] 
- 
\mathbb{E}[\| \tilde{\theta}^\kappa_t \|_{L^2_x}^2]
=
\kappa \int_s^t \int_{\R^d} \mathbb{E} \left[  C(0) : \nabla \tilde{\theta}_r^{\kappa}(x) \otimes \nabla \tilde{\theta}_r^{\kappa}(x) \right]
\dd x \dd r.
\end{align*}
By \autoref{lem:approx}, the left-hand side converges to $\mathbb{E}[\| \theta_s \|_{L^2_x}^2] - \mathbb{E}[\| \theta_t \|_{L^2_x}^2]$ as $\kappa \downarrow 0$, implying existence of the limit of the right-hand side, as well as the second equality in \eqref{eq:claim_equivalences}.

Let us now show the implication $(2) \Rightarrow (4)$.
Integrating \eqref{eq:energy_balance_viscous_kappa} with respect to space and time we obtain
\begin{align*}
\| \tilde{\theta}^\kappa_s \|_{L^2_x}^2 
- 
 \| \tilde{\theta}^\kappa_t \|_{L^2_x}^2 + \sqrt{1-\kappa}\int_s^t \langle \nabla|\tilde\theta^\kappa_r|^2, \dd W_r\rangle 
=
\kappa \int_s^t \int_{\R^d}    C(0) : \nabla \tilde{\theta}_r^{\kappa}(x) \otimes \nabla \tilde{\theta}_r^{\kappa}(x) 
\dd x \dd r,
\quad
\PP\mbox{-a.s.}
\end{align*}
The stochastic integral on the left-hand side, which is only relevant if $\div W \neq 0$, is uniformly bounded in $L^2(\Omega)$ with respect to $\kappa$, thanks to estimate \eqref{eq:contraction_Lp} (with $p=4$) and \autoref{lem:stoch_integr}; it follows that the family of random variables on the left-hand side are uniformly integrable in $\kappa$, thus so are the ones on the right-hand side.
By Fatou's Lemma for uniformly integrable random variables (\cite[Thm. 4, p.188]{Shyriaev1996})
it follows that
\begin{align*}
    \mathbb{E} \left[ \limsup_{\kappa \downarrow 0} \kappa \int_s^t \int_{\R^d}   C(0) : \nabla \tilde{\theta}_r^{\kappa}(x) \otimes \nabla \tilde{\theta}_r^{\kappa}(x) \dd x \dd r \right]
    &\geq \limsup_{\kappa\downarrow 0} \EE[\| \tilde{\theta}^\kappa_s \|_{L^2_x}^2 - \| \tilde{\theta}^\kappa_t \|_{L^2_x}^2]\\
    & = \EE[\|\theta_s \|_{L^2_x}^2]-\EE[\|\theta_t \|_{L^2_x}^2],
\end{align*}
proving that $(2) \Rightarrow (4)$.
\end{proof}

\section{Anomalous dissipation of mean energy}
\label{sec:anomalous_dissipation}

According to \autoref{prop:properties.inviscid.Kraichnan} and \autoref{cor:dissipation_measure}, for any initial condition $\theta_0 \in L^2_x$ the associated solution $\theta$ to the inviscid transport equation \eqref{eq:Kraichnan} satisfies $\mathbb{E} [\| \theta_t \|_{L^2_x}^2]
\leq \| \theta_0 \|_{L^2_x}^2$, with exact equality if and only if $\EE[\mathcal{D}[\theta]([0,t]\times\R^d)]=0$, so that $\PP$-a.s. $\mathcal{D}[\theta]([0,t]\times\R^d)\equiv 0$.
Also in light of points (2)-(3) from \autoref{prop:anomalous_dissipation_equivalence}, we will say a solution $\theta$ displays \emph{anomalous dissipation} on an interval $[s,t]$ whenever $\mathbb{E} [\| \theta_t \|_{L^2_x}^2]
< \mathbb{E} [\| \theta_s \|_{L^2_x}^2]$.

This section is devoted to study anomalous dissipation for solutions of the stochastic transport equation perturbed by a space-homogeneous noise (\autoref{sec:dissipation}) and the interplay between anomalous dissipation and space regularity of solutions (\autoref{sec:rigidity}). 

\subsection{Continuous-in-time anomalous dissipation for every initial condition}\label{sec:dissipation}

It follows from \cite[Lemma 6.5]{LeJRai2002} that \emph{spontaneous stochasticity} happens (in the sense that solutions to \eqref{eq:Kraichnan} are not representable by a flow of maps, but only by a flow of kernels) if and only if there exist some $t>0$ and $\theta_0\in L^2_x$ such that anomalous dissipation happens on $[0,t]$. However, it is not clear from \cite{LeJRai2002} how generic this phenomenon should be, genericity being intended with respect to the initial condition and time.
Here we prove the following dichotomy, valid in the general framework of \eqref{eq:Kraichnan} perturbed by a Gaussian homogeneous noise $W$. Combined with \cite[Lemma 6.5]{LeJRai2002} and \cite[Theorem 10.2]{LeJRai2002}, it implies in particular \autoref{thm:dissipation_intro} from the Introduction.

\begin{theorem} \label{thm:dissipation}
Let $W$ satisfy \autoref{ass:well_posed} and consider the inviscid SPDE \eqref{eq:Kraichnan}, which is well-posed in $L^2_x$ in the sense of \autoref{prop:solution}.
Then exactly one of the following alternatives holds:
\begin{enumerate}
    \item 
 All solutions preserve the mean energy: for any $\theta_0\in L^2_x$ and $t\geq 0$, $\mathbb{E} [\| \theta_t \|_{L^2_x}^2] = \| \theta_0\|_{L^2_x}^2$.
    \item All non-zero solutions continuously dissipate the mean energy: for any $\theta_0\in L^2_x\setminus\{0\}$ and any $s<t$, it holds $\EE[\| \theta_t \|_{L^2_x}^2]<\EE[\| \theta_s \|_{L^2_x}^2]$.
\end{enumerate}
\end{theorem}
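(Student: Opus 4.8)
Since the two alternatives are mutually exclusive, the task is to show that the negation of (1) forces (2). The plan is to encode the mean-energy defect $\mathcal{E}_t(\theta_0):=\|\theta_0\|_{L^2_x}^2-\mathbb{E}[\|\theta_t\|_{L^2_x}^2]$ (nonnegative by \eqref{eq:contraction_L2}, and nondecreasing in $t$ by the Markov property together with $L^2$-contractivity of $P_{t-s}$, cf.\ \autoref{prop:properties.inviscid.Kraichnan}) as a Fourier multiplier and then analyse it. First I would pass through the two-point function: by \autoref{lem:PDE_twopoint_transport} with $\kappa=0$ and spatial homogeneity, $\mathbb{E}[\|\theta_t\|_{L^2_x}^2]=F_t(0)$, where $F$ solves $\partial_t F=Q:D^2_z F$ with $F_0=\theta_0\ast\theta_0^-$, i.e.\ $\hat F_0=|\hat\theta_0|^2$; since $\theta_t$ is linear in $\theta_0$, the map $F_0\mapsto F_t(0)$ is linear, so $\mathcal{E}_t(\theta_0)=F_0(0)-F_t(0)$ depends on $\theta_0$ only through $|\hat\theta_0|^2$, linearly and nonnegatively, with $\mathcal{E}_t(\theta_0)\le\|\theta_0\|_{L^2_x}^2=\||\hat\theta_0|^2\|_{L^1_x}$. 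As $|\hat\theta_0|^2$ ranges over all nonnegative $L^1$ functions, $\mathcal{E}_t$ defines a bounded positive linear functional on $L^1_x$, and the Riesz representation theorem ($L^1$–$L^\infty$ duality) gives $\mathcal{E}_t(\theta_0)=\int_{\R^d}h_t(\xi)\,|\hat\theta_0(\xi)|^2\,\rmd\xi$ with $h_t$ measurable, $0\le h_t\le 1$, and nondecreasing in $t$. Hence the conservation set is the closed (translation-invariant) subspace $\mathfrak{C}=\{\theta_0:\hat\theta_0=0\text{ a.e.\ on }E\}$ with $E:=\bigcup_{t>0}\{h_t>0\}$, and the failure of (1) means exactly that $h_{t_0}\not\equiv 0$ for some $t_0>0$.

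Next I would exploit the flow/Markov structure in time. For deterministic data $g$ one has $\mathbb{E}[\|\theta_t\|_{L^2_x}^2]=\int(1-h_t)|\hat g|^2$ by the very definition of $h_t$, so conditioning at time $s$ yields $\mathbb{E}[\|\theta_{t+s}\|_{L^2_x}^2\mid\mathcal{F}_s]=\int(1-h_t)|\hat\theta_s|^2$; taking expectations, writing $\hat F_s(\xi)=\mathbb{E}[|\hat\theta_s(\xi)|^2]=\int\mathcal{K}_s(\xi,\eta)|\hat\theta_0(\eta)|^2\,\rmd\eta$ with $\mathcal{K}_s\ge0$ the transition kernel of \eqref{eq:PDE_F} in Fourier variables, and using that $\mathcal{E}_t$ depends on the data only through $\mathbb{E}|\hat\cdot|^2$, I obtain the pointwise identity
\[
h_{t+s}(\eta)-h_s(\eta)=\int_{\R^d}h_t(\xi)\,\mathcal{K}_s(\xi,\eta)\,\rmd\xi\qquad\text{for a.e.\ }\eta .
\]
Taking $s=t$ and using $\mathcal{K}_t\ge0$ shows $h_t\equiv 0\Rightarrow h_{2t}\equiv 0$; since $\{t>0:h_t\equiv 0\}$ is also downward closed (monotonicity of $h_t$), it is closed under doubling and downward closed, hence equals $\varnothing$ or $(0,\infty)$. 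The case $(0,\infty)$ is precisely alternative (1); so once (1) fails, $h_t\not\equiv 0$ for every $t>0$.

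The last and hardest step is to upgrade ``$h_t\not\equiv 0$'' to ``$h_t>0$ a.e.'' and deduce strict continuity. I would establish strict positivity of the transition kernel, $\mathcal{K}_s(\xi,\eta)>0$ for a.e.\ $(\xi,\eta)$ and all $s>0$: from the Wiener-chaos expansion of $\theta_s$ (point (3) of \autoref{prop:solution}), the $n$-th chaos contributes a nonnegative term to $\hat F_s(\xi)$ that convolves $|\hat\theta_0|^2$ with one copy of $\supp\hat C$ per chaos order, and $\supp\hat C$ cannot be contained in a half-space (from $\hat C(-\xi)=\hat C(\xi)$ and $\hat C\not\equiv 0$), so the $n$-fold sumsets exhaust $\R^d$. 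This is the incarnation, in the present Fourier language, of the Stone–Weierstrass argument the paper advertises — density of $\mathfrak{C}$ in $\mathcal{C}_0$, which is incompatible with the nonzero $h\in L^1_x\cap L^2_x$ orthogonal to $\mathfrak{C}$ furnished by Riesz representation when $\mathfrak{C}\neq L^2_x$ — and I expect it to be the main obstacle, since $\mathcal{K}_s$ comes from the equation \eqref{eq:PDE_F} which is degenerate at $z=0$. Granting the strict positivity, for any $s<t$ and $\theta_0\neq 0$ the displayed identity with $h_{t-s}\not\equiv 0$ forces $h_t-h_s>0$ a.e., hence $\mathbb{E}\|\theta_t\|_{L^2_x}^2-\mathbb{E}\|\theta_s\|_{L^2_x}^2=-\int(h_t-h_s)|\hat\theta_0|^2<0$; in particular $h_t>0$ a.e.\ for every $t>0$, so no nonzero $\theta_0$ conserves the mean energy on any interval, i.e.\ $\mathfrak{C}=\{0\}$ and $\mathfrak{D}=L^2_x\setminus\{0\}$, which is (2).
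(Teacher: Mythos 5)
Your architecture is genuinely different from the paper's: you encode the energy defect as a Fourier multiplier $\mathcal{E}_t(\theta_0)=\int h_t|\hat\theta_0|^2$ and try to propagate positivity of $h_t$ through a Fourier-space transition kernel, whereas the paper works on the physical side, producing a nonzero $h\in L^1_x\cap L^2_x$ orthogonal to $\mathfrak{C}$ via Riesz representation and then showing $\mathfrak{C}_0=\mathfrak{C}\cap\mathcal{C}_0$ is a translation-invariant subalgebra of $\mathcal{C}_0$, dense by Stone--Weierstrass. Your first two reductions are essentially sound modulo fixable points: the factorization of $\mathbb{E}\|\theta_t\|_{L^2_x}^2$ through $|\hat\theta_0|^2$ does not follow from linearity of $\theta_0\mapsto\theta_t$ alone (that only gives a quadratic form in $\theta_0$); you need homogeneity plus either the uniqueness of the strictly parabolic equation \eqref{eq:PDE_F} for $\kappa>0$ combined with \autoref{lem:approx}, or the explicit chaos computation showing the quadratic form diagonalizes in Fourier. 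Also, $\mathcal{K}_s$ necessarily contains a singular diagonal atom $e^{-s\xi\cdot C(0)\xi}\delta(\xi-\eta)$ from the zeroth chaos, so all positivity statements must be phrased for its absolutely continuous part; and realizing arbitrary nonnegative $L^1$ densities as $|\hat\theta_0|^2$ requires complex-valued data or a restriction to even densities. The doubling argument showing $\{t:h_t\equiv0\}$ is either empty or all of $(0,\infty)$ is clean and correct.

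The genuine gap is the final step, and your justification for it does not work as stated. Almost sure strict positivity of the off-diagonal kernel $k_s(\xi,\eta)$ is not a question about $\supp\hat C$ alone: the weight attached to a frequency transition $\eta\mapsto\eta+\zeta$ at each chaos order is a quadratic form of the type $\eta\cdot\hat C(\zeta)\eta$ (the gradient in $\dd W\cdot\nabla\theta$ acts on $\theta$ before multiplication by $\sigma_k$), and this can vanish identically on a non-null set of pairs when the matrix $\hat C(\zeta)$ is degenerate. This is exactly the mechanism behind the counterexample of \autoref{rem:counterexample_torus}: there $\supp\hat C$ has $n$-fold sumsets exhausting all of frequency space, yet data with $\hat\theta_0$ supported in $\ZZ^2\times\{0\}$ never transfer energy to the orthogonal modes because $\eta\cdot\hat C(\zeta)\eta=0$ for every such $\eta$ and every $\zeta$ in the second block. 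So the sumset/half-space observation cannot be the whole argument; one must additionally show that, under \autoref{ass:well_posed} on $\R^d$ (where $\hat C\in L^1\cap L^\infty$ rules out the block-diagonal singular covariances of the torus example), the degeneracy set $\{(\eta,\zeta):\hat C(\zeta)\neq0,\ \eta\in\ker\hat C(\zeta)\}$ is Lebesgue-null, and then compose this a.e.-positivity correctly through all chaos orders and the time simplices. That analysis is the actual content of the theorem in your formulation and is entirely absent from the sketch. The paper's algebra/Stone--Weierstrass route avoids the question altogether by exploiting the closure of $\mathfrak{C}$ under products and translations rather than any kernel positivity.
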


We will refer to case $(2)$ as ``continuous anomalous dissipation of mean energy'', as the resulting map $t\mapsto \EE[\| \theta_t\|_{L^2_x}^2]$ is strictly monotone and continuous thanks to \autoref{prop:properties.inviscid.Kraichnan}.

In particular, combining \cite[Lemma 6.5]{LeJRai2002} with \autoref{thm:richardson} in the Introduction, we find:

\begin{cor}\label{cor:dissipation.general.assumption}
Let $W$ be a noise whose covariance satisfies \autoref{assumption} and consider the associated inviscid SPDE \eqref{eq:Kraichnan}, then all non-zero solutions continuously dissipate the mean energy.
\end{cor}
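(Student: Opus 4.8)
The plan is to read the corollary off the genericity dichotomy of \autoref{thm:dissipation}, excluding its energy-conserving alternative by means of \autoref{thm:richardson} and the Le Jan--Raimond rigidity \cite[Lemma 6.5]{LeJRai2002}. As a preliminary step one checks that a covariance satisfying \autoref{assumption} also satisfies \autoref{ass:well_posed}, so that \eqref{eq:Kraichnan} is well posed in $L^2_x$ and the solution map $S$ is available, and that it lies in the diffusive regime, where the lower bound \eqref{eq:richardson_lower} of \autoref{thm:richardson} is in force --- this being precisely what \autoref{assumption} is built to encode. Granting this, \autoref{thm:dissipation} leaves exactly one of two possibilities: (1) every solution of \eqref{eq:Kraichnan} conserves the mean energy; or (2) every non-zero solution continuously dissipates the mean energy. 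Since (2) is the claim, it suffices to contradict (1).

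So suppose (1) holds. By \cite[Lemma 6.5]{LeJRai2002}, conservation of the mean $L^2_x$-energy for every initial datum is equivalent to $S$ being a flow of \emph{maps}; in that case the $n$-point motions attached to \eqref{eq:Kraichnan} are genuine random trajectories (cf.\ the Introduction and \autoref{ssec:two_point}), so in particular, for each $x \in \R^d$, the two-point motion $(X_t,Y_t)$ issued from $(x,x)$ never leaves the diagonal, i.e.\ $X_t = Y_t$ $\PP$-almost surely for all $t \geq 0$. Combining this with the identity $\EE[|X_t-Y_t|^2] = 2\,\EE[\var(\mu^x_t)]$, where $\mu^x$ is the unique solution of \eqref{eq:stochastic.continuity} started from $\delta_x$, we get $\EE[\var(\mu^x_t)] = 0$ for every $t \geq 0$. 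This directly contradicts \eqref{eq:richardson_lower}: in the diffusive regime, for each $\eps \in (0,1)$ there is $t_\eps > 0$ such that $\EE[\var(\mu^x_t)] \geq (1-\eps) K_{\mathsf{Ric}}\, t^{1/(1-\alpha)} > 0$ for $t \in (0,t_\eps)$. Hence (1) cannot occur, alternative (2) of \autoref{thm:dissipation} holds, and the corollary follows.

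I do not expect a serious obstacle here, since the analytic substance is already carried by \autoref{thm:dissipation} (genericity of anomalous dissipation), \autoref{thm:richardson} (the quantitative pair-dispersion lower bound), and \cite[Lemma 6.5]{LeJRai2002} (flow of maps $\Longleftrightarrow$ mean-energy conservation). The only points that require care are the bookkeeping of hypotheses --- confirming that \autoref{assumption} places us in the diffusive regime and implies \autoref{ass:well_posed} --- and the correct invocation of the Le Jan--Raimond correspondence identifying the property that $S$ is a flow of maps with the impossibility of the two-point motion splitting off the diagonal.
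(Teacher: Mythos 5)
Your proposal is correct and follows exactly the route the paper intends: the corollary is stated right after the sentence ``combining \cite[Lemma 6.5]{LeJRai2002} with \autoref{thm:richardson}\ldots'', and the mechanism is precisely yours --- the Richardson lower bound \eqref{eq:richardson_lower} together with the identity $\EE[|X_t-Y_t|^2]=2\,\EE[\var(\mu^x_t)]$ forces the two-point motion off the diagonal, so $S$ is not a flow of maps, Le Jan--Raimond then yields anomalous dissipation for some datum, and the dichotomy of \autoref{thm:dissipation} upgrades this to continuous dissipation for every non-zero datum. Your bookkeeping caveats (that \autoref{assumption} is used alongside \autoref{ass:well_posed}, and that it encodes the diffusive regime) match the paper's standing conventions.
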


It is worth comparing \autoref{cor:dissipation.general.assumption} with \cite[Theorem 1.1]{rowan2023} on the incompressible Kraichnan model on the torus $\TT^d$. There, a quantitative estimate of the form $\EE[\| \theta_t \|_{L^2_x}^2] \leq Ce^{-t/C} \| \theta_0\|_{L^2_x}^2$ is proved for all non-constant initial data $\theta_0$, implying anomalous dissipation on the interval $[0,t]$ for \emph{sufficiently large} $t$. However, the proofs therein don't clarify whether solutions continuously dissipate the mean energy.
On the other hand, our results do not directly transfer to $\TT^d$, see \autoref{rem:counterexample_torus} below.

In order to prove \autoref{thm:dissipation}, let us first notice that we can reduce ourselves to the case $s=0$ only in scenario $(2)$. Indeed, assuming the validity of the result for $s=0$, by the Markovianity of solutions (\autoref{prop:properties.inviscid.Kraichnan}), solving \eqref{eq:Kraichnan} on $[s,t]$ is equivalent to considering the solution $\tilde \theta_r$ on the time interval $[0,t-s]$, with random initial condition $\theta_s$ independent of $\tilde W_r=W_{r+s}-W_r$; 
upon conditioning on $\theta_s$, applying the result to $\tilde\theta$, one finds $\EE[\|\theta_t\|_{L^2_x}^2]=\EE[\|\tilde\theta_{t-s}\|_{L^2_x}^2]<\EE[\|\theta_s\|_{L^2_x}^2]$.
Notice that $\tilde\theta_0 = \theta_s\equiv 0$ $\PP$-almost surely is not possible by the explicit chaos decomposition \eqref{eq:wiener_sol} (for instance $\EE[\theta_s]=P_s\theta_0\neq 0$ whenever $\theta_0\neq 0$).

From now on, $t > 0$ is fixed without further specification. 
Let us introduce the following sets:
\begin{align*}
\mathfrak{C} 
&:= 
\Big\{ \theta_0 \in L^2_x : \mathbb{E} [\| \theta_t \|_{L^2_x}^2] = 
\| \theta_0\|_{L^2_x}^2 \Big\}, \qquad 
\mathfrak{D} 
:= 
\Big\{ \theta_0 \in L^2_x : \mathbb{E} [\| \theta_t \|_{L^2_x}^2] < 
\| \theta_0 \|_{L^2_x}^2 \Big\},
\end{align*}
where the letters $\mathfrak{C}$ and $\mathfrak{D}$ stand respectively for ``conservation'' and ``dissipation''. 
By the above observation, \autoref{thm:dissipation} boils down to showing that either $\mathfrak{C} = \{0\}$ or $\mathfrak{C} = L^2_x$.

\begin{lemma} \label{lem:Dt.open}
$\mathfrak{D}$ is an open subset of $L^2_x$.
\end{lemma}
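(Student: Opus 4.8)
The claim is that $\mathfrak{D} = \{\theta_0 \in L^2_x : \mathbb{E}\|\theta_t\|_{L^2_x}^2 < \|\theta_0\|_{L^2_x}^2\}$ is open in $L^2_x$. The plan is to observe that $\mathfrak{D}$ is the preimage of an open half-line under a continuous function. Concretely, define $\Phi : L^2_x \to \mathbb{R}$ by $\Phi(\theta_0) := \|\theta_0\|_{L^2_x}^2 - \mathbb{E}\|\theta_t\|_{L^2_x}^2 = \mathbb{E}[\mathcal{D}[\theta]([0,t]\times\mathbb{R}^d)] \geq 0$ (using \eqref{eq:mean.energy.balance}), so that $\mathfrak{D} = \Phi^{-1}((0,\infty))$. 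It then suffices to show $\Phi$ is continuous on $L^2_x$; since $\theta_0 \mapsto \|\theta_0\|_{L^2_x}^2$ is continuous, the heart of the matter is continuity of $\theta_0 \mapsto \mathbb{E}\|\theta_t\|_{L^2_x}^2$.

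First I would exploit linearity of the solution map: if $\theta$ and $\theta'$ are the unique solutions of \eqref{eq:Kraichnan} with data $\theta_0$ and $\theta_0'$, then $\theta - \theta'$ is the unique solution with data $\theta_0 - \theta_0'$ (this is immediate from any of the equivalent formulations in \autoref{prop:solution}, e.g. the Wiener chaos expansion \eqref{eq:wiener_sol}, which is linear in the initial datum). Combining this with the contraction estimate \eqref{eq:contraction_L2} applied to $\theta - \theta'$ gives
\begin{align*}
\big| \, \|\mathbb{E}^{1/2}\|\theta_t\|_{L^2_x}^2 \text{-type terms} \, \big|
\end{align*}
— more precisely, writing $a := (\mathbb{E}\|\theta_t\|_{L^2_x}^2)^{1/2}$ and $a' := (\mathbb{E}\|\theta_t'\|_{L^2_x}^2)^{1/2}$, the triangle inequality in $L^2_\omega L^2_x$ yields $|a - a'| \leq (\mathbb{E}\|\theta_t - \theta_t'\|_{L^2_x}^2)^{1/2} \leq \|\theta_0 - \theta_0'\|_{L^2_x}$. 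Hence $\theta_0 \mapsto (\mathbb{E}\|\theta_t\|_{L^2_x}^2)^{1/2}$ is $1$-Lipschitz, so $\theta_0 \mapsto \mathbb{E}\|\theta_t\|_{L^2_x}^2$ is continuous (indeed locally Lipschitz), and therefore $\Phi$ is continuous. Consequently $\mathfrak{D} = \Phi^{-1}((0,\infty))$ is open, completing the proof.

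The argument is essentially routine once linearity of the solution map is invoked, so there is no serious obstacle; the only point requiring a word of care is that \autoref{prop:solution} provides uniqueness only within the class of $\mathcal{F}^W$-adapted processes satisfying the energy bound \eqref{eq:contraction_L2}, and one should check that $\theta - \theta'$ lies in this class — which it does, being $\mathcal{F}^W$-adapted (a difference of adapted processes) and satisfying \eqref{eq:contraction_L2} with data $\theta_0 - \theta_0'$ by the same triangle inequality in $L^2_\omega L^2_x$, so that the identification of $\theta - \theta'$ with the unique solution started from $\theta_0 - \theta_0'$ is legitimate. One could alternatively bypass \autoref{prop:solution} entirely and read linearity directly off the chaos expansion \eqref{eq:wiener_sol}, which manifestly depends linearly on $\theta_0$, together with the orthogonality of distinct Wiener chaoses to recompute $\mathbb{E}\|\theta_t\|_{L^2_x}^2$ as a sum of squared chaos norms; but the contraction-plus-linearity route above is shorter.
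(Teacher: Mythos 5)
Your proof is correct and is essentially the same argument as the paper's: both rest on linearity of the solution map plus the contraction bound \eqref{eq:contraction_L2}, which via the triangle inequality in $L^2_\omega L^2_x$ give $|(\mathbb{E}\|\theta_t\|_{L^2_x}^2)^{1/2}-(\mathbb{E}\|\theta_t'\|_{L^2_x}^2)^{1/2}|\leq \|\theta_0-\theta_0'\|_{L^2_x}$; the paper phrases this as a direct $\eps$-perturbation of a point of $\mathfrak{D}$ while you phrase it as continuity of $\Phi$ and openness of $\Phi^{-1}((0,\infty))$. (The appeal to \eqref{eq:mean.energy.balance} for $\Phi\geq 0$ is unnecessary — and that identity requires extra hypotheses on $\theta_0$ — but it plays no role in the argument.)
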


\begin{proof}
Let $\theta_0 \in \mathfrak{D}$ and consider $g_0 \in L^2_x$ such that $\| g_0 - \theta_0 \|_{L^2_x} < \eps$, with $\eps>0$ to be fixed later.
Let $g$ be the unique solution to \eqref{eq:Kraichnan} starting from $g_0$; by linearity of \eqref{eq:Kraichnan}, $g-\theta$ is the solution to the equation with initial condition $g_0-\theta_0$.
Then by \eqref{eq:contraction_Lp} and triangular inequality it holds  
\begin{align*}
\| g_0 \|_{L^2_x} 
&> 
\| \theta_0 \|_{L^2_x} - \eps, 
\qquad 
\| g_t \|_{L^2_\omega L^2_x} 
\leq
\| \theta_t \|_{L^2_\omega L^2_x}
+ 
\eps 
\end{align*}  
from which it follows that $\| g_t \|_{L^2_\omega L^2_x} < \| g_0 \|_{L^2_x}$ as soon as we choose $\eps > 0$ small enough, so that $\| \theta_t \|_{L^2_\omega L^2_x} + 2 \eps \leq\| \theta_0 \|_{L^2_x}$.
\end{proof}

\begin{lemma}
\label{lem:Ct.subspace}
Suppose $\mathfrak{D}$ non-empty.
Then, there exists $h \in L^1_x \cap L^2_x$, $h \neq 0$,  such that
\[\mathfrak{C} \subset \{ \varphi \in L^2_x : \langle \varphi, h \rangle = 0 \} \] where $\langle \cdot, \cdot \rangle$ denotes the scalar product in $L^2_x$.
\end{lemma}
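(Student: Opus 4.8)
The plan is to encode the map $\theta_0\mapsto\mathbb E[\|\theta_t\|_{L^2_x}^2]$ through a bounded self-adjoint operator, realise $\mathfrak C$ as its kernel, and then exhibit $h$ explicitly as an element of the orthogonal complement that moreover lies in $L^1_x$. Concretely: for $\varphi\in L^2_x$ let $\theta^\varphi$ denote the unique solution of \eqref{eq:Kraichnan} with $\theta^\varphi_0=\varphi$; the assignment $\varphi\mapsto\theta^\varphi_t$ is linear (visible from the chaos expansion \eqref{eq:wiener_sol}), so $B(\varphi,\psi):=\mathbb E[\langle\theta^\varphi_t,\theta^\psi_t\rangle]$ is a symmetric bilinear form which, by Cauchy--Schwarz and \eqref{eq:contraction_L2}, satisfies $0\le B(\varphi,\varphi)\le\|\varphi\|_{L^2_x}^2$. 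The Riesz representation theorem then provides a self-adjoint $\mathcal B$ with $0\le\mathcal B\le I$ and $B(\varphi,\psi)=\langle\mathcal B\varphi,\psi\rangle$, and since $I-\mathcal B\ge 0$ one sees that $\varphi\in\mathfrak C$ iff $\langle(I-\mathcal B)\varphi,\varphi\rangle=0$ iff $(I-\mathcal B)\varphi=0$; that is, $\mathfrak C=\ker(I-\mathcal B)$.

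The natural candidate is $h:=(I-\mathcal B)\varphi_0$ for a well-chosen $\varphi_0$. For any $\varphi_0\in L^2_x$ this $h$ lies in $L^2_x$, the inclusion $\mathfrak C\subset\{h\}^\perp$ is immediate from self-adjointness (if $\psi\in\ker(I-\mathcal B)$ then $\langle\psi,h\rangle=\langle(I-\mathcal B)\psi,\varphi_0\rangle=0$), and $h\ne 0$ exactly when $\langle h,\varphi_0\rangle=\|\varphi_0\|_{L^2_x}^2-\mathbb E[\|\theta^{\varphi_0}_t\|_{L^2_x}^2]>0$, i.e. when $\varphi_0\in\mathfrak D$. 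So I would first pick $\varphi_0$: since $\mathfrak D$ is open by \autoref{lem:Dt.open} and non-empty by hypothesis, and $L^1_x\cap L^2_x$ is dense in $L^2_x$, there is $\varphi_0\in\mathfrak D\cap L^1_x\cap L^2_x$ (one may even take $\varphi_0\in C^\infty_c(\R^d)$). The only remaining point is that $h=\varphi_0-\mathcal B\varphi_0$ really belongs to $L^1_x$, for which it suffices that $\mathcal B\varphi_0\in L^1_x$. Here I would use the $L^p$-contractivity \eqref{eq:contraction_Lp} of \autoref{prop:properties.inviscid.Kraichnan}: for $\psi\in L^2_x\cap L^\infty_x$, combining $\|\theta^\psi_t\|_{L^\infty_x}\le\|\psi\|_{L^\infty_x}$ (case $p=\infty$) with $\mathbb E[\|\theta^{\varphi_0}_t\|_{L^1_x}]\le\|\varphi_0\|_{L^1_x}$ (case $p=1$) yields $|\langle\mathcal B\varphi_0,\psi\rangle|=|\mathbb E\langle\theta^{\varphi_0}_t,\theta^\psi_t\rangle|\le\|\varphi_0\|_{L^1_x}\|\psi\|_{L^\infty_x}$; testing this against $\psi=\sgn(\mathcal B\varphi_0)\mathbf 1_{B_R}$ and letting $R\to\infty$ gives $\|\mathcal B\varphi_0\|_{L^1_x}\le\|\varphi_0\|_{L^1_x}$, so $h\in L^1_x\cap L^2_x$.

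The crux --- which is more a point to get right than a genuine obstacle --- is that one cannot argue purely abstractly: a dense subspace of $L^2_x$ may meet the nonzero closed subspace $\mathfrak C^\perp$ only in $\{0\}$, so it is necessary to produce $h$ concretely as $(I-\mathcal B)\varphi_0$ with $\varphi_0$ taken inside $\mathfrak D\cap L^1_x$, and to exploit the $L^1/L^\infty$ contraction of the solution map to force $\mathcal B\varphi_0\in L^1_x$. As a back-up route to $\mathcal B\varphi_0\in L^1_x$ I would, if needed, combine \autoref{lem:duality} with the space homogeneity of $W$ to identify $\mathcal B\varphi_0=\varphi_0\ast\rho^-$, where $\rho$ is the push-forward of the second-moment measure $\mathbb E[\mu^0_t\otimes\mu^0_t]$ of the diagonal two-point motion under $(a,b)\mapsto a-b$; since $\rho$ is a sub-probability measure, Young's inequality then gives $\mathcal B\varphi_0\in L^1_x\cap L^2_x$ at once.
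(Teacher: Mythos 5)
Your proof is correct and is essentially the paper's own argument in operator-theoretic dress: your $\mathcal{B}\varphi_0$ is exactly the Riesz representative $\tilde h$ of $\psi\mapsto\mathbb{E}[\langle\theta_t,\psi_t\rangle]$ used in the paper, your $h=(I-\mathcal{B})\varphi_0$ coincides (up to sign) with the paper's $h=\tilde h-\theta_0$, and the $L^1$-integrability of $h$ is obtained in both cases from the same $L^1$/$L^\infty$ contraction of the solution map applied to a base point chosen in the open set $\mathfrak{D}$. The only cosmetic difference is that you deduce $\mathfrak{C}\subset\{h\}^\perp$ from $\mathfrak{C}=\ker(I-\mathcal{B})$ and self-adjointness, whereas the paper reaches the same conclusion by expanding the energy of $\theta_0+g_0$ and using the scaling $\varphi\mapsto\lambda\varphi$.
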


\begin{proof}
If $\mathfrak{D}$ is non-empty, then by \autoref{lem:Dt.open} there exists $\theta_0\in L^1_x \cap L^{\infty}_x$ belonging to it. By \autoref{prop:properties.inviscid.Kraichnan}, the associated solution $\theta_t$ enjoys the same integrability. 
Similarly to before, consider the evolution of initial datum $\theta_0 + g_0$. 
By linearity of the expectation, we have
\begin{align*}
\mathbb{E} [\| \theta_t + g_t \|_{L^2_x}^2] 
&=
\mathbb{E} [\| \theta_t \|_{L^2_x}^2] 
+
2\mathbb{E} [\langle \theta_t, g_t \rangle] 
+
\mathbb{E} [\| g_t \|_{L^2_x}^2],
\\
\| \theta_0 + g_0 \|_{L^2_x}^2 
&= 
\| \theta_0 \|_{L^2_x}^2 
+ 
2\langle \theta_0, g_0 \rangle
+ 
\| g_0 \|_{L^2_x}^2.
\end{align*}
Since $\mathbb{E} [\| \theta_t \|_{L^2_x}^2] < \| \theta_0 \|_{L^2_x}^2$ and $\mathbb{E} [\| g_t \|_{L^2_x}^2] \leq \| g_0 \|_{L^2_x}^2$, in order to show that $\theta_0 + g_0 \in \mathfrak{D}$ it suffices to require that
\begin{equation}
\mathbb{E} [\langle \theta_t, g_t \rangle] 
\leq
\langle \theta_0, g_0 \rangle .
\label{eq:cond1}
\end{equation}
Notice that for fixed $\theta_0$, the map $\psi \mapsto \mathbb{E} [\langle \theta_t,
\psi_t \rangle]$ is linear and bounded from $L^2_x$ to $\mathbb{R}$, in view of bound~\eqref{eq:contraction_Lp}. 
It follows from Riesz Representation Theorem that there exists $\tilde{h} \in L^2_x$ such that
\[ \mathbb{E} [\langle \theta_t, \psi_t \rangle] 
= \langle \tilde{h}, \psi \rangle,
\qquad 
\forall \, \psi \in L^2_x. 
\]
Moreover, since $\theta_0, \theta_t \in L^1_x \cap L^{\infty}_x$, again by \autoref{prop:properties.inviscid.Kraichnan} we have the estimate
\[ | \langle \tilde{h}, \psi \rangle | 
\leq 
\mathbb{E} [\| \theta_t \|_{L^1_x} \| \psi_t \|_{L^{\infty}_x}] 
\leq \| \theta_0 \|_{L^1_x} 
\| \psi \|_{L^{\infty}_x},
\qquad 
\forall \, \psi \in L^{\infty}_x, 
\]
which by standard density arguments implies that $\tilde{h} \in L^1_x $.
Condition~\eqref{eq:cond1} then becomes:
\[ \theta_0 + g_0 \in \mathfrak{D}
\quad 
\Longleftarrow 
\quad 
\langle \tilde{h}, g_0 \rangle 
\leq 
\langle \theta_0, g_0 \rangle 
\quad 
\Longleftrightarrow
\quad 
\langle \tilde{h} - \theta_0, g_0 \rangle 
\leq 
0. 
\]
Set $h := \tilde{h} - \theta_0$. 
Notice that, by the above facts, $h \in L^1_x \cap L^2_x$ and
\begin{align*}
    \| h \|_{L^2_x} & \geq  \| \theta_0 \|_{L^2_x} - \| \tilde{h} \|_{L^2_x}
    \geq  \| \theta_0 \|_{L^2_x} -\mathbb{E} [\| \theta_t \|_{L^2_x}] > 0,
\end{align*}
so that $h \neq 0$. By the change of variables $\varphi = \theta_0 + g_0$ we deduce that
\[ 
\varphi \in \mathfrak{D}
\quad\Longleftarrow \quad 
\langle h, \varphi \rangle
\leq
\langle h, \theta_0 \rangle 
=
\mathbb{E} [\| \theta_t \|_{L^2_x}^2] - \| \theta_0\|_{L^2_x}^2 
< 0. \]
By linearity of \eqref{eq:spde_inviscid_approx}, we also know that $\varphi \in \mathfrak{D}$ if and only if $\lambda \varphi \in \mathfrak{D}$ for any $\lambda \in \mathbb{R}  \setminus \{  0 \}$. 
In particular, we can always find $\lambda$ such that $\langle h,  \lambda \varphi \rangle < 0$ if and only if $\langle h, \varphi \rangle \neq  0$. 
It follows that
\[ 
\{ \varphi \in L^2_x : \langle h, \varphi \rangle \neq 0 \} \subset \mathfrak{D}
\quad \Longrightarrow \quad 
\mathfrak{C} \subset \{ \varphi \in L^2_x : \langle h,\varphi \rangle = 0 \} . \qedhere
\]
\end{proof}

\begin{lemma}
\label{lem:closed}
$\mathfrak{C}$ is a linear closed subspace of $L^2_x$, and it is closed under translations $\theta_0\mapsto \theta_0(\cdot+l)$. 
Furthermore,  $\theta_0 \in \mathfrak{C}$ if and only if
\[
\lim_{\kappa \downarrow 0} 
\mathbb{E} [\|   \hat{\theta}^\kappa_t - \theta_t \|_{L^2_x}^2] 
= 
0.
\]
where $\hat{\theta}^\kappa$ denotes the solution to the SPDE with smoothed noise \eqref{eq:spde_inviscid_approx}, starting from the same initial condition $\theta_0$.
\end{lemma}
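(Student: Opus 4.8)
The plan is to handle the three assertions in turn: the first two by soft functional-analytic arguments, the third through the smoothed-noise approximation \eqref{eq:spde_inviscid_approx}.

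\textbf{Linearity and closedness.} I would introduce the bilinear form $a(\theta_0,\psi_0):=\langle\theta_0,\psi_0\rangle_{L^2_x}-\mathbb{E}[\langle\theta_t,\psi_t\rangle_{L^2_x}]$, where $\theta,\psi$ denote the unique solutions of \eqref{eq:Kraichnan} issued from $\theta_0,\psi_0$. Linearity of the solution map (\autoref{prop:solution}) and the contraction bound \eqref{eq:contraction_L2} make $a$ a bounded symmetric bilinear form with $a(\theta_0,\theta_0)=\|\theta_0\|_{L^2_x}^2-\mathbb{E}\|\theta_t\|_{L^2_x}^2\geq 0$, i.e.\ a nonnegative semi-inner product. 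Hence $\mathfrak{C}=\{\theta_0\in L^2_x:a(\theta_0,\theta_0)=0\}$, which by the Cauchy--Schwarz inequality for $a$ coincides with the radical $\bigcap_{\psi_0\in L^2_x}\ker a(\cdot,\psi_0)$, an intersection of kernels of bounded linear functionals on $L^2_x$; this is manifestly a closed linear subspace.

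\textbf{Translation invariance.} Here I would use space-homogeneity of $W$. If $\theta$ solves the It\^o form \eqref{eq:STE_ito} with data $\theta_0$ and noise $W$, then $\theta^l_\cdot(\cdot):=\theta_\cdot(\cdot+l)$ solves the same equation with data $\theta_0(\cdot+l)$ and noise $W^l_\cdot(\cdot):=W_\cdot(\cdot+l)$, and $W^l$ has the same covariance --- hence the same law --- as $W$. Since by \autoref{prop:solution} the solution is a fixed measurable functional of the driving noise (via the chaos representation \eqref{eq:wiener_sol}), the law of $\theta^l_t$ agrees with that of the solution issued from $\theta_0(\cdot+l)$ and driven by $W$; in particular its mean energy equals $\mathbb{E}\|\theta^l_t\|_{L^2_x}^2=\mathbb{E}\|\theta_t\|_{L^2_x}^2$ by translation invariance of the $L^2_x$-norm. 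As $\|\theta_0(\cdot+l)\|_{L^2_x}=\|\theta_0\|_{L^2_x}$, membership in $\mathfrak{C}$ is preserved.

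\textbf{Characterization via smoothed noise.} The crux is that the approximation \eqref{eq:spde_inviscid_approx} \emph{conserves} the mean energy: $\mathbb{E}\|\hat\theta^\kappa_t\|_{L^2_x}^2=\|\theta_0\|_{L^2_x}^2$ for all $\kappa>0$, $t\geq 0$. This comes from the computation in the proof of \autoref{prop:energy_balance_viscous_kappa}, now with $W^\kappa,C^\kappa$ in place of $\sqrt{1-\kappa}W,(1-\kappa)C$ and no viscous matrix, producing the defect-free local balance
\begin{align*}
\dd|\hat\theta^\kappa|^2+\nabla|\hat\theta^\kappa|^2\cdot\dd W^\kappa=\tfrac12 C^\kappa(0):D^2|\hat\theta^\kappa|^2\,\dd t;
\end{align*}
indeed the second-order term in \eqref{eq:spde_inviscid_approx} is exactly the It\^o correction of the transport noise $W^\kappa$, so the quadratic variation $\sum_{k\leq\kappa^{-1}}|\sigma_k\cdot\nabla\hat\theta^\kappa|^2\,\dd t$ cancels it after integrating in $x$ (both reduce to $\int\nabla\hat\theta^\kappa\cdot C^\kappa(0)\nabla\hat\theta^\kappa\,\dd x$), leaving no dissipation. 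Integrating in space removes the drift (a total spatial derivative) and taking expectations removes the remaining martingale; a density argument in $\theta_0$, using the uniform contraction $\|\hat\theta^\kappa_t\|_{L^2_\omega L^2_x}\leq\|\theta_0\|_{L^2_x}$, extends the identity to all $\theta_0\in L^2_x$. Now by \autoref{lem:approx} (extended likewise to $L^2_x$ data) $\hat\theta^\kappa_t\to\theta_t$ weakly in $L^2_\omega L^2_x$ as $\kappa\downarrow 0$, while $\|\hat\theta^\kappa_t\|_{L^2_\omega L^2_x}=\|\theta_0\|_{L^2_x}$ for every $\kappa$; weak lower semicontinuity gives $\|\theta_t\|_{L^2_\omega L^2_x}\leq\|\theta_0\|_{L^2_x}$, with equality precisely when $\theta_0\in\mathfrak{C}$. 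Invoking the Hilbert-space fact that weak convergence together with convergence of norms implies strong convergence: if $\theta_0\in\mathfrak{C}$ the norms agree ($\|\hat\theta^\kappa_t\|_{L^2_\omega L^2_x}=\|\theta_0\|_{L^2_x}=\|\theta_t\|_{L^2_\omega L^2_x}$), hence $\mathbb{E}\|\hat\theta^\kappa_t-\theta_t\|_{L^2_x}^2\to 0$; conversely, strong convergence forces $\|\theta_t\|_{L^2_\omega L^2_x}=\lim_\kappa\|\hat\theta^\kappa_t\|_{L^2_\omega L^2_x}=\|\theta_0\|_{L^2_x}$, i.e.\ $\theta_0\in\mathfrak{C}$.

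\textbf{Main obstacle.} The only genuinely technical point is the rigorous derivation of $\mathbb{E}\|\hat\theta^\kappa_t\|_{L^2_x}^2=\|\theta_0\|_{L^2_x}^2$ for merely $L^2_x$ data: exactly as in \autoref{prop:energy_balance_viscous_kappa} one mollifies, controls the tail of the series of stochastic integrals, and verifies that the resulting (a priori only local) martingale has vanishing expectation. This is however a simpler rerun of a result already established --- here $W^\kappa$ is spatially smooth and no viscous defect term appears --- so it is tedious rather than hard; everything else is soft functional analysis (semi-inner products, weak-plus-norm convergence) together with the homogeneity of $W$.
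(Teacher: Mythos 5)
Your proof is correct, and the most substantive part --- the equivalence between $\theta_0\in\mathfrak{C}$ and strong $L^2_{\omega,x}$-convergence of $\hat\theta^\kappa_t$ --- follows exactly the paper's route: mean-energy conservation for the smoothed-noise approximation, weak convergence from \autoref{lem:approx} (which, note, already covers all of $L^2_x$ by taking $p=2$, so no extension is needed), and the Hilbert-space fact that weak convergence plus convergence of norms yields strong convergence. The translation-invariance argument is also the same as the paper's. Where you genuinely diverge is in the first part: you obtain linearity and closedness simultaneously from the positive semidefinite bounded bilinear form $a(\theta_0,\psi_0)=\langle\theta_0,\psi_0\rangle-\mathbb{E}\langle\theta_t,\psi_t\rangle$, identifying $\mathfrak{C}$ with its radical via the Cauchy--Schwarz inequality for semi-inner products, hence with an intersection of kernels of bounded functionals. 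The paper instead gets closedness from the openness of $\mathfrak{D}$ (\autoref{lem:Dt.open}, a perturbation argument) and deduces linearity only \emph{after} establishing the $\hat\theta^\kappa$-characterization, by linearity of the SPDEs. Your argument is self-contained and arguably cleaner, since it does not route linearity through the approximation scheme; the paper's ordering has the advantage of reusing \autoref{lem:Dt.open}, which it needs anyway for \autoref{thm:dissipation}. One further difference: the paper simply asserts $\mathbb{E}\|\hat\theta^\kappa_t\|_{L^2_x}^2=\|\theta_0\|_{L^2_x}^2$ (it follows from the flow representation for smooth noise, or equivalently from $\partial_t\mathbb{E}|\hat\theta^\kappa|^2$ solving a heat equation), whereas you supply a proof via the defect-free local energy balance plus a density argument in $\theta_0$; your cancellation of the It\^o correction against the quadratic variation is correct, and the martingale/integrability issues you flag are handled exactly as you indicate (true martingale for $\theta_0\in L^2_x\cap L^4_x$ via \eqref{eq:contraction_Lp} and \autoref{lem:stoch_integr}, then the linear contraction bound to pass to general $L^2_x$ data).
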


\begin{proof}
Closedness of $\mathfrak{C}$ in $L^2_x$ is clear by \autoref{lem:Dt.open}. 
Closure under translation
descends from the noise $W$ being space homogeneous.
Indeed, for every $\theta_0 \in \mathfrak{C}$ and $l \in \R^d$ it holds
\begin{align*}
\mathbb{E}[\| (\theta_0 (\cdot+l))_t \|_{L^2_x}^2]
=
\mathbb{E}[\| \theta_t \|_{L^2_x}^2]
=
\mathbb{E}[\| \theta_0 \|_{L^2_x}^2]
=
\mathbb{E}[\| \theta_0(\cdot+l) \|_{L^2_x}^2],
\end{align*}
therefore $\theta_0(\cdot+l) \in \mathfrak{C}$ as well.
  
Let us move to the convergence in $L^2_{\omega,x}$. Since $\mathbb{E} [\| \hat{\theta}^\kappa_t \|_{L^2_x}^2] = \| \theta_0 \|_{L^2_x}^2$ for every $t,\kappa>0$, clearly $\hat{\theta}^\kappa_t \to \theta_t$ in $L^2_{\omega,x}$ implies $\theta_0 \in \mathfrak{C}$.
Viceversa, if $\theta_0 \in \mathfrak{C}$, then $\mathbb{E} [\| \theta_t \|_{L^2_x}^2] = \| \theta_0 \|_{L^2_x}^2$ and, by \autoref{lem:approx}, $\theta_t$ is the weak limit in $L^2_{\omega,x}$ of $\hat{\theta}^\kappa_t$.
Since moreover 
\[
\lim_{\kappa \downarrow 0} 
\mathbb{E} [\| \hat{\theta}^\kappa_t \|_{L^2_x}^2] 
= 
\| \theta_0 \|_{L^2_x}^2 
=
\mathbb{E} [\| \theta_t \|_{L^2_x}^2],
\] 
we deduce that 
$\lim_{\kappa \downarrow 0} 
\mathbb{E} [\|   \hat{\theta}^\kappa_t - \theta_t \|_{L^2_x}^2] 
= 
0$, as desired. 

Finally, linearity of $\mathfrak{C}$ follows from the linearity of the SPDEs \eqref{eq:Kraichnan}-\eqref{eq:spde_inviscid_approx} and the characterization of $\mathfrak{C}$ above in terms of convergence of $\hat\theta^\kappa_t$.
\end{proof}

Let $\mathcal{C}_0$ denote the space of continuous functions from $\R^d$ to $\R$ that vanish at $ \infty$.
Next, consider the set of initial data in $\mathcal{C}_0$ that do not lead to anomalous dissipation, namely
\[ \mathfrak{C}_0 := \mathfrak{C} \cap \mathcal{C}_0 . \]

By a similar argument as the one employed in \autoref{lem:closed}, we have
the following.

\begin{lemma}
\label{lem:algebra}
$(\mathfrak{C}_0,+,\cdot)$ is an algebra.
\end{lemma}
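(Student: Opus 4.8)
The plan is to exhibit $\mathfrak{C}_0$ as a subalgebra of the commutative algebra $(\mathcal{C}_0,+,\cdot)$. By \autoref{lem:closed}, $\mathfrak{C}_0=\mathfrak{C}\cap\mathcal{C}_0$ is already a linear subspace, and associativity, commutativity and distributivity of the product are inherited from $\mathcal{C}_0$; so the only thing to prove is that $\mathfrak{C}_0$ is closed under pointwise multiplication. I would fix $\theta_0,\phi_0\in\mathfrak{C}_0$ and set $\psi_0:=\theta_0\phi_0$. First, $\psi_0\in\mathcal{C}_0\cap L^2_x$: continuity and decay at infinity are immediate, and since $\mathfrak{C}_0\subset\mathcal{C}_0\subset L^\infty_x$ one has $|\psi_0|\leq\|\phi_0\|_{L^\infty_x}|\theta_0|\in L^2_x$. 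The substance is to prove $\psi_0\in\mathfrak{C}$, and for this I would follow the structure of the proof of \autoref{lem:closed}, using the characterization $\psi_0\in\mathfrak{C}$ if and only if $\hat\psi^\kappa_t\to\psi_t$ strongly in $L^2_\omega L^2_x$ obtained there, together with the contraction bound \eqref{eq:contraction_Lp}.

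The key structural input is multiplicativity of the solution operator of the smoothed-noise equation \eqref{eq:spde_inviscid_approx}. Since $W^\kappa=\sum_{k\leq\kappa^{-1}}\sigma_k W^k$ has $C^\infty$ (indeed Schwartz) coefficients, classical stochastic-flow theory \cite{Ku82,Ku97} shows that for such smooth noise the unique solution of \autoref{prop:solution} started from any $g_0\in L^2_x\cap L^\infty_x$ is $g_0\circ\Psi^\kappa_t$ for an $\mathcal{F}^W_t$-adapted, $\PP$-a.s. diffeomorphism $\Psi^\kappa_t$ of $\R^d$ that does not depend on the initial datum. As composition with a fixed map preserves pointwise products, we get $\PP$-a.s.
\begin{align*}
\hat\psi^\kappa_t=\psi_0\circ\Psi^\kappa_t=\big(\theta_0\circ\Psi^\kappa_t\big)\big(\phi_0\circ\Psi^\kappa_t\big)=\hat\theta^\kappa_t\,\hat\phi^\kappa_t .
\end{align*}

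It then remains to let $\kappa\downarrow 0$. Since $\theta_0,\phi_0\in\mathfrak{C}$, \autoref{lem:closed} gives $\hat\theta^\kappa_t\to\theta_t$ and $\hat\phi^\kappa_t\to\phi_t$ strongly in $L^2_\omega L^2_x$, where $\theta,\phi$ solve \eqref{eq:Kraichnan} from $\theta_0,\phi_0$. By \eqref{eq:contraction_Lp} with $p=\infty$ and \autoref{prop:properties.inviscid.Kraichnan} one has $\sup_\kappa\|\hat\phi^\kappa_t\|_{L^\infty_\omega L^\infty_x}\leq\|\phi_0\|_{L^\infty_x}$ and $\|\theta_t\|_{L^\infty_\omega L^\infty_x}\leq\|\theta_0\|_{L^\infty_x}$, hence
\begin{align*}
\|\hat\theta^\kappa_t\hat\phi^\kappa_t-\theta_t\phi_t\|_{L^2_\omega L^2_x}
&\leq\|\hat\phi^\kappa_t\|_{L^\infty_\omega L^\infty_x}\|\hat\theta^\kappa_t-\theta_t\|_{L^2_\omega L^2_x}\\
&\quad+\|\theta_t\|_{L^\infty_\omega L^\infty_x}\|\hat\phi^\kappa_t-\phi_t\|_{L^2_\omega L^2_x}\longrightarrow 0 ,
\end{align*}
so $\hat\psi^\kappa_t\to\theta_t\phi_t$ strongly in $L^2_\omega L^2_x$. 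On the other hand, \autoref{lem:approx} applied to the datum $\psi_0\in L^2_x\cap L^3_x$ gives $\hat\psi^\kappa_t\rightharpoonup\psi_t$ weakly in $L^2_\omega L^2_x$, with $\psi$ the solution of \eqref{eq:Kraichnan} from $\psi_0$. Uniqueness of the limit forces $\psi_t=\theta_t\phi_t$ and promotes the convergence to strong in $L^2_\omega L^2_x$; by the characterization in \autoref{lem:closed} this means $\psi_0\in\mathfrak{C}$, hence $\psi_0=\theta_0\phi_0\in\mathfrak{C}_0$, and $(\mathfrak{C}_0,+,\cdot)$ is an algebra. The step I expect to be the main obstacle is the structural one in the second paragraph: it genuinely uses that for smooth noise — in contrast with the rough regime, where $S$ is only a flow of kernels — the stochastic transport equation is solved by composition with a bona fide flow of diffeomorphisms, plus the identification of that flow-solution with the weak/Wiener solution of \autoref{prop:solution}. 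Once this multiplicativity is secured, the passage to the limit is a routine compactness argument resting only on \autoref{lem:closed} and the uniform $L^\infty$ bound.
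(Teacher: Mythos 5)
Your proof is correct and follows essentially the same route as the paper: reduce to closure under products, use the flow-of-diffeomorphisms structure of the smooth-noise equation \eqref{eq:spde_inviscid_approx} to get $\hat{\theta}^\kappa_t\hat{\phi}^\kappa_t=\widehat{(\theta_0\phi_0)}^\kappa_t$, and pass to the limit using the strong $L^2_{\omega,x}$ convergence from \autoref{lem:closed} together with the uniform $L^\infty$ bounds. Your explicit identification of the limit via the weak convergence in \autoref{lem:approx} is a detail the paper leaves implicit, but it is the same argument.
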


\begin{proof}
Let $\theta_0$, $g_0\in\mathfrak{C}_0$.
Since $\mathcal{C}_0$ is an algebra and $\mathfrak{C}$ is a linear space by \autoref{lem:closed}, to conclude we only need to check that $\theta_0 g_0 \in \mathfrak{C}$.

By \autoref{lem:closed}, the solutions $\theta_t$, $g_t$ associated to $\theta_0$, $g_0$ are strong limits in $L^2_{\omega, x}$ of the solutions $\hat{\theta}^\kappa_t$, $\hat{g}^\kappa_t$ associated to \eqref{eq:spde_inviscid_approx}. 
At the same time, since $\theta_0, g_0 \in L^{\infty}_x$, the solutions $\hat{\theta}^\kappa_t$, $\hat{g}^\kappa_t$ satisfy uniform bounds in $L^{\infty}_{\omega, x}$; by interpolation, convergence holds in $L^p_{\omega, x}$ for any $p \in [2,\infty)$, in particular for $p = 4$. 
Again by the smooth transport structure, at fixed $\kappa > 0$ it holds 
\[
\hat{\theta}^\kappa_t \hat{g}^\kappa_t = \widehat{(\theta_0 g_0)}^\kappa_t.
\]
By the above, $\hat{\theta}^\kappa_t \hat{g}^\kappa_t \rightarrow \theta_t g_t$ in $L^2_{\omega, x}$, which again by \autoref{lem:closed} implies that $\theta_0 g_0 \in \mathfrak{C}$.  
\end{proof}

\begin{lemma} \label{lem:non_empty}
If $\mathfrak{C} \neq \{ 0 \}$, then there exists a strictly positive $\theta_0 \in \mathfrak{C}_0$.
\end{lemma}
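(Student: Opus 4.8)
The plan is to first verify that $\mathfrak{C}_0$ is nontrivial, and then to upgrade a nonzero element of $\mathfrak{C}_0$ to an everywhere-positive one by superposing translates of its square. For the first point, take any $\phi \in \mathfrak{C} \setminus \{0\}$ and a standard mollifier $\chi^\eps \in C^\infty_c(\R^d)$. Since $y \mapsto \phi(\cdot - y)$ is continuous from $\R^d$ into $L^2_x$ and $\chi^\eps$ is continuous with compact support, $\phi * \chi^\eps$ lies in the closed linear span of the translates $\{\phi(\cdot - l) : l \in \R^d\}$, which by \autoref{lem:closed} is contained in $\mathfrak{C}$. On the other hand $\phi * \chi^\eps$ is smooth, belongs to $L^2_x$, and vanishes at infinity, since by Cauchy--Schwarz $|(\phi * \chi^\eps)(x)| \le \|\chi^\eps\|_{L^2_x}\left( \int_{x - \supp \chi^\eps} |\phi|^2 \right)^{1/2} \to 0$ as $|x| \to \infty$; hence $\phi * \chi^\eps \in \mathcal{C}_0$. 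Since $\phi * \chi^\eps \to \phi$ in $L^2_x$, for $\eps$ small enough $g := \phi * \chi^\eps$ is a nonzero element of $\mathfrak{C}_0$.

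Next I would use that $\mathfrak{C}_0$ is an algebra (\autoref{lem:algebra}) to pass to $g^2 \in \mathfrak{C}_0$, which is nonnegative and not identically zero; by continuity, $g^2 > 0$ on some ball $B(x_0,\rho)$. Normalize $h := g^2 / \|g^2\|_{L^2_x} \in \mathfrak{C}_0$, so that $\|h\|_{L^2_x} = 1$, $h \ge 0$, and $\|h\|_{L^\infty_x} < \infty$. Fix an enumeration $\{l_k\}_{k \ge 1}$ of $\QQ^d$ and define
\[
\theta_0 := \sum_{k \ge 1} 2^{-k}\, h(\cdot - l_k).
\]
Because $\sum_k 2^{-k} \|h(\cdot - l_k)\|_{L^2_x} = 1 < \infty$, the series converges in $L^2_x$, and since each $h(\cdot - l_k) \in \mathfrak{C}$ and $\mathfrak{C}$ is a closed subspace, $\theta_0 \in \mathfrak{C}$. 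Because $\sum_k 2^{-k} \|h(\cdot - l_k)\|_{L^\infty_x} = \|h\|_{L^\infty_x} < \infty$, the series also converges uniformly, so $\theta_0$ is a uniform limit of functions in $\mathcal{C}_0$ and therefore $\theta_0 \in \mathcal{C}_0$; hence $\theta_0 \in \mathfrak{C}_0$. Finally $\theta_0 \ge 0$, and for every $x \in \R^d$ density of $\{l_k\}$ gives some $k$ with $x - l_k \in B(x_0,\rho)$, whence $h(x - l_k) > 0$ and $\theta_0(x) \ge 2^{-k} h(x - l_k) > 0$. Thus $\theta_0$ is a strictly positive element of $\mathfrak{C}_0$, as desired.

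The argument is elementary, and the only point requiring genuine care is that the infinite superposition $\theta_0$ must be controlled in \emph{two} topologies at once: $L^2_x$-convergence, which is what keeps it inside the closed subspace $\mathfrak{C}$, and uniform convergence, which is what keeps it inside $\mathcal{C}_0$. This is exactly why $h$ is normalized in $L^2_x$ while also recording that $\|h\|_{L^\infty_x} < \infty$, and why the mollification step in the first paragraph is needed to land a nonzero element of $\mathfrak{C}$ into $\mathcal{C}_0$ in the first place. The use of the algebra structure to replace $g$ by the nonnegative function $g^2$ is what makes the density-of-translations argument yield strict positivity rather than mere non-vanishing.
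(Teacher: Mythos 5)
Your proof is correct. The skeleton matches the paper's: mollify a nonzero element of $\mathfrak{C}$ to land in $\mathfrak{C}_0$, square it via \autoref{lem:algebra} to obtain a nonnegative element positive on a ball, then use translation invariance of $\mathfrak{C}$ to spread that positivity over all of $\R^d$. The only genuine difference is the last step: the paper convolves $|g|^2$ once more with a Gaussian kernel, so that strict positivity everywhere is immediate from the full support of the kernel, and membership in $\mathfrak{C}_0$ follows in one line from $\mathfrak{C}$ being closed under convolution with smooth integrable functions (itself a consequence of closedness, linearity and translation invariance). You instead superpose the weighted series $\sum_k 2^{-k} h(\cdot - l_k)$ over an enumeration of $\QQ^d$, which forces you to track convergence in two topologies simultaneously — $L^2_x$ to stay in the closed subspace $\mathfrak{C}$, and uniform to stay in $\mathcal{C}_0$ — exactly as you note. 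Both devices work; the Gaussian convolution is the more economical of the two, while your construction is arguably more elementary in that it uses only finite linear combinations of translates plus closure, never needing to justify that a continuum superposition (a Bochner integral) stays in $\mathfrak{C}$ beyond the Riemann-sum approximation you already invoke in the first paragraph.
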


\begin{proof}
Assume $\mathfrak{C}$ is not $\{0\}$ and let $f\in \mathfrak{C}$, $f \neq 0$.
As $\mathfrak{C}$ is closed, linear, and closed under translations (\autoref{lem:closed}), it is also closed under convolutions with smooth functions. 
Therefore, if $\chi$ is the Gaussian kernel on $\R^d$, we have $g :=  f  \ast \chi \in \mathfrak{C}_0$; $g \neq 0$ since $f \neq 0$ by assumption.
By \autoref{lem:algebra}, $|g|^2 \in \mathfrak{C}_0$ and by the same argument as above $\theta_0 := |g|^2 \ast \chi \in \mathfrak{C}_0$ as well.
By construction  $\theta_0(x) > 0$ for all $x \in \mathbb{R}^d$, proving the lemma. 
\end{proof}

We are finally ready to prove our \autoref{thm:dissipation}.

\begin{proof}[Proof of \autoref{thm:dissipation}]
It suffices to show that $\mathfrak{C}\setminus\{0\}$ and $\mathfrak{D}$ cannot be both non-empty.
To this end, we first show that, if $\mathfrak{C} \neq \{0\}$, then $\mathfrak{C}_0$ is dense in $\mathcal{C}_0$ with respect to the $\mathcal{C}_0$-topology.

By \autoref{lem:algebra}, $\mathfrak{C}_0$ is a subalgebra of $\mathcal{C}_0$. 
By Stone--Weierstrass Theorem, in order to prove its density in $\mathcal{C}_0$ it is sufficient to verify that:
\begin{itemize}  
\item 
$\mathfrak{C}_0$ vanishes nowhere, i.e. for any $x \in \mathbb{R}^d$ there exists $\theta_0 \in \mathfrak{C}_0$ such that $\theta_0 (x) \neq 0$.
\item 
$\mathfrak{C}_0$ separates points, i.e. for any distinct $x, y \in \mathbb{R}^d$ there exists $\theta_0 \in \mathfrak{C}_0$ such that $\theta_0 (x) \neq \theta_0(y)$.
\end{itemize}

By \autoref{lem:closed} and \autoref{lem:non_empty}, there exists a strictly positive $\theta_0 \in L^2_x \cap \mathcal{C}_0$ such that $\theta_0$ and all its translations belong to $\mathfrak{C}_0$. 
Since $\theta_0$ is strictly positive, clearly $\mathfrak{C}_0$ vanishes nowhere. 
Assume by contradiction that $\mathfrak{C}_0$ didn't separate points, then there would exist $\bar{x}, \bar{y} \in \mathbb{R}^d$ distinct such that
\begin{align*}
\theta_0 (\bar{x} + l) = \theta_0 (\bar{y} + l), 
\qquad 
\forall \, l \in \mathbb{R}^d.
\end{align*} 
In other words, $\theta_0$ would be ($\bar{x} - \bar{y}$)-periodic;  
but $\theta_0$ is vanishing at $\infty$, which clearly yields a contradiction.
Thus $\mathfrak{C}_0$ separates points and the Stone-Weierstrass Theorem applies: $\mathfrak{C}_0 \subset \mathfrak{C}$ is dense in $\mathcal{C}_0$ with respect to the $\mathcal{C}_0$-topology.

Suppose now by contradiction that $\mathfrak{C}\setminus\{0\}$ and $\mathfrak{D}$ are both non-empty. By \autoref{lem:Ct.subspace}, there exists a non-zero $h\in L^1_x\cap L^2_x$ such that $\langle \theta_0,h \rangle$ for all $\theta_0 \in \mathfrak{C}$; by density, the relation extends to any  $\theta_0\in\cC_0$. But then $h \equiv 0$ by the fundamental lemma of calculus of variations, yielding a contradiction.
\end{proof}

\begin{remark}\label{rem:counterexample_torus}
An analogous statement to \autoref{thm:dissipation} likely fails on the torus $\TT^d$.
To see this, let $d=4$, $z=(x,y)\in \TT^2\times\TT^2=\TT^4$.
For fixed $\alpha\in (0,1)$, consider two independent, sharply $\alpha$-regular Kraichnan noises $W^1(x)$ and $W^2(y)$ on $\TT^2$, with $W^1$ being strongly compressible and $W^2$ being incompressible; set $W(z) = (W^1(x), W^2(y))$ on $\TT^4$. 
Without loss of generality one may take $C^1(0)=C^2(0)= I_2$, so that $C(0)=I_4$; by construction, $W$ is still a space-homogeneous Gaussian noise, sharply $\alpha$-regular in all directions.  
Given an initial condition of the form $\theta_0(x, y) = f_0(x)g_0(y)$, one can verify - e.g., by approximation arguments - that the solution factorizes as $\theta_t(x, y) = f_t(x)g_t(y)$, where $f_t$ and $g_t$ solve the two-dimensional problems driven by $W^1$ and $W^2$, respectively.
The term $g_t$ dissipates energy over time as soon $g_0$ is non-constant, for instance by \cite{rowan2023}. On the other hand, in light of the results from \cite{LeJRai2002}, we expect $W^1$ to induce a flow of maps and $f_t$ to preserve energy.
As a consequence, we obtain a nontrivial subspace of energy-conserving solutions, associated to $\theta_0(x, y) = f_0(x)$ (i.e., $g_0 \equiv 1$). 
Such a scenario is not present in $\R^d$ since therein $\theta_0(x, y) = f_0(x)\notin L^2_x$ unless $f_0\equiv 0$.
Notice that $W$ as constructed is strongly anisotropic; it is reasonable to expect a variant of \autoref{thm:dissipation} to hold on $\TT^d$, under stronger isotropy-type conditions.
\end{remark}

\subsection{Anomalous dissipation implies irregularity} \label{sec:rigidity}

This subsection is devoted to the proof that solutions to the inviscid transport \eqref{eq:Kraichnan} enjoy limited spatial regularity, specifically condition \eqref{eq:sharpness_intro} of \autoref{thm:regularity_Kraichnan}, whenever the noise induces anomalous dissipation of energy.
We will actually show the converse: if the solution is regular enough, then dissipation of energy cannot take place; the conclusion will then follow by genericity of anomalous dissipation, namely from applying results in the style of \autoref{cor:dissipation.general.assumption}. 

Recall that $Q(z) = C(0) - C(z)$, where $C$ is the covariance of the noise $W$. We will work under the following assumption:

\begin{assumption} \label{ass:sharpness_reg}
\autoref{ass:well_posed} holds and there exists a neighbourhood $U$ of the origin 
such that $Q \in C^2 (U \setminus \{0\})$, with bounds
\begin{equation}
| Q (z) | \lesssim | z |^{2 \alpha}, 
\quad 
| \div Q (z) | \lesssim | z |^{2 \alpha - 1}, 
\quad 
| \nabla \cdot (\div Q) (z) | \lesssim 
| z |^{2 \alpha - 2}, 
\quad \forall  z \in U \setminus \{0\}.
    \label{eq:ass.energy.conservation}
  \end{equation}
\end{assumption}

\autoref{ass:sharpness_reg} covers the majority of the cases of interest. For example, for fixed $\alpha\in (0,1)$, it is verified by the Kraichnan model for any compressibility ratio $\eta\in [0,1]$, cf. \autoref{cor:assumptions_kraichnan}.
Recall the increment notation $\delta_z \theta$ introduced before \autoref{lem:besov_type_spaces}.

\begin{prop}\label{prop:regularity_implies_conservation}
Let $W$ satisfy \autoref{ass:sharpness_reg}, $\theta_0\in L^2_x$, and $\theta$ be the unique solution to \eqref{eq:Kraichnan}.
Then, on any finite time interval $I \subset \R_+$ it holds that
\[ 
\lim_{|z|\to 0} \frac{1}{|z|^{2(1-\alpha)}} \int_I \EE[\| \delta_z \theta_r\|_{L^2_x}^2] \dd r=0
\quad \Longrightarrow \quad 
\mathbb{E} [\| \theta_t \|_{L^2_x}^2] 
=
\mathbb{E} [\| \theta_s \|_{L^2_x}^2],
\quad \forall \, s, t \in I. \]
\end{prop}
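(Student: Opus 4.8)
The plan is to use the dissipation measure identity from \autoref{cor:dissipation_measure}, namely $\|\theta_0\|_{L^2_x}^2 - \EE[\|\theta_T\|_{L^2_x}^2] = \EE[\mathcal{D}([0,T]\times\R^d)]$, and show that the regularity hypothesis forces $\mathcal{D}[\theta]$ to vanish on $I$. Equivalently, via \autoref{prop:anomalous_dissipation_equivalence}, it suffices to show that $\kappa\int_s^t \EE\|\nabla\tilde\theta_r^\kappa\|_{L^2_x}^2\,dr \to 0$ as $\kappa\downarrow 0$. The natural route, however, is to work directly with the two-point self-correlation function $F^\kappa$ solving \eqref{eq:PDE_F} and its inviscid limit $F_t = F[\theta_t]$, and to derive the energy balance by integrating the PDE for $F^\kappa$ against a suitable sequence of test functions concentrating at $z=0$. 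Indeed $F^\kappa_t(0) = \EE[\|\tilde\theta_t^\kappa\|_{L^2_x}^2]$, so one wants to track $\partial_t F^\kappa_t(0)$; the degeneracy of the operator $(1-\kappa)Q:D^2 + \kappa C(0):D^2$ at $z=0$ is precisely what carries the dissipation, and the point is that the hypothesis controls exactly the modulus of continuity of $F_t$ near $0$ that is needed to see this vanishes.

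The key steps, in order, are as follows. First, translate the hypothesis: by \autoref{lem:refined.Besov.Holder} (the identity $\frac{1}{|z|^{2(1-\alpha)}}\int_I \EE\|\delta_z\theta_r\|_{L^2_x}^2\,dr = \frac{2}{|z|^{2(1-\alpha)}}\int_I (F_r(0)-F_r(z))\,dr$), the assumption is equivalent to $\lim_{|z|\to 0} |z|^{-2(1-\alpha)}\int_I (F_r(0)-F_r(z))\,dr = 0$. By isotropy it is natural to pass to spherical averages $\oint_{\SS^{d-1}} F_r(\rho\hat y)\,\sigma(d\hat y)$ and record that the above limit controls $\rho^{-2(1-\alpha)}\int_I (F_r(0) - \oint_{\SS^{d-1}}F_r(\rho\hat y)\,\sigma(d\hat y))\,dr \to 0$. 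Second, integrate \eqref{eq:energy_balance_viscous_kappa} (or equivalently use \eqref{eq:contraction_L2k}) to write, for $[s,t]\subset I$,
\begin{align*}
\EE[\|\theta_s\|_{L^2_x}^2] - \EE[\|\theta_t\|_{L^2_x}^2] = \lim_{\kappa\downarrow 0} \kappa\int_s^t \int_{\R^d} \EE\big[C(0):\nabla\tilde\theta_r^\kappa\otimes\nabla\tilde\theta_r^\kappa\big]\,dx\,dr.
\end{align*}
Third — this is the technical heart — estimate the right-hand side using the PDE for $F^\kappa$: one expresses $\kappa\int_s^t\EE\|\cC^{1/2}\nabla\tilde\theta_r^\kappa\|_{L^2_x}^2\,dr$ in terms of $F^\kappa$ near the origin, localize using a cutoff $\chi$ supported in the neighbourhood $U$ of \autoref{ass:sharpness_reg} where the bounds $|Q(z)|\lesssim|z|^{2\alpha}$, $|\div Q(z)|\lesssim|z|^{2\alpha-1}$, $|\nabla\cdot\div Q(z)|\lesssim|z|^{2\alpha-2}$ hold, integrate by parts (moving the $D^2$ onto the weight, which is where the three derivative bounds on $Q$ get used), and bound the resulting expression by a constant times $\sup_{|z|\le\rho}|z|^{-2(1-\alpha)}\big(F^\kappa_r(0)-F^\kappa_r(z)\big)$ integrated in time, plus a remainder from the region away from $0$ which is harmless since $F^\kappa$ is bounded by $\|\theta_0\|_{L^2_x}^2$. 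Fourth, pass to the limit $\kappa\downarrow 0$ using \autoref{lem:approx} (strong $L^2_{\omega,x}$ convergence $\tilde\theta_r^\kappa\to\theta_r$, hence $F^\kappa\to F$ locally uniformly, with uniform bounds) together with the lower-semicontinuity in \autoref{rem:lsc_besov_type}, so that the limiting bound is governed by $\sup_{|z|\le\rho}|z|^{-2(1-\alpha)}\int_I(F_r(0)-F_r(z))\,dr$; finally send $\rho\to 0$ and invoke the hypothesis to conclude the right-hand side is $0$.

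The main obstacle I expect is making the integration-by-parts step in the third point rigorous and $\kappa$-uniform: $F^\kappa$ is only known to be $C^{2-2\alpha-\delta}$ in space (not $C^2$), so one cannot naively apply $D_z^2$ to it; the computation has to be organized so that all derivatives fall on the smooth, compactly supported weight built from $Q$ and the cutoff $\chi$, exploiting that $\nabla\cdot\div(Q\chi)$ has an $L^1$ singularity of order $|z|^{2\alpha-2}$ which is integrable since $2\alpha-2 > -d$ for $d\ge 2$ (and is balanced against the $|z|^{2(1-\alpha)}$ gained from the modulus of continuity of $F^\kappa$, giving a net $|z|^{2\alpha-2+2-2\alpha}=|z|^0$ — borderline but integrable near $0$ after the spherical average absorbs a further $|z|^{d-1}$). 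Care is also needed because $\partial_t F^\kappa$ must be paired correctly with a time cutoff to extract the balance over $[s,t]$ rather than $[0,T]$; this is handled exactly as in the proof of \autoref{cor:dissipation_measure}. Once the weight-side computation is set up correctly, the rest is a routine limiting argument.
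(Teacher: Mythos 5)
Your strategy is in essence the paper's: localize near the origin with a mollified weight built from $Q$, move both derivatives onto that weight using the three bounds of \autoref{ass:sharpness_reg}, and control the result by the hypothesized decay of $|z|^{-2(1-\alpha)}\int_I\EE\|\delta_z\theta_r\|_{L^2_x}^2\,\dd r$. The differences are in the packaging, and two of them deserve comment. First, the detour through the viscous approximation and the dissipation measure is unnecessary and partly unavailable: \autoref{thm:energy_balance_inviscid}, \autoref{cor:dissipation_measure} and \autoref{prop:anomalous_dissipation_equivalence} require $\theta_0\in L^2_x\cap L^4_x$ or $\div W=0$, whereas the proposition assumes only $\theta_0\in L^2_x$ and a possibly compressible noise. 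The identity $\EE\|\theta_s\|_{L^2_x}^2-\EE\|\theta_t\|_{L^2_x}^2=\lim_{\kappa\downarrow0}\kappa\int_s^t\cdots$ that you actually invoke does survive for $L^2_x$ data (via \autoref{prop:energy_balance_viscous_kappa} and \autoref{lem:approx}), but as a reduction it is circular: since $Q(0)=0$, that right-hand side is exactly $F^\kappa_s(0)-F^\kappa_t(0)$, so you are back to proving that $F_t(0)$ is constant. The paper avoids the double limit in $(\kappa,\eps)$ altogether: \autoref{lemma1} gives $\frac{\dd}{\dd t}\EE\langle G\ast\theta,\theta\rangle=\EE\langle(D^2:(GQ))\ast\theta,\theta\rangle$ directly for the inviscid solution and any $G\in C^\infty_c(U\setminus\{0\})$, so only the mollification parameter is sent to zero. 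Second, the step you leave vague is the linchpin: to bound $\EE\langle(D^2:(\chi^\eps Q))\ast\theta,\theta\rangle$ by increments rather than by $F$ itself one must use $\int_{\R^d} D^2:(\chi^\eps Q)\,\dd z=0$, which converts the pairing into $-\tfrac12\iint D^2:(\chi^\eps Q)(x-y)\,|\theta(x)-\theta(y)|^2\,\dd x\,\dd y$; without this cancellation the hypothesis, which concerns increments, cannot be brought to bear. Finally, your concerns about the integrability of the $|z|^{2\alpha-2}$ singularity and about differentiating $F^\kappa$ uniformly in $\kappa$ both disappear if, as in the paper, $\chi$ is supported on the annulus $B_1\setminus B_{1/2}$: the weight $\chi^\eps Q$ is then smooth and supported at distance $\sim\eps$ from the origin, all derivatives fall on it, and a pure rescaling yields the bound $\eps^{2\alpha-2}\sup_{|z|\le\eps}\|\delta_z\theta_r\|_{L^2_x}^2$, which vanishes after time integration by hypothesis. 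With these adjustments your argument closes; as written, it would need the zero-mean trick made explicit and the $L^4_x$/divergence-free caveat removed.
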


Roughly speaking, if $W$ is $\alpha$-regular and the solution $\theta$ belongs to the closure of spatially smooth functions in $\tilde L^2_{\omega,t} \tilde B^{2(1-\alpha)}_{2,\infty}$, then anomalous dissipation cannot take place. In light of \autoref{thm:dissipation} and its consequences we obtain the following:

\begin{cor}\label{cor:generic_irregularity}
    For fixed $\alpha\in (0,1)$, let $W$ satisfy \autoref{assumption}. Then for any non-zero $\theta_0\in L^2_x$ and any $s<t$, the associated solution $\theta$ to \eqref{eq:Kraichnan} is such that
    \begin{equation}\label{eq:sharp_upper_regularity}
        \limsup_{|z|\to 0} \frac{1}{|z|^{2(1-\alpha)}} \int_s^t \EE[\| \delta_z \theta_r\|_{L^2_x}^2] \dd r>0, \quad
        \limsup_{j\to +\infty} 2^{2j(1-\alpha)} \int_s^t \EE[\| \dot\Delta_j \theta_r\|_{L^2_{x}}^2] \dd r>0.
    \end{equation}
\end{cor}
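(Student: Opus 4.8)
The plan is to reduce the claim to a statement about the two-point self-correlation $F_t:=F^0_t$ of the solution $\theta$. By \autoref{lem:PDE_twopoint_transport} it is a weak solution of the degenerate parabolic equation $\partial_t F=Q:D^2_z F$, by \autoref{lem:L^2-criterion-selfcorrelation} it is continuous and bounded with $F_t(0)=\EE\|\theta_t\|_{L^2_x}^2$ and $F_t(0)-F_t(z)=\tfrac12 S_t(z)$, where $S_t(z):=\EE\|\delta_z\theta_t\|_{L^2_x}^2\geq0$. Since $r\mapsto F_r(0)=\EE\|\theta_r\|_{L^2_x}^2$ is continuous (by \autoref{prop:properties.inviscid.Kraichnan}, together with an elementary argument at $r=0$ using $\EE\theta_r=P_r\theta_0$), it suffices to show this map is locally constant on the interior $\mathring I$. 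I would do this by testing the equation against $\psi(r,z)=\phi(r)\rho_\ell(z)$, with $\phi\in C^\infty_c(\mathring I)$ and $\{\rho_\ell\}_{\ell>0}$ a standard (even) mollifier with $\supp\rho_\ell\subset B_\ell$, then sending $\ell\to0$.

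Moving all derivatives onto the test function, the weak equation gives
\[
\int_{\mathring I}\phi'(r)\,\langle F_r,\rho_\ell\rangle\dd r=-\int_{\mathring I}\phi(r)\,\big\langle F_r,\,D^2_z : (Q\rho_\ell)\big\rangle\dd r,
\qquad D^2_z : (Q\rho_\ell):=\sum_{i,j}\partial_i\partial_j(Q_{ij}\rho_\ell).
\]
Two facts drive the argument. First, $\int_{\R^d}D^2_z : (Q\rho_\ell)\dd z=0$: although $Q$ is only $C^2$ away from the origin, the bounds of \autoref{ass:sharpness_reg} ensure $Q\in W^{2,1}_{loc}$ near $0$ — this is where $d\geq2$ is essential, as it makes $|z|^{2\alpha-2}$ locally integrable and rules out a Dirac contribution at the origin — so $Q\rho_\ell$ is a compactly supported $W^{2,1}$ function whose second derivatives integrate to zero. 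Consequently $F_r$ may be replaced by $F_r-F_r(0)=-\tfrac12 S_r$ in the second integral. Second, the Leibniz rule combined with the three bounds $|Q|\lesssim|z|^{2\alpha}$, $|\div Q|\lesssim|z|^{2\alpha-1}$, $|\nabla\cdot\div Q|\lesssim|z|^{2\alpha-2}$ and $|\nabla^k\rho_\ell|\lesssim\ell^{-d-k}\mathbf{1}_{B_\ell}$ gives the commutator estimate $\|D^2_z : (Q\rho_\ell)\|_{L^1_x}\lesssim\ell^{2\alpha-2}$ (each of the three Leibniz terms contributes $\lesssim\ell^{2\alpha-2}$). Since $S_r\geq0$ and $D^2_z : (Q\rho_\ell)$ is supported in $\overline{B_\ell}$, these two facts yield
\[
\bigg|\int_{\mathring I}\phi(r)\,\big\langle F_r,D^2_z : (Q\rho_\ell)\big\rangle\dd r\bigg|\lesssim\|\phi\|_{L^\infty}\,\ell^{2\alpha-2}\,\sup_{|z|\leq\ell}\int_I S_r(z)\dd r.
\]

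To conclude, put $T(z):=\int_I S_r(z)\dd r=\int_I\EE\|\delta_z\theta_r\|_{L^2_x}^2\dd r$, which is finite and continuous. The hypothesis reads exactly $T(z)=o(|z|^{2(1-\alpha)})$ as $z\to0$, and this forces $\sup_{|z|\leq\ell}T(z)=o(\ell^{2(1-\alpha)})$ (given $\eps>0$, for $\ell$ small enough one has $T(z)\leq\eps|z|^{2(1-\alpha)}\leq\eps\ell^{2(1-\alpha)}$ for all $|z|\leq\ell$). Hence the right-hand side above is $\lesssim\|\phi\|_{L^\infty}\,\ell^{2\alpha-2}\,o(\ell^{2(1-\alpha)})=o(1)$ as $\ell\to0$, while by dominated convergence ($\langle F_r,\rho_\ell\rangle\to F_r(0)$ pointwise, uniformly bounded by $\|\theta_0\|_{L^2_x}^2$) the left-hand side of the tested identity tends to $\int_{\mathring I}\phi'(r)F_r(0)\dd r$. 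Therefore $\int_{\mathring I}\phi'(r)F_r(0)\dd r=0$ for every $\phi\in C^\infty_c(\mathring I)$, so $r\mapsto F_r(0)$ is constant on $\mathring I$ and hence on $I$, i.e.\ $\EE\|\theta_t\|_{L^2_x}^2=\EE\|\theta_s\|_{L^2_x}^2$ for all $s,t\in I$.

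The step I expect to be the main obstacle is the rigorous handling of the degenerate, mildly singular operator $Q:D^2$ near the origin: namely that $Q\rho_\ell\in W^{2,1}(\R^d)$ with $\int_{\R^d}D^2_z : (Q\rho_\ell)\dd z=0$ (the absence of a concentrated mass at $0$ is exactly where $d\geq2$ enters), and that the weak formulation of \autoref{lem:PDE_twopoint_transport} may be used with all derivatives transferred to the test function. To bypass any subtlety in the weak formulation of the degenerate equation one can instead run the identical computation on the regularized self-correlations $F^\kappa$, which are classical solutions of a non-degenerate parabolic equation; this only adds the term $2\kappa\,\langle F^\kappa_r,C(0) : D^2\rho_\ell\rangle=O(\kappa\ell^{-2})$, after which one lets $\kappa\to0$ using $F^\kappa_r(0)\to F_r(0)$ and $S^\kappa_r\to S_r$ from \autoref{lem:approx} (and \autoref{prop:anomalous_dissipation_equivalence} to read off energy conservation at the inviscid level), and then $\ell\to0$ as above.
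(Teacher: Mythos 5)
Your argument, as written, proves only one half of what is needed. Everything you do — testing the equation for the self-correlation $F$ against $\phi(r)\rho_\ell(z)$, the commutator bound $\|D^2_z:(Q\rho_\ell)\|_{L^1_x}\lesssim \ell^{2\alpha-2}$, the cancellation $\int_{\R^d}D^2_z:(Q\rho_\ell)\,\dd z=0$ — culminates in the implication ``if $\int_I\EE\|\delta_z\theta_r\|_{L^2_x}^2\,\dd r=o(|z|^{2(1-\alpha)})$ then $\EE\|\theta_t\|_{L^2_x}^2=\EE\|\theta_s\|_{L^2_x}^2$ on $I$.'' This is precisely \autoref{prop:regularity_implies_conservation} (your mollifier sits at the origin and you justify the singular integration by parts via $Q\rho_\ell\in W^{2,1}$ for $d\geq 2$, whereas the paper supports $\chi$ on an annulus so as never to touch the singularity of $\div Q$; both work, and your Leibniz splitting uses exactly the three bounds of \autoref{ass:sharpness_reg}). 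But the corollary asserts that the $\limsup$ is \emph{strictly positive}, and for that you must also know that energy conservation on $[s,t]$ is \emph{false} for every non-zero $\theta_0$ under \autoref{assumption}. You never derive this contradiction: your proof ends with the conclusion of energy conservation and stops. The missing input is \autoref{cor:dissipation.general.assumption} (equivalently, the dichotomy of \autoref{thm:dissipation} combined with Le Jan--Raimond's characterization of the diffusive regime, which the paper accesses via spontaneous stochasticity/\autoref{thm:richardson}): under \autoref{assumption} all non-zero data continuously dissipate mean energy, so $\EE\|\theta_t\|_{L^2_x}^2<\EE\|\theta_s\|_{L^2_x}^2$, contradicting what your regularity hypothesis forces. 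Without this, nothing positive about the $\limsup$ follows from your computation.

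Two smaller points. First, you do not address the second inequality in \eqref{eq:sharp_upper_regularity} at all; the equivalence between the increment-based seminorm and the Littlewood--Paley one is the content of \autoref{lem:besov_type_spaces} and must be cited (or reproved). Second, your $W^{2,1}$ argument needs the bound $|\nabla\cdot(\div Q)(z)|\lesssim|z|^{2\alpha-2}$, which is part of \autoref{ass:sharpness_reg} but is not literally contained in \autoref{assumption} (the latter only prescribes asymptotics of $b_L,b_N$ and their first derivatives); you should either assume \autoref{ass:sharpness_reg} alongside \autoref{assumption}, as the paper implicitly does, or verify the second-derivative bound from the covariance structure (as \autoref{cor:assumptions_kraichnan} does for the Kraichnan model).
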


\begin{proof}
    Under \autoref{ass:sharpness_reg}, by \autoref{cor:dissipation.general.assumption}, all non-zero initial data $\theta_0$ display continuous anomalous dissipation of mean energy, therefore the assumptions of \autoref{prop:regularity_implies_conservation} must not hold on $I=[s,t]$. The second part of the statement then follows from \autoref{lem:besov_type_spaces}.
\end{proof}

\begin{remark}\label{rem:generic.irregularity}
    Under \autoref{ass:sharpness_reg}, it follows from \autoref{cor:generic_irregularity} and characterizations of function spaces that $\theta$ cannot belong to either $L^2(\Omega\times [s,t]; H^{1-\alpha}_x)$, nor $L^2(\Omega\times [s,t]; B^{1-\alpha+\eps}_{2,\infty})$ with $\eps>0$, otherwise both $\limsup$ appearing in \eqref{eq:sharp_upper_regularity} would be $0$. In particular, regardless of how smooth the initial condition $\theta_0$ is, the associated solution must instantaneously lose energy-critical regularity.
In particular this proves the claim \eqref{eq:sharpness_intro} in \autoref{thm:regularity_Kraichnan}.
    
    Similarly, by \autoref{lem:refined.Besov.Holder}, the associated two-point self-correlation $F[\theta]$ must also enjoy limited regularity, at best $F[\theta] \in \tilde L^1([s,t]; \tilde B^{2(1-\alpha)}_{\infty,\infty})$ but never better. 
\end{remark}

In order to prove \autoref{prop:regularity_implies_conservation}, we need some preparations.
Let $\theta_0\in L^2_x$, $\theta$ associated solution $\theta$ to \eqref{eq:Kraichnan}.
The following lemma  describes the evolution of the quantity $\mathbb{E} [\langle G \ast \theta, \theta
\rangle]$ for regular kernels $G$;
its proof can be easily obtained following
similar computations to those of \autoref{lem:PDE_twopoint_transport} and we omit it.

\begin{lemma}\label{lemma1}
Let $W$ satisfy \autoref{ass:sharpness_reg} for some neighbourhood $U$ of $0$ and let $G \in C^\infty_c(U\setminus\{0\})$; let $H$ be the matrix-valued kernel given by $H (x) := G (x) Q (x)$.
Then
\begin{equation}
\frac{\dd}{\dd t} \mathbb{E} [\langle G \ast \theta, \theta \rangle] 
=
\mathbb{E} [\langle (D^2 : H) \ast \theta,  \theta \rangle].
\label{eq:balance1}
\end{equation}
\end{lemma}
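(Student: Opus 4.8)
The plan is to reduce \eqref{eq:balance1} to the parabolic equation already established for the two-point self-correlation function. First I would record the elementary convolution identity: for $K\in L^1_x$ and $f,g\in L^2_x$ one has $\langle K\ast f,g\rangle=\langle K,\,g\ast f^-\rangle$ with $f^-(x)=f(-x)$; taking $f=g=\theta_t$, $K=G$, and expectations, and recalling \eqref{eq:two-point_correlation} with $\kappa=0$,
\[
\mathbb{E}[\langle G\ast\theta_t,\theta_t\rangle]=\langle G,\mathbb{E}[\theta_t\ast\theta_t^-]\rangle=\langle G,F_t\rangle,
\]
where $F_t:=F(t,\cdot)$ is the self-correlation of the solution $\theta$ of \eqref{eq:Kraichnan}. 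All pairings are finite, since $\|G\ast\theta_t\|_{L^2_x}\leq\|G\|_{L^1_x}\|\theta_t\|_{L^2_x}$, $\mathbb{E}\|\theta_t\|_{L^2_x}^2<\infty$ by \eqref{eq:contraction_L2}, and $F_t$ is bounded and continuous by \autoref{lem:L^2-criterion-selfcorrelation}. By \autoref{lem:PDE_twopoint_transport} with $\kappa=0$, $F$ is a weak solution of $\partial_t F=Q:D^2_z F$.

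Next I would test the weak formulation of this equation against the space-time function $\zeta(t)G(z)$ with $\zeta\in C^\infty_c((0,\infty))$. Since $G\in C^\infty_c(U\setminus\{0\})$ and $Q\in C^2(U\setminus\{0\})$ by \autoref{ass:sharpness_reg}, the matrix kernel $H=GQ$ lies in $C^2_c(U\setminus\{0\})$, so $D^2:H=\sum_{i,j}\partial_i\partial_j(GQ_{ij})\in C_c(U\setminus\{0\})$; crucially, the support of $G$ being bounded away from the origin lets one integrate by parts the two spatial derivatives onto $H$ with no boundary terms and with the degeneracy of $Q$ at $z=0$ playing no role. Carrying this out, and using symmetry of $Q$, gives
\[
\frac{\dd}{\dd t}\langle G,F_t\rangle=\int_{\R^d}F_t(z)\sum_{i,j}\partial_i\partial_j\bigl(G(z)Q_{ij}(z)\bigr)\dd z=\langle F_t,D^2:H\rangle
\]
in the sense of distributions in $t$. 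Applying the convolution identity once more, now with $K=D^2:H$, the right-hand side equals $\mathbb{E}[\langle(D^2:H)\ast\theta_t,\theta_t\rangle]$, so \eqref{eq:balance1} holds distributionally in $t$.

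Finally I would promote this to a pointwise identity. The right-hand side $t\mapsto\langle F_t,D^2:H\rangle$ is bounded by $\|D^2:H\|_{L^1_x}\|\theta_0\|_{L^2_x}^2$, so $t\mapsto\langle G,F_t\rangle$ is Lipschitz and \eqref{eq:balance1} holds for a.e.\ $t$ — already enough for the later applications, where the identity is integrated in time; it holds for every $t$ once one notes that $t\mapsto F_t$ is continuous, which follows from the Wiener chaos expansion \eqref{eq:wiener_sol} together with the uniform bound $\|F_t\|_{C^0_x}=\mathbb{E}\|\theta_t\|_{L^2_x}^2$ of \autoref{lem:L^2-criterion-selfcorrelation}. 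An equivalent, self-contained route — the one alluded to in the statement — avoids $F$ altogether: mollify $\theta_t$, apply It\^o's formula to $\theta^\eps_t(x)\theta^\eps_t(y)$ exactly as in the proof of \autoref{lem:PDE_twopoint_transport} to obtain the evolution of $r^\eps_t(x,y)=\mathbb{E}[\theta^\eps_t(x)\theta^\eps_t(y)]$, integrate against $G(x-y)$, substitute $z=x-y$, move both derivatives onto $GQ=H$ by integration by parts, and let $\eps\to0$ using \eqref{eq:contraction_L2}; a short computation shows the limit equals $\langle F_t,D^2:H\rangle$. In either approach the only mildly delicate point is this last passage — the classical time-derivative, respectively the $\eps\to0$ limit — while the spatial integration by parts is entirely harmless precisely because $G$, hence $H$, is supported away from the singularity of $Q$ at the origin.
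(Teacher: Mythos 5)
Your proof is correct and matches the paper's intended argument: the paper omits the proof of this lemma, noting only that it follows by computations similar to those of \autoref{lem:PDE_twopoint_transport}, which is precisely your second route (mollify, apply It\^o's formula to $\theta^\eps_t(x)\theta^\eps_t(y)$, take expectations, integrate against $G(x-y)$, and move both derivatives onto $H=GQ$, harmlessly since $G$ is supported away from the singularity of $Q$). Your first route --- citing the already-established weak formulation $\partial_t F = Q:D^2_z F$ of \autoref{lem:PDE_twopoint_transport} with $\kappa=0$ and testing it against $G$ --- is just a cleaner packaging of the same computation.
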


\begin{proof}[Proof of \autoref{prop:regularity_implies_conservation}]
Up to rescaling and shifting space and/or time, without loss of generality we may assume $I = [0, 1]$ and the open set $U$ from \autoref{ass:sharpness_reg} to contain the unit ball $B_1 \subset \R^d$.
Let $\chi \in C^\infty_c(\R^d)$ be a probability density supported on the annulus $B_1 \setminus B_{1 / 2}$,  $\{\chi^{\eps}\}_{\eps\in (0,1)}$ be the associated standard mollifiers. Applying \autoref{lemma1} with $G = \chi^{\eps}$, we obtain
  \begin{equation} 
    \mathbb{E} [\langle \chi^{\eps} \ast \theta_t, \theta_t \rangle]
    - \langle \chi^{\eps} \ast \theta_0, \theta_0\rangle
    =
    \int_0^1 \EE[\langle D^2  : (\chi^{\eps} Q) \ast \theta_r, \theta_r \rangle] \dd r
    =:
    \int_0^1 \mathbb{E} [\mathcal{A}^{\eps} (\theta_r, \theta_r)] \dd r.
    \label{eq:approximate.balance}
  \end{equation}
Notice that $Q \chi^{\eps} \in C^2_c$ implies $\int_{\mathbb{R}^d} D^2 : (\chi^{\eps} Q)(z)\dd z = 0$, giving
\begin{align*}
\mathcal{A}^{\eps} (\theta_r, \theta_r) 
&= 
\int_{\R^d}\int_{\R^d} D^2 : (\chi^{\eps} Q) (x - y) \theta_r (x) \theta_r (y) \dd x \dd y
\\
&= 
- \frac{1}{2} \int_{\R^d}\int_{\R^d} D^2 : (\chi^{\eps} Q) (x-y) | \theta_r (x) - \theta_r (y) |^2 \dd x \dd y.
\end{align*}  
Expanding
$D^2 : (\chi^{\eps} Q)$ and rescaling we find
\[ \mathcal{A}^{\eps} (\theta_r, \theta_r) 
\lesssim 
\int_{\R^d}\int_{\R^d}
\left( \frac{| D^2 \chi (z) Q (\eps z) |}{\eps^2} 
+ 
\frac{| \nabla \chi^{\eps} (z) \cdot \text{div} Q (\eps z)|}{\eps} 
+ 
| \chi (z) \nabla \cdot (\text{div} Q) (\eps z) | \right) 
\, 
| \delta_{\eps z} \theta_r(y)|^2\dd y\dd z.
\]
Applying assumption~\eqref{eq:ass.energy.conservation}, using that $\chi$ and its derivatives are supported on $B_1 \setminus B_{1 / 2}$, we get
\begin{align*}
    \mathcal{A}^{\eps} (\theta_r, \theta_r) \lesssim \int_{B_1 \times
    \mathbb{R}^d} \eps^{2 \alpha - 2} \, | \delta_{\eps z} \theta_r(y)|^2 \dd y\dd z = \eps^{2 \alpha - 2} \int_{B_1}
    \left\| \delta_{\eps z} \theta_r
    \right\|_{L^2_x}^2\dd z.
\end{align*}
Using properties of the mollifiers and plugging the expression above in \eqref{eq:approximate.balance} we get
\begin{align*}
 \mathbb{E} \| \theta_t \|_{L^2_x}^2 - \|\theta_0\|_{L^2_x}^2
 &=
 \lim_{\varepsilon \to 0}
 \left( \mathbb{E} [\langle \chi^{\eps} \ast \theta_t, \theta_t \rangle]
    - \langle \chi^{\eps} \ast \theta_0, \theta_0\rangle \right)
    \\
    &=
    \lim_{\varepsilon \to 0}
    \int_0^1 \mathbb{E} [\mathcal{A}^{\eps} (\theta_r, \theta_r)] \dd r
    \lesssim 
    \lim_{\varepsilon \to 0}\sup_{|z|\leq \eps}
    \frac{1}{|z|^{2(1-\alpha)}}  \int_0^1 \EE[\| \delta_{ z}\theta_r\|_{L^2_x}^2] \dd r. \qedhere
\end{align*}
\end{proof}

\section{Anomalous Sobolev regularization} \label{sec:regularity}

From now on, we will restrict ourselves to considering homogeneous, \emph{isotropic} noise $W$ on $\R^d$; the resulting class of covariance functions $C$ can be fully characterized, see \autoref{app:auxiliary} for more details.
Under mild assumptions, $C$ must be of the form
\begin{align} \label{eq:covariance}
C^{ij}(z)
=
B_L(|z|) \frac{z^i z^j}{|z|^2} + B_N(|z|) \left( \delta^{ij}-\frac{z^i z^j}{|z|^2}\right),
\quad
\forall\, 1 \leq i,j \leq d, 
\quad 
z \in \R^d \setminus \{0\}.
\end{align}
See \eqref{eq:covariance_BLBN} for the exact formula for $B_L$, $B_N$; they satisfy $B_L(0) = B_N(0)>0$, so that $C^{ij}(0) = B_N(0) \delta^{ij}$, and \autoref{rmk:isotropy} is valid with $c_0 = B_N(0)/2$. 
Correspondingly, let us define
\begin{align} \label{eq:definition_b}
b_L(r) 
:= 
B_L(0) - B_L(r) ,
\quad
b_N(r) 
:= 
B_N(0) - B_N(r),
\end{align}
so that $Q(z)=C(0)-C(z)$ admits a similar expansion as \eqref{eq:covariance}, with $B_L$, $B_N$ replaced by functions $b_L$, $b_N$ going like $b_L(r) \sim b_N(r) \sim r^{2\alpha}$ for small $r$, cf. \eqref{eq:covariance_generalQ}. In particular, in the isotropic case, $b_L$ and $b_N$ determine precisely the asymptotic behaviour of $Q$ around $0$.
This section is devoted to the proof of the Sobolev regularity results of \autoref{thm:regularity_Kraichnan} (in a more general version that is not restricted to the Kraichnan noise). Hereafter, we informally refer to this results as anomalous regularization. This terminology is justified by the fact that smooth transport does not yield any Sobolev regularization of solutions. In order to show anomalous regularization,
we study the zero-viscosity limits $\kappa \downarrow 0$ of \eqref{eq:PDE_F_intro_tilde} and prove that 
\begin{align} \label{eq:anom_reg_01}
\sup_{\kappa \in (0,1/2)} \int_0^t \sup_{|z| < l} \frac{| F^\kappa_s(z) - F^\kappa_s(0) |}{|z|^{2-2\alpha-\delta}}\dd s
\lesssim 
(1+t)\, \| \theta_0 \|_{L^2_x}^2.
\end{align}
By \autoref{lem:Sobolev-Holder} this yields uniform-in-$\kappa$ Sobolev regularity for approximations $\tilde{\theta}^\kappa$.
We deduce \eqref{eq:anom_reg_01} above under the additional assumption that the initial condition $F^\kappa_0$ is radially symmetric. 
We show how the general case is reduced to this one in \autoref{ssec:radial}.
Under this condition, $F^\kappa(h) =: f^\kappa(|h|)$ for some $f^\kappa:\R_+ \times \R_+ \to \R$ solving the parabolic PDE \begin{align} \label{eq:PDE_f}
\partial_t f^ \kappa 
=
(1-\kappa) b_L(r) \partial_r^2 f^ \kappa 
+
(1-\kappa) \frac{d-1}{r}b_N(r) \partial_r f^ \kappa 
+
2 c_0 \kappa \left( \partial_r^2 f^ \kappa
+ 
\frac{d-1}{r} \partial_r f^ \kappa \right),
\end{align}
and \eqref{eq:anom_reg_01} boils down to deriving analogous estimates for the behaviour of $f^\kappa$ close to $r=0$ that are uniform in $\kappa \in (0,1/2)$. 
We point out that, in the formal limit $\kappa \downarrow 0$, \eqref{eq:PDE_f} becomes a \textit{degenerate} parabolic PDE:
\begin{align} \label{eq:PDE_f_deg}
\partial_t f 
=
b_L(r) \partial_r^2 f 
+
\frac{d-1}{r}b_N(r) \partial_r f .
\end{align}
In particular, the non-degenerate parabolic term $2c_0\kappa \partial_r^2 f^\kappa$ in \eqref{eq:PDE_f} cannot be used in a standard way to recover regularity estimates for $f$ close to $r=0$.
However, our analysis suggests that the presence of the $\kappa$-dependent, non-degenerate parabolic term does indeed help also in the limit $\kappa \downarrow 0$.
More precisely, together with the fact that $f^\kappa$ is the radial part of a two-point self-correlation function, it acts by selecting a suitable Neumann boundary condition that allows to close uniform-in-$\kappa$ regularity estimates for $f^\kappa$.
As a consequence, being the limit $\kappa \downarrow 0$ of \eqref{eq:PDE_f} constitutes an additional physical constraint that in principle improves the regularity of solutions of the degenerate parabolic PDE \eqref{eq:PDE_f_deg}.
Furthermore, it is worth to mention the fact that our result is not based only on the degree of degeneracy of the coefficients $b_L$, $b_N$ as $r\downarrow 0$ (the parameter $\alpha$), but also on their relative intensity (the parameter $\beta$ below).
As far as we know, the best available regularity result for solutions of \eqref{eq:PDE_f_deg} is given by \cite{ChiSer1984}, proving local H\"older regularity of solutions, with unspecified H\"older exponent.

Let us move to the precise statements of the results contained in this section.
The main assumption on $W$ we will adopt in order to prove anomalous regularization is:
\begin{assumption} \label{assumption}
The noise $W$ admits an homogeneous, isotropic covariance $C$.
Let $b_L$, $b_N$ be given by \eqref{eq:definition_b}, then there exist $\alpha \in (0,1)$ and $c>0$ such that, as $r \downarrow 0$, it holds
\begin{align} \label{eq:assumption_b}
b_L(r) = c r^{2\alpha} + o(r^{2\alpha})&,
\quad
b_N(r) = \beta c r^{2\alpha} + o(r^{2\alpha}),
\\
\label{eq:assumption_db}
\partial_r b_L(r) = 2\alpha c r^{2\alpha-1} + o(r^{2\alpha-1})&,
\quad
\partial_r b_N(r) = 2\alpha \beta c r^{2\alpha-1} + o(r^{2\alpha-1}),
\end{align}
for some 
\begin{align} \label{eq:assumption_beta}
\beta > \frac{2\alpha-1}{d-1}.
\end{align}
In \eqref{eq:assumption_db} above, we are implicitly assuming that $b_L$, $b_N$ are differentiable in a neighbourhood of $r=0$, excluding at most the point $r=0$ when $\alpha<1/2$.
\end{assumption}

The Kraichnan model satisfies \autoref{assumption} if and only if $\alpha \in (0,1)$ and $\eta > 1-\frac{d}{4\alpha^2}$, see \autoref{cor:assumptions_kraichnan} in  \autoref{app:auxiliary}.
This condition corresponds exactly to the diffusive regime identified by Le Jan and Raimond \cite{LeJRai2002}.

Next, let us define
\begin{align} \label{eq:assumption_delta}
 \delta_\star = \delta_\star(\alpha,\beta,d) := \min \{ 1-\alpha  , (d-1)\beta +1 -2\alpha\}>0,
\end{align}
and recall the definition of the increment ratio
\begin{align*}
\llbracket f \rrbracket_{I^{\gamma}(l)}
:=
\sup_{r \in (0,l)}
\frac{|f(r)-f(0)|}{r^{\gamma}},
\quad
\gamma \geq 0, \quad l>0.
\end{align*}

We shall restrict our analysis to solutions $f^\kappa$ satisfying for every $\kappa \in (0,1/2)$
\begin{align*}
\sup |f^\kappa| :=
\sup_{t,r \in \R_+}
|f^\kappa(t,r)|  < \infty,
\end{align*}
and the Neumann boundary condition $\partial_r f^\kappa(t,0)=0$ for every $t>0$.
These assumption are justified by the fact that, for $\kappa \in (0,1/2)$ and $t>0$, the two-point self-correlation function is a bounded $C^1_x$ function which is symmetric with respect the origin, and therefore its gradient vanishes at zero.

We have the following:
\begin{prop} \label{prop:regularity}
Suppose \autoref{assumption}. 
Then for every $\delta \in (0,\delta_\star)$ there exists $\eps = \eps(\alpha,\delta)>0$ arbitrarily small and $l>0$ with the following property.
For every $\kappa \in (0,1/2)$, let $f^\kappa$ satisfy \eqref{eq:PDE_f} and $f^\kappa_t \in  C^2_{loc}$, $\partial_r f^\kappa(t,0)=0$ for every $t>0$.
Then for every $\kappa \in (0,1/2)$ and $t > 0$ it holds  
\begin{align} \label{eq:thm_reg}
\int_0^t
\llbracket f^\kappa_s \rrbracket^{1-\eps}_{I^{2-2\alpha-\delta}(l)}
\dd s
\lesssim (1+t)
\sup|f^\kappa|^{1-\eps},
\end{align}
with implicit constant that does not depend upon $\kappa$.
\end{prop}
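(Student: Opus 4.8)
The plan is to derive a differential inequality for a suitably weighted, mollified version of the increment quantity $\llbracket f^\kappa_s \rrbracket_{I^{2-2\alpha-\delta}(l)}$, exploiting the structure of \eqref{eq:PDE_f} near $r=0$. First I would reduce the problem to controlling, for small $r$, the quantity $g^\kappa(t,r) := f^\kappa(t,0) - f^\kappa(t,r) \geq 0$ (nonnegativity and boundedness by $2\sup|f^\kappa|$ follow from $f^\kappa$ being the radial part of a self-correlation function, via \autoref{lem:L^2-criterion-selfcorrelation}). From \eqref{eq:PDE_f}, using $\partial_r f^\kappa(t,0)=0$, one gets a closed PDE for $g^\kappa$ of the form
\[
\partial_t g^\kappa = (1-\kappa) b_L(r) \partial_r^2 g^\kappa + (1-\kappa)\frac{d-1}{r} b_N(r)\partial_r g^\kappa + 2c_0\kappa\Big(\partial_r^2 g^\kappa + \frac{d-1}{r}\partial_r g^\kappa\Big) - \partial_t f^\kappa(t,0),
\]
and crucially $-\partial_t f^\kappa(t,0) = -\lim_{r\to 0}\big[(1-\kappa)b_L(r)\partial_r^2 f^\kappa + \dots\big]\le 0$ in an appropriate sense, since it equals (a positive multiple of) the dissipation rate — this is the analogue of the heuristic in the introduction. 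So $g^\kappa$ is a nonnegative supersolution of a degenerate-parabolic operator with a favorable sign on the source term.

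Next I would test this against a cleverly chosen weight. The natural candidate, motivated by the structure in \eqref{eq:assumption_b}--\eqref{eq:assumption_beta}, is to consider spherical-average-type functionals $\Phi^\kappa(t) := \int_0^l r^{a}\, \varphi(r)\, g^\kappa(t,r)\, \mathrm{d}r$ for a suitable power $a$ and cutoff $\varphi$, and to integrate the PDE for $g^\kappa$ against $r^{a}\varphi(r)$. Integration by parts moves derivatives onto the weight; using $b_L(r)\sim c r^{2\alpha}$, $b_N(r)\sim \beta c r^{2\alpha}$, $\partial_r b_L \sim 2\alpha c r^{2\alpha-1}$, one finds that the leading term in $r$ of the resulting operator acting on $r^a\varphi$ is $\sim r^{a+2\alpha-2}$ times a constant $P(a) = c[a(a-1) + (d-1)\beta a + \dots]$ (a quadratic in $a$). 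Choosing $a$ so that $P(a)$ has the right sign, plus the boundary terms at $r=l$ being controlled by $\sup|f^\kappa|$ and the ones at $r=0$ vanishing (thanks to the Neumann condition and the degeneracy), yields a differential inequality of the form $\frac{\mathrm{d}}{\mathrm{d}t}\Phi^\kappa \le C\sup|f^\kappa| - c' \cdot(\text{something controlling }\llbracket f^\kappa_t\rrbracket)$, uniformly in $\kappa$. Integrating in time and using boundedness of $\Phi^\kappa$ gives a bound on $\int_0^t (\text{something})\,\mathrm{d}s$.

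The subtle point — and the reason for the exponent $1-\eps$ and the nonlinear $\llbracket\cdot\rrbracket^{1-\eps}$ appearing in \eqref{eq:thm_reg} rather than a clean linear bound — is that the functional $\Phi^\kappa$ controls an $L^1$-in-$r$ average of $g^\kappa$, not its pointwise supremum; to pass from the integrated/averaged quantity to the pointwise increment ratio $\llbracket f^\kappa_s\rrbracket_{I^{2-2\alpha-\delta}(l)}$ one must interpolate, using monotonicity-type or parabolic-regularity properties of $g^\kappa$ (e.g. that $g^\kappa$ cannot be much larger at a point than its average over a comparable interval, because of a one-sided Harnack or a maximum-principle argument for the degenerate operator), at the cost of losing a small power. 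The precise bookkeeping of which $\delta$'s and $\eps$'s are admissible is governed by $\delta_\star = \min\{1-\alpha, (d-1)\beta+1-2\alpha\}$: the first term controls the degeneracy of $b_L\partial_r^2$, the second the competition between $b_N$ and the number of dimensions through $\beta$, exactly as in \eqref{eq:assumption_beta}.

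The main obstacle I expect is making rigorous the claim that the ``extra'' term $-\partial_t f^\kappa(t,0)$ has a sign (or is harmless) uniformly in $\kappa$, and simultaneously extracting from the averaged estimate a genuinely pointwise Hölder-type bound near $r=0$ that is uniform in $\kappa$ — in other words, transferring the gain from an integral functional to the $I^{2-2\alpha-\delta}(l)$ seminorm without the constant blowing up as $\kappa\downarrow 0$. This is precisely where the degeneracy of \eqref{eq:PDE_f_deg} bites and where the argument must use, rather than a generic De Giorgi–Nash–Moser scheme, the one-dimensional radial reduction together with the self-correlation structure (nonnegativity of $g^\kappa$ and the Neumann condition), as emphasized in the remark following \autoref{thm:reg_F_intro}.
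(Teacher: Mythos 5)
Your proposal correctly identifies several of the right ingredients (the radial reduction, the asymptotics of $b_L,b_N$, the role of $\delta_\star$, the Neumann condition), but it has a genuine gap at the decisive step, which you yourself flag: your weighted functional $\Phi^\kappa(t)=\int_0^l r^a\varphi(r)\,g^\kappa(t,r)\,\mathrm{d}r$ only controls an $L^1$-in-$r$ average of $g^\kappa=f^\kappa(t,0)-f^\kappa(t,r)$, whereas \eqref{eq:thm_reg} requires the pointwise supremum $\sup_{r<l}g^\kappa(t,r)/r^{2-2\alpha-\delta}$. The passage from the average to the supremum via a ``one-sided Harnack or maximum-principle argument for the degenerate operator'' is precisely what is unavailable: for this class of degenerate parabolic operators the known Harnack-type results (Chiarenza--Serapioni) yield only an unquantifiable small H\"older exponent and cannot be bootstrapped, as the paper itself stresses. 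So the hardest step is left unproved. Two secondary issues: the sign of your source term is reversed ($f^\kappa(t,0)=\EE\|\tilde\theta^\kappa_t\|_{L^2_x}^2$ is non-increasing, so $-\partial_t f^\kappa(t,0)\ge 0$, which pushes $g^\kappa$ \emph{up} and works against the desired upper bound; it is harmless only because its time integral is bounded by $\sup|f^\kappa|$, not because of a favorable sign); and your use of $g^\kappa\ge 0$ restricts to self-correlation functions, while the proposition is stated for general solutions of \eqref{eq:PDE_f}.

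The paper circumvents the average-to-pointwise obstruction by a different mechanism. It first changes variables via $\xi(r)$ solving $b_L\partial_r^2\xi+\tfrac{d-1}{r}b_N\partial_r\xi=r^{-\delta}$, so that $\xi(r)\sim r^{2-2\alpha-\delta}$ and the target becomes a \emph{Lipschitz} bound for $g(t,\xi)=f(t,r)$. It then runs the energy estimate on the derivative $y=\partial_\xi g$ rather than on $g$ (the Neumann condition is what kills the boundary terms at $\xi=0$), obtaining a uniform-in-$\kappa$ weighted bound $\int\!\!\int \xi^{1-\delta'}|\partial_\xi^2 g|^2\,\mathrm{d}\xi\,\mathrm{d}s<\infty$; since $\xi^{-(1-\delta')}$ is integrable, H\"older plus the one-dimensional Sobolev embedding $W^{1,p}\hookrightarrow C^\theta$ with $p>1$ (and Poincar\'e, using $\partial_\xi g(s,0)=0$) converts this into a genuinely pointwise bound on $\partial_\xi g$. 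The exponent $1-\eps$ in \eqref{eq:thm_reg} then arises not from interpolating averages against suprema, but from a dyadic-in-time pigeonhole and absorption argument needed to control the data $\partial_\xi g_\tau$ entering the energy estimate.
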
 

The strategy of the proof of \autoref{prop:regularity} is the following. First, we find a suitable change of variables $r \mapsto \xi(r)$ so that the local H\"older-like regularity for $f^\kappa$ stated in the theorem corresponds to local Lipschitz regularity for the function $g^\kappa(t,\xi) := f(t,r)$.
Then, for $t>\tau>0$ we produce suitable bounds on the $L^2([\tau,t], Lip_x)$ norm of $g^\kappa$ by using the PDE satisfied by $g^\kappa$ and, critically, the Neumann boundary condition on $f^\kappa$ to cancel out boundary terms coming from integration by parts.
Finally, we obtain the desired control on the $L^{1-\eps}_t Lip_x$ norm of $g^\kappa$ by a localization argument.

Under extra regularity assumptions on $f^\kappa$, \autoref{prop:regularity} above implies the following:
\begin{cor} \label{cor:extra_regularity}
Under the same assumptions of \autoref{prop:regularity}, suppose in addition $f_0^\kappa \in C^2_b$. Then for every $\delta>0$ sufficiently small there exists $l > 0$ such that, for every $\kappa \in (0,1/2)$ and $t \geq 0$, it holds 
\begin{align} \label{eq:thm_reg2}
\sup_{s \in [0,t]} \llbracket f^\kappa_s \rrbracket_{I^{2-2\alpha-\delta}(l)} 
\lesssim
(1+t) \| f^\kappa_0 \|_{C^2_b}.
\end{align}
\end{cor}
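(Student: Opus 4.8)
The plan is to deduce the pointwise-in-time bound \eqref{eq:thm_reg2} from the time-integrated bound \eqref{eq:thm_reg} of \autoref{prop:regularity} by observing that, for $C^2_b$ data, the initial ``regularization'' step in that argument becomes unnecessary. First I would record the two elementary inputs supplied by the extra hypothesis. Since $f^\kappa_0 \in C^2_b$ and $\partial_r f^\kappa_0(0) = 0$, a second-order Taylor expansion at $r = 0$ gives $|f^\kappa_0(r) - f^\kappa_0(0)| \leq \tfrac12 \|f^\kappa_0\|_{C^2_b}\, r^2$ for all $r \geq 0$, hence
\[
\llbracket f^\kappa_0 \rrbracket_{I^{2-2\alpha-\delta}(l)} \leq \tfrac12\, l^{2\alpha+\delta}\, \|f^\kappa_0\|_{C^2_b};
\]
in the Lipschitz variable $\xi = \xi(r)$ of \autoref{prop:regularity} this says that the rescaled profile $g^\kappa_0$ is Lipschitz near $\xi = 0$ with seminorm $\lesssim \|f^\kappa_0\|_{C^2_b}$. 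Moreover $\sup|f^\kappa| \lesssim \|f^\kappa_0\|_{C^0} \leq \|f^\kappa_0\|_{C^2_b}$, by the maximum principle for \eqref{eq:PDE_f} (uniformly parabolic at each fixed $\kappa > 0$), or directly from $\sup|f^\kappa| = \sup_t f^\kappa_t(0) \leq f^\kappa_0(0)$ when $f^\kappa$ is a self-correlation profile.

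Next I would re-run the proof of \autoref{prop:regularity} with the starting time $\tau$ set to $0$. Recall that in that scheme one changes variables $r \mapsto \xi$ so that the target H\"older-type regularity of $f^\kappa$ becomes Lipschitz regularity of $g^\kappa$, and then performs a weighted parabolic energy estimate for $g^\kappa$ on $[\tau, t]$, using the Neumann condition at $\xi = 0$ to cancel the boundary term and a cutoff near the far endpoint $\xi \sim \xi(l)$ to localize; the need to take $\tau > 0$ there comes solely from the fact that the merely bounded datum does not yet have the Lipschitz-in-$\xi$ regularity one propagates, and it is the removal of the ensuing $\tau$-singular constant through the localization step that degrades the exponent to $1 - \eps$. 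In the present situation $g^\kappa_0$ is already Lipschitz near $\xi = 0$ by the previous step, so the same energy estimate may be run with $\tau = 0$, yielding the standard parabolic inequality
\[
\sup_{s \in [0,t]} \mathcal{E}^\kappa(s) + \int_0^t \mathcal{D}^\kappa(s)\, \dd s \lesssim \mathcal{E}^\kappa(0) + (1+t)\, \sup|f^\kappa|^2
\]
with $\kappa$-independent constants, where $\mathcal{E}^\kappa(s)$ is the weighted energy that controls $\llbracket f^\kappa_s \rrbracket^2_{I^{2-2\alpha-\delta}(l)}$ near $\xi = 0$, $\mathcal{E}^\kappa(0) \lesssim \|f^\kappa_0\|^2_{C^2_b}$, and the forcing on the right is generated only by the cutoff commutators, supported away from the degeneracy and hence bounded by $\sup|f^\kappa|$ per unit time. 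Combining with $\sup|f^\kappa| \lesssim \|f^\kappa_0\|_{C^2_b}$ and taking square roots gives $\sup_{s \in [0,t]} \llbracket f^\kappa_s \rrbracket_{I^{2-2\alpha-\delta}(l)} \lesssim (1+t)\, \|f^\kappa_0\|_{C^2_b}$, which is \eqref{eq:thm_reg2}.

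The main obstacle is to confirm that the quantity carrying the supremum in time in the energy estimate genuinely dominates a power of the target increment-ratio seminorm — i.e. that the Lipschitz-in-$\xi$ seminorm of $g^\kappa$ near $\xi = 0$ appears at the ``energy'' level of the hierarchy rather than only under the time integral, as the $L^2([\tau,t], Lip_x)$ wording in the sketch of \autoref{prop:regularity} might suggest. If it only appears at the ``dissipation'' level, one further differentiates \eqref{eq:PDE_f} in $\xi$, runs the energy estimate for $\partial_\xi g^\kappa$ (the Neumann condition and the $C^2_b$ datum, now entering through $\partial_\xi^2 g^\kappa_0$, still controlling the initial and boundary terms), and upgrades the resulting control of $\partial_\xi g^\kappa$ in $L^\infty_t L^2_\xi \cap L^2_t H^1_\xi$ to $L^\infty_t L^\infty_\xi$ by a one-dimensional Gagliardo--Nirenberg interpolation. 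Checking that the weights produced by the change of variables $r \mapsto \xi$ and by this extra differentiation stay integrable near $\xi = 0$ under \autoref{assumption} — this is precisely where the threshold $\delta < \delta_\star$ enters — uniformly in $\kappa \in (0,1/2)$, is the only delicate bookkeeping, and it is of the same nature as the estimates already needed for \autoref{prop:regularity}.
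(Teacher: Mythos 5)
There is a gap, and you have in fact located it yourself. In the proof of \autoref{prop:regularity} the Lipschitz-in-$\xi$ control of $g^\kappa$ lives only at the \emph{dissipation} level: \autoref{lem:apriori_g} bounds $\int_\tau^t\int_0^\ell b_L(\partial_r\xi)^2|\partial_\xi^2 g_s|^2\,\dd\xi\,\dd s$, and the $C^{1+\theta}_{[0,\ell]}$ norm of $g_s$ is recovered from $\partial_\xi^2 g_s$ via the weighted Sobolev embedding, hence only in $L^2_t$ (then $L^{1-\eps}_t$ after localization). The quantity carried by the $\sup_{s}$ in the energy inequality is merely $\int_0^{2\ell}|\partial_\xi g_s|^2\,\dd\xi$, which does not dominate $\llbracket f^\kappa_s\rrbracket_{I^{2-2\alpha-\delta}(l)}$. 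So taking $\tau=0$ with Lipschitz data does not yield \eqref{eq:thm_reg2}; it only removes the localization in time. Your fallback — differentiating the equation in $\xi$ and running the weighted energy estimate for $\partial_\xi g^\kappa$ — is not a routine repetition: it introduces second derivatives of the degenerate coefficients $w,\tilde w,q,\tilde q$ (which are more singular at $\xi=0$ by a full power of $\xi$), requires new boundary asymptotics for $\partial_\xi^2 g$ at $\xi=0$ in place of the Neumann condition, and needs an analogue of the sign structure of \autoref{lem:q_tilde_decreasing} at the next order, none of which is verified. As written, the argument does not close.

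The paper avoids all of this by differentiating in \emph{time} rather than in space: since the coefficients of \eqref{eq:PDE_f} are autonomous, $h^\kappa:=\partial_t f^\kappa$ solves the same PDE, and its initial datum $h^\kappa_0=b_L\partial_r^2 f^\kappa_0+\tfrac{d-1}{r}b_N\partial_r f^\kappa_0+2c_0\kappa(\cdots)$ is bounded (using $f^\kappa_0\in C^2_b$, $\partial_r f^\kappa_0(0)=0$ and $b_N(r)/r\lesssim r^{2\alpha-1}$ with $\partial_r f^\kappa_0\lesssim r$). Applying \autoref{prop:regularity} to $h^\kappa$ (after interpolation with $L^\infty_{t,x}$ to reach full $L^1_t$ integrability) gives $\int_0^t\llbracket h^\kappa_r\rrbracket_{I^{2-2\alpha-\delta}(l)}\,\dd r\lesssim(1+t)\|f^\kappa_0\|_{C^2_b}$, and then the subadditivity of the increment-ratio seminorm applied to $f^\kappa_s=f^\kappa_0+\int_0^s h^\kappa_r\,\dd r$ yields \eqref{eq:thm_reg2} directly. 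I would encourage you to adopt this route; it converts the sup-in-time statement into the already-proved time-integrated one for a new solution of the same equation, at no extra PDE cost.
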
 
\begin{proof}
Since the coefficients of \eqref{eq:PDE_f} are time-independent, the function $h^\kappa := \partial_t f^\kappa$ satisfies the same PDE with initial condition in $C_b$. In particular, we can apply \autoref{prop:regularity} and interpolation with $L^\infty_{t,x}$ to conclude that $\llbracket h^\kappa \rrbracket_{I^{2-2\alpha-\delta}(l)}  \in L^1_{t,loc}$ with estimate proportional to $\| f_0^\kappa \|_{C^2_b}$. But then
\begin{align*}
\sup_{s\in [0,t]} \llbracket f^\kappa_s \rrbracket_{I^{2-2\alpha-\delta}(l)}  
&= 
\sup_{s\in [0,t]} \left\llbracket f^\kappa_0 + \int_0^s h^\kappa_r \dd r \right\rrbracket_{I^{2-2\alpha-\delta}(l)} 
\\
&\leq 
\| f_0^\kappa \|_{C^2_b} 
+ 
\int_0^t \llbracket h^\kappa_r \rrbracket_{I^{2-2\alpha-\delta}(l)} \dd r \lesssim
(1+t) \| f_0^\kappa \|_{C^2_b}. \qedhere
\end{align*} 
\end{proof}

In virtue of \autoref{lem:Sobolev-Holder}, the previous \autoref{prop:regularity} and its \autoref{cor:extra_regularity} yield Sobolev regularity for $\tilde{\theta}^\kappa$. Since all the estimates are uniform in $\kappa$, the $H^{1-\alpha-\delta}_x$ regularity gain in \autoref{thm:regularity_Kraichnan} is recovered by taking the limit $\kappa \downarrow 0$, as detailed below.
\begin{proof}[Proof of \autoref{thm:regularity_Kraichnan}, bounds \eqref{eq:regularization_intro} and \eqref{eq:regularization_intro2}]
Let $\tilde{\theta}^\kappa$ denote the solution of \eqref{eq:spde_viscous_approx} with $\kappa \in (0,1/2)$. As $\tilde{\theta}^\kappa_t \to \theta_t$ strongly in $L^2_{\omega,x}$ for every fixed $t \geq 0$ by \autoref{lem:approx},  to prove \eqref{eq:regularization_intro} it suffices to show
\begin{align*}
\sup_{\kappa \in (0,1/2)}
\int_0^t \mathbb{E} \| \tilde{\theta}^\kappa_s \|^2_{H^{1-\alpha-\delta}_x}\dd s
\lesssim 
(1+t)\, \| \theta_0 \|_{L^2_x}^2.
\end{align*}
By \autoref{lem:Sobolev-Holder} and \eqref{eq:contraction_L2}, for the above it is enough to prove \eqref{eq:anom_reg_01}.
By the results in \autoref{ssec:radial}, \emph{Step 4}, we can assume without loss of generality that $F^\kappa$ is radial.
Next, recall that for every $\kappa \in (0,1/2)$ and $t>0$ there exists $\tau \in (0,t)$ such that $\tilde{\theta}^\kappa_\tau \in L^2(\Omega;H^1_x)$, by \autoref{prop:wellposedness_viscous_SPDE}. In particular, by \autoref{lem:Sobolev-Holder} we have $F^\kappa_\tau \in C^{2}_x$, and since the PDE \eqref{eq:PDE_F_intro_tilde} is strictly parabolic, by \cite[Theorem 8.2.1 and Corollary 8.3.1]{Kr08} we have $F^\kappa_t \in C^2_{x}$ and $\nabla F^\kappa_t$ vanishes at the origin as $F^\kappa$ is symmetric, being it a two-point self-correlation function (by a change of variables in \eqref{eq:two-point_correlation}). Since $t>0$ is arbitrary, the radial part of $F^\kappa$ satisfies all the assumptions of \autoref{prop:regularity}. 
Furthermore, by interpolation with $L^\infty_{t,x}$ one can reach full $L^1$ integrability in time up to paying approximately $\eps$ regularity in space (which can be absorbed into the arbitrariness of $\delta$).
We deduce
\begin{align*}
\sup_{\kappa \in (0,1/2)} \int_0^t
\llbracket F^\kappa_s \rrbracket_{I^{2-2\alpha-\delta}(l)} \dd s
&=
\sup_{\kappa \in (0,1/2)} \int_0^t
\llbracket f^\kappa_s \rrbracket_{I^{2-2\alpha-\delta}(l)} 
\dd s
\\
&\lesssim
(1+t) 
\sup|f^\kappa|
=
(1+t)\, \| \theta_0 \|_{L^2_x}^2.
\end{align*}

As for the bound \eqref{eq:regularization_intro2}, observe that  since $f^\kappa$ comes from a two-point self-correlation function, a sufficient condition to guarantee $f_0^\kappa \in C^2_b$ is to take $\tilde{\theta}^\kappa_0 = \theta_0 \in H^1_x$. Then we can repeat the argument above using \eqref{eq:thm_reg2} instead of \eqref{eq:thm_reg}.
\end{proof}

\begin{remark}
In the authors' opinion, it would be interesting to understand if the regularity given by \eqref{eq:anom_reg_01} can be obtained even without assuming the additional structure given by being a zero-diffusivity limit of two-point self-correlation functions, namely if 
\begin{align*}
\int_0^t \sup_{|z| < l} \frac{|F_s(z) - F_s(0) |}{|z|^{2-2\alpha-\delta}}\dd s
< \infty
\end{align*}
holds in general for bounded solutions of \eqref{eq:PDE_F} with $\kappa = 0$. This is a purely deterministic PDE problem that we believe deserves further attention.
\end{remark}

The next corollary is about $L^p_x$-based Sobolev regularity of solutions when the initial condition $\theta_0$ has more than $L^2_x$ integrability. For simplicity we state the result for $\theta_0 \in L^2_x \cap L^\infty_x$ but the same argument can be used whenever $\theta_0 \in L^2_x \cap L^q_x$ for some $q>2$, up to modifying the regularity exponent accordingly.
\begin{cor}
    Let $\theta_0\in L^2_x\cap L^\infty_x$, then for any $p\in (2,\infty)$ and small $\delta>0$, the associated solution of \eqref{eq:Kraichnan} satisfies \begin{equation}\label{eq:anomalous_regularity_interpolation1}
 \int_0^t \mathbb{E} \| \theta_s \|^p_{W^{2(1-\alpha-\delta)/p,p}_x} \dd s
\lesssim (1+t)\, \| \theta_0\|_{L^\infty_x}^{p-2} \| \theta_0 \|_{L^2_x}^{2},
\qquad
\forall t \geq 0.
    \end{equation}
    If additionally $\theta_0\in H^1_x\cap L^\infty_x$, then
    \begin{equation}\label{eq:anomalous_regularity_interpolation2}
        \mathbb{E} \Big[ \| \theta_s \|^p_{W^{2(1-\alpha-\delta)/p,p}_x} \Big] \lesssim (1+t)\,  \| \theta_0\|_{L^\infty_x}^{p-2} \| \theta_0 \|_{H^1_x}^{2},
\qquad
\forall t \geq 0.
    \end{equation}
\end{cor}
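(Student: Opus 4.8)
The plan is to read both bounds off the $H^{1-\alpha-\delta}_x$ estimates of \autoref{thm:regularity_Kraichnan} by interpolating them against the pathwise $L^\infty_x$-contraction. Two ingredients are needed. First: since $\theta_0\in L^2_x\cap L^\infty_x$, estimate \eqref{eq:contraction_Lp} applied with $p=\infty$ gives, for each fixed $t\geq 0$, the $\PP$-almost sure bound $\|\theta_t\|_{L^\infty_x}\leq\|\theta_0\|_{L^\infty_x}$. Second: a Gagliardo--Nirenberg-type interpolation inequality stating that, for every $\sigma\in(0,1)$, $p\in(2,\infty)$ and $f\in H^\sigma_x\cap L^\infty_x$,
\[
\|f\|_{W^{2\sigma/p,p}_x}^{p}\ \lesssim_{\sigma,p,d}\ \|f\|_{H^\sigma_x}^{2}\,\|f\|_{L^\infty_x}^{p-2}.
\]
I would prove the latter via Littlewood--Paley theory: for $0<s:=2\sigma/p<1$ one has $\|f\|_{W^{s,p}_x}\sim\|f\|_{L^p_x}+\|f\|_{\dot B^{s}_{p,p}}$; estimating each dyadic block by $\|\dot\Delta_j f\|_{L^p_x}^{p}\leq\|\dot\Delta_j f\|_{L^2_x}^{2}\|\dot\Delta_j f\|_{L^\infty_x}^{p-2}\lesssim\|\dot\Delta_j f\|_{L^2_x}^{2}\,\|f\|_{L^\infty_x}^{p-2}$ (using the uniform-in-$j$ bound $\|\dot\Delta_j\|_{L^\infty_x\to L^\infty_x}\lesssim 1$), summing over $j\in\ZZ$, using $sp=2\sigma$ to get $\sum_j 2^{sjp}\|\dot\Delta_j f\|_{L^2_x}^{2}=\sum_j 2^{2\sigma j}\|\dot\Delta_j f\|_{L^2_x}^{2}\sim\|f\|_{\dot H^\sigma_x}^{2}\leq\|f\|_{H^\sigma_x}^{2}$, and absorbing the $L^p_x$-part of the norm via $\|f\|_{L^p_x}^{p}\leq\|f\|_{L^2_x}^{2}\|f\|_{L^\infty_x}^{p-2}$.

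With these in hand the rest is routine. Fixing $\delta>0$ small, $p\in(2,\infty)$ and setting $\sigma:=1-\alpha-\delta$, I would apply the interpolation inequality to $f=\theta_r(\omega)$ for a.e.\ $r$ — for which $\|\theta_r\|_{H^\sigma_x}<\infty$ almost surely by \eqref{eq:regularization_intro} — take expectations, and use the almost-sure $L^\infty_x$ bound to pull $\|\theta_0\|_{L^\infty_x}^{p-2}$ out of the expectation, obtaining for a.e.\ $r\geq 0$
\[
\EE\big[\|\theta_r\|_{W^{2(1-\alpha-\delta)/p,p}_x}^{p}\big]\ \lesssim\ \|\theta_0\|_{L^\infty_x}^{p-2}\,\EE\big[\|\theta_r\|_{H^{1-\alpha-\delta}_x}^{2}\big].
\]
Integrating over $r\in[0,t]$ and invoking \eqref{eq:regularization_intro} then yields \eqref{eq:anomalous_regularity_interpolation1}. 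For \eqref{eq:anomalous_regularity_interpolation2}, under the additional hypothesis $\theta_0\in H^1_x$ the displayed inequality holds for every $t\geq 0$ (now $\theta_t\in H^{1-\alpha-\delta}_x$ almost surely for all $t$, by \eqref{eq:regularization_intro2}), and inserting \eqref{eq:regularization_intro2} of \autoref{thm:regularity_Kraichnan} in place of \eqref{eq:regularization_intro} gives $\EE[\|\theta_t\|_{W^{2(1-\alpha-\delta)/p,p}_x}^{p}]\lesssim(1+t)\,\|\theta_0\|_{L^\infty_x}^{p-2}\|\theta_0\|_{H^1_x}^{2}$.

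Essentially nothing here is genuinely hard once \autoref{thm:regularity_Kraichnan} is available; the closest thing to an obstacle, and the only step deserving care, is the fractional-Sobolev interpolation inequality — specifically the identification of $W^{s,p}_x$ with $B^{s}_{p,p}$ for $s\in(0,1)$ and the $L^\infty_x$-boundedness of the Littlewood--Paley blocks, both classical (see e.g.\ \cite{BaChDa2011}) — together with the bookkeeping ensuring that \eqref{eq:contraction_Lp} is invoked as an almost-sure-at-each-fixed-time bound, which is what legitimizes moving $\|\theta_0\|_{L^\infty_x}^{p-2}$ outside the expectation. Finally, the remark after the statement, concerning $\theta_0\in L^2_x\cap L^q_x$ with $q\in(2,\infty)$, is handled verbatim upon replacing $L^\infty_x$ by $L^q_x$ in both \eqref{eq:contraction_Lp} and the interpolation inequality, at the price of a correspondingly smaller smoothness exponent.
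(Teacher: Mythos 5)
Your proposal is correct and follows essentially the same route as the paper: the pathwise $L^\infty_x$-contraction \eqref{eq:contraction_Lp} (with $p=\infty$) combined with the multiplicative interpolation inequality $\|f\|_{W^{2\sigma/p,p}_x}^p\lesssim \|f\|_{H^\sigma_x}^2\|f\|_{L^\infty_x}^{p-2}$, applied $\omega$-wise and then integrated against \eqref{eq:regularization_intro} (resp.\ \eqref{eq:regularization_intro2}). The only difference is presentational — the paper simply says ``by interpolation'' where you supply a self-contained Littlewood--Paley proof of the inequality, which is a correct and arguably cleaner way to justify the step given that $L^\infty_x$ is an interpolation endpoint.
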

\begin{proof}
    By assumption and inequality \eqref{eq:contraction_Lp} from \autoref{prop:properties.inviscid.Kraichnan}, 
    \begin{equation}\label{eq:contraction_Linfty}
        \sup_{t \geq 0}  \| \theta_t \|_{L^\infty_\omega L^\infty_x} \leq \| \theta_0 \|_{L^\infty_x}.
    \end{equation}
    Estimate \eqref{eq:anomalous_regularity_interpolation1} follows from interpolating \eqref{eq:contraction_Linfty} with \eqref{eq:regularization_intro}; similarly, \eqref{eq:anomalous_regularity_interpolation2} follows from interpolating \eqref{eq:contraction_Linfty} (now at fixed $t\geq 0$) with \eqref{eq:regularization_intro2}.
\end{proof}

\subsection{Proof of \autoref{prop:regularity}}

Let us notice that the precise values of $c$ in \eqref{eq:assumption_b} and \eqref{eq:assumption_db} and $c_0$ in \eqref{eq:PDE_f} do not affect the regularity stated in \autoref{prop:regularity} and \autoref{cor:extra_regularity}.
Hereafter, we will replace $c=c_0=1$ for the sake of simplicity. Moreover, by rescaling time and since $(1-\kappa)$ is bounded away from zero for $\kappa \in (0,1/2)$, we can equivalently study the regularity of
\begin{align*}
\partial_t \bar{f}^\kappa 
=
b_L(r) \partial_r^2 \bar{f}^\kappa 
+
\frac{d-1}{r}b_N(r) \partial_r \bar{f}^\kappa 
+
2\bar{\kappa} \left( \partial_r^2 \bar{f}^\kappa + \frac{d-1}{r} \partial_r \bar{f}^\kappa\right),
\quad
\bar{\kappa} := \frac{\kappa}{1-\kappa} \in (0,1). 
\end{align*}

In the following we rename $f:=\bar{f}^\kappa$ and $\kappa:=\bar{\kappa} \in (0,1)$ for notational convenience, and we study
\begin{align} \label{eq:PDE_f_new}
\partial_t f 
=
b_L(r) \partial_r^2 f
+
\frac{d-1}{r}b_N(r) \partial_r f 
+
2{\kappa} \left( \partial_r^2 f + \frac{d-1}{r} \partial_r f\right).
\end{align}

We divide the proof of \autoref{prop:regularity} into several steps.

\subsubsection{Step 1: Change of variables}

Let us consider a change of variables $r \mapsto \xi(r)$ and define $g$ as
\begin{align*}
g(t,\xi) := f(t,r).
\end{align*}
Taking partial derivatives in the expression above we obtain the identities
\begin{align} \label{eq:derivatives_g}
\partial_t f(t,r) &= \partial_t g(t,\xi),
\\
\partial_r f(t,r) &= \partial_r \xi(r) \partial_\xi g(t,\xi),
\\
\partial^2_r f(t,r) &= (\partial_r \xi(r))^2 \partial_\xi^2 g(t,\xi)
+
\partial_r^2 \xi(r) \partial_\xi g(r,\xi).
\end{align}
Using the relations above and \eqref{eq:PDE_f_new} we obtain a PDE for $g$, that is
\begin{align} \label{eq:PDE_g}
\partial_t g(t,\xi)
&=
b_L(r) (\partial_r \xi(r))^2 \partial_\xi^2 g(t,\xi)
+
\left( b_L(r) \partial^2_r \xi(r) + \frac{d-1}{r}b_N(r) \partial_r \xi(r) \right)
\partial_\xi g(t,\xi)
\\ \nonumber
&\quad+ 2\kappa \left( 
(\partial_r \xi(r))^2 \partial_\xi^2 g(t,\xi)
+
\left( \partial^2_r \xi(r) + \frac{d-1}{r} \partial_r \xi(r) \right)
\partial_\xi g(t,\xi)
\right).
\end{align}

To avoid any potential confusion, let us point out that the PDE above does not retain any dependence of $r$, even if we abuse notation and write the coefficients in front of the $\xi$ derivatives of $g$ as functions of $r$.
That is, by injectivity of the change of variable $r \mapsto \xi(r)$, one can rewrite all the $r$-dependent quantities in \eqref{eq:PDE_g} as functions of $\xi$, e.g. $\partial_r \xi(r) =: \Phi_1(\xi)$, $\partial_r^2 \xi(r) =: \Phi_2(\xi)$, or $b_L(r) =: \Phi_L(\xi)$.
In particular, one can takes derivatives with respect to $\xi$ as $\partial_\xi(\partial_r \xi(r)) = \partial_\xi \Phi_1 (\xi)$ et cetera.
In the following, we will often use the correspondence between the variables $r$ and $\xi$ without explicit mention.  

Next, we have to fix the change of variable $\xi$.
Let $q=q(r)= r^{-\delta}$ where $\delta \in (0,\delta_\star)$.
Define $\xi = \xi(r)$ by imposing $\xi(0)=0$ and 
\begin{align*}
\partial_r \xi(r)
:=
\int_0^r
\frac{q(\rho)}{b_L(\rho)}
\exp \left( 
-(d-1)\int_\rho^r \frac{b_N(u) \dd u}{b_L(u) u}
\right) \dd \rho.
\end{align*}

It is easy to check that $\xi$ solves the ODE
\begin{align} \label{eq:ODE_xi}
b_L(r) \partial_r^2 \xi(r)
+
\frac{d-1}{r}b_N(r) \partial_r \xi(r)
=
q(r),
\quad
\xi(0) = 0.
\end{align}

\begin{lemma} \label{lem:change_of_variables}
The following hold true.
\begin{enumerate}
\item[i)]
There exists $c_\xi>0$ such that for every $\eps>0$ there exists $\ell>0$ such that for every $r \in (0,\ell)$
\begin{align*}
(c_\xi-\eps) r^{1-2\alpha-\delta}
\leq
\partial_r \xi(r) 
\leq
(c_\xi+\eps) r^{1-2\alpha-\delta};
\end{align*}
\item[ii)]
Fix $\eps = c_\xi /2$ and take the corresponding $\ell>0$ given by point $i)$ above.
Then, for every $\kappa \in (0,1)$ and $t>0$ there exists $c_g >0$ such that such that for every $r \in (0,\ell)$
\begin{align*}
|\partial_\xi g(t,\xi)| 
\leq
c_g r^{2\alpha+\delta}.
\end{align*}
\end{enumerate}
\end{lemma}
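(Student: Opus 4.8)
The plan is to extract the asymptotics of $\partial_r\xi$ directly from its explicit integral representation, and then deduce the bound on $\partial_\xi g$ by combining the PDE for $g$ with the Neumann boundary condition and the uniform bound $\sup|f^\kappa|<\infty$. For part $i)$, I would start from
\[
\partial_r\xi(r)=\int_0^r\frac{q(\rho)}{b_L(\rho)}\exp\!\Big(-(d-1)\int_\rho^r\frac{b_N(u)}{b_L(u)\,u}\,\mathrm{d}u\Big)\,\mathrm{d}\rho,
\]
and insert the short-distance asymptotics from \autoref{assumption}, namely $b_L(r)=r^{2\alpha}(1+o(1))$ and $b_N(r)=\beta r^{2\alpha}(1+o(1))$ (recall we normalized $c=1$). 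The inner exponent is $-(d-1)\int_\rho^r\frac{b_N(u)}{b_L(u)u}\,\mathrm{d}u$; since $b_N/b_L\to\beta$, this is $-(d-1)\beta\log(r/\rho)(1+o(1))$, so the exponential behaves like $(\rho/r)^{(d-1)\beta(1+o(1))}$. Substituting $q(\rho)=\rho^{-\delta}$ and $b_L(\rho)^{-1}\sim\rho^{-2\alpha}$, the integrand is $\sim\rho^{-2\alpha-\delta+(d-1)\beta}r^{-(d-1)\beta}$, and the $\rho$-integral converges near $0$ precisely because $-2\alpha-\delta+(d-1)\beta>-1$, which is exactly the condition $\delta<\delta_\star\le(d-1)\beta+1-2\alpha$. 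Carrying out the integral gives $\partial_r\xi(r)\sim c_\xi\,r^{1-2\alpha-\delta}$ with $c_\xi=\big((d-1)\beta-2\alpha-\delta+1\big)^{-1}>0$; making the $o(1)$ errors uniform on $(0,\ell)$ for $\ell$ small yields the two-sided bound with the stated $\eps$-room. The only delicacy here is justifying that the product of the two $o(1)$ factors (from $b_L$ and from the exponent) still gives a clean power law with controllable error; this is routine but must be done with care, e.g. by fixing $\eta'>0$ so small that $b_L(u)\in[u^{2\alpha}(1-\eta'),u^{2\alpha}(1+\eta')]$ and $b_N/b_L\in[\beta-\eta',\beta+\eta']$ on $(0,\ell)$, bounding the exponential from above and below by the corresponding pure powers, and then letting $\eta'\downarrow0$.

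For part $ii)$, the strategy is to use that $g(t,\cdot)$ is Lipschitz near $\xi=0$ with a constant controlled by $\sup|f^\kappa|$, and then translate back to the $r$-variable via part $i)$. Concretely, from the Neumann condition $\partial_r f^\kappa(t,0)=0$ together with the chain rule $\partial_r f=\partial_r\xi\cdot\partial_\xi g$ and the fact that $\partial_r\xi(0)=0$ (since $\partial_r\xi(r)\sim c_\xi r^{1-2\alpha-\delta}\to0$ as $1-2\alpha-\delta>0$ when $\alpha<1/2$; when $\alpha\ge1/2$ one argues directly that $\partial_\xi g$ stays bounded), one gets that $g$ has bounded $\xi$-derivative near the origin. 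More robustly: since $f^\kappa$ is $C^2_{\mathrm{loc}}$ with vanishing derivative at $r=0$, $|\partial_r f^\kappa(t,r)|\lesssim r\cdot\sup_{\rho\le\ell}|\partial_r^2 f^\kappa(t,\rho)|$, but we do not want $\kappa$-dependent second-derivative bounds. Instead I would use a maximum-principle / energy argument on \eqref{eq:PDE_g} to show $\partial_\xi g$ is bounded on a fixed neighbourhood of $\xi=0$ by $C\sup|f^\kappa|$, uniformly in $\kappa$ — this is essentially the content that is leveraged in later steps of the proof of \autoref{prop:regularity}, and here one can take $c_g$ to depend on $\kappa,t$ as the statement permits (``for every $\kappa\in(0,1)$ and $t>0$ there exists $c_g>0$''). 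Then $|\partial_\xi g(t,\xi)|\le c_g'$ on $\xi\in(0,\xi(\ell))$, and rewriting: $|\partial_\xi g|\le c_g' = c_g'\cdot\frac{r^{2\alpha+\delta}}{r^{2\alpha+\delta}}$; since we only need the bound $|\partial_\xi g|\le c_g r^{2\alpha+\delta}$ which, near $r=0$, is a \emph{weaker} statement than boundedness when $2\alpha+\delta>0$ would force blow-up — wait, $r^{2\alpha+\delta}\to0$, so this is actually asking for decay, not just boundedness.

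So part $ii)$ genuinely requires showing $\partial_\xi g$ \emph{vanishes} at $\xi=0$ at rate $r^{2\alpha+\delta}$, not merely that it is bounded. The route is: $\partial_\xi g(t,\xi)=\dfrac{\partial_r f(t,r)}{\partial_r\xi(r)}$, the numerator vanishes to first order ($\partial_r f(t,r)=O(r)$ as $r\downarrow0$ by $C^2_{\mathrm{loc}}$ and the Neumann condition, with the implicit constant allowed to depend on $\kappa,t$), and the denominator is $\sim c_\xi r^{1-2\alpha-\delta}$ by part $i)$; hence $\partial_\xi g=O(r/r^{1-2\alpha-\delta})=O(r^{2\alpha+\delta})$, giving exactly the claimed bound with $c_g$ absorbing the ($\kappa,t$-dependent) $C^2$-seminorm of $f$ and the constant $2/c_\xi$ from part $i)$. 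I expect part $i)$ to be the main obstacle, because it is where the structural hypothesis \eqref{eq:assumption_beta} on $\beta$ enters and where the integrability of the kernel at $\rho=0$ must be extracted carefully from the competing $o(1)$ terms; part $ii)$ is then a short deduction, the one subtlety being to confirm that the $C^2_{\mathrm{loc}}$ regularity of $f^\kappa$ at fixed $\kappa,t$ (guaranteed by strict parabolicity, as invoked in the proof of \autoref{thm:regularity_Kraichnan}) indeed forces the first-order vanishing $\partial_r f(t,r)=O(r)$ near $r=0$ given $\partial_r f(t,0)=0$.
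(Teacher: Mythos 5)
Your proposal is correct and follows essentially the same route as the paper: for part $i)$ you sandwich the exponential between pure powers $(\rho/r)^{(d-1)(\beta\pm\eps')}$ and the prefactor between $(1\pm\eps')\rho^{-2\alpha-\delta}$, integrate, and obtain the same constant $c_\xi=\big((d-1)\beta+1-2\alpha-\delta\big)^{-1}$ with the same use of $\delta<\delta_\star$ for integrability; for part $ii)$ you ultimately land on the paper's argument $\partial_\xi g=\partial_r f/\partial_r\xi$ with $|\partial_r f(t,r)|\le r\sup_{u\le r}|\partial_r^2 f(t,u)|$ from the Neumann condition and $C^2_{loc}$ regularity, divided by the lower bound from part $i)$. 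The brief detour in $ii)$ toward a $\kappa$-uniform maximum-principle bound is unnecessary (and you correctly discard it), since the statement explicitly allows $c_g$ to depend on $\kappa$ and $t$.
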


\begin{proof}
By the estimates \eqref{eq:assumption_b}, for every $\eps'>0$ there exists $\ell>0$ such that for every $u \in (0,\ell)$
\begin{align*}
\frac{\beta - \eps'}{u}
\leq
\frac{b_N(u)}{b_L(u) u}
\leq
\frac{\beta + \eps'}{u}.
\end{align*} 
Therefore assuming $r<\ell$
\begin{align*}
\left(\frac{\rho}{r}\right)^{(d-1)(\beta+\eps')}
\leq
\exp \left( 
-(d-1)\int_ \rho ^r \frac{b_N(u) \dd u}{b_L(u) u}
\right)
\leq
\left(\frac{\rho}{r}\right)^{(d-1)(\beta-\eps')}.
\end{align*}
Moreover,
\begin{align*}
\left( 1 - \eps' \right) \rho ^{-2\alpha-\delta}
\leq
\frac{q(\rho)}{b_L(\rho)}
\leq
\left( 1 + \eps' \right) \rho ^{-2\alpha-\delta}.
\end{align*}
Next, without loss of generality we can change the value of $\eps'$ so that $(d-1)(\beta \pm \eps') + 1 - 2\alpha - \delta > 0$, which is always possible by \eqref{eq:assumption_delta} since $\delta \in (0,\delta_\star)$. 
We have   
\begin{align*}
\frac{\left( 1 - \eps' \right) r^{1-2\alpha-\delta}}{(d-1)(\beta+\eps')+1-2\alpha-\delta}
\leq
\partial_r \xi(r)
\leq
\frac{\left( 1 + \eps' \right) r^{1-2\alpha-\delta}}{(d-1)(\beta-\eps')+1-2\alpha-\delta}.
\end{align*}
This proves \textit{i)} with 
\begin{align*}
c_\xi = \frac{1}{(d-1)\beta+1-2\alpha-\delta} > 0.
\end{align*}

Let us move to \textit{ii)}.
Using $\partial_r \xi (r) >0 $ for every $r \in (0,\ell)$ we have
\begin{align*}
\partial_\xi g(t,\xi) = 
\frac{\partial_r f(t,r)}{\partial_r\xi(r)}.
\end{align*}
By assumption $\partial_r f(t,0)=0$ and $f_t \in C^2_{loc}$, therefore 
\begin{align*}
|\partial_r f(t,r)|
\leq
\sup_{u \in (0,r)} |\partial_r^2 f(t,u)| r.
\end{align*}
Combining with point \textit{i)} we get for every $r \in (0,\ell)$
\begin{align*}
|\partial_\xi g(t,\xi)|
\leq
\frac{\sup_{u \in (0,r)} |\partial_r^2 f(t,u)|}{c_\xi/2} r^{2\alpha+\delta}
\end{align*}
This proves \textit{ii)} with 
\begin{align*}
c_g =  \frac{\sup_{u \in (0,\ell)} |\partial_r^2 f(t,u)|}{c_\xi/2}.
\end{align*}
Notice that $c_g$ may depend on $\kappa$ and $t$ since $\partial_r^2 f_t$ does.
\end{proof}

\subsubsection{Step 2: A priori estimates}
Let us move back to the estimate for $g$. 
The ultimate goal of this subsection is to produce good estimates for the $C^0_{[0,\ell]}$ norm of $\partial_\xi g$. Here $\ell>0$ is a fixed, possibly extremely small parameter, and all implicit constants are allowed to depend on it.

Let us define
\begin{align*}
w(r) 
:=
b_L(r) (\partial_r \xi(r))^2,
\quad
\tilde{w}(r) 
:=
(\partial_r \xi(r))^2,
\quad
\tilde{q}(r)
:=
\partial^2_r \xi(r) + \frac{d-1}{r} \partial_r \xi(r).
\end{align*}

We are going to show a priori estimates for the derivative $y(t,\xi) := \partial_\xi g(t,\xi)$.
Since $\xi$ solves the ODE \eqref{eq:ODE_xi}, the PDE \eqref{eq:PDE_g} can be rewritten as
\begin{align*}
\partial_t g(t,\xi)
&=
w(r) \partial_\xi^2 g(t,\xi)
+
q(r)
\partial_\xi g(t,\xi)+ 2\kappa 
\tilde{w}(r) \partial_\xi^2 g(t,\xi)
+
2\kappa \tilde{q}(r)
\partial_\xi g(t,\xi).
\end{align*}
Therefore, $y$ satisfies
\begin{align} \label{eq:PDE_y}
\partial_t y(t,\xi)
&=
w(r) \partial_\xi^2 y(t,\xi)
+
\partial_\xi \left( w(r) \right) \partial_\xi y(t,\xi)
\\ \nonumber
&\quad+
q(r) \partial_\xi y(t,\xi)
+
\partial_\xi (q(r)) y(t,\xi)
\\ \nonumber
&\quad+ 
2\kappa \tilde{w}(r) \partial_\xi^2 y(t,\xi)
+
2\kappa \partial_\xi \left( \tilde{w}(r) \right) \partial_\xi y(t,\xi)
\\ \nonumber
&\quad+
2\kappa \tilde{q}(r) \partial_\xi y(t,\xi)
+
2\kappa \partial_\xi (\tilde{q}(r)) y(t,\xi).
\end{align}

The next lemma concerns monotonicity of the term $\tilde{q}$ as a function of $\xi$.
Being $r \mapsto \xi(r)$ strictly increasing, it is sufficient to check $\partial_r  \tilde{q} \leq 0$. 
\begin{lemma} 
Assume \eqref{eq:assumption_b}, \eqref{eq:assumption_db}, and \eqref{eq:assumption_beta}.
Then there exists $\ell>0$ sufficiently small such that
\label{lem:q_tilde_decreasing}
\begin{align*}
\partial_r \tilde{q}(r) \leq 0,
\quad
\forall r \in (0,\ell).
\end{align*}
\end{lemma}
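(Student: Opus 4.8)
We must show that $\partial_r \tilde{q}(r) \le 0$ for $r$ in a sufficiently small right-neighbourhood of $0$, where $\tilde{q}(r) = \partial_r^2 \xi(r) + \frac{d-1}{r}\partial_r \xi(r)$ and $\xi$ solves the ODE \eqref{eq:ODE_xi}. The plan is to obtain an explicit asymptotic expansion of $\partial_r \xi$, $\partial_r^2 \xi$ and hence of $\tilde q$ near $r = 0$, then differentiate and identify the leading term, checking that its coefficient is strictly negative under \eqref{eq:assumption_beta}.

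**Step 1: Asymptotics of $\partial_r \xi$ and $\partial_r^2 \xi$.** From the integral formula for $\partial_r \xi$ used in \autoref{lem:change_of_variables}, combined with \eqref{eq:assumption_b}–\eqref{eq:assumption_db} (so that $b_L(u) = u^{2\alpha}(1+o(1))$, $b_N(u)/b_L(u) = \beta + o(1)$, $q(\rho) = \rho^{-\delta}$), I would refine the estimate of \autoref{lem:change_of_variables}(i) to a genuine asymptotic equivalence: $\partial_r \xi(r) = c_\xi\, r^{1-2\alpha-\delta}(1+o(1))$ with $c_\xi = \big((d-1)\beta + 1 - 2\alpha - \delta\big)^{-1}$, and moreover the $o(1)$ error is differentiable with derivative $o(r^{-1})$, so that differentiating the relation \eqref{eq:ODE_xi} itself is cleaner than differentiating the integral. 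Indeed, rearranging \eqref{eq:ODE_xi} gives
\begin{align*}
\partial_r^2 \xi(r) = \frac{q(r)}{b_L(r)} - \frac{(d-1) b_N(r)}{b_L(r)\, r}\,\partial_r \xi(r) = r^{-2\alpha-\delta}(1+o(1)) - (d-1)\beta\, c_\xi\, r^{-2\alpha-\delta}(1+o(1)),
\end{align*}
so $\partial_r^2\xi(r) = \big(1 - (d-1)\beta c_\xi\big) r^{-2\alpha-\delta}(1+o(1))$. Note $1 - (d-1)\beta c_\xi = (1-2\alpha-\delta)c_\xi$, hence $\partial_r^2 \xi(r) = (1-2\alpha-\delta)\,c_\xi\, r^{-2\alpha-\delta}(1+o(1))$, consistent with formally differentiating $c_\xi r^{1-2\alpha-\delta}$.

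**Step 2: Leading term of $\tilde q$ and of $\partial_r \tilde q$.** Adding the two pieces,
\begin{align*}
\tilde q(r) = \partial_r^2\xi(r) + \frac{d-1}{r}\partial_r\xi(r) = \big[(1-2\alpha-\delta) + (d-1)\big]\,c_\xi\, r^{-2\alpha-\delta}(1+o(1)) = (d-2\alpha-\delta)\,c_\xi\, r^{-2\alpha-\delta}(1+o(1)).
\end{align*}
To conclude I need the derivative, so I would carry the expansion one order further, or — more robustly — write $\tilde q(r) = \partial_r^2 \xi(r) + \frac{d-1}{r}\partial_r\xi(r)$ and use \eqref{eq:ODE_xi} once more: $\tilde q(r) = \frac{q(r)}{b_L(r)} + \frac{(d-1)}{r}\partial_r\xi(r)\big(1 - \frac{b_N(r)}{b_L(r)}\big)$. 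The second summand is $O(r^{-2\alpha-\delta} \cdot o(1)) = o(r^{-2\alpha-\delta})$ only if $b_N/b_L \to 1$, which is false in general ($b_N/b_L \to \beta \ne 1$); so instead I use $\tilde q(r) = r^{-2\alpha-\delta}(1+o(1)) + (d-1)(1-\beta) c_\xi r^{-2\alpha-\delta}(1+o(1))$, and since $1 + (d-1)(1-\beta)c_\xi = (d-2\alpha-\delta)c_\xi$ we recover the same leading term. Differentiating, $\partial_r \tilde q(r) = -(2\alpha+\delta)(d-2\alpha-\delta)\,c_\xi\, r^{-2\alpha-\delta-1}(1+o(1))$, provided the error terms differentiate acceptably (this is where I would need the error control from Step 1, propagated through one differentiation; since all the building blocks $b_L, b_N, q$ are $C^1$ near $0$ with the prescribed derivative asymptotics, this is legitimate). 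Because $c_\xi > 0$ and $2\alpha + \delta > 0$ and $d - 2\alpha - \delta > 0$ (as $\delta < \delta_\star \le 1-\alpha$ and $d \ge 2$, so $2\alpha + \delta < 2\alpha + 1 - \alpha = 1 + \alpha \le d$ — and strict unless $d=2,\alpha=1$ which is excluded since $\alpha<1$), the leading coefficient is strictly negative. Hence $\partial_r \tilde q(r) < 0 < \,$[nothing], i.e. $\le 0$, for all $r$ in a sufficiently small interval $(0,\ell)$, which is the claim.

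**Main obstacle.** The delicate point is justifying that the $o(\cdot)$ errors from \eqref{eq:assumption_b}–\eqref{eq:assumption_db} survive \emph{one differentiation in $r$}: \autoref{assumption} controls $b_L, b_N$ and their first derivatives asymptotically, but $\partial_r \tilde q$ involves, a priori, second derivatives of $\xi$ differentiated once more, hence $\partial_r^3 \xi$. The clean way around this is exactly the device above: never differentiate $\xi$ three times directly, but instead differentiate the \emph{algebraic identity} $\tilde q(r) = q(r)/b_L(r) + (d-1)r^{-1}\partial_r\xi(r)\,(1 - b_N(r)/b_L(r))$, which only involves $\partial_r(q/b_L)$, $\partial_r(b_N/b_L)$, and $\partial_r\xi$, $\partial_r^2\xi$ — all of which are controlled to leading order by \eqref{eq:assumption_b}–\eqref{eq:assumption_db} and Step 1. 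One must be slightly careful that $q(r)/b_L(r) = r^{-2\alpha-\delta}(1+o(1))$ with differentiable error, which follows from $b_L(r) = cr^{2\alpha} + o(r^{2\alpha})$ together with $\partial_r b_L(r) = 2\alpha c r^{2\alpha-1} + o(r^{2\alpha-1})$ by the quotient rule. Assembling these pieces gives $\partial_r \tilde q(r) \le 0$ on $(0,\ell)$ for $\ell$ small, completing the proof.
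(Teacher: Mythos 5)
Your proof is correct and follows essentially the same route as the paper: substitute the ODE \eqref{eq:ODE_xi} to write $\tilde q = q/b_L + \tfrac{d-1}{r}(1-b_N/b_L)\partial_r\xi$, differentiate this algebraic identity (eliminating $\partial_r^2\xi$ again via the ODE so that only \eqref{eq:assumption_b}--\eqref{eq:assumption_db} and \autoref{lem:change_of_variables} are needed), and identify the leading coefficient. Your leading term $-(2\alpha+\delta)(d-2\alpha-\delta)c_\xi\,r^{-1-2\alpha-\delta}$ agrees exactly with the paper's bound $-\tfrac{(2\alpha+\delta)(d-2\alpha-\delta)}{(d-1)\beta+1-2\alpha-\delta}$, and your sign check is the same.
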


\begin{proof}
Since $\xi$ solves the ODE \eqref{eq:ODE_xi}, we have
\begin{align*}
\partial_r \tilde{q}
&=
\partial_r \left( \frac{q}{b_L} \right)
+
\partial_r \left( \frac{d-1}{r} \left( 1-\frac{b_N}{b_L} \right) \partial_r \xi \right)
\\
&=
\partial_r \left( \frac{q}{b_L} \right)
+
\frac{d-1}{r} \left( 1-\frac{b_N}{b_L} \right)
\frac{q}{b_L}
-
\left(\frac{d-1}{r}\right)^2 \left( 1-\frac{b_N}{b_L} \right)\frac{b_N}{b_L} \partial_r \xi
\\
&\quad- 
\frac{d-1}{r^2} \left( 1-\frac{b_N}{b_L} \right) \partial_r \xi
-
\frac{d-1}{r} \partial_r \left( \frac{b_N}{b_L} \right) \partial_r \xi.
\end{align*}

Now we use $q(r) = r^{-\delta}$ and the following bounds, true for arbitrary $\eps>0$ and every $r \in (0,\ell)$ up to choosing $\ell>0$ small enough:
\begin{align*}
(1-\eps)r^{2\alpha} \leq 
b_L(r) 
&\leq (1+\eps)r^{2\alpha},
\\
(2\alpha-\eps)r^{2\alpha-1} \leq 
\partial_r b_L(r) 
&\leq (2\alpha+\eps)r^{2\alpha-1},
\\
(\beta-\eps)r^{2\alpha} \leq 
b_N(r) 
&\leq (\beta+\eps)r^{2\alpha},
\\
(2\alpha\beta-\eps)r^{2\alpha-1} \leq 
\partial_r b_N(r) 
&\leq (2\alpha\beta+\eps)r^{2\alpha-1},
\\
(c_\xi-\eps) r^{1-2\alpha-\delta} \leq
\partial_r \xi
&\leq (c_\xi+\eps) r^{1-2\alpha-\delta}.
\end{align*}
The estimates on $b_L$ and $b_N$ are nothing but the conditions \eqref{eq:assumption_b} and \eqref{eq:assumption_db} in \autoref{assumption}.
The last one comes from \autoref{lem:change_of_variables}.
Let us plug these bounds into the expression of $\partial_r \tilde{q}$ above. 
Up to changing the value of $\eps$ into a larger one (but still arbitrarily small), we get the following: for arbitrary $\eps>0$ and $r \in (0,\ell)$, $\ell$ small enough, it holds
\begin{align*}
\frac{\partial_r \tilde{q}(r)}{r^{-1-\delta-2\alpha}}
&\leq
-2\alpha-\delta
+
(d-1)(1-\beta)
-
(d-1)(1-\beta)c_\xi
\left( (d-1)\beta + 1 \right)
+
\eps
\\
&=
-\frac{(2\alpha+\delta) (d-2\alpha-\delta)}{(d-1)\beta+1-2\alpha-\delta}
+
\eps
<
0.
\end{align*}
It is interesting to observe that we have recovered the exact same compressibility threshold \eqref{eq:assumption_beta}.
\end{proof}

\begin{lemma} \label{lem:apriori_g}
For every $\ell>0$ sufficiently small there exists an implicit constant such that, for every $t > \tau \geq 0$ and $\kappa \in (0,1)$ it holds
\begin{align*}
\int_\tau^t \int_0^\ell b_L(r)(\partial_r\xi(r))^2 |\partial_\xi^2 g_s|^2 \dd\xi\dd s
\lesssim
\int_0^{2\ell} |\partial_\xi g_\tau|^2 \dd\xi
+
\int_\tau^t \int_{\ell}^{2\ell}
|\partial_\xi g_s|^2 \dd\xi\dd s.
\end{align*}
\end{lemma}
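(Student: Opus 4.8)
The plan is a weighted energy estimate for $y := \partial_\xi g$, using the PDE \eqref{eq:PDE_y} it satisfies, multiplied by $y$ itself against a suitable spatial cutoff, then integrated over $[\tau,t]\times(0,\infty)$ in the $\xi$ variable. First I would fix a smooth cutoff $\chi = \chi(\xi)$ with $\chi \equiv 1$ on the $\xi$-interval corresponding to $r \in (0,\ell)$, $\chi \equiv 0$ for $r \geq 2\ell$, and $0 \le \chi \le 1$, and test \eqref{eq:PDE_y} against $\chi^2 y$. The leading second-order terms $w(r)\partial_\xi^2 y$ and $2\kappa\tilde w(r)\partial_\xi^2 y$ produce, after one integration by parts in $\xi$, the good (coercive) terms $-\int \chi^2 (w + 2\kappa\tilde w) |\partial_\xi y|^2$, plus commutator terms where a $\xi$-derivative falls on $\chi^2$ or on the coefficients $w,\tilde w$. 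The left endpoint $\xi = 0$ (i.e. $r=0$) is where care is needed: the Neumann condition $\partial_r f(t,0)=0$, via \autoref{lem:change_of_variables}, gives $y(t,0) = \partial_\xi g(t,0) = 0$ and moreover the quantitative bound $|\partial_\xi g| \lesssim r^{2\alpha+\delta}$, which, combined with the asymptotics $w(r) \sim r^{2\alpha}(r^{1-2\alpha-\delta})^2 = r^{2-2\alpha-\delta}$ and $\tilde w(r) \sim r^{2-4\alpha-2\delta}$, should make all boundary contributions at $\xi = 0$ vanish. Observe that by the chain rule $w(r)|\partial_\xi^2 g|^2 = w(r) \tilde w(r)^{-1}\big(\partial_r\xi)^{-2}\cdot$(something), so I must be a little careful translating $|\partial_\xi^2 g|^2$ back to the quantity $b_L(r)(\partial_r\xi(r))^2|\partial_\xi^2 g_s|^2$ appearing in the statement; in fact that quantity is exactly $w(r)|\partial_\xi^2 g|^2$, so the coercive term is precisely what we want.

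Next I would handle the first-order and zeroth-order terms in \eqref{eq:PDE_y}: the terms $q(r)\partial_\xi y$, $\partial_\xi(q(r)) y$, $2\kappa\tilde q(r)\partial_\xi y$, $2\kappa\partial_\xi(\tilde q(r)) y$, and the commutators $\partial_\xi(w)\partial_\xi y$, $2\kappa\partial_\xi(\tilde w)\partial_\xi y$. The key structural input is \autoref{lem:q_tilde_decreasing}: since $\tilde q$ is decreasing in $r$, hence in $\xi$ (as $r\mapsto\xi$ is increasing), the term $2\kappa\,\partial_\xi(\tilde q)\,y \cdot \chi^2 y = 2\kappa\,\partial_\xi(\tilde q)\,\chi^2 y^2 \le 0$ and can simply be dropped — this is the crucial sign that lets the estimate close uniformly in $\kappa \in (0,1)$, since otherwise this term would be of the wrong order near $r=0$ with a $\kappa$ in front of a singular coefficient. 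For the remaining first-order terms, I would integrate by parts to move derivatives around, converting e.g. $\int \chi^2 q\, \partial_\xi y \cdot y = \tfrac12\int\chi^2 q\,\partial_\xi(y^2) = -\tfrac12\int\partial_\xi(\chi^2 q)\,y^2$, and then absorb the resulting $\int(\cdots)y^2$ contributions either into the coercive term via Young's inequality (when they carry a factor of $w$ or $\tilde w$ that dominates) or into the right-hand side localized on $r\in(\ell,2\ell)$ (where all coefficients are bounded and bounded below, so everything is harmless). The $\kappa$-independent terms coming from $\partial_\xi(q) y$ and the $w$-commutator need to be controlled using the explicit asymptotics of $w$, $q$, $\xi$ from \autoref{lem:change_of_variables} and \autoref{assumption}, checking that the powers of $r$ work out so that these are dominated by the coercive term $w|\partial_\xi y|^2$ near $r=0$.

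Finally, integrating the differential inequality in time from $\tau$ to $t$ yields
\[
\sup_{s\in[\tau,t]}\int \chi^2 y_s^2\,\dd\xi + \int_\tau^t\!\!\int \chi^2\,(w+2\kappa\tilde w)|\partial_\xi y_s|^2\,\dd\xi\,\dd s
\lesssim \int \chi^2 y_\tau^2\,\dd\xi + \int_\tau^t\!\!\int_{\{\ell \le r\le 2\ell\}} (y_s^2 + |\partial_\xi y_s|^2)\,\dd\xi\,\dd s,
\]
and since on the strip $\ell \le r \le 2\ell$ the coefficient $w$ is bounded below, the $|\partial_\xi y_s|^2$ contribution there can itself be reabsorbed, up to enlarging the strip slightly, or simply estimated by a standard interior parabolic bound in terms of $\int|\partial_\xi g_s|^2$ on $(\ell,2\ell)$; this gives the claimed inequality after recalling $y = \partial_\xi g$ and $w = b_L(r)(\partial_r\xi)^2$. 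I expect the main obstacle to be the bookkeeping at the degenerate endpoint $r=0$: one must verify that every boundary term produced by integration by parts genuinely vanishes there (using $|\partial_\xi g| \lesssim r^{2\alpha+\delta}$ together with the precise blow-up/vanishing rates of $w$, $\tilde w$, $q$, $\partial_\xi q$, $\tilde q$, $\partial_\xi\tilde q$), and that no lower-order term near $r=0$ beats the coercive term — this is exactly where the threshold \eqref{eq:assumption_beta} and the choice $\delta < \delta_\star$ enter, mirroring the computation already seen in the proof of \autoref{lem:q_tilde_decreasing}.
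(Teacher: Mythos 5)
Your proposal follows essentially the same route as the paper: a weighted energy estimate for $y=\partial_\xi g$ tested against a spatial cutoff, coercivity from the second-order terms giving exactly $\int b_L(r)(\partial_r\xi)^2|\partial_\xi^2 g|^2$, the monotonicity of $\tilde q$ from \autoref{lem:q_tilde_decreasing} to drop the dangerous $\kappa$-dependent zeroth-order term, and vanishing of all boundary terms at $r=0$ via the quantitative Neumann bound $|\partial_\xi g|\lesssim r^{2\alpha+\delta}$ from \autoref{lem:change_of_variables}. Two small simplifications the paper uses and that you should adopt: the $\kappa$-independent term $\partial_\xi(q)\,y^2$ is likewise dropped by sign (since $q(r)=r^{-\delta}$ is decreasing in $r$, hence in $\xi$, no absorption into the coercive term or Hardy-type inequality is needed), and the commutator terms carrying $\partial_\xi\chi$ are integrated by parts once more so that only $|y|^2$ — not $|\partial_\xi y|^2$ — appears on the annulus $[\ell,2\ell]$, which matches the stated right-hand side without any interior parabolic estimate or enlargement of the annulus.
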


\begin{proof}
Let $\chi$ be a smooth cut-off function such that $\chi(\xi)=1$ for $\xi \leq \ell$ and $\chi(\xi)=0$ for $\xi \geq 2\ell$.

Testing \eqref{eq:PDE_y} against $\chi y$ and using integration by parts, we get
\begin{align*}
\frac12 &\int_0^\infty \chi(\xi) |y(t,\xi)|^2 \dd\xi
-
\frac12 \int_0^\infty \chi(\xi) |y(\tau,\xi)|^2 \dd\xi
\\
&=
-\int_\tau^t \int_0^\infty \chi(\xi)
w(r) |\partial_\xi y(s,\xi)|^2 \dd\xi\dd s
-
2\kappa \int_\tau^t \int_0^\infty \chi(\xi)
\tilde{w}(r) |\partial_\xi y(s,\xi)|^2 \dd\xi\dd s
\\
&\quad+ \frac12
\int_\tau^t \int_0^\infty
\Big( 
\partial_\xi ((w+2\kappa\tilde{w}) \partial_\xi\chi) 
- 
(q+2\kappa \tilde{q})\partial_\xi \chi + \partial_\xi (q+2\kappa\tilde q) \chi
\Big)
|y(s,\xi)|^2 \dd\xi\dd s
\\
&\quad+
\int_\tau^t
\left[ \chi(\xi) w(r) y(s,\xi) \partial_\xi y(s,\xi)  \right]_{\xi=0}^{\xi=\infty}\dd s
+
\frac12 \int_\tau^t \left[ \chi(\xi) q(r) |y(s,\xi)|^2 \right]_{\xi=0}^{\xi=\infty}\dd s
\\
&\quad+
2\kappa \int_\tau^t \left[ \chi(\xi) \tilde{w}(r) y(s,\xi) \partial_\xi y(s,\xi)  \right]_{\xi=0}^{\xi=\infty}\dd s
+
\kappa \int_\tau^t \left[ \chi(\xi) \tilde{q}(r) |y(s,\xi)|^2 \right]_{\xi=0}^{\xi=\infty}\dd s,
\end{align*}
where the terms in square brackets are boundary terms coming from the integration by parts. 

Next, we notice that  
\begin{align*}
&\frac12  \int_\tau^t \int_0^\infty
\Big( 
\partial_\xi ((w+2\kappa\tilde{w}) \partial_\xi\chi) 
- 
(q+2\kappa \tilde{q})\partial_\xi \chi 
\Big)
|y(s,\xi)|^2 \dd\xi\dd s
\\
&\quad=
\frac12 \int_\tau^t \int_\ell^{2\ell}
\Big( 
\partial_\xi ((w+2\kappa\tilde{w}) \partial_\xi\chi) 
- 
(q+2\kappa \tilde{q})\partial_\xi \chi 
\Big)
|y(s,\xi)|^2 \dd\xi\dd s
\lesssim
 \int_\tau^t \int_\ell^{2\ell}
|y(s,\xi)|^2 \dd\xi\dd s.
\end{align*}
Again, the coefficient in front of $|y(s,\xi)|^2$ in the integral above is controlled by an implicit constant, depending on $\ell>0$. Notice that here we also use the assumption \eqref{eq:assumption_db} on the differentiability of $b_L$ to bound the term with $\partial_\xi(w+2\kappa \tilde{w})$, at least up to choosing $\ell>0$ small enough.

The term involving the derivatives of $q$ and $\tilde{q}$ has a definite sign, more precisely:
\begin{align*}
&\frac12  \int_\tau^t \int_0^\infty
\partial_\xi (q+2\kappa \tilde{q}) \chi |y(s,\xi)|^2 \dd\xi\dd s
\leq
0,
\end{align*}
where we have used the monotonicity of $\xi$ and \autoref{lem:q_tilde_decreasing} to ensure $\partial_\xi (q+2\kappa \tilde{q}) \leq 0$.

In order to conclude, it only remains to check that the boundary terms vanish for every given $\kappa \in (0,1)$ and $s>0$. The boundary terms at $\xi=\infty$ clearly vanish, since $\chi$ is supported in $[0,2\ell]$. In addition, since $\xi:\R_+ \to \R_+$ is strictly increasing and bijective, it holds 
\begin{align*}
[\,\cdot\,]_{\xi=0}^{\xi=\infty} 
=  
-
\lim_{\xi \downarrow 0} [\,\cdot\,]
=
-
\lim_{r \downarrow 0} [\,\cdot\,],
\end{align*}
so we only need to study the behaviour of the terms inside square brackets as $r \downarrow 0$.

First, we consider $y$ and $\partial_\xi y$. By \eqref{eq:derivatives_g}, since $\xi$ solves the ODE \eqref{eq:ODE_xi}, we have
\begin{align*}
y(s,\xi) 
&= 
\frac{\partial_r f(s,r)}{\partial_r \xi(r)},
\\
\partial_\xi y(s,\xi)
&=
\frac{\partial^2_r f(s,r)}{(\partial_r \xi(r))^2}
+ 
\frac{d-1}{r} 
\frac{b_N(r)}{ b_L(r) \partial_r \xi(r)} y(s,\xi)
-
\frac{q(r)}{b_L(r)(\partial_r \xi(r))^2} y(s,\xi).
\end{align*}
for every $s>0$. Hence, by \autoref{lem:change_of_variables} we deduce the asymptotic behaviours
\begin{align*}
y(s,\xi) \sim r^{2\alpha+\delta},
\quad
\partial_\xi y(s,\xi) \sim r^{-2+4\alpha+2\delta},
\quad
\mbox{ as } r \downarrow 0
\end{align*}
Notice that the proportionality constants may depend on $\kappa \in (0,1)$ and $s>0$, but this will not affect the final result.
Moreover, for  the coefficients $w,\tilde{w},q,\tilde{q}$ we have as $r \downarrow 0$ 
\begin{align*}
& w(r) \sim r^{2-2\alpha-2\delta},
\qquad
&&\tilde{w}(r) \sim r^{2-4\alpha-2\delta},
\\
&q(r) \sim r^{-\delta},
&&\tilde{q}(r) \sim r^{-2\alpha-\delta}.
\end{align*}
Now it is easy to check that the boundary terms equal zero. Indeed, we have
\begin{align*}
& w(r) y(s,\xi) \partial_\xi y(s,\xi)
\sim
r^{4\alpha+\delta}
,\qquad\qquad\qquad
q(r) |y(s,\xi)|^2
\sim
r^{4\alpha+\delta},
\\
& \tilde{w}(r) y(s,\xi) \partial_\xi y(s,\xi)
\sim
r^{2\alpha+\delta}
,\qquad\qquad\qquad
\tilde{q}(r) |y(s,\xi)|^2
\sim
r^{2\alpha+\delta}. \qedhere
\end{align*}
\end{proof}

\begin{lemma} \label{lem:apriori_g_bis}
For every $\ell>0$ sufficiently small there exists an implicit constant such that, for every $t > \tau \geq 0$ and $\kappa \in (0,1)$ it holds
\begin{align*}
\int_\tau^t \int_{\ell}^{4\ell}
|\partial_\xi g_s|^2 \dd\xi\dd s
\lesssim
(1+t) \sup |g|^2.
\end{align*}
\end{lemma}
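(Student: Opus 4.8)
The plan is to control the local $L^2$-norm of $\partial_\xi g$ away from the degenerate boundary $r=0$ by exploiting the uniform $L^\infty$ bound on $g$ itself, using the fact that on the region $\xi \in [\ell/2, 8\ell]$ (equivalently $r$ in a fixed compact subset of $(0,\infty)$, away from the origin) the PDE \eqref{eq:PDE_y} is \emph{uniformly parabolic} with coefficients bounded independently of $\kappa \in (0,1)$. First I would pick a smooth cut-off $\zeta \in C^\infty_c((\ell/2, 8\ell))$ with $\zeta \equiv 1$ on $[\ell, 4\ell]$ and $0 \leq \zeta \leq 1$. The essential point is that on $\operatorname{supp}\zeta$, by \autoref{lem:change_of_variables}$(i)$ and \autoref{assumption}, the quantities $w(r) = b_L(r)(\partial_r\xi(r))^2$ and $\tilde w(r) = (\partial_r\xi(r))^2$ are bounded above and below by positive constants depending only on $\ell$ (not on $\kappa$), and similarly $q, \tilde q$ and all their first $\xi$-derivatives appearing in \eqref{eq:PDE_g} are bounded on $\operatorname{supp}\zeta$ by constants depending only on $\ell$.

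The key step is a Caccioppoli-type (energy) estimate applied not to $y = \partial_\xi g$ but to $g$ directly. Testing the PDE \eqref{eq:PDE_g} for $g$ against $\zeta^2 g$ and integrating in space and over $[\tau, t]$, the second-order terms produce, after integration by parts,
\begin{align*}
\int_\tau^t \int \zeta^2 \big( w + 2\kappa \tilde w\big) |\partial_\xi g_s|^2 \dd\xi \dd s
\lesssim \int \zeta^2 |g_\tau|^2 \dd\xi + \int_\tau^t \int \big( |\partial_\xi \zeta|^2 + \zeta|\partial_\xi\zeta| \big)\big(w + 2\kappa\tilde w + |q| + 2\kappa|\tilde q|\big) |g_s|^2 \dd\xi\dd s
\end{align*}
plus a lower-order term $\int_\tau^t \int \zeta^2 |\partial_\xi(q + 2\kappa\tilde q)|\,|g_s|^2$, where no boundary terms arise since $\zeta$ is compactly supported in the interior. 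Here I should double-check that the drift terms in \eqref{eq:PDE_g} are handled by the same testing: the first-order term $(q + 2\kappa\tilde q)\partial_\xi g \cdot \zeta^2 g = \tfrac12 (q+2\kappa\tilde q)\partial_\xi(\zeta^2 g^2)$ integrates by parts onto $\partial_\xi(\zeta^2)$ and $\partial_\xi(q+2\kappa\tilde q)$, and by \autoref{lem:q_tilde_decreasing} the term $\partial_\xi(q+2\kappa\tilde q)\zeta^2 g^2$ has a favourable sign (or, on $\operatorname{supp}\zeta$ with $r$ bounded away from $0$, is simply bounded). Since on $\operatorname{supp}\zeta$ we have $w + 2\kappa\tilde w \geq c(\ell) > 0$ uniformly in $\kappa$, and $w + 2\kappa\tilde w + |q| + 2\kappa|\tilde q| \leq C(\ell)$ uniformly in $\kappa$ (using $\kappa < 1$), and $|g_s| \leq \sup|g|$ pointwise, the right-hand side is bounded by $C(\ell)\big( \sup|g|^2 + (t-\tau)\sup|g|^2\big) \lesssim (1+t)\sup|g|^2$. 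Dividing through by $c(\ell)$ and restricting the left-hand integral to $\zeta \equiv 1$ on $[\ell, 4\ell]$ gives the claim.

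The main obstacle I anticipate is purely bookkeeping rather than conceptual: one must verify that every coefficient in \eqref{eq:PDE_g} that gets differentiated in $\xi$ during the integration by parts — in particular $\partial_\xi w$, $\partial_\xi \tilde w$, $\partial_\xi q$, $\partial_\xi \tilde q$ — is indeed bounded on $\operatorname{supp}\zeta$ uniformly in $\kappa$. This follows because on a fixed compact $r$-interval $[r_1, r_2] \subset (0,\infty)$ corresponding to $\operatorname{supp}\zeta$ (via the strictly increasing bijection $r \mapsto \xi(r)$), $b_L, b_N$ and their derivatives are bounded and $b_L$ is bounded below, so $\partial_r\xi$ is smooth and bounded with bounded derivatives by \eqref{eq:ODE_xi}, and none of these bounds involve $\kappa$; converting $\partial_\xi = (\partial_r\xi)^{-1}\partial_r$ keeps everything finite. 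A secondary point to be careful about is rigour of the integration by parts in the time variable — strictly one should test against $\zeta^2 g$ after a mollification in time or simply use that $g = f$ is a classical $C^2_{loc}$ solution of a parabolic equation by hypothesis, so $\partial_t g \in C^0$ on compact sets and the manipulations are justified pointwise. No new idea beyond these verifications is needed; this lemma is the ``easy'' interior estimate complementing the delicate boundary analysis of \autoref{lem:apriori_g}.
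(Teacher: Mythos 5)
Your proposal is correct and follows essentially the same route as the paper, which proves this lemma by testing \eqref{eq:PDE_g} against $\chi g$ for a cut-off $\chi$ equal to $1$ on $[\ell,4\ell]$ and supported in $[\ell/2,8\ell]$ — i.e., exactly the interior Caccioppoli estimate you describe, using that on this region the coefficients are bounded above and the diffusion coefficient $w+2\kappa\tilde w$ is bounded below uniformly in $\kappa$. The choice of $\zeta^2 g$ versus $\chi g$ as test function is an immaterial variant.
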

\begin{proof}
The proof is obtained testing the PDE \eqref{eq:PDE_g} against $\chi g$, where $\chi$ is a smooth cut-off such that $\chi(\xi) = 1$ for $\xi \in [\ell,4\ell]$ and $\chi(\xi) = 0$ for $\xi \notin [\ell/2,8\ell]$. 
\end{proof}

In the following, we define the local H\"older norm
\begin{align*}
\| g \|_{C^\gamma_{[0,\ell]}}
:=
[ g ]_{C^\gamma_{[0,\ell]}}
+
\sum_{k \leq \lfloor \gamma \rfloor}
[ \partial_\xi^k g ]_{C^0_{[0,\ell]}}
\end{align*}
for $\gamma\geq 0$ and $\ell > 0$.

\begin{lemma} \label{lem:apriori_g_tris}
For every $\ell>0$ sufficiently small, $\theta \in (0,1)$ and $\eps\in(0,\theta/(1+\theta))$, there exists an implicit constant such that, for every $t > \tau \geq 0$ and $\kappa \in (0,1)$ it holds
\begin{align*}
\int_\tau^t \int_{0}^{2\ell}
|\partial_\xi g_s|^2 \dd\xi\dd s
\lesssim
(1+t) \sup |g|^2
+
\sup |g|^{1+\eps}
\int_\tau^t \| g_s \|^{1-\eps}_{C^{1+\theta}_{[0,\ell]}}\dd s.
\end{align*} 
\end{lemma}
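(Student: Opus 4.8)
The goal is to control $\int_\tau^t\int_0^{2\ell}|\partial_\xi g_s|^2\,\dd\xi\,\dd s$ in terms of $\sup|g|$ and a weak norm of $g$. We already have from \autoref{lem:apriori_g} that the \emph{weighted} second-derivative term $\int_\tau^t\int_0^\ell b_L(r)(\partial_r\xi(r))^2|\partial_\xi^2 g_s|^2\,\dd\xi\,\dd s$ is bounded by $\int_0^{2\ell}|\partial_\xi g_\tau|^2+\int_\tau^t\int_\ell^{2\ell}|\partial_\xi g_s|^2$, and the latter two quantities are in turn controlled by $(1+t)\sup|g|^2$ via \autoref{lem:apriori_g_bis} (applied with $\tau=0$ for the first term, using that $\partial_\xi g_0$ enters only on a region bounded away from $0$, or more simply by absorbing it into the interpolation below). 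So the main point is to upgrade the \emph{degenerate-weighted} $L^2_t L^2_\xi$ bound on $\partial_\xi^2 g$ near $\xi=0$ into an \emph{unweighted} $L^2_tL^2_\xi$ bound on $\partial_\xi g$ near $\xi=0$.

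The first step is to make precise the degeneracy of the weight $w(r)=b_L(r)(\partial_r\xi(r))^2$. By \autoref{lem:change_of_variables}(i), $\partial_r\xi(r)\sim r^{1-2\alpha-\delta}$, while $b_L(r)\sim r^{2\alpha}$, so $w(r)\sim r^{2-2\alpha-2\delta}$. On the other hand, near $\xi=0$ the change of variables gives $\xi\sim r^{2-2\alpha-\delta}$ (integrating $\partial_r\xi\sim r^{1-2\alpha-\delta}$), hence $r\sim \xi^{1/(2-2\alpha-\delta)}$ and $w\sim \xi^{(2-2\alpha-2\delta)/(2-2\alpha-\delta)}$, which is a positive power of $\xi$ strictly less than one and tending to $0$ only at $\xi=0$. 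Thus away from $\xi=0$ the weight is harmless, and the whole difficulty is localized at the origin. The second step is the interpolation: on a fixed interval $[0,\ell]$, writing $\|\partial_\xi g\|_{L^2_\xi}$ (or rather its square), one interpolates between the weighted $\dot H^1$-type quantity $\int_0^\ell w(r)|\partial_\xi^2 g|^2$, which is already bounded, and a weak Hölder norm $\|g\|_{C^{1+\theta}_{[0,\ell]}}$. Concretely, one uses that near $0$ the weight $w(\xi)\gtrsim \xi^{a}$ for some $a\in(0,1)$, combined with the elementary inequality: for $h$ smooth on $[0,\ell]$,
\[
\int_0^\ell |h'(\xi)|^2\,\dd\xi \lesssim \Big(\int_0^\ell \xi^{a}|h'(\xi)|^2\,\dd\xi\Big)^{1-\lambda}\,\|h\|_{C^{\theta}_{[0,\ell]}}^{2\lambda} + \|h\|_{C^\theta_{[0,\ell]}}^2
\]
for a suitable $\lambda=\lambda(a,\theta)\in(0,1)$, applied with $h=\partial_\xi g$ (so $h'=\partial_\xi^2 g$ and $\|h\|_{C^\theta}\lesssim\|g\|_{C^{1+\theta}}$). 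Integrating in time, Hölder's inequality in $s$ turns the time integral of a product into a product of time integrals; choosing the exponents so that the power of $\int_\tau^t\int_0^\ell w|\partial_\xi^2 g|^2$ lands at $1$ (which it does, since that quantity is $\lesssim (1+t)\sup|g|^2$), and bounding the $C^{1+\theta}$-norm by interpolation against $\sup|g|$ to lower its exponent from $2$ to $1-\eps$, produces exactly the right-hand side $(1+t)\sup|g|^2 + \sup|g|^{1+\eps}\int_\tau^t\|g_s\|^{1-\eps}_{C^{1+\theta}_{[0,\ell]}}\,\dd s$. The constraint $\eps\in(0,\theta/(1+\theta))$ is precisely what is needed for the interpolation exponents to be admissible (i.e. for the Hölder conjugate exponents to be positive and the $\sup|g|$ power to come out nonnegative).

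The main obstacle, and the step requiring care, is the behavior at $\xi=0$: the weight $w$ vanishes there, so the weighted bound on $\partial_\xi^2 g$ does \emph{not} by itself control $\partial_\xi g$ in $L^2$ near $0$ — one genuinely needs the extra input of a (weak) Hölder modulus of continuity for $\partial_\xi g$, which is why $\|g\|_{C^{1+\theta}_{[0,\ell]}}$ must appear on the right. Making the interpolation inequality above rigorous with the correct power $a$ of the weight, and verifying that the resulting exponent of the weighted term is $\leq 1$ (so Hölder in time closes without loss), is the crux. I would also need to handle the boundary contribution at $\xi=0$ of any integration by parts used to prove the one-dimensional interpolation inequality; but since $\partial_\xi g\sim r^{2\alpha+\delta}\to 0$ by \autoref{lem:change_of_variables}(ii), all such boundary terms vanish, exactly as in the proof of \autoref{lem:apriori_g}. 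Finally, combining this interpolation estimate with \autoref{lem:apriori_g} and \autoref{lem:apriori_g_bis} (to absorb $\int_0^{2\ell}|\partial_\xi g_\tau|^2$ and $\int_\tau^t\int_\ell^{2\ell}|\partial_\xi g_s|^2$ into $(1+t)\sup|g|^2$, noting the region $[\ell,2\ell]\subset[\ell,4\ell]$) gives the claim.
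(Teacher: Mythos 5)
Your route is genuinely different from the paper's, and its central step does not hold. The paper proves this lemma by a direct energy estimate that never invokes \autoref{lem:apriori_g}: one tests the PDE \eqref{eq:PDE_g} against $\chi\,(w+2\kappa\tilde w)^{-1}g$, with $\chi$ a cut-off equal to $1$ on $[0,2\ell]$, so that $\int_\tau^t\int_0^{2\ell}|\partial_\xi g_s|^2$ appears directly on the left-hand side; the dangerous cross term $\int_0^{\ell}\xi^{-1}|g_s|\,|\partial_\xi g_s|$ is then bounded by $\sup|g|\,\llbracket \partial_\xi g_s\rrbracket_{I^{\theta'}(\ell)}$ using $\partial_\xi g_s(0)=0$, and this seminorm is interpolated between $\sup|g|$ and $\|g_s\|_{C^{1+\theta}_{[0,\ell]}}$ — that interpolation is exactly where $\eps=(\theta-\theta')/(1+\theta)<\theta/(1+\theta)$ originates.

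The gap in your proposal is the "elementary inequality" $\int_0^\ell|h'|^2\lesssim\bigl(\int_0^\ell\xi^a|h'|^2\bigr)^{1-\lambda}\|h\|_{C^\theta}^{2\lambda}+\|h\|_{C^\theta}^2$: it is false unless $\theta>1/2$. Scaling $h\mapsto h(\cdot/r)$ forces $\lambda=a/(a-1+2\theta)$, which lies in $(0,1)$ only when $\theta>1/2$; concretely, for $\beta\in(1/2,(1+a)/2)$ the family $h_\eps'(\xi)=(\xi+\eps)^{-\beta}$ keeps $\int\xi^a|h_\eps'|^2$ and $\|h_\eps\|_{C^{1-\beta}}$ bounded while $\int|h_\eps'|^2\to\infty$, so the inequality fails for every $\theta<1/2$. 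The lemma is claimed for all $\theta\in(0,1)$, and in its actual use (Step~3 of the proof of \autoref{prop:regularity}) $\theta$ is \emph{small}, of order $1-1/p$ with $p$ close to $1$. Structurally, your target — an unweighted $L^2$ bound on $\partial_\xi^2 g$ near $\xi=0$ — is too strong: the asymptotics $\partial_\xi^2 g\sim r^{-2+4\alpha+2\delta}$ with $\dd\xi\sim r^{1-2\alpha-\delta}\dd r$ make that integral divergent for small $\alpha$, and the paper only ever extracts $\partial_\xi^2 g\in L^p_{[0,\ell]}$ for some $p<2$ from the weighted bound. A secondary problem: the fixed-time term $\int_0^{2\ell}|\partial_\xi g_\tau|^2\dd\xi$ coming from \autoref{lem:apriori_g} cannot be absorbed into $(1+t)\sup|g|^2$ — \autoref{lem:apriori_g_bis} only controls time-integrated quantities on $[\ell,4\ell]$ — and keeping the present lemma independent of that term is precisely what lets the dyadic pigeonhole argument in Step~3 close without circularity.
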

\begin{proof}
Let $\chi$ be a smooth cut-off such that $\chi(\xi) = 1$ for $\xi <2\ell$ and $\chi(\xi) = 0$ for $\xi >4\ell$. Test the PDE \eqref{eq:PDE_g} against $\chi (w+2\kappa\tilde{w})^{-1} g$ to get

\begin{align*}
\int_\tau^t \int_{0}^{2\ell}
|\partial_\xi g_s|^2 \dd\xi\dd s
&\lesssim
\int_0^{4\ell}
(w+2\kappa\tilde{w})^{-1} |g_\tau|^2 \dd\xi
+
\int_\tau^t \int_0^{4\ell}
(\partial_\xi \chi+|q+2\kappa \tilde{q}| (w+2\kappa\tilde{w})^{-1})  |g_s||\partial_\xi g_s| \dd\xi\dd s
\\
&\lesssim
\int_0^{4\ell}
w^{-1} |g_\tau|^2 \dd\xi
+
\int_\tau^t \int_0^{4\ell}
(1+qw^{-1} + |\tilde{q}|\tilde{w}^{-1})  |g_s||\partial_\xi g_s| \dd\xi\dd s,
\end{align*}

where in the second inequality we have used 
\begin{align*}
(w+2\kappa\tilde{w})^{-1} 
\leq 
w^{-1},
\qquad
(w+2\kappa\tilde{w})^{-1} 
\leq 
\frac{1}{2\kappa} \tilde{w}^{-1}.
\end{align*}

Now we observe that for $r$ sufficiently small it holds
\begin{align*}
w(r) \sim r^{2-2\alpha-2\delta} \sim \xi^{1-\delta'}
\end{align*}
where 
\begin{align*}
\delta' := \frac{\delta}{2-2\alpha-\delta}  \in (0,1).
\end{align*}
In particular the weight $w(r)^{-1}$ is integrable on $[0,4\ell]$ with respect to $\dd\xi$ and therefore
\begin{align*}
\int_0^{4\ell}
w^{-1} |g_\tau|^2 \dd\xi
\lesssim
\sup |g|^2.
\end{align*}
As for the other term, we notice that when $r \downarrow 0$
\begin{align*}
q(r) w(r)^{-1} \sim \tilde{q}(r) \tilde{w}(r)^{-1} \sim \xi^{-1}, 
\end{align*}
therefore up to taking $\ell$ sufficiently small
\begin{align*}
\int_\tau^t \int_0^{4\ell}
(1+qw^{-1} + |\tilde{q}|\tilde{w}^{-1})  |g_s||\partial_\xi g_s| \dd\xi\dd s
\lesssim
\int_\tau^t \int_0^{4\ell}
\xi^{-1} |g_s||\partial_\xi g_s| \dd\xi\dd s.
\end{align*}
We split
\begin{align*}
\int_\tau^t \int_0^{4\ell}
\xi^{-1} |g_s||\partial_\xi g_s| \dd\xi\dd s
=
\int_\tau^t \int_0^{\ell}
\xi^{-1} |g_s||\partial_\xi g_s| \dd\xi\dd s
+
\int_\tau^t \int_\ell^{4\ell}
\xi^{-1} |g_s||\partial_\xi g_s| \dd\xi\dd s,
\end{align*}
and use \autoref{lem:apriori_g_bis} and Cauchy's inequality to bound
\begin{align*}
\int_\tau^t \int_\ell^{4\ell}
\xi^{-1} |g_s||\partial_\xi g_s| \dd\xi\dd s
\lesssim
(1+t) \sup |g|^2.
\end{align*}
For the other term, take any $\theta' \in (0,\theta)$ and write
\begin{align*}
\int_\tau^t \int_0^{\ell}
\xi^{-1} |g_s||\partial_\xi g_s| \dd\xi\dd s
&\lesssim
\sup |g| \int_\tau^t \int_0^{\ell}
\xi^{-1} |\partial_\xi g_s| \dd\xi\dd s
\\
&\lesssim
\sup |g| \int_\tau^t  \llbracket \partial_\xi g_s \rrbracket_{I^{\theta'}(\ell)} \int_0^{\ell} 
\xi^{\theta'-1}  \dd\xi\dd s
\\
&\lesssim
\sup |g| \int_\tau^t \llbracket \partial_\xi g_s \rrbracket_{I^{\theta'}(\ell)}\dd s,
\end{align*}
where we have used the fact that $\partial_\xi g(s,0) = 0$. By interpolation there exists $\eps \in (0,1)$, depending only on $\theta$ and $\theta'$, such that
\begin{align*}
\llbracket \partial_\xi g_s\rrbracket_{I^{\theta'}(\ell)}
\lesssim
\sup |g|^{\eps}
\| g_s \|^{1-\eps}_{C^{1+\theta}_{[0,\ell]}}.
\end{align*}
More precisely $\eps$ is chosen as
\begin{align*}
(1-\eps)(1+\theta) = 1+\theta'
\quad
\Rightarrow
\quad
\eps = \frac{\theta-\theta'}{1+\theta},
\end{align*}
and can take any value between $0$ and $\frac{\theta}{1+\theta}$ up to choosing $\theta'$ properly.
\end{proof}

\subsubsection{Step 3: Proof of \autoref{prop:regularity} in the rotational-invariant case} \label{subsec:proof_reg_theorems}
Before moving to the proof of \autoref{prop:regularity}, let us comment on the implications on $f$ of having a control on the Lipschitz norm of $g$.
Recall that $\delta$ satisfies \eqref{eq:assumption_delta}.
Let us distinguish two cases.

If $\delta \in (0,1-2\alpha)$ then the function $\xi = \xi(r)$ is globally $C^2$ except at $r = 0$, where it is $2-2\alpha-\delta$ H\"older. 
Moreover, $\xi$ is strictly increasing since $\partial_r \xi > 0$. 
As a consequence, proving local Lipschitz regularity for $g$ close to zero, uniformly in $\kappa \in (0,1)$, gives a bound on the increments of $f$ close to zero, uniformly in $\kappa \in (0,1)$. Indeed, defining $l:=\xi^{-1}(\ell)$ it holds for every $r \in [0,l]$
\begin{align*}
|f(t,r)-f(t,0)|
&=
|g(t,\xi(r))-g(t,\xi(0))|\leq
\| \partial_\xi g(t,\cdot) \|_{C^0_{[0,\ell]}}
\, |\xi(r)-\xi(0)|
\\
&\lesssim
\| \partial_\xi g(t,\cdot) \|_{C^0_{[0,\ell]}}
\, r^{2-2\alpha-\delta}.
\end{align*}

If $\delta \in [1-2\alpha,1-\alpha)$ the previous argument applies, but since $2-2\alpha-\delta < 1$ one gains genuine local $C^{2-2\alpha-\delta}$ regularity of $f$ close to zero. 
Indeed, defining $l:=\xi^{-1}(\ell)$ it holds for every $r,r' \in [0,l]$
\begin{align*}
|f(t,r)-f(t,r')|
&=
|g(t,\xi(r))-g(t,\xi(r'))|\leq
\| \partial_\xi g(t,\cdot) \|_{C^0_{[0,\ell]}}
\, |\xi(r)-\xi(r')|
\\
&\lesssim
\| \partial_\xi g(t,\cdot) \|_{C^0_{[0,\ell]}}
\, |r-r'|^{2-2\alpha-\delta}.
\end{align*}

\begin{proof}
By the previous argument, it is sufficient to show
\begin{align*}
\int_{0}^{t}
\| \partial_\xi g_s \|^{1-\eps}_{C^0_{[0,\ell]}}\dd s
&\lesssim
\sup |g|^{1-\eps}.
\end{align*}
We will show something slightly better, that is a bound on the time integral of $\| \partial_\xi g_s \|^{1-\eps}_{C^\theta_{[0,\ell]}}$ for some $\theta>0$.
By \autoref{lem:change_of_variables}, the coefficient in front of $\partial_\xi^2 g$ in the PDE \eqref{eq:PDE_g} goes like
\begin{align*}
b_L(r) (\partial_r\xi(r))^2
\sim
r^{2-2\alpha-2\delta}
\sim
\xi^{1-\delta'},
\end{align*}
as $r \downarrow 0$, where 
\begin{align*}
\delta'
=
\frac{\delta}{2-2\alpha-\delta}
\in (0,1),
\end{align*}
since $\delta \in (0,1-\alpha)$.
By \autoref{lem:apriori_g} and \autoref{lem:apriori_g_bis}, we have for $t > \tau \geq 0$ and $\ell$ small enough
\begin{align*}
\int_\tau^t \int_0^\ell
\xi^{1-\delta'} |\partial_\xi^2 g_s|^2 \dd\xi\dd s
&\lesssim
\int_\tau^t \int_0^\ell
b_L(r) |\partial_r \xi(r)|^2 |\partial_\xi^2 g_s|^2 \dd\xi\dd s
\lesssim
(1+t) \sup|g|^2 +
\int_0^{2\ell} |\partial_\xi g_\tau|^2 \dd\xi.
\end{align*}

For any given $s>0$, we have the H\"older inequality
\begin{align*}
\| \partial_\xi^2 g_s \|_{L^p_{[0,\ell]}}
\lesssim
\| \xi^{\frac{1-\delta'}{2}}
\partial_\xi^2 g_s \|_{L^2_{[0,\ell]}}
\| \xi^{-\frac{1-\delta'}{2}}
\|_{L^q_{[0,\ell]}}
\lesssim
\| \xi^{\frac{1-\delta'}{2}}
\partial_\xi^2 g_s \|_{L^2_{[0,\ell]}},
\end{align*}
as soon as 
\begin{align*}
\frac{1}{p}
=
\frac{1}{2}
+
\frac{1}{q},
\quad
2<q<\frac{2}{1-\delta'}.
\end{align*}
Since $p>1$, by Sobolev embedding there exists $\theta>0$ (of order $\theta \sim 1-1/p$) such that
\begin{align*}
\| \partial_\xi g_s \|_{C^\theta_{[0,\ell]}}
\lesssim
\| \partial_\xi g_s \|_{L^p_{[0,\ell]}}
+
\| \partial_\xi^2 g_s \|_{L^p_{[0,\ell]}}
\lesssim
\| \partial_\xi^2 g_s \|_{L^p_{[0,\ell]}}.
\end{align*}
The second inequality here is justified since $\partial_\xi g(s,0) = 0$ for every $s>0$ and we can apply Poincaré inequality on $\partial_\xi g_s$. Therefore,
\begin{align} \label{eq:int_taut}
\int_\tau^t
\| g_s \|_{C^{1+\theta}_{[0,\ell]}}^2
\dd s
&\lesssim
(1+t) \sup|g|^2 
+
\int_\tau^t
\| \partial_\xi g_s \|^2_{C^\theta_{[0,\ell]}}
\dd s 
\\
&\lesssim
(1+t) \sup|g|^2 
+
\int_0^{2\ell}
| \partial_\xi g_\tau |^2 \dd\xi. 
\end{align}

From \autoref{lem:apriori_g_tris} we deduce the following: for every $n \in \mathbb{Z}$ there exists $\tau_n \in [0,2^{-n}]$ such that
\begin{align} \label{eq:g_taun}
\int_0^{2\ell}
| \partial_\xi g_{\tau_n} |^2 \dd\xi
\lesssim
2^n \left( 
(1+2^{-n}) \sup |g|^2
+
\sup |g|^{1+\eps}
\int_0^{2^{-n}} \| g_s \|^{1-\eps}_{C^{1+\theta}_{[0,\ell]}}\dd s
\right),
\end{align}
otherwise one could contradict the former lemma by integrating between $0$ and $2^{-n}$.

Therefore, given $n \in \ZZ$ one has by \eqref{eq:int_taut} applied between time $\tau_n<2^{-n}$ and $2^{1-n}<2t$ and \eqref{eq:g_taun}
\begin{align*}
\int_{2^{-n}}^{2^{1-n}}
\| g_s \|^2_{C^{1+\theta}_{[0,\ell]}}
\dd s
&\lesssim
\int_{\tau_n}^{2^{1-n}}
\| g_s \|^2_{C^{1+\theta}_{[0,\ell]}}
\dd s
\\
&\lesssim
(1+2^{-n})\sup|g|^2 
+
\int_0^{2\ell}
| \partial_\xi g_{\tau_n} |^2 \dd\xi
 \dd s
\\
&\lesssim
(1+2^{-n})\sup|g|^2 
+
2^n \left( 
(1+2^{-n}) \sup |g|^2
+
\sup |g|^{1+\eps}
\int_0^{2^{-n}} \| g_s \|^{1-\eps}_{C^{1+\theta}_{[0,\ell]}}\dd s
\right).
\end{align*}

As a consequence, given any $n_0 \in \mathbb{Z}$ such that $2^{-n_0} \leq t<2^{1-n_0}$, by H\"older inequality one has
\begin{align*}
\int_{0}^{t}
\| g_s \|^{1-\eps}_{C^{1+\theta}_{[0,\ell]}}\dd s
&\lesssim
\sum_{n \geq n_0}
\int_{2^{-n}}^{2^{1-n}}
\| g_s \|^{1-\eps}_{C^{1+\theta}_{[0,\ell]}}\dd s
\\
&\lesssim
\sum_{n \geq n_0}
2^{-n(\frac{1+\eps}{2})}
\left( \int_{2^{-n}}^{2^{1-n}}
\| g_s \|^2_{C^{1+\theta}_{[0,\ell]}}\dd s \right)^{\frac{1-\eps}{2}} 
\\
&\lesssim
\sum_{n \geq n_0}
2^{-n\eps} \left( 
(1+2^{-n})^2 \sup |g|^2
+
\sup |g|^{1+\eps}
\int_0^{2^{-n}} \| g_s \|^{1-\eps}_{C^{1+\theta}_{[0,\ell]}}\dd s
\right)^{\frac{1-\eps}{2}} 
\\
&\lesssim
(1+t) \sup |g|^{1-\eps}
+
t^\eps \left( 
\sup |g|^{1+\eps}
\int_0^t \| g_s \|^{1-\eps}_{C^{1+\theta}_{[0,\ell]}}\dd s
\right)^{\frac{1-\eps}{2}}.
\end{align*}
Next apply Young inequality with exponents $\frac{2}{1-\eps}$ and $\frac{2}{1+\eps}$ so that 
\begin{align*}
t^\eps \left( 
\sup |g|^{1+\eps}
\int_0^t \| g_s \|^{1-\eps}_{C^{1+\theta}_{[0,\ell]}}\dd s
\right)^{\frac{1-\eps}{2}} 
&\leq
t^{\frac{2\eps}{1+\eps}}C_{\eps'} 
\sup |g|^{1-\eps}
+
\eps'
\int_0^t \| g_s \|^{1-\eps}_{C^{1+\theta}_{[0,\ell]}}\dd s
\end{align*}
with $\eps'$ sufficiently small, so that the last term on the right-hand side can be absorbed in the left-hand side of the previous chain of inequalities.
Taking $\eps$ small enough so that $t^{\frac{2\eps}{1+\eps}} \lesssim 1+t$, we obtain
\begin{align*}
\int_{0}^{t}
\| g_s \|^{1-\eps}_{C^{1+\theta}_{[0,\ell]}}\dd s
&\lesssim
(1+t) \sup |g|^{1-\eps}. \qedhere
\end{align*}
\end{proof}

\subsubsection{Step 4: Reduction to rotational-invariant initial data}
\label{ssec:radial}
Here we show that we can reduce, without any loss of generality, to the case of rotationally invariant solutions.
The underlying idea is to ``randomly rotate'' initial data $\theta_0$, in the style of \cite{rowan2023}.

Let us consider the special orthogonal group $SO(d)$ endowed with its Haar measure $\mathfrak{m}$.
Let us recall that, since $SO(d)$ is a compact Lie group, its right- and left-invariant Haar measures coincide, see \cite{VonNeumann35}, end of page 112. 

If $\theta$ is a solution to \eqref{eq:Kraichnan}, then $\bar \theta_t(x):=\theta_t(R x)$, for a given $R \in SO(d)$, solves the SPDE
\begin{align*}
\dd \bar \theta_t
+ 
\circ \dd \bar W_t
\cdot
\nabla\bar\theta_t 
= 
0, 
\quad 
\bar W_t(x):= R^T W_t(R x).
\end{align*}
Since $W$ is isotropic, the ``rotated'' noise $\bar W$ has same covariance as $W$, and so $\bar \theta$ has the same law as the solution to \eqref{eq:Kraichnan} with noise $W$ and initial datum $\bar\theta_0(x) = \theta_0(R x)$. 
In other words, we have $Law( \theta_t \circ R)=Law((\theta_0\circ R)_t)$, where $(\theta_0\circ R)_t$ denotes the unique solution at time $t$ of \eqref{eq:Kraichnan} with initial condition $\theta_0\circ R$. 
Additionally, notice that composition by $R \in SO(d)$ cannot affect regularity or integrability of $\theta$, so that for any $s \geq 0$
\begin{align*}
\EE[\|\theta_t\|_{H^s_x}^2]
= 
\EE[\| \theta_t \circ R\|_{H^s_x}^2]
= 
\EE[\| (\theta_0\circ R)_t \|_{H^s_x}^2],\qquad \forall\, R \in SO(d).
\end{align*}

Given a deterministic $\theta_0$, we now proceed to randomize it by taking the initial datum $\bar\theta_0 = \theta_0\circ R$, where now $R$ is sampled with distribution $\mathfrak{m}$ and independently of $W$. It follows from the above that
\begin{equation}\label{eq:random_data_ok}
\EE[\| \bar\theta_t \|_{H^s_x}^2]
= 
\int_{SO(d)} \EE_W[\| (\theta_0\circ R)_t \|_{H^s_x}^2] \,\mathfrak{m}(\mathrm{d} R)
= 
\EE_W[\| \theta_t\|_{H^s_x}^2],
\end{equation}
so that proving regularity for $\theta_t$ is equivalent to proving it for $\bar\theta_t$. 

On the other hand, the function $\bar F(h)$ associated to $\bar\theta_t$ is $SO(d)$-invariant. 
Indeed, by the above it holds
\begin{align*}
\bar F_t(h)
&= 
\EE\left[\int_{\R^d} \bar\theta_t(h+x)  \bar \theta_t(x) \dd x \right]
\\
&= 
\int_{SO(d)} \EE_W \left[
\int_{\R^d} (\theta_0\circ R)_t(h+x) (\theta_0\circ R)_t(x)  \dd x \right] \mathfrak{m}(\mathrm{d} R)
\\
&= 
\int_{SO(d)} \EE_W\left[
\int_{\R^d} \theta_t(R(h+x)) \theta_t(Rx)\dd x \right] \mathfrak{m}(\mathrm{d} R).
 \end{align*}
Since any $R$ in $SO(d)$ has determinant $1$, change of variables gives
\begin{align*}
\int_{\R^d} \theta_t(R(h+x)) \theta_t(Rx) \dd x
=
\int_{\R^d} \theta_t(Rh+y)) \theta_t(y)\dd y.
\end{align*}
Plugging in the expression above, we get
\begin{align*}
\bar F_t(h)
&= 
\int_{SO(d)}\EE_W\left[ \int_{\R^d} \theta_t(Rh+y)) \theta_t(y)\dd y \right] \mathfrak{m}(\mathrm{d} R)
= 
\int_{SO(d)} F_t(R h) \mathfrak{m}(\mathrm{d} R).
\end{align*}
By the group structure of $SO(d)$ and since $\mathfrak{m}$ is right invariant, it holds for every fixed $R_0 \in SO(d)$
\begin{align*}
\bar F_t(R_0 h) 
= 
\int_{SO(d)} F_t(R R_0 h) \mathfrak{m}(\mathrm{d} R)
= 
\int_{SO(d)}  F_t(R' h) \mathfrak{m}(\mathrm{d} R')	
= 
\bar F_t(h),
\end{align*}
giving $SO(d)$-invariance of the function $\bar{F}$.

Overall, thanks to \eqref{eq:random_data_ok}, we conclude that given any $\theta_0$ we can randomize it so to enforce that $F(h)$ is rotationally symmetric (i.e. $SO(d)$-invariant), and still conclude the desired estimates for $\EE[\| \theta_t \|_{H^s_x}^2]$ even though $\theta$ itself is not symmetric. So we can assume that $F$ is rotationally symmetric in the PDE estimates. This concludes the proof of \autoref{prop:regularity}. 

\subsection{Fixed-time regularity results}
\label{ssec:consequences.regularization}
In this subsection we present some results on the regularity of $\theta$ at fixed time $t >0$ descending from the anomalous Sobolev regularity \eqref{eq:regularization_intro}, more specifically we prove \eqref{eq:regularization_intro3} and \eqref{eq:regularization_intro4} of \autoref{thm:regularity_Kraichnan}.

Recall that by \eqref{eq:regularization_intro}, the solution $\theta$ of \eqref{eq:Kraichnan} belongs to $L^2_{\omega,t}H^{1-\alpha-}_x$
The next statement guarantees that the equation instantaneously regularizes the initial datum and $\theta$ stays in $L^2_\omega H^{1-\alpha-}_x$ for \emph{every} time $t>0$, implying in particular \eqref{eq:regularization_intro3} of \autoref{thm:regularity_Kraichnan}.
\begin{prop}\label{prop:anomalous_regularity_fixedt}
    Let $\theta_0\in L^2_x$, then for any $\gamma\in (0,1-\alpha)$ and small $\tilde\delta>0$, the associated solution of \eqref{eq:Kraichnan} satisfies
    \begin{equation}\label{eq:anomalous_regularity_pointwise}
        \EE \| \theta_t\|_{H^\gamma_x}^2 \lesssim (1\wedge t)^{-\frac{\gamma}{(1-\alpha)^2}-\tilde\delta} \| \theta_0\|_{L^2_x}^2,\qquad \forall\, t\geq 0.
    \end{equation}
\end{prop}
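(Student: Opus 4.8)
The plan is to derive the fixed-time estimate \eqref{eq:anomalous_regularity_pointwise} from the space-time bound \eqref{eq:regularization_intro} by exploiting the Markov property of $\theta$ together with the contraction property of the heat-like semigroup $P_t$. First I would recall from \autoref{prop:properties.inviscid.Kraichnan} that $\theta$ is a Markov process with $\EE[\theta_t \mid \cF_s] = P_{t-s}\theta_s$, and that $P_t$ is a convolution semigroup with smooth kernel (here $A = \frac12 C(0):D^2$ is, up to constants, the Laplacian, since we are in the isotropic case $C(0)=2c_0 I_d$). Thus for any $s < t$, conditioning on $\cF_s$ and using that $P_{t-s}$ maps $H^{\gamma}_x$ to $H^{\gamma}_x$ boundedly with the additional smoothing $\|P_{t-s} g\|_{H^\gamma_x} \lesssim (t-s)^{-(\gamma-\beta)/2}\|g\|_{H^\beta_x}$ for $\beta < \gamma$, combined with Jensen's inequality $\EE\|\EE[\,\cdot\mid\cF_s]\|^2 \leq \EE\|\cdot\|^2$, would not directly help because conditional expectation loses the fluctuations. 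Instead the correct route is: apply the Markov property at an intermediate time $s$, so that $\theta_t$ has the law of the solution at time $t-s$ started from the random datum $\theta_s \in L^2_x$; then apply \eqref{eq:regularization_intro} (with initial time reset) to get $\int_s^t \EE\|\theta_r\|^2_{H^{1-\alpha-\delta}_x}\,\mathrm{d}r \lesssim (1 + (t-s))\,\EE\|\theta_s\|^2_{L^2_x} \leq (1+(t-s))\|\theta_0\|^2_{L^2_x}$, using \eqref{eq:contraction_L2}.

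The key remaining step is to upgrade this time-averaged bound to a pointwise-in-time bound, and this is where the precise power of $t$ comes from. The mechanism: by the mean-value theorem in the $\int_0^s$ variable applied to \eqref{eq:regularization_intro}, for each $s \in (0,t]$ there exists $\tau = \tau(s) \in (0,s)$ with $\EE\|\theta_\tau\|^2_{H^{1-\alpha-\delta}_x} \lesssim s^{-1}(1+s)\|\theta_0\|^2_{L^2_x} \lesssim (1\wedge s)^{-1}\|\theta_0\|^2_{L^2_x}$ for $s \leq 1$ (and I would restrict to $t \leq 1$ since for $t \geq 1$ the bound \eqref{eq:regularization_intro} already gives a uniform statement, or one iterates). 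Then use the Markov property from time $\tau$ to $t$: $\EE\|\theta_t\|^2_{H^\gamma_x}$ equals the expectation of the $H^\gamma_x$-norm squared of the solution at time $t - \tau$ started from $\theta_\tau$. Now I would run the \emph{same} argument one level up: apply \autoref{prop:regularity} / the $F^\kappa$-regularity machinery but starting from data already in $H^{1-\alpha-\delta}_x$ rather than $L^2_x$, using the self-correlation PDE \eqref{eq:PDE_F} with initial datum $F_\tau \in C^{2-2\alpha-2\delta}_x$. The gain in regularity at each iteration is a fixed fraction of $1-\alpha$, and each iteration costs a factor $(1\wedge t)^{-\text{(something)}/(1-\alpha)}$; summing the geometric series of regularity gains up to $\gamma$ produces the exponent $-\gamma/(1-\alpha)^2 - \tilde\delta$ in \eqref{eq:anomalous_regularity_pointwise}.

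More concretely, I would set up an iteration: let $\gamma_0 = 0$, $\gamma_{k+1} = \gamma_k + (1-\alpha-\delta)$ capped appropriately, and prove by induction that $\EE\|\theta_t\|^2_{H^{\gamma_k}_x} \lesssim (1\wedge t)^{-a_k}\|\theta_0\|^2_{L^2_x}$ with $a_k$ satisfying a recursion $a_{k+1} = (1 + a_k)\cdot\frac{1}{1-\alpha-\delta}$-type relation coming from: (i) splitting $[0,t]$ at $t/2$, (ii) finding a good intermediate time $\tau \in (t/2, t)$ where the time-averaged $H^{\gamma_{k+1}}_x$-bound localizes, paying $(1\wedge t)^{-1}$, (iii) propagating from $\tau$ to $t$ via \autoref{prop:regularity}'s self-correlation estimate applied with the $H^{\gamma_k}_x$-regular datum $\theta_{t/2}$, paying the factor $(1\wedge t)^{-a_k}$ from the inductive hypothesis. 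Solving the recursion and then interpolating to fill in non-dyadic values of $\gamma$ gives the stated exponent, with $\tilde\delta$ absorbing the accumulated $\delta$-losses and the finitely many interpolation steps. The main obstacle I anticipate is making the intermediate-time selection argument rigorous and uniform in $\kappa$: one must ensure that at the selected time $\tau$ the viscous approximation $\tilde\theta^\kappa_\tau$ genuinely lies in the right Sobolev space with the claimed bound \emph{before} passing $\kappa \to 0$, which requires carefully interleaving the mean-value-theorem step with the uniform-in-$\kappa$ estimate \eqref{eq:thm_reg} from \autoref{prop:regularity} rather than with its $\kappa = 0$ consequence — otherwise one cannot legitimately restart the self-correlation PDE analysis at time $\tau$. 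A secondary technical point is tracking the exact constant in the semigroup smoothing versus the self-correlation smoothing so that the two exponents $-\gamma/(1-\alpha)^2$ (from iterating the transport regularization) and $-1/(1-\alpha)$ (the single-step rate seen in \eqref{eq:regularization_intro3} for $\gamma$ near $1-\alpha$) are consistent; indeed plugging $\gamma \uparrow 1-\alpha$ into \eqref{eq:anomalous_regularity_pointwise} gives exponent $\to -1/(1-\alpha)$, matching \eqref{eq:regularization_intro3}, which is a useful internal consistency check to guide the bookkeeping.
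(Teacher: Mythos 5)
Your overall architecture is the paper's: reduce to $t\le 1$ by Markovianity and \eqref{eq:contraction_L2}, use Chebyshev/mean-value on the time-integrated bound \eqref{eq:regularization_intro} to select an intermediate time $u$ where $\EE\|\theta_u\|^2_{H^{1-\alpha-\delta}_x}\lesssim t^{-1}\|\theta_0\|^2_{L^2_x}$, restart there via the Markov property, iterate, and sum a geometric series of regularity gains; your consistency check at $\gamma\uparrow 1-\alpha$ against \eqref{eq:regularization_intro3} is also the right sanity check. The genuine gap is in the propagation step: you propose to ``run the self-correlation PDE machinery one level up'' starting from data in $H^{1-\alpha-\delta}_x$ (equivalently $F_\tau\in C^{2-2\alpha-2\delta}_x$), but no such estimate exists in the paper and you would have to prove one. \autoref{cor:extra_regularity} requires $f_0^\kappa\in C^2_b$, i.e.\ $\theta_0\in H^1_x$, and \eqref{eq:regularization_intro} only produces $H^{1-\alpha-\delta}_x$ from $L^2_x$ data with no additive gain from fractional-Sobolev data; so the additive recursion $\gamma_{k+1}=\gamma_k+(1-\alpha-\delta)$ has nothing to drive it. (Relatedly, the recursion $a_{k+1}=(1+a_k)/(1-\alpha-\delta)$ you sketch has a negative fixed point; the correct one is of the form $a_{n+1}=1+(1-\sigma)a_n$ with fixed point $1/\sigma$.)

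The paper closes this step not by redoing the degenerate PDE analysis but by \emph{real interpolation of the linear solution operator} $\theta_0\mapsto\theta_t$ between the two endpoint bounds it already has: the $L^2_x\to L^2_\omega L^2_x$ contraction \eqref{eq:contraction_L2} and the $H^1_x\to L^2_\omega H^{\sigma}_x$ bound \eqref{eq:regularization_intro2} with $\sigma=1-\alpha-\eps$. Interpolating at parameter $\sigma$ gives $H^\sigma_x\to L^2_\omega H^{\sigma^2}_x$ boundedly, which concatenated with the Chebyshev-selected time yields $\EE\|\theta_t\|^2_{H^{\sigma^2}_x}\lesssim t^{-1}\|\theta_0\|^2_{L^2_x}$; iterating (interpolating the newly obtained bound with \eqref{eq:regularization_intro2} again) produces regularities $\sigma_n=\sigma^2\sum_{i<n}(1-\sigma)^i\uparrow\sigma$ at cost $t^{-\sum_{i<n}(1-\sigma)^i}$, whence the exponent $\gamma/\sigma^2\to\gamma/(1-\alpha)^2$. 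This interpolation device is the missing idea in your proposal; it also dissolves the difficulty you flag about interleaving the time selection with the uniform-in-$\kappa$ estimates, since interpolation is applied directly to the inviscid solution map rather than to the viscous approximations or to the self-correlation PDE.
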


\begin{proof}
    It suffices to give the proof for $t\in (0,1]$; indeed, in the case $t>1$, we can just run the SPDE up to time $t-1$, invoke \eqref{eq:contraction_L2} and the Markov property, and then use the bound \eqref{eq:anomalous_regularity_pointwise} to pass from $\EE \| \theta_{t-1}\|_{L^2_x}^2$ to $\EE \| \theta_t\|_{H^\gamma_x}^2$.
    
    Fix $\eps>0$ small and set $\sigma:=1-\alpha-\eps$. From \eqref{eq:contraction_L2} and \eqref{eq:regularization_intro2}, the linear operator $\theta_0\mapsto \theta_t$ satisfies
    \begin{align*}
        \EE\|\theta_t\|_{L^2_x}^2 \leq \|\theta_0\|_{L^2_x}^2, \quad
        \EE\|\theta_t\|_{H^\sigma_x}^2 \leq \|\theta_0\|_{H^1_x}^2,\qquad \forall\, t\in (0,1].
    \end{align*}
We can interpolate between these two bounds above, applying e.g. \cite[Thm C.3.3]{HVNVW1}. 
Indeed, using the notation therein, $H^{\sigma}_x$ can be obtained as real interpolation between $L^2_x$ and $H^1_x$, more precisely $H^\sigma_x = (L^2_x,H^1_x)_{\sigma,2}$ (\cite[Thm. 14.4.31]{HVNVW3}); by \cite[Thms. 2.2.10 and C.3.14]{HVNVW1}, $L^2_\omega H^\sigma_x = (L^2_\omega L^2_x,L^2_\omega H^1_x)_{\sigma,2}$ as well.    
    Therefore, by \cite[Thm. C.3.3]{HVNVW1}, for $\sigma_1:=\sigma^2$ it holds that
    \begin{equation}\label{eq:interpolation1}
        \EE\|\theta_t\|_{H^{\sigma_1}_x}^2 \lesssim \|\theta_0\|_{H^{\sigma}_x}^2,\qquad \forall\, t\in (0,1], \ \forall\, \theta_0\in H^\sigma_x,
    \end{equation}
    with implicit constant depending on $\tilde{\delta}$, but independent of $\theta_0$ and $t$.
    On the other hand, for fixed $t\in (0,1]$, by bound \eqref{eq:regularization_intro} and Markov's inequality, for any $\theta_0\in L^2_x$ we can find $u\in [0,t]$ such that
    \begin{equation}\label{eq:markov}
        \EE\|\theta_u\|_{H^{\sigma}_x}^2
        \leq \frac{1}{t} \int_0^t \EE\|\theta_s\|_{H^{\sigma}_x}^2 \dd s
        \lesssim \frac{1}{t} \| \theta_0\|_{L^2_x}^2.
    \end{equation}
    Restarting the dynamics at $u$, using the Markov property, and then applying the bound \eqref{eq:interpolation1} to pass from $u$ to $t$, we conclude that
    \begin{equation}\label{eq:interpolation2}
        \EE\|\theta_t\|_{H^{\sigma_1}_x}^2
        \lesssim \frac{1}{t} \| \theta_0\|_{L^2_x}^2,\qquad \forall\, t\in (0,1],\ \forall\, \theta_0\in L^2_x.
    \end{equation}
    We can now proceed iteratively: interpolating \eqref{eq:interpolation2} with \eqref{eq:regularization_intro2}, it holds
    \begin{equation*}
        \EE\|\theta_t\|_{H^{\sigma_2}_x}^2 \lesssim t^{-(1-\sigma)} \, \|\theta_0\|_{H^{\sigma}_x}^2, 
        \quad
        \sigma_2:=\sigma^2 + (1-\sigma)\sigma_1.
    \end{equation*}
    In turn, this bound can be concatenated with \eqref{eq:markov}, and using the Markov property of $\theta$ one gets $\EE\|\theta_t\|_{H^{\sigma_2}_x}^2 \lesssim t^{-1-(1-\sigma)} \, \|\theta_0\|_{L^2_x}^2$, with implicit constant independent of $\theta_0$ and $t$.
    Inductively, it's not hard to see that for any $n\in\NN$ one obtains the estimate
    \begin{equation*} 
        \EE\|\theta_t\|_{H^{\sigma_n}_x}^2
        \lesssim t^{-\sum_{i=0}^{n-1}(1-\sigma)^i} \| \theta_0\|_{L^2_x}^2,\qquad \forall\, t\in (0,1],\ \forall\, \theta_0\in L^2_x,
    \end{equation*}
    where $\sigma_n:=\sigma^2 \sum_{i=0}^{n-1}(1-\sigma)^i$, and with an implicit constant that does not depend upon $\theta_0$, $t$, nor $n$. Noting that $\sigma_n\to \sigma$ monotonically as $n\to\infty$, we conclude that for any $\gamma\in (0,\sigma)$ there exists $n=n_\gamma$ sufficiently large such that 
    \begin{equation*}
        \EE\|\theta_t\|_{H^{\gamma}_x}^2
        \lesssim 
        \EE\|\theta_t\|_{H^{\sigma_n}_x}^2
        \lesssim t^{-\sum_{i=0}^{n-1}(1-\sigma)^i} \| \theta_0\|_{L^2_x}^2
        \lesssim 
        t^{-\frac{\gamma}{\sigma^2}} \| \theta_0\|_{L^2_x}^2,\qquad \forall\, t\in (0,1],\ \forall\, \theta_0\in L^2_x.
    \end{equation*}
    The conclusion now follows by noting that we can have $\sigma$ get arbitrarily close to $1-\alpha$ upon choosing $\eps>0$ small enough.
\end{proof}

The next result is based on similar techniques as the previous one, with the only difference that we now focus on anomalous integrability gain rather than anomalous regularity gain. More specifically, solutions starting from $\theta_0 \in L^2_x$ become $L^\infty_x$-valued at any positive time $t>0$, giving \eqref{eq:regularization_intro4} of \autoref{thm:regularity_Kraichnan}.
This is clearly ``anomalous'' as a smooth transport cannot improve integrability of initial data, e.g. in the smooth incompressible case where it just rearranges the initial condition in a measure-preserving way.

\begin{prop} \label{prop:Linfty_theta}
	Let $W$ satisfy Assumptions \ref{ass:well_posed}-\ref{assumption} and $\delta>0$ small. Then for any $\theta_0\in L^2_x$ and any $q\in (2,\infty]$, the unique solution to \eqref{eq:Kraichnan} satisfies
	\begin{equation}\label{eq:Linfty_theta}
		\EE \| \theta_t\|_{L^q_x}^2 \lesssim \big(1\wedge t^{-\frac{d}{2(1-\alpha)} \frac{q}{q-2}-\delta}\big) \| \theta_0\|_{L^2_x}^2.
	\end{equation}
\end{prop}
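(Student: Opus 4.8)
The plan is to deduce \eqref{eq:Linfty_theta} by combining the pointwise-in-time Sobolev regularity \eqref{eq:anomalous_regularity_pointwise} (equivalently \eqref{eq:regularization_intro3}) with the contraction bound \eqref{eq:contraction_L2} via Sobolev embedding and an interpolation/restart argument identical in spirit to the proof of \autoref{prop:anomalous_regularity_fixedt}. As there, it suffices to treat $t\in(0,1]$: for $t>1$ one runs the equation up to time $t-1$, uses \eqref{eq:contraction_L2} and the Markov property (\autoref{prop:properties.inviscid.Kraichnan}), and then applies the $t\in(0,1]$ bound with $\theta_0$ replaced by $\theta_{t-1}$. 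Moreover it is enough to prove the case $q=\infty$, since for $q\in(2,\infty)$ one interpolates $\|\theta_t\|_{L^q_x}\leq \|\theta_t\|_{L^2_x}^{2/q}\|\theta_t\|_{L^\infty_x}^{1-2/q}$, takes expectations with Cauchy--Schwarz, and combines the bound $\EE\|\theta_t\|_{L^2_x}^2\leq\|\theta_0\|_{L^2_x}^2$ with the $L^\infty$ estimate; this produces exactly the exponent $\frac{d}{2(1-\alpha)}\frac{q}{q-2}$ after tracking powers.

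For the endpoint $q=\infty$, first I would fix $\gamma\in(d/2,1-\alpha)$ whenever $d/2<1-\alpha$; however this fails as soon as $d\geq 2$ (since $1-\alpha<1\leq d/2$), so a single application of the Sobolev embedding $H^\gamma_x\hookrightarrow L^\infty_x$ is not available and one must iterate. The correct route is a bootstrap on the integrability exponent mirroring the regularity bootstrap in \autoref{prop:anomalous_regularity_fixedt}: from \eqref{eq:regularization_intro3} we have, for any $\gamma<1-\alpha$, $\EE\|\theta_t\|^2_{H^\gamma_x}\lesssim (1\wedge t)^{-\gamma/(1-\alpha)-\delta}\|\theta_0\|_{L^2_x}^2$, and by the Sobolev embedding $H^\gamma_x\hookrightarrow L^{p(\gamma)}_x$ with $\frac{1}{p(\gamma)}=\frac12-\frac{\gamma}{d}$ this upgrades $L^2_x\to L^{p(\gamma)}_x$ at the cost of a negative power of $t$. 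One then restarts the dynamics: using the Markov property and Markov's inequality to find an intermediate time $u\in[0,t]$ at which $\theta_u$ already lies in $L^{p(\gamma)}_\omega$-controlled $L^{p(\gamma)}_x$, and repeats the regularization step starting from $L^{p}_x$ data. Iterating, the integrability exponent $p_n$ grows and, because each step gains a fixed fraction of derivatives (up to $1-\alpha$), after finitely many steps one passes any threshold $p_n>\tfrac{d}{1-\alpha-\gamma'}$ beyond which $H^{\gamma'}_x$ with $\gamma'<1-\alpha$ embeds into $L^\infty_x$; alternatively the scheme is run purely in $L^p$-scales using the heat-kernel-type smoothing encoded in \eqref{eq:regularization_intro3} to land directly in $L^\infty_x$. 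The accumulated power of $t$ is a geometric sum $\sum_{i\geq 0}(1-\alpha)^{?}$-type series which converges, and careful bookkeeping shows the limiting exponent is $\frac{d}{2(1-\alpha)}+\delta$, matching the claim; the arbitrariness of $\delta$ absorbs the finitely many $\eps$-losses coming from taking $\gamma$ strictly below $1-\alpha$ at each stage.

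The main obstacle is the bookkeeping of the time-singularity exponents across the iteration: each restart introduces a factor $t^{-a_n}$, and one must verify that $\sum_n a_n$ converges to the sharp value $\frac{d}{2(1-\alpha)}$ (up to $\delta$) rather than to something larger, which requires choosing the derivative gain at each step close to the maximal $1-\alpha$ and summing a geometric-type series with ratio $<1$. A secondary technical point is ensuring the interpolation inequalities are applied in the vector-valued setting $L^2_\omega X$ (as in the proof of \autoref{prop:anomalous_regularity_fixedt}, invoking real interpolation $L^2_\omega H^\sigma_x=(L^2_\omega L^2_x, L^2_\omega H^1_x)_{\sigma,2}$ and the analogous statement for $L^p_x$-scales), and that all implicit constants remain independent of $\theta_0$, of $t\in(0,1]$, and of the iteration index $n$; this is handled exactly as in \autoref{prop:anomalous_regularity_fixedt} by the homogeneity of the estimates under rescaling $\theta_0\mapsto\lambda\theta_0$ and the semigroup/Markov structure.
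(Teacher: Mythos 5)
Your overall strategy coincides with the paper's: reduce to $q=\infty$ and $t\in(0,1]$, get a first integrability gain $L^2_x\to L^2_\omega L^{p_1}_x$ (with $1/p_1=1/2-\sigma/d$, $\sigma=1-\alpha-\tilde\delta$) by combining the Markov-inequality restart \eqref{eq:markov} with the Sobolev embedding of $H^\sigma_x$, then iterate with Markov restarts on a dyadic partition of $[0,t]$ so that the time singularities form a convergent geometric series summing to $d/(2\sigma)$, and finish with Fatou. The reduction of $q\in(2,\infty)$ to $q=\infty$ by interpolation with \eqref{eq:contraction_L2} is also as in the paper.

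There is, however, one genuine gap in your iteration step. You propose to ``repeat the regularization step starting from $L^{p}_x$ data'' for $p=p_n>2$, but no such $L^p$-based regularization estimate is available: \eqref{eq:regularization_intro3}/\eqref{eq:anomalous_regularity_pointwise} takes $L^2_x$ data to $H^\gamma_x$, and the $L^p$-Sobolev variants in the paper (\eqref{eq:anomalous_regularity_interpolation1}--\eqref{eq:anomalous_regularity_interpolation2}) already presuppose $L^\infty_x$ data, so the loop does not close as stated. The paper's way around this is not to re-derive regularity from $L^{p_n}_x$ data at all, but to interpolate the linear solution map $\theta_0\mapsto\theta_t$ between the two endpoint bounds $L^2_x\to L^2_\omega L^{p_1}_x$ (operator norm $\sim(Ct^{-1})^{1/2}$, from the first step) and $L^\infty_x\to L^2_\omega L^\infty_x$ (operator norm $1$, from \eqref{eq:contraction_Linfty} and Jensen). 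Real interpolation, $L^p_x=(L^2_x,L^\infty_x)_{1-2/p,p}$ and $L^2_\omega L^q_x=(L^2_\omega L^{p_1}_x,L^2_\omega L^\infty_x)_{1-2/p,q}$ with $q=pp_1/2$, then gives $\EE\|\theta_t\|_{L^q_x}^2\leq C^{2/p}t^{-2/p}\|\theta_0\|_{L^p_x}^2$. This is exactly what produces the geometric growth $p_{n+1}=p_np_1/2$ and the summable time exponents $2/p_n\sim(2/p_1)^n$, with $\sum_k(2/p_1)^k=d/(2\sigma)$. Your proposal gestures at ``interpolation in $L^p$-scales'' but does not identify that the second interpolation endpoint must be the maximum-principle bound $L^\infty_x\to L^2_\omega L^\infty_x$; without it the argument does not go through.
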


\begin{proof}
It suffices to prove \eqref{eq:Linfty_theta} in the case $q=\infty$ (with the convention that $\frac{\infty}{\infty}=1$). The general case then follows by combining this bound with \eqref{eq:contraction_L2} and interpolation arguments, invoking the results from \cite{HVNVW1,HVNVW3} similarly to \autoref{prop:anomalous_regularity_fixedt}. Moreover, by Markovianity and \eqref{eq:contraction_L2}, we may assume $t\leq 1$.

Fix $\tilde\delta>0$ small and set $\sigma:=1-\alpha-\tilde\delta$; let $p_1\in (2,\infty)$ such that $1/p_1=1/2-\sigma/d$. Arguing as in \eqref{eq:markov}, using Sobolev embeddings, Markovianity and \eqref{eq:contraction_Lp}, it holds
\begin{equation}\label{eq:markov2}
	\EE\| \theta_t\|_{L^{p_1}_x}^2
	\leq \EE\Big[\| \theta_t\|_{L^{p_1}_x}^{p_1}\Big]^{\frac{2}{p_1}}
	\leq \EE\| \theta_u\|_{L^{p_1}_x}^2
	\lesssim \EE\| \theta_u\|_{H^\sigma_x}^2
	\lesssim \frac{1}{t} \| \theta_0\|_{L^2_x}^2.
\end{equation}
This provides a bound for the linear map $\theta_0\mapsto \theta_t$ from $L^2_x$ to $L^2_\omega L^{p_1}_x$. On the other hand, by \eqref{eq:contraction_Linfty} and Jensen's inequality, the same linear map is bounded (with constant $1$) from $L^\infty_x$ to $L^2_\omega L^\infty_x$. 
By \cite[Theorems 2.2.10 and C.3.14]{HVNVW1}, for $p \in (2,\infty)$ the space $L^p_x$ can be obtained as real interpolation between $L^2_x$ and $L^\infty_x$ with parameters
$L^p_x = (L^2_x, L^\infty_x)_{1-\frac{2}{p},p}$. On the other hand,
\begin{align*}
    L^2_\omega L^q_x = (L^2_\omega L^{p_1}_x, L^2_\omega L^\infty_x)_{1-\frac{2}{p},q}
    \qquad
    \mbox{for }
    q := \frac{p p_1}{2}.
\end{align*}
Since $q>p$, by \cite[Proposition C.3.2 and Theorem C.3.3]{HVNVW1} we deduce that the map $\theta_0\mapsto \theta_t$ is bounded from $L^p_x$ to $L^2_\omega L^q_x$, with
\begin{equation*}
	\EE \| \theta_t\|_{L^q_x}^2 \leq C^{\frac{2}{p}} t^{-{\frac{2}{p}}} \| \theta_0\|_{L^p_x}^2.
\end{equation*}
Here $C$ is the constant coming from inequality \eqref{eq:markov2} and we may assume $C\geq 1$ if needed.
Now for fixed $t>0$, consider the increasing sequence $\{t_n\}_{n \in \NN}$ defined by $t_0=0$ and $t_n=t_{n-1}+2^{-n} t$ for $n\geq 1$; set $p_n := p_1^n / 2^{n-1}$. Using the Markov property and concatenating the previous estimates to pass from $\EE\| \theta_{t_n}\|_{L^{p_n}_x}$ to $\EE\| \theta_{t_{n+1}}\|_{L^{p_{n+1}}_x}$, we obtain
\begin{align*}
	\sup_{n\in\NN} \EE\| \theta_t\|_{L^{p_n}_x}^2
	& \leq \| \theta_0\|_{L^2_x}^2 \prod_{k=0}^\infty (C (t_{k+1}-t_k)^{-1})^{\frac{2}{p_n}}\\
	&= \| \theta_0\|_{L^2_x}^2 C^{\sum_{k=0}^\infty (\frac{2}{p_1})^k} t^{- \sum_{k=0}^\infty (\frac{2}{p_1})^k} 2^{\sum_{k=0}^\infty (k+1)(\frac{2}{p_1})^k}\\
	& \lesssim t^{-\frac{1}{1-2/p_1}} \| \theta_0\|_{L^2_x}^2
	= t^{-\frac{d}{2\sigma}} \| \theta_0\|_{L^2_x}^2.
\end{align*}
By Fatou's lemma and \cite[Lem. A.2]{GalLuo2023} we deduce that
\begin{align*}
	\EE \|\theta_t\|_{L^\infty_x}^2
	\leq \liminf_{n\to\infty}\EE \| \theta_t\|_{L^{p_n}_x}^2
	\lesssim t^{-\frac{d}{2\sigma}} \| \theta_0\|_{L^2_x}^2.
\end{align*}
The conclusion then follows by the arbitrariness of $\tilde\delta>0$.
\end{proof}

\begin{proof}[Proof of \autoref{thm:regularity_Kraichnan}, bounds \eqref{eq:regularization_intro3} and \eqref{eq:regularization_intro4}]
     \eqref{eq:regularization_intro3} follows immediately from \autoref{prop:anomalous_regularity_fixedt}, upon taking $\gamma=1-\alpha-\delta$ and tuning the small parameter $\tilde\delta>0$ accordingly. \eqref{eq:regularization_intro4} corresponds to \eqref{eq:Linfty_theta} with $q=\infty$.
\end{proof}


\subsection{Regularity theory for a degenerate parabolic PDE}
\label{sec:regularity_PDE}

Formally passing to the limit in \eqref{eq:PDE_F}, we obtain the \textit{degenerate} parabolic equation in non-divergence form
\begin{align} \label{eq:PDE_F_deg}
\partial_t F = Q : D_z^2 F.
\end{align}
Making use on the strong link between the degenerate PDE \eqref{eq:PDE_F_deg} and the SPDE \eqref{eq:Kraichnan}, we identify a relevant \emph{selection} criterion that additionally allows to show $C^{2-2\alpha-}_x$ H\"older regularity for $F$, which we believe is of independent interest. 
The regularity is sharp by \eqref{eq:sharpness_intro} and \autoref{lem:Sobolev-Holder}.
The criterion is based on a Fourier splitting of $F$ in four terms, each being a zero diffusivity limit of two-point self-correlation functions $F^{\kappa,j}$ associated to solutions to \eqref{eq:spde_viscous_approx}, for $j=1,\dots,4$. 

Before stating the main result of this section, recall that equation \eqref{eq:PDE_F} with $\kappa \in (0,1/2)$ is a strictly parabolic PDE with H\"older coefficients: by linearity and \cite[Corollary 8.3.1]{Kr08}, it enjoys uniqueness of bounded solutions.

\begin{theorem} \label{thm:reg_F_intro}
Suppose \autoref{ass:well_posed} and \autoref{assumption}. Let $F_0 \in \mathcal{F}L^1(\R^d) := \{ F \in C_b(\R^d) : \hat{F} \in L^1(\R^d) \}$ and let $F^\kappa$ be the unique bounded solution of \eqref{eq:PDE_F} with initial condition $F_0$. 
Then:
\begin{enumerate}
    \item 
    For every $\delta>0$ small enough and $t > 0$ we have
\begin{align*}
\sup_{\kappa \in (0,1/2)}
\left(
\sup_{s \leq t} \| F^\kappa_s \|_{\mathcal{F}L^1}
+
\int_0^t \|F^\kappa_s\|_{C^{2-2\alpha-\delta}_x} \dd s
\right)
\lesssim
(1+t) \|F_0\|_{\mathcal{F}L^1}.
\end{align*}
If in addition $D^2_z F_0 \in \mathcal{F}L^1(\R^d)$ then
\begin{align*}
\sup_{\kappa \in (0,1/2)}
\sup_{s \leq t}
\|F^\kappa_s\|_{C^{2-2\alpha-\delta}_x}
\lesssim
(1+t) \left( \|F_0\|_{\mathcal{F}L^1}+\|D^2_z F_0\|_{\mathcal{F}L^1} \right).
\end{align*}
\item 
For every $t>0$ the functions $F^\kappa_t$ converge in $\mathcal{F}L^1(\R^d)$ as $\kappa \downarrow 0$ towards 
a unique limit $F$, that is a solution of \eqref{eq:PDE_F_deg} with initial condition $F_0$, and retains the same regularity as $F^\kappa$:
\begin{align*}
\sup_{s \leq t} \| F_s \|_{\mathcal{F}L^1}
+
\int_0^t \|F_s\|_{C^{2-2\alpha-\delta}_x} \dd s
\lesssim
(1+t) \|F_0\|_{\mathcal{F}L^1},
\end{align*}
and if $D^2_z F_0 \in \mathcal{F}L^1(\R^d)$
\begin{align*}
\sup_{s \leq t}
\|F_s\|_{C^{2-2\alpha-\delta}_x}
\lesssim
(1+t) \left( \|F_0\|_{\mathcal{F}L^1}+\|D^2_z F_0\|_{\mathcal{F}L^1} \right).
\end{align*}

\item
The regularity is fundamentally sharp, in the sense that there exist arbitrarily regular initial conditions  (e.g. with $\hat{F}_0 \in C^\infty_c(\R^d)$) such that for every $s<t$ and $\delta>0$
\begin{align*}
\int_s^t \sup_{|z| \in (0,l)}
\frac{|F_r(0)-F_r(z)|}{|z|^{2-2\alpha+\delta}}\dd r
= \infty.
\end{align*}
\end{enumerate}
\end{theorem}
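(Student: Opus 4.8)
My plan is to exploit systematically the correspondence, established earlier in the paper, between the degenerate PDE \eqref{eq:PDE_F_deg} (and its regularization \eqref{eq:PDE_F}) and the stochastic transport equation \eqref{eq:Kraichnan}. The key observation is that, by \autoref{lem:L^2-criterion-selfcorrelation} and the identity \eqref{eq:hat_xi=L2}, any $F_0\in\mathcal{F}L^1(\R^d)$ with $\hat F_0\geq 0$ is precisely the self-correlation function $F[\theta_0]$ of a random field $\theta_0\in L^2(\Omega;L^2_x)$ (with $\|\theta_0\|_{L^2_x}^2=\|\hat F_0\|_{L^1_x}=\|F_0\|_{\mathcal{F}L^1}$), and then by \autoref{lem:PDE_twopoint_transport} the solution of \eqref{eq:PDE_F} with this datum is $F^\kappa_t=F[\tilde\theta^\kappa_t]$. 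Thus part (1) for nonnegative-Fourier-transform data is a direct translation of the uniform-in-$\kappa$ bound \eqref{eq:anom_reg_01} (equivalently \autoref{prop:regularity} after the \emph{Step 4} radial reduction) via \autoref{lem:Sobolev-Holder} and \autoref{lem:refined.Besov.Holder}; the $\sup_s\|F^\kappa_s\|_{\mathcal{F}L^1}$ bound is just \eqref{eq:selfcorrelation.sup.bound} together with \eqref{eq:contraction_L2}. The second bound in (1), under $D^2_zF_0\in\mathcal{F}L^1$, comes the same way from \eqref{eq:regularization_intro2}/\eqref{eq:thm_reg2}: having $D^2_zF_0\in\mathcal{F}L^1$ amounts to $\theta_0\in L^2(\Omega;H^1_x)$, hence $f^\kappa_0\in C^2_b$ and \autoref{cor:extra_regularity} applies.

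The genuinely new point is that a \emph{general} $F_0\in\mathcal{F}L^1(\R^d)$ need not have a nonnegative Fourier transform, so it is not literally a self-correlation function. Here I would use the Fourier splitting announced in the text: write $\hat F_0 = \hat F_0^+ - \hat F_0^-$ with $\hat F_0^\pm\geq 0$, and further split each piece using real/imaginary parts so that each of the (at most four) resulting data $F_0^{\kappa,j}$ has nonnegative Fourier transform and lies in $\mathcal{F}L^1$, with $\sum_j\|F_0^j\|_{\mathcal{F}L^1}\lesssim\|F_0\|_{\mathcal{F}L^1}$ (and similarly for the $D^2_z$ norm). Linearity of \eqref{eq:PDE_F} in $F$ lets one solve for each piece separately, apply the previous paragraph to obtain the uniform $C^{2-2\alpha-\delta}_x$ and $\mathcal{F}L^1$ bounds for each $F^{\kappa,j}$, and recombine. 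Uniqueness of bounded solutions of \eqref{eq:PDE_F} for $\kappa\in(0,1/2)$ (strict parabolicity, \cite[Corollary 8.3.1]{Kr08}) guarantees that the recombined function is \emph{the} bounded solution with datum $F_0$, so the splitting is consistent.

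For part (2), the convergence $F^\kappa_t\to F_t$ in $\mathcal{F}L^1$ as $\kappa\downarrow 0$: on the probabilistic side \autoref{lem:approx} gives $\tilde\theta^\kappa_t\to\theta_t$ strongly in $L^2(\Omega;L^2_x)$; by \eqref{eq:hat_xi=L2} this yields $\hat F^\kappa_t=\EE[|\hat{\tilde\theta}^\kappa_t|^2]\to\EE[|\hat\theta_t|^2]=\hat F_t$ in $L^1_x$, i.e. convergence in $\mathcal{F}L^1$, for the nonnegative-Fourier pieces; summing over $j$ handles general data. The limit $F$ solves \eqref{eq:PDE_F_deg} weakly by passing to the limit in the weak formulation of \eqref{eq:PDE_F} (the coefficient $(1-\kappa)Q+\kappa C(0)\to Q$ uniformly on compacts, and $F^\kappa_t$ is bounded by $\|F_0\|_{\mathcal{F}L^1}$ uniformly), and it inherits the regularity bounds by lower semicontinuity of the $C^{2-2\alpha-\delta}_x$ seminorm (equivalently, of the increment ratio in \autoref{lem:Sobolev-Holder}) under the $\mathcal{F}L^1$, hence uniform-on-compacts, convergence; uniqueness of this limit follows from uniqueness of the strong $L^2_{\omega,x}$ limit of $\tilde\theta^\kappa_t$ plus the one-to-one correspondence in \autoref{lem:L^2-criterion-selfcorrelation}. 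Part (3) is exactly \eqref{eq:sharpness_intro} of \autoref{thm:regularity_Kraichnan} read through \autoref{lem:refined.Besov.Holder}: pick $\theta_0$ with $\hat F_0:=\EE$-free choice, e.g. $F_0=F[\theta_0]$ for smooth compactly-Fourier-supported $\theta_0$, so that $\hat F_0=|\hat\theta_0|^2\in C^\infty_c$; then $F_r=F[\theta_r]$ and \autoref{cor:generic_irregularity}/\autoref{rem:generic.irregularity} force $\int_s^t\sup_{|z|\in(0,l)}|F_r(0)-F_r(z)|/|z|^{2-2\alpha+\delta}\,\dd r=\infty$.

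The main obstacle I anticipate is making the Fourier splitting both clean and norm-controlled: one must ensure each of the four pieces is simultaneously in $\mathcal{F}L^1$, has nonnegative Fourier transform so that \autoref{lem:L^2-criterion-selfcorrelation} applies to produce a genuine $L^2(\Omega;L^2_x)$ field, and (for the second bound) has $D^2_z$-transform in $L^1$ — all while the associated initial fields $\theta_0^j$ can in principle be built on a common probability space so that linearity of \eqref{eq:Kraichnan} reproduces the splitting at the level of $\tilde\theta^\kappa$ and not merely of the deterministic self-correlations. A safer route, which I would actually take, is to \emph{avoid} reconstructing random fields for the split pieces: work purely deterministically with \eqref{eq:PDE_F}, proving the uniform estimates first for data with $\hat F_0\geq 0$ by the probabilistic argument above, then extending to general $F_0\in\mathcal{F}L^1$ by linearity of the deterministic PDE \eqref{eq:PDE_F} and the elementary decomposition of $\hat F_0$ into four nonnegative pieces — here only the deterministic statement (1) needs to hold for each piece, and no compatibility of probability spaces is required.
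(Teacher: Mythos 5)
Your proposal is correct and follows essentially the same route as the paper: split $\hat F_0$ into four nonnegative real pieces, realize each as the self-correlation of a field solving the SPDE, import the uniform-in-$\kappa$ Sobolev bounds of \autoref{thm:regularity_Kraichnan} through \autoref{lem:Sobolev-Holder}, and recombine by linearity and uniqueness of bounded solutions — your ``safer route'' in the last paragraph is precisely what the paper does. The only small imprecision is the appeal to \autoref{lem:L^2-criterion-selfcorrelation} for the \emph{existence} of a field with prescribed self-correlation (that lemma goes the other way); the paper instead takes the explicit deterministic, complex-valued realization $\theta_0^j=\mathcal{F}^{-1}((\hat F_0^j)^{1/2})$, which settles your worry about compatible probability spaces.
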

In the statement above $\mathcal{F}L^1(\R^d)$ denotes the space of functions with integrable Fourier transform, endowed with the norm $\| F \|_{\mathcal{F}L^1}:=\| \hat{F} \|_{L^1}$.
This is the relevant functional space to equation \eqref{eq:PDE_F_intro_tilde}: Indeed, recall the formula \eqref{eq:hat_xi=L2} relating the Fourier transforms of solutions to \eqref{eq:spde_viscous_approx} and their associated two point self-correlation functions: in this particular case,
\begin{align*}
\hat{F}^\kappa_t(\xi)
=
\mathbb{E}
[|\mathcal{F}(\tilde{\theta}^\kappa_t)(\xi)|^2],
\quad
\xi \in \R^d,
\end{align*}
which is integrable by Parseval identity.

The basic idea behind the proof of \autoref{thm:reg_F_intro} is to ``reverse engineer'' the proof of \autoref{thm:regularity_Kraichnan}. Namely, given a solution $F^\kappa$ of the PDE \eqref{eq:PDE_F}, not necessarily given by a self-correlation function, we split $F^\kappa= F^{\kappa,1}-F^{\kappa,2}+iF^{\kappa,3}-iF^{\kappa,4}$ and find $\tilde{\theta}^{\kappa,j}$ solving \eqref{eq:spde_viscous_approx} for $j=1,\dots,4$ and such that \eqref{eq:two-point_correlation} holds, then using the regularity of $\tilde{\theta}^{\kappa,j}$ to deduce the desired regularity of each $F^{\kappa,j}$.

\begin{proof}[Proof of \autoref{thm:reg_F_intro}] 
$(1)$
We preliminarily show that, in fact, we can restrict ourselves to prove the thesis under the additional condition that $\hat{F}_0$ is real-valued and $\hat{F}_0 \geq 0$. To see this, rewrite 
\begin{align*}
    \hat{F}_0 := \hat{F}_0^1 - \hat{F}_0^2 +i\hat{F}_0^3 - i\hat{F}_0^4,
\end{align*}
where each $\hat{F}^j_0$ is real-valued, non-negative, and explicitly given by:
\begin{align*}
    \hat{F}_0^1 &:= Re(\hat{F}_0) \mathbf{1}_{\{Re(\hat{F}_0) \geq 0\}}, \qquad 
    \hat{F}_0^2 := -Re(\hat{F}_0) \mathbf{1}_{\{Re(\hat{F}_0) < 0\}}, 
    \\
    \hat{F}_0^3 &:= Im(\hat{F}_0) \mathbf{1}_{\{Im(\hat{F}_0) \geq 0\}}, \qquad
    \hat{F}_0^4 := -Im(\hat{F}_0) \mathbf{1}_{\{Im(\hat{F}_0) < 0\}}. 
\end{align*}

Notice that the PDE \eqref{eq:PDE_F} is linear, with coefficients that are symmetric with respect to the origin and real-valued.
In particular, if $F^\kappa$ is the unique bounded solution of \eqref{eq:PDE_F} with initial condition $F_0$, then both
\begin{align*}
    F^{\kappa,+}(t,z) := \frac{F^\kappa(t,z)+F^\kappa(t,-z)}{2},
    \quad
    F^{\kappa,-}(t,z) := \frac{F^\kappa(t,z)-F^\kappa(t,-z)}{2},
\end{align*}
are the unique bounded solutions of the same equation, with initial condition respectively $F^+_0$ and $F^-_0$, and satisfy $\hat{F}^\kappa = \hat{F}^{\kappa,+} + \hat{F}^{\kappa,-}$.  
By symmetry, $\hat{F}^{\kappa,+}$ is purely real-valued and symmetric, and $\hat{F}^{\kappa,-}$ is purely imaginary-valued and antisymmetric, meaning that $Re(\hat{F}^\kappa)=\hat{F}^{\kappa,+}$ and  $Im(\hat{F}^\kappa)=-i\hat{F}^{\kappa,-}$ by uniqueness of solutions.
Moreover, \eqref{eq:PDE_F} is positivity-preserving on Fourier transforms in the following sense: if $\hat{F}_0^j$ is real-valued and non-negative, then
one can define $\theta_0^j := \mathcal{F}^{-1}((\hat{F}_0^j)^{1/2}) \in L^2(\R^d;\CC)$ and run the SPDE \eqref{eq:spde_viscous_approx} from initial condition $\theta_0^j$, using $\CC$-linearity of the equation, obtaining a unique solution $\tilde{\theta}^{\kappa,j}$;
then one can verify that \autoref{lem:PDE_twopoint_transport}, \eqref{eq:two-point_correlation}, and \eqref{eq:hat_xi=L2} still hold true for $\tilde{\theta}^{\kappa,j}$. Therefore, the unique bounded solution of \eqref{eq:PDE_F} with initial condition $F_0^j$ satisfies 
\begin{align*}
\hat{F}^{\kappa,j}_t 
=
\mathbb{E}
[|\mathcal{F}(\tilde{\theta}^{\kappa,j}_t) |^2] \geq 0,
\quad
\mbox{where } \theta_0^j := \mathcal{F}^{-1}((\hat{F}_0^j)^{1/2})
\end{align*}
for every $t \geq 0$. In particular, this means that (modulo taking the Fourier transform)
\begin{align*}
    \hat{F}^{\kappa,1} &:= \hat{F}^{\kappa,+} \mathbf{1}_{\{\hat{F}^{\kappa,+} \geq 0\}},\qquad \qquad 
    \hat{F}^{\kappa,2} := -\hat{F}^{\kappa,+} \mathbf{1}_{\{\hat{F}^{\kappa,+}< 0\}},
    \\
    \hat{F}^{\kappa,3} &:= -i\hat{F}^{\kappa,-} \mathbf{1}_{\{-i\hat{F}^{\kappa,-} \geq 0\}},\qquad 
    \hat{F}^{\kappa,4} := i\hat{F}^{\kappa,-} \mathbf{1}_{\{-i\hat{F}^{\kappa,-} < 0\}},
\end{align*}
are the unique bounded solutions of \eqref{eq:PDE_F} with initial conditions ${F}_0^j$, respectively for $j=1,\dots,4$.
Moreover, the decomposition
\begin{align*}
    F^{\kappa,+} = F^{\kappa,1}-F^{\kappa,2},
    \qquad
    F^{\kappa,-} = iF^{\kappa,3}-iF^{\kappa,4}, 
\end{align*}
holds for all times $t \geq 0$, giving in particular $F^{\kappa} = F^{\kappa,1}-F^{\kappa,2} +iF^{\kappa,3}-iF^{\kappa,4}$. 

Thus, in order to prove the desired regularity for $F^\kappa$, it suffices to show it holds for each $F^{\kappa,j}$, where now the advantage is that $\hat{F}_0^j$ is real-valued and non-negative for every $j$.
To do this, we proceed as described above: let us define $\theta_0^j \in L^2(\R^d;\CC)$ given by $\theta_0^j = \mathcal{F}^{-1}((\hat{F}_0^j)^{1/2})$ and run the SPDE \eqref{eq:spde_viscous_approx} from the initial condition $\theta_0^j$; then the desired estimates follow from \autoref{lem:Sobolev-Holder} and the uniform-in-$\kappa$ Sobolev regularity of $\tilde{\theta}^{\kappa,j}$ proved before in \autoref{thm:regularity_Kraichnan}. 

$(2)$
The convergence of ${F}^{\kappa,j}_t$ in $\mathcal{F}L^1$ for every $j=1,\dots,4$, and the identification of the limit, both follow from the strong convergence $\tilde{\theta}^{\kappa,j}_t \to \theta^j_t$ in $L^2_{\omega,x}$. 
Indeed by construction
\begin{align*}
\hat{F}^{\kappa,j}_t 
=
\mathbb{E}
[|\mathcal{F}(\tilde{\theta}^{\kappa,j}_t) |^2]
\to
\mathbb{E}
[|\mathcal{F}(\theta^j_t) |^2]
=:
\hat{F}^j_t,
\quad \mbox{ in } L^1 (\R^d),
\end{align*}
and $F^j$ solves \eqref{eq:PDE_F_deg} by \autoref{lem:PDE_twopoint_transport} applied with $\kappa = 0$. Therefore, so does $F = F^1-F^2+iF^3-iF^4$.
Moreover, by \autoref{thm:regularity_Kraichnan} we know that $\theta^j$ is Sobolev regular in space, either $L^1_t$ or $L^\infty_t$ depending upon the space regularity of $F_0^j$. By \autoref{lem:Sobolev-Holder}, each $F^j$ has the desired regularity, and so does $F$. 

$(3)$
To see that the regularity of point $(2)$ is sharp, take any $\hat{F}_0 \in C^\infty_c(\R^d)$, $\hat{F}_0 \geq 0$, $F_0 \neq 0$. Then the associated solution $\theta$ of the SPDE, constructed as above, must satisfy
\begin{align*}
\int_s^t \mathbb{E}[\| \theta_r\|_{H^{1-\alpha}_x}^2] \dd r  = \infty
\end{align*}
by \autoref{thm:regularity_Kraichnan}.
Thus we have by \autoref{lem:Sobolev-Holder} (with $\alpha-\delta$ replacing $\alpha$)
\begin{equation*}
\int_s^t \sup_{|z| \in (0,l)}
\frac{|F_r(0)-F_r(z)|}{|z|^{2-2\alpha+\delta}}\dd r
\gtrsim
\int_s^t \mathbb{E} \| \theta_r \|^2_{H^{1-\alpha}_x}\dd r
-
(t-s) \|\theta_0\|_{L^2_x}^2
= \infty. \qedhere
\end{equation*}
\end{proof}

We have seen above that vanishing-viscosity limits of solutions to the parabolic PDE \eqref{eq:PDE_F} satisfy sharp $C^{2-2\alpha-}_x$ regularity in space, either $L^1_t$ or $L^\infty_t$ with respect to time depending on the regularity of the initial condition.
Next, we focus on time regularity, in particular we are interested in obtaining H\"older estimates in time.
One could interpret the following results as an explicit, quantitative version of the locally Holder regularity result obtained in \cite{ChSe87} by Chiarenza and Serapioni.

For notationally simplicity, the estimates below are obtained directly for $\kappa=0$, but it is easy to check that they are all uniform with respect to the parameter $\kappa \in [0,1/2)$.
Let us preliminarly recall that, if $\theta_0\in H^1_x$, then $\partial_t F$ also solves the PDE with an initial datum in $C^0_b$, and therefore it enjoys $\partial_t F\in L^\infty_{t,x}\cap L^1_t C^{2-2\alpha-}_x$. It follows upon integrating in time that, for any $\gamma<1-\alpha$, we have
\begin{equation}\label{eq:space-time_reg_H1}
	|F_t(x)-F_s(y)|\lesssim (|t-s|+s|x-y|^{1\wedge 2\gamma} )\| \theta_0\|_{H^1_x}^2.
\end{equation}

\begin{lemma}
	Let $W$ satisfy Assumptions \ref{ass:well_posed}-\ref{assumption}, $\theta_0\in H^1_x$. Then for any $\gamma < 1-\alpha$ it holds
	\begin{equation}\label{eq:time_regularity_SPDE1}
		\EE \| \theta_t-\theta_s\|_{L^2_x}^2 \lesssim |t-s|^\gamma (1+s) \| \theta_0\|_{H^1_x}^2, \quad \forall\, s\leq t.
	\end{equation}
\end{lemma}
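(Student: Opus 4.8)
\textbf{Proof proposal for \eqref{eq:time_regularity_SPDE1}.}
The plan is to connect the time increment of $\theta$ in $L^2_{\omega,x}$ with the behaviour of the two-point self-correlation $F_t = F[\theta_t]$ near the space origin \emph{and} with its modulus of continuity in time. The starting observation is the identity, valid for any $s \le t$,
\begin{align*}
\EE\|\theta_t - \theta_s\|_{L^2_x}^2
= \EE\|\theta_t\|_{L^2_x}^2 + \EE\|\theta_s\|_{L^2_x}^2 - 2\,\EE\langle \theta_t,\theta_s\rangle_{L^2_x}.
\end{align*}
By the Markov property and $\EE[\theta_t\,|\,\cF_s] = P_{t-s}\theta_s$ (\autoref{prop:properties.inviscid.Kraichnan}), the cross term becomes $\EE\langle \theta_t,\theta_s\rangle = \EE\langle P_{t-s}\theta_s,\theta_s\rangle$. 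Writing $P_{t-s}$ as convolution against the heat-type kernel $p_{t-s}$ associated to the operator $A = c_0\Delta$ (isotropic case), this is $\EE[(p_{t-s}\ast(\theta_s\ast\theta_s^-))(0)] = (p_{t-s}\ast F_s)(0)$. Hence
\begin{align*}
\EE\|\theta_t-\theta_s\|_{L^2_x}^2
= \big(F_t(0) - F_s(0)\big) + 2\big(F_s(0) - (p_{t-s}\ast F_s)(0)\big),
\end{align*}
where I used $\EE\|\theta_t\|_{L^2_x}^2 = F_t(0)$ and $\EE\|\theta_s\|_{L^2_x}^2 = F_s(0)$. The first bracket is controlled by the time-regularity of $t\mapsto F_t(0)$, which by \eqref{eq:space-time_reg_H1} (the $x=y$ case) is Lipschitz in time with constant $\lesssim \|\theta_0\|_{H^1_x}^2$, so $|F_t(0)-F_s(0)| \lesssim |t-s|\,\|\theta_0\|_{H^1_x}^2 \lesssim |t-s|^\gamma\,\|\theta_0\|_{H^1_x}^2$ on the relevant time scale.

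The core estimate is therefore the second bracket: $F_s(0) - (p_\tau\ast F_s)(0) = \int_{\R^d} p_\tau(z)\,(F_s(0)-F_s(z))\,\dd z$ with $\tau = t-s$. Here I would split the integral at $|z| \sim \sqrt\tau$. For $|z| \lesssim \sqrt\tau$, I use the spatial modulus of continuity of $F_s$ provided by \eqref{eq:space-time_reg_H1}: $|F_s(0)-F_s(z)| \lesssim s\,|z|^{1\wedge 2\gamma}\,\|\theta_0\|_{H^1_x}^2$, and since $\int p_\tau(z)|z|^{1\wedge 2\gamma}\dd z \lesssim \tau^{(1\wedge 2\gamma)/2}$, this region contributes $\lesssim s\,\tau^{(1\wedge 2\gamma)/2}\|\theta_0\|_{H^1_x}^2$. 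For $|z| \gtrsim \sqrt\tau$, I bound $|F_s(0)-F_s(z)| \le 2F_s(0) \le 2\|\theta_0\|_{L^2_x}^2$ crudely, and the Gaussian tail $\int_{|z|\gtrsim\sqrt\tau} p_\tau(z)\dd z$ is $O(1)$ — which is too lossy. The fix is instead to keep using \eqref{eq:space-time_reg_H1} on the far region as well (it holds for all $z$), giving $\int_{|z|\gtrsim\sqrt\tau} p_\tau(z)\,s\,|z|^{1\wedge2\gamma}\dd z \lesssim s\,\tau^{(1\wedge2\gamma)/2}\|\theta_0\|_{H^1_x}^2$ by the full Gaussian moment bound. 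Combining, $F_s(0)-(p_\tau\ast F_s)(0) \lesssim s\,(t-s)^{(1\wedge2\gamma)/2}\|\theta_0\|_{H^1_x}^2$, which after adjusting $\gamma$ (so that $(1\wedge 2\gamma')/2 = \gamma$ for $\gamma < 1-\alpha \le 1/2$, i.e.\ just relabelling) yields the claimed $(1+s)$-weighted bound. One must also restrict to $t-s \le 1$, say, and absorb the case $t-s\ge 1$ into \eqref{eq:contraction_L2} with the trivial bound $\EE\|\theta_t-\theta_s\|_{L^2_x}^2 \le 4\|\theta_0\|_{L^2_x}^2 \le 4|t-s|^\gamma\|\theta_0\|_{L^2_x}^2$.

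The main obstacle I anticipate is \textbf{being careful about whether $F_s(\cdot)$ genuinely has the spatial Hölder modulus $|z|^{1\wedge 2\gamma}$ uniformly down to $z=0$ with the stated $s$-dependence}: this is exactly the content of \eqref{eq:space-time_reg_H1}, which in turn rests on $\partial_t F \in L^1_t C^{2-2\alpha-}_x$ from \autoref{thm:reg_F_intro} together with $F_0\in C^2_x$ (guaranteed by $\theta_0\in H^1_x$ via \autoref{lem:Sobolev-Holder}); so the key input is already available and the argument above is essentially assembling it. A secondary technical point is making the heat-kernel convolution representation of the cross term $\EE\langle P_{t-s}\theta_s,\theta_s\rangle = (p_{t-s}\ast F_s)(0)$ fully rigorous — this needs Fubini/Tonelli justified by $\theta_s \in L^2_{\omega,x}$ and $p_{t-s}\in L^1$, which is routine. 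In the general (non-isotropic) case one replaces $c_0\Delta$ by $\tfrac12 C(0):D^2$ and uses the corresponding anisotropic Gaussian kernel, with identical moment bounds; I would state the proof in the isotropic setting and remark that the general case is analogous.
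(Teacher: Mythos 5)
Your overall strategy is the same as the paper's: decompose $\EE\|\theta_t-\theta_s\|_{L^2_x}^2$ into $F_t(0)-F_s(0)$ plus a cross term, handle the first piece by \eqref{eq:space-time_reg_H1}, and control the cross term via the Markov property $\EE[\theta_t\,|\,\cF_s]=P_{t-s}\theta_s$ together with the short-time behaviour of the semigroup. The first bracket and the representation $\EE\langle P_{t-s}\theta_s,\theta_s\rangle=(p_{t-s}\ast F_s)(0)$ are fine.

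However, your quantitative treatment of the second bracket has a genuine gap in part of the parameter range. You bound $F_s(0)-(p_\tau\ast F_s)(0)=\int p_\tau(z)\,(F_s(0)-F_s(z))\,\dd z$ using the \emph{first-order} spatial increment $|F_s(0)-F_s(z)|\lesssim s\,|z|^{1\wedge 2\gamma}\|\theta_0\|_{H^1_x}^2$ and the Gaussian moment $\int p_\tau(z)|z|^{1\wedge 2\gamma}\dd z\lesssim \tau^{(1\wedge 2\gamma)/2}$. Since the spatial modulus is capped at Lipschitz, this saturates at $\tau^{1/2}$; you even write ``$\gamma<1-\alpha\le 1/2$'' when relabelling, which silently assumes $\alpha\ge 1/2$. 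The lemma is claimed for all $\alpha\in(0,1)$ and all $\gamma<1-\alpha$, so for $\alpha<1/2$ and $\gamma\in(1/2,1-\alpha)$ your argument does not reach the stated exponent. The paper avoids this by estimating the cross term through the duality $|\EE\langle P_{t-s}\theta_s-\theta_s,\theta_s\rangle|\le \EE[\|P_{t-s}\theta_s-\theta_s\|_{H^{-\gamma}_x}\|\theta_s\|_{H^{\gamma}_x}]$ and the Fourier-multiplier bound $\|P_\tau f-f\|_{H^{-\gamma}_x}\lesssim \tau^{\gamma}\|f\|_{H^{\gamma}_x}$, valid for every $\gamma\in(0,1)$, combined with $\EE\|\theta_s\|_{H^\gamma_x}^2\lesssim(1+s)\|\theta_0\|_{H^1_x}^2$ from \eqref{eq:regularization_intro2}. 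If you want to stay in physical space, you can repair your argument by exploiting that $F_s$ is even, writing $\int p_\tau(z)(F_s(0)-F_s(z))\dd z=\tfrac12\int p_\tau(z)\,(2F_s(0)-F_s(z)-F_s(-z))\dd z$ and controlling the second difference by $\|F_s\|_{C^{2\gamma}_x}|z|^{2\gamma}$ (which also makes sense for $2\gamma\in(1,2)$), so that the Gaussian moment yields the full $\tau^{\gamma}$; but as written the proposal only proves the lemma for $\alpha\ge 1/2$.
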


\begin{proof}
	It suffices to consider $|t-s|\leq 1$, otherwise by the triangular inequality  with \eqref{eq:contraction_L2}, it holds
	\begin{align*}
		\EE \| \theta_t-\theta_s\|_{L^2_x}^2
		&= \EE \| \theta_t\|^2 -\EE\|\theta_s\|_{L^2_x}^2-2\EE\langle \theta_t-\theta_s,\theta_s\rangle
		\\&= [F_t(0)-F_s(0)]-2\EE\langle \theta_t-\theta_s,\theta_s\rangle
		=: I_1 + I_2.
	\end{align*}
	By \eqref{eq:space-time_reg_H1} we have $I_1 = |F_t(0)-F_s(0)|\lesssim |t-s| \| \theta_0\|_{H^1_x}^2$, so we only have to control the term $I_2$.
    By \autoref{prop:properties.inviscid.Kraichnan}(1),
    we have $\EE[\theta_t|\cF_s]=P_{t-s} \theta_s$  and therefore
	\begin{align*}
		|\EE\langle \theta_t-\theta_s,\theta_s\rangle|
		= |\EE\langle P_{t-s}\theta_s-\theta_s,\theta_s\rangle|
		\leq \EE[ \|P_{t-s}\theta_s-\theta_s\|_{H^{-\gamma}_x}\|\theta_s\|_{H^\gamma_x}].
	\end{align*}
	Since the matrix $C(0)$ is non-degenerate by assumption, usual properties of heat semigroup give
	\begin{align*} 
        \|P_{t-s}\theta_s-\theta_s\|_{H^{-\gamma}_x}
		\lesssim |t-s|^{\gamma} \| \theta_s\|_{H^\gamma_x},
	\end{align*}
	which combined with the above and \eqref{eq:regularization_intro2} yields
	\begin{equation*}
		|I_2| \lesssim |t-s|^\gamma \EE \| \theta_s\|_{H^\gamma_x}^2
		\lesssim |t-s|^\gamma(1+s) \| \theta_0\|_{H^1_x}^2.\qedhere
	\end{equation*}
\end{proof}

\begin{cor}
	Let $W$ satisfy Assumptions \ref{ass:well_posed}-\ref{assumption}. Then for any $\beta\in (0,1)$ and $\theta_0\in H^\beta_x$, it holds
	\begin{equation}\label{eq:time_regularity_SPDE2}
		\EE \| \theta_t-\theta_s\|_{L^2_x}^2 \lesssim |t-s|^{\beta\gamma} (1+s)^\beta \| \theta_0\|_{H^\beta_x}^2\quad\forall\,
		s\leq t.
	\end{equation}
	Moreover, for any $\theta_0\in L^2_x$ and any $u\in (0,1)$ and $\delta>0$ fixed, we have
	\begin{equation}\label{eq:time_regularity_SPDE3}
		\EE \| \theta_t-\theta_s\|_{L^2_x}^2 \lesssim u^{-\frac{\gamma}{(1-\alpha)^2}-\delta} |t-s|^{\gamma^2} (1+s)^\gamma\| \theta_0\|_{L^2_x}^2 \quad \forall s,t\in [u,\infty).
	\end{equation}
\end{cor}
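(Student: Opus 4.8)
The plan is to bootstrap from the already-established fixed-time regularity gain, exactly as in the proof of the previous lemma, but now starting the clock at time $u>0$ so that no initial smoothness is assumed. First I would record the analogue of \eqref{eq:space-time_reg_H1} for merely $L^2_x$ data: by \autoref{prop:anomalous_regularity_fixedt} (applied at the positive time $u$) the solution $\theta_u$ already lies in $L^2_\omega H^\gamma_x$ with norm $\lesssim u^{-\gamma/(1-\alpha)^2-\delta}\|\theta_0\|_{L^2_x}^2$, and moreover $\theta_t\in L^2_\omega H^{\gamma'}_x$ for every $\gamma'<1-\alpha$ and every $t>0$; this replaces the $H^1_x$ hypothesis used in the proof of \eqref{eq:time_regularity_SPDE1}.

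Then I would mimic the decomposition $\EE\|\theta_t-\theta_s\|_{L^2_x}^2 = [F_t(0)-F_s(0)] - 2\EE\langle\theta_t-\theta_s,\theta_s\rangle =: I_1+I_2$. For $I_2$ the argument is unchanged: by the Markov property (\autoref{prop:properties.inviscid.Kraichnan}(1)) $\EE[\theta_t\mid\cF_s]=P_{t-s}\theta_s$, so
\begin{align*}
|I_2| \leq \EE[\|P_{t-s}\theta_s-\theta_s\|_{H^{-\gamma'}_x}\|\theta_s\|_{H^{\gamma'}_x}]
\lesssim |t-s|^{\gamma'}\,\EE\|\theta_s\|_{H^{\gamma'}_x}^2,
\end{align*}
using non-degeneracy of $C(0)$ and standard heat-semigroup estimates, and then \autoref{prop:anomalous_regularity_fixedt} again to bound $\EE\|\theta_s\|_{H^{\gamma'}_x}^2\lesssim s^{-\gamma'/(1-\alpha)^2-\delta}\|\theta_0\|_{L^2_x}^2$, or — since $s\geq u$ — simply $\lesssim u^{-\gamma'/(1-\alpha)^2-\delta}\|\theta_0\|_{L^2_x}^2$. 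For $I_1=|F_t(0)-F_s(0)|$ I would use that, by the semigroup property, $\theta_u$ serves as a new $H^{\gamma'}_x$ initial datum; restarting \eqref{eq:time_regularity_SPDE1}-type reasoning (or directly \eqref{eq:time_regularity_SPDE2} with $\beta=\gamma'$) gives $I_1\lesssim |t-s|^{\gamma'^2}(1+s)^{\gamma'}\,\EE\|\theta_u\|_{H^{\gamma'}_x}^2$, again absorbing the $\EE\|\theta_u\|_{H^{\gamma'}_x}^2$ factor into $u^{-\gamma'/(1-\alpha)^2-\delta}\|\theta_0\|_{L^2_x}^2$. Choosing $\gamma'$ slightly above $\gamma$ and folding the loss into the free parameter $\delta$ yields the exponent $\gamma^2$ on $|t-s|$ and $\gamma$ on $(1+s)$ stated in \eqref{eq:time_regularity_SPDE3}. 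The restriction $|t-s|\leq 1$ is handled exactly as in the preceding lemma, the complementary case following from the triangular inequality with \eqref{eq:contraction_L2}. Estimate \eqref{eq:time_regularity_SPDE2} is proved first, by the identical decomposition with $\theta_0\in H^\beta_x$ in place of $H^1_x$: $I_1$ is controlled by interpolating \eqref{eq:regularization_intro2} (which needs $H^1_x$) with the trivial $L^2_x$ bound to get a $|t-s|$-power degraded by the factor $\beta$, and $I_2$ by the same Markov/semigroup estimate together with \eqref{eq:anomalous_regularity_pointwise}-type control of $\EE\|\theta_s\|_{H^{\beta\gamma}_x}^2$.

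The one genuinely delicate point is bookkeeping of the powers of $u$, $s$, and $|t-s|$ so that the final exponents match the statement and all constants stay independent of $t,s$ (and, implicitly, of $\kappa\in[0,1/2)$ as remarked in the text). In particular one must be careful that in \eqref{eq:time_regularity_SPDE3} the blow-up as $u\downarrow 0$ is the single factor $u^{-\gamma/(1-\alpha)^2-\delta}$ and does \emph{not} get squared when both $I_1$ and $I_2$ are estimated — this is ensured by noting that $I_1$ carries $\EE\|\theta_u\|_{H^{\gamma'}_x}^2$ while $I_2$ carries $\EE\|\theta_s\|_{H^{\gamma'}_x}^2$ with $s\geq u$, each contributing one, not two, powers of the regularization constant. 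Everything else is a routine combination of the Markov property, heat-kernel smoothing, the fixed-time gains from \autoref{prop:anomalous_regularity_fixedt}, and the real-interpolation machinery already invoked in this subsection.
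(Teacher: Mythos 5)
Your strategy for \eqref{eq:time_regularity_SPDE3} is essentially the paper's: restart the dynamics at time $u$, use \autoref{prop:anomalous_regularity_fixedt} to get $\EE\|\theta_u\|_{H^{\gamma}_x}^2\lesssim u^{-\gamma/(1-\alpha)^2-\delta}\|\theta_0\|_{L^2_x}^2$, and concatenate via the Markov property with a time-H\"older bound for Sobolev data; your bookkeeping remark about not squaring the $u$-blow-up is correct and the choice of $\gamma'$ slightly above $\gamma$ is harmless.

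The gap is in your proof of \eqref{eq:time_regularity_SPDE2}, on which \eqref{eq:time_regularity_SPDE3} then relies. You propose to re-run the decomposition $\EE\|\theta_t-\theta_s\|_{L^2_x}^2=I_1+I_2$ and to control $I_1=|F_t(0)-F_s(0)|$ "by interpolation". But $\theta_0\mapsto F_t(0)-F_s(0)$ is a \emph{quadratic} functional of the data, not the square of the norm of a linear operator applied to $\theta_0$, so the real-interpolation theorem for linear maps (the only interpolation machinery invoked in this subsection) does not apply to it; you would need a genuine bilinear interpolation argument, which you do not supply. Moreover, the bound you cite for the $H^1_x$ endpoint, \eqref{eq:regularization_intro2}, contains no factor $|t-s|$ at all — the relevant input is \eqref{eq:time_regularity_SPDE1} (equivalently \eqref{eq:space-time_reg_H1}), whose derivation genuinely uses $\theta_0\in H^1_x$ through $\partial_t F$ solving the PDE with bounded data. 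The clean repair, which is the paper's proof, is to skip the $I_1+I_2$ splitting entirely and observe that $\EE\|\theta_t-\theta_s\|_{L^2_x}^2=\|\Lambda_{s,t}\theta_0\|_{L^2_\omega L^2_x}^2$ for the \emph{linear} map $\Lambda_{s,t}:\theta_0\mapsto\theta_t-\theta_s$, which is bounded from $H^1_x$ to $L^2_\omega L^2_x$ with constant $\lesssim(|t-s|^\gamma(1+s))^{1/2}$ by \eqref{eq:time_regularity_SPDE1} and from $L^2_x$ to $L^2_\omega L^2_x$ with constant $\lesssim 1$ by \eqref{eq:contraction_L2}; real interpolation at parameter $\beta$ then gives \eqref{eq:time_regularity_SPDE2} in one line, and \eqref{eq:time_regularity_SPDE3} follows by your (and the paper's) restarting argument with $\beta=\gamma$.
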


\begin{proof}
	As the map $\theta_0\mapsto (\theta_t-\theta_s)$ is linear, the bound \eqref{eq:time_regularity_SPDE2} follows by interpolation, using the bound \eqref{eq:time_regularity_SPDE1} and the trivial estimate $\EE\|\theta_t-\theta_s\|_{L^2}^2 \lesssim \| \theta_0\|_{L^2_x}^2$ coming from \eqref{eq:contraction_L2}.
	Concerning \eqref{eq:time_regularity_SPDE3}, without loss of generality we may assume $|t-s|\leq 1$ and $s \leq t$.
	Estimate \eqref{eq:time_regularity_SPDE3} then follows by Markovianity and concatenating \eqref{eq:time_regularity_SPDE2} with \eqref{eq:anomalous_regularity_pointwise}, for the choice $\beta=\gamma$.
\end{proof}

\begin{cor}\label{cor:reg_F_improved}
	Let $W$ satisfy Assumptions \ref{ass:well_posed}-\ref{assumption}; let $\theta_0\in L^2_x$ and let $F$ be the associated two-point self-correlation function. Then for any $u\in (0,1)$ it holds that
	\begin{align*}
		|F_t(x)-F_s(y)| \lesssim u^{-\frac{\gamma}{(1-\alpha)^2}-\delta}(|t-s|^{\frac{\gamma^2}{2}}(1+s)^\gamma + |x-y|^{1\wedge 2\gamma}) \| F_0\|_{C^0_b} \quad
		\forall\, s,t\in [u,\infty],\, \forall\, x,y\in\R^d.
	\end{align*}
\end{cor}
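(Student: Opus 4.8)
The plan is to combine the fixed-time spatial regularity of $F$ with the time regularity of $\theta$ in $L^2_x$, using the relation $S_t(z):=2(F_t(0)-F_t(z))=\EE\|\delta_z\theta_t\|_{L^2_x}^2$ and the Cauchy--Schwarz-type trick that controls differences $F_t(z)-F_s(z)$ at fixed $z$ by $\EE\|\theta_t-\theta_s\|_{L^2_x}$ together with $\EE\|\theta_t\|_{L^2_x}$, $\EE\|\theta_s\|_{L^2_x}$.

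First I would split
\begin{align*}
|F_t(x)-F_s(y)| \leq |F_t(x)-F_t(y)| + |F_t(y)-F_s(y)|.
\end{align*}
For the spatial term, note $F_t(x)-F_t(y)=\tfrac12\big(S_t(y-?)\ldots\big)$ — more precisely, writing $F_t(x)-F_t(y)=\EE\langle \delta_{x-y}\theta_t(\cdot+y),\theta_t\rangle_{L^2_x}$ is not directly useful; instead I would use that at fixed $t>0$ the bound $\EE\|\theta_t\|_{H^\gamma_x}^2\lesssim (1\wedge t)^{-\gamma/(1-\alpha)^2-\delta}\|\theta_0\|_{L^2_x}^2$ from \autoref{prop:anomalous_regularity_fixedt}, combined with \autoref{lem:Sobolev-Holder}(2) (or rather its proof giving $\|F_t\|_{C^{2\gamma}_x}\lesssim \EE\|\theta_t\|_{H^\gamma_x}^2$ when $2\gamma\leq 1$, and the elementary $\|F_t\|_{C^{0,1}}\lesssim\EE\|\theta_t\|_{H^1_x}^2$ when $2\gamma>1$), yields
\begin{align*}
|F_t(x)-F_t(y)|\lesssim (1\wedge t)^{-\frac{\gamma}{(1-\alpha)^2}-\delta}\,|x-y|^{1\wedge 2\gamma}\,\|\theta_0\|_{L^2_x}^2.
\end{align*}
Here I use $\|\theta_0\|_{L^2_x}^2=F_0(0)=\|F_0\|_{C^0_b}$ by \autoref{lem:L^2-criterion-selfcorrelation}.

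For the temporal term, at fixed $y$ write
\begin{align*}
F_t(y)-F_s(y) = \EE[\langle\theta_t(\cdot+y),\theta_t\rangle-\langle\theta_s(\cdot+y),\theta_s\rangle],
\end{align*}
and estimate $|F_t(y)-F_s(y)|\lesssim \big(\EE\|\theta_t\|_{L^2_x}^2\big)^{1/2}\big(\EE\|\theta_t-\theta_s\|_{L^2_x}^2\big)^{1/2}+\big(\EE\|\theta_s\|_{L^2_x}^2\big)^{1/2}\big(\EE\|\theta_t-\theta_s\|_{L^2_x}^2\big)^{1/2}$ via Cauchy--Schwarz in $\omega$ and $x$; since $\EE\|\theta_r\|_{L^2_x}^2\leq\|\theta_0\|_{L^2_x}^2$ by \eqref{eq:contraction_L2}, this gives $|F_t(y)-F_s(y)|\lesssim \|\theta_0\|_{L^2_x}\big(\EE\|\theta_t-\theta_s\|_{L^2_x}^2\big)^{1/2}$. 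Now plug in \eqref{eq:time_regularity_SPDE3}, valid for $s,t\in[u,\infty)$, to obtain $|F_t(y)-F_s(y)|\lesssim u^{-\frac{\gamma}{2(1-\alpha)^2}-\delta/2}\,|t-s|^{\gamma^2/2}\,(1+s)^{\gamma/2}\,\|\theta_0\|_{L^2_x}^2$. Summing the two contributions, absorbing $(1+s)^{\gamma/2}\leq(1+s)^\gamma$ and adjusting $\delta$, and using $u\leq 1$ so that $(1\wedge t)^{-\ldots}\leq u^{-\ldots}$ for $t\geq u$, gives the claimed bound (the $u^{-\gamma/(1-\alpha)^2-\delta}$ prefactor is the larger of the two powers of $u$ produced). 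The main subtlety — not really an obstacle — is bookkeeping the exponents of $u$ and $\delta$ so that the single stated prefactor $u^{-\gamma/(1-\alpha)^2-\delta}$ dominates both pieces; one simply takes the worst case and relabels $\delta$. One should also note the case $|t-s|>1$ is handled trivially by $|F_t(y)-F_s(y)|\leq 2\|\theta_0\|_{L^2_x}^2\lesssim |t-s|^{\gamma^2/2}\|F_0\|_{C^0_b}$.
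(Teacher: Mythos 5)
Your proof is correct and follows essentially the same route as the paper's: split into a purely temporal and a purely spatial increment, bound the temporal one by $\|\theta_0\|_{L^2_x}\,\EE[\|\theta_t-\theta_s\|_{L^2_x}^2]^{1/2}$ via Cauchy--Schwarz/Young and \eqref{eq:time_regularity_SPDE3}, and bound the spatial one via \autoref{lem:Sobolev-Holder} together with the fixed-time regularity \eqref{eq:anomalous_regularity_pointwise}. The only (immaterial) difference is the order of the splitting — the paper freezes the spatial variable at $x$ for the time increment and the time variable at $s$ for the space increment, while you do the reverse.
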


\begin{proof}
    Let us decompose
    \begin{align*}
        |F_t(x)-F_s(y)| \leq |F_t(x)-F_s(x)| + |F_s(x)-F_s(y)| =: I_1+ I_2.
    \end{align*}
    By Young's convolution inequality and invariance of the $L^2_x$ norm with respect of the reflection map $\theta \mapsto\theta^-$, and recalling \eqref{eq:time_regularity_SPDE3} and \eqref{eq:selfcorrelation.sup.bound}, it holds
	\begin{align*}
		I_1 \leq \|F_t-F_s\|_{C^0_b}
		&\leq \|\EE[\theta_t\ast\theta^-_t-\theta_s\ast\theta^-_s]\|_{C^0_b}
		\\
        &\leq \EE[\| \theta_t-\theta_s\|_{L^2_x} \| \theta_t\|_{L^2_x}+\| \theta_t-\theta_s\|_{L^2_x} \| \theta_s\|_{L^2_x}]
		\\
        &\lesssim \| \theta_0\|_{L^2_x} \EE[\| \theta_t-\theta_s\|_{L^2_x}^2]^{1/2}
        \\
        &\lesssim
        u^{-\frac{\gamma}{2(1-\alpha)^2}-\frac{\delta}{2}} |t-s|^{\frac{\gamma^2}{2}} (1+s)^\frac{\gamma}{2}\| F_0\|_{C^0_b}.
	\end{align*}
As for the term $I_2$, we use \autoref{lem:Sobolev-Holder} and the space regularity coming from \eqref{eq:anomalous_regularity_pointwise} to deduce
\begin{align*}
I_2 \leq |x-y|^{1\wedge2\gamma} \| F_s\|_{C^{2\gamma}_x} 
&\lesssim
|x-y|^{1\wedge2\gamma} \mathbb{E}\| \theta_s\|_{H^{\gamma}_x}^2 \lesssim
u^{-\frac{\gamma}{(1-\alpha)^2}-\delta} |x-y|^{1\wedge2\gamma}  \| F_0\|_{C^0_b}. \qedhere
\end{align*}
\end{proof}

\section{Structure of the dissipation measure in the incompressible Kraichnan model} \label{section:dissipation_measure}

Recall the dissipation measure $\mathcal{D}[\theta]$ from \autoref{thm:energy_balance_inviscid}, which governs the anomalous loss of kinetic energy of solutions to the inviscid transport SPDE as part of the local energy balance \eqref{eq:energy_balance_inviscid}.
In this section, we obtain an exact formula for $\EE \mathcal{D}[\theta]$ in terms of increments of the solution $\theta$; as a consequence, boundedness and non-triviality of $\EE \mathcal{D}[\theta]$ are directly linked to the sharp regularity of solutions $\theta$ in Besov-type spaces $\tilde L^2_{\omega,t} \tilde B^{1-\alpha}_{2,\infty}$.
In fact, equation \eqref{eq:exact.formula} below can be regarded as an analogue of known formulas for the Duchon--Robert dissipation measure \cite{DuRo00}, in the context of exact laws of turbulence in hydrodynamics, see \cite{Novack2024}.

Compared to the previous sections, the forthcoming \autoref{thm:exact.formula.dissipation.measure} requires more stringent assumptions on the noise $W$. Throughout this section, we will always assume $W$ to be of Kraichnan type and divergence-free (so that $\eta=1$). For simplicity of exposition, we will also set the infra-red cut-off $m=1$, although this is not crucial.
In this case, by equation \eqref{eq:covariance_generalQ} and \autoref{lem:asymptotics_kraichnan} from \autoref{app:auxiliary}, the asymptotics of $Q$ around $0$ are given by
\begin{equation}\label{eq:covariance_recap}
    Q(z)= c |z|^{2\alpha} \Big[ P^\parallel_z + \Big(1+\frac{2\alpha}{d-1} \Big) P^\perp_z\Big] + O(|z|^2),
    \quad \text{for }|z|\ll 1,
\end{equation}
for a suitable constant $c=c(\alpha,d)$. We adopted the notation $P^\parallel_z=\hat z\otimes \hat z$ and $P^\perp_z=I_d-P^\parallel_z$.

The following result summarizes the main findings of this section. It particularly implies \autoref{thm:dissipation_measure_intro} and \eqref{eq:regularization_besov_intro} of \autoref{thm:regularity_Kraichnan} from the Introduction.

\begin{theorem}\label{thm:exact.formula.dissipation.measure}
Let $\alpha\in (0,1)$ and consider the associated incompressible Kraichnan noise $W$ ($\eta=1$, $m=1$). Let $\theta_0\in L^2_x$, $\theta$ be the unique solution to \eqref{eq:spde_inviscid_approx}, and $\mathcal{D}[\theta]$ be the associated dissipation measure. Then
\begin{equation}\label{eq:exact.formula}
    \mathbb{E} \mathcal{D}[\theta](\dif t, \dif x)
    = c\, d\, (1-\alpha)\, \lim_{\eps \rightarrow 0} 
\mathbb{E}h_t(\eps, x)\dd t \dd x
\end{equation}
in the sense of space-time distributions, where $c$ is as in \eqref{eq:covariance_recap} and
\begin{equation*}
    h_t(\eps, x):= \oint_{\mathbb{S}^{d - 1}} \frac{| \delta_{\eps \hat{y}} \theta_t (x)|^2}{\eps^{2 - 2 \alpha}} \sigma ( \mathrm{d} \hat{y}).
\end{equation*}
Moreover 
\begin{equation}\label{eq:sharp_regularity}
    \lim_{\eps\to 0} \frac{1}{\eps^{2(1-\alpha)}} \oint_{\mathbb{S}^{d-1}} \| \delta_{\eps \hat{z}} \theta \|_{L^2_{\omega,t,x}}^2 \sigma ( \mathrm{d} \hat{z})
    = \frac{1}{c\, d\, (1-\alpha)} (\| \theta_0\|_{L^2_x}^2 - \EE[| \theta_T\|_{L^2_x}^2])>0
\end{equation}
\end{theorem}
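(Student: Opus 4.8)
The plan is to derive the exact formula \eqref{eq:exact.formula} by passing to the limit $\kappa\downarrow 0$ in the viscous local energy balance \eqref{eq:energy_balance_viscous_kappa}, matching the viscous dissipation term against the structure function using the precise short-distance asymptotics \eqref{eq:covariance_recap} of $Q$. First I would work with $\tilde\theta^\kappa$ solving \eqref{eq:spde_viscous_approx} and recall from the proof of \autoref{thm:energy_balance_inviscid} that $\kappa\, C(0):\nabla\tilde\theta^\kappa_t\otimes\nabla\tilde\theta^\kappa_t\,\dd t\,\dd x$ converges (in probability, as space-time distributions) to $\mathcal{D}[\theta]$; taking expectations and using \autoref{lem:approx} one gets that $\kappa\,\EE[C(0):\nabla\tilde\theta^\kappa_t\otimes\nabla\tilde\theta^\kappa_t]\,\dd t\,\dd x$ converges to $\EE\mathcal{D}[\theta]$ as space-time distributions. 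Since the noise is divergence-free and isotropic, $C(0)=2c_0 I_d$ (with $c_0$ as in \autoref{rmk:isotropy}), so this term is $2c_0\kappa\,\EE\|\nabla\tilde\theta^\kappa_t(x)\|^2$ pointwise in $x$.

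Next I would express $\EE\|\nabla\tilde\theta^\kappa_t(x)\|^2$ through the two-point function. Writing $r^\kappa_t(x,y)=\EE[\tilde\theta^\kappa_t(x)\tilde\theta^\kappa_t(y)]$, the key is the identity, obtained as in the proof of \autoref{lem:PDE_twopoint_transport} (cf.\ \eqref{eq:aux006}) by It\^o's formula applied to $|\delta_z\tilde\theta^\kappa_t(x)|^2$ with the viscous correction,
\begin{align*}
\partial_t\, \EE|\delta_z\tilde\theta^\kappa_t(x)|^2
= (1-\kappa)\,Q(z):D^2_z\,\EE|\delta_z\tilde\theta^\kappa_t(x)|^2
- 4\kappa c_0\,\EE|\nabla\delta_z\tilde\theta^\kappa_t(x)|^2 + (\text{lower order}),
\end{align*}
but rather than tracking this PDE I would use the cleaner route: the pointwise dissipation density $2c_0\kappa\EE\|\nabla\tilde\theta^\kappa_t(x)\|^2$ equals $\lim_{\eps\to 0}c_0\kappa\,\oint_{\SS^{d-1}}\eps^{-2}\EE|\delta_{\eps\hat y}\tilde\theta^\kappa_t(x)|^2\sigma(\dd\hat y)\cdot(\text{const})$, and integrate the structure-function balance over a small ball. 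Concretely, integrating $\partial_t F^\kappa = (1-\kappa)Q:D^2_z F^\kappa + 2c_0\kappa\Delta F^\kappa$ against a suitable radial bump and using the divergence form $(1-\kappa)\nabla\cdot(Q\nabla F^\kappa)$ (valid since $\div Q=0$ in the incompressible case) together with the asymptotics \eqref{eq:covariance_recap}, the boundary flux through $\partial B_\eps$ produces exactly the spherical average $h_t(\eps,x)$ with the constant $c\,d\,(1-\alpha)$: this is where the factor $d$ (from the trace $\tr(P^\parallel + (1+\tfrac{2\alpha}{d-1})P^\perp) = 1 + (d-1)(1+\tfrac{2\alpha}{d-1}) = d + 2\alpha$, after the cancellation leaving the leading anomalous piece) and $(1-\alpha)$ (from the scaling exponent $2-2\alpha$ of $|z|^{2\alpha}$ hitting $D^2$, i.e.\ $2\alpha(2\alpha-1)+\ldots$ reorganizing to $2(1-\alpha)$) enter. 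I would mirror the formal computation in the Introduction (the ``heuristic computation'' with $I^1_r$, $I^2_r$), now made rigorous by the uniform-in-$\kappa$ Sobolev bound \eqref{eq:regularization_intro}: since $\theta\in L^2_{\omega,t}H^{1-\alpha-}_x$ with bounds uniform in $\kappa$, the limits $\eps\to 0$ and $\kappa\to 0$ can be interchanged on the relevant terms, and the positive term dropped in the heuristic is controlled.

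For the second statement \eqref{eq:sharp_regularity}: integrating \eqref{eq:exact.formula} in $x$ over $\R^d$ and in $t$ over $[0,T]$, the left side becomes $\EE\mathcal{D}[\theta]([0,T]\times\R^d) = \|\theta_0\|_{L^2_x}^2 - \EE\|\theta_T\|_{L^2_x}^2$ by \eqref{eq:mean.energy.balance}, while the right side becomes $c\,d\,(1-\alpha)\lim_{\eps\to 0}\eps^{-2(1-\alpha)}\oint_{\SS^{d-1}}\|\delta_{\eps\hat z}\theta\|_{L^2_{\omega,t,x}}^2\sigma(\dd\hat z)$ — here I must justify that the limit commutes with the $t,x$ integration, which follows from the uniform bound of \autoref{lem:refined.Besov.Holder}/\eqref{eq:regularization_intro} (the spherical averages are dominated in $L^1_{t}$ uniformly in $\eps$, by \autoref{lem:besov_type_spaces}) plus a lower-semicontinuity/Fatou argument for the $\liminf$ and a monotonicity-type argument for the matching $\limsup$. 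Positivity of the right-hand side of \eqref{eq:sharp_regularity} then follows from \autoref{cor:dissipation.general.assumption} (or \autoref{thm:dissipation}): the incompressible Kraichnan noise is in the diffusive regime $\eta=1>1-\tfrac{d}{4\alpha^2}$, hence satisfies \autoref{assumption}, hence every non-zero $\theta_0$ continuously dissipates mean energy, so $\|\theta_0\|_{L^2_x}^2 - \EE\|\theta_T\|_{L^2_x}^2>0$.

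The main obstacle will be the rigorous justification of exchanging the two limits $\eps\to 0$ and $\kappa\to 0$ and of passing the limit inside the space-time integration in \eqref{eq:exact.formula}. The delicate point is that $\kappa\EE\|\nabla\tilde\theta^\kappa_t\|^2$ is \emph{not} uniformly integrable in space a priori, and the structure functions are only controlled in a Besov-type $\tilde L^2_{\omega,t}\tilde B^{1-\alpha-}_{2,\infty}$ scale, not at the critical exponent $1-\alpha$; so the identity \eqref{eq:exact.formula} is genuinely a statement about a \emph{limit} that need not be attained by any single $\eps$ or $\kappa$. I expect to handle this by: (i) first establishing \eqref{eq:exact.formula} tested against $\varphi\in C^\infty_c(\R_+\times\R^d)$, using the uniform $\mathcal{F}L^1$ and $C^{2-2\alpha-}_x$ bounds of \autoref{thm:reg_F_intro} on $F^\kappa$ to get equicontinuity and pass to the limit in $\kappa$ for fixed $\eps$, then in $\eps$; (ii) controlling the error between $2c_0\kappa\EE\|\nabla\tilde\theta^\kappa\|^2$ and the mollified spherical average by an $O(\eps^\gamma)$ term uniform in $\kappa$, using the uniform fractional regularity; and (iii) for the integrated identity \eqref{eq:sharp_regularity}, invoking the tightness/finite-mass statement for $\mathcal D[\theta]$ from \autoref{thm:energy_balance_inviscid} together with \autoref{lem:tightness}-type bounds to upgrade the distributional identity to the integrated one. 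A secondary bookkeeping obstacle is pinning down the explicit constant $c\,d\,(1-\alpha)$ — this requires carefully tracking the angular integrals $\oint_{\SS^{d-1}}\hat z\otimes\hat z\,\sigma(\dd\hat z)=\tfrac1d I_d$ and the contribution of the $O(|z|^2)$ remainder in \eqref{eq:covariance_recap} (which vanishes in the limit after the $\eps^{-(2-2\alpha)}$ rescaling), but this is routine once the structure of the argument is fixed.
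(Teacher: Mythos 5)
Your route is genuinely different from the paper's, and it has a gap that is not just technical. The paper never touches the viscous approximation in this proof: it works directly at $\kappa=0$, applies It\^o's formula to $\theta_t\,\theta_t^\eps$ (mollification in space only), and identifies the resulting commutator expression $\cD^\eps(t,x)=\tfrac12\int D^2\chi^\eps(y):Q(y)|\delta_y\theta_t(x)|^2\dd y$ with $\EE\mathcal{D}[\theta]$ by comparison with the already-established balance \eqref{eq:energy_balance_inviscid}; the Sobolev regularity of \autoref{thm:regularity_Kraichnan} enters only to kill one error term (and, as noted in \autoref{rem:required_regularity}, is not even needed for $\alpha>1/2$). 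Your plan instead requires interchanging the limits $\kappa\to 0$ and $\eps\to 0$ in $\kappa\,\EE\|\nabla\tilde\theta^\kappa\|^2$ versus $\eps^{-(2-2\alpha)}$-rescaled increments; you flag this as the main obstacle but the proposed fix (uniform $\mathcal{F}L^1$ and $C^{2-2\alpha-\delta}_x$ bounds on $F^\kappa$) controls regularity strictly \emph{below} the critical exponent $1-\alpha$, whereas the identity \eqref{eq:exact.formula} lives exactly \emph{at} that exponent. Sub-critical uniform bounds give equicontinuity away from criticality but do not by themselves let you commute the two limits on the critical quantity.

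The more fundamental missing idea is the existence of $\lim_{\eps\to 0}\EE h_t(\eps,x)$ at all. Your ball-integration of the structure-function PDE is exactly the Introduction's heuristic, and that computation only yields an \emph{inequality} (a positive term is dropped), or at best a limit of a \emph{combination} of the spherical average at radius $\eps$ with a radial average over radii $\leq\eps$ — this is precisely what the paper's \autoref{cor:exact_diss_measure2} produces with the explicit mollifier $\chi(y)=k_d(1-|y|^2)\mathbf 1_{|y|\le1}$. Disentangling that combination, proving the limit of $h_t(\eps,x)$ alone exists, and extracting the factor $(1-\alpha)$ is the content of \autoref{lem:identification_limits}, an ODE-comparison argument on $G(r)=\int_0^r u^{d+1}\langle\varphi,v(u)\rangle\dd u$; nothing in your proposal plays this role, and without it the right-hand side of \eqref{eq:exact.formula} is not known to be well defined. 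Relatedly, for \eqref{eq:sharp_regularity} your ``monotonicity-type argument for the matching $\limsup$'' is not available: the paper gets the $\liminf$ from lower semicontinuity of total variation under narrow convergence of nonnegative measures, but the $\limsup$ requires integrating the local balance over $[0,T]\times\R^d$ to obtain $\lim_\eps\int\EE\cD^\eps=\|\theta_0\|_{L^2_x}^2-\EE\|\theta_T\|_{L^2_x}^2$ exactly, i.e.\ a separate global energy identity, not a domination argument. Your positivity claim via \autoref{thm:dissipation} and the use of \eqref{eq:mean.energy.balance} are correct.
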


\begin{remark}
    In light of \eqref{eq:sharp_regularity} and \autoref{lem:besov_type_spaces}, $\theta$ belongs sharply to $\tilde{L}^2_{\omega,t} \tilde B^{1-\alpha}_{2,\infty}$, in the sense that it does not belong to the closure of smooth functions in this space and therefore cannot enjoy higher space regularity. By \autoref{lem:refined.Besov.Holder}, a similar consideration applies to its two-point self-correlation $F[\theta]$, which belongs sharply to $\tilde L^1_t \dot B^{2-2\alpha}_{\infty,\infty}$. Note however that, while space regularity is sharp, time integrability can be improved: by \eqref{eq:regularization_intro2} we have $\sup_{t\in [0,T]} \EE[\| \theta_t\|_{H^{2-2\alpha-\delta}_x}^2]<\infty$ and $F[\theta]\in\tilde L^\infty_t \dot B^{2-2\alpha-2\delta}_{\infty,\infty}$ for each small $\delta>0$, whenever $\theta_0\in H^1_x$. 
\end{remark}

The proof of \autoref{thm:exact.formula.dissipation.measure} is reminiscent of the ones from \cite{Novack2024}; it also has its own specific features, like the use of \autoref{lem:identification_limits} below. We divide the proof into a few substeps.

\begin{prop}\label{prop:alternative_dissipation_measure}
    Let $W$, $\theta_0$, $\theta$, $\cD[\theta]$ be as in \autoref{thm:energy_balance_inviscid}; let $\chi$ be a compactly supported probability density such that $\chi$, $\nabla\chi\in L^\infty_x\cap \mathrm{BV}$ and let $\{\chi^\eps\}_\eps$ denote the associated mollifiers. Define
    \begin{equation}\label{eq:defn_Deps}
        \cD^\eps(t,x):=\frac{1}{2} \int_{\R^d} D^2\chi^\eps(y):Q(y) |\delta_y \theta(x)|^2 \dd y.
    \end{equation}
    Then $\EE\cD^\eps(t,x) \dd t \dd x\to \EE\cD[\theta](\dif t, \dif x)$ as $\eps\to 0^+$ in the sense of space-time distributions.
\end{prop}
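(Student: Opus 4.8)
\textbf{Proof plan for \autoref{prop:alternative_dissipation_measure}.}

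The plan is to start from the local energy balance \eqref{eq:energy_balance_inviscid} of \autoref{thm:energy_balance_inviscid} and reinterpret the right-hand side after mollification. First I would mollify the balance law by convolving against $\chi^\eps$ in space: writing $\theta^\eps := \theta \ast \chi^\eps$, the quantity $\EE|\theta^\eps_t(x)|^2$ is smooth and satisfies a closed equation; more usefully, one should instead convolve the \emph{distributional} identity \eqref{eq:energy_balance_inviscid} against $\chi^\eps$ and test against a space-time test function $\varphi \in C^\infty_c$. Taking expectations kills the martingale term $\nabla|\theta|^2\cdot\dd W$ (here using that $W$ is divergence-free, so no It\^o correction from $\dd(\div W)$ survives), leaving
\begin{align*}
\langle \EE\cD[\theta], \varphi \ast \chi^\eps \rangle
= -\frac{\dd}{\dd t}\langle \EE|\theta^\eps_t|^2, \varphi\rangle + \frac12 \langle \EE|\theta^\eps_t|^2, C(0):D^2\varphi\rangle,
\end{align*}
interpreted against a further time-test function. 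The point is now to recognize the right-hand side as $\langle \EE\cD^\eps, \varphi\rangle$ up to errors vanishing as $\eps\to 0$.

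The second, and technically central, step is to derive a closed identity for $\EE|\theta^\eps_t|^2$ directly. One recognizes that $\EE[|\theta^\eps_t(x)|^2] = \EE[\langle \theta_t(\cdot),\chi^\eps(x-\cdot)\rangle^2]$ can be written in terms of the two-point correlation $r_t(x,y)=\EE[\theta_t(x)\theta_t(y)]$, namely $\EE|\theta^\eps_t(x)|^2 = \int\int \chi^\eps(x-x')\chi^\eps(x-y') r_t(x',y')\dd x'\dd y'$, and $r_t$ satisfies \eqref{eq:aux006} with $\kappa=0$, i.e. $\partial_t r_t(x,y) = \frac12 C(0):(D_x^2+D_y^2)r_t + C(x-y):\nabla_x\nabla_y r_t$. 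Convolving this identity in both variables against $\chi^\eps(x-\cdot)$ and tracking the terms, the $C(0):(D_x^2+D_y^2)$ part reassembles (modulo terms that are pure $x$-derivatives falling on the test function, contributing the $\frac12 C(0):D^2\varphi$ term above) while the cross term produces, after an integration by parts moving the two derivatives onto the mollifiers and using translation invariance $C(x'-y') = C(0)-Q(x'-y')$,
\begin{align*}
\int_{\R^d} D^2\chi^\eps(y):\big(C(0)-C(y)\big)\, \EE[\theta_t(x+y)\theta_t(x)]\dd y
= -\int_{\R^d} D^2\chi^\eps(y):Q(y)\,\EE[\theta_t(x+y)\theta_t(x)]\dd y,
\end{align*}
using $\int D^2\chi^\eps = 0$ and $Q(0)=0$. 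Symmetrizing via $\int D^2\chi^\eps(y):Q(y)\theta_t(x+y)\theta_t(x)\dd y = -\frac12\int D^2\chi^\eps(y):Q(y)|\delta_y\theta_t(x)|^2\dd y$ (legitimate because $D^2\chi^\eps\cdot Q$ is even, so the terms $|\theta_t(x+y)|^2$ and $|\theta_t(x)|^2$ integrate to the same thing), we arrive exactly at the expression $\cD^\eps(t,x)$ from \eqref{eq:defn_Deps}. This shows $\partial_t \EE|\theta^\eps_t|^2 = \frac12 C(0):D^2\EE|\theta^\eps_t|^2 - \EE\cD^\eps(t,x)$ as space-time distributions, i.e. $\EE\cD^\eps$ is precisely the mollified-energy defect.

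The third step is passage to the limit $\eps\to 0$. From the identity just derived, for any space-time test function one has $\langle \EE\cD^\eps,\varphi\rangle = -\langle\EE|\theta^\eps|^2,\partial_t\varphi\rangle - \frac12\langle \EE|\theta^\eps|^2,C(0):D^2\varphi\rangle$; since $\theta^\eps_t\to\theta_t$ in $L^2_{\omega,x}$ uniformly on compact time intervals (by $\|\theta^\eps_t\|_{L^2_x}\le\|\theta_t\|_{L^2_x}$, \eqref{eq:contraction_L2}, and standard mollification), the right-hand side converges to $-\langle\EE|\theta|^2,\partial_t\varphi\rangle - \frac12\langle\EE|\theta|^2,C(0):D^2\varphi\rangle = \langle\EE\cD[\theta],\varphi\rangle$ by \autoref{thm:energy_balance_inviscid} and \autoref{cor:dissipation_measure}. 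The hypothesis $\chi,\nabla\chi\in L^\infty_x\cap\mathrm{BV}$ enters to make the integration by parts producing $D^2\chi^\eps$ rigorous and to ensure $D^2\chi^\eps$ is a finite measure of the right scaling (so that \eqref{eq:defn_Deps} is well-defined as an $L^1_x$ object for a.e.\ $(\omega,t)$ via $\int|D^2\chi^\eps(y)||Q(y)|\dd y\lesssim\eps^{2\alpha-2}$ paired against $\sup_y\|\delta_y\theta\|_{L^2_x}^2\le 4\|\theta\|_{L^2_x}^2\in L^1_{\omega,t}$). I expect the main obstacle to be the careful bookkeeping in the second step: isolating which terms coming from $\frac12 C(0):(D_x^2+D_y^2)r_t$ reconstruct $\frac12 C(0):D^2\EE|\theta^\eps_t|^2$ versus which are absorbed as harmless commutator-type errors, and justifying the two integrations by parts and the symmetrization at the level of distributions (rather than smooth functions) given that $\theta_t$ is merely $L^2_x$-valued — this is where one leans on the mollification $\theta^\eps$ being genuinely smooth and on Fubini/dominated-convergence arguments analogous to those already carried out in the proof of \autoref{prop:energy_balance_viscous_kappa}.
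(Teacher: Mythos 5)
Your overall strategy (mollify, use the energy balance of \autoref{thm:energy_balance_inviscid}, identify the flux, pass to the limit) matches the paper's, but the central computation is set up with the wrong object and this creates a genuine gap. You work with the \emph{doubly} mollified energy $\EE|\theta^\eps_t|^2=\iint\chi^\eps(x-x')\chi^\eps(x-y')r_t(x',y')\dd x'\dd y'$. When you integrate the cross term $C(x'-y'):\nabla_{x'}\nabla_{y'}r_t$ by parts, the two derivatives land on \emph{different} mollifiers, producing a two-kernel object of the form
\begin{equation*}
\iint \nabla\chi^\eps(x-y)\otimes\nabla\chi^\eps(x-y'):C(y-y')\,\EE[\theta_t(y)\theta_t(y')]\dd y\dd y'
\end{equation*}
(in the divergence-free case this is $\sum_k\EE|(\nabla\cdot(\sigma_k\theta_t))^\eps|^2$, exactly the term appearing in the proof of \autoref{prop:energy_balance_viscous_kappa}). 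Because the argument of $C$ is $y-y'$ while the two kernels carry $x-y$ and $x-y'$ separately, this does \emph{not} collapse to a single convolution $\int D^2\chi^\eps(y):Q(y)\,\EE[\theta_t(x+y)\theta_t(x)]\dd y$ — that expression only arises from the \emph{asymmetric} product $\theta_t\theta_t^\eps$, where exactly one factor is mollified (this is what the paper does: It\^o's formula for $\theta_t\theta_t^\eps$ gives the cross term $\sum_k(\nabla\cdot(\sigma_k\theta_t)^\eps)(\nabla\cdot(\sigma_k\theta_t))$, which does reduce to $\theta_t\,[(C:D^2\chi^\eps)\ast\theta_t]$ plus a divergence). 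As written, your step 2 cannot produce the kernel $D^2\chi^\eps:Q$ of \eqref{eq:defn_Deps}.

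Two further points. First, even with the correct asymmetric setup, the identity for $\partial_t\EE[\theta_t\theta_t^\eps]$ contains a remainder $\EE\,\nabla\cdot\big(\theta_t[(Q\nabla\chi^\eps)\ast\theta_t]\big)$ which does \emph{not} vanish for free: since $|Q\nabla\chi^\eps|$ scales like $\eps^{2\alpha-1}$, killing this term requires pairing it against increments $\delta_{\eps z}\theta$ of size $o(\eps^{1-2\alpha})$ in $L^2$, i.e.\ the anomalous Sobolev regularization $\theta\in L^2_{\omega,t}H^{1-\alpha-}_x$ of \autoref{thm:regularity_Kraichnan} (cf.\ \autoref{rem:required_regularity}: for $\alpha<1/2$ this is an essential input, not bookkeeping). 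Your plan has no analogue of this step. Second, your symmetrization $\int D^2\chi^\eps:Q\,\theta_t(x+y)\theta_t(x)\dd y=-\tfrac12\int D^2\chi^\eps:Q\,|\delta_y\theta_t(x)|^2\dd y$ is not a pointwise identity in $x$: the term $\int D^2\chi^\eps(y):Q(y)|\theta_t(x+y)|^2\dd y=(D^2\chi^\eps:Q)\ast|\theta_t|^2(x)$ does not equal $|\theta_t(x)|^2\int D^2\chi^\eps:Q\,\dd y$; the two only agree after integrating in $x$ against a test function and letting $\eps\to0$ (this is Step 4 of the paper's proof, where that convolution is shown to vanish as a distribution using $Q(0)=0$). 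You should state the symmetrization as an identity up to this explicitly controlled error rather than as an equality.
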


\begin{proof}
    We divide the proof in several steps.

    \emph{Step 1.} First assume $\chi$ to be smooth. As in the proof of \autoref{prop:energy_balance_viscous_kappa}, $\theta^{\eps} := \theta \ast \chi^{\eps}$ satisfies
    \[ \dd \theta^{\eps}_t  + \sum_k \nabla\cdot (\sigma_k  \theta_t)^{\eps} \dd W^k_t
    = \frac{1}{2} C (0) : D^2 \theta^{\eps}_t\]
    where we used the fact that $\nabla\cdot \sigma_k=0$ since $W$ is divergence free.
    Applying It\^{o} Formula to $\theta \theta^{\varepsilon}$ (e.g. as an $H^{-1}_x$-valued semimartingale), we find
    \begin{equation} \label{eq:LEB_eps} \begin{split}
    \dd(\theta_t \theta^{\varepsilon}_t) 
    & = \frac{1}{2} C (0) : D^2 (\theta_t \theta^{\varepsilon}_t)\dd t 
    - C (0) \nabla \theta_t \cdot \nabla \theta^{\varepsilon}_t\dd t
    + \sum_k \nabla\cdot (\sigma_k \theta_t)^{\varepsilon} \nabla\cdot (\sigma_k \theta_t)\dd t \\
    &\quad - \sum_k \theta_t \nabla\cdot (\sigma_k \theta_t)^{\eps} \dd W^k_t
    - \sum_k \theta^\eps_t \nabla (\sigma_k \theta _t)\dd W^k_t.
    \end{split}\end{equation}
    For fixed $t \geq 0$, we can rewrite the second term of the first line above as
    \begin{align*}
        C (0) \nabla \theta_t \cdot \nabla \theta^{\varepsilon}_t
        & = \nabla\cdot [\theta_t C(0)\nabla\theta^\eps_t]- \theta_t C(0): D^2\theta^\eps_t\\
        & = \nabla\cdot \big( \theta_t [(C(0) \nabla\chi^\eps)\ast \theta_t]\big) - \theta_t [(C(0): D^2\chi^\eps)\ast \theta_t].
    \end{align*}
    For fixed $t \geq 0$, we can rewrite the last term of the first line of \eqref{eq:LEB_eps} as
    \begin{align*}
        \sum_k \nabla\cdot (\sigma_k \theta_t)^{\varepsilon} \nabla\cdot (\sigma_k \theta_t)
        = \nabla\cdot \bigg( \sum_k \nabla\cdot (\sigma_k \theta_t)^{\varepsilon} \sigma_k \theta_t \bigg)
        - \sum_k D^2: (\sigma_k \theta_t)^{\varepsilon} \sigma_k \theta_t
        =: \nabla \cdot I_1^\eps - I^\eps_2 ,
    \end{align*}
    where $I_1^\eps$ and $I^\eps_2$ are pointwise defined functions. $I_1^\eps$ is given by
    \begin{align*}
        I_1^\eps(x)
        & = \sum_k \nabla\cdot (\sigma_k \theta_t)^{\varepsilon}(x) \sigma_k(x) \theta_t(x)\\
        & = \theta_t (x) \sum_k \int_{\mathbb{R}^d} \sigma_k (y) \theta_t (y) \cdot \nabla \chi^{\varepsilon} (x - y) \sigma_k (x) \dd y\\
        & = \theta_t (x) \int_{\mathbb{R}^d} C(x-y) \nabla \chi^{\varepsilon} (x - y) \theta_t(y) \dd y
        = \theta_t (x) [(C \nabla\chi^\eps)\ast \theta_t](x).
    \end{align*}
    A similar computation yields
    \begin{align*}
        I_2^\eps(x)
        = \theta_t (x) [(C: D^2\chi^\eps)\ast \theta_t](x).
    \end{align*}
    Taking expectation in \eqref{eq:LEB_eps} removes the stochastic integrals, and inserting the above formulas while recalling that $Q(x)=C(0)-C(x)$ we arrive at the identity
    \begin{equation}\label{eq:LEB_eps2}
        \partial_t \EE[\theta_t \theta^{\varepsilon}_t] 
        = \frac{1}{2} C (0) : D^2 \EE[\theta_t \theta^{\varepsilon}_t]
        - \EE\nabla\cdot\big( \theta_t [(Q \nabla\chi^\eps)\ast \theta_t]\big)
        + \EE \theta_t [(Q: D^2\chi^\eps)\ast \theta_t]
    \end{equation}
    in the sense of distributions.
    For fixed $\eps>0$, the above derivation holds for smooth $\chi$, but notice that the resulting expression is meaningful as long as $\nabla\chi$, $D^2\chi$ are finite measures.
    Indeed in this case, since $Q$ is a continuous bounded function, by Young's inequality $(Q: D^2\chi^\eps)\ast \theta_t$ is a well-defined element of $L^2_x$, and similarly for $[(Q \nabla\chi^\eps)\ast \theta_t]$, therefore the right-hand side of \eqref{eq:LEB_eps2} is a well-defined distribution.
    Standard approximation arguments then allow to extend the validity of \eqref{eq:LEB_eps2}, at fixed $\eps>0$, to any $\chi$ as in our assumptions.

    {\em Step 2.} For $\eps>0$, for any fixed $t>0$, define the random distribution 
    \begin{equation}\label{eq:defn_cV_eps}
    \mathcal{V}^\eps_t := \cV^{1,\eps}_t + \cV^{2,\eps}_t
    = \nabla\cdot\big( \theta_t [(Q \nabla\chi^\eps)\ast \theta_t]\big) - \theta_t [(Q: D^2\chi^\eps)\ast \theta_t]
    \end{equation}
    By properties of mollifiers and dominated convergence, $\theta_t \theta_t^\eps\to |\theta_t|^2$ in $L^1_{\omega,t,x}$, and similarly $D^2(\theta_t \theta_t^\eps)\to D^2|\theta_t|^2$ as space-time distributions.
    Therefore, taking expectation in \eqref{eq:energy_balance_inviscid} and comparing it to \eqref{eq:LEB_eps2}, we find the following identity between space-time distributions:
    \begin{equation}\label{eq:LEB_eps3}
        \mathbb{E} \mathcal{D}[\theta] (\dif x,\dif t) = \lim_{\eps\to 0} \EE[\cV^\eps_t](\dif x) \dd t.
    \end{equation}

    \emph{Step 3.} We claim that $\EE[\cV^{1,\eps}_t]\to 0$ as a space-time distribution, for $\cV^{1,\eps}$ as defined in \eqref{eq:defn_cV_eps}, possibly thanks to the positive Sobolev regularity of $\theta$ coming from \autoref{thm:regularity_Kraichnan}.
    We will show something slightly stronger, namely that $\theta_t [(Q \nabla\chi^\eps)\ast \theta_t]$ converges to $0$ in $L^1_{t,x}$, both $\PP$-a.s. and in expectation.
    Indeed, exploiting the facts that ${\rm div} Q=0$ and that the integral of a divergence on full space vanishes, we have
    \begin{align*}
        \theta_t (x) (\nabla \cdot [Q\chi^\eps] \ast \theta_t) (x)
        & = \theta_t (x) \int_{\mathbb{R}^d} Q (x - y) \nabla \chi^{\varepsilon} (x - y) (\theta_t (y) - \theta_t (x))\dd y\\
        & = \theta_t (x) \int_{\mathbb{R}^d} \frac{Q (\eps z)}{\eps} \nabla \chi (z)\, \delta_{\eps z} \theta_t (x)\dd z.
    \end{align*}
    Let $\lambda>0$ to be chosen later.
    By \eqref{eq:covariance_recap} and the boundedness of $Q$, it holds $|Q(z)|\lesssim |z|^{2\alpha}$ for all $z\in\R^d$; together with Minkowski's inequality, this yields
    \begin{align*}
        \| \theta_t (\nabla \cdot [Q \nabla\chi^\eps] \ast \theta_t)\|_{L^1_x}
        & \lesssim \| \theta_t\|_{L^2_x}\, \eps^{2\alpha-1} \int_{\mathbb{R}^d} |z|^{2\alpha} |\nabla \chi (z)| \|\, \delta_{\eps z} \theta_t \|_{L^2_x} \dd z\\
        & \lesssim \eps^{\alpha-\lambda}\,\| \theta_t\|_{L^2_x}\, \| \theta_t\|_{H^{1-\alpha-\lambda}_x}
    \end{align*}
    whenever $\lambda<\alpha$ and $\lambda\leq 1-\alpha$; above we used the compact support of $\nabla\chi$. Note that such $\lambda$ can always be found thanks to \autoref{thm:regularity_Kraichnan}.
    Integrating in time, using that $\theta\in L^2_t H^{1-\alpha-\lambda}_x$ both $\PP$-a.s. and in $L^2_\omega$, the claim follows by sending $\eps\to 0$.
    It then follows from \eqref{eq:LEB_eps3} that
    \begin{equation}\label{eq:first.option}
        \mathbb{E} \mathcal{D}[\theta] (\dif x,\dif t) = \lim_{\eps\to 0} \EE[\cV^{2,\eps}_t(x)] \dd x \dd t
        = -\lim_{\eps\to 0} \EE\Big[\theta_t(x) [(Q: D^2\chi^\eps)\ast \theta_t](x)\Big] \dd x \dd t.
    \end{equation}

    \emph{Step 4.} Upon expanding $|\delta_y \theta(x)|^2$ in \eqref{eq:defn_Deps}, notice that
    \begin{align*}
        \cD^\eps(t,x)
        & = \frac{1}{2} \int_{\R^d} D^2\chi^\eps(y):Q(y) |\theta_t(x+y)|^2 \dd y
        - \int_{\R^d} D^2\chi^\eps(y):Q(y) \theta_t(x+y)\theta_t(x) \dd y\\
        & = \frac{1}{2} [(D^2\chi^\eps:Q) \ast |\theta_t|^2](x)
        + \cV^{2,\eps}_t(x)
    \end{align*}
    where $|\theta_t(x)|^2$ gives no contribution since $Q$ is divergence free.
    We are left to show that the first term in the above vanishes in the sense of distributions as $\eps\to 0$.
    Fix $\varphi \in C^{\infty}_c(\R^d)$; by $D^2\chi^\eps:Q=D^2:(\chi^\eps\,Q)$, integration by parts and rescaling, we have
\begin{align*}
\langle \varphi, D^2 : (\chi^{\eps} Q) \ast | \theta_t |^2 \rangle 
&=
\langle (D^2 \varphi) \ast (\chi^{\eps} Q), | \theta_t |^2 \rangle
\\
&=
\int_{\mathbb{R}^d}\int_{\mathbb{R}^d} 
D^2 \varphi (x + \eps y) Q (\eps y) \chi (y) |\theta_t(x)|^2\dd y \dd x \to 0
  \end{align*}
by dominated convergence, as $Q$ is continuous and bounded with $Q(0)=0$.
From the above and \eqref{eq:first.option} we conclude that
\begin{equation*}
    \mathbb{E} \mathcal{D}[\theta] (\dif x,\dif t) = \lim_{\eps\to 0} \EE[\cV^{2,\eps}_t(x)] \dd x \dd t
    =\lim_{\eps\to 0} \EE[\cD^{\eps}_t(x)] \dd x \dd t. \qedhere
\end{equation*}
\end{proof}

\begin{remark}\label{rem:required_regularity}
Step 3 above is the only place throughout the proof of \autoref{thm:exact.formula.dissipation.measure} where Sobolev regularity of $\theta$ is needed.
In particular, for $\alpha<1/2$, we need at least $\theta\in L^2_\omega L^2_t H^{1-2\alpha+}_x$; for $\alpha>1/2$, one can actually choose $\lambda=1-\alpha$, so that in this regime the derivation works for any $L^2_x$-valued solution.    
\end{remark}

\begin{cor}\label{cor:exact_diss_measure2}
Let $W$, $\theta_0$, $\theta$, $\cD[\theta]$ be as in \autoref{thm:energy_balance_inviscid} and define 
\begin{equation}\label{eq:def_h}
    h_t (r, x) := \oint_{\mathbb{S}^{d - 1}} \frac{| \delta _{r
    \hat{y}}\theta_t(x)) |^2}{r^{2 - 2 \alpha}} \sigma ( \mathrm{d} \hat{y}).
\end{equation}
Then for any $t> 0$ it holds 
\begin{equation}\label{eq:exact_diss_measure2}
    \lim_{\eps \rightarrow 0} \EE\left[ \bigg\| \mathcal{D}^\eps(t,x)- c\,\frac{d (d + 2)}{2}
    \left(  h_t(\eps, x) -  (d + 2 \alpha) \eps^{-d-2} \int_0^\eps r^{d + 1} h_t(r, x) \dd r \right) \bigg\|_{L^1_x} \right] = 0
\end{equation}
where $c$ the same constant as in \eqref{eq:covariance_recap}.
\end{cor}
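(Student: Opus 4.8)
The plan is to start from the representation of $\mathcal{D}^\eps$ obtained in \autoref{prop:alternative_dissipation_measure}, namely
\begin{align*}
\mathcal{D}^\eps(t,x) = \frac12 \int_{\R^d} D^2\chi^\eps(y):Q(y)\,|\delta_y\theta_t(x)|^2\dd y,
\end{align*}
and to make the choice of mollifier explicit by taking $\chi$ to be radial, so that $D^2\chi^\eps(y):Q(y)$ can be expanded using \eqref{eq:covariance_recap}. First I would rescale to write $\mathcal{D}^\eps(t,x)=\tfrac12\int_{\R^d} D^2\chi(z):Q(\eps z)\,\eps^{-d-2}|\delta_{\eps z}\theta_t(x)|^2\dd z$ and insert $Q(\eps z)= c\,\eps^{2\alpha}|z|^{2\alpha}\big[P^\parallel_z+(1+\tfrac{2\alpha}{d-1})P^\perp_z\big]+O(\eps^2)$. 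The $O(\eps^2)$ term contributes $O(\eps^2\cdot\eps^{-d-2})\int |z|^0|\delta_{\eps z}\theta_t|^2$ which, after using $\|\delta_{\eps z}\theta_t\|_{L^2_x}\lesssim \|\theta_t\|_{L^2_x}$ and the compact support of $D^2\chi$, is $O(\eps^{-d})\|\theta_t\|_{L^2_x}^2$ — wait, this does not obviously vanish, so I would instead need to exploit that $D^2\chi$ has vanishing integral (or, more precisely, that $\int D^2\chi(z)\,z_i z_j\,\dd z$ is the only surviving moment) to cancel the leading behaviour; concretely the term $|\theta_t(x)|^2$ drops out because $Q$ is divergence-free and $\int D^2:(\chi Q)=0$, exactly as in Step 4 of the previous proof, and the $O(\eps^2)$ remainder similarly pairs against $|\delta_{\eps z}\theta_t|^2$ which is $O(\eps^{2})$ in an averaged sense, giving a genuine $O(\eps^{2\alpha})\to 0$ bound.

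The heart of the computation is the angular integral. Writing $|z|=r$, $z=r\hat z$, and using that $D^2\chi(z)$ for radial $\chi$ has the form $\chi''(r)\hat z\otimes\hat z + \tfrac{\chi'(r)}{r}(I_d-\hat z\otimes\hat z)$, I would compute the two contractions $D^2\chi(z):P^\parallel_z = \chi''(r)$ and $D^2\chi(z):P^\perp_z = (d-1)\tfrac{\chi'(r)}{r}$. Hence the integrand becomes, up to the constant $c$,
\begin{align*}
r^{2\alpha}\Big[\chi''(r)+\Big(1+\tfrac{2\alpha}{d-1}\Big)(d-1)\tfrac{\chi'(r)}{r}\Big] = r^{2\alpha}\Big[\chi''(r)+(d-1+2\alpha)\tfrac{\chi'(r)}{r}\Big].
\end{align*}
Then $\mathcal{D}^\eps(t,x) = \tfrac{c}{2}\eps^{-d-2+2\alpha}\int_0^\infty r^{d-1}r^{2\alpha}\big[\chi''(r)+(d-1+2\alpha)\tfrac{\chi'(r)}{r}\big]\Big(\fint_{\SS^{d-1}}|\delta_{\eps r\hat z}\theta_t(x)|^2\sigma(\dif\hat z)\Big)\dd r + o(1)$. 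Substituting $\rho=\eps r$ and recognizing $\fint_{\SS^{d-1}}|\delta_{\rho\hat z}\theta_t(x)|^2\sigma(\dif\hat z)=\rho^{2-2\alpha}h_t(\rho,x)$ turns this into a weighted integral of $h_t(\cdot,x)$ against the kernel $K(\rho):= \rho^{d+1}\big[\chi''(\rho/\eps)+\dots\big]$, which I would then integrate by parts (moving derivatives off $\chi$) to produce a clean convolution-type expression. The precise constant $\tfrac{d(d+2)}{2}$ and the structure $h_t(\eps,x)-(d+2\alpha)\eps^{-d-2}\int_0^\eps r^{d+1}h_t(r,x)\dd r$ should emerge from choosing $\chi$ (or rather the generating density $\psi=\chi\ast\chi$ in \autoref{prop:alternative_dissipation_measure}) to be a normalized indicator-type bump, i.e. taking $\chi^\eps$ essentially supported in $B_\eps$ with $\int\chi=1$; the term $\int_0^\eps r^{d+1}h_t(r,x)\dd r$ is precisely the ``volume average'' piece coming from integrating the kernel against $\chi'$, and the two pieces together are what remains after the integration by parts and the moment conditions on $\chi$ are used.

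The convergence statement \eqref{eq:exact_diss_measure2} is in $L^1_x$ expectation, so after establishing the pointwise (in $x$) algebraic identity up to an $o(1)$ remainder, I would control the remainder uniformly: the error terms are all of the schematic form $\eps^{\gamma}\int_{B_1}|z|^{a}\|\delta_{\eps z}\theta_t\|_{L^2_x}\|\theta_t\|_{L^2_x}\dd z$ with $\gamma>0$, and after integrating in time and taking expectation one uses $\EE\|\theta_t\|_{L^2_x}^2\le\|\theta_0\|_{L^2_x}^2$ together with Cauchy--Schwarz; crucially one does \emph{not} need Sobolev regularity here (unlike Step 3 of the previous proof) because the divergence-free cancellation has already been carried out in \autoref{prop:alternative_dissipation_measure} and we are comparing two explicit quadratic functionals of $\theta_t$. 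The main obstacle I anticipate is bookkeeping the constants and moment identities: one must verify that the specific combination $h_t(\eps,x)-(d+2\alpha)\eps^{-d-2}\int_0^\eps r^{d+1}h_t(r,x)\dd r$ is exactly the output for a suitable (and admissible, i.e. $\mathrm{BV}$-regular with $\nabla\chi\in\mathrm{BV}$) choice of $\chi$, which amounts to a careful integration-by-parts identity for the radial kernel $r^{d-1}r^{2\alpha}[\chi''+(d-1+2\alpha)\chi'/r]$; getting the factor $\tfrac{d(d+2)}{2}$ right is a matter of normalizing $\int_{\R^d}\chi=1$ and computing $\int_{\R^d}\chi(z)z_i z_j\,\dd z$, so I would fix $\chi$ to be the normalized indicator of the unit ball (smoothed, or directly as a $\mathrm{BV}$ function) and push the computation through in that concrete case.
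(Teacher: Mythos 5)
Your overall strategy coincides with the paper's: rescale $\cD^\eps$, insert the asymptotics \eqref{eq:covariance_recap}, use the contractions $D^2\chi:P^\parallel_z=\chi''(r)$ and $D^2\chi:P^\perp_z=(d-1)\chi'(r)/r$ for radial $\chi$, and pass to polar coordinates (Cavalieri) to express everything through $h_t$. However, there is one decisive ingredient you do not pin down, and the concrete choice you commit to at the end is inadmissible. The identity \eqref{eq:exact_diss_measure2}, with its two specific terms, holds for one particular mollifier, namely $\chi(y)=k_d(1-|y|^2)\mathbf{1}_{\{|y|\le 1\}}$ with $k_d^{-1}=\tfrac{2\omega_{d-1}}{d(d+2)}$: its Hessian is the measure $D^2\chi = 2k_d\big(\hat y\otimes\hat y\,\sigma(\dd\hat y)-I_d\,\mathbf{1}_{\{|y|\le1\}}\dd y\big)$, whose surface part on $\SS^{d-1}$ produces the term $h_t(\eps,x)$ (via $P^\parallel_{\hat y}:\hat y\otimes\hat y=1$, $P^\perp_{\hat y}:\hat y\otimes\hat y=0$) and whose constant Lebesgue part on the ball produces $(d+2\alpha)\eps^{-d-2}\int_0^\eps r^{d+1}h_t(r,x)\dd r$ (via $\mathrm{Tr}(Q(\eps y))\sim c\,(d+2\alpha)\eps^{2\alpha}|y|^{2\alpha}$). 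Your proposed $\chi=\,$normalized indicator of $B_1$ fails the hypothesis $\chi,\nabla\chi\in L^\infty_x\cap\mathrm{BV}$ of \autoref{prop:alternative_dissipation_measure}: for the indicator, $\nabla\chi$ is already a surface measure and $D^2\chi$ is a distribution of order one, so the pairing defining $\cD^\eps$ against the merely integrable function $y\mapsto Q(y)|\delta_y\theta_t(x)|^2$ is not even well defined. A smoothed indicator would restore admissibility but would smear the sphere average and destroy the exact form of \eqref{eq:exact_diss_measure2}; no integration by parts is needed once the quadratic bump is chosen, since $h_t(\eps,x)$ appears directly as the surface-measure part of $D^2\chi^\eps$.

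Two smaller points. First, your remainder analysis is more complicated than necessary and the rate you state is wrong: after rescaling, the $O(\eps^2|y|^2)$ error in $Q(\eps y)$ exactly cancels the $\eps^{-2}$ in front of $|\delta_{\eps y}\theta_t(x)|^2$, leaving an $O(1)$ compactly supported kernel paired against $|\delta_{\eps y}\theta_t(x)|^2$; this tends to $0$ in $L^1_{\omega,x}$ simply because $\EE\|\delta_{\eps y}\theta_t\|_{L^2_x}^2\to 0$ by continuity of translations in $L^2_x$ (no moment conditions on $\chi$, no divergence-free cancellation, and no $O(\eps^{2\alpha})$ rate are needed or available for general $\theta_0\in L^2_x$). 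Second, you are right that no Sobolev regularity of $\theta$ enters at this stage; that is consistent with the paper, which only uses it in Step 3 of \autoref{prop:alternative_dissipation_measure}.
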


\begin{proof}
By rescaling, we have
\begin{align*}
\cD^\eps(t,x) = \frac{1}{2} \int_{\mathbb{R}^d} Q
    (\eps y) : D^2 \chi (y)  \frac{| \delta_{\eps y} \theta_t (x)
    |^2}{\eps^2}\dd y.
\end{align*}
Let us now specialize the formula above to $\chi (y) := k_d (1 - | y |^2) \mathbf{1}_{| y | \leq 1}$, with 
\begin{align*}
k_d^{- 1} := \| (1 - | y |^2) \mathbf{1}_{| y | \leq 1}\|_{L^1_x}
= \omega_{d - 1} \left( \frac{1}{d} - \frac{1}{d + 2} \right)
= \frac{2\, \omega_{d - 1}}{d(d+2)},
\end{align*}
so that $\chi$ is a probability measure. 
It holds
\[ \nabla \chi (y) = - 2 k_d\, y\, \mathbf{1}_{\{| y | \leq 1\}}, \quad
     D^2 \chi (y) = 2 k_d \, \big(\hat y \otimes \hat y\, \sigma(\dif \hat y) - I_d \, \mathbf{1}_{\{| y | \leq 1\}}  \dd y\big). \]
Therefore, 
  \begin{align*}
\cD^\eps(t,x)
&= 
k_d \left[ 
\omega_{d-1}\oint_{\mathbb{S}^{d-1}} Q(\eps \hat y) : \hat y \otimes \hat y\, \frac{| \delta_{\eps \hat y} \theta_t (x)|^2}{\eps^{2 }} \sigma ( \mathrm{d} \hat y)
-   
\int_{|y| \leq 1} 
\mathrm{Tr} (Q(\eps y)) \,
\frac{| \delta_{\eps y} \theta_t (x)|^2}{\eps^{2 }}\dd y 
\right] \\
&= 
c\, k_d \left[ \omega_{d-1}\oint_{\mathbb{S}^{d-1}}  
\frac{| \delta_{\eps \hat y} \theta_t (x)|^2}{\eps^{2 - 2 \alpha}} \sigma ( \mathrm{d} \hat y) 
- (d + 2\alpha) 
\int_{| y | \leq 1} | y |^{2 \alpha}  \frac{|\delta_{\eps y} \theta_t (x) |^2}{\eps^{2 - 2 \alpha}}  \dd y
\right] + o_\eps(1)
\end{align*}
where we invoked \eqref{eq:covariance_recap} and we used the facts that
\begin{align*}
    P^\parallel_{\hat y} : \hat y \otimes \hat y=1, \quad
    P^\perp_{\hat y} : \hat y \otimes \hat y=0, \quad
    {\rm Tr}(P^\parallel_{ y})=1,\quad 
    {\rm Tr}(P^\perp_{ y})=d-1.
\end{align*}
The remainder $o_\eps(1)$ must be interpreted in the $L^1_{\omega,x}$-topology, as
\begin{align*}
   \EE[ \| o_\eps(1)\|_{L^1_x}]
   & \lesssim \EE\Big[\,\Big\| \oint_{\mathbb{S}^{d-1}}  | \delta_{\eps \hat y} \theta_t (x)|^2 \sigma ( \mathrm{d} \hat y) \Big\|_{L^1_x}\Big]
   + \EE\Big[\,\Big\| \int_{| y | \leq 1} | y |^{2 \alpha}  |\delta_{\eps y} \theta_t(x) |^2  \dd y\Big\|_{L^1_x}\Big]\\
   & \leq \oint_{\mathbb{S}^{d-1}} \EE[\| \delta_{\eps \hat y} \theta_t\|_{L^2_x}^2] \sigma ( \mathrm{d} \hat y)
   + \int_{| y | \leq 1} | y |^{2 \alpha}  \EE[\|\delta_{\eps y} \theta_t\|_{L^2_x}^2]  \dd y
\end{align*}
and $\theta_t\in L^2_x$, so that $\EE[\|\delta_{\eps y} \theta_t\|_{L^2_x}^2]\to 0$ as $\eps\to 0^+$.
Finally, by Cavalieri's principle we can rewrite 
  \begin{align*}
 \int_{| y | \leq 1} | y |^{2 \alpha} \frac{|
    \delta_{\eps y} \theta_t (x) |^2}{\eps^{2 - 2 \alpha}}\dd y &
    = \omega_{d-1}\int_0^1 r^{2 \alpha + d - 1} \oint_{\mathbb{S}^{d - 1}} \frac{|
    \delta_{r \eps \hat y} \theta_t (x) |^2}{\eps^{2 - 2 \alpha}}
    \sigma ( \mathrm{d} \hat{y})  \dd r\\
    & = \omega_{d-1}\int_0^1 r^{d + 1} \oint_{\mathbb{S}^{d - 1}} \frac{| \delta_{r \eps \hat{y}} \theta_t
    (x) |^2}{(r \eps)^{2 - 2 \alpha}} \sigma
    ( \mathrm{d} \hat{y})  \dd r\\
    & = \omega_{d-1} \int_0^1 r^{d + 1} h (r \eps, x)  \dd r;
  \end{align*}
changing variables $r \eps=r'$ and recalling the expression for $k_d$ yields the desired \eqref{eq:exact_diss_measure2}.
\end{proof}

The following general lemma allows to rewrite, in a cleaner way, the expression inside round parentheses in \eqref{eq:exact_diss_measure2}. 

\begin{lemma}\label{lem:identification_limits}
Denote by $(C^\infty_c)'$ the space of distributions on $\R^d$, with its usual topology.
Let $r\mapsto v(r)$ be a continuous map from $(0,1]$ to $(C^\infty_c)'$, such that $\lim_{r\to 0^+} r^{2-2\alpha} v(r)=0$ in $(C^\infty_c)'$. 
Further assume that there exists $\nu\in (C^\infty_c)'$ such that
\begin{equation}\label{eq:ass_distributional_limit}
    \nu = \frac{1}{2} \lim_{r \rightarrow 0} \left[ v(r) - (d + 2 \alpha)
    r^{- d - 2} \int_0^r u^{d + 1} v(u) \dd u \right],
\end{equation}
where both the identity and the limit are interpreted in $(C^\infty_c)'$. Then 
\begin{equation}\label{eq:identification_limits}
\nu = 
(1 - \alpha) \lim_{r \rightarrow 0}  r^{- d - 2} \int_0^r u^{d + 1} v(u) \dd u
= \frac{1 - \alpha}{d + 2} \lim_{r \rightarrow 0} v(r),
  \end{equation}
with both limits existing in $(C^\infty_c)'$.
\end{lemma}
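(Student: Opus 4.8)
\textbf{Proof plan for \autoref{lem:identification_limits}.}

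The plan is to set $w(r):=r^{-d-2}\int_0^r u^{d+1}v(u)\,\dd u$ and observe that, tested against any fixed $\varphi\in C^\infty_c$, the scalar function $r\mapsto \langle w(r),\varphi\rangle$ satisfies a first-order linear ODE. Indeed, writing $W(r):=\int_0^r u^{d+1}\langle v(u),\varphi\rangle\,\dd u$, we have $\langle w(r),\varphi\rangle=r^{-d-2}W(r)$, hence $W'(r)=r^{d+1}\langle v(r),\varphi\rangle$ and
\begin{equation*}
\frac{\dd}{\dd r}\langle w(r),\varphi\rangle
= -(d+2)\,r^{-d-3}W(r)+r^{-1}\langle v(r),\varphi\rangle
= r^{-1}\Big(\langle v(r),\varphi\rangle-(d+2)\langle w(r),\varphi\rangle\Big).
\end{equation*}
The continuity of $r\mapsto v(r)$ and the decay hypothesis $r^{2-2\alpha}v(r)\to 0$ guarantee that $W(r)$ and $w(r)$ are well defined and that $r^{2-2\alpha}\langle w(r),\varphi\rangle\to 0$ as $r\to 0^+$ (the integrand is $O(u^{d-1+2\alpha})$ near $0$ after multiplying by $u^{-(2-2\alpha)}$, which is integrable since $d-1+2\alpha>-1$).

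Next I would rewrite the bracket in \eqref{eq:ass_distributional_limit}. Testing against $\varphi$, the assumption says
\begin{equation*}
\langle \nu,\varphi\rangle=\tfrac12\lim_{r\to0^+}\Big(\langle v(r),\varphi\rangle-(d+2\alpha)\langle w(r),\varphi\rangle\Big).
\end{equation*}
Using the ODE above to substitute $\langle v(r),\varphi\rangle=r\,\tfrac{\dd}{\dd r}\langle w(r),\varphi\rangle+(d+2)\langle w(r),\varphi\rangle$, the bracket becomes
\begin{equation*}
r\,\frac{\dd}{\dd r}\langle w(r),\varphi\rangle+\big((d+2)-(d+2\alpha)\big)\langle w(r),\varphi\rangle
= r\,\frac{\dd}{\dd r}\langle w(r),\varphi\rangle+2(1-\alpha)\langle w(r),\varphi\rangle.
\end{equation*}
So the existence of the limit in \eqref{eq:ass_distributional_limit} is equivalent to the existence of $\lim_{r\to0^+}\big(r\,h'(r)+2(1-\alpha)h(r)\big)$ where $h(r):=\langle w(r),\varphi\rangle$, and $2\langle\nu,\varphi\rangle$ equals that limit. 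The key claim is then: if $r\mapsto h(r)$ is $C^1$ on $(0,1]$, $r^{2-2\alpha}h(r)\to0$, and $r h'(r)+2(1-\alpha)h(r)\to L$, then $h(r)\to L/(2(1-\alpha))$. This is an elementary ODE fact: setting $\phi(r)=r^{2(1-\alpha)}h(r)$, one computes $\phi'(r)=r^{2(1-\alpha)-1}\big(r h'(r)+2(1-\alpha)h(r)\big)=r^{2\alpha-3}\cdot r^{2}\big(\cdots\big)$—more carefully, $\phi'(r)=r^{2(1-\alpha)-1}(rh'(r)+2(1-\alpha)h(r))$, which behaves like $L\,r^{1-2\alpha}$ near $0$; integrating from $0$ (using $\phi(0^+)=0$ from the decay hypothesis) gives $\phi(r)=\frac{L}{2(1-\alpha)}r^{2(1-\alpha)}+o(r^{2(1-\alpha)})$, i.e. $h(r)\to L/(2(1-\alpha))$. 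This yields the first equality in \eqref{eq:identification_limits}, namely $\langle\nu,\varphi\rangle=(1-\alpha)\lim_{r\to0^+}\langle w(r),\varphi\rangle$; since $\varphi$ was arbitrary, the identity holds in $(C^\infty_c)'$. For the second equality, once we know $\langle w(r),\varphi\rangle$ converges, the ODE relation $\langle v(r),\varphi\rangle=r h'(r)+(d+2)h(r)$ combined with $r h'(r)=\big(rh'(r)+2(1-\alpha)h(r)\big)-2(1-\alpha)h(r)\to L-2(1-\alpha)\tfrac{L}{2(1-\alpha)}=0$ shows $\langle v(r),\varphi\rangle\to(d+2)\lim h(r)=\tfrac{d+2}{1-\alpha}\langle\nu,\varphi\rangle$, which is exactly $\nu=\tfrac{1-\alpha}{d+2}\lim_{r\to0^+}v(r)$.

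The only genuinely delicate point—the ``hard part''—is the quantitative regularity in $r$ needed to justify differentiating under the pairing and applying the fundamental theorem of calculus to $\phi$: one must check that $r\mapsto\langle v(r),\varphi\rangle$ is continuous (given), that $W(r)=\int_0^ru^{d+1}\langle v(u),\varphi\rangle\,\dd u$ is genuinely $C^1$ with $W'(r)=r^{d+1}\langle v(r),\varphi\rangle$ (immediate from continuity), and that $\phi'$ is integrable near $0$—the latter following from the uniform bound on $rh'(r)+2(1-\alpha)h(r)$ near $0$, which holds because that quantity has a finite limit $L$. All the rest is routine. I would present the argument entirely at the level of the scalar functions $h(r)=\langle w(r),\varphi\rangle$, noting that since $(C^\infty_c)'$ carries the weak-$\ast$ topology, convergence in $(C^\infty_c)'$ is precisely pointwise convergence of these pairings, so no additional uniformity over $\varphi$ is required.
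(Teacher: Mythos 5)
Your proof is correct and is essentially the paper's argument in different clothing: your $\phi(r)=r^{2(1-\alpha)}h(r)$ is exactly the paper's $r^{-\gamma_1}G(r)$ with $\gamma_1=d+2\alpha$, and your relation $\phi'(r)=r^{1-2\alpha}\bigl(rh'(r)+2(1-\alpha)h(r)\bigr)\to L\,r^{1-2\alpha}$ is precisely the differential inequality $(r^{-\gamma_1}G)'\sim\gamma_2\,r^{d+1-\gamma_1}$ that the paper integrates from $0$ after checking the same vanishing boundary condition. All the supporting details (integrability of $u^{d-1+2\alpha}$ and $r^{1-2\alpha}$ near $0$, testwise interpretation of distributional limits, the algebraic step for the second equality) match the paper and are sound.
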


\begin{proof}
Fix $\varphi \in C^\infty_c(\R^d)$ and set $g (r) := \langle \varphi, v(r) \rangle$; define 
  \[ \gamma_1 := d + 2 \alpha, \quad \gamma_2 : = 2 \langle \varphi, \nu
     \rangle, \quad G (r) := \int_0^r u^{d + 1} g (u) \dd u. \]
By \eqref{eq:ass_distributional_limit}, for all $\delta > 0$ sufficiently small and all $r \in (0, \bar{r}  (\delta))$, it holds
  \[ \gamma_2 - \delta \leq r^{- d - 1} G' (r) - \gamma_1 r^{- d - 2} G
     (r) \leq \gamma_2 + \delta, \]
  which can be rewritten as
  \begin{equation}
    (\gamma_2 - \delta) r^{d + 1 - \gamma_1} \leq (r^{- \gamma_1} G)'
    \leq (\gamma_2 + \delta) r^{d + 1 - \gamma_1} .
    \label{eq:differential_formula2}
  \end{equation}
  By assumption, $g(r)=r^{2\alpha-2}\, o_r(1)$ and so it's easy to check that $\lim_{r \rightarrow 0} r^{- \gamma_1} G (r) = 0$. 
  Integrating~\eqref{eq:differential_formula2} over $(0, r)$ for $r < \bar{r}
  (\delta)$, we get
  \[ \frac{\gamma_2 - \delta}{d + 2 - \gamma_1} r^{d + 2} \leq G (r)
     \leq \frac{\gamma_2 + \delta}{d + 2 - \gamma_1} r^{d + 2}, \]
  which amounts to
  \[ \lim_{r \rightarrow 0} r^{- d - 2} \int_0^r u^{d + 1}  g
     (u) \dd u = \lim_{r \rightarrow 0} r^{- d - 2} G (r) =
     \frac{\gamma_2}{d + 2 - \gamma_1} = \frac{\langle \varphi, \nu \rangle}{1
     - \alpha} . \]
As the argument holds for any $\varphi$, this identifies the distributional limit of the last term on the right-hand side of \eqref{eq:ass_distributional_limit}, from which~\eqref{eq:identification_limits} follows by basic algebraic
  manipulations.
\end{proof}

\begin{proof}[Proof of \autoref{thm:exact.formula.dissipation.measure}]
    Formula \eqref{eq:exact.formula} follows from concatenating \autoref{prop:alternative_dissipation_measure}, \autoref{cor:exact_diss_measure2} and \autoref{lem:identification_limits}. In particular, by \eqref{eq:exact_diss_measure2}, we can apply \autoref{lem:identification_limits} to $v(r)=c d (d+2) \EE h_t(r,x)$, for any $t>0$. Condition $\lim_{r\to 0^+} r^{2-2\alpha} v(r)=0$ is satisfied as $\theta_t\in L^2_{\omega,x}$ and continuity of translations in $L^2_x$:
    \begin{align*}
        \lim_{r\to 0^+} \| r^{2-2\alpha} \EE h_t(r,\cdot)\|_{L^1_x}
        \leq \lim_{r\to 0^+} \oint_{\mathbb{S}^{d - 1}} \EE\| \delta _{r
    \hat{y}}\theta_t \|_{L^2_x}^2 \sigma ( \mathrm{d} \hat{y}) = 0.
    \end{align*}

Let us move to showing \eqref{eq:sharp_regularity}.  
Since $c d (1-\alpha)\mathbb{E}h(\eps, x)\dd t \dd x$ and $\mathbb{E} \mathcal{D}[\theta]$ are both non-negative measures, \eqref{eq:exact.formula} implies narrow convergence of one to the other; by lower semicontinuity of total variation with respect to narrow convergence, we deduce that
    \begin{align*}
        c d (1-\alpha) \liminf_{\eps\to 0} \frac{1}{\eps^{2(1-\alpha)}} \oint_{\mathbb{S}^{d-1}} 
\| \delta_{\eps \hat{z}} \theta \|_{L^2_{\omega,t,x}}^2 \sigma ( \mathrm{d} \hat{z})
        & = \liminf_{\eps\to 0} c d (1-\alpha) \| \EE h(\eps,\cdot)\|_{L^1([0,T]\times\R^d)}\\
        & \geq \EE \cD[\theta] ([0,T]\times\R^d)]= \| \theta_0\|_{L^2_x}^2 - \EE[\| \theta_T\|_{L^2_x}^2]>0,
    \end{align*}
    where in the last inequality we applied \autoref{thm:dissipation_intro}.
    Concerning the limsup, note that by integrating \eqref{eq:LEB_eps2} on $[0,T]\times\R^d$ and using that divergence terms vanish, one finds
    \begin{align*}
        \EE[\langle\theta_T^\eps,\theta_T\rangle] - \langle\theta_0^\eps,\theta_0\rangle
        & = \EE \int_{[0,T]\times \R^d} \theta_t(x) [(Q: D^2\chi^\eps)\ast \theta_t](x) \dd x \dd t\\
        & = -\frac{1}{2} \EE\Big[\int_{[0,T]\times \R^d\times\RR^d} (Q: D^2\chi^\eps)(x-y)|\theta_t(x)-\theta_t(y)|^2 \Big] \dd x \dd y \dd t\\
        & = - \int_{[0,T]\times \RR^d} \EE\cD^\eps(t,x) \dd t \dd x,
    \end{align*}
    where in the intermediate passage we used again the fact that, expanding $|\theta_t(x)-\theta_t(y)|^2$, the terms different from $\theta_t(x)\theta_t(y)$ vanish upon integrating in $\dif x\dd y$.
   Rearranging terms and using $\lim_{\varepsilon \downarrow 0}\EE[\langle\theta_T^\eps,\theta_T\rangle] =\EE[\|\theta_T\|_{L^2_x}^2]$, $\lim_{\varepsilon \downarrow 0} \langle\theta_0^\eps,\theta_0\rangle = \|\theta_0\|_{L^2_x}^2$ one then finds
    \begin{align*}
        \lim_{\eps \downarrow 0} \int_{[0,T]\times \RR^d} \EE\cD^\eps(t,x) \dd t \dd x =
        \|\theta_0\|_{L^2_x}^2-\EE[\|\theta_T\|_{L^2_x}^2] = \EE\cD[\theta]([0,T]\times\R^d).
    \end{align*}
    Using the $L^1_x$-closedness between $\EE\cD^\eps(t,x)$ and the other expression appearing in \eqref{eq:exact_diss_measure2}, and going through the same computations as in  \autoref{lem:identification_limits}, it is not hard to see that it similarly holds
    \begin{align*}
      c\,d\,(1-\alpha) \limsup_{\eps \downarrow 0} \| \mathbb{E} h(\eps,\cdot)\|_{L^1([0,T]\times\R^d)} \leq \EE\cD[\theta]([0,T]\times\R^d).
    \end{align*}
    Combined with the previous inequality, this yields \eqref{eq:sharp_regularity}.
    \end{proof}

\begin{remark}\label{rem:proportionality_constant}
    The proportionality constant $\tilde c=\tilde c(\alpha,d):=d(1-\alpha) c(\alpha,d)$ appearing in \eqref{eq:exact.formula}, related to the constant $c=c(\alpha,d)$ appearing in the asymptotics \eqref{eq:covariance_recap}, can be tracked explicitly. Recalling the relations \eqref{eq:defn_alpha1_c_beta}, one can rewrite the terms $\alpha_1$, $a$, $b$ in function of $\eta$, ${\rm Tr} C(0)=\EE[|W_1(x)|^2]$ and special functions. For $\eta=1$, after some bookkeeping one finds 
    \begin{equation}\label{eq:proportionality_constant}\begin{split}
        \tilde c
        & = \frac{d}{d+2\alpha} \,{\rm Tr}(C(0))\, \frac{\pi^{d-1}}{\Gamma(d/2)}\, \frac{(1-\alpha) \cos(\alpha \pi)\Gamma(1-2\alpha)\Gamma(\alpha +1/2)}{ \Gamma(\alpha+1)}\\
        & =: \frac{d}{d+2\alpha}\,{\rm Tr}(C(0))\, K_1(d)\,K_2(\alpha)
    \end{split}\end{equation}
    where $\Gamma$ denotes the Gamma function. Formula \eqref{eq:proportionality_constant} allows to determine the asymptotic behaviour of $\tilde c$ as $\alpha\downarrow 0$ and $\alpha\uparrow 1$, using the properties of the Gamma function:
    \begin{align*}
        \lim_{\alpha\to 0^+} \tilde c(\alpha,d) = {\rm Tr}(C(0))\, K_1(d) \sqrt{\pi}, \quad
        \lim_{\alpha\to 1^-} \tilde c(\alpha,d)= \frac{d}{d+2}\,{\rm Tr}(C(0))\, K_1(d)\, \frac{\sqrt\pi}{4}.
    \end{align*}
    Interestingly, $\tilde c$ is uniformly bounded away from $0$ and $\infty$:
    \begin{align*}
        0<\inf_{\alpha\in (0,1)} \tilde c(\alpha,d) \leq \sup_{\alpha\in (0,1)} \tilde c(\alpha,d) <\infty.
    \end{align*}
\end{remark}

\section{Richardson's law and $L^2_x$ regularity of the stochastic continuity equation} \label{sec:richardson}
In this section we present a rigorous proof of  \autoref{thm:richardson} on the expected variance of solutions $\mu$ to the stochastic continuity equation. Furthermore, we rigorously establish the relation \eqref{eq:particle_dispersion} linking the expected variance of $\mu$ to the average ``particle'' dispersion expressed in terms of the Le Jan-Raimond two-point motion (cf. \autoref{ssec:two_point}), thus justifying the choice of the name ``Richardson's law'' for \eqref{thm:richardson}.  
Finally, we discuss $L^2_x$ regularity of $\mu$ in \autoref{sec:dirac}.

\subsection{Richardson's law} \label{ssec:Richardson's law}
The core idea is to estimate the growth over time of the variance of $\mu_t$ by using the It\^o Formula 
\begin{align} \label{eq:G_mu_mu}
\dd \langle G \ast \mu, \mu \rangle = 2 \langle (\nabla G \ast \mu) \,
     \mu, \dd W \rangle + \langle (D^2 G : Q) \ast \mu, \mu \rangle \dd
     t,
\end{align}
where $G$ is a sufficiently regular, symmetric kernel on $\R^d$ with good bounds on its growth at infinity.
For the upper bound in \eqref{thm:richardson}, we choose $G(x) = |x|^2/2$, which links the dynamics of the expected variance to the covariance structure of the noise. By assuming only a bound on the trace of the covariance operator $\mbox{Tr}Q(x) \lesssim |x|^{2\alpha}$, we show that the expected variance grows at most like $t^\frac{1}{1-\alpha}$.
For the lower bound, we select the kernel $G(x) = |x|^{2-2\alpha}$, which allows us to track the separation of ``particle trajectories'' over short timescales and demonstrate that the variance achieves a minimal growth rate dictated by the same power law, under \autoref{assumption}.
The precise value of the Richardson constant $K_{\mathsf{Ric}}$ is given by
\begin{align*}
K_{\mathsf{Ric}}
:=
\frac12 \left( c(2-2\alpha)(1-2\alpha+\beta(d-1))  \right)^{\frac{1}{1-\alpha}}
> 0.
\end{align*}

\begin{lemma}
Suppose \autoref{ass:well_posed} and let $\mu_0 \in \mathcal{P}(\R^d)$ with $\langle |x|^2,\mu_0\rangle<\infty$. Let $G$ be a $C^2$ symmetric kernel such that $G$, $\nabla G$, $D^2 G$ have at most quadratic growth. Then \eqref{eq:G_mu_mu} holds $\PP$-a.s.
\end{lemma}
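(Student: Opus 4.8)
The plan is to prove \eqref{eq:G_mu_mu} along the classical route already used for \autoref{lem:PDE_twopoint_transport} and \autoref{rem:PDE_twopoint_continuity} --- mollification, a pointwise It\^o product rule, and passage to the limit --- the only genuinely new features being that here the identity is kept pathwise rather than in expectation, and that the kernel $G$ is unbounded, which forces a preliminary moment estimate on $\mu$.

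\emph{Heuristics via tensorization.} It is cleanest to read \eqref{eq:G_mu_mu} at the level of the product measure $\mu_t\otimes\mu_t$ on $\R^d\times\R^d$: writing $\langle G\ast\mu_t,\mu_t\rangle=\int\int G(x-y)\,\mu_t(\dd x)\mu_t(\dd y)$ and applying It\^o's product rule to $\mu_t\otimes\mu_t$ tested against $\Phi(x,y):=G(x-y)$, the two first-order terms coming from $\dd\mu_t(\dd x)$ and $\dd\mu_t(\dd y)$ produce, after integration by parts, the martingale $\sum_k\int\int(\sigma_k(x)-\sigma_k(y))\cdot\nabla G(x-y)\,\mu_t(\dd x)\mu_t(\dd y)\,\dd W^k_t$ and the drift $\int\int C(0):D^2G(x-y)\,\mu_t\otimes\mu_t\,\dd t$, while the It\^o correction $\dd\langle\mu(\dd x),\mu(\dd y)\rangle_t$ gives $-\int\int C(x-y):D^2G(x-y)\,\mu_t\otimes\mu_t\,\dd t$ (using $\sum_k\sigma_k(x)\otimes\sigma_k(y)=C(x-y)$). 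Symmetrizing the double integrals under $x\leftrightarrow y$ and using that $\nabla G$ is odd (as $G$ is symmetric) turns the martingale into $2\langle(\nabla G\ast\mu_t)\mu_t,\dd W_t\rangle$, and combining the two drifts leaves $\langle((C(0)-C):D^2G)\ast\mu_t,\mu_t\rangle\,\dd t=\langle(Q:D^2G)\ast\mu_t,\mu_t\rangle\,\dd t$, i.e.\ \eqref{eq:G_mu_mu}.

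\emph{A priori second moment.} To make sense of every term and to control the limits, one first shows that $\PP$-a.s.\ $t\mapsto\langle|x|^2,\mu_t\rangle$ is locally bounded. Testing \eqref{eq:weak_sol.continuity} against concave radial cut-offs $\psi_R\in C^\infty_c$ with $\psi_R(x)=|x|^2$ on $B_R$, $\psi_R\lesssim R^2$, and $|\nabla\psi_R|^2+|D^2\psi_R|\lesssim 1+\psi_R$ uniformly in $R$, the drift is bounded by $C\|\mu_s\|_{\mathrm{TV}}=C$ and, by \autoref{rem:bound_C_TV}, the quadratic variation of the martingale part by $C\int_0^t\langle\mu_s,1+\psi_R\rangle\,\dd s$; the Burkholder--Davis--Gundy inequality and Gr\"onwall's lemma give $\EE\sup_{s\le T}\langle\mu_s,\psi_R\rangle\lesssim_T1$ uniformly in $R$, and monotone convergence as $R\to\infty$ yields $\EE\sup_{s\le T}\langle|x|^2,\mu_s\rangle<\infty$. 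Since $G,\nabla G,D^2G$ and $Q=C(0)-C$ are all $O(1+|\cdot|^2)$ (with $Q$ bounded by \autoref{ass:well_posed}), this makes every term in \eqref{eq:G_mu_mu} well-defined $\PP$-a.s., the stochastic integral via \autoref{lem:stoch_integr} and \autoref{rem:bound_C_TV}.

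\emph{Rigorous proof and main difficulty.} Set $\mu^\eps_t:=\mu_t\ast\chi^\eps$ for a symmetric mollifier; by \eqref{eq:weak_sol.continuity}, $\mu^\eps$ is smooth in space and solves $\dd\mu^\eps_t=-\sum_k\nabla\cdot b^{\eps,k}_t\,\dd W^k_t+\tfrac12 C(0):D^2\mu^\eps_t\,\dd t$ pointwise in $x$, with $b^{\eps,k}_t:=(\sigma_k\mu_t)\ast\chi^\eps$. Applying It\^o's product rule to $\mu^\eps_t(x)\mu^\eps_t(y)$ at fixed $x,y$, multiplying by $G(x-y)$, integrating in $\dd x\,\dd y$, invoking the stochastic Fubini theorem (legitimate, after the localization below, by the moment bound together with $\|\mu_t\|_{\mathrm{TV}}\le1$ from \eqref{eq:contraction_TV}) and integrating by parts in $x,y$, one obtains the $\eps$-regularized analogue of \eqref{eq:G_mu_mu}, in which $C(x-y)$ is replaced in the correction term by $\sum_k b^{\eps,k}(x)\otimes b^{\eps,k}(y)=\int\int C(x'-y')\chi^\eps(x-x')\chi^\eps(y-y')\,\mu_t(\dd x')\mu_t(\dd y')$. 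Finally let $\eps\to0$: since $\chi$ is symmetric, $\langle|x|^2,\mu^\eps_t\rangle\to\langle|x|^2,\mu_t\rangle$, so $\mu^\eps_t\otimes\mu^\eps_t\to\mu_t\otimes\mu_t$ tested against continuous functions of at most quadratic growth, whence every drift term converges $\PP$-a.s., locally uniformly in $t$; the stochastic integrals converge in probability, uniformly on compact time intervals, after localizing along $\tau_n:=n\wedge\inf\{t:\langle|x|^2,\mu_t\rangle>n\}\uparrow\infty$. The computation itself is elementary once tensorized; the real work --- and the only place where the hypotheses on $G$ enter essentially --- is the growth of $G$ at infinity, which is what makes the a priori estimate necessary (and dictates the concave choice of cut-offs so that the martingale's bracket closes under Gr\"onwall), forces ordinary Fubini to be upgraded to stochastic Fubini, and requires the convergence $\mu^\eps_t\otimes\mu^\eps_t\to\mu_t\otimes\mu_t$ to hold against functions of quadratic rather than bounded growth, which in turn rests on the convergence of second moments.
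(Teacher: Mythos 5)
Your proof is correct and follows essentially the same route as the paper, which simply declares the identity a ``standard application of It\^o Formula'' and devotes its short argument to checking that all terms in \eqref{eq:G_mu_mu} are well defined via the second-moment bound \eqref{eq:stoch_continuity_bound_variance} (which the paper imports from \autoref{prop:existence.SCE.properties}(3) rather than re-deriving, as you do). Your tensorized It\^o computation, mollification, stochastic Fubini and localization steps are exactly the omitted standard details, and your bookkeeping of the drift and quadratic-covariation terms correctly produces $\langle (D^2G:Q)\ast\mu,\mu\rangle$.
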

\begin{proof}
This is a standard application of It\^o Formula.
Note that by assumption, estimate \eqref{eq:stoch_continuity_bound_variance} holds, therefore all terms appearing in \eqref{eq:G_mu_mu} are well-defined. For instance, for the stochastic integral, by \eqref{eq:covariance_TV_inequaity} and the assumptions it holds
\begin{align*}
\| \mathcal{C}^{1/2}((\nabla G \ast \mu_t)\mu_t) \|_{L^2_x}^2
& \lesssim
\| (\nabla G \ast \mu_t)\mu_t \|_{TV}^2
\lesssim
\| (1+|x|^2) \ast \mu_t)\mu_t \|_{TV}^2\\
& \lesssim
1 + \left( \int_{\R^d} \int_{\R^d} |x-y|^2 \mu_t(\mathrm{d} y) \mu_t (\mathrm{d}x) \right)^2
\lesssim 1 + \sup_{t\in [0,T]} \langle |x|^2,\mu_t\rangle^2,
\end{align*}
where the last quantity admits moments of any order.
\end{proof}

\begin{proof}[Proof of \autoref{thm:richardson}]

\textit{Upper bound}.
Denote for simplicity $V_t  := \var(\mu_t)$. We can specialize \eqref{eq:G_mu_mu} to $G(x) := | x |^2 / 2$. This yields  
\begin{equation}
\dd V_t 
= 
2  \langle (x \ast \mu) \, \mu, \dd W_t \rangle + 
  \langle \mbox{Tr} Q \ast \mu, \mu \rangle\dd t. \label{eq:variance.balance}
\end{equation}
Under the assumption that $\mbox{Tr} Q (x) \lesssim | x |^{2 \alpha}$ for every $x \in \R^d$, after taking expectation in \eqref{eq:variance.balance} and applying Jensen's inequality, we get
\begin{align*}
\frac{\dd}{\dd t} \mathbb{E} [V_t] 
&\lesssim
\mathbb{E} [\langle | x - y |^{2 \alpha} \ast \mu_t, \mu_t \rangle] 
\lesssim \mathbb{E} [V_t]^{\alpha}.
  \end{align*}
Next apply Bihari–LaSalle inequality for $\dot{v}_t \lesssim v^{\alpha}_t$ with $v_0 = \mathbb{E}[V_0]$ to get
\begin{align*}
\mathbb{E}[V_t]
\lesssim
\left( \mathbb{E}[V_0]^{1-\alpha}+ t\right)^{\frac{1}{1-\alpha}},
\quad
\forall t \geq 0.
\end{align*} 

\textit{Lower bound}. 
Let $\gamma:=2-2\alpha$, so that $2\alpha+\gamma-2 = 0$. 
Approximate the kernel $G(x) := |x|^\gamma$ with a sequence of functions $\{G^\eps\}_{\eps \in (0,1)}$ satisfying the assumptions of previous lemma and such that $D^2 G^\eps : Q$ is bounded uniformly in $\eps$, which is possible since $\mbox{Tr}Q(x) \lesssim |x|^{2\alpha}$.
Then applying \eqref{eq:G_mu_mu} with $G^\eps$ and taking the limit $\eps \downarrow 0$, by Dominated Convergence we have
\begin{align*}
\mathbb{E} &\left[ \langle G \ast \mu_t , \mu_t \rangle \right] 
=
\mathbb{E} \left[ \langle G \ast \mu_0 , \mu_0 \rangle \right] 
\\
&\quad+
\int_0^t
\mathbb{E} \left[ \int_{\R^d} \int_{\R^d}
|x-y|^{\gamma-2} \gamma
\left( (\gamma-1) b_L(|x-y|) + (d-1) b_N(|x-y|) \right)
\mu_s( \mathrm{d} x) \mu_s( \mathrm{d} y) \right]
ds.
\end{align*}
Next, by \autoref{assumption}, for every $\eps>0$ there exists $\ell > 0$ small enough such that 
\begin{align*}
|x-y|^{\gamma-2} \gamma
\left( (\gamma-1) b_L(|x-y|) + (d-1) b_N(|x-y|) \right)
\geq
(1-\eps) c\gamma (\gamma-1+\beta(d-1)) > 0
\end{align*}
for every $|x-y| \in (0,\ell)$.
Moreover, since $\hat{C} \in L^1$ we also have the global bound
\begin{align*}
|x-y|^{\gamma-2} \gamma
\left( (\gamma-1) b_L(|x-y|) + (d-1) b_N(|x-y|) \right)
\leq
K < \infty.
\end{align*}
Finally, since $\mathbb{E} \left[ \langle G \ast \mu_0 , \mu_0 \rangle \right] \geq 0$ we get 
\begin{align*}
\mathbb{E} \left[ \langle G \ast \mu_t , \mu_t \rangle \right] 
&\geq
(1-\eps) c \gamma (\gamma-1+(d-1)\beta)
\int_0^t
\mathbb{E} \left[ \int_{\R^d} \int_{\R^d}
\mathbf{1}_{\{|x-y| < \ell\}}
\mu_s( \mathrm{d} x) \mu_s( \mathrm{d} y) \right]
\dd s
\\
&\quad-
K \int_0^t
\mathbb{E} \left[ \int_{\R^d} \int_{\R^d}
\mathbf{1}_{\{|x-y| \geq  \ell\}}
\mu_s( \mathrm{d} x) \mu_s( \mathrm{d} y) \right]
\dd s.
\end{align*}
By Markov inequality and the upper bound proved previously, it holds for every $t < t_\eps$ small enough
\begin{align*}
\sup_{s \leq t}
\mathbb{E} \left[ \int_{\R^d} \int_{\R^d}
\mathbf{1}_{\{|x-y| \geq  \ell\}}
\mu_s( \mathrm{d} x) \mu_s( \mathrm{d} y) \right]
\leq 
\ell^{-2}
\sup_{s \leq t}
\mathbb{E} \left[ V_s \right]
< \eps
\end{align*}
for every $M$ and $t_\eps$ small enough.
Therefore, up to changing the value of $\eps$, we get
\begin{align*}
\mathbb{E}& \left[ \int_{\R^d} \int_{\R^d}
|x-y|^{\gamma} \mu_t( \mathrm{d} x) \mu_t( \mathrm{d} y) \right]
\geq 
(1-\eps')
c \gamma (\gamma-1+(d-1)\beta) t,
\quad
\forall t \in (0,t_\eps).
\end{align*}
Since $\gamma \in (0,2)$, by Jensen's inequality it holds
\begin{align*}
\left(2 \mathbb{E}[V_t] \right)^{\gamma/2}
&\geq
\mathbb{E} \left[ \int_{\R^d} \int_{\R^d}
|x-y|^{\gamma} \mu_t( \mathrm{d} x) \mu_t( \mathrm{d} y) \right]
\\
&\geq
(1-\eps')c\gamma(\gamma-1+\beta(d-1)) t,
\qquad
\forall t \in (0,t_\eps),
\end{align*}
implying the desired result as
\begin{equation*}
\frac12 \left( c\gamma(\gamma-1+\beta(d-1)) t \right)^{2/\gamma}
=
\frac12 \left( c(2-2\alpha)(1-2\alpha+\beta(d-1)) t \right)^{\frac{1}{1-\alpha}}
=
K_{\mathsf{Ric}} t^{\frac{1}{1-\alpha}}. \qedhere
\end{equation*}
\end{proof}

The upper bound in \autoref{thm:richardson} can be improved in the following functional sense.
\begin{prop}\label{prop:richardson_upper_functional}
Suppose \autoref{ass:well_posed} and $\mathrm{Tr}\, Q(x) \lesssim |x|^{2\alpha}$ for every $x \in \R^d$; let $\mu_0=\delta_x$ for some $x\in\R^d$. Then for any $p \in (0, 1)$ and $T \in (0,\infty)$ it holds
\[ 
\mathbb{E} \left[ \left( \sup_{t \in [0, T]} t^{- \frac{1}{1 - \alpha}} \var(\mu_t) \right)^{(1 - \alpha) p} \right]
< 
\infty. 
\]
\end{prop}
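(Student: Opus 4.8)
\textbf{Proof plan for \autoref{prop:richardson_upper_functional}.}
The plan is to run the same energy balance as in the proof of the upper bound in \autoref{thm:richardson}, but now keeping the stochastic integral and estimating its supremum via Burkholder--Davis--Gundy (BDG). Recall from \eqref{eq:variance.balance} that, with $V_t := \var(\mu_t)$,
\begin{align} \label{eq:plan_variance_balance}
V_t = V_0 + 2\int_0^t \langle (x\ast\mu_s)\,\mu_s, \dd W_s\rangle + \int_0^t \langle \mathrm{Tr}\,Q \ast \mu_s, \mu_s\rangle \dd s,
\end{align}
where for $\mu_0 = \delta_x$ one has $V_0 = 0$. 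Denote by $N_t$ the martingale part. Using $\mathrm{Tr}\,Q(z)\lesssim |z|^{2\alpha}$, Jensen's inequality and $\alpha\in(0,1)$, the drift is bounded by $\int_0^t \langle |z|^{2\alpha}\ast\mu_s,\mu_s\rangle\dd s \lesssim \int_0^t V_s^\alpha \dd s \le t\,\sup_{s\le t} V_s^\alpha$. For the martingale, its quadratic variation is, by \autoref{lem:stoch_integr} and \eqref{eq:covariance_TV_inequaity},
\begin{align} \label{eq:plan_QV}
[N]_t = 4\int_0^t \|\mathcal{C}^{1/2}((x\ast\mu_s)\,\mu_s)\|_{L^2_x}^2 \dd s \lesssim \int_0^t \|(x\ast\mu_s)\,\mu_s\|_{TV}^2 \dd s \lesssim \int_0^t V_s\,\dd s \le t\,\sup_{s\le t} V_s,
\end{align}
where I used that $\|(x\ast\mu_s)\mu_s\|_{TV} \le \int\int |x-y|\,\mu_s(\dif x)\mu_s(\dif y) \le \big(\int\int |x-y|^2 \mu_s(\dif x)\mu_s(\dif y)\big)^{1/2} = (2V_s)^{1/2}$.

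The key step is then a self-improving (bootstrap) estimate for the random variable $M_T := \sup_{t\in[0,T]} t^{-1/(1-\alpha)} V_t$. From \eqref{eq:plan_variance_balance}, for every $t\le T$,
\begin{align} \label{eq:plan_pointwise}
V_t \lesssim \sup_{s\le t}|N_s| + t\,\sup_{s\le t} V_s^\alpha.
\end{align}
I would fix $q := (1-\alpha)p \in (0,1-\alpha)\subset(0,1)$ and take $L^q$-quasinorms. For the drift term, $\sup_{s\le t} V_s^\alpha \le t^{\alpha/(1-\alpha)} M_t^\alpha$, so $t\,\sup_{s\le t}V_s^\alpha \le t^{1/(1-\alpha)} M_t^\alpha$; dividing \eqref{eq:plan_pointwise} by $t^{1/(1-\alpha)}$ and taking the sup over $t\le T$ gives
\begin{align} \label{eq:plan_bootstrap}
M_T \lesssim \sup_{t\le T} t^{-\frac{1}{1-\alpha}} \sup_{s\le t}|N_s| + M_T^\alpha.
\end{align}
For the martingale term, BDG together with \eqref{eq:plan_QV} gives, for any exponent $r\in(0,\infty)$, $\EE[\sup_{s\le t}|N_s|^r] \lesssim \EE[[N]_t^{r/2}] \lesssim t^{r/2}\,\EE[\sup_{s\le t} V_s^{r/2}] \le t^{r/2} t^{\frac{r}{2(1-\alpha)}}\EE[M_t^{r/2}]$; since $M_t$ is nondecreasing in $t$ this can be summed dyadically over $t\in[2^{-k-1}T, 2^{-k}T]$ (as in the proof of \autoref{prop:regularity}, Step 3) to control $\EE\big[(\sup_{t\le T} t^{-1/(1-\alpha)}\sup_{s\le t}|N_s|)^q\big]$ by $\lesssim_{q,T} \EE[M_T^{q/2}]$, using $q<1$ and the convergence of the geometric series in $2^{-k(1-q)/(1-\alpha)}$ or similar. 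Plugging this into \eqref{eq:plan_bootstrap}, raising to the $q$-th power and using $(a+b)^q \le a^q + b^q$ for $q\le 1$,
\begin{align} \label{eq:plan_final}
\EE[M_T^q] \lesssim_{q,T} \EE[M_T^{q/2}] + \EE[M_T^{q\alpha}].
\end{align}
Since $q/2 < q$ and $q\alpha < q$, Young's inequality absorbs the right-hand side into a fraction of $\EE[M_T^q]$ plus a constant, \emph{provided one first knows $\EE[M_T^q]<\infty$}; this finiteness a priori is the main obstacle and is where I would be careful.

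To secure the a priori finiteness, I would argue as follows. By \autoref{prop:existence.SCE} the solution $\mu$ has finite total variation $\le 1$ and, more importantly, arguing directly from \eqref{eq:plan_variance_balance}--\eqref{eq:plan_QV} with $r=2$ one gets $\EE[\sup_{t\le T}V_t^2]<\infty$ by a standard Gronwall/BDG argument (the drift contributes $\EE[\sup_t V_t^2]\lesssim T^2\EE[\sup_t V_t^{2\alpha}] \lesssim T^2(1+\EE[\sup_t V_t^2])$ and the martingale contributes $\lesssim T^2\EE[\sup_t V_t]$, both absorbable). Then $M_T \le \big(\inf_{t\le T} t^{1/(1-\alpha)}\big)^{-1}$ is not integrable directly, but on the interval $[\eta, T]$ one has $\sup_{t\in[\eta,T]} t^{-1/(1-\alpha)}V_t \le \eta^{-1/(1-\alpha)}\sup_{t\le T}V_t \in L^2$; near $t=0$, one uses that $V_t \le \sup_{s\le t}|N_s| + \mathrm{const}\cdot t$ and that $\sup_{s\le t}|N_s|$ has, by BDG and \eqref{eq:plan_QV}, $L^q$-norm $\lesssim t^{1/2}$ which beats $t^{1/(1-\alpha)}$ only when $\alpha<1/2$ — so for general $\alpha$ a slightly more careful dyadic decomposition near $0$ is needed, treating the range $t\in[2^{-k-1}T,2^{-k}T]$ and showing $\sum_k \EE[(2^{k/(1-\alpha)}\sup_{t\le 2^{-k}T}|N_t|)^q]<\infty$ by inserting the not-yet-optimal but finite bound $\EE[\sup_t V_t^2]<\infty$ into the BDG estimate, i.e. $\EE[(\sup_{t\le 2^{-k}T}|N_t|)^q] \lesssim 2^{-k q/2}\,(2^{-k})^{q/(2(1-\alpha))}\EE[\sup_t V_t^2]^{q/4}$ — wait, this requires care with exponents; the cleanest route is to first establish \eqref{eq:plan_final} with $M_T$ replaced by its truncation $M_T\wedge K$ (which is trivially in $L^q$), obtain a bound uniform in $K$, and let $K\to\infty$ by monotone convergence. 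That truncation trick is how I would handle the a priori integrability issue cleanly, and the rest is routine.
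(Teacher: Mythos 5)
There is a genuine gap, and it sits exactly where you flagged discomfort: the martingale term cannot be handled by Burkholder--Davis--Gundy. Your quadratic variation bound $[N]_\tau \lesssim \int_0^\tau V_s\,\dd s$ is correct, but inserting $V_s \le s^{1/(1-\alpha)} M_\tau$ gives $[N]_\tau^{1/2} \lesssim M_\tau^{1/2}\,\tau^{\frac{2-\alpha}{2(1-\alpha)}}$, and since $\frac{2-\alpha}{2(1-\alpha)} < \frac{1}{1-\alpha}$ for every $\alpha>0$, the BDG estimate for $\sup_{s\le\tau}|N_s|$ is \emph{larger} than $\tau^{1/(1-\alpha)}$ as $\tau\to 0$. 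Concretely, on the dyadic block $\tau=2^{-k}T$ one obtains
\begin{align*}
\EE\big[(\tau^{-\frac{1}{1-\alpha}}\sup_{s\le\tau}|N_s|)^q\big] \lesssim \tau^{-\frac{q\alpha}{2(1-\alpha)}}\,\EE[M_T^{q/2}],
\end{align*}
so the dyadic series behaves like $\sum_k 2^{\frac{kq\alpha}{2(1-\alpha)}}=\infty$: the claimed bound $\EE[(\sup_{t\le T}t^{-1/(1-\alpha)}\sup_{s\le t}|N_s|)^q]\lesssim\EE[M_T^{q/2}]$ does not follow from BDG, and your bootstrap inequality never gets off the ground (the truncation trick only repairs the a priori integrability, not this). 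The failure is structural: BDG controls the two-sided fluctuation of $N$ through its quadratic variation, whereas the result is true only because $V\ge 0$ forces the martingale $N$ to be bounded below by minus an increasing process of small expectation; for exponents $p<1$ this one-sided information yields $\EE[(\sup_t N_t)^p]\lesssim \EE[A_T]^p$ by a Lenglart/first-moment tail argument that never sees $[N]$.

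This is precisely the route the paper takes: it rewrites the balance as $V_t\le V_s+\int_s^t V_r^\alpha\,\dd r+M_t-M_s$ and applies the stochastic Bihari--LaSalle inequality of \cite[Theorem 3.1]{Ge23}, restarted on each dyadic block $[2^{-n-1},2^{-n}]$, to obtain $\EE[(\sup_{r\in[s,t]}V_r^{1-\alpha})^p]\lesssim\EE[V_s]^{1-\alpha}+(t-s)$; the already established first-moment bound $\EE[V_s]\lesssim s^{1/(1-\alpha)}$ from \autoref{thm:richardson} then makes the resulting sum $\sum_n 2^{np}\cdot 2^{-n}$ converge because $p<1$. If you wish to keep your scheme, you must replace BDG by such a stochastic Gronwall-type maximal inequality for $p<1$; with BDG alone the statement is out of reach.
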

In other words, the random variable $N := \sup_{t \in [0, T]} t^{- \frac{1}{1 -
\alpha}} \var(\mu_t)$ exists and is $\PP$-almost surely finite, which gives the pathwise bound 
\begin{align*}
\var(\mu_t)
\leq
N t^\frac{1}{1 - \alpha},
\quad
\forall t \in [0,T].
\end{align*}
\begin{proof}
Without loss of generality, we may assume $T=1$. 
Then by the same passages as in the proof of \autoref{thm:richardson}, for any $s < t$ we have 
\[ V_t \leq V_s + \int_s^t V_r^{\alpha} \dd r + M_t - M_s, \quad M_r
   := \int_0^r 2 \langle (x \ast \mu_u) \, \mu_u, \dd W_u \rangle . \]
In this situation, since $M$ is a nicely defined, continuous martingale, one
can apply the stochastic versions of the Bihari-LaSalle inequality (\cite[Theorem 3.1]{Ge23}) to find
\[ \mathbb{E} \left[ \left(
\sup_{r \in [s, t]} V_r^{1 - \alpha} \right)^p\right] 
\lesssim 
 \mathbb{E} \left[ \left(
\sup_{r \in [s, t]} V_r \right)^p\right]^{1 - \alpha} 
\lesssim 
\mathbb{E}[V_s]^{1-\alpha} + t-s,
\]
 for any fixed $p \in (0, 1)$ and $s<t$.
Subdiving the interval $[0, 1]$ in a dyadic way and using the upper bound in expectation coming from \autoref{thm:richardson}, one then finds
\begin{align*}
\mathbb{E} \left[\left( \sup_{t \in [0, 1]} t^{- 1} V_t^{1 - \alpha}\right)^p\right] 
&\leq
\sum_{n \geq  0} \mathbb{E} \left[\left( \sup_{t \in [2^{- n - 1}, 2^{- n}]} t^{-1} V_t^{1 - \alpha}\right)^p\right]
\\
&\lesssim 
\sum_{n \geq  0} 2^{n p} \left(\mathbb{E} \left[V_{2^{- n}}^{1 -\alpha}\right] + 2^{- n} \right) 
\lesssim
  \sum_{n \geq  0} 2^{- n (1 - p)} < \infty. \qedhere
\end{align*}
\end{proof}

\subsection{Relation to two-point motion}  \label{ssec:two_point}

Recall previous \autoref{prop:solution_LJR}, stating that the solution $\theta$ of the forward stochastic transport equation can be obtained by time reversal of the solution map $S$ given by \cite[Theorem 3.2]{LeJRai2002}, which instead solves the backward stochastic transport equation. 
Because of the latter, the solution map $S$ constructed by Le Jan and Raimond is in a natural duality with the solution $\mu$ of the (forward) stochastic continuity equation \eqref{eq:stochastic.continuity}.
In this subsection we aim at clarifying the link between $\mu$ and the two-point motion associated to the backward stochastic transport equation introduced in \cite{LeJRai2002}. 

Let us recall the Le Jan-Raimond construction of two-point motion.
By \cite[Equation (3.23)]{LeJRai2002}, the solution map $S$ satisfies for every $t \geq 0$ and $\PP \otimes \mathscr{L}^d$-almost every $(\omega,x) \in \Omega \times \R^d$
\begin{align} \label{eq:conditional_law}
S_t f (\omega,x) = \int_{C_t \R^d} f(e_t(\omega')) P_{x,\omega} ( \mathrm{d} \omega'),
\end{align}
where $P_{x,\omega} ( \mathrm{d} \omega')$ is a family of conditional probabilities on the space of continuous functions $C_t \R^d$ defined for $\PP \otimes \mathscr{L}^d$-almost every $(\omega,x)$. 
Here $\omega' \in C_t \R^d$ and $e_t$ is the evaluation at time $t$.

Let $\lambda_x , \lambda_y \ll \mathscr{L}^d$ be two probability measures on $\R^d$ absolutely continuous with respect to the Lebesgue measure.
The two-point motion (with initial distribution $\lambda_x \otimes \lambda_y$) is defined as the law on $C_t(\R^d \times \R^d)$ given by
\begin{align} \label{eq:2point}
P^2_{\lambda_x \otimes \lambda_y}( \mathrm{d} \omega', \mathrm{d}\omega'') 
&:=
\int_{\Omega} 
P_{\lambda_x,\omega} ( \mathrm{d} \omega') P_{\lambda_y,\omega} ( \mathrm{d} \omega'') \mathbb{P}( \mathrm{d} \omega),
\quad
\omega',\omega'' \in C_t \R^d,
\end{align}
where the conditional laws $P_{\lambda_x,\omega}$ and $P_{\lambda_y,\omega}$ are given by the disintegration formula
\begin{align} \label{eq:2point_x}
P_{\lambda_x,\omega} ( \mathrm{d} \omega') 
&:= 
\int_{\R^d} P_{x,\omega} ( \mathrm{d} \omega') \lambda_x( \mathrm{d} x),
\qquad
P_{\lambda_y,\omega} ( \mathrm{d} \omega'') 
:= 
\int_{\R^d} P_{y,\omega} ( \mathrm{d} \omega') \lambda_y( \mathrm{d} y),
\end{align}
and are well defined since $\lambda_x , \lambda_y \ll \mathscr{L}^d$.
The two-point motion defines a Markov process $(X,Y)$ on $\R^d \times \R^d$, still denoted two-point motion with slight abuse of notation, that has continuous trajectories $\PP$-almost surely and transition semigroup
\begin{align*}
P_t^2 (f \otimes g) := \mathbb{E} [S_t f \otimes S_t g],
\quad
\forall f, g \in L^2_x.
\end{align*}

\begin{lemma} \label{lem:variance_mu}
Let $\mu$ be the unique solution of \eqref{eq:stochastic.continuity} with initial condition $\mu_0 \ll \mathscr{L}^d$ with density $\frac{\dif \mu_0}{\dif x}\in L^2_x$, and let $(X,Y)$ be the two-point motion with initial distribution $\mu_0 \otimes \mu_0$. Then for every $t \geq 0$ it holds
\begin{align} \label{eq:variance_mu}
\mathbb{E}[\var(\mu_t)] 
&=
\frac12 
\int_\Omega \int_{\R^d} \int_{\R^d} 
|x'-y'|^2 \mu_t(\omega,\dd x') \mu_t(\omega,\dd y')
\PP ( \mathrm{d} \omega)
=
\frac12 \mathbb{E} [|X_t-Y_t|^2].
\end{align}
\end{lemma}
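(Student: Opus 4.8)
The plan is to relate the three quantities in \eqref{eq:variance_mu} directly. The first identity is purely algebraic: for any probability measure $\nu$ on $\R^d$ with finite second moment, $\mathrm{Var}(\nu) = \int |y - \bar\nu|^2 \,\nu(\dd y)$ with $\bar\nu := \int x\,\nu(\dd x)$, and expanding the square one gets the symmetrized expression $\frac12 \iint |x'-y'|^2\,\nu(\dd x')\nu(\dd y')$, exactly as recorded in the displayed definition of $\mathbb{E}[\mathrm{Var}(\mu_t)]$ in the Introduction. Applying this pointwise in $\omega$ to $\nu = \mu_t(\omega,\cdot)$ and taking expectations gives the first equality in \eqref{eq:variance_mu}; all terms are finite by \autoref{prop:richardson_upper_functional} (or directly by the moment bound \eqref{eq:stoch_continuity_bound_variance}) since $\mu_0 = \delta_x$ or, more generally, has finite second moment.

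The substance is the second equality, $\frac12 \iint |x'-y'|^2\,\mu_t(\dd x')\mu_t(\dd y') = \frac12\,\mathbb{E}[|X_t-Y_t|^2]$ after integrating in $\omega$. First I would reduce to test functions: it suffices to show that for every pair $f,g \in C_c(\R^d)$ (or in $L^2_x$) one has
\begin{align}\label{eq:plan_key}
\mathbb{E}\big[\langle f,\mu_t\rangle\,\langle g,\mu_t\rangle\big]
=
\mathbb{E}\big[f(X_t)\,g(Y_t)\big]
=
P_t^2(f\otimes g),
\end{align}
since bilinear quantities of this form determine the law of $|X_t-Y_t|^2$ and of the symmetric bilinear form $\iint \cdot\,\mu_t\otimes\mu_t$; one then applies \eqref{eq:plan_key} with a sequence of test functions approximating $(x',y')\mapsto |x'-y'|^2$ (or, cleanly, with $f_i(x')=x'_i$, $g_i(y')=y'_i$ and $f=g=1$, using a cutoff and the uniform-integrability bound of \autoref{prop:richardson_upper_functional} to pass to the limit). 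The left-hand side of \eqref{eq:plan_key} is handled via the duality \autoref{lem:duality}: since $\mu_0 \ll \mathscr{L}^d$ with $L^2_x$ density, for $f \in C_c(\R^d)$ we have $\langle f,\mu_t\rangle = \langle S_t f,\mu_0\rangle$ $\PP$-a.s., hence
\begin{align*}
\mathbb{E}\big[\langle f,\mu_t\rangle\,\langle g,\mu_t\rangle\big]
=
\mathbb{E}\Big[\int_{\R^d}\int_{\R^d} S_t f(x')\,S_t g(y')\,\mu_0(\dd x')\,\mu_0(\dd y')\Big]
=
\int_{\R^d}\int_{\R^d} \mathbb{E}\big[S_t f(x')\,S_t g(y')\big]\,\mu_0(\dd x')\,\mu_0(\dd y'),
\end{align*}
using Fubini, which is justified by the $L^2$-bounds on $S_t$ in \cite[Theorem 3.2]{LeJRai2002} together with $\mu_0$ having $L^2_x$ density. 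By definition of the two-point transition semigroup, $\mathbb{E}[S_t f(x')\,S_t g(y')] = (P_t^2(f\otimes g))(x',y')$, and integrating against $\mu_0\otimes\mu_0$ produces exactly $\mathbb{E}[f(X_t)g(Y_t)]$ for the two-point motion started from $\mu_0\otimes\mu_0$, by \eqref{eq:2point}--\eqref{eq:2point_x}. This is \eqref{eq:plan_key}.

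The main obstacle, and the point requiring care, is the measure-theoretic bookkeeping in the definition of the two-point motion: the conditional kernels $P_{x,\omega}$ in \eqref{eq:conditional_law} are defined only for $\PP\otimes\mathscr{L}^d$-a.e.\ $(\omega,x)$, so the disintegrations \eqref{eq:2point_x} and the product measure \eqref{eq:2point} make sense precisely because $\mu_0 \ll \mathscr{L}^d$ — this is where that hypothesis is used, and one must check that the null sets do not interfere when forming $P_{\mu_0,\omega}(\dd\omega')P_{\mu_0,\omega}(\dd\omega'')\PP(\dd\omega)$ and then evaluating $f(X_t)g(Y_t) = \int\int f(e_t(\omega'))g(e_t(\omega''))\,P^2_{\mu_0\otimes\mu_0}$. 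Once this identification is in place, and one notes that $\mathbb{E}[S_t f\otimes S_t g] = P_t^2(f\otimes g)$ is exactly the transition semigroup of $(X,Y)$ (so that evaluating against $\mu_0\otimes\mu_0$ gives the law at time $t$ of the process started from that initial distribution), the remaining steps — Fubini, the approximation of $|x'-y'|^2$, and the passage to the limit using \autoref{prop:richardson_upper_functional} — are routine. Finally, \eqref{eq:variance_mu} combined with the upper and lower bounds of \autoref{thm:richardson} yields $\mathbb{E}[|X_t-Y_t|^2] \asymp t^{1/(1-\alpha)}$ at short times, which is the announced Richardson law and justifies the terminology. $\qed$
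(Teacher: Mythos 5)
Your proposal is correct and follows essentially the same route as the paper: both arguments rest on the duality $\langle f,\mu_t\rangle=\langle S_t f,\mu_0\rangle$ of \autoref{lem:duality} together with the conditional-kernel representation \eqref{eq:conditional_law} of $S_t$, and on the observation that $\mu_0\ll\mathscr{L}^d$ is exactly what makes the disintegrations \eqref{eq:2point_x} well defined. The only organizational difference is that the paper identifies the measure $\mu_t(\omega,\cdot)=\int_{\R^d}((e_t)_\sharp P_{x,\omega})\,\mu_0(\dd x)$ once and for all and then integrates $|x'-y'|^2$ directly against the resulting product, which sidesteps the test-function reduction and the final approximation/uniform-integrability step in your plan (and also covers the case where both sides are infinite); your version is equally valid given the second-moment bound \eqref{eq:stoch_continuity_bound_variance}.
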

\begin{proof}
Let us compute the quantity $\mathbb{E}[|X_t-Y_t|^2]$ using the definition of two-point process above.
We start the two-point motion from an arbitrary initial condition $\lambda_x \otimes \lambda_y$. By \eqref{eq:2point}, \eqref{eq:2point_x} above and change of variables, we get
\begin{align*}
\mathbb{E}[|X_t-Y_t|^2]
&:=
\int_{C_t \R^d} \int_{C_t \R^d} 
|e_t(\omega')-e_t(\omega'')|^2 P^2_{\lambda_x \otimes \lambda_y}( \mathrm{d} \omega' \dd \omega'')\\
&=
\int_{\Omega} 
\int_{\R^d} \int_{\R^d}
|x'-y'|^2
\left( \int_{\R^d} ((e_t)_\sharp P_{x,\omega}) ( \mathrm{d} x') \lambda_x( \mathrm{d} x) \right)
\left( \int_{\R^d}  ((e_t)_\sharp P_{y,\omega}) ( \mathrm{d} y') \lambda_y( \mathrm{d} y) \right) \mathbb{P}( \mathrm{d} \omega) 
.
\end{align*}
Next, let us rewrite solutions of the forward stochastic continuity equation.
The key identities are $\langle S_t f, \mu_0 \rangle = \langle f, \mu_t \rangle$, given by \autoref{lem:duality} for any $f \in C^\infty_c(\R^d)$, and equation \eqref{eq:conditional_law}. We have
\begin{align*}
\int_{\R^d} f(x') \mu_t(\omega,\dd x')
=
\int_{\R^d} S_t f (x, \omega) \mu_0 ( \mathrm{d} x) 
=
\int_{\R^d} \int_{\R^d} f(x') ((e_t)_\sharp P_{x,\omega}) ( \mathrm{d} x') \mu_0( \mathrm{d} x)
.
\end{align*}
As a consequence, since $f \in C^\infty_c(\R^d)$ is arbitrary, we deduce the following expression for the solution of the stochastic continuity equation \eqref{eq:stochastic.continuity}:
\begin{align*} 
\mu_t(\omega,\dd x') = \int_{\R^d} ((e_t)_\sharp P_{x,\omega}) ( \mathrm{d} x') \mu_0( \mathrm{d} x),
\qquad
\PP\mbox{-almost surely}. 
\end{align*}
Together with the formula for $\mathbb{E}[|X_t-Y_t|^2]$ above,  \eqref{eq:variance_mu} follows after imposing $\lambda_x = \lambda_y = \mu_0$. 
\end{proof}

Next, we extend the formula \eqref{eq:variance_mu} in \autoref{lem:variance_mu} to a Dirac delta initial condition $\mu_0 = \delta_x$, for almost every $x \in \R^d$.
Recall that the conditional law $P_{x,\omega}( \mathrm{d} \omega')$ is well-defined for $\PP \otimes \mathscr{L}^d$-almost every $(\omega,x)$. 
Let us introduce the family $P_{x,\omega}( \mathrm{d} \omega')
P_{x,\omega}( \mathrm{d} \omega'')$ of conditional probabilities on $C_t \R^d \times C_t \R^d \simeq C_t(\R^d \times \R^d)$, defined for the same full-measure set of $(\omega,x)$.
Hence, by Markovianity of $S_t$ and \eqref{eq:conditional_law}, for almost every $x \in \R^d$ the transition densities $t \mapsto \int_\Omega ((e_t)_\sharp P_{x,\omega}) ( \mathrm{d} x') ((e_t)_\sharp P_{x,\omega}) ( \mathrm{d} x'') \PP(\mathrm{d}\omega)$ satisfy the Chapman-Kolmogorov equations, with initial distribution $\delta_x \otimes \delta_x$.
By the Kolmogorov extension theorem, for almost every $x \in \R^d$ this defines a Markov process $(X^{\delta_x},Y^{\delta_x})$ on $\R^d \times \R^d$, with law
\begin{align*}
P^2_{\delta_x \otimes \delta_x}( \mathrm{d} \omega'\dd \omega'')
=
\int_\Omega
P_{x,\omega}( \mathrm{d} \omega')
P_{x,\omega}( \mathrm{d} \omega'')
\PP( \mathrm{d} \omega),
\end{align*}
and thus $(X^{\delta_x},Y^{\delta_x})$ is the two-point motion with initial distribution $\delta_x \otimes \delta_x$.

By \autoref{lem:duality} and \eqref{eq:conditional_law}, for every fixed $t \geq 0$ and for $\PP \otimes \mathscr{L}^d$-almost every $(\omega,x)$ we have
\begin{align*}
(e_t)_\sharp P_{x,\omega}( \mathrm{d} x')
=
\mu_t^x(\omega,\dd x'),
\end{align*} 
where $\mu^x$ is the unique solution of the stochastic continuity equation \eqref{eq:stochastic.continuity} starting from initial condition $\mu_0 = \delta_x$.
By the same computation as above, we deduce that for almost every $x \in \R^d$:
\begin{align} \label{eq:particle_dispersion}
\frac12 \mathbb{E}[|X_t^{\delta_x}-Y_t^{\delta_x}|]
=
\mathbb{E}[\var(\mu_t^x)]
=
\mathbb{E}[\var(\mu_t)],
\end{align}
where with the second equality we refer to the fact that the law of $\var(\mu_t^x)$ is independent of the initial location $x$ of the Dirac delta measure. We have seen that the latter quantity can be controlled for short times with $\sim t^{\frac{1}{1-\alpha}}$, under the assumptions of \autoref{thm:richardson}.  This observation justifies the choice of the terminology ``Richardson's law'' when talking about \eqref{thm:richardson}.

\subsection{Instantaneous $L^2_x$ regularization in the incompressible case} \label{sec:dirac}

In this subsection, we present some estimates for the stochastic continuity equation \eqref{eq:stochastic.continuity}, allowing to show instantaneous $L^2_x$-regularization of solutions with initial condition $\mu_0=\delta_x$, ultimately due the diffusive and spontaneously stochastic nature of the inviscid system. 
Mathematically, our approach is based on uniform-in-$\kappa$ bounds for the fundamental solution of the parabolic PDEs \eqref{eq:PDE_F}-\eqref{eq:PDE_G} satisfied by the two-point self-correlation functions.
Our results are valid in the incompressible regime $\div W = 0$, corresponding to 
$\eta=1$ for the Kraichnan model.

To prove the result, we leverage \autoref{lem:L^2-criterion-selfcorrelation}: any $L^\infty_x$-bound on the two-point self-correlation can be linked to an $L^2_{\omega,x}$-bound on the solution to the SPDE.
For convenience, we will constantly work with the viscous approximation $\tilde\mu^\kappa$, solving \eqref{eq:viscous.stochastic.continuity}; its two-point self-correlation function, denoted $G^\kappa$, solves the PDEs \eqref{eq:PDE_G}.
Moreover, since $\div Q = 0$ by assumption, $G^\kappa$ also solves \eqref{eq:PDE_F}. The above can be compactly written as
\begin{equation}\label{eq:dual_PDEs}
    \partial_t G^\kappa = -\cH_\kappa G^\kappa,\quad
    \text{with}\quad \cH_\kappa = \cH^\ast_\kappa:= -(1-\kappa) Q : D^2 - \kappa C(0):D^2. 
\end{equation}

A general strategy to deal with (possibly degenerate) second order parabolic PDEs, usually in symmetric, divergence form, is to perform energy estimates and Nash-type inequalities, see e.g. \cite{Davies1990}; useful predecessors in application to homogeneous isotropic turbulence are \cite{Hakulinen,rowan2023}.

Going forward we shall suppose the following condition on the covariance matrix $Q$;
note that we do not need $W$ to be isotropic here.

\begin{assumption} \label{ass:sharpness_reg_bis}
\autoref{ass:well_posed} holds, $\div Q = 0$ and there exist $\alpha\in (0,1)$, $K>0$ such that
\begin{equation}\label{eq:sharpness_reg_bis_ii}
         Q(z) \geq K (1\wedge |z|^{2\alpha}) I_d, \quad \forall\, z\in\R^d.
\end{equation}
\end{assumption}

\begin{theorem}\label{thm:main.thm.dirac}
    Let $W$ satisfy \autoref{ass:sharpness_reg_bis} 
    for some $\alpha\in (0,1)$.
    Then for any $\mu_0\in \mathcal{M}(\R^d)$, the associated solutions $\tilde \mu^\kappa$ to \eqref{eq:viscous.stochastic.continuity} satisfy
    \begin{equation}\label{eq:main.thm.dirac}
        \sup_{\kappa\in [0,1/2)} \EE[\| \tilde\mu^\kappa_t\|_{L^2_x}^2] \lesssim t^{-\frac{d}{2(1-\alpha)}} \| \mu_0\|_{TV}^2, \quad \forall\, t\in (0,1).
    \end{equation}
\end{theorem}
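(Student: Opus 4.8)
The plan is to reduce the estimate \eqref{eq:main.thm.dirac} to a purely PDE question about the two-point self-correlation $G^\kappa$ of $\tilde\mu^\kappa$, via \autoref{lem:L^2-criterion-selfcorrelation}. Recall that $\tilde\mu^\kappa\in L^2(\Omega;\cM)$ by \eqref{eq:contraction_TV}, so its self-correlation $G^\kappa$ is a well-defined finite measure; moreover $G^\kappa$ is nonnegative (it is the Fourier transform of $\EE[|\widehat{\tilde\mu^\kappa_t}|^2]$, cf. \eqref{eq:hat_xi=L2}) and, by \autoref{lem:L^2-criterion-selfcorrelation}, the bound $\EE[\|\tilde\mu^\kappa_t\|_{L^2_x}^2]=G^\kappa_t(0)=\|G^\kappa_t\|_{L^\infty_x}$ holds whenever the right-hand side is finite. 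Thus it suffices to prove the $\kappa$-uniform bound
\begin{align*}
\|G^\kappa_t\|_{L^\infty_x}\lesssim t^{-\frac{d}{2(1-\alpha)}}\,\|\mu_0\|_{TV}^2,\qquad t\in(0,1).
\end{align*}
Since $G^\kappa$ solves the parabolic PDE $\partial_t G^\kappa = -\cH_\kappa G^\kappa$ in \eqref{eq:dual_PDEs}, with $\cH_\kappa$ symmetric and, by \autoref{ass:sharpness_reg_bis}, uniformly elliptic degenerating like $|z|^{2\alpha}$ near the origin, this is an ultracontractivity-type bound for the semigroup $e^{-t\cH_\kappa}$, and the initial datum $G^\kappa_0=F[\mu_0]$ has total mass $\leq\|\mu_0\|_{TV}^2$ (by \autoref{rem:bound_C_TV}, since $G_0^\kappa$ is a finite measure with $\|G_0^\kappa\|_{TV}\le\|\mu_0\|_{TV}^2$).

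The core step is an $L^1\to L^\infty$ smoothing estimate for $e^{-t\cH_\kappa}$ that is uniform in $\kappa\in[0,1/2)$. The natural route is the Nash--Moser method: since $\cH_\kappa$ is in divergence form ($Q:D^2 = \nabla\cdot(Q\nabla\cdot)$ because $\div Q=0$), for a nonnegative solution $G$ one computes $\frac{\dd}{\dd t}\|G_t\|_{L^2_x}^2 = -2\langle (1-\kappa)Q\nabla G_t + \kappa C(0)\nabla G_t,\nabla G_t\rangle \leq -2K(1-\kappa)\int |z|^{2\alpha}\wedge 1\,|\nabla G_t|^2\,\dd z$, using \eqref{eq:sharpness_reg_bis_ii}. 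One must then combine this dissipation with a weighted Nash inequality of the form $\|G\|_{L^2_x}^{2+4/d'}\lesssim \big(\int (|z|^{2\alpha}\wedge 1)|\nabla G|^2\big)\,\|G\|_{L^1_x}^{4/d'}$ for a suitable effective dimension $d'$. The degeneracy of the weight $|z|^{2\alpha}$ at the origin worsens the Sobolev exponent exactly as in the Fabes--Kenig--Serapioni theory of Muckenhoupt-weighted spaces \cite{FKS1982}: the weight $w(z)=|z|^{2\alpha}\wedge 1$ is $A_2$ (since $2\alpha<d$, always true for $d\ge 2$, $\alpha<1$), and the associated Nash/Sobolev inequality produces the effective dimension $d' = d/(1-\alpha)$, which is precisely what yields the exponent $\frac{d}{2(1-\alpha)}$ in \eqref{eq:main.thm.dirac}. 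Feeding this back into the ODE for $\|G_t\|_{L^2_x}^2$ and running a Nash iteration (or directly the Nash differential inequality $\frac{\dd}{\dd t}\|G_t\|_{L^2}^2\lesssim -\|G_t\|_{L^2}^{2+4/d'}\|G_0\|_{L^1}^{-4/d'}$) gives $\|G_t\|_{L^2_x}^2\lesssim t^{-d'/2}\|G_0\|_{L^1_x}^2$, and then a Moser/duality iteration upgrades $L^1\to L^2$ at time $t/2$ and $L^2\to L^\infty$ on $[t/2,t]$ to the desired $\|G_t\|_{L^\infty_x}\lesssim t^{-d'/2}\|G_0\|_{L^1_x}$; since $d'/2 = d/(2(1-\alpha))$ and $\|G_0\|_{L^1_x}=\|G_0\|_{TV}\le\|\mu_0\|_{TV}^2$, this is exactly what we want. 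All constants depend only on $K$, $d$, $\alpha$ and the $A_2$-constant of $w$, hence are uniform in $\kappa$ (the term $\kappa C(0):D^2$ only adds nonnegative dissipation and can be discarded, using $1-\kappa\ge 1/2$).

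The main obstacle I anticipate is making the weighted Nash inequality genuinely \emph{global} on $\R^d$ and uniform in $\kappa$: the bound \eqref{eq:sharpness_reg_bis_ii} gives ellipticity with constant $\sim 1$ for $|z|\gtrsim 1$ and degeneracy $|z|^{2\alpha}$ near $0$, so the relevant weight $w(z)=1\wedge|z|^{2\alpha}$ behaves differently at the two scales, and one must check that the local (near-origin, where the $A_2$ theory bites) and far-field (standard parabolic) regimes glue into a single inequality with the worse exponent $d'=d/(1-\alpha)$ governing the short-time blow-up. One clean way to handle this is to work on the ball $B_1$ only — which is where the small-time singularity is generated — via a localized weighted Sobolev inequality on $B_1$ for the $A_2$-weight $|z|^{2\alpha}$, controlling the exterior contribution crudely by the $L^\infty$-ellipticity of $C(0)$; alternatively one may cite the semigroup comparison/Nash-profile results already used in this literature (\cite{Hakulinen,rowan2023}) and the weighted De Giorgi--Nash--Moser framework of \cite{FKS1982}. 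A secondary technical point is justifying the energy identity for $G^\kappa$ rigorously (it is a priori only a measure): this is done by first running the argument for $\kappa>0$, where \eqref{eq:PDE_F} is strictly parabolic and $G^\kappa_t$ is smooth and bounded for $t>0$ by the results of \cite{Kr08} already invoked in the proof of \autoref{thm:regularity_Kraichnan}, obtaining the bound with a constant independent of $\kappa$, and then passing to $\kappa=0$ by the weak convergence $\tilde\mu^\kappa_t\rightharpoonup \mu_t$ (\autoref{lem:approx}, or its continuity-equation analogue) together with lower semicontinuity of the $L^2_x$-norm, exactly as in \autoref{lem:L^2-criterion-selfcorrelation}.
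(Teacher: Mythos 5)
Your proposal is correct and follows essentially the same route as the paper: reduce to an $L^\infty$ bound on the self-correlation $G^\kappa$ via \autoref{lem:L^2-criterion-selfcorrelation}, exploit that $\cH_\kappa$ is self-adjoint and in divergence form (since $\div Q=0$), prove a $\kappa$-uniform weighted Nash/Sobolev inequality with effective dimension $\gamma=d/(1-\alpha)$, and run the Nash argument plus duality to get the $TV\to L^\infty$ rate $t^{-\gamma/2}$, passing to $\kappa=0$ by lower semicontinuity. The only difference is that the paper proves the weighted Sobolev inequality (\autoref{lem:Davies}) directly — splitting near/far from the origin and using H\"older in Lorentz spaces with $|z|^{-\alpha}\in L^{d/\alpha,\infty}_x$ together with Talenti's inequality — rather than citing the $A_2$ theory of \cite{FKS1982}, which is exactly the localized gluing you flag as the main obstacle and sketch as an alternative.
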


\begin{remark}\label{rem:ass_bis_kraichnan}
In the Kraichnan model, \autoref{ass:sharpness_reg_bis} is satisfied for $\alpha\in (0,1)$ and $\eta=1$. See \autoref{cor:assumptions_kraichnan} and \autoref{lem:nondegeneracy_kraichnan}.
\end{remark}

To prove \autoref{thm:main.thm.dirac}, we start with the following lemma:

\begin{lemma} \label{lem:Davies}
Under the assumptions of \autoref{thm:main.thm.dirac}, let $\cH_\kappa$ as defined in \eqref{eq:dual_PDEs} and consider
\begin{align*}
    \gamma:=\frac{d}{1-\alpha}, \qquad r := \frac{2\gamma}{\gamma-2}.
\end{align*}
Then, uniformly in $\kappa\in [0,1/2)$, for all 
$G\in C^\infty_c(\R^d)$ we have 
\begin{align}\label{eq:generator_estimate}
\| G \|_{L^r_x}^2 
&\lesssim 
\langle \cH_\kappa G,G\rangle
+ 
\| G \|_{L^2_x}^2.
\end{align}
\end{lemma}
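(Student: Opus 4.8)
\textbf{Proof plan for \autoref{lem:Davies}.}
The statement \eqref{eq:generator_estimate} is a weighted Nash-type inequality: it says that the quadratic form associated to $\cH_\kappa$ controls an $L^r_x$ norm (with $r$ the Sobolev exponent dual to the "fractional dimension" $\gamma = d/(1-\alpha)$), uniformly in the diffusivity parameter $\kappa\in[0,1/2)$. The plan is to derive it from the lower bound \eqref{eq:sharpness_reg_bis_ii} on $Q$, which forces $\langle\cH_\kappa G,G\rangle$ to dominate a genuinely degenerate-elliptic Dirichlet form, and then from a Hardy/Nash inequality adapted to the weight $1\wedge|z|^{2\alpha}$. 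First I would integrate by parts: since $\div Q=0$, we have $\langle(-Q:D^2)G,G\rangle = \langle Q\nabla G,\nabla G\rangle$ for $G\in C^\infty_c$, and likewise $\langle(-C(0):D^2)G,G\rangle = \langle C(0)\nabla G,\nabla G\rangle$. Hence, using \eqref{eq:sharpness_reg_bis_ii} and $C(0)$ positive definite, for $\kappa\in[0,1/2)$,
\begin{align*}
\langle\cH_\kappa G,G\rangle
= (1-\kappa)\langle Q\nabla G,\nabla G\rangle + \kappa\langle C(0)\nabla G,\nabla G\rangle
\gtrsim \int_{\R^d}(1\wedge|z|^{2\alpha})\,|\nabla G(z)|^2\,\dd z,
\end{align*}
with implicit constant independent of $\kappa$ (here one uses that $(1-\kappa)\geq 1/2$ to absorb the $\kappa$-dependent constant).

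The core of the argument is then the purely deterministic, weighted functional inequality
\begin{align*}
\| G\|_{L^r_x}^2 \lesssim \int_{\R^d}(1\wedge|z|^{2\alpha})\,|\nabla G|^2\,\dd z + \|G\|_{L^2_x}^2,
\qquad r = \frac{2\gamma}{\gamma-2},\quad \gamma = \frac{d}{1-\alpha}.
\end{align*}
I would prove this by a change of variables that flattens the weight. Away from the origin, set $\rho = \rho(|z|)$ with $\dd\rho/\dd r \sim r^{-\alpha}$ for $r\leq 1$ (and $\rho$ linear for $r\geq 1$); in radial-type coordinates this converts $\int (1\wedge r^{2\alpha})|\partial_r G|^2 r^{d-1}\dd r$ into $\int |\partial_\rho \tilde G|^2 \rho^{(d-1)/(1-\alpha)\cdot(\,\cdot\,)}\,\dd\rho$-type expressions, revealing the effective dimension $\gamma = d/(1-\alpha)$ near the origin: a ball of Euclidean radius $r$ has "$\rho$-radius" $\sim r^{1-\alpha}$, so volumes scale like $r^{d} \sim \rho^{d/(1-\alpha)} = \rho^\gamma$. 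Then $\gamma$-dimensional Sobolev/Nash embedding $H^1 \hookrightarrow L^{2\gamma/(\gamma-2)}$ (in the metric-measure sense, or via the explicit weighted one-dimensional Hardy inequality combined with spherical harmonics decomposition as in \cite{Hakulinen,rowan2023}) yields the claim. Equivalently, one may avoid coordinates and invoke directly the theory of Sobolev inequalities with Muckenhoupt $A_2$ weights: $w(z) = 1\wedge|z|^{2\alpha}$ lies in $A_2$ since $2\alpha < d$, and the associated degenerate Sobolev inequality of Fabes–Kenig–Serapioni \cite{FKS1982} gives exactly an $L^{r}(w\,\dd z)$ gain; a short additional argument upgrades the left-hand measure from $w\,\dd z$ to $\dd z$ near the origin (where $w\leq 1$ one only loses, so $\|G\|_{L^r(\dd z)}\leq$ control, using $r>2$ and interpolation with the $L^2$ term on the far region $|z|\geq 1$ where $w\sim 1$).

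The main obstacle is making the uniformity in $\kappa$ and the weight-flattening simultaneously clean: one must be careful that the $\kappa\Delta$ term is genuinely helpful (it only adds a nonnegative Dirichlet form, hence can be discarded for the lower bound, which is why uniformity is actually easy once \eqref{eq:sharpness_reg_bis_ii} is in hand) and that the matrix $Q$ — not assumed isotropic here — still yields the scalar lower bound $Q\geq K(1\wedge|z|^{2\alpha})I_d$ in the form sense, which is precisely \eqref{eq:sharpness_reg_bis_ii}, so no isotropy is needed. The genuinely technical point is the weighted Sobolev/Nash inequality near the origin with the sharp exponent $\gamma = d/(1-\alpha)$; I expect this to follow from \cite{FKS1982} or from a self-contained Bliss-type one-dimensional computation after the radial change of variables, combined with a standard partition of unity separating $|z|\lesssim 1$ from $|z|\gtrsim 1$, the latter region being non-degenerate and handled by ordinary Sobolev embedding plus the additive $\|G\|_{L^2_x}^2$ slack.
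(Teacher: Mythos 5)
Your proposal follows the same skeleton as the paper's proof: integrate by parts using $\div Q=0$ to get $\langle\cH_\kappa G,G\rangle=(1-\kappa)\langle Q\nabla G,\nabla G\rangle+\kappa\langle C(0)\nabla G,\nabla G\rangle\gtrsim\int(1\wedge|z|^{2\alpha})|\nabla G|^2$ uniformly in $\kappa$, split $G=\psi_1G+\psi_2G$ with a cut-off at $|z|\sim 1$, handle the far region by ordinary Sobolev embedding plus interpolation (using $r\in(2,2^*)$), and reduce everything to the near-origin weighted inequality $\|G_1\|_{L^r_x}^2\lesssim\int|z|^{2\alpha}|\nabla G_1|^2\dd z$. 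The only genuine divergence is in how that last inequality is established. The paper writes $\nabla G_1=|z|^{-\alpha}\cdot|z|^{\alpha}\nabla G_1$, applies O'Neil's H\"older inequality in Lorentz spaces using $|z|^{-\alpha}\in L^{d/\alpha,\infty}_x$ to get $\|\nabla G_1\|_{L^{p,2}_x}\lesssim\||z|^{\alpha}\nabla G_1\|_{L^2_x}$ with $1/p=1/2+\alpha/d$, and then concludes via Talenti's Lorentz--Sobolev inequality and the embedding $L^{r,2}_x\hookrightarrow L^{r,r}_x=L^r_x$; this is short and gives the unweighted $L^r_x$ norm on the left directly. Your first alternative (radial change of variables flattening the weight, effective dimension $\gamma=d/(1-\alpha)$, Bliss/Hardy computation plus a spherical decomposition) is a valid proof of the same Caffarelli--Kohn--Nirenberg-type inequality --- the exponent balance $1/r=1/2-(1-\alpha)/d$ you identify is exactly right --- but it is more laborious than the Lorentz-space argument. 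Your second alternative (Fabes--Kenig--Serapioni) has a directional problem as sketched: near the origin $w=|z|^{2\alpha}\leq 1$, so the weighted norm $\|G\|_{L^r(w\,\dd z)}$ that FKS controls is \emph{smaller} than the unweighted $\|G\|_{L^r(\dd z)}$ you need; "one only loses" goes the wrong way, and upgrading from $L^r(w\,\dd z)$ to $L^r(\dd z)$ where the weight vanishes is not a short additional argument but essentially the whole content of the estimate. So rely on your first route (or the paper's Lorentz-space one), not the FKS shortcut.
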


\begin{proof}
For simplicity, let us take $\kappa=0$, the other cases being simpler thanks to the presence of the non-degenerate operator $\kappa C(0):D^2$.
Integrating by parts we have
\begin{align*}
    \langle \cH_0 G,G\rangle
    & = \langle \nabla G, {\rm div} (Q G) \rangle
    = \langle \nabla G, Q \nabla G \rangle .
\end{align*}
In order to conclude, it thus suffices to show that
\begin{align*} 
    \| F \|_{L^r_x}^2 & \lesssim \langle \nabla F, Q \nabla F \rangle + \| F \|_{L^2_x}^2.
\end{align*}
Let $\psi_1\in C^\infty_c$ be such that $\psi_1\equiv 1$ for $|z|\leq 1$, $\psi_1\equiv 0$ for $|z|\geq 2$ and set $\psi_2=1-\psi_1$. Then by \autoref{ass:sharpness_reg_bis} it holds
\begin{align*}
    \langle \nabla G, Q \nabla G \rangle
    & \gtrsim \int_{\R^d} (1\wedge |z|^{2\alpha}) |\nabla G (z)|^2 \dd z\\
    & \gtrsim \int_{\R^d} |z|^{2\alpha} |\nabla G (z)|^2 |\psi_1(z)|^2 \dd z + \int_{\R^d} |\nabla G (z)|^2 |\psi_2(z)|^2 \dd z\\
    & \gtrsim \int_{\R^d} |z|^{2\alpha} |\nabla ( \psi_1 G) (z)|^2 \dd z + \int_{\R^d} |\nabla ( \psi_2 G)(z)|^2 \dd z - \| G\|_{L^2_x}^2.
\end{align*}
Set $G_i = \psi_i G$ for $i=1,2$. By Sobolev embeddings and interpolation (note that $r\in (2,2^\ast)$ for any $d\geq 2$) it holds
\begin{align*}
    \| G_2\|_{L^r_x}^2 \lesssim \| \nabla G_2\|_{L^2_x}^2  + \| G\|_{L^2_x}^2.
\end{align*}
Concerning $G_1$, we may write $\nabla G_1 (z) = |z|^{-\alpha}|z|^\alpha \nabla G_1 (z)$ and use H\"older inequality in Lorentz spaces $L^{p,q}_x$ (\cite[Thm. 3.4]{ONeil1963}) to bound
\begin{equation}\label{eq:lorentz-inequality}
\| \nabla G_1\|_{L^{p,2}_x}
\lesssim 
\| |z|^\alpha \nabla G_1\|_{L^{2,2}_x}
\| |z|^{-\alpha} \|_{L^{d/\alpha,\infty}_x}
\lesssim
\| |z|^\alpha \nabla G_1\|_{L^{2}_x},
\qquad
\frac{1}{p}:=\frac{1}{2} + \frac{\alpha}{d}.
\end{equation}
where we used $L^2_x=L^{2,2}_x$ and $|z|^{-\alpha}\in L^{d/\alpha,\infty}_x$.
By Talenti's inequality \cite[Thm. 3]{talenti1992} we deduce
\begin{align*}
\| G_1\|_{L^{r,2}_x}
\lesssim 
\| \nabla G_1\|_{L^{p,2}_x}.
\end{align*}
Noting that $\gamma>2$, so that $r>2$, Lorentz embedding yields
$\| G_1\|_{L^{r}_x} = \| G_1\|_{L^{r,r}_x} \lesssim \| G_1\|_{L^{r,2}_x}$.
Combining the above estimates overall yields the proof.
\end{proof}

\autoref{lem:Davies} yields time-dependent $L^\infty_x$-bounds on the two-point self-correlation functions:
\begin{prop}
Under the assumptions of \autoref{thm:main.thm.dirac}, uniformly in $\kappa \in (0,1/2)$ and $t \in (0,1)$ we have
\begin{equation}\label{eq:semigroup_full_1}
\| e^{-\cH_\kappa t} G\|_{L^\infty_x} 
\lesssim 
t^{-\frac{\gamma}{4}} \|G\|_{L^2_x}, 
\qquad 
\forall  G\in L^2_x.
\end{equation}
\begin{equation}\label{eq:semigroup_full_2}
\| e^{-\cH_\kappa t} G\|_{L^\infty_x} 
\lesssim 
t^{-\frac{\gamma}{2}} \|G\|_{TV}, 
\qquad 
\forall  G\in \cM(\R^d).
\end{equation}
\end{prop}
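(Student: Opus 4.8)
This is the classical Nash--Moser route to ultracontractivity, with \autoref{lem:Davies} playing the role of the usual Sobolev/Nash inequality, and with the self-adjointness of $\cH_\kappa$ (which, by $\div Q=0$, makes \eqref{eq:dual_PDEs} simultaneously divergence-form and conservative) providing the $L^1_x$-contractivity we need. First I would convert \eqref{eq:generator_estimate} into a Nash-type inequality: since $\tfrac1r=\tfrac12-\tfrac1\gamma$, H\"older interpolation gives $\|G\|_{L^2_x}\leq\|G\|_{L^1_x}^{1-\vartheta}\|G\|_{L^r_x}^\vartheta$ with $\vartheta=\gamma/(\gamma+2)$, and inserting \eqref{eq:generator_estimate} yields, uniformly in $\kappa\in[0,1/2)$,
\[
\|G\|_{L^2_x}^{2+\frac4\gamma}\lesssim\|G\|_{L^1_x}^{\frac4\gamma}\bigl(\langle\cH_\kappa G,G\rangle+\|G\|_{L^2_x}^2\bigr),\qquad G\in C^\infty_c(\R^d),
\]
and then, by density, along the evolution $u(t):=e^{-\cH_\kappa t}G_0$.

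Next I would establish the $L^1_x\to L^2_x$ bound. Fix $G_0\in L^1_x\cap L^2_x$ and set $\phi(t):=\|u(t)\|_{L^2_x}^2$. Because $\partial_t u=-\cH_\kappa u$ is in divergence form with bounded, nonnegative coefficients $Q+\kappa C(0)$, testing against a regularization of $\operatorname{sgn}(u)$ gives the $L^1_x$-contraction $\|u(t)\|_{L^1_x}\leq\|G_0\|_{L^1_x}$ for all $t\geq 0$. Differentiating, $\phi'(t)=-2\langle\cH_\kappa u(t),u(t)\rangle$, and the Nash inequality together with $\|u(t)\|_{L^1_x}\leq\|G_0\|_{L^1_x}$ produces the scalar differential inequality $\phi'\leq -c\,\|G_0\|_{L^1_x}^{-4/\gamma}\phi^{1+2/\gamma}+2\phi$; a standard comparison argument (e.g.\ for $e^{-2t}\phi$) then yields $\phi(t)\lesssim t^{-\gamma/2}\|G_0\|_{L^1_x}^2$, that is $\|e^{-\cH_\kappa t}G_0\|_{L^2_x}\lesssim t^{-\gamma/4}\|G_0\|_{L^1_x}$ for $t\in(0,1)$, with constant independent of $\kappa$, and this extends to all $G_0\in L^1_x$ by density.

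Finally I would dualize and iterate. Since $\cH_\kappa=\cH_\kappa^\ast$, the adjoint of $e^{-\cH_\kappa t}\colon L^1_x\to L^2_x$ is $e^{-\cH_\kappa t}\colon L^2_x\to L^\infty_x$, giving \eqref{eq:semigroup_full_1} directly; composing via the semigroup property, $\|e^{-\cH_\kappa t}G\|_{L^\infty_x}\leq\|e^{-\cH_\kappa t/2}\|_{L^2_x\to L^\infty_x}\|e^{-\cH_\kappa t/2}\|_{L^1_x\to L^2_x}\|G\|_{L^1_x}\lesssim t^{-\gamma/2}\|G\|_{L^1_x}$. Equivalently, the kernel $p^\kappa_t$ of $e^{-\cH_\kappa t}$ satisfies $\|p^\kappa_t\|_{L^\infty_{x,y}}\lesssim t^{-\gamma/2}$ uniformly in $\kappa$, whence for a finite measure $G\in\cM(\R^d)$ one gets $\|e^{-\cH_\kappa t}G\|_{L^\infty_x}=\sup_x\bigl|\int p^\kappa_t(x,y)\,G(\dd y)\bigr|\lesssim t^{-\gamma/2}\|G\|_{TV}$, which is \eqref{eq:semigroup_full_2}. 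The main delicate point is the soft input that $\{e^{-\cH_\kappa t}\}_{t>0}$ is a self-adjoint Markovian semigroup \emph{uniformly in} $\kappa$ --- in particular an $L^1_x$-contraction admitting a genuine bounded kernel for $t>0$; for $\kappa>0$ this is classical uniform parabolic theory, and its robustness as $\kappa\downarrow 0$ is precisely what \autoref{lem:Davies} is tailored to guarantee, the rest being routine.
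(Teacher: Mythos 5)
Your proposal is correct and follows essentially the same route as the paper: the Nash iteration built on \autoref{lem:Davies} via the interpolation $\|G\|_{L^2_x}\leq\|G\|_{L^1_x}^{1-\vartheta}\|G\|_{L^r_x}^{\vartheta}$ with $\vartheta=\gamma/(\gamma+2)$, the resulting ODE inequality for $\|u(t)\|_{L^2_x}^2$, and self-adjointness of $\cH_\kappa$ (from $\div Q=0$) to dualize. The only cosmetic difference is that the paper runs the Nash argument directly on mollified Dirac masses to bound $\|K^\kappa_t(x,\cdot)\|_{L^2_y}$ and then obtains the $\mathrm{TV}\to L^2$ bound by duality before concatenating, whereas you prove $L^1_x\to L^2_x$ first and dualize to $L^2_x\to L^\infty_x$; these are equivalent.
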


\begin{proof}
Note that, for $\kappa>0$, the operator $\cH_\kappa$ is associated to a strictly elliptic, bounded, H\"older continuous matrix $(1-\kappa)Q(0) + \kappa C(0)$; classical PDE results thus guarantee the existence of the semigroup $e^{- \cH_\kappa t}$, as well as the existence of an associated Green function $K^\kappa_t(x,y)$. In particular, 
\begin{align*}
    (e^{-\cH_\kappa t} G)(x)=\int_{\R^d} K^\kappa_t(x,y) G(y)\dd y, 
\end{align*}
and in order to prove \eqref{eq:semigroup_full_1}, it then suffices to show that
\begin{equation}\label{eq:semigroup_full_goal}
    \sup_{x\in\R^d} \| K^\kappa_t(x,\cdot)\|_{L^2_y} \lesssim t^{-\frac{\gamma}{4}}, 
\end{equation}
uniformly in $\kappa$. Further note that, since $\div Q=0$, $\cH_\kappa$ is self-adjoint and
\begin{align*}
    K^\kappa_t(x,\cdot) = e^{-\cH_\kappa^\ast t} \delta_x = e^{-\cH_\kappa t} \delta_x.
\end{align*}
Starting from the above and \autoref{lem:Davies}, getting estimates of the type \eqref{eq:semigroup_full_goal} is a classical argument by Nash, see also \cite{Davies1990} and \cite[Prop. 3.7]{rowan2023}; we briefly sketch it.

Let $\varphi^\eps_0\in \cP(\R^d)\cap C^\infty_c(\R^d)$ be a smooth approximation of $\delta_x$ and let $\varphi^\eps_t=e^{-\cH_\kappa t} \varphi^\eps_0$. As the PDE associated to $\cH_\kappa$ is of Fokker-Planck type, $\|\varphi^\eps_t\|_{L^1_y}=1$ for all $t\geq 0$;
applying \eqref{eq:generator_estimate}, one finds
\begin{align*}
    \frac{\dif}{\dif t} \frac{\| \varphi^\eps_t\|_{L^2_y}^2}{2}
    = -\langle \cH_\kappa \varphi^\eps_t, \varphi^\eps_t\rangle
    \lesssim - \| \varphi^\eps_t\|_{L^r_y}^2 + \|  \varphi^\eps_t\|_{L^2_y}^2
    \lesssim -\| \varphi^\eps_t\|_{L^2_y}^{2+\frac{4}{\gamma}} + \|  \varphi^\eps_t\|_{L^2_y}^2,
\end{align*}
where we used the interpolation estimate $\| \varphi^\eps_t\|_{L^2_y} \leq \| \varphi^\eps_t\|_{L^1_y}^{1-\theta} \| \varphi^\eps_t\|_{L^r_y}^{\theta} = \| \varphi^\eps_t\|_{L^r_y}^{\theta}$ with $\theta=\frac{\gamma}{\gamma+2}$.
From here, standard ODE type estimate yield
\begin{align*}
    \| \varphi^\eps_t\|_{L^2_y}^2 \lesssim t^{-\frac{\gamma}{2}}
\end{align*}
uniformly in $t\in (0,1)$, $\eps>0$ and $\kappa \in (0,1/2)$; we can then take $\eps\to 0^+$ to obtain the same estimate for $\| K^\kappa_t(x,\cdot)\|_{L^2_y}^2$.
Once \eqref{eq:semigroup_full_1} is proved, we can obtain \eqref{eq:semigroup_full_2} by duality. Indeed, for any $G\in L^2_x$ and $t>0$, it holds
\begin{align*}
|\langle G , e^{-\cH_\kappa t} G' \rangle |
=|\langle e^{-\cH_\kappa t} G , G' \rangle |
\leq
\| e^{-\cH_\kappa t} G \|_{C^0_x} \|G'\|_{TV}
\lesssim 
t^{-\frac{\gamma}{4}} \|G\|_{L^2_x} \|G'\|_{TV}
\end{align*}
where we used that, by parabolic smoothing, $e^{-\cH_\kappa t} G$ is a continuous bounded function, with supremum norm coinciding with the $L^\infty_x$-one.
Therefore $\| e^{-\cH_\kappa t} G' \|_{L^2_x}
\lesssim
t^{-\frac{\gamma}{4}} \|G'\|_{TV}$.
Then \eqref{eq:semigroup_full_2} is proved by concatenating this estimate with \eqref{eq:semigroup_full_1}.
\end{proof}

\begin{proof}[Proof of \autoref{thm:main.thm.dirac}]
    For given $\kappa>0$, $t\in (0,1)$ and $\mu_0\in\cM(\R^d)$, in light of \autoref{lem:L^2-criterion-selfcorrelation} and \eqref{eq:semigroup_full_2}, we have
    \begin{align*}
        \EE[\| \tilde \mu^\kappa_t\|_{L^2_x}^2]
        = \| G^\kappa_t\|_{L^\infty_x}
        = \| e^{-\cH_\kappa t} G_0\|_{L^\infty_x}
        \lesssim t^{-\frac{\gamma}{2}} \| G_0\|_{TV}
        = t^{-\frac{\gamma}{2}} \| \mu_0^-\ast\mu_0\|_{TV}
        \leq t^{-\frac{\gamma}{2}} \|\mu_0\|_{TV}^2,
    \end{align*}
    where the hidden constant does not depend on $\kappa$. The case $\kappa=0$ follows by taking $\kappa\to 0^+$ and using lower semicontinuity of the $L^2_{\omega,x}$-norm.
\end{proof}

\autoref{thm:main.thm.dirac} can be linked to Richardson's law (\autoref{thm:richardson}), yielding optimality of the estimate \eqref{eq:main.thm.dirac}, through the following deterministic lemma. 

\begin{lemma}
\label{lem:deterministic.lower.bound}
For any $d \geq  1$ there exists a constant $\gamma_d > 0$ such that, for any $\mu \in L^1_x \cap L^2_x$ with $\| \mu \|_{L^1_x}=1$, it holds
\begin{equation}\label{eq:deterministic.lower.bound}
\var (\mu) \| \mu \|_{L^2_x}^{\frac{4}{d}} \geq  \gamma_d .
\end{equation}
\end{lemma}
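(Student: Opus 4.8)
The plan is to exploit the fact that the quantity $\var(\mu)\,\|\mu\|_{L^2_x}^{4/d}$ is invariant under the two natural symmetries of the problem — translations and parabolic rescalings — and then to run a simple two-regime (inner/outer ball) splitting argument of Nash/Heisenberg type. Throughout we work with $\mu$ a probability density (i.e. $\mu\ge 0$, $\|\mu\|_{L^1_x}=1$), which is the only case relevant to the application to the stochastic continuity equation.

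First I would reduce to a normalized configuration. Since $\var(\mu)=\int_{\R^d}|y-m|^2\mu(y)\,dy$ with $m:=\int_{\R^d}x\,\mu(x)\,dx$, the variance is translation invariant and we may assume $m=0$, so that $\var(\mu)=\int_{\R^d}|x|^2\mu(x)\,dx$. Next, for $\lambda>0$ set $\mu_\lambda(x):=\lambda^d\mu(\lambda x)$; then $\|\mu_\lambda\|_{L^1_x}=\|\mu\|_{L^1_x}$, $\var(\mu_\lambda)=\lambda^{-2}\var(\mu)$, and $\|\mu_\lambda\|_{L^2_x}^2=\lambda^d\|\mu\|_{L^2_x}^2$, so that the product $\var(\mu_\lambda)\|\mu_\lambda\|_{L^2_x}^{4/d}$ equals $\var(\mu)\|\mu\|_{L^2_x}^{4/d}$. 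Hence it suffices to prove the bound under the extra normalization $\|\mu\|_{L^2_x}=1$, for which we must show $\var(\mu)\ge\gamma_d$.

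Then, for a radius $R>0$ to be fixed, I would split
\[
1=\|\mu\|_{L^1_x}=\int_{B_R}\mu(x)\,dx+\int_{B_R^c}\mu(x)\,dx.
\]
By Cauchy--Schwarz, $\int_{B_R}\mu\le|B_R|^{1/2}\|\mu\|_{L^2_x}=(\omega_d R^d)^{1/2}$ with $\omega_d:=|B_1|$; by Chebyshev, $\int_{B_R^c}\mu\le R^{-2}\int_{\R^d}|x|^2\mu=R^{-2}\var(\mu)$. Choosing $R$ so that $\omega_d R^d=\tfrac14$, i.e. $R^{-2}=(4\omega_d)^{2/d}$, forces the first term to be at most $1/2$, whence $R^{-2}\var(\mu)\ge 1/2$, that is $\var(\mu)\ge\tfrac12(4\omega_d)^{-2/d}=:\gamma_d>0$. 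Undoing the normalization yields the claimed inequality with this explicit $\gamma_d$.

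I do not expect a genuine obstacle here: the only points needing a line of care are verifying that the exponents cancel correctly in the scaling step (done above) and the use of nonnegativity of $\mu$ in the Chebyshev bound, which is why the statement is restricted to probability densities. The argument is essentially the standard observation that a probability density concentrated in a set of small measure must carry a large $L^2$ norm, repackaged in scale-invariant form; as a by-product it also makes the constant $\gamma_d$ fully explicit.
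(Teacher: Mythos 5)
Your proof is correct, but it takes a genuinely different route from the paper. The paper's argument writes $1=\int\int\mu(\mathrm{d}x)\mu(\mathrm{d}y)$ and applies Cauchy--Schwarz with the weights $|x-y|^{\pm 2\delta}$, then controls the negative-power term $\langle |\cdot|^{-2\delta}\ast\mu,\mu\rangle$ by the Hardy--Littlewood--Sobolev inequality and interpolation between $L^1_x$ and $L^2_x$, finally raising to the power $2/\delta$. Your argument instead observes that $\var(\mu)\,\|\mu\|_{L^2_x}^{4/d}$ is invariant under translations and the mass-preserving rescaling $\mu_\lambda(x)=\lambda^d\mu(\lambda x)$, reduces to $\|\mu\|_{L^2_x}=1$, and then runs the elementary Nash-type inner/outer splitting: Cauchy--Schwarz on $B_R$ and Chebyshev on $B_R^c$. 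What your approach buys is a fully explicit constant $\gamma_d=\tfrac12(4\omega_d)^{-2/d}$ and the avoidance of HLS altogether; what it shares with the paper is the implicit restriction to nonnegative $\mu$ (the paper's own first identity $1=\int\int\mu\,\mu$ and its use of Cauchy--Schwarz and Jensen against $\mu\otimes\mu$ also require $\mu\geq 0$, and indeed the statement fails for signed densities with $\int\mu=0$, for which $\var(\mu)\leq 0$). Since the lemma is only ever applied to solutions of the stochastic continuity equation started from a Dirac mass, which are nonnegative by \autoref{prop:existence.SCE.properties}, your explicit acknowledgement of this restriction is appropriate rather than a gap. One minor point of care, which you handle correctly: when $\var(\mu)=\infty$ the claim is trivial, and otherwise the first moment is finite so the centering at the mean is legitimate.
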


\begin{proof}
For any $\delta \in (0, 1/2)$, by Cauchy-Schwartz and Jensen inequalities it holds
\begin{equation}\label{eq:deterministic_variance_proof}\begin{split}
1 
& = 
\int_{\R^d} \int_{\R^d} \mu ( \mathrm{d} x) \mu ( \mathrm{d} y)\\
&\leq
\left( \int_{\R^d} \int_{\R^d} | x-y |^{2 \delta} \mu ( \mathrm{d} x) \mu ( \mathrm{d} y) \right)^{1 / 2} 
\left( \int_{\R^d} \int_{\R^d} | x-y |^{-2 \delta} \mu( \mathrm{d} x) \mu( \mathrm{d} y) \right)^{1/ 2}
\\
&\lesssim 
\var(\mu)^{\delta/2} 
\left( \int_{\R^d} \int_{\R^d} | x-y |^{-2\delta}\mu( \mathrm{d} x)\mu ( \mathrm{d} y) \right)^{1/2}.
\end{split}\end{equation}
Take $p \in (1,2)$ such that $1 /  p = 1 - \delta / d$. Hardy--Littlewood--Sobolev inequality gives
\[ 
\int_{\R^d} \int_{\R^d} | x-y |^{-2\delta}\mu( \mathrm{d} x)\mu ( \mathrm{d} y)
= 
\langle | \cdot |^{- 2 \delta} \ast \mu, \mu \rangle 
\lesssim 
\| \mu \|_{L^p_x}^2 
\lesssim
\| \mu \|_{L^2_x}^{\frac{4 \delta}{d}}
\]
where the last inequality comes by interpolation between $L^1_x$ and $L^2_x$. 
Inserting the above inequality in \eqref{eq:deterministic_variance_proof} and elevating both sides to power $2 / \delta$ then yields the conclusion.
\end{proof}

\begin{cor}
Under the same assumptions of the upper bound of \autoref{thm:richardson}, for $\mu_0=\delta_x$ and $\mu$ solution to \eqref{eq:stochastic.continuity}, it holds
\[\mathbb{E} [\| \mu_t \|_{L^2_x}^2] \gtrsim t^{-
  \frac{d}{2 (1 - \alpha)}},
  \quad
  \forall t \in (0,1).\]
In particular, the dependence on $t$ in estimate \eqref{eq:main.thm.dirac} is optimal.
\end{cor}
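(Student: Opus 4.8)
The plan is to derive this lower bound by combining the deterministic functional inequality \autoref{lem:deterministic.lower.bound} with the upper bound on the expected variance established in \autoref{thm:richardson}, using Jensen's inequality to upgrade a pathwise estimate into one in expectation. First I would dispose of the trivial case: if $\EE[\|\mu_t\|_{L^2_x}^2]=+\infty$ there is nothing to prove, so we may assume $\EE[\|\mu_t\|_{L^2_x}^2]<\infty$. Under this reduction $\mu_t\in L^2_x$ holds $\PP$-almost surely; since $\mu_0=\delta_x$ is a probability measure and the stochastic continuity equation conserves mass and positivity (recall the representation $\mu_t(\omega,\dd x')=\int_{\R^d}((e_t)_\sharp P_{x,\omega})(\dd x')\,\mu_0(\dd x)$ from \autoref{ssec:two_point}, which exhibits $\mu_t$ as the push-forward of $\delta_x$ under a flow of Markovian kernels), we have $\|\mu_t\|_{\mathrm{TV}}=1$ $\PP$-a.s., hence $\mu_t\in L^1_x\cap L^2_x$ with $\|\mu_t\|_{L^1_x}=1$ $\PP$-a.s.; in particular $\mu_t$ is not a Dirac mass, so $\var(\mu_t)\in(0,\infty)$ $\PP$-a.s.

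Next I would apply \autoref{lem:deterministic.lower.bound} pathwise to $\mu_t$ and rearrange \eqref{eq:deterministic.lower.bound} to obtain $\|\mu_t\|_{L^2_x}^2 \geq \gamma_d^{d/2}\,\var(\mu_t)^{-d/2}$ $\PP$-almost surely. Taking expectations and using that $x\mapsto x^{-d/2}$ is convex and decreasing on $(0,\infty)$, Jensen's inequality together with the upper bound $\EE[\var(\mu_t)]=\EE[\var(\mu_t^x)]\lesssim t^{\frac{1}{1-\alpha}}$ from \eqref{eq:richardson_upper} (valid for all $t\geq 0$ under the present hypotheses) yields
\[
\EE[\|\mu_t\|_{L^2_x}^2]\ \geq\ \gamma_d^{d/2}\,\EE\big[\var(\mu_t)^{-d/2}\big]\ \geq\ \gamma_d^{d/2}\,\big(\EE[\var(\mu_t)]\big)^{-d/2}\ \gtrsim\ t^{-\frac{d}{2(1-\alpha)}},\qquad \forall\, t\in(0,1),
\]
which is the asserted estimate; comparing with \eqref{eq:main.thm.dirac} in \autoref{thm:main.thm.dirac} shows the exponent of $t$ there is optimal.

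This argument is short and involves no real obstacle; the only point requiring a word of care is the measure-theoretic bookkeeping in the first paragraph, namely the reduction to the event where $\mu_t$ is an absolutely continuous probability density so that $\|\mu_t\|_{L^1_x}=1$ and $\var(\mu_t)>0$ may legitimately be fed into \autoref{lem:deterministic.lower.bound} and into Jensen's inequality. Everything else is a direct concatenation of the two already-proven ingredients.
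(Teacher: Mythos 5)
Your proposal is correct and follows essentially the same route as the paper: the paper's proof is precisely the concatenation of \autoref{lem:deterministic.lower.bound}, Jensen's inequality applied to the convex map $x\mapsto x^{-d/2}$, and the upper bound \eqref{eq:richardson_upper}. The extra bookkeeping you include (reducing to the case $\EE[\|\mu_t\|_{L^2_x}^2]<\infty$ and checking $\|\mu_t\|_{L^1_x}=1$) is sound and only makes explicit what the paper leaves implicit.
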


\begin{proof}
    By \eqref{eq:deterministic.lower.bound} and Jensen inequality it holds
    \begin{equation*}
        \EE[\| \mu_t\|_{L^2_x}^2] \gtrsim \EE[\var(\mu_t)^{-d/2}] \geq \EE[\var(\mu_t)]^{-d/2}
    \end{equation*}
    and the conclusion follows from \eqref{eq:richardson_upper}.
\end{proof}

We can strengthen the previous bound \eqref{eq:main.thm.dirac} to a functional one, up to the price of logarithmic corrections.
Correspondingly, we obtain a functional lower bound for Richardson's law; it nicely pairs the functional upper bound from \autoref{prop:richardson_upper_functional}.

\begin{cor}\label{cor:richardson_lower_functional}
Under the assumptions of \autoref{thm:main.thm.dirac}, let $\mu_0=\delta_x$ for some $x \in \mathbb{R}^d$ and $\mu$ the solution to \eqref{eq:stochastic.continuity}.
Then for any $\eps > 0$ it holds
\begin{equation}\label{eq:richardson_lower_functional1}
    \mathbb{E} \left[ \sup_{t \in (0, 1)} t^{\frac{d}{2 (1 - \alpha)}} | \log
    t -1|^{- 1 - \eps} \| \mu_t \|_{L^2_x}^2 \right] < \infty .
\end{equation}
In particular, there exists an $L^{d/2}_\omega$-integrable random variable $N$ such that $\PP$-a.s.
\begin{equation}\label{eq:richardson_lower_functional2}
    \var (\mu_t) \gtrsim \frac{1}{N}\, t^{\frac{1}{1 - \alpha}} | \log t-1
    |^{-\frac{2(1+\eps)}{d}} \quad \forall\,
    t\in (0,1).
\end{equation}
\end{cor}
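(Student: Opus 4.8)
\textbf{Proof plan for \autoref{cor:richardson_lower_functional}.}

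The plan is to combine the functional $L^2_x$-regularization bound that one can extract by upgrading \autoref{thm:main.thm.dirac} to a pathwise, $\sup$-in-time statement, with the deterministic functional inequality \eqref{eq:deterministic.lower.bound} of \autoref{lem:deterministic.lower.bound}. First I would establish \eqref{eq:richardson_lower_functional1}. The starting point is the self-correlation identity $\EE[\|\mu_t\|_{L^2_x}^2] = \|G_t\|_{L^\infty_x} = \|e^{-\cH_0 t} G_0\|_{L^\infty_x}$ from \autoref{lem:L^2-criterion-selfcorrelation} together with the semigroup bound \eqref{eq:semigroup_full_2}, which gives $\EE[\|\mu_t\|_{L^2_x}^2] \lesssim t^{-d/(2(1-\alpha))}$ for $t \in (0,1)$. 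To pass from this fixed-time bound to a bound on $\EE[\sup_{t\in(0,1)} \cdots]$ one argues dyadically: for $n \geq 1$ set $I_n = [2^{-n-1}, 2^{-n}]$; since $t \mapsto \EE[\|\mu_t\|_{L^2_x}^2]$ is monotone nonincreasing (by \eqref{eq:contraction_TV} and the Markov property, or more directly because $\mu_t$ is a sub-Markovian evolution of measures so its $L^2_x$ norm contracts in expectation — one should check this carefully, restarting at the left endpoint of $I_n$ and invoking the fixed-time decay on an interval of length $\sim 2^{-n}$), one gets $\EE[\sup_{t \in I_n} t^{d/(2(1-\alpha))} \|\mu_t\|_{L^2_x}^2] \lesssim 2^{-n \cdot d/(2(1-\alpha))} \cdot \EE[\|\mu_{2^{-n-1}}\|_{L^2_x}^2] \lesssim 1$, uniformly in $n$. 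The logarithmic weight $|\log t - 1|^{-1-\eps}$ behaves like $(n+1)^{-1-\eps}$ on $I_n$, and $\sum_n (n+1)^{-1-\eps} < \infty$; summing over $n$ yields \eqref{eq:richardson_lower_functional1}. (Here the key structural point, analogous to the proof of \autoref{prop:richardson_upper_functional}, is that supremizing over a dyadic block costs only the fixed-time bound at the block's endpoint, because of monotonicity — this is why no extra factor beyond the summable logarithmic correction is needed.)

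Next I would deduce \eqref{eq:richardson_lower_functional2}. Define the random variable
\begin{align*}
    N := \sup_{t \in (0,1)} t^{\frac{d}{2(1-\alpha)}} |\log t - 1|^{-1-\eps} \|\mu_t\|_{L^2_x}^2,
\end{align*}
which by \eqref{eq:richardson_lower_functional1} has $\EE[N] < \infty$; in particular $N < \infty$ $\PP$-a.s. However, \eqref{eq:richardson_lower_functional2} claims $N$ is $L^{d/2}_\omega$-integrable, which is stronger, so in fact one should take a slightly different $N$: apply \eqref{eq:deterministic.lower.bound} with $\mu = \mu_t$ (noting $\|\mu_t\|_{L^1_x} = \|\mu_0\|_{TV} = 1$ $\PP$-a.s. since $\mu_0 = \delta_x$ and the evolution preserves total mass and positivity), obtaining pathwise
\begin{align*}
    \var(\mu_t) \geq \gamma_d \|\mu_t\|_{L^2_x}^{-4/d} \geq \gamma_d \left( N t^{-\frac{d}{2(1-\alpha)}} |\log t - 1|^{1+\eps} \right)^{-2/d}
    = \gamma_d N^{-2/d} t^{\frac{1}{1-\alpha}} |\log t - 1|^{-\frac{2(1+\eps)}{d}},
\end{align*}
so that $\var(\mu_t) \gtrsim \tfrac{1}{\tilde N} t^{1/(1-\alpha)} |\log t - 1|^{-2(1+\eps)/d}$ with $\tilde N := N^{2/d}$; and $\EE[\tilde N^{d/2}] = \EE[N] < \infty$, giving the $L^{d/2}_\omega$-integrability claimed. (One must be careful that $\mu_t$ is genuinely in $L^2_x$ for $t > 0$ — this is exactly the content of \autoref{thm:main.thm.dirac} / \autoref{lem:L^2-criterion-selfcorrelation}, so $\var(\mu_t)$ and $\|\mu_t\|_{L^2_x}$ are both finite $\PP$-a.s. for each $t \in (0,1)$, and by continuity of $t\mapsto\mu_t$ in a negative Sobolev space the supremum over $t$ is well-defined as a measurable random variable.)

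The main obstacle I anticipate is the rigorous justification of the monotonicity/restarting step used to upgrade the fixed-time bound \eqref{eq:main.thm.dirac} to the dyadic supremum bound: one needs that $\EE[\|\mu_{t}\|_{L^2_x}^2] \leq \EE[\|\mu_s\|_{L^2_x}^2]$ for $s \leq t$ (or at least a comparable one-sided bound), which for the incompressible stochastic continuity equation should follow because the two-point self-correlation $G_t = e^{-\cH_0 t} G_0$ evolves under the positivity- and (after integration) mass-preserving semigroup $e^{-\cH_0 t}$ with $\|G_t\|_{L^\infty_x}$ nonincreasing — but this requires invoking the semigroup property $G_{t} = e^{-\cH_0(t-s)} G_s$ together with $\|e^{-\cH_0 \tau}\|_{L^\infty_x \to L^\infty_x} \leq 1$, which in turn rests on the maximum principle for the (possibly degenerate at $\kappa=0$) parabolic operator $\cH_0$. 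A clean way to sidestep the degeneracy is to work with $\tilde\mu^\kappa$ and $G^\kappa = e^{-\cH_\kappa t}G_0$ for $\kappa \in (0,1/2)$, where $\cH_\kappa$ is strictly parabolic so the contraction $\|e^{-\cH_\kappa\tau}\|_{L^\infty\to L^\infty}\leq 1$ is classical, obtain the dyadic bound uniformly in $\kappa$, and then pass to the limit $\kappa\to 0^+$ using lower semicontinuity of the relevant norms exactly as in the proof of \autoref{thm:main.thm.dirac}; everything else in the argument is routine summation of a convergent series and an application of Jensen's inequality.
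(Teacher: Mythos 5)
Your derivation of \eqref{eq:richardson_lower_functional2} from \eqref{eq:richardson_lower_functional1} is exactly the paper's argument (apply \autoref{lem:deterministic.lower.bound} pathwise and set $N=X^{2/d}$ with $X$ the supremum in \eqref{eq:richardson_lower_functional1}, so $\EE[N^{d/2}]=\EE[X]<\infty$), and the overall dyadic structure of the proof of \eqref{eq:richardson_lower_functional1} is also the right one. However, there is a genuine gap in the key step of that first part: you bound $\EE\bigl[\sup_{t\in I_n}\|\mu_t\|_{L^2_x}^2\bigr]$ by $\EE\bigl[\|\mu_{2^{-n-1}}\|_{L^2_x}^2\bigr]$ and justify this by monotonicity of $t\mapsto\EE[\|\mu_t\|_{L^2_x}^2]$ (equivalently, by the $L^\infty_x$-contraction of the self-correlation semigroup $e^{-\cH_\kappa t}$). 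That only controls $\sup_{t\in I_n}\EE[\|\mu_t\|_{L^2_x}^2]$, which sits on the wrong side of the inequality $\sup_t\EE\leq\EE\sup_t$; the quantity in \eqref{eq:richardson_lower_functional1} is an expectation of a pathwise supremum, and no argument at the level of the deterministic function $G_t=\EE[\mu_t\ast\mu_t^-]$ can reach it. Your proposed workaround via $G^\kappa=e^{-\cH_\kappa t}G_0$ and the maximum principle suffers from the same defect, so the obstacle you flag (degeneracy of $\cH_0$) is not the real one.

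The ingredient you are missing is \emph{pathwise} monotonicity of $\|\mu_t\|_{L^2_x}$. Under \autoref{ass:sharpness_reg_bis} the noise is divergence-free, so the stochastic continuity equation coincides with the stochastic transport equation, and \autoref{rem:Lp_bound_divergence_free} gives $\sup_{t\geq s}\|\mu_t\|_{L^2_x}\leq\|\mu_s\|_{L^2_x}$ $\PP$-almost surely after restarting the Markov process at time $s$. This yields
\begin{equation*}
\EE\Bigl[\sup_{t\in[s,s+1]}\|\mu_t\|_{L^2_x}^2\Bigr]\lesssim\EE\bigl[\|\mu_s\|_{L^2_x}^2\bigr],
\end{equation*}
which, applied with $s=2^{-n-1}$ and combined with the fixed-time bound \eqref{eq:main.thm.dirac} and the summable weight $(1+n)^{-1-\eps}$, closes the dyadic argument exactly as you outline. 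With this substitution your proof is complete and coincides with the paper's.
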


\begin{proof}
Let us first prove \eqref{eq:richardson_lower_functional2} starting from \eqref{eq:richardson_lower_functional1}.
Setting $X:= \sup_{t \in (0, 1)} t^{\frac{d}{2 (1 - \alpha)}} | \log t-1 |^{- 1 - \eps} \| \mu_t \|_{L^2_x}^2$, so that $X\in L^1_\omega$, by \eqref{eq:deterministic.lower.bound} we get the $\PP$-a.s. estimate
\begin{align*}
    \var (\mu_t) \gtrsim \Big( X\,  t^{-\frac{d}{2 (1 - \alpha)}} | \log t -1|^{1+\eps} \Big)^{-\frac{2}{d}}
\end{align*}
which yields \eqref{eq:richardson_lower_functional2} for $N=X^{2/d}\in L^{d/2}_\omega$.

To prove \eqref{eq:richardson_lower_functional1}, note that in the divergence-free case $\mu$ is also a solution to the stochastic transport equation, which is Markovian and satisfies \autoref{rem:Lp_bound_divergence_free}.
In particular, for any fixed $s>0$, we have
 \[ \mathbb{E} \Big[\sup_{t \in [s, s + 1]} \| \mu_t \|_{L^2_x}^2\Big] \lesssim
    \mathbb{E} [\| \mu_s \|_{L^2_x}^2] . \]
Therefore, by virtue of \eqref{eq:main.thm.dirac}, for any $\eps>0$ we have
\begin{align*}
\mathbb{E} \left[ \sup_{t \in (0, 1)} t^{\frac{d}{2 (1 - \alpha)}} | \log t-1 |^{- 1 - \eps} \| \mu_t \|_{L^2_x}^2 \right] 
&\leq
\sum_{n\geq  0} \mathbb{E} \left[ \sup_{t \in [2^{- n - 1}, 2^{- n}]} t^{\frac{d}{2 (1 - \alpha)}} | \log t-1 |^{- 1 - \eps} \| \mu_t \|_{L^2_x}^2 \right]
\\
&\lesssim 
\sum_{n \geq  0} 2^{- n \frac{d}{2 (1 - \alpha)}} \, \mathbb{E} [\| \mu_{2^{- n - 1}} \|_{L^2_x}^2] \, (1+n)^{- 1 - \eps} 
\\
&\lesssim 
\sum_{n \geq  0} (1+n)^{- 1 - \eps} < \infty. \qedhere
 \end{align*}
\end{proof}

\begin{remark}
    In the incompressible Kraichnan model, using the Markov property, we can further concatenate \autoref{thm:main.thm.dirac} with \autoref{thm:regularity_Kraichnan}.
    In particular, starting from any $\mu_0\in \mathcal{M}(\R^d)$, the associated solution to \eqref{eq:viscous.stochastic.continuity} instantaneously becomes $L^\infty_x\cap H^{1-\alpha-}_x$-valued at positive times.
\end{remark}

\appendix
\section{Covariance of the Kraichnan model}\label{app:auxiliary}

Recall the characterization of the covariance $C$ associated to homogeneous isotropic Gaussian velocity fields on $\RR^d$ from \cite[section 12]{MoYa75}: there must exist two finite, non-negative measures $F_{\rm sol}$ and $F_{\rm grad}$ on $\R_+$ such that the Fourier transform of $C$ is given by
\begin{align*}
\hat{C}( \mathrm{d} \xi)
=
I_d\, \sigma( \mathrm{d} u) F_{\rm sol}( \mathrm{d} r) 
+
u\otimes u\, \sigma( \mathrm{d} u) (F_{\rm grad}( \mathrm{d} r)-F_{\rm sol}( \mathrm{d} r)),
\end{align*}
where $I_d$ is the identity matrix on $\RR^d$, $r := |\xi|$, $u := \xi/|\xi|$ and $\sigma$ is the normalized Haar measure on $\mathbb{S}^{d-1}=\{u\in\RR^d:|u|=1\}$.
The measures $F_{\rm sol}$ and $F_{\rm grad}$ correspond, respectively, to the divergence-free and gradient part of the noise. If they do not charge the singleton $\{0\}$, then for any $z \in \R^d \setminus \{0\}$ 
\begin{align}\label{eq:covariance.app}
C(z)
=
B_L(|z|) \hat{z}\otimes \hat z + B_N(|z|) \left( I_d-\hat{z}\otimes \hat z\right),\quad \hat{z}:=\frac{z}{|z|}
\end{align}
where the longitudinal and normal projections $B_L,B_N : \R_+ \to \R_+$ of the covariance are given by
\begin{equation}\label{eq:covariance_BLBN}\begin{split}
B_L(r)
:=
\int_0^\infty \int_{\mathbb{S}^{d-1}} \cos(\rho u_1 r) (1-u_1^2) \sigma( \mathrm{d} u) F_{\rm sol}( \mathrm{d} \rho)
+ 
\int_0^\infty \int_{\mathbb{S}^{d-1}} \cos(\rho u_1 r) u_1^2 \sigma( \mathrm{d} u) F_{\rm grad}( \mathrm{d} \rho),
\\
B_N(r)
:=
\int_0^\infty \int_{\mathbb{S}^{d-1}} \cos(\rho u_1 r) (1-u_2^2) \sigma( \mathrm{d} u) F_{\rm sol}( \mathrm{d} \rho)
+
\int_0^\infty \int_{\mathbb{S}^{d-1}} \cos(\rho u_1 r) u_2^2 \sigma( \mathrm{d} u) F_{\rm grad}( \mathrm{d} \rho).
\end{split}\end{equation}
It follows that $B_L(0) = B_N(0)$, implying that $C(0) = B_N(0) I_d$. Upon defining 
\begin{align} \label{eq:definition_b.app}
b_L(r) 
:= 
B_L(0) - B_L(r) ,
\quad
b_N(r) 
:= 
B_N(0) - B_N(r),
\end{align}
from \eqref{eq:covariance.app} we find
\begin{align}\label{eq:covariance_generalQ}
Q(z):= C(0)-C(z)
= b_L(|z|) \hat{z}\otimes \hat z + b_N(|z|) \left( I_d-\hat{z}\otimes \hat z\right)\quad \forall\,z \in \R^d \setminus \{0\}.
\end{align}
Within the above general framework, the Kraichnan model corresponds to the choice
\begin{align*}
F_{\rm grad}( \mathrm{d} r) := a F( \mathrm{d} r),
\qquad
F_{\rm sol}( \mathrm{d} r) := \frac{b}{d-1} F( \mathrm{d} r), \qquad F( \mathrm{d} r)
:=
\frac{r^{d-1}}{(r^2+m^2)^{\frac{d}{2}+\alpha}} \dd r.
\end{align*}
where $a,b\geq 0$ are such that $a+b>0$ and $m>0$. The \emph{incompressibility ratio} of the noise is
\begin{equation}\label{eq:incompressibility.ratio}
     \eta := \frac{b}{a+b}\in [0,1].
\end{equation}
Here $\eta=1$ corresponds to $a=0$ (divergence-free noise) and $\eta=0$ to $b=0$ (gradient noise).

The next lemma characterizes sharply the behaviour of $Q$ around $0$, guaranteeing that the Kraichnan model satisfies the assumptions imposed in the main body of the paper.

\begin{lemma}\label{lem:asymptotics_kraichnan}
Let $\alpha\in (0,1)$, $a$, $b$, $m$, $\eta$ as above and let $Q$ given by the Kraichnan model; define:
\begin{equation}\label{eq:defn_alpha1_c_beta}\begin{split}
\alpha_1& := \omega_{d-1} \bigg(\int_0^\infty (1-\cos x) x^{-1-2\alpha} \dd x\bigg) \bigg(\int_0^{\pi/2} (\cos \theta)^{2\alpha} (\sin \theta)^{d-2} \dd \theta \bigg),\\
c & := \alpha_1 (a+b) \frac{2\alpha+1-2\alpha \eta}{d+2\alpha} > 0,\\
\beta 
& :=
\frac{d-1+2\alpha \eta}{(d-1)(2\alpha+1-2\alpha \eta )},
\end{split}\end{equation}
where $c_d$ is as in \cite[Lemma 10.2]{LeJRai2002}. Then, as $r \downarrow 0$, for all $k\in \{0,1,2\}$ we have
\begin{equation}\label{eq:asymp123} 
\partial_r^{k} b_L(r) = c \, \partial_r^{k} (r^{2\alpha}) + m^{2-2\alpha}O(r^{2-k}),
\quad
\partial_r^{k} b_N(r) = \beta c \, \partial_r^{k} r^{2\alpha} + m^{2-2\alpha}O(r^{2-k})
\end{equation}
Moreover, away from $r=0$, $b_L$ and $b_N$ are smooth with bounded derivatives: for any $r_0>0$ and $k\in\NN$ it holds
\begin{equation}\label{eq:kraichnan_smooth_large_r}
    \sup_{r\geq r_0} \big[ |\partial^k_r b_L(r)| + |\partial^k_r b_N(r)|\big]<\infty.
\end{equation}
\end{lemma}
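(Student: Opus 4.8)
The statement is a purely computational asymptotic analysis of the Kraichnan covariance, so the plan is to reduce the matrix-valued function $Q(z)$ to the scalar profiles $b_L$, $b_N$ via \eqref{eq:covariance_generalQ}, express these in terms of the spectral measure $F$, and then carefully expand the resulting oscillatory integrals around $r=0$. First I would recall from \eqref{eq:covariance_BLBN} and \eqref{eq:definition_b.app} that
\begin{align*}
b_L(r) &= \int_0^\infty \int_{\SS^{d-1}} (1-\cos(\rho u_1 r))\big[(1-u_1^2)F_{\mathrm{sol}}+u_1^2 F_{\mathrm{grad}}\big](\dd \rho)\,\sigma(\dd u),\\
b_N(r) &= \int_0^\infty \int_{\SS^{d-1}} (1-\cos(\rho u_1 r))\big[(1-u_2^2)F_{\mathrm{sol}}+u_2^2 F_{\mathrm{grad}}\big](\dd \rho)\,\sigma(\dd u),
\end{align*}
and plug in $F_{\mathrm{grad}}=aF$, $F_{\mathrm{sol}}=\tfrac{b}{d-1}F$ with $F(\dd\rho)=\rho^{d-1}(\rho^2+m^2)^{-d/2-\alpha}\dd\rho$. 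The integrals over $\SS^{d-1}$ of $u_1^2$, $u_2^2$ against $\sigma$ are elementary constants depending only on $d$, so each $b_\bullet(r)$ becomes a linear combination (with $d$-dependent coefficients involving $a,b$) of one-dimensional integrals of the form $\int_0^\infty \int_{\SS^{d-1}}(1-\cos(\rho u_1 r))\,\rho^{d-1}(\rho^2+m^2)^{-d/2-\alpha}\,\sigma(\dd u)\,\dd\rho$.

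Second, for the small-$r$ expansion I would split each such integral into a ``far'' part where $m^2$ is negligible, producing the leading $r^{2\alpha}$ term, and a correction. Concretely, write $(\rho^2+m^2)^{-d/2-\alpha}=\rho^{-d-2\alpha}+\big[(\rho^2+m^2)^{-d/2-\alpha}-\rho^{-d-2\alpha}\big]$. For the first piece, substituting $x=\rho u_1 r$ (and integrating the angular variable) gives exactly $\alpha_1 r^{2\alpha}$ up to the combinatorial constants, where $\alpha_1$ is the product of the radial integral $\int_0^\infty (1-\cos x)x^{-1-2\alpha}\dd x$ (finite since $0<\alpha<1$) and the angular integral $\int_0^{\pi/2}(\cos\theta)^{2\alpha}(\sin\theta)^{d-2}\dd\theta$ as in \eqref{eq:defn_alpha1_c_beta}. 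Collecting the coefficients and simplifying (using $\int_{\SS^{d-1}} u_1^2\,\sigma = 1/d$ etc.) should yield the stated $c$ and $\beta=c\,\text{(ratio)}$; this is where one checks that the constant reduces to $c=\alpha_1(a+b)\frac{2\alpha+1-2\alpha\eta}{d+2\alpha}$ and the $b_N/b_L$ leading ratio is $\beta$. For the second (correction) piece, I would bound $(1-\cos(\rho u_1 r))\le C (\rho r)^2\wedge 1$ and $\big|(\rho^2+m^2)^{-d/2-\alpha}-\rho^{-d-2\alpha}\big|\lesssim m^2\rho^{-d-2-2\alpha}$ for $\rho\gtrsim m$, plus a crude bound for $\rho\lesssim m$; integrating gives an $O(m^{2-2\alpha}r^2)$ remainder, which is the $k=0$ case of \eqref{eq:asymp123}. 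The cases $k=1,2$ follow by differentiating under the integral sign — the integrands differentiated in $r$ introduce factors $\rho u_1$ and $\cos/\sin$, and the same splitting yields $c\,\partial_r^k(r^{2\alpha})+m^{2-2\alpha}O(r^{2-k})$; one must check that differentiation under the integral is justified (dominated convergence after the $\rho$-truncation), which is routine given the polynomial decay of $F$.

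Third, for \eqref{eq:kraichnan_smooth_large_r} I would observe that for $r\ge r_0>0$ one cannot differentiate naively inside (each $\rho$-derivative costs a factor $\rho$, eventually breaking integrability), so instead one integrates by parts in $\rho$: writing $\cos(\rho u_1 r)=\frac{1}{u_1 r}\partial_\rho \sin(\rho u_1 r)$ and transferring derivatives onto the smooth, integrable weight $\rho^{d-1}(\rho^2+m^2)^{-d/2-\alpha}$ gains decay, so after finitely many integrations by parts the integral defining $\partial_r^k b_\bullet$ is absolutely convergent with a bound uniform for $r\ge r_0$ (each integration by parts producing a harmless $1/(u_1 r)$ factor, bounded by $1/r_0$ after noting the $u_1$-singularity is integrable against $\sigma$ in dimension $d\ge2$). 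I expect the main obstacle to be purely bookkeeping: correctly tracking all the $d$-, $a$-, $b$-dependent angular constants through the reduction so that the final coefficients match $c$ and $\beta$ in \eqref{eq:defn_alpha1_c_beta} exactly, and handling the mild $u_1\to 0$ singularity that appears both in the change of variables $x=\rho u_1 r$ and in the integration-by-parts argument — this requires splitting the $\SS^{d-1}$ integral near the equator $u_1=0$ and using that $(\cos\theta)^{2\alpha}$, respectively $(\sin\theta)^{d-2}$, keeps everything integrable. The oscillatory estimates themselves and the dominated-convergence justifications are standard.
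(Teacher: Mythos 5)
Your overall strategy is sound and lands close to the paper's, but the route differs in three places worth recording. First, you re-derive the leading constants $c$ and $\beta$ from scratch by splitting $(\rho^2+m^2)^{-d/2-\alpha}=\rho^{-d-2\alpha}+[\text{correction}]$ and computing the exactly self-similar piece; the paper instead imports the leading asymptotics of $b_L,b_N$ wholesale from \cite[Lemma 10.2]{LeJRai2002} and only estimates the corrections. Your version is self-contained at the cost of the angular bookkeeping you acknowledge. Second, for \eqref{eq:kraichnan_smooth_large_r} you integrate by parts in $\rho$; the paper rescales $x=\rho r$ and bounds $\partial_r^k$ of the weight $r^{2\alpha}(x^2+m^2r^2)^{-d/2-\alpha}$ pointwise by $r_0^{-k}$ times the weight itself, which is shorter. (Note also that in your IBP scheme the $u_1$-powers actually cancel: the $k$-th $r$-derivative produces $(\rho u_1)^k$ while $k$ integrations by parts produce $(u_1 r)^{-k}$, so there is no equatorial singularity to worry about beyond what is already integrable.)

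The one genuine soft spot is your treatment of $k=1,2$. Differentiating the \emph{unsplit} $\rho$-integral under the integral sign is not "routine given the polynomial decay of $F$": the differentiated integrand is of size $\rho u_1\,\rho^{d-1}(\rho^2+m^2)^{-d/2-\alpha}\sim\rho^{-2\alpha}$ at infinity ($k=1$), respectively $\rho^{1-2\alpha}$ ($k=2$), which is not absolutely integrable for $\alpha\le 1/2$, so dominated convergence fails as stated. Two repairs are available: either differentiate only after your self-similar/correction splitting (the self-similar piece is an exact multiple of $r^{2\alpha}$ after the substitution $x=\rho u_1 r$, so its derivatives are computed in closed form, and the correction piece \emph{is} absolutely differentiable since its density gains a factor $m^2\rho^{-2}$); or do what the paper does, namely perform the change of variables $x=\rho u_1 r$ \emph{first}, so that $r$ appears only in the prefactor $r^{2\alpha}$ and in the denominator $(x^2+r^2u_1^2m^2)^{-d/2-\alpha}$, whose $r$-derivatives are absolutely integrable; the paper then reads off the identity $\partial_r b_L=\tfrac{2\alpha}{r}b_L+m^{2-2\alpha}O(r)$ and recovers the $k=0$ expansion by integrating $\partial_r(b_L r^{-2\alpha})$ back from $0$. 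With either fix your argument goes through; as written the justification for $k=1,2$ is incomplete for $\alpha\le 1/2$.
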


\begin{cor}\label{cor:assumptions_kraichnan}
    Let $C$ be the covariance given by the Kraichnan model, for given $\alpha\in (0,1)$, $\eta\in [0,1]$ and $m>0$. Then $C(0) = B_N(0) I_d$ with $B_N(0) > 0$. Moreover:
    \begin{itemize}
        \item Assumptions \ref{ass:well_posed} and \ref{ass:sharpness_reg} are satisfied for any choice of $\alpha$, $\eta$, $m$.
        \item Assumptions \ref{assumption} and \ref{ass:sharpness_reg_bis} are satisfied if and only if $\eta>1-\frac{d}{4\alpha^2}$.
    \end{itemize}
\end{cor}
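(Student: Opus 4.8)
The statement to prove is \autoref{cor:assumptions_kraichnan}, which records the consequences of the sharp asymptotics in \autoref{lem:asymptotics_kraichnan} for the various assumptions used throughout the paper. The plan is to treat each assertion in turn, reading off the relevant properties from the explicit form of the Kraichnan covariance and from \autoref{lem:asymptotics_kraichnan}.

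First I would dispatch the claim $C(0) = B_N(0) I_d$ with $B_N(0) > 0$: this is immediate from \eqref{eq:covariance_BLBN} (which gives $B_L(0) = B_N(0)$) together with \eqref{eq:covariance.app}, while positivity follows because $F = F_{\rm sol} + F_{\rm grad}$ is a nonzero nonnegative measure (since $a + b > 0$) and $\cos(0) = 1$, so $B_N(0) = \int_0^\infty \int_{\mathbb S^{d-1}} (\cdots)\, \sigma(\dd u) F(\dd\rho) > 0$. Next, for \autoref{ass:well_posed}: $\hat C$ is manifestly nonnegative and symmetric by the representation in \autoref{app:auxiliary}; integrability $\hat C \in L^1 \cap L^\infty$ follows since $\hat C(\xi)$ has the explicit form \eqref{eq:kraichnan_covariance_fourier_intro}, which is bounded (thanks to $m > 0$) and decays like $|\xi|^{-d-2\alpha}$ at infinity, hence integrable for $\alpha > 0$; the bound $\sup_\xi \xi \cdot \hat C(\xi)\xi < \infty$ follows from the same decay since $\xi \cdot \hat C(\xi)\xi \lesssim |\xi|^2 (|\xi|^2 + m^2)^{-d/2-\alpha}$, which is bounded on $\R^d$ precisely because $2 - d - 2\alpha < 0$ for $d \ge 2$; non-degeneracy of $C(0)$ was just established. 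For \autoref{ass:sharpness_reg}, the bounds \eqref{eq:ass.energy.conservation} follow directly from the expansion \eqref{eq:covariance_generalQ} combined with \eqref{eq:asymp123} of \autoref{lem:asymptotics_kraichnan} for $k = 0, 1, 2$: writing $Q$ in terms of $b_L, b_N$ and differentiating, the leading terms are $O(r^{2\alpha - k})$ uniformly near the origin, and \eqref{eq:kraichnan_smooth_large_r} handles the region away from $0$, so $Q \in C^2(U \setminus \{0\})$ with the stated estimates on any bounded neighbourhood $U$.

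The substantive part is the equivalence statements. For \autoref{assumption}: the asymptotics \eqref{eq:assumption_b}--\eqref{eq:assumption_db} are exactly \eqref{eq:asymp123} with $k = 0, 1$ (the constants $c$ and $\beta c$ matching \eqref{eq:defn_alpha1_c_beta}), so the only condition to verify is \eqref{eq:assumption_beta}, i.e. $\beta > \frac{2\alpha - 1}{d-1}$. Using the explicit formula $\beta = \frac{d - 1 + 2\alpha\eta}{(d-1)(2\alpha + 1 - 2\alpha\eta)}$, this inequality becomes, after clearing the (positive) denominator $(d-1)(2\alpha + 1 - 2\alpha\eta)$ — note $2\alpha + 1 - 2\alpha\eta = 1 + 2\alpha(1-\eta) > 0$ always — a linear inequality in $\eta$: $d - 1 + 2\alpha\eta > (2\alpha - 1)(2\alpha + 1 - 2\alpha\eta)$. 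Expanding the right side gives $(2\alpha - 1)(2\alpha + 1) - 2\alpha\eta(2\alpha - 1) = 4\alpha^2 - 1 - 2\alpha\eta(2\alpha-1)$, so the inequality is $d - 1 + 2\alpha\eta + 2\alpha\eta(2\alpha - 1) > 4\alpha^2 - 1$, i.e. $d + 4\alpha^2\eta > 4\alpha^2$, i.e. $\eta > 1 - \frac{d}{4\alpha^2}$. This is a short but careful algebraic computation which I would present in full since it is the crux of the corollary. For \autoref{ass:sharpness_reg_bis}: $\div Q = 0$ holds iff the gradient part vanishes, i.e. $a = 0$, i.e. $\eta = 1$ — wait, this would seem to restrict the equivalence. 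Here I need to be careful: in the incompressible case $\eta = 1$ one has $\div Q = 0$, and the lower bound \eqref{eq:sharpness_reg_bis_ii} follows from \autoref{lem:asymptotics_kraichnan} (the leading coefficients $c$ and $\beta c$ are strictly positive, so $Q(z) \succeq K|z|^{2\alpha} I_d$ near $0$, and away from $0$ a compactness argument using \eqref{eq:kraichnan_smooth_large_r} and non-degeneracy — see \autoref{lem:nondegeneracy_kraichnan} as referenced in \autoref{rem:ass_bis_kraichnan} — gives $Q(z) \succeq K I_d$). The "only if" direction of the $\eta > 1 - \frac{d}{4\alpha^2}$ equivalence: if $\eta \le 1 - \frac{d}{4\alpha^2}$ then $\beta \le \frac{2\alpha-1}{d-1}$ by reversing the computation above, so \eqref{eq:assumption_beta} fails and \autoref{assumption} does not hold; similarly for \autoref{ass:sharpness_reg_bis} one notes it requires $\eta = 1$ which in particular forces $\eta > 1 - \frac{d}{4\alpha^2}$ (as $d > 0$), but the full equivalence should be read as: \autoref{ass:sharpness_reg_bis} can only possibly hold when $\div Q = 0$, hence $\eta = 1$, and conversely $\eta = 1 \Rightarrow \eta > 1 - \frac{d}{4\alpha^2}$, consistent with the stated "if and only if" — here I would point out the subtlety and follow exactly the phrasing intended in \autoref{rem:ass_bis_kraichnan}.

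The main obstacle, such as it is, is bookkeeping: there is no deep difficulty, but one must (i) correctly extract the constants $\alpha_1, c, \beta$ from the integral definitions in \eqref{eq:defn_alpha1_c_beta} and confirm $c > 0$ and $\beta > 0$, which requires $2\alpha + 1 - 2\alpha\eta > 0$ (always true), and (ii) perform the algebraic manipulation turning $\beta > \frac{2\alpha-1}{d-1}$ into $\eta > 1 - \frac{d}{4\alpha^2}$ without sign errors, being careful that when $2\alpha - 1 < 0$ the inequality $\beta > \frac{2\alpha-1}{d-1}$ holds trivially (both sides: left positive, right negative) — this edge case ($\alpha < 1/2$) means the constraint is only binding for $\alpha \ge 1/2$, and indeed $1 - \frac{d}{4\alpha^2} < 0$ whenever $4\alpha^2 < d$, which for $d \ge 2$ certainly includes all $\alpha \le 1/2$, so the two characterizations are consistent. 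I would organize the proof as: a preliminary paragraph on $C(0)$ and positivity, then verification of \autoref{ass:well_posed} and \autoref{ass:sharpness_reg} (straightforward from \autoref{app:auxiliary} and \autoref{lem:asymptotics_kraichnan}), then the algebraic equivalence for \autoref{assumption}, and finally the discussion of \autoref{ass:sharpness_reg_bis} using \autoref{lem:nondegeneracy_kraichnan}.
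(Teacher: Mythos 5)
Your proposal is correct and follows essentially the same route as the paper: read off $C(0)=B_N(0)I_d$ and Assumption \ref{ass:well_posed} from the explicit Fourier representation, get Assumption \ref{ass:sharpness_reg} from the formulas for $Q$, $\div Q$, $\nabla\cdot\div Q$ in terms of $b_L,b_N$ combined with \eqref{eq:asymp123}, reduce Assumption \ref{assumption} to the algebraic equivalence $\beta>\frac{2\alpha-1}{d-1}\Leftrightarrow \eta>1-\frac{d}{4\alpha^2}$ (your expansion is right), and handle Assumption \ref{ass:sharpness_reg_bis} via \eqref{eq:kraichnan_smooth_large_r} and \autoref{lem:nondegeneracy_kraichnan}. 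Your flag that $\div Q=0$ in Assumption \ref{ass:sharpness_reg_bis} forces $\eta=1$, so the literal ``if and only if'' for that assumption must be read in the sense of \autoref{rem:ass_bis_kraichnan}, is a legitimate observation that the paper's own proof does not address explicitly; it does not affect the correctness of your argument.
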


\begin{proof}
The identity $C(0) = B_N(0) I_d$ has been discussed before, and the fact that $B_N(0) > 0$ descends from the requirement $a+b>0$. 
    Assumption \ref{ass:well_posed} immediately follows from the explicit formulae for $\hat C$, $F_{\rm sol}$, and $F_{\rm grad}$.
    By formula \eqref{eq:covariance_generalQ}, it holds
    \begin{align*}
        {\rm div}\, Q(z)& = \Big[ \partial_r b_L(r) + (b_L(r)-b_N(r)) \frac{d-1}{r} \Big]\, \hat z\\
        \nabla\cdot {\rm div}\, Q(z)& =\partial_r^2 b_L(r) + (2\partial_r b_L(r)-\partial_r b_N(r)) \frac{(d-1)}{r} + (b_L(r)-b_N(r)) \frac{(d-1)(d-2)}{r^2}
    \end{align*}
    which verifies \autoref{ass:sharpness_reg} thanks to \eqref{eq:asymp123}.
    Condition \eqref{eq:assumption_beta} in \autoref{assumption} is equivalent to
    \begin{align*}
        \frac{d-1+2\alpha \eta}{(d-1)(2\alpha+1-2\alpha \eta )}>\frac{2\alpha-1}{d-1}
    \end{align*}
    which after some algebraic manipulations can be seen equivalent to $\eta>1-\frac{d}{4\alpha^2}$.
    Having verified Assumptions \ref{assumption} and \ref{ass:sharpness_reg}, the remaining conditions from \autoref{ass:sharpness_reg_bis} follow from \eqref{eq:kraichnan_smooth_large_r} (uniform boundedness of $D^2:Q$ away from the origin) and \autoref{lem:nondegeneracy_kraichnan} below (uniform ellipticity of $Q$ away from the origin).    
\end{proof}

\begin{proof}[Proof of \autoref{lem:asymptotics_kraichnan}]
By \cite[Lemma 10.2]{LeJRai2002},
for $\alpha_1$ as defined above\footnote{Compared to our convention, \cite{LeJRai2002} considers $\alpha'=2\alpha\in (0,2)$.}, it holds
\begin{align} 
\label{eq:bL}
b_L(r) 
&=
\frac{(a+b)\alpha_1}{d+2\alpha}
\left( 2\alpha+1-2\alpha \eta \right) r^{2\alpha} + o(r^{2\alpha}),
\\
\label{eq:bN}
b_N(r) 
&=
\frac{(a+b)\alpha_1}{d+2\alpha}
\left(\frac{d-1+2\alpha \eta}{d-1}\right) r^{2\alpha} + o(r^{2\alpha}).
\end{align}
This identifies uniquely the parameters $c$ and $\beta$ above.
Recall that
\begin{align*}
b_L(r)
=
\frac{b}{d-1}
\int_0^\infty \int_{\mathbb{S}^{d-1}} (1-\cos(\rho u_1 r)) (1-u_1^2) \sigma( \mathrm{d} u) F( \dif \rho)
+
a 
\int_0^\infty \int_{\mathbb{S}^{d-1}} (1-\cos(\rho u_1 r)) u_1^2 \sigma( \mathrm{d} u) F( \dif \rho),
\\
b_N(r)
=
\frac{b}{d-1}
\int_0^\infty \int_{\mathbb{S}^{d-1}} (1-\cos(\rho u_1 r)) (1-u_2^2) \sigma( \mathrm{d} u) F( \mathrm{d} \rho)
+
a 
\int_0^\infty \int_{\mathbb{S}^{d-1}} (1-\cos(\rho u_1 r)) u_2^2 \sigma( \mathrm{d} u) F( \mathrm{d} \rho),
\end{align*}
where $F( \mathrm{d} \rho) = \frac{\rho^{d-1}}{(\rho^2+m^2)^{\frac{d}{2}+\alpha}} \dd\rho$.
It is hence sufficient to study the quantities
\begin{equation}\label{eq:bL_decomposition}\begin{split}
I^1(r)&:=\int_0^\infty \int_{\mathbb{S}^{d-1}} (1-\cos(\rho u_1 r)) \sigma( \mathrm{d} u) F( \mathrm{d} \rho),
\\
I^2(r)&:=\int_0^\infty \int_{\mathbb{S}^{d-1}} (1-\cos(\rho u_1 r)) u_1^2 \sigma( \mathrm{d} u) F( \mathrm{d} \rho),
\\
I^3(r)&:=\int_0^\infty \int_{\mathbb{S}^{d-1}} (1-\cos(\rho u_1 r)) u_2^2 \sigma( \mathrm{d} u) F( \mathrm{d} \rho),
\end{split}\end{equation}
and their derivatives with respect to $r$. For the sake of simplicity, we only study $I^1$, the other terms being similar.
Following the proof of \cite[Lemma 10.2]{LeJRai2002}, the change of variables $x = \rho u_1 r$ gives for some dimensional constant $c_d >0$
\begin{align*}
I^1(r)=
c_d r^{2\alpha}
\int_0^1
\left( 
\int_0^\infty
(1-\cos x)
\frac{x^{d-1}}{(x^2 + r^2 u_1^2 m^2)^{\frac{d}{2}+\alpha}} \dd x
\right)
u_1^{2\alpha}(1-u_1^2)^{\frac{d-2}{2}} \dd u_1.
\end{align*}
The derivative with respect to $r$ of the term above can be computed as
\begin{align*}
&2\alpha
c_d r^{2\alpha-1}
\int_0^1
\left( 
\int_0^\infty
(1-\cos x)
\frac{x^{d-1}}{(x^2 + r^2 u_1^2 m^2)^{\frac{d}{2}+\alpha}} \dd x
\right)
u_1^{2\alpha}(1-u_1^2)^{\frac{d-2}{2}} \dd u_1
\\
&\quad+
c_d r^{2\alpha}
\int_0^1
\left( 
\int_0^\infty
(1-\cos x)
\partial_r
\frac{x^{d-1}}{(x^2 + r^2 u_1^2 m^2)^{\frac{d}{2}+\alpha}} \dd x
\right)
u_1^{2\alpha}(1-u_1^2)^{\frac{d-2}{2}} \dd u_1
\\
&=
\frac{2\alpha}{r}
\int_0^\infty \int_{\mathbb{S}^{d-1}} (1-\cos(\rho u_1 r)) \sigma( \mathrm{d} u) F( \mathrm{d} \rho)
\\
&\quad-
c_d r^{2\alpha}
\int_0^1
\left( 
\int_0^\infty
(1-\cos x)
\frac{(d+2\alpha) u_1^2m^2 r x^{d-1}}{(x^2 + r^2 u_1^2 m^2)^{\frac{d}{2}+\alpha+1}} \dd x
\right)
u_1^{2\alpha}(1-u_1^2)^{\frac{d-2}{2}} \dd u_1,
\end{align*}
where taking the derivative inside the integral is justified by the fact that the derivative is absolutely integrable with respect to $\dd x \dd u_1$ for every $r>0$:
\begin{align*}
\int_0^1
\left( 
\int_0^\infty
|1-\cos x|
\frac{r x^{d-1}}{(x^2 + r^2 u_1^2 m^2)^{\frac{d}{2}+\alpha+1}}
\dd x\right)
u_1^{2+ 2\alpha}(1-u_1^2)^{\frac{d-2}{2}} \dd u_1
\\
\lesssim
\int_0^1
\left( 
\int_0^\infty
(1 \wedge x^2)
\frac{r x^{d-1}}{(x^2 + r^2 u_1^2 m^2)^{\frac{d}{2}+\alpha+1}} \dd x
\right) \dd u_1
< \infty.
\end{align*}
We can quantify the dependence of $r$ in the latter more precisely. 
This can be done by splitting the inner integral into the regions $x < ru_1m$ and $x> ru_1m$. Indeed, using $1 \wedge x^2 \leq 1$ in the first case we find
\begin{align*}
\left| 
\int_0^1\int_0^{ru_1m}
\frac{r x^{d+1}}{(x^2 + r^2 u_1^2 m^2)^{\frac{d}{2}+\alpha+1}} \dd x\, \dd u_1
\right|\lesssim
\left|\int_0^1\int_0^{ru_1m} \frac{r x^{d+1}}{(ru_1m)^{d+2\alpha+2}} \dd x\, \dd u_1
\right|\lesssim 
m^{-2\alpha} r^{1-2\alpha}. 
\end{align*}
In the second region we use instead 
\begin{align*}
\left| \int_0^1\int_{r u_1 m}^\infty
\frac{r x^{d+1}}{(x^2 + r^2 u_1^2 m^2)^{\frac{d}{2}+\alpha+1}} \dd x\, \dd u_1
\right|\lesssim
\left|  \int_0^1\int_{r u_1 m}^\infty \frac{rx^{d+1}}{ x^{d+2\alpha+2}} \dd x\, \dd u_1
\right|\lesssim 
m^{-2\alpha} r^{1-2\alpha}.
\end{align*}
Thus, putting all together we obtain
\begin{align*}
\partial_r b_L 
=
\frac{2\alpha}{r} b_L
+
m^{2-2\alpha}O(r),
\qquad
\partial_r b_N 
=
\frac{2\alpha}{r} b_N
+
m^{2-2\alpha}O(r),
\end{align*}
and \eqref{eq:asymp123} with $k=1$ descends from \eqref{eq:bL} and \eqref{eq:bN}.
From the previous line we deduce
\begin{align*}
\partial_r (b_L(r) r^{-2\alpha})
=
m^{2-2\alpha}O(r^{1-2\alpha}),
\qquad
\partial_r (b_N(r) r^{-2\alpha})
=
m^{2-2\alpha}O(r^{1-2\alpha}),
\end{align*}
Integrating in time, using \eqref{eq:bL}, \eqref{eq:bN}, and that $O(r^{1-2\alpha})$ is integrable at the origin since $1-2\alpha>-1$, one finds
\begin{align*}
b_L(r) - cr^{2\alpha}
=
m^{2-2\alpha}O(r^2),
\qquad
b_N(r) - \beta cr^{2\alpha} 
=
m^{2-2\alpha}O(r^2),
\end{align*}
giving \eqref{eq:asymp123} with $k=0$.

Let us move to estimate \eqref{eq:asymp123} with $k=2$, involving the second derivatives of $b_L$ and $b_N$.
As before, we can reduce ourselves to the study of the  function
\[ 
c_d r^{2\alpha}
\int_0^1
\left( 
\int_0^\infty
(1-\cos x)
\frac{x^{d-1}}{(x^2 + r^2 u_1^2 m^2)^{\frac{d}{2}+\alpha}} \dd x
\right)
u_1^{2\alpha}(1-u_1^2)^{\frac{d-2}{2}} \dd u_1. \]
In particular, also in view of the previous passages and avoiding unnecessary repetitions, it suffices to show that 
\[  
\left| \partial_r^2  \int_0^1
\left(  \int_0^\infty (1-\cos x)
\frac{x^{d-1}}{(x^2 + r^2 u_1^2 m^2)^{\frac{d}{2}+\alpha}} \dd x
\right)
u_1^{2\alpha}(1-u_1^2)^{\frac{d-2}{2}} \dd u_1
\right| \lesssim
m^{2-2\alpha} r^{- 2 \alpha} . \]
After some elementary passages, the latter boils down to proving
\begin{align*}
\left|
\int_0^1 \int_0^{\infty}  
\frac{r x^{d + 1}}{(x^2 + r^2
    u_1^2 m^2 )^{\frac{d}{2} + \alpha + 2}} \dd x \dd u_1 \right|
    \lesssim
  m^{-2\alpha} r^{- 2 \alpha - 1},
  \end{align*}
which follows, as before, by splitting the inner integral into the regions $x < ru_1m$ and $x> ru_1m$.

Finally, let us prove \eqref{eq:kraichnan_smooth_large_r}. As before, we only show bounds for $\partial^k_r I^1$, the other terms being similar.
Starting from \eqref{ass:sharpness_reg_bis}, applying the change of variables $x = \rho r$, it holds
\begin{align*}
    I^1(r)= \int_0^\infty \int_{\mathbb{S}^{d-1}} (1-\cos(x u_1)) x^{d-1} \frac{r^{2\alpha}}{(x^2+m^2 r^2)^{\frac{d}{2}+\alpha}}\sigma( \mathrm{d} u) \mathrm{d} x.
\end{align*}
For fixed $k\in\NN$, uniformly over $r\geq r_0$, one has (for some hidden constant depending on $d$, $\alpha$, $m$)
\begin{align*}
    \bigg| \partial^k_r \bigg( \frac{r^{2\alpha}}{(x^2+m^2 r^2)^{\frac{d}{2}+\alpha}}\bigg)\bigg|
    \lesssim r_0^{-k} \frac{r^{2\alpha}}{(x^2+m^2 r^2)^{\frac{d}{2}+\alpha}}
\end{align*}
and so that
\begin{align*}
    |\partial^k_r I^1(r)|
    & \lesssim \int_0^\infty x^{d-1} \bigg| \partial^k_r \bigg( \frac{r^{2\alpha}}{(x^2+m^2 r^2)^{\frac{d}{2}+\alpha}}\bigg)\bigg| \mathrm{d} x\\
    & \lesssim r_0^{-k} \int_0^\infty x^{d-1} \frac{r^{2\alpha}}{(x^2+m^2 r^2)^{\frac{d}{2}+\alpha}} \mathrm{d} x
    = r_0^{-k} \int_0^\infty  \frac{y^{d-1}}{(y^2+m^2)^{\frac{d}{2}+\alpha}} \mathrm{d} y<\infty. \qedhere
\end{align*}
\end{proof}

The next result gives uniform ellipticity of $Q$ far from the origin.

\begin{lemma}\label{lem:nondegeneracy_kraichnan}
Let $Q$ be given by the Kraichnan model.
Then for any $r_0>0$ there exists a constant $\kappa=\kappa(d,m,\alpha,a,b,r_0)>0$ such that
\begin{equation}\label{eq:nondegeracy_kraichnan}
Q(z)\geq \kappa I_d \quad \forall\, |z|\geq r_0.
\end{equation}
\end{lemma}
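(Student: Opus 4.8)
The goal is to show that the matrix-valued covariance increment $Q(z) = C(0)-C(z)$ of the Kraichnan model is uniformly elliptic on the region $\{|z|\geq r_0\}$, i.e. bounded below by $\kappa I_d$ for some $\kappa>0$. By the decomposition \eqref{eq:covariance_generalQ}, for $z\neq 0$ we have $Q(z) = b_L(|z|)\,\hat z\otimes\hat z + b_N(|z|)\,(I_d-\hat z\otimes\hat z)$, so the eigenvalues of $Q(z)$ are exactly $b_L(|z|)$ (eigenvector $\hat z$) and $b_N(|z|)$ (multiplicity $d-1$). Hence \eqref{eq:nondegeracy_kraichnan} is equivalent to the scalar estimate
\begin{align*}
\inf_{r\geq r_0}\big(b_L(r)\wedge b_N(r)\big) > 0.
\end{align*}
So the whole problem reduces to proving that $b_L$ and $b_N$ are strictly positive on $[r_0,\infty)$ and bounded away from $0$ there.

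First I would establish strict positivity pointwise. Recall from \autoref{app:auxiliary} the representation of $b_L$, $b_N$ as superpositions of terms $\int_0^\infty\int_{\SS^{d-1}}(1-\cos(\rho u_1 r))\,w(u)\,\sigma(\mathrm{d} u)\,F(\mathrm{d}\rho)$, with nonnegative weights $w(u)\in\{1-u_1^2,\,u_1^2,\,1-u_2^2,\,u_2^2\}$ and spectral measure $F(\mathrm{d}\rho) = \rho^{d-1}(\rho^2+m^2)^{-d/2-\alpha}\mathrm{d}\rho$, which has full support on $(0,\infty)$ and strictly positive density. Since $1-\cos(\rho u_1 r)\geq 0$ always, each such integral is nonnegative; and it is strictly positive because the integrand $1-\cos(\rho u_1 r)$ is positive on a set of positive $\sigma\otimes F$-measure (it vanishes only on the null set where $\rho u_1 r\in 2\pi\ZZ$, in particular where $u_1=0$). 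The constraint $a+b>0$ guarantees at least one of the two summands (gradient or solenoidal) is present with a positive coefficient, and since both possible angular weights appearing in each of $b_L$, $b_N$ cannot vanish simultaneously $\sigma$-a.e., we get $b_L(r)>0$ and $b_N(r)>0$ for every $r>0$.

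Next I would upgrade pointwise positivity to a uniform lower bound on $[r_0,\infty)$ via a compactness-plus-tail argument. By \eqref{eq:asymp123} (the case $k=0$), $b_L(r)= cr^{2\alpha}+m^{2-2\alpha}O(r^2)$ and $b_N(r)=\beta c r^{2\alpha}+m^{2-2\alpha}O(r^2)$ as $r\downarrow 0$ with $c,\beta c>0$; more importantly, one needs to understand the behavior as $r\to\infty$. Since $C(r) = B_\bullet(r)$ is the covariance of a stationary field with integrable spectral density, $B_L(r),B_N(r)\to 0$ as $r\to\infty$ by Riemann–Lebesgue, hence $b_L(r)\to B_L(0)>0$ and $b_N(r)\to B_N(0)>0$; in particular $\liminf_{r\to\infty} b_L(r)\geq B_L(0)>0$ and likewise for $b_N$, so there is $R>r_0$ with $b_L(r),b_N(r)\geq B_N(0)/2$ for all $r\geq R$. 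On the compact interval $[r_0,R]$, the functions $b_L,b_N$ are continuous (indeed smooth by \eqref{eq:kraichnan_smooth_large_r}) and strictly positive by the previous paragraph, hence bounded below by some $\kappa_0>0$. Taking $\kappa := \min\{\kappa_0,\,B_N(0)/2\}>0$ gives $b_L(r)\wedge b_N(r)\geq\kappa$ for all $r\geq r_0$, which is \eqref{eq:nondegeracy_kraichnan}. The dependence of $\kappa$ on $d,m,\alpha,a,b,r_0$ is clear since all ingredients ($B_N(0)$, the continuity modulus on $[r_0,R]$, the cutoff $R$) depend only on these parameters.

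\textbf{Main obstacle.} The one genuinely delicate point is ruling out an accidental zero of $b_L$ or $b_N$ at some isolated $r>r_0$: although $b_L=B_L(0)-B_L(r)$ and $|B_L(r)|\leq B_L(0)$ forces $b_L\geq 0$, equality $b_L(r)=0$ would require $B_L(r)=B_L(0)$, i.e. $\cos(\rho u_1 r)=1$ for $\sigma\otimes F$-a.e. $(u,\rho)$, which is impossible because $F$ has positive density and $u_1$ ranges over a non-degenerate set — but making this airtight (rather than hand-waving "the integrand is a.e. positive") is where a little care is needed. An alternative, perhaps cleaner, route to this step is to observe that $b_L$ and $b_N$ are themselves, up to positive constants, increments of positive-definite functions and to invoke strict positive-definiteness of the Kraichnan kernel (its spectral measure is absolutely continuous with a.e.-positive density), which directly gives $B_L(r)<B_L(0)$ for all $r\neq 0$; I would use whichever of these is most economical given the conventions already fixed in \autoref{app:auxiliary}.
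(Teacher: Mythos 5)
Your proof is correct, and for the core step it takes a genuinely different route from the paper. Both arguments dispose of the region $|z|\geq R$ (for $R$ large) in the same way, via the Riemann--Lebesgue lemma applied to the integrable spectral density. The difference is on the intermediate annulus $r_0\leq |z|\leq R$: you reduce to the scalar eigenvalue bound $\inf_{r\geq r_0}(b_L(r)\wedge b_N(r))>0$ and obtain it \emph{softly}, by combining pointwise strict positivity of $b_L,b_N$ (the integrand $1-\cos(\rho u_1 r)$ vanishes only on a $\sigma\otimes F$-null set, and $a+b>0$ guarantees a positive coefficient in front of at least one angular weight) with continuity and compactness. The paper instead proves a \emph{quantitative} lower bound, $Q(z)\gtrsim |z|^{2\alpha}(1+|z|^2)^{-\frac d2-\alpha+1}I_d$ for all $z$, by bounding $u_1^2\leq 1$ in the denominator of the oscillatory integral, rescaling, and restricting the integration to the window $ry\in(\pi/3,2\pi/3)$ where $1-\cos(ry)$ is bounded below. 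Your argument is shorter and cleaner but yields a non-explicit constant $\kappa$; the paper's buys an explicit rate in $|z|$ and explicit parameter dependence, and simultaneously recovers the small-$|z|$ asymptotics. Since the lemma as stated only asks for a qualitative bound on $\{|z|\geq r_0\}$, your proof fully suffices; the one point to write carefully is exactly the one you flagged, namely that $b_L(r)=0$ would force $\cos(\rho u_1 r)=1$ for $\sigma\otimes F$-a.e.\ $(u,\rho)$, which is excluded because $\{u_1=0\}$ is $\sigma$-null (for $d\geq 2$) and, for each $u$ with $u_1\neq 0$, the set $\{\rho:\rho u_1 r\in 2\pi\ZZ\}$ is countable hence $F$-null.
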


\begin{proof}
Recall that $Q(z)=C(0)-C(z)$, where $C(0)$ is a multiple of $I_d$ and by the Riemann--Lebesgue lemma $C(z)\to 0$ as $|z|\to\infty$; therefore the statement is automatically satisfied for $|z|\geq R$ with $R$ large enough.
It remains to cover the case of $r=|z|\in (r_0,R)$; we will show a more explicit estimate:
\begin{equation}\label{eq:nondegeneracy_claim}
	Q(z) \gtrsim |z|^{2\alpha} (1 + |z|^2)^{-\frac{d}{2}-\alpha+1} I_d \quad \forall\, z\in\R^d.
\end{equation}
As in \autoref{lem:asymptotics_kraichnan}, $b_L$, $b_N$ can be written in function of $I^i$ given by \eqref{eq:bL_decomposition}, therefore we will only show a lower bound for $I^1$, the others being similar; up to rescaling, we can assume $m=1$. Since $u_1^2\leq 1$, it holds
\begin{align*}
	I^1(r)
	& \sim r^{2\alpha} \int_0^1 \int_0^\infty (1-\cos x) \frac{x^{d-1} }{(x^2 + r^2 u_1^2)^{\frac{d}{2}+\alpha}} \dd x\,
u_1^{2\alpha}(1-u_1^2)^{\frac{d-2}{2}} \dd u_1\\
	& \geq r^{2\alpha} \int_0^\infty (1-\cos x) \frac{x^{d-1} }{(x^2 + r^2)^{\frac{d}{2}+\alpha}} \dd x\,
 \int_0^1 u_1^{2\alpha}(1-u_1^2)^{\frac{d-2}{2}} \dd u_1\\
 	& \sim \int_0^\infty (1-\cos (ry)) \frac{y^{d-1} }{(y^2 + 1)^{\frac{d}{2}+\alpha}} \dd y
\end{align*}
where we changed variables $y=rx$. Restricting the integral on the interval $ry\in (\pi/3, 2\pi/3)$, we find
\begin{align*}
	I^1(r)
	\gtrsim \int_{\frac{\pi}{3r}}^{\frac{2\pi}{3r}} \frac{y^{d-1} }{(y^2 + 1)^{\frac{d}{2}+\alpha}} \dd y
	\gtrsim \frac{\pi}{3r} \Big( \frac{\pi}{3r} \Big)^{d-1} \Big(1 + \frac{4\pi^2}{9 r^2} \Big)^{-\frac{d}{2}-\alpha};
\end{align*}
upon rearranging terms this yields $I^1(r)\gtrsim r^{2\alpha} (1+r^2)^{-d/2-\alpha}$ and thus \eqref{eq:nondegeneracy_claim}.
\end{proof}

\section{Complements to \autoref{sec:preliminaries}}
\label{app:preliminaries}
In this section we collect some complementary results to \autoref{sec:preliminaries}.

\subsection{Complements to \autoref{ssec:preliminaries_transp}}

\begin{proof}[Proof of \autoref{prop:solution}]

$1) \Leftrightarrow 2)$ amounts to the classical equivalence of weak and mild solutions, see \cite[Thm. 6.5]{DaPZab2014}.

$2) \Rightarrow 3)$.
We readapt the argument from \cite{Ma11}.
Since $\langle \theta_t ,\varphi \rangle \in L^2_\omega$ is $\mathcal{F}^W_t$-measurable, it admits a unique Wiener chaos decomposition; we only have to identify each term in the expansion.
Fix $m \in \NN$ and consider $\mathbf{K}=(K_1,\ldots,K_m)\in \NN^m$, with $K_i$ not necessarily distinct; let $K := \{ K_1,\dots,K_m \}$.
Let $\Pi^\mathbf{K}$ denote the projection on the Wiener chaos generated by iterated integrals with respect to $\dd W^{K_1}_{t_1} \dots \dd W^{K_m}_{t_m}$.

Since $\theta$ satisfies \eqref{eq:mild_sol}, for every $t \geq 0$ we have $\PP$-almost surely  
\begin{align*}
\langle \theta_t , \varphi \rangle
&=
\langle \theta_0 , P_t \varphi \rangle
+
\sum_{k_1\in\NN}
\int_0^t \langle \theta_{t_1} , \div (\sigma_{k_1} P_{t-{t_1}}\varphi) \rangle \dd W^{k_1}_{t_1}.
\end{align*}
Applying the projection $\Pi^\mathbf{K}$ to both sides, by properties of stochastic integrals we obtain
\begin{align*}
\Pi^\mathbf{K}\langle \theta_t , \varphi \rangle
&=
\Pi^\mathbf{K}\langle \theta_0 , P_t \varphi \rangle
+
\sum_{k_1 \in K}
\Pi^\mathbf{K}\int_0^t \langle \theta_{t_1} , \div (\sigma_{k_1} P_{t-{t_1}}\varphi) \rangle \dd W^{k_1}_{t_1},
\end{align*}
where now the sum over $k_1$ is finite. As such, we can iterate the formula in the integrand, and the resulting stochastic integral will be well-defined at each iteration by \autoref{lem:stoch_integr}. Therefore, recalling the definition of the operators $\mathcal{J}^{k}_t$ above, for every $N \geq 2$ and $t \geq 0$ we have $\PP$-almost surely 
\begin{align*}
\Pi^\mathbf{K}\langle \theta_t ,\varphi \rangle
&=
\Pi^\mathbf{K}\langle \theta_0 , P_t \varphi \rangle
+
\sum_{n = 1}^{N-1}
\Pi^\mathbf{K}J^n_t(\theta_0,\varphi)
\\
&\quad+
\sum_{k_1,\dots,k_N \in K}
\Pi^\mathbf{K}\int_0^t \int_0^{t_1} \dots \int_0^{t_{N}}
\langle
\theta_s  , \mathcal{J}^{k_N}_{t_{N-1}-t_N} \dots \mathcal{J}^{k_1}_{t-t_1} \varphi\rangle \,
\dd W^{k_1}_{t_1} \dots \dd W^{k_N}_{t_N}.
\end{align*}
The last iterated integral is orthogonal to any Wiener chaos of order $\leq N-1$ by direct computation. Since $N$ is arbitrary, 
\begin{align*}
\Pi^\mathbf{K}\langle \theta_t ,\varphi \rangle
&=
\Pi^\mathbf{K}\langle \theta_0 , P_t \varphi \rangle
+
\sum_{n = 1}^{m}
\Pi^\mathbf{K}J^n_t(\theta_0,\varphi).
\end{align*}
As $\mathbf{K}$ is arbitrary, the Wiener chaos decomposition of $\langle \theta_t ,\varphi \rangle$ is uniquely determined and \eqref{eq:wiener_sol} follows. Moreover, the series converges in $L^2_\omega$ as $N \to \infty$ by orthogonality of Wiener chaoses and the bound \eqref{eq:contraction_L2}. 

$3) \Rightarrow 2)$.
By \eqref{eq:wiener_sol} it holds $\PP$-almost surely 
\begin{align*}
\langle &\theta_t ,\varphi \rangle
-
\langle \theta_0 , P_t \varphi \rangle
\\
&=
\sum_{n \geq 1}
\sum_{k_1,\dots,k_n}
\int_0^t \int_0^{t_1} \dots \int_0^{t_{n-1}}
\langle 
\theta_0 , P_{t_n} \mathcal{J}^{k_n}_{t_{n-1}-t_n}  \dots \mathcal{J}^{k_1}_{t-t_1} \varphi  \rangle  \,
\dd W^{k_1}_{t_1} \dots \dd W^{k_n}_{t_n}
\\
&=
\sum_{k_1} \int_0^t 
\left( 
\langle \theta_0 , P_{t_1} \mathcal{J}^{k_1}_{t-t_1} \varphi  \rangle
+
\sum_{n \geq 2}
\sum_{k_2,\dots,k_n}
\int_0^{t_1} \dots \int_0^{t_{n-1}}
\langle 
\theta_0 , P_{t_n} \mathcal{J}^{k_n}_{t_{n-1}-t_n}  \dots \mathcal{J}^{k_1}_{t-t_1} \varphi \rangle  \,
\dd W^{k_2}_{t_2} \dots \dd W^{k_n}_{t_n} \right) \dd W^{k_1}_{t_1} 
\\
&=
\sum_{k_1} \int_0^t 
\langle \theta_{t_1} , \mathcal{J}^{k_1}_{t-t_1} \varphi  \rangle \, \dd W^{k_1}_{t_1},
\end{align*}
giving \eqref{eq:mild_sol}.

\textit{Uniqueness up to modifications}.
Let $\theta$ and $\tilde{\theta}$ two processes satisfying the properties above. Let $\{ \varphi_n \}_{n \in \NN} \subset C^\infty_c(\R^d)$ be a dense sequence in $C^\infty_c(\R^d)$. By point $3$), for every $t \geq 0$,  it holds $\PP$-almost surely $\langle \theta_t - \tilde{\theta}_t ,\varphi_n \rangle = 0$ for every $n \in \mathbb{N}$. 
Therefore $\theta_t = \tilde{\theta}_t$ as $L^2_x$-valued random variables
$\PP$-almost surely.
\end{proof}

\begin{lemma}\label{lem:stoch_integr_estim}
Let $\hat{C} \in L^1(\R^d) \cap L^\infty(\R^d)$.
Then for any $\eps>0$ and any $\gamma>0$ there exists $K := K(\eps,\gamma,\hat C)>0$ such that
\begin{equation}\label{eq:stoch_integr_estim}
	\| \cC^{1/2} g\|_{L^2_x}^2 
	\leq 
	\eps \| g\|_{TV}^2 
	+ 
	K \| g\|_{H^{-\gamma}_x},
	\quad \forall\, g\in \mathcal{M}\cap H^{-\gamma}_x.
\end{equation}
\end{lemma}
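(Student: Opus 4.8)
The plan is to prove the interpolation-type inequality \eqref{eq:stoch_integr_estim} by exploiting the Fourier-side representation of the operator $\mathcal{C}^{1/2}$ together with the hypothesis $\hat C \in L^1 \cap L^\infty$. By Plancherel's theorem we have $\| \mathcal{C}^{1/2} g\|_{L^2_x}^2 = \int_{\R^d} \hat C(\xi) |\hat g(\xi)|^2 \dd \xi$, where $\hat g$ is a continuous bounded function since $g \in \mathcal{M}$, with $\| \hat g\|_{L^\infty} \leq \| g\|_{TV}$. The idea is to split the frequency integral at a threshold $|\xi| \leq R$ and $|\xi| > R$, where $R = R(\eps,\gamma,\hat C)$ will be chosen large.

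First I would bound the low-frequency part: on $\{|\xi| \leq R\}$ we use $\hat C \in L^\infty$ and the trivial bound $|\hat g(\xi)|^2 \leq (1+|\xi|^2)^{\gamma} (1+|\xi|^2)^{-\gamma} |\hat g(\xi)|^2 \leq (1+R^2)^\gamma (1+|\xi|^2)^{-\gamma} |\hat g(\xi)|^2$, so that
\begin{align*}
\int_{|\xi| \leq R} \hat C(\xi) |\hat g(\xi)|^2 \dd \xi
\leq \| \hat C\|_{L^\infty} (1+R^2)^{\gamma} \int_{\R^d} (1+|\xi|^2)^{-\gamma} |\hat g(\xi)|^2 \dd\xi
= \| \hat C\|_{L^\infty} (1+R^2)^{\gamma} \| g\|_{H^{-\gamma}_x}^2.
\end{align*}
This yields the second term on the right-hand side of \eqref{eq:stoch_integr_estim}, with $K$ controlling $\| \hat C\|_{L^\infty}(1+R^2)^\gamma$ (after possibly adjusting between $\| g\|_{H^{-\gamma}_x}$ and $\| g\|_{H^{-\gamma}_x}^2$ using that we may assume $\| g\|_{H^{-\gamma}_x} \leq 1$, or by a standard homogeneity/scaling reduction; alternatively one keeps the squared norm throughout and notes the statement as written is then a consequence).

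Next I would handle the high-frequency part: on $\{|\xi| > R\}$ we use $\| \hat g\|_{L^\infty} \leq \| g\|_{TV}$ to get
\begin{align*}
\int_{|\xi| > R} \hat C(\xi) |\hat g(\xi)|^2 \dd\xi
\leq \| g\|_{TV}^2 \int_{|\xi| > R} \hat C(\xi) \dd\xi.
\end{align*}
Since $\hat C \in L^1(\R^d)$ and $\hat C \geq 0$ (recall $\hat C$ is nonnegative under \autoref{ass:well_posed}, and here we only assume $\hat C \in L^1 \cap L^\infty$ but $\hat C \geq 0$ as a covariance Fourier transform), the tail $\int_{|\xi|>R} \hat C(\xi)\dd\xi \to 0$ as $R \to \infty$ by dominated convergence. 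Thus we may fix $R$ large enough that this tail is $\leq \eps$, producing the first term $\eps \| g\|_{TV}^2$. The only delicate point is matching the precise form of the second term as stated (linear in $\| g\|_{H^{-\gamma}_x}$ rather than quadratic); I expect this to be a non-issue, resolved either by the elementary inequality $a \leq \frac12(1 + a^2)$ absorbing a constant into $K$ and the $\eps$-term, or because the lemma is only applied with $g = \mathcal{C}^{1/2}$-bounded objects where the norm is already controlled — in any case this is routine bookkeeping and not a genuine obstacle. The genuinely load-bearing step is the decay of the $L^1$-tail of $\hat C$, which is immediate from integrability, so the proof is essentially a two-region Fourier split.
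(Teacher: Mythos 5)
Your proof is correct and is essentially identical to the paper's: the same two-region Fourier split, with $\|\hat g\|_{L^\infty}\leq \|g\|_{TV}$ and the $L^1$-tail of $\hat C$ controlling high frequencies, and $\|\hat C\|_{L^\infty}$ with the $(1+R)^{2\gamma}$ weight controlling low frequencies via the $H^{-\gamma}_x$ norm. Your worry about the linear versus quadratic $\|g\|_{H^{-\gamma}_x}$ term is well founded but harmless: the paper's own proof (and its subsequent application in the a priori estimates) produces and uses the squared norm, so the missing exponent in the statement is simply a typo.
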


\begin{proof}
By Parseval's formula, for any $R>0$ it holds that
\begin{align*}
\| \cC^{1/2} g\|_{L^2_x}^2
& \sim 
\int_{\R^d} \hat C(\xi)\hat g(\xi)\cdot \hat g(\xi) \dd\xi
\\
& \leq 
\| \hat g\|_{L^\infty}^2 \int_{|\xi|>R} |\hat C(\xi)| \dd\xi
+ 
\| \hat C\|_{L^\infty} \int_{|\xi|\leq R} |\hat g(\xi)|^2 \dd\xi 
\\
& \lesssim 
\| g\|_{TV}^2 \int_{|\xi|>R} |\hat C(\xi)| \dd\xi
+
\| \hat C\|_{L^\infty} (1+R)^{2\gamma} \| g\|_{H^{-\gamma}_x}^2
.
\end{align*}
Since $\hat C\in L^1$, the first term can be made arbitrarily small up to taking $R>0$ large enough.
\end{proof}

\begin{prop}\label{prop:wellposedness_viscous_SPDE}
Suppose \autoref{ass:well_posed}, then for every $\kappa>0$ there exists a unique $\theta$ solution of \eqref{eq:spde_viscous} with initial condition $\theta_0\in L^2_x$,
and for any $m\in [1,\infty)$ and $T<\infty$ we have
\begin{equation}\label{eq:apriori_viscous_2}
\EE\left[ 
\left( \sup_{t\in [0,T]} \| \theta_t\|_{L^2_x}^2 
+ 
2\kappa \int_0^T \| \nabla \theta_t\|_{L^2_x}^2 \dd t \right)^m \right] 
\lesssim 
\| \theta_0\|_{L^2_x}^{2m},
\end{equation}
with implicit constant possibly depending upon ${m,T,\kappa}$. 
Moreover, for any $\theta_0 \in L^2_x \cap L^p_x$ with $p\in [1,\infty)$ it holds
\begin{equation}\label{eq:apriori_viscous_1}
\sup_{t\geq 0} \EE[\| \theta_t\|_{L^p_x}^p] 
\leq 
\| \theta_0\|_{L^p_x}^p.
\end{equation}
\end{prop}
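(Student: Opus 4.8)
\textbf{Proof plan for \autoref{prop:wellposedness_viscous_SPDE}.}
The plan is to treat \eqref{eq:spde_viscous} with $\kappa>0$ as a non-degenerate, linear, parabolic SPDE and invoke the standard variational theory (e.g. in the style of \cite{DaPZab2014} or the classical Krylov--Rozovskii framework) in the Gelfand triple $H^1_x \hookrightarrow L^2_x \hookrightarrow H^{-1}_x$. The operator $A = \tfrac12 C(0):D^2 + \kappa \tilde C : D^2$ is uniformly elliptic on $H^1_x$ because $\kappa \tilde C$ is, so the deterministic part provides coercivity. For the stochastic term $\dd W\cdot\nabla\theta = \sum_k (\sigma_k\cdot\nabla\theta)\,\dd W^k$, the key algebraic identity (valid since $W$ is Gaussian with covariance $C$) is that the It\^o correction exactly reconstructs the term $\tfrac12 C(0):D^2\theta$ coming from $\div(\sigma_k\otimes\sigma_k)$ contractions, so that after applying It\^o's formula to $\|\theta_t\|_{L^2_x}^2$ the second-order terms cancel up to the genuinely dissipative $-2\kappa\|\nabla\theta_t\|_{L^2_x}^2$ (and a harmless lower-order term when $\div W\neq 0$, controlled by \autoref{lem:stoch_integr_estim} and Gr\"onwall). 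This yields existence of a variational solution, and pathwise uniqueness follows by linearity, applying the same energy estimate to the difference of two solutions. One must then check this variational solution coincides with the one characterized by \autoref{prop:solution}; this is immediate since a variational solution satisfies \eqref{eq:weak_sol} by definition and satisfies \eqref{eq:contraction_L2} by the energy identity.

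For the moment bound \eqref{eq:apriori_viscous_2}, the plan is to start from the It\^o formula for $\|\theta_t\|_{L^2_x}^2$,
\begin{align*}
\|\theta_t\|_{L^2_x}^2 + 2\kappa\int_0^t \|\nabla\theta_s\|_{L^2_x}^2\dd s
= \|\theta_0\|_{L^2_x}^2 - 2\int_0^t \langle \theta_s, \div(\theta_s\,\dd W_s)\rangle,
\end{align*}
(the drift contributions beyond $-2\kappa\|\nabla\theta\|^2$ vanishing by the cancellation above, modulo the $\div W$ term which is a martingale in expectation), then raise to the $m$-th power, take suprema in time, and apply the Burkholder--Davis--Gundy inequality to the stochastic integral. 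Its quadratic variation is bounded by $\int_0^t \|\mathcal C^{1/2}(\theta_s\nabla\varphi\text{-type term})\|^2\dd s$, which by \autoref{lem:stoch_integr_estim} (or directly by $[M]_t \le \sup_\xi \xi\cdot\hat C(\xi)\xi\int_0^t\|\theta_s\|_{L^2_x}^2\dd s$ from \autoref{lem:stoch_integr}) is controlled by $\int_0^t \|\theta_s\|_{L^2_x}^4\dd s$; a Young inequality then absorbs a $\sup_{s\le t}\|\theta_s\|_{L^2_x}^{2m}$ term into the left-hand side, and Gr\"onwall closes the estimate, with the implicit constant depending on $m,T,\kappa$ (through the BDG constant and the absorption step; the $\kappa$-dependence is harmless here since we only need it for fixed $\kappa>0$).

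For the $L^p_x$ bound \eqref{eq:apriori_viscous_1}, the plan is to mollify $\theta$ to $\theta^\eps := \theta\ast\chi^\eps$, which solves a regularized SPDE with a commutator remainder, apply the It\^o--Wentzell formula to $\int_{\R^d}|\theta^\eps_t(x)|^p\dd x$, and send $\eps\to 0$. The transport term $\dd W\cdot\nabla\theta$ integrates to zero against $p|\theta|^{p-2}\theta$ (it is a pure transport that preserves all $L^p$ norms up to the compressibility correction), the It\^o correction from the noise combines with the $\tfrac12 C(0):D^2$ drift to give, after integration by parts, a sign-definite non-positive term $-\tfrac{p(p-1)}{2}\int |\theta|^{p-2}\,\nabla\theta\cdot C(0)\nabla\theta\,\dd x \le 0$ plus (when $\div W\neq 0$) a martingale term whose expectation vanishes, and the $\kappa$-diffusion likewise contributes a non-positive term; hence $\tfrac{\dd}{\dd t}\EE\|\theta_t\|_{L^p_x}^p \le 0$. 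The main obstacle is making the It\^o--Wentzell computation and the passage to the limit $\eps\to 0$ rigorous: one must control the commutators $[\sigma_k\cdot\nabla,\ \cdot\ast\chi^\eps]\theta$ in the relevant topology and verify the convergence of the stochastic integrals, which is standard (following e.g. the arguments in \cite{GalLuo2023} cited in \autoref{rem:Lp_bound_divergence_free}) but technically the most delicate part; everything else is a matter of bookkeeping of non-positive terms and BDG/Gr\"onwall.
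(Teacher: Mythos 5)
Your proposal is correct in substance but routes the existence/uniqueness part differently from the paper. You apply the variational (Krylov--Rozovskii) theory directly to the full equation in the triple $H^1_x\hookrightarrow L^2_x\hookrightarrow H^{-1}_x$, using the exact cancellation $2\langle A u,u\rangle+\sum_k\|\sigma_k\cdot\nabla u\|_{L^2_x}^2=-2\kappa\langle \tilde C\nabla u,\nabla u\rangle$ for coercivity; this is legitimate (the $\sigma_k$ are smooth and $\sum_k\sigma_k\otimes\sigma_k=C(0)$, so $B:H^1_x\to\ell^2(L^2_x)$ is bounded and monotonicity is immediate by linearity). The paper instead truncates the noise to $W^n=\sum_{k\le n}\sigma_kW^k$, derives the energy identity for the approximations via the stochastic Lions--Magenes lemma, proves the moment and $L^p_x$ bounds uniformly in $n$, and passes to the limit by weak compactness (the delicate point there being the upgrade of the weak formulation from a.e.\ $t$ to every $t$, which your route gets for free from the variational solution's continuity in $L^2_x$). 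For the $L^p_x$ bound you mollify $\theta$ in space and use It\^o--Wentzell, whereas the paper works on the already-regular truncated approximations and smooths the convex function $|r|^p$ instead; both are standard. The moment estimate \eqref{eq:apriori_viscous_2} is obtained exactly as in the paper (BDG, splitting the quadratic variation via \autoref{lem:stoch_integr_estim}, absorption, Gr\"onwall).

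One caveat: your parenthetical alternative for the quadratic variation, namely $[M]_t\le \sup_\xi \xi\cdot\hat C(\xi)\xi\int_0^t\|\theta_s\|_{L^2_x}^2\,\dd s$, is wrong. The martingale in the energy balance is $\int_0^\cdot\langle\nabla|\theta_s|^2,\dd W_s\rangle$, so the second bound of \autoref{lem:stoch_integr} applies with $g=|\theta|^2$ and yields $\int_0^t\||\theta_s|^2\|_{L^2_x}^2\,\dd s=\int_0^t\|\theta_s\|_{L^4_x}^4\,\dd s$, which is not controlled by $L^2_x$ data. Relatedly, \autoref{lem:stoch_integr_estim} applied to $g=\theta\nabla\theta$ does not give a bound purely in terms of $\int_0^t\|\theta_s\|_{L^2_x}^4\,\dd s$: the $\eps\|g\|_{TV}^2$ term is $\eps\|\theta_s\|_{L^2_x}^2\|\nabla\theta_s\|_{L^2_x}^2$ and must be absorbed into the dissipative term $\kappa\int_0^T\|\nabla\theta_s\|_{L^2_x}^2\,\dd s$ on the left-hand side (hence the $\kappa$-dependence of the constant); only the $K\|g\|_{H^{-\gamma}_x}$ remainder is controlled by $\|\theta_s\|_{L^2_x}^2$ and handled by Gr\"onwall. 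Your main route does include the absorption step, so this is a matter of precision rather than a gap, but the parenthetical shortcut should be dropped.
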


\begin{proof}
The result can be proved with well-known techniques, here we only give a sketch of the proof. 

\textit{Step 1}.
Let us take a spatially smooth approximations of the noise $W^n$, and consider solutions to  $\{ \theta^{\kappa,n} \}$ to the approximate equation with noise $W^n$.
By the stochastic Lions Magenes Lemma \cite[Theorem 2.13]{RozLot2018} the approximate solutions satisfy
\begin{align*}
N_t 
:= 
\| \theta^{\kappa,n}_t\|_{L^2}^2 
+ 
2 \int_0^ t\kappa \|\nabla \theta^{\kappa,n}_s\|_{L^2_x}^2\dd s 
= 
2 \int_0^t \dd W^n_s \cdot \theta^{\kappa,n}_s \nabla\theta^\kappa_s  =: 
M_t.
\end{align*}
By Burkholder-Davies-Gundy inequality and \autoref{lem:stoch_integr}, for any $m\in [1,\infty)$ it holds
\begin{align*}
\EE\left[ \sup_{s\leq t} N_s^m \right]
\lesssim_m 
\| \theta_0\|_{L^2_x}^{2m} 
+ 
\EE\left[\left( 
\int_0^t \| \cC^{1/2} (\theta^{\kappa,n}_s \nabla\theta^{\kappa,n}_s)\|_{L^2_x}^2 \dd s \right)^{m/2} \right].
\end{align*}
Notice that $2\theta^{\kappa,n}\nabla\theta^{\kappa,n}=\nabla( |\theta^{\kappa,n}|^2)$ and that for $\gamma>d+1$ it holds
\begin{align*}
\| 2\theta^{\kappa,n}_t\nabla\theta^{\kappa,n}_t\|_{H^{-\gamma}_x}^2 
\leq 
\| |\theta^{\kappa,n}_t|^2\|_{H^{1-\gamma}_x}^2 
\leq 
\| |\theta^{\kappa,n}_t|^2\|_{L^1_x}^2 
= 
\| \theta^{\kappa,n}_t\|_{L^2_x}^4 \leq N_t^2, 
\end{align*}
while
\begin{align*}
\int_0^t \| \theta^{\kappa,n}_s \nabla\theta^{\kappa,n}_s\|_{L^1_x}^2\dd s
\leq 
\int_0^t \| \theta^{\kappa,n}_s\|_{L^2_x}^2 \|\nabla\theta^{\kappa,n}_s\|_{L^2_x}^2\dd s
\leq 
\sup_{s\leq t} \| \theta^{\kappa,n}_s\|_{L^2_x}^2 \int_0^t \|\nabla\theta^{\kappa,n}_s\|_{L^2_x}^2\dd s
\lesssim _\kappa
\sup_{s\leq t} N_s^{2}.
\end{align*}
Therefore, we can apply \autoref{lem:stoch_integr_estim} to find
\begin{align*}
\EE\left[\left( \int_0^t \| \cC^{1/2} (2\theta^{\kappa,n}_s \nabla\theta^{\kappa,n}_s)\|_{L^2_x}^2 \dd s \right)^{m/2} \right]
\leq 
\eps^{m/2} \EE \left[\sup_{s\leq t} N_s^{m}\right] + C_\kappa \EE\left[\left(\int_0^t N_s^2\dd s\right)^{m/2}\right].
\end{align*}
Choosing $\eps$ small enough one can reabsorb the first term on the left-hand side and arrive at
\begin{align*}
	\EE\left[\sup_{s\leq t} N_s^m\right]
	\lesssim_{m,\kappa} \| \theta_0\|_{L^2}^{2m} +  \EE\left[\left(\int_0^t N_s^2\dd s\right)^{m/2}\right],
\end{align*}
from which follows by Gr\"onwall lemma 
\begin{align} \label{eq:aux002}
\EE\left[ 
\left( \sup_{t\in [0,T]} \| \theta^{\kappa,n}_t\|_{L^2_x}^2 
+ 
2\kappa \int_0^T \| \nabla \theta^{\kappa,n}_t\|_{L^2_x}^2 \dd t \right)^m \right] 
\lesssim_{m,T,\kappa} 
\| \theta_0\|_{L^2_x}^{2m}
\end{align}
uniformly in $n \in \NN$.

\textit{Step 2}.
Let us move to the $L^p_x$ bounds, $p \in [1,\infty)$.
Given $f(r):=|r|^p$, take a sequence $f_m \uparrow f$ monotonically satisfying $f_m \in C^2(\R)$, $f_m'' \geq 0$ for every $m \in \NN$. Standard manipulations and It\^o Formula give for every $m,n \in \NN$
\begin{align*}
\EE\left[
\int_{\R^d} f_m(\theta^{\kappa,n}_t(x)) \dd x \right] 
\leq 
\int_{\R^d} f_n(\theta_0(x))\dd x
\leq 
\|\theta_0\|_{L^p_x}^p.
\end{align*}
Taking the supremum over $m \in \NN$ first, and then over $t \geq 0$, we get for every fixed $n \in \NN$
\begin{align} \label{eq:aux004}
\sup_{t \geq 0} \mathbb{E} \left[\| \theta_t^{\kappa,n} \|_{L^p_x}^p \right]
\leq
\|\theta_0\|_{L^p_x}^p.
\end{align}  

\textit{Step 3}.
Existence of progressively measurable solutions follows by compactness arguments. In particular, taking spatially smooth approximations of the noise $W^n$, one has a sequence of progressively measurable approximate solutions $\{ \theta^{\kappa,n} \}$ that are uniformly bounded in the spaces $L^{2m}_\omega L^2_{t,loc} H^1_x$ and $L^{2m}_\omega L^\infty_{t,loc} L^2_x$ for every $m \in \NN$ because of \eqref{eq:aux002}, and in the spaces $L^\infty_t L^p_\omega L^p_x$ for every $p \in \NN$ because of \eqref{eq:aux004}. By Banach-Alaoglu Theorem and a diagonal argument, one can extract a subsequence converging in the weak topology of $L^{2m}_\omega L^2_{t,loc} H^1_x$, in the weak-$\ast$ topology of $L^{2m}_\omega L^\infty_{t,loc} L^2_x$, and in the weak-$\ast$ topology of $L^{\infty}_t L^p_\omega L^p_x$ for every $m ,p \in \NN$ towards a process $\tilde{\theta}^\kappa$ satisfying 
\begin{align} \label{eq:aux003}
\EE\left[ 
\left( \mbox{ess}\sup_{t\in [0,T]} \| \tilde{\theta}^\kappa_t\|_{L^2_x}^2 
+ 
2\kappa \int_0^T \| \nabla \tilde{\theta}^\kappa_t\|_{L^2_x}^2 \dd t \right)^m \right] 
&\lesssim_{m,T,\kappa} 
\| \theta_0\|_{L^2_x}^{2m},
\\
\mbox{ess} \sup_{t \geq 0} \mathbb{E} \left[\| \theta_t^{\kappa,n} \|_{L^p_x}^p \right]
&\leq
\|\theta_0\|_{L^p_x}^p. \label{eq:aux005}
\end{align}

The limit is progressively measurable as the space of $\{\mathcal{F}_t \}_{t \geq 0}$-progressively measurable processes is weakly closed in $L^2_\omega L^2_{t,loc}$. 
As the stochastic integral
$f\mapsto \int_0^\cdot \langle f,\dd W\rangle$
is a bounded linear operator from progressively measurable $f \in L^2(\Omega \times [0,T] \times \R^d)$ to $L^2(\Omega \times [0,T] )$ for every $T<\infty$, they are also continuous with respect to the weak convergence and therefore we conclude that for every test function $\varphi \in C^\infty_c(\R^d)$ the following identity holds for almost every $(\omega,t) \in \Omega \times \R_+$
\begin{align} \label{eq:aux001}
\langle \tilde{\theta}^\kappa_t , \varphi \rangle
=
\langle \theta_0 , \varphi \rangle
-
\int_0^t
\langle \varphi \nabla \tilde{\theta}^\kappa_s , \dd W_s \rangle 
+
\int_0^t
\langle \tilde{\theta}^\kappa_s , A \varphi  \rangle\dd s.
\end{align} 
In order to improve the inequality above to every $t \geq 0$, we argue as follows.
Let $\{\varphi_n\}_{n \in \mathbb{N}}$ be a dense subsequence in $C^\infty_c(\R^d)$.
Notice that, for every $\varphi=\varphi_n$, the right-hand side of the expression above defines a stochastic process with continuous trajectories $\PP$-almost surely. 
Since $\tilde{\theta}^\kappa \in L^2_\omega L^\infty_{t,loc} L^2_x$, there exists a full measure probability set $\Omega' \subset \Omega$ such that for every $\omega \in \Omega'$ there exists a full measure set $\mathcal{T} \subset \R_+$ such that the maps $\{ \tilde{\theta}^\kappa_t(\omega)\}_{t \in \mathcal{T} \cap [0,T]}$ are uniformly bounded in $L^2_x$ for every $T<\infty$ and for every $n \in \NN$ the equation \eqref{eq:aux001} holds with $\varphi = \varphi_n$ for every $\omega \in \Omega'$ and $t \in \mathcal{T}$.
Given arbitrary $\omega \in \Omega'$ and $t \geq 0$, let $\{t_m\}_{m \in \NN} \subset \mathcal{T}$ be such that $t_m \to t$. 
Since $\{ \tilde{\theta}^\kappa_{t_m}(\omega)\}_{m \in \NN}$ are uniformly bounded in $L^2_x$, there exists a weakly convergent subsequence towards some $\theta^\kappa_t$. Moreover, up to relabelling the converging subsequence, for every $\omega \in \Omega'$, $t \geq 0$ and $n \in \NN$ it holds
\begin{align*}
\langle \theta^\kappa_t , \varphi_n \rangle
&=
\lim_{m \to \infty}
\langle \tilde{\theta}^\kappa_{t_m} , \varphi_n \rangle
\\
&=
\lim_{m \to \infty}
\left(
\langle \theta_0 , \varphi_n \rangle
-
\int_0^{t_m}
\langle \varphi_n \nabla \tilde{\theta}^\kappa_s , \dd W_s \rangle 
+
\int_0^{t_m}
\langle \tilde{\theta}^\kappa_s , A \varphi_n  \rangle\dd s
\right)
\\
&=
\langle \theta_0 , \varphi_n \rangle
-
\int_0^t
\langle \varphi_n \nabla \tilde{\theta}^\kappa_s , \dd W_s \rangle 
+
\int_0^t
\langle \tilde{\theta}^\kappa_s , A \varphi_n  \rangle\dd s,
\end{align*}
where the last equality comes by continuity of the right-hand side of \eqref{eq:aux001}. Hence $\theta^\kappa_t$ does not depend of the choice of the subsequence $\{t_m\}_{m \in \NN}$. In particular, $\theta^\kappa_t(\omega) = \tilde{\theta}^\kappa_t(\omega)$ for every $\omega \in \Omega'$ and $t \in \mathcal{T}$, and therefore for every $\omega \in \Omega'$ and $n \in \NN$ it holds
\begin{align*}
\langle \theta^\kappa_t , \varphi_n \rangle
&=
\langle \theta_0 , \varphi_n \rangle
-
\int_0^t
\langle \varphi_n \nabla \tilde{\theta}^\kappa_s , \dd W_s \rangle 
+
\int_0^t
\langle \tilde{\theta}^\kappa_s , A \varphi_n  \rangle\dd s
\\
&=
\langle \theta_0 , \varphi_n \rangle
-
\int_0^t
\langle \varphi_n \nabla \theta^\kappa_s , \dd W_s \rangle 
+
\int_0^t
\langle \theta^\kappa_s , A \varphi_n  \rangle\dd s,
\quad
\forall t \geq 0.
\end{align*}
Since $\{\varphi_n\}_{n \in \NN}$ are dense in $C^\infty_c(\R^d)$, the equality above extends to every $\varphi \in C^\infty_c(\R^d)$.
Moreover, $\theta^\kappa$ is $\PP$-almost surely weakly continuous (since the right-hand side of the equation above is $\PP$-almost surely continuous) and weakly $\{\mathcal{F}_t\}_{t \geq 0}$-progressively measurable (since the right-hand side of the equation above is $\{\mathcal{F}_t\}_{t \geq 0}$-adapted). By Pettis measurability theorem, $\theta^\kappa$ is progressively measurable.
We conclude that $\theta^\kappa$ is a weak solution of \eqref{eq:spde_viscous}. 
In addition, since $\theta^\kappa_t(\omega) = \tilde{\theta}^\kappa_t(\omega)$ for every $\omega \in \Omega'$ and $t \in \mathcal{T}$ and $\tilde{\theta}^\kappa$ satisfies \eqref{eq:aux003} and \eqref{eq:aux005}, by weak continuity $\theta^\kappa$ satisfies \eqref{eq:apriori_viscous_2} and \eqref{eq:apriori_viscous_1}.

\textit{Step 4}.
For pathwise uniqueness, by linearity it suffices to check that solutions starting at $0$ stay $0$. One can verify that the assumptions of the stochastic Lions-Magenes lemma are satisfied (cf. \cite[Thm 2.13]{RozLot2018}) and so the solution satisfies
\begin{align*}
	\frac{\dd}{\dd t} \EE[\| \theta^\kappa_t\|_{L^2}^2] + 2\kappa \EE[\|\nabla \theta^\kappa_t\|_{L^2}^2] = 0
\end{align*}
by which we can conclude by Gr\"onwall lemma.
\end{proof}

\begin{proof}[Proof of \autoref{lem:approx} for $p=2$]
For every $T<\infty$, the random variables $\hat{\theta}^\kappa$ are bounded in $L^2([0,T] \times \Omega \times \R^d)$, uniformly in $\kappa \in (0,1)$, and progressively measurable.
By Banach-Alaoglu, there exists a weakly converging subsequence in $L^2_{t,loc} L^2_\omega L^2_x$ towards a limit which retain the progressive measurability.
The limit satisfies condition $1$) of \autoref{prop:solution} by direct verification, therefore coincides with $\theta$ up to taking a modification. By uniqueness of the limit, it descends immediately that the whole sequence converges weakly as $\kappa \downarrow 0$. The weak convergence at fixed $t \geq 0$ then follows using the characterization \eqref{eq:weak_sol} of solutions, the density of $L^2_\omega \otimes C^\infty_c(\R^d)$ in $L^2_{\omega,x}$, and the bound \eqref{eq:contraction_L2}.

Let us move to the strong convergence in $L^2_\omega L^2_x$ of $\tilde{\theta}^\kappa_t$.
By \autoref{prop:solution_LJR}, for every $\varphi \in L^2_x$ and $t \geq 0$ we have $\PP$-almost surely 
\begin{align*}
\langle \tilde{\theta}^\kappa_t ,\varphi \rangle
=
\langle \theta_0 , P_t \varphi \rangle
+
\sum_{n \geq 1}
(1-\kappa)^{n/2}
J^n_t(\theta_0,\varphi).
\end{align*}
For every $n \in \NN$ fixed, each term of the series above converges pointwise as $\kappa \downarrow 0$ towards the corresponding $n$-th order chaos of $\langle \theta_t ,\varphi \rangle$. Moreover, by orthogonality of Wiener chaoses and since $(1-\kappa)^{n/2} \leq 1$ for every $\kappa \in (0,1/2)$ and $n \in \NN$, the convergence is dominated and therefore $\mathbb{E} \langle \tilde{\theta}^\kappa_t ,\varphi \rangle^2 \uparrow \mathbb{E}\langle \theta_t ,\varphi \rangle^2$ monotonically.
In order to conclude, it is sufficient to show convergence of the $L^2_\omega L^2_x$ norms as $\kappa \downarrow 0$.   
Take a complete orthonormal system $\{ e_n \}_{n \in \NN} \subset L^2_x$. By the above convergence with $\varphi = e_n$ we have
$\langle \tilde{\theta}^\kappa_t ,e_n \rangle^2  \to \langle \theta_t ,e_n \rangle^2$ in $L^1_\omega$, moreover for every $\eps>0$ there exists $N \in \NN$ sufficiently large such that uniformly in $\kappa \in (0,1/2)$
\begin{align*}
\sum_{n > N} \mathbb{E} \langle \tilde{\theta}^\kappa_t , e_n \rangle^2
\leq
\sum_{n > N} \mathbb{E} \langle \theta_t , e_n \rangle^2
< \eps
\end{align*}
since $\theta_t \in L^2_{\omega} L^2_x$, from which we deduce $\mathbb{E}[\| \tilde{\theta}^\kappa_t \|_{L^2_x}^2] \to \mathbb{E}[\| \theta_t \|_{L^2_x}^2]$.
\end{proof}

\begin{proof}[Proof of \autoref{prop:properties.inviscid.Kraichnan}]

(1) Let $\theta$ be the unique $\cF$-progressively measurable solution to \eqref{eq:spde_viscous} satisfying \eqref{eq:contraction_L2} and fix $s<t$; define $\tilde W^s_r:=W_{r+s}-W_s$, so that $\tilde W^s$ is independent of $\cF_s$, thus in particular of $\theta_s$.
We claim that, for any $\varphi\in C^\infty_c(\R^d)$, $\PP$-a.s. it holds
\begin{align}\label{eq:claim_markovianity}
\langle \theta_t ,\varphi \rangle
=
\langle \theta_s , P_{t-s} \varphi \rangle
+
\sum_{n \geq 1}
\tilde J^n_{t-s}(\theta_s,\varphi),
\end{align}
where $\tilde J^n_r$ are defined as $J^n_r$ but the stochastic integrals are with respect to $\tilde W^s$.
\eqref{eq:claim_markovianity} implies that $\EE[\langle\theta_t,\varphi\rangle|\cF_s]=\langle \theta_s,P_{t-s}\varphi\rangle$ for all $\varphi\in C^\infty_c(\R^d)$, thus $\EE[\theta_t|\cF_s]=P_{t-s}\theta_s$.
Similarly, \eqref{eq:claim_markovianity} and measure-theoretic arguments (Doob-Dynkin lemma, Pettis measurability theorem) imply the existence of a measurable $F$ such that $\theta_t=F(\theta_s,\tilde W^s)$; since $\tilde W^s$ is independent of $\theta_s$, Markovianity of $\theta$ follows.

Let us show \eqref{eq:claim_markovianity}. By first conditioning on $\theta_s$ and using independence of $\tilde W^s$, for $\PP$-a.e. realization of $\theta_s$ in $L^2_x$, we can apply \autoref{prop:solution_LJR} and \autoref{prop:solution}(3) to deduce the existence of a solution $\tilde \theta$ to the SPDE \eqref{eq:spde_viscous} driven by $\tilde W^s$, with random initial condition $\theta_s$, such that, for any $r\geq 0$ and $\varphi\in C^\infty_c(\R^d)$
\begin{align}\label{eq:claim_markovianity2}
    \langle \tilde\theta_r ,\varphi \rangle
=
\langle \theta_s , P_r \varphi \rangle
+
\sum_{n \geq 1}
\tilde J^n_r(\theta_s,\varphi).
\end{align}
Moreover by construction
\begin{equation}\label{eq:energy_markovianity}
    \EE[\|\tilde\theta_r\|_{L^2_x}^2 | \theta_s]\leq \| \theta_s\|_{L^2_x}^2 \quad \PP\text{-a.s.}
    \quad\Rightarrow\quad
    \EE\| \tilde\theta_r\|_{L^2_x}^2 \leq \EE\| \theta_s\|_{L^2_x}^2\leq \|\theta_0\|_{L^2_x}^2.
\end{equation}
Define $\theta'_r:= \theta_r \mathbf{1}_{r\leq s} + \tilde \theta_{r+s} \mathbf{1}_{r> s}$; by construction, since $\theta$ was a solution to \eqref{eq:spde_viscous} driven by $W$ and $\rmd \tilde W^s_r=\rmd W_{r+s}$, $\theta'$ is also a solution to \eqref{eq:spde_viscous} driven by $W$ starting at $\theta_0$. Furthermore, $\tilde\theta$ is $\cF^s_r$-progressively measurable, for $\cF^s_r:=\sigma(\theta_s,\{\tilde W^s_u\}_{u\leq r})\subset \cF_{s+r}$, so $\theta'$ is $\cF$-progressively measurable; by \eqref{eq:energy_markovianity}, $\theta'$ satisfies \eqref{eq:contraction_L2}.
By \autoref{prop:solution} we deduce that $\PP$-a.s. $\theta_t=\theta'_t=\tilde\theta_{t-s}$, so that \eqref{eq:claim_markovianity} follows from \eqref{eq:claim_markovianity2}.

$(2)$
Let $\theta_0 \in L^2_x \cap L^p_x$ with $p \in [1,\infty)$.
By \autoref{lem:approx} applied with $p=2$, for every $t \geq 0$ the function $\theta_t$ can be obtained as limit in $L^2_\omega L^2_x$ of $\tilde{\theta}_t^\kappa$, as $\kappa \downarrow 0$. 
Since the functions $\{\tilde{\theta}^\kappa_t\}_{\kappa \in (0,1/2)}$ are uniformly bounded in $L^p_\omega L^p_x$ by $\| \theta_0 \|_{L^p_x}$ for every $p \in [1,\infty)$ in virtue of \autoref{prop:wellposedness_viscous_SPDE}, the same bound holds true for $\theta_t$. 

For $p=\infty$ we can argue as follows.
By Markov inequality we have for any $\delta>0$ and $q \in (2,\infty)$
\begin{align*}
(\mathbb{P}\otimes Leb)& 
\Big\{ \theta_t (\omega,x) > (1+\delta)(\|\theta_0\|_{L^\infty_x}+\delta\|\theta_0\|_{L^2_x}) \Big\}
\\
&\leq
\frac{\mathbb{E} [\|\theta_t\|_{L^q_x}^q]}{ (1+\delta)^q(\|\theta_0\|_{L^\infty} +\delta\|\theta_0\|_{L^2_x})^q}. 
\end{align*}
Now we apply the $L^p$-estimate previously proved for $p=q$ and Young inequality with exponents $q/(q-2)$ and $q/2$ to get
\begin{align*}
\mathbb{E} [\|\theta_t\|_{L^q_x}^q]
\leq
\|\theta_0\|_{L^q_x}^q
\leq
\left(
\|\theta_0\|_{L^\infty_x}^{\frac{q-2}{q}} \|\theta_0\|_{L^2_x}^{\frac{2}{q}} \right)^q
\leq
\left(\frac{q-2}{q} \|\theta_0\|_{L^\infty_x}
+ 
\frac{2}{q} \|\theta_0\|_{L^2_x} \right)^q.
\end{align*} 
Taking $q > 2/\delta$ we deduce from the above 
\begin{align*}
(\mathbb{P}\otimes \mathscr{L}^d)& 
\Big\{ \theta_t (\omega,x) > (1+\delta)(\|\theta_0\|_{L^\infty_x}+\delta\|\theta_0\|_{L^2_x}) \Big\}
\leq
(1+\delta)^{-q},
\end{align*}
giving $\theta_t \leq \|\theta_0\|_{L^\infty_x}$ for $(\mathbb{P}\otimes \mathscr{L}^d)$ a.e. $(\omega,x)$ since $q<\infty$ and $\delta>0$ are arbitrary.
This gives all the desired a priori estimates.

$(3)$ 
The case $p=2$ descends immediately by $(1)$ and \cite[Theorem 3.2, point (b)]{LeJRai2002}.
For $p \neq 2$ we argue as follows.
Rewrite $\theta_0 = \Theta_0 + (\theta_0 - \Theta_0)$ with $\Theta_0 \in L^1_x \cap L^\infty_x$ and $\|\theta_0-\Theta_0\|_{L^p_x} \leq \eta$, for some $\eta$ to be chosen later.
By linearity of the equation and point $(3)$, we have $\sup_{t \geq 0}\mathbb{E}[\|\theta_t-\Theta_t\|_{L^p_x}^p] \leq \eta^p$. Furthermore $t \mapsto \mathbb{E}[\|\Theta_t\|_{L^p}^p]$ is uniformly continuous by uniform continuity of $t \mapsto\mathbb{E}[\|\Theta_t\|_{L^2}^2]$, the fact that $\Theta_0 \in L^1_x \cap L^\infty_x$, point $(3)$ and interpolation.
Therefore for every $s,t$
\begin{align*}
 |\mathbb{E}[\| \theta_{t}\|_{L^p_x}^p] - \mathbb{E}[\| \theta_s\|_{L^p_x}^p]|
 &\leq
 |\mathbb{E}[\| \theta_t\|_{L^p_x}^p] - \mathbb{E}[\| \Theta_t\|_{L^p_x}^p]|
 +
 |\mathbb{E}[\| \Theta_s\|_{L^p_x}^p] - \mathbb{E}[\| \theta_s\|_{L^p_x}^p]|
 \\
 &\quad+
 |\mathbb{E}[\| \Theta_t\|_{L^p_x}^p] - \mathbb{E}[\| \Theta_s\|_{L^p_x}^p]|
 \\
& \leq 
C(p,\|\theta_0\|_{L^p_x}, \| \Theta_0\|_{L^p_x}) \eta^p 
+ |\mathbb{E}[\| \Theta_t\|_{L^p_x}^p] - \mathbb{E}[\| \Theta_s\|_{L^p_x}^p]|.
\end{align*}
Now let $\varepsilon>0$ be given, and choose $\eta$ such that $C(p,\|\theta_0\|_{L^p_x}, \| \Theta_0\|_{L^p_x}) \eta^p < \varepsilon/2$. For this value of $\eta$, the associated $t \mapsto\mathbb{E}[\|\Theta_t\|_{L^2}^2]$ is uniformly continuous and therefore there exists $\delta>0$ such that $|\mathbb{E}[\| \Theta_t\|_{L^p_x}^p] - \mathbb{E}[\| \Theta_s\|_{L^p_x}^p]| \leq \varepsilon/2$ for every $|s-t|<\delta$, giving
\begin{equation*}
     |\mathbb{E}[\| \theta_{t}\|_{L^p_x}^p] - \mathbb{E}[\| \theta_s\|_{L^p_x}^p]| \leq \varepsilon,
     \quad\forall |t-s|<\delta. \qedhere
\end{equation*}
\end{proof}

\begin{proof}[Proof of \autoref{lem:approx} for $p \neq 2$]
Weak convergence of $\hat{\theta}^\kappa_t$ to $\theta_t$ in $L^p_{\omega} L^p_x$ descends from the weak convergence in $L^2_\omega L^2_x$ combined with the bound from \autoref{prop:properties.inviscid.Kraichnan}(4).
Let us move to the strong convergence of diffusive approximations.
Fix $\delta>0$ and rewrite $\theta_0 = \Theta_0 + (\theta_0 - \Theta_0)$ with $\Theta_0 \in L^1_x \cap L^\infty_x$ and $\|\theta_0-\Theta_0\|_{L^p_x} \leq \delta$.
By interpolation and \autoref{lem:approx} with $p \neq 2$ it holds
$\|\tilde{\Theta}^\kappa_t - {\Theta}_t \|_{L^p_\omega L^p_x} \to 0$ as $\kappa \downarrow 0$.
Thus, by linearity of \eqref{eq:spde_viscous_approx} and the stability estimate \autoref{prop:properties.inviscid.Kraichnan}, point $(4)$, we have
\begin{align*}
\limsup_{\kappa \downarrow 0}\|\tilde{\theta}^\kappa_t -{\theta}_t \|_{L^p_\omega L^p_x}
\lesssim
\limsup_{\kappa \downarrow 0} \left( \|\tilde{\theta}^\kappa_t - \tilde{\Theta}^\kappa_t \|_{L^p_\omega L^p_x}
+
\|\tilde{\Theta}^\kappa_t - {\Theta}_t \|_{L^p_\omega L^p_x}
+
\|{\Theta}_t- {\theta}_t \|_{L^p_\omega L^p_x} \right)
\lesssim
\delta. 
\end{align*}
Since $\delta>0$ is arbitrary we conclude that $\|\tilde{\theta}^\kappa_t -{\theta}_t \|_{L^p_\omega L^p_x} \to 0$.
\end{proof}

\begin{lemma} \label{lem:tightness}
Suppose \autoref{ass:well_posed}. For any $T,\eps>0$ there exists $R>0$ large enough such that
\begin{align*}
\sup_{\kappa \in (0,1/2)}
\left(
\sup_{t\in [0,T]} 
\EE[\| \tilde{\theta}^\kappa_t \mathbf{1}_{\{|x|>R\}}\|_{L^2_x}^2] 
+ 
\EE\left[\int_0^T \kappa \| \mathbf{1}_{\{|x|>R\}} C(0) :  \nabla \tilde{\theta}^\kappa_t(x) \otimes  \nabla \tilde{\theta}^\kappa_t(x) \|_{L^2_x}^2 
\dd t\right] 
\right)
<\eps.
\end{align*}
\end{lemma}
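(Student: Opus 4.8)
The plan is to prove a uniform-in-$\kappa$ tail estimate for the diffusive approximations $\tilde\theta^\kappa$ by testing the local energy balance \eqref{eq:energy_balance_viscous_kappa} against a family of cut-off functions localized away from the origin, and then using Gr\"onwall together with the $L^2_x$-contraction \eqref{eq:contraction_L2}. First I would fix a smooth cut-off $\phi\in C^\infty(\R^d;[0,1])$ with $\phi\equiv 0$ on $B_1$ and $\phi\equiv 1$ outside $B_2$, set $\phi_R(x):=\phi(x/R)$, and note the scaling bounds $|\nabla\phi_R|\lesssim R^{-1}$, $|D^2\phi_R|\lesssim R^{-2}$, so that $C(0):D^2\phi_R\to 0$ uniformly as $R\to\infty$. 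Testing \eqref{eq:energy_balance_viscous_kappa} against $\phi_R^2$ and taking expectations (the martingale term drops as in the proof of \autoref{cor:dissipation_measure}, using \autoref{lem:stoch_integr} and the a priori bounds of \autoref{prop:wellposedness_viscous_SPDE} to justify the manipulations), one obtains for all $t\in[0,T]$
\begin{align*}
\EE\!\int_{\R^d}\!\phi_R^2|\tilde\theta^\kappa_t|^2\dd x
+ \kappa\,\EE\!\int_0^t\!\!\int_{\R^d}\!\phi_R^2\, C(0):\nabla\tilde\theta^\kappa_s\otimes\nabla\tilde\theta^\kappa_s\dd x\dd s
= \int_{\R^d}\!\phi_R^2|\theta_0|^2\dd x + \tfrac12\EE\!\int_0^t\!\!\int_{\R^d}\!|\tilde\theta^\kappa_s|^2\,C(0):D^2(\phi_R^2)\dd x\dd s,
\end{align*}
after moving the (nonnegative) dissipation term to the left. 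The first term on the right is $\le \|\theta_0\mathbf 1_{\{|x|>R\}}\|_{L^2_x}^2$, which is small for $R$ large since $\theta_0\in L^2_x$.

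The only genuinely nontrivial term is the last one, since $D^2(\phi_R^2)=2\phi_R D^2\phi_R+2\nabla\phi_R\otimes\nabla\phi_R$ is supported in the annulus $B_{2R}\setminus B_R$ and bounded by $\lesssim R^{-2}$ there; thus it is controlled by $R^{-2}\EE\int_0^t\int_{B_{2R}\setminus B_R}|\tilde\theta^\kappa_s|^2\dd x\dd s \le R^{-2}\, T\,\|\theta_0\|_{L^2_x}^2$ using \eqref{eq:contraction_L2}, which is again $\to 0$ as $R\to\infty$ uniformly in $\kappa\in(0,1/2)$. This immediately gives the bound on both $\sup_{t\le T}\EE[\|\tilde\theta^\kappa_t\mathbf 1_{\{|x|>R\}}\|_{L^2_x}^2]$ (using $\mathbf 1_{\{|x|>2R\}}\le\phi_R$, i.e. replacing $R$ by $R/2$) and the localized dissipation $\kappa\,\EE\int_0^T\int_{|x|>2R}C(0):\nabla\tilde\theta^\kappa_s\otimes\nabla\tilde\theta^\kappa_s\dd x\dd s$ at once, since both appear with nonnegative sign on the left. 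Note that the statement writes the dissipation term with an $L^2_x$-norm squared of the pointwise quantity $C(0):\nabla\tilde\theta^\kappa\otimes\nabla\tilde\theta^\kappa$; I would read this (consistently with the rest of the paper, e.g. \autoref{prop:anomalous_dissipation_equivalence}) as the space integral of the nonnegative density $C(0):\nabla\tilde\theta^\kappa_t(x)\otimes\nabla\tilde\theta^\kappa_t(x)$, i.e. $\EE\int_0^T\int_{\{|x|>R\}}\kappa\,C(0):\nabla\tilde\theta^\kappa_t\otimes\nabla\tilde\theta^\kappa_t\dd x\dd t$, which is precisely what the above display produces.

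The main obstacle is purely technical: justifying the localized energy identity rigorously rather than formally. This requires mollifying $\tilde\theta^\kappa$, writing the balance for $\tilde\theta^{\kappa,\eps}=\tilde\theta^\kappa\ast\chi^\eps$ exactly as in \autoref{prop:energy_balance_viscous_kappa}, multiplying by $\phi_R^2$ and integrating by parts (now legitimate since $\phi_R^2\in C^\infty_b$ and all objects are smooth in space), taking expectations to kill the stochastic integral, and then passing to the limit $\eps\to0$ using the $L^1_x$ convergences $|\tilde\theta^{\kappa,\eps}_t|^2\to|\tilde\theta^\kappa_t|^2$ and $C(0):\nabla\tilde\theta^{\kappa,\eps}_t\otimes\nabla\tilde\theta^{\kappa,\eps}_t\to C(0):\nabla\tilde\theta^\kappa_t\otimes\nabla\tilde\theta^\kappa_t$ (valid for a.e. $t$ $\PP$-a.s. by \autoref{prop:wellposedness_viscous_SPDE}, then in expectation via the uniform bound \eqref{eq:apriori_viscous_2} and dominated convergence) — all of which mirror steps already carried out in the proof of \autoref{prop:energy_balance_viscous_kappa}, with the extra factor $\phi_R^2$ causing no new difficulty. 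Alternatively, and perhaps more cleanly, one can avoid differentiating $\tilde\theta^\kappa$ altogether by testing the SPDE \eqref{eq:spde_viscous_approx} directly against $\phi_R^2\tilde\theta^\kappa$ and applying the stochastic Lions--Magenes lemma (as in Step 1 of the proof of \autoref{prop:wellposedness_viscous_SPDE}), which yields the same inequality with the dissipation term $2\kappa\int\phi_R^2|\nabla\tilde\theta^\kappa|^2$ plus a cross term $\kappa\int\nabla(\phi_R^2)\cdot\nabla\tilde\theta^\kappa\,\tilde\theta^\kappa$ that is absorbed via Young's inequality at the cost of an extra $\kappa R^{-2}\int_{B_{2R}\setminus B_R}|\tilde\theta^\kappa|^2$ harmlessly bounded by $R^{-2}\|\theta_0\|_{L^2_x}^2$. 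Either route delivers the claim; I would use whichever integration-by-parts bookkeeping is lighter given what has already been set up in \autoref{sec:preliminaries}.
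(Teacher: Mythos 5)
Your proposal is correct and follows essentially the same route as the paper: the paper's proof tests the SPDE \eqref{eq:spde_viscous_approx} directly against $\tilde\theta^\kappa\psi^R$ with a cut-off vanishing near the origin and equal to one at infinity, obtaining exactly the localized energy inequality you derive, with the right-hand side controlled by the tail of $\|\theta_0\|_{L^2_x}$ plus an $R^{-2}$ term absorbed via \eqref{eq:contraction_L2}. Your "alternative, cleaner" route via the stochastic Lions--Magenes lemma is in fact the one the paper takes, and your reading of the dissipation term as the space integral of the nonnegative density is the intended one.
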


\begin{proof}
Let $\psi$ be a smooth nonnegative function such that $\psi\equiv 0$ on $B_{1/2}$ and $\psi\equiv 1$ on $B_1^c$ and set $\psi^R:= \psi(\cdot/R)$. Then testing \eqref{eq:spde_viscous_approx} against $\tilde{\theta}^\kappa \psi^R$, integrating in space and taking expectation 
\begin{align*}
	\EE&\left[ \int_{\R^d} |\tilde{\theta}^\kappa_t(x)|^2 \psi^R(x)\dd x + \kappa \int_0^t \int_{\R^d} C(0) :  \nabla \tilde{\theta}^\kappa_t(x) \otimes  \nabla \tilde{\theta}^\kappa_t(x) \psi^R(x)\dd x\right]
	\\
	&\lesssim \int_{\R^d} |\theta_0(x)|^2 \psi^R(x)\dd x + \frac{1}{R^2} \int_0^T \EE[\| \tilde{\theta}^\kappa_t\|_{L^2_x}^2]\dd t, 
\end{align*}
with implicit constant independent of $\kappa \in (0,1/2)$.
Taking supremum over $t \in [0,T]$ and $\kappa \in (0,1/2)$ on the left-hand side and sending $R\to\infty$ gives the conclusion.
\end{proof}

\subsection{Complements to \autoref{ssec:continuity}}

\begin{lemma}
\label{lem:estim.stoch.integrals}
Let $f$ be a progressively measurable process with values in $\mathcal{M}$, satisfying the bound $\sup_{r \geq  0} \| f_r \|_{\mathrm{TV}} \leq K$ $\mathbb{P}$-almost surely for some deterministic $K$. 
Then for any $\gamma < 1 / 2$, $\eps > 0$, $T \in (0, + \infty)$ and $p \in [2,\infty)$ we have
\[ \mathbb{E} \left[ \left\| \int_0^{\cdot} f_r \dd W_r \right\|^p_{C^{\gamma} ([0, T] ; H^{- d / 2 - \eps}_x)} \right]
\lesssim 
\| \hat{C} \|_{L^1}^{1 / 2} K. 
\]
\end{lemma}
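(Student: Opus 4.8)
The plan is to combine the factorization \eqref{eq:representation_covariance} of the covariance with a Kolmogorov-type continuity estimate for Hilbert-space-valued martingales, treating the stochastic integral as an $H^{-d/2-\eps}_x$-valued process. First I would fix $s < t$ in $[0,T]$ and, writing $M_t := \int_0^t f_r \dd W_r = \sum_k \int_0^t \sigma_k f_r \dd W^k_r$ in the sense of \autoref{lem:stoch_integr} (applied to the vector-valued measures $\mathfrak m := f_r$, legitimate since $\| \sigma_k f_r \|_{TV} \le \|\sigma_k\|_{L^\infty}\|f_r\|_{TV}$), estimate the moments of the increment $M_t - M_s$ in $H^{-d/2-\eps}_x$. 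The point is that $H^{-d/2-\eps}_x = H^{-d/2-\eps}(\R^d)$ embeds into a space where $\delta_x \in H^{-d/2-\eps}_x$ uniformly in $x$, so that pairing with a test distribution and using the Burkholder--Davis--Gundy inequality in the Hilbert space $H^{-d/2-\eps}_x$ gives, for $p \ge 2$,
\begin{align*}
\EE\big[ \| M_t - M_s\|_{H^{-d/2-\eps}_x}^p \big]
\lesssim_p
\EE\Big[ \Big( \int_s^t \sum_k \| \sigma_k f_r \|_{H^{-d/2-\eps}_x}^2 \dd r \Big)^{p/2} \Big]
= \EE\Big[ \Big( \int_s^t \| \cC^{1/2} f_r \|_{L^2_x}^2 \dd r \Big)^{p/2} \Big].
\end{align*}
By \autoref{rem:bound_C_TV}, $\| \cC^{1/2} f_r\|_{L^2_x}^2 \le \|\hat C\|_{L^1_x} \| f_r\|_{TV}^2 \le \|\hat C\|_{L^1_x} K^2$ $\PP$-a.s., so the right-hand side is bounded by $(\|\hat C\|_{L^1_x} K^2)^{p/2} (t-s)^{p/2} = \|\hat C\|_{L^1_x}^{p/2} K^p (t-s)^{p/2}$.

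The step that needs a little care is making the Hilbert-space BDG estimate rigorous: one can either invoke the Burkholder inequality for $H$-valued stochastic integrals against a cylindrical Wiener process (with $H = H^{-d/2-\eps}_x$ and Hilbert--Schmidt norm of the integrand given precisely by $\sum_k \|\sigma_k f_r\|_H^2$), or reduce to the scalar case by testing against a countable dense family of test functions and then using separability of $H^{-d/2-\eps}_x$ together with monotone convergence. In either route the key identity is $\sum_k \| \sigma_k f_r\|_{H^{-d/2-\eps}_x}^2 = \| \cC^{1/2} f_r \|_{H^{-d/2-\eps}_x}^2 \le \| \cC^{1/2} f_r\|_{L^2_x}^2$ (using that $H^{-d/2-\eps}_x$ is weaker than $L^2_x$), which feeds straight into \autoref{rem:bound_C_TV}. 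One should also note that continuity of $t\mapsto M_t$ with values in $H^{-d/2-\eps}_x$ holds $\PP$-a.s. by the same estimate applied with $p$ large, so the increments really control a genuine (not merely modification-sense) path.

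With the increment bound $\EE[\| M_t - M_s\|_{H^{-d/2-\eps}_x}^p] \lesssim_p \|\hat C\|_{L^1_x}^{p/2} K^p |t-s|^{p/2}$ in hand, I would apply the Kolmogorov--Chentsov continuity theorem in the Banach-space-valued form (e.g. the version in \cite{DaPZab2014} or a direct Garsia--Rodemich--Rumsey argument): for any $\gamma < 1/2 - 1/p$ one gets
\begin{align*}
\EE\Big[ \big\| M \big\|_{C^\gamma([0,T]; H^{-d/2-\eps}_x)}^p \Big]
\lesssim_{p,\gamma,T}
\|\hat C\|_{L^1_x}^{p/2} K^p,
\end{align*}
and since $p$ can be taken arbitrarily large, this covers every $\gamma < 1/2$. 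Taking the $p$-th root would give the stated bound $\lesssim \|\hat C\|_{L^1_x}^{1/2} K$ (the statement as written is the $p$-th-root version up to the convention in the constant; in any case the homogeneity in $\|\hat C\|_{L^1_x}$ and $K$ is exactly $1/2$ and $1$ respectively, as claimed). The main obstacle, such as it is, is purely bookkeeping: choosing the functional-analytic framework for the vector-valued stochastic integral so that the Hilbert--Schmidt norm of the integrand is cleanly identified with $\|\cC^{1/2} f_r\|_{L^2_x}$, after which everything reduces to the elementary pathwise bound of \autoref{rem:bound_C_TV} and a standard continuity criterion.
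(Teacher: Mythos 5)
Your overall architecture (Hilbert-space-valued BDG for the increments, then Banach-valued Kolmogorov--Chentsov) is sound and is in substance the same as the paper's proof: the paper tests the martingale against $\varphi(x)=e^{i\xi\cdot x}$, applies scalar BDG for each $\xi$, and integrates over $\xi$ against the weight $(1+|\xi|^2)^{-d/2-\eps}$ via Minkowski's inequality --- which is exactly an $H^{-d/2-\eps}_x$-valued BDG estimate carried out mode by mode.

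However, the identity you call "key", namely $\sum_k \| \sigma_k f_r\|_{H^{-d/2-\eps}_x}^2 = \| \cC^{1/2} f_r \|_{H^{-d/2-\eps}_x}^2$, is false: the left-hand side is the Hilbert--Schmidt norm of the multiplication-type integrand $e_k\mapsto \sigma_k f_r$, while $\cC$ is a \emph{convolution} operator, and the two do not commute with the Bessel potential. (Already in $L^2_x$ one has $\sum_k\|\sigma_k g\|_{L^2_x}^2=\mathrm{Tr}(C(0))\|g\|_{L^2_x}^2$, which is not $\langle C\ast g,g\rangle=\|\cC^{1/2}g\|_{L^2_x}^2$.) Consequently the equality in your first display is unjustified. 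The argument is repairable, because the quantity you actually need is bounded correctly: writing $\widehat{\sigma_k f_r}(\xi)=\int e^{-i\xi\cdot x}\sigma_k(x)\,f_r(\dif x)$ and summing over $k$ gives $\sum_k|\widehat{\sigma_k f_r}(\xi)|^2=\int\int e^{-i\xi\cdot(x-y)}\,\mathrm{Tr}(C(x-y))\,f_r(\dif x)f_r(\dif y)\leq \mathrm{Tr}(C(0))\,\|f_r\|_{\mathrm{TV}}^2\leq \|\hat C\|_{L^1}K^2$ pointwise in $\xi$, and since $(1+|\xi|^2)^{-d/2-\eps}$ is integrable (this is precisely where the exponent $-d/2-\eps$ is used) one gets $\sum_k\|\sigma_k f_r\|_{H^{-d/2-\eps}_x}^2\lesssim_{d,\eps}\|\hat C\|_{L^1}K^2$. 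With that inequality in place of your identity, the rest of your proof (BDG, the $|t-s|^{p/2}$ increment bound, Kolmogorov for $\gamma<1/2-1/p$ with $p$ arbitrarily large, and the remark about the $p$-th root in the stated constant) goes through and coincides with the paper's conclusion.
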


\begin{proof}
Recall the by assumption the covariance of the noise $C$ is such that $| \hat{C} | \in L^1 (\mathbb{R}^d) \cap L^{\infty} (\mathbb{R}^d)$.
In particular, the associated convolution operator $\mathcal{C}$ satisfies
\begin{align*}
\| \mathcal{C}^{1/2} \varphi \|_{L^2_x}^2 
= 
\langle \mathcal{C} \varphi, \varphi \rangle_{L^2_x} 
\lesssim 
\| \varphi \|_{\mathrm{TV}}^2  
\| \hat{C} \|_{L^1},
\quad
\forall \varphi \in \mathcal{M} (\mathbb{R}^d) .
\end{align*}
Set $I_t \mapsto \int_0^t f_r \dd W_r$, and let $\varphi \in C^\infty_c(\R^d)$ be a smooth test  function. By the above considerations and \autoref{lem:stoch_integr}, the martingale
\[ M^{\varphi}_t \mapsto \langle \varphi, I_t \rangle 
= 
\int_0^t \langle \varphi f_r, \dd W_r \rangle \]
has quadratic variation satisfying the $\PP$-almost sure bound
\[ \frac{\dd}{\dd t} [M^{\varphi}]_t 
= 
\| \mathcal{C}^{1 / 2} (\varphi f_r) \|_{L^2}^2 \lesssim 
\| \hat{C} \|_{L^1} \| \varphi f_r\|_{\mathrm{TV}}^2 
\leq
\| \varphi \|_{C^0}^2 \| \hat{C} \|_{L^1} K^2. \]
Let now take any $p \in [2, \infty)$. By Burkholder-Davis-Gundy inequality, it follows that
  \[ \| M^{\varphi}_t - M^{\varphi}_s \|_{L^p (\Omega)} \lesssim_p \| \varphi
     \|_{C^0} \| \hat{C} \|^{1 / 2}_{L^1} K | t - s |^{1 / 2} . \]
  Taking $\varphi (x) = e^{i \xi \cdot x}$,  the above estimate
  becomes (dropping $\| \hat{C} \|^{1 / 2}_{L^1} K$ henceforth for
  simplicity)
  \[ \| \hat{I}_t (\xi) - \hat{I}_s (\xi) \|_{L^p (\Omega)} \lesssim | t - s
     |^{1 / 2} . \]
  Interpreting the $H^{- s}_x$-norm as the $L^2 (\mathbb{R}^d, (1 + | x  |^2)^{- s} \dd x)$ of the Fourier transform, by Minkowski's inequality we get
  \begin{align*}
    \| \| I_t - I_s \|_{H^{- d / 2 - \eps} (\mathbb{R}^d)} \|_{L^p
    (\Omega)} & \leq\left( \int \| \hat{I}_t (\xi) - \hat{I}_s (\xi)
    \|_{L^p (\Omega)}^2  (1 + | \xi |^2)^{- d / 2 - \eps} \dd\xi
    \right)^{1 / 2} \lesssim  | t - s |^{1 / 2} .
  \end{align*}  
As the estimate holds for any $p \in [2, \infty)$, conclusion follows by Kolmogorov's continuity theorem. 
\end{proof}

\begin{proof}[Proof of \autoref{prop:existence.SCE}]
For simplicity, we present the construction on a fixed finite interval $[0,T]$; the general case follows from a standard diagonal argument.
Let $W^n = \Pi_n W$, where $\Pi_n$ denotes the Fourier  projection on modes $| \xi | \leq n$. As $W^n$ is a spatially smooth  we can consider the unique strong solution
  $\mu^n$ to the stochastic continuity-diffusion equation
  \[ \dd \mu^n + \nabla \cdot (\mu^n \circ \dd W^n) = \kappa \tilde{C}:D^2 \mu^n \dd t, \quad \mu^n
     |_{t = 0} = \mu_0. \]
By linearity of the equation we can assume without loss of generality that $\mu_0 \in \mathcal{M}_+$.
Interpreting the above as a Fokker-Planck equation with random drift $\dd W^n$, one has that $\mu^n_t$ corresponds to the conditional law, with respect to an auxiliary $\R^d$-valued Brownian motion $w$ independent of $W^n$, of the unique solution of the SDE
\begin{align*}
    \dd X^n_t = \dd W^n(t,X_t) + \sqrt{2\kappa} \tilde{C}^{1/2}\dd w_t,
    \quad
    (X_0^n)_\sharp \mathbb{P}^w = \mu_0.
\end{align*}
Notice that the Fokker-Planck SPDE above has equivalent It\^{o}
  form given by
  \[ \dd \mu^n + \nabla \cdot (\mu^n \dd W^n) = \left( \kappa \tilde{C} + \frac{c_n}{2} I_d \right) : D^2
     \mu^n\dd t, \quad \mu^n |_{t = 0} = \mu_0, \]
  where $c_n = \mathrm{Tr}\, (C^n (0))$, where $C^n$ is the convolution kernel
  associated to $\mathcal{C}^n = \Pi_n  \mathcal{C}$, $\widehat{C^n} (\xi) =
  \hat{C} (\xi) \mathbf{1}_{| \xi | \leq n}$. By construction, $c^n
  \rightarrow c$. Moreover, by standard properties of pushforward maps, we have the
  $\mathbb{P}$-almost sure estimate
  \begin{equation} 
    \| \| \mu^n_t \|_{\mathrm{TV}} \|_{L^{\infty} ([0, T])} \leq\| \mu_0
    \|_{\mathrm{TV}} . \label{eq:apriori.estim.1}
  \end{equation}
  
  We can rewrite the equation in integral form as
  \begin{equation}
    \mu^n_t = \mu_0 + \nabla \cdot I^n_t + J^n_t = \mu_0 + \nabla \cdot \left(
    \int_0^t \mu^n_r \dd W^n_r \right) + \left( \kappa \tilde{C} + \frac{c_n}{2} I_d \right) : D^2 \int_0^t     \mu^n_r \dd r. \label{eq:approx.spde}
  \end{equation}
  Since $\mu^n \in L^{\infty}_t \mathrm{TV}$, $\sup_n  (c_n + \| \widehat{C^n}
  \|_{L^1}) < \infty$ and $\mathrm{TV} \hookrightarrow H^{- d / 2 -
  \eps}_x$ we can now apply the previous bounds to see that, for any
  fixed $p \in [1, \infty)$, $\eps > 0$, $\gamma < 1 / 2$ it holds
  \[ \sup_n \| \| \nabla \cdot I^n_t \|_{C^{\gamma} ([0, T] ; H^{- d / 2 - 1 -
     \eps}_x)} \|_{L^p (\Omega)} + \| \| J^n \|_{C^1 ([0, T] ; H^{- d / 2
     - 2 - \eps}_x)} \|_{L^p (\Omega)} \lesssim \| \hat{C} \|_{L^1}^{1 /
     2} \| \mu_0 \|_{\mathrm{TV}}. \]
In particular,
  \begin{equation}
    \sup_n \| \| \mu^n \|_{C^{\gamma} ([0, T] ; H^{- d / 2 - 2 -
    \eps}_x)} \|_{L^p (\Omega)} \lesssim \| \hat{C} \|_{L^1}^{1 / 2} \|
    \mu_0 \|_{\mathrm{TV}} < \infty . \label{eq:apriori.estim.2}
  \end{equation}
  For $\eps > 0$ as above, it will be convenient to introduce the
  Hilbert space $\mathcal{H} = \mathcal{C}^{- 1 / 2} (L^2_x) \cap H^{- d / 2 -
  \eps}_x$, endowed with the norm $\| f \|_{\mathcal{H}}^2 = \| \mathcal{C}^{1 /
  2} f \|_{L^2_x}^2 + \| f \|_{H^{- d / 2 - \eps}_x}^2$, and the space
  $\widetilde{\mathcal{H}} = L^2 (\Omega \times [0, T] ; \mathcal{H})$. As the
  sequence $\{\mu^n\}_{n \in \NN}$ is bounded in $\widetilde{\mathcal{H}}$, by Banach-Alaoglu Theorem we can extract a (not relabelled) subsequence
  such that $\mu^n \rightharpoonup \mu$ for some $\mu \in
  \widetilde{\mathcal{H}}$; as the processes $\mu^n$ are progressively
  measurable, the same holds for $\mu$. Notice that for any constants
  $K_1$ and $K_2$, the set
  \[ E = \left\{ \mu \in \widetilde{\mathcal{H}} : \, \| \| \mu \|_{\mathrm{TV}}
     \|_{L^{\infty} ([0, T])} \leq K_1, \, \| \| \mu \|_{C^{\gamma}
     ([0, T] ; H^{- d / 2 - 2 - \eps}_x)} \|_{L^p (\Omega)} \leq
     K_2 \right\} \]
  is convex and closed in the strong topology of $\widetilde{\mathcal{H}}$ (by
  standard lower-semicontinuity arguments) and therefore also closed with respect to
  weak convergence. This implies that we can take a representative of $\mu$
  which has continuous trajectories (in the $H^{- d / 2 - 2 -  \eps}_x$-topology) and such that estimates
  \eqref{eq:apriori.estim.1}-\eqref{eq:apriori.estim.2} still hold. Moreover,
  again by weak continuity of trajectories and lower-semicontinuity of the
  TV-norm, we can upgrade~\eqref{eq:apriori.estim.1} by replacing
  $\mathrm{esssup}_{t \in [0, T]}$ with $\sup_{t \in [0, T]}$. In other words we
 have found
  \begin{equation*}
    \sup_{t \in [0, T]} \| \mu_t \|_{\mathrm{TV}} \leq\| \mu_0
    \|_{\mathrm{TV}}, \quad \mathbb{P} \text{-almost surely}, 
\end{equation*}
and
\begin{equation*}
\| \| \mu
    \|_{C^{\gamma} ([0, T] ; H^{- d / 2 - 2 - \eps}_x)} \|_{L^p (\Omega)}
    \lesssim \| \hat{C} \|_{L^1}^{1 / 2} \| \mu_0 \|_{\mathrm{TV}} .
  \end{equation*}
  It is clear that $\mu$ is adapted, since each
  $\mu^n$ is so. It remains to show that $\mu$ is indeed a solution to the SPDE. 
For any $\varphi \in C^{\infty}_c$, by~\eqref{eq:apriori.estim.2} we have
  \[ \mathbb{E} \left[ \int_0^T \| \mathcal{C}^{1 / 2} (\nabla \varphi  \mu^n_r) \|_{L^2_x}^2 \dd r \right] 
\leq
\| \nabla \varphi  \|_{L^{\infty}_x}  \| \hat{C} \|_{L^1} \mathbb{E} \left[ \int_0^T \|
     \mu^n_r \|_{\mathrm{TV}}^2 \dd r \right] \leq 
 T \| \nabla \varphi \|_{L^{\infty}_x} \| \hat{C} \|_{L^1} \| \mu_0 \|_{\mathrm{TV}}, \]
  uniformly in $n \in \NN$, and similarly for $\mu$. By a similar argument, one checks
  that $\{\nabla \varphi \mu^n\}_{n \in \NN}$ is uniformly bounded in $L^2 (\Omega \times
  [0, T] ; H^{- d / 2 - \eps}_x)$ and so in $\widetilde{\mathcal{H}}$.
  Combined with weak compactness and the fact that $\mu^n \rightharpoonup \mu$
  in $\widetilde{\mathcal{H}}$, this implies that $\varphi \mu^n
  \rightharpoonup \varphi \mu$ in $\widetilde{\mathcal{H}}$ as well. But since
  the map $f \mapsto \int_0^{\cdot} \langle f_r, \dd W_r \rangle$ is linear
  and strongly continuous from $\widetilde{\mathcal{H}}$ to $L^2_{\omega}
  C_t$, it is also weakly continuous, so that
  \[ \bigg\langle \varphi, \nabla \cdot \left( \int_0^{\cdot} \mu^n_r \dd W_r
     \right) \bigg\rangle = - \int_0^{\cdot} \langle \varphi \mu^n_r, \dd W_r
     \rangle \rightharpoonup - \int_0^{\cdot} \langle \varphi \mu_r, \dd
     W_r \rangle . \]
On the other hand, for any $\varphi \in C^\infty_c(\R^d)$, by contruction and Doob's inequality it holds
  \begin{align*}
    \mathbb{E} \left[ \sup_{t \in [0, T]} \left| \bigg\langle \varphi, \nabla \cdot
    \left( \int_0^t \mu^n_r \dd(W - W^n)_r \right) \bigg\rangle \right|^2
    \right] & =\mathbb{E} \left[ \sup_{t \in [0, T]} \left| \int_0^t \langle
    \nabla \varphi \mu^n_r, \dd(W - W^n)_r \rangle \right|^2 \right]\\
    & \lesssim \mathbb{E} \left[ \int_0^T \| (\mathcal{C} - \mathcal{C}^n)^{1
    / 2}  (\nabla \varphi \mu^n_r) \|_{L^2_x}^2 \dd r \right]\\
    & \lesssim \| \hat{C} - \widehat{C^n} \|_{L^1} \| \nabla \varphi
    \|_{L^\infty_x}^2 \| \mu_0 \|_{\mathrm{TV}}^2 \rightarrow 0
  \end{align*}
since $\hat{C}^n \rightarrow \hat{C}$ in $L^1$. Combining these facts we deduce that
  \[ \bigg\langle \varphi, \nabla \cdot \left( \int_0^{\cdot} \mu^n_r \dd W^n_r
     \right) \bigg\rangle = - \int_0^{\cdot} \langle \varphi \mu^n_r, \dd W^n_r
     \rangle \rightharpoonup - \int_0^{\cdot} \langle \varphi \mu_r, \dd
     W_r \rangle. \]
A similar (much simpler) argument shows that
  \[\int_0^{\cdot} \Big\langle  \left( \kappa \tilde{C} + \frac{c_n}{2} I_d \right) : D^2  \varphi, \mu^n_r \Big\rangle
     \dd r \rightharpoonup  \int_0^{\cdot} \Big\langle \left( \kappa \tilde{C} + \frac{c}{2} I_d \right) : D^2
     \varphi, \mu_r \Big\rangle \dd r \]
  in the weak topology of $L^2_{\omega} C_t$ as well. But the this implies
  that, whenever testing against $\varphi$ in~\eqref{eq:approx.spde}, at any
  fixed $t$ all terms on the right-hand side are converging to the respective integrals
  with $(\mu^n, W^n)$ replaced by $(\mu, W)$. Therefore also the right-hand side must
  be converging; but by construction $\mu^n \rightharpoonup \mu$ and so we can
  conclude that $\mu$ is a desired solution to
  \eqref{eq:stochastic.continuity_diffusive}.
\end{proof}
 
We have the following proposition concerning some useful properties of solutions to \eqref{eq:stochastic.continuity_diffusive}.

\begin{prop}
\label{prop:existence.SCE.properties}
For any deterministic measure $\mu_0 \in \mathcal{M}$, the unique solution $\mu$ to \eqref{eq:stochastic.continuity_diffusive} satisfies:
\begin{enumerate}
    \item 
    If $\mu_0 \geq 0$ then $\mu_t \geq 0$ for every $t \geq 0$ $\PP$-almost surely. 
    \item 
    For every fixed $t \geq 0$,  $\PP$-almost surely we have $\mu_t (\mathbb{R}^d) = \mu_0 (\mathbb{R}^d)$ and $\| \mu_t \|_{\mathrm{TV}} \leq\| \mu_0
    \|_{\mathrm{TV}}$.
    In particular, if $\mu_0 \in \mathcal{P}(\mathbb{R}^d)$ (the set of probability measures on $\mathbb{R}^d$), then $\mu$ is a process with values in $\mathcal{P} (\mathbb{R}^d)$,
    \item 
    If $\mu_0\in \mathcal{P} (\mathbb{R}^d)$ with $\langle |x|^2,\mu_0\rangle=\int_{\R^d} |x|^2 \mu_0(\mathrm{d}x)<\infty$, then for any $m\in [1,\infty)$ it holds
    \begin{equation}\label{eq:stoch_continuity_bound_variance}
        \EE\bigg[\,\sup_{t\in [0,T]} \langle |x|^2,\mu_t\rangle^m\bigg]^{\frac{1}{m}} \lesssim_m \langle |x|^2,\mu_0 \rangle + T\,[{\rm Tr}(C(0))+\kappa {\rm Tr}(\tilde C)],\quad \forall\, T\geq 0.
    \end{equation}
\end{enumerate}
\end{prop}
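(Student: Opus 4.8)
\textbf{Proof plan for Proposition \ref{prop:existence.SCE.properties}.}
The three assertions concern, respectively, positivity, conservation of mass together with the total-variation bound, and the second-moment estimate; I would establish them by passing to the limit in the smooth approximations $\mu^n$ constructed in the proof of \autoref{prop:existence.SCE}, since each property is either preserved under weak limits or can be tested against suitable functions.

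For $(1)$: when the noise $W^n=\Pi_n W$ is spatially smooth, $\mu^n_t$ is the conditional law (given the auxiliary Brownian motion $w$) of the SDE solution $X^n_t$ with initial law $\mu_0\geq 0$, hence $\mu^n_t\geq 0$ $\PP$-a.s. for every $t$. Since the cone of nonnegative measures is convex and closed in the $H^{-d/2-2-\eps}_x$-topology (on a fixed countable dense set of times, then for all times by the continuous-in-time representative), and $\mu^n\rightharpoonup\mu$ in $\widetilde{\mathcal H}$, the limit $\mu$ inherits nonnegativity at each fixed $t$ $\PP$-a.s.; an exceptional-null-set argument over a countable dense set of times combined with weak continuity upgrades this to ``for all $t$, $\PP$-a.s.''. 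For the mass: testing \eqref{eq:mild_sol.continuity} against $\varphi\equiv 1$ is not directly allowed, so instead I would test the weak formulation \eqref{eq:weak_sol.continuity} against a cutoff $\varphi^R$ with $\varphi^R\equiv 1$ on $B_R$, $|\nabla\varphi^R|\lesssim R^{-1}$, $|D^2\varphi^R|\lesssim R^{-2}$; using the uniform bound $\sup_{t\in[0,T]}\|\mu_t\|_{\mathrm{TV}}\leq\|\mu_0\|_{\mathrm{TV}}$ (already established in the proof of \autoref{prop:existence.SCE}) together with \autoref{lem:stoch_integr} and \autoref{rem:bound_C_TV} to control the stochastic term, sending $R\to\infty$ kills the drift and noise contributions and yields $\mu_t(\R^d)=\mu_0(\R^d)$ $\PP$-a.s. for fixed $t$. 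Combining $\mu_t\geq 0$ with $\mu_t(\R^d)=\mu_0(\R^d)$ gives $\|\mu_t\|_{\mathrm{TV}}=\mu_0(\R^d)=\|\mu_0\|_{\mathrm{TV}}$ when $\mu_0\geq 0$, in particular $\mu\in\mathcal P(\R^d)$ when $\mu_0\in\mathcal P(\R^d)$.

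For $(3)$: I would apply the It\^o formula of \eqref{eq:weak_sol.continuity} with the test function $\varphi(x)=|x|^2$ — rigorously, with a smooth truncation $\varphi^R$ as above and then removing the cutoff using $\langle|x|^2,\mu_0\rangle<\infty$ and the nonnegativity and mass-conservation from $(2)$, exactly as in the proof of the upper bound of \autoref{thm:richardson}. Since $A\varphi = \tfrac12 C(0):D^2\varphi + \kappa\tilde C:D^2\varphi = \mathrm{Tr}(C(0)) + 2\kappa\,\mathrm{Tr}(\tilde C)$ is constant, one gets
\begin{equation*}
\langle|x|^2,\mu_t\rangle = \langle|x|^2,\mu_0\rangle + \int_0^t\big[\mathrm{Tr}(C(0))+2\kappa\,\mathrm{Tr}(\tilde C)\big]\,\mathrm{d}s + M_t,
\end{equation*}
with $M_t=\sum_k\int_0^t\langle\mu_s,2\sigma_k\cdot x\rangle\,\mathrm{d}W^k_s$ a local martingale. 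Taking $\sup_{t\in[0,T]}$, raising to the power $m$, taking expectations and using the Burkholder–Davis–Gundy inequality, the martingale term is controlled by $\EE\big[\big(\int_0^T\|\mathcal C^{1/2}(x\mu_s)\|_{L^2_x}^2\,\mathrm{d}s\big)^{m/2}\big]$, which by \eqref{eq:covariance_TV_inequaity} is bounded by $\EE[(\int_0^T\|x\mu_s\|_{\mathrm{TV}}^2\,\mathrm{d}s)^{m/2}]\lesssim T^{m/2}\,\EE[\sup_{s\leq T}\langle|x|^2,\mu_s\rangle^{m/2}]$ (using $\|x\mu_s\|_{\mathrm{TV}}\leq(\mu_s(\R^d))^{1/2}\langle|x|^2,\mu_s\rangle^{1/2}=\langle|x|^2,\mu_s\rangle^{1/2}$ by Cauchy–Schwarz and $(2)$); a Young-inequality absorption then closes the estimate and gives \eqref{eq:stoch_continuity_bound_variance}. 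The main technical obstacle is the removal of the spatial cutoff $\varphi^R$ in the second-moment identity: one must argue that the boundary contributions from $\nabla\varphi^R$ and $D^2\varphi^R$ vanish as $R\to\infty$, which requires first establishing an a priori (uniform-in-$R$) bound on $\sup_{t\leq T}\langle|x|^2\wedge R^2,\mu_t\rangle$ — this is done by running the cutoff argument with the truncated weight, extracting a bound independent of $R$, and then invoking monotone convergence; the uniformity in $\kappa\in[0,1/2)$ is automatic since all constants above depend on $\kappa$ only through the harmless factor $\mathrm{Tr}(C(0))+2\kappa\,\mathrm{Tr}(\tilde C)$.
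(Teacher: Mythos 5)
Your proposal is correct and follows essentially the same route as the paper: positivity via the pushforward representation of the smooth approximations $\mu^n$ plus weak closedness of the nonnegative cone, and the second-moment bound via It\^o's formula with $\varphi(x)=|x|^2$, Burkholder--Davis--Gundy, the estimate $\|x\mu_s\|_{\mathrm{TV}}\leq\langle|x|^2,\mu_s\rangle^{1/2}$ and a Young-inequality absorption. The only deviation is in the mass-conservation step, where the paper simply passes the identity $\mu^n_t(\R^d)=\mu_0(\R^d)$ (which holds for the approximations by construction) to the weak limit, rather than testing the limiting equation against cutoffs $\varphi^R$; your cutoff argument is equally valid and in fact addresses more explicitly why the non-compactly-supported test function is admissible.
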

\begin{proof}

$(1)$
The argument in the proof of \autoref{prop:existence.SCE} above in fact also shows that, for any fixed $t$, we have $\mu^n_t \rightharpoonup \mu_t$ (e.g. in $L^2_{\omega} H^{- d/2 - \eps}_x$).
If $\mu_0 \geq 0$, then by properties of the pushforward $\mathbb{P}$-almost surely this property also holds for $\mu^n_t$ for all $t \geq  0$, and by properties of weak convergence (up to modifying the definition of the convex set $E$ in the above proof) must then hold for $\mu_t$ as well. 

$(2)$
Notice that by construction and the previous point, $\mathbb{P}$-almost surely we have the identity $\mu^n_t (\mathbb{R}^d) = \mu_0 (\mathbb{R}^d)$ for all $t \geq  0$, and by weak convergence the same relation holds for $\mu^n_t$ replaced by $\mu_t$ as well. 
In particular, if $\mu_0 \in \mathcal{P}(\mathbb{R}^d)$ (the set of probability measures on $\mathbb{R}^d$), then $\mu$ is a process with values in $\mathcal{P} (\mathbb{R}^d)$.

Similarly, the $L^\infty_t$ bound \eqref{eq:apriori.estim.1}, together with $\PP$-almost sure time continuity of trajectories and the property at point (1), leads to $\| \mu_t \|_{\mathrm{TV}} \leq \| \mu_0 \|_{\mathrm{TV}}$.

$(3)$
For simplicity, we assume $W$ to be spatially smooth, as the general case follows by the above approximation procedure; by Jensen, we may assume $m\geq 2$.
Taking $\varphi=|x|^2$ in \eqref{eq:weak_sol.continuity} one finds
\begin{align*}
    \dif\langle |x|^2,\mu_t\rangle
    = 2 \langle x \mu_t, \dif W_t\rangle + \Big[{\rm Tr}(C(0))+\frac{\kappa}{2} {\rm Tr}(\tilde C)\Big] \dif t.
\end{align*}
By writing the above in integral form, applying Minkowski and Burkholder-Davies-Gundy inequality,
\begin{align*}
    \EE\bigg(\,\sup_{t\in [0,T]} \langle |x|^2,\mu_t\rangle^m\bigg)^{\frac{1}{m}} & \lesssim_m \langle |x|^2,\mu_0 \rangle + T\,[{\rm Tr}(C(0))+\kappa {\rm Tr}(\tilde C)] + \EE\bigg(\,\Big[\int_0^\cdot \langle x\mu_t, \dif W_t\rangle\Big]_T^{m/2}\bigg)^{\frac{1}{m}}\\
    & \leq \langle |x|^2,\mu_0 \rangle + T\,[{\rm Tr}(C(0))+\kappa {\rm Tr}(\tilde C)] + {\rm Tr}(C(0))^{1/2} \EE\bigg(\,\Big(\int_0^T \langle |x|,\mu_t\rangle^2 \dd t \Big)^{m/2}\bigg)^{\frac{1}{m}}\\
    & \leq \langle |x|^2,\mu_0 \rangle + T\,[{\rm Tr}(C(0))+\kappa {\rm Tr}(\tilde C)] + {\rm Tr}(C(0))^{1/2} T^{1/2}\EE\bigg(\,\sup_{t\in [0,T]} \langle |x|^2,\mu_t\rangle^{m/2}\bigg)^{\frac{1}{m}}
\end{align*}
where we applied \autoref{lem:stoch_integr} , estimate \eqref{eq:covariance_TV_inequaity} and Jensen's inequality.
By Jensen's and Young's inequalities, for any $\delta>0$ it holds
\begin{align*}
    {\rm Tr}(C(0))^{1/2} T^{1/2}\EE\bigg(\,\sup_{t\in [0,T]} \langle |x|^2,\mu_t\rangle^{m/2}\bigg)^{\frac{1}{m}}
    \leq \frac{1}{4\delta} {\rm Tr}(C(0)) T + \delta \,\EE\bigg(\,\sup_{t\in [0,T]} \langle |x|^2,\mu_t\rangle^m\bigg)^{\frac{1}{m}};
\end{align*}
combined with the previous estimate, up to choosing $\delta$ small enough in function of the hidden constant, this yields the desired \eqref{eq:stoch_continuity_bound_variance}.
\end{proof}

\begin{proof}[Proof of \autoref{lem:duality}]
When $f \in C^\infty_c(\R^d)$ and $\mu_0 \in L^2_x$, we have by \eqref{eq:wiener_sol.continuity} 
\begin{align*}
\langle \mu_t , f \rangle
=
\langle \mu_0 , P_t f \rangle
+
\sum_{n \geq 1}
I^n_t(f,\mu_0)
=
\langle f , P_t \mu_0  \rangle
+
\sum_{n \geq 1}
I^n_t(f,\mu_0)
=
\langle S_t f, \mu_0 \rangle.
\end{align*}
By density of $C^\infty_c(\R^d)$ in $C_c(\R^d)$, continuity of the map $S_t$ from $L^2_x$ to $L^2_{\omega,x}$, and since $\mu_t \in \mathcal{M}$ with $\|\mu_t\|_{\mathrm{TV}} \leq \|\mu_0\|_{\mathrm{TV}}$ $\PP$-almost surely, the duality formula can be extended to every $f \in C_c(\R^d)$. 

Let us move to the second statement.
Let $\{\chi^n(x-\cdot)\}_{n \in \NN}$ be a collection of rescaled smooth mollifiers approximating the Dirac delta $\delta_x$.
Identifying an absolutely continuous measure with its density, we can apply the first part of the lemma to $\chi^n(x-\cdot)$ and get for every $f \in C_c(\R^d)$
\begin{align*}
(S_t f \ast \chi^n)(x,\omega)
=
\langle f , \mu^{\chi^n(x-\cdot)}_t \rangle.
\end{align*} 
For every $x \in \R^d$ the right-hand side converges to $\langle f, \mu^{x}_t \rangle$ $\PP$-almost surely as $n \to \infty$. As for the left-hand side, it converges in $L^2_{\omega,x}$ as $n \to \infty$ towards $S_tf(x,\omega)$, implying the thesis when $f \in C_c(\R^d)$.  

In order to extend the relation to every $f \in L^2_x$ we argue as follows. Let $\{ f^n \}_{n \in \NN} \subset C_c(\R^d)$ be a sequence converging to $f$ in $L^2_x$ and such that $\| f^{n + 1} - f^n \|_{L^2_x} \leq 2^{-n}$ for all $n$. Then for any fixed $M$, by linearity of $S_t$ and the previous step we have 
\begin{equation*}
S_t \left( \sum_{n = 1}^M | f^{n + 1} - f^n | \right) (x,\omega) 
= 
\left\langle \sum_{n = 1}^M | f^{n + 1} - f^n |,\mu^x_t \right\rangle. 
\end{equation*}
Since $\mu_t^x$ is a non-negative measure by point $(2)$ of \autoref{prop:existence.SCE.properties}, by Monotone Convergence we can take $M \rightarrow \infty$ and deduce that for $\PP \otimes \mathscr{L}^d$-almost every $(\omega,x)$
\[ S_t \left( \sum_{n = 1}^{+ \infty} | f^{n + 1} -  f^n | \right) (x,\omega) 
= 
\left\langle \sum_{n = 1}^{+ \infty} | f^{n + 1} - f^n |, \mu^x_t \right\rangle. \]
For any such  $(x,\omega)$, since $\langle \sum_{n = 1}^{+ \infty} | f^{n + 1} -
  f^n |, \mu^x_t \rangle < \infty$ and $\mu_t^x$ is a non-negative measure, it follows that $\{ \langle f^n, \mu^x_t \rangle \}_{n \in \NN}$ is a Cauchy sequence.
In order to conclude the proof, we have to prove that $\langle f^n, \mu^x_t \rangle \to \langle f, \mu^x_t \rangle$. Notice that the latter is an element of $L^2_{\omega,x}$ since by Jensen inequality 
\begin{align*}
\mathbb{E} \int_{\R^d} \langle f , \mu^x_t \rangle^2 \dd x
\leq
\int_{\R^d} \langle |f|^2 , \mathbb{E} [\mu^x_t ]\rangle \dd x
=
\int_{\R^d} P_t(|f|^2)(x) \dd x
=
\| f \|_{L^2}^2 < \infty,
\end{align*}
where we have used the fact that $\mathbb{E}[\mu_t^x] = P_t \delta_x$, as follows by taking the expectation in \eqref{eq:wiener_sol.continuity}. 
In order to prove the convergence it suffices to show $\langle | f^n - f |, \mu^x_t \rangle \rightarrow 0$ in $L^2_{\omega,x}$ as $n \rightarrow \infty$.
Arguing as before we get for $\PP \otimes \mathscr{L}^d$-almost every $(\omega,x)$
\begin{align*}
\langle | f^n - f |, \mu^x_t \rangle
=
\langle | \sum_{m=n}^\infty (f^{m+1} - f^m) |, \mu^x_t \rangle
=
S_t \left(|\sum_{m=n}^\infty (f^{m+1} - f^m) | \right) (x,\omega),
\end{align*}
which goes to zero in $L^2_{\omega,x}$ since
\begin{equation*}
\left\| S_t \left(| \sum_{m=n}^\infty (f^{m+1} - f^m) | \right)  \right\|_{L^2_{\omega,x}}
\leq
\sum_{m=n}^\infty \left\| f^{m+1} - f^m \right\|_{L^2_{x}}
\leq
\sum_{m=n}^\infty 2^{-m}
=
2^{1-n} \to 0. \qedhere
\end{equation*}
\end{proof}

\subsection{Proof of \autoref{lem:besov_type_spaces}}\label{app:proof.besov.spaces}

For convenience, let us define
\begin{align*}
    I_1:= \sup_{j\in\ZZ} 2^{sjp} \| \dot\Delta_j f\|_{L^p_{\omega,t,x}}^p,\qquad I_2:= \sup_{\eps>0} \frac{1}{\eps^{sp}} \fint_{\SS^{d-2}} \| \delta_{\eps \hat z} f\|_{L^p_{\omega,t,x}}^p \sigma(\dif \hat z)
\end{align*}
First assume that $f\in \tilde L^p_{\omega,t} B^s_{p,\infty}$; by definition, since $|\hat z|=1$, we immediately have
\begin{align*}
    \frac{1}{\eps^{sp}} \fint_{\SS^{d-2}} \| \delta_{\eps \hat z} f\|_{L^p_{\omega,t,x}}^p \sigma(\dif \hat z)
    \leq \llbracket f\rrbracket_{\tilde{L}^p_{t,\omega} \tilde B^s_{p,\infty}}^p \quad \Rightarrow \quad I_2 \leq \llbracket f\rrbracket_{\tilde{L}^p_{t,\omega} \tilde B^s_{p,\infty}}^p.
\end{align*}
Next assume $I_1<\infty$ and fix $z\in\R^d$. By Bernstein estimates, for any fixed $j\in \ZZ$ we have
\begin{align*}
    \| \dot\Delta_j \delta_z f\|_{L^p_x} \leq \| \delta_z \dot\Delta_j f\|_{L^p_x}\lesssim |z| 2^j \| \dot\Delta_j f\|_{L^p_x},
\end{align*}
which by integrating also in $(\omega,t)$ implies that
\begin{align*}
    \| \dot\Delta_j \delta_z f\|_{L^p_{\omega,t,x}} \lesssim \min\{|z| 2^j,1\} \| \dot\Delta_j f\|_{L^p_{\omega,t,x}}
\end{align*}
where the term with $1$ comes from triangular inequality. Therefore, choosing $K\in\ZZ$ such that $|z|\sim 2^{-K}$ we obtain for every $z \in \R^d$:
\begin{align*}
    \| \delta_z f \|_{L^p_{\omega,t,x}}
    & \leq \sum_{j\in\ZZ} \| \delta_z \dot\Delta_j f \|_{L^p_{\omega,t,x}}
    \lesssim \sum_{j\leq K} |z| 2^j \| \dot\Delta_j f\|_{L^p_x} + \sum_{j> K} \| \dot\Delta_j f\|_{L^p_x}\\
    & \lesssim I_1^{1/p} \bigg(\sum_{j\leq K} |z| 2^{j(1-s)} +  \sum_{j> K} 2^{-js} \bigg)
    \\
    &\lesssim I_1^{1/p} ( |z| 2^{K(1-s)} + 2^{-K s})\lesssim I_1^{1/p} |z|^s,
\end{align*}
giving $\llbracket f\rrbracket_{\tilde{L}^p_{t,\omega} \tilde B^s_{p,\infty}}^p \lesssim I_1$.
Finally, assume $I_2<\infty$ and recall that $\dot\Delta_j f=f\ast \psi_j$, where $\psi$ is a radially symmetric smooth function with mean zero. Therefore
\begin{align*}
    \| \dot\Delta_j f\|_{L^p_x}^p
    & = \Big\| \int_{\R^d} \delta_z f \psi_j(z) \dd z\Big\|_{L^p_x}^p
    \leq \Big(  \int_{\R^d} \|\delta_z f\|_{L^p_x} |\psi_j(z)| \dd z\Big)^p\\
    & \leq \| \psi\|_{L^1}^{p-1} \int_{\R^d} \|\delta_z f\|_{L^p_x}^p |\psi_j(z)| \dd z
    \sim \int_{\R^d} \|\delta_z f\|_{L^p_x}^p 2^{jd} |\psi(2^j z)| \dd z,
\end{align*}
where in the intermediate step we used Jensen's inequality. Integrating also with respect to $(\omega,t)$, using that $\psi(z)=\varphi(|z|)$ and integrating over spheres of fixed radii $|z|=r$, we arrive at
\begin{align*}
    \| \dot\Delta_j f\|_{L^p_{\omega,t,x}}^p
    \lesssim I_2 \int_0^{+\infty} r^{sp}\, r^{d-1}\, 2^{jd} \varphi(2^j r) \dd r
    = 2^{-jsp} I_2 \int_0^{+\infty} r^{sp+d-1} \varphi(r) \dd r,
\end{align*}
where the last integral is finite since $\varphi$ is of fast decay. Overall this shows that
\begin{align*}
    I_2\leq \llbracket f\rrbracket_{\tilde{L}^p_{t,\omega} \tilde B^s_{p,\infty}}^p \lesssim I_1\lesssim I_2,
\end{align*}
yielding the desired \eqref{eq:besov_type_spaces}.
The proof of \eqref{eq:besov_type_spaces2} is almost identical and therefore omitted.
Concerning the last statement, it follows from \eqref{eq:besov_type_spaces} that \eqref{eq:besov_type_spaces2} holds if and only if the sequence $\{f^n\}_n$ converges to $f$ in $\tilde L^p_{\omega,t} \tilde B^s_{p,\infty}$, for the spatially smooth sequence given by
$
    f^n=\sum_{j\leq n} \dot\Delta_j f. 
$\qed
%

\bibliography{biblio}{}
\bibliographystyle{alpha}

\end{document}